\newtheorem{definition}{Definition}[section]
\newtheorem{theorem}{Theorem}[section]
\newtheorem{conjecture}{Conjecture}
\newtheorem{proposition}[theorem]{Proposition}
\newtheorem{lemma}[theorem]{Lemma}
\newtheorem{remark}{Remark}[section]
\newtheorem{convention}{Convention}[section]
\newtheorem{bigtheorem}{Theorem}
\numberwithin{equation}{section}
\newcommand{\Lb}{\underline{L}}
\DeclareMathAlphabet\mathbfcal{OMS}{cmsy}{b}{n}
\title{Naked Singularities for the Einstein Vacuum Equations: \\ The Exterior Solution}
\date\today
\author[1]{Igor Rodnianski}
\author[2,3]{Yakov Shlapentokh-Rothman}
\affil[1]{\small Princeton University, Department of Mathematics, Fine~Hall,~Washington~Road,~Princeton,~NJ~08544,~United~States~of~America\vskip.2pc \ }
\affil[2]{\small University of Toronto, Department of Mathematics, 40 St.~George~Street, Toronto, ON, Canada\vskip.2pc \ }
\affil[3]{\small University of Toronto Mississauga, Department of Mathematical and Computational Sciences, 3359 Mississauga Road, Mississauga, ON, Canada\vskip.2pc \ }
\begin{document}

\maketitle
\begin{abstract}In this work we initiate the mathematical study of naked singularities for the Einstein vacuum equations in $3+1$ dimensions by constructing solutions which correspond to the exterior region of a naked singularity. A key element is our introduction of a new type of self-similarity for the Einstein vacuum equations. Connected to this is a new geometric twisting phenomenon which plays the leading role in singularity formation.

Prior to this work, the only known examples of naked singularities were the solutions constructed by Christodoulou for the spherically symmetric Einstein-scalar-field system, as well as other solutions explored numerically for either the spherically symmetric Einstein equations coupled to suitable matter models or for the Einstein equations in higher dimensions.
\end{abstract}
\tableofcontents

\section{Introduction}

Already in the earliest investigations of the Einstein vacuum equations,

\begin{equation}\label{EVE}
{\rm Ric}_{\mu\nu}\left(g\right) = 0,
\end{equation}
the existence of singular solutions forced theorists to confront fundamental questions concerning the domain of validity for General Relativity. Namely, since (severe) singularities do occur in some solutions, e.g., the Schwarzschild solution~\cite{schwarzschild1916}, what is the relevance/predictive power of a non-singular portion of a given solution? Though originally the possibility was entertained that generic asymmetric perturbations of a spacetime $\left(\mathcal{M},g_{\mu\nu}\right)$ satisfying~\eqref{EVE} would be regular, the incompleteness theorem of Penrose~\cite{Penroseincomp} showed that when a trapped surface is present, some degree of pathology is in fact a stable feature. Fortunately, Penrose also suggested a way out of this problem, at least for isolated self-gravitating systems:

\begin{conjecture}\label{cosmicconj}(Weak Cosmic Censorship Conjecture \underline{Original} Version~\cite{Penroseergo}) For asymptotically flat solutions to the Einstein vacuum equations, singularities are always hidden behind an event horizon.
\end{conjecture}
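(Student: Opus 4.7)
The ``statement'' I am asked to prove is a conjecture rather than a theorem, and in the form stated it is in fact expected to be \emph{false}: the whole point of the paper that follows is to construct a class of vacuum spacetimes that violate it. So the proposal I would pursue is not a proof but a disproof by explicit construction, and the outline below is what I would attempt before reading the authors' approach.

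The natural route, modeled on Christodoulou's scalar-field construction alluded to in the abstract, is to look for a self-similar vacuum spacetime whose singular vertex is visible from future null infinity $\mathcal{I}^+$. I would first fix a double-null gauge $(u,\underline{u},\theta^A)$ with lapse $\Omega$ and shift $b$, and posit a homothetic Killing field $K$ acting by dilation in $(u,\underline{u})$. Imposing $\mathcal{L}_K g = 2g$ reduces \eqref{EVE} to a system on the quotient by $K$; the unknowns become the induced metric $\slashed{g}$ on the two-sphere sections together with the Ricci coefficients $\chi,\underline{\chi},\eta,\underline{\eta},\omega,\underline{\omega}$ and the lapse $\Omega$, viewed as $K$-invariant tensors on a single orbit. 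One then sets up a characteristic initial value problem: prescribe self-similar data on an outgoing cone $\{u = -1\}$ emanating from the prospective singular vertex, and try to solve the vacuum system in the exterior $\{u \leq -1,\ \underline{u}\geq 0\}$ all the way out to $\mathcal{I}^+$.

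The analytic core would then be a semi-global existence argument for this characteristic problem, closed by weighted energy estimates adapted to the homothetic scaling. Ricci coefficients and curvature components carry fixed scaling weights in $|u|$, and the natural norms are $L^2$ norms on the spheres with these weights; one propagates them along $\underline{u}$-cones using the null Bianchi identities and the transport equations for the Ricci coefficients, in the style of Christodoulou--Klainerman. Visibility of the vertex from $\mathcal{I}^+$ would follow from showing that the limit $\underline{u}\to\infty$ produces a complete asymptotically flat $\mathcal{I}^+$ whose past boundary is the null cone through the self-similar vertex, and that no trapped surface forms on the exterior cones.

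The obvious obstacle is that in $3+1$ vacuum, exact homothetic self-similarity is extremely rigid and in sufficiently symmetric classes (e.g.\ spherical symmetry) forces triviality by Birkhoff-type rigidity, which is precisely why the vacuum naked singularity problem was long regarded as inaccessible. The abstract flags the resolution: a \emph{new} type of self-similarity together with a geometric ``twisting'' of the angular directions under rescaling. I would therefore expect the hardest step to be identifying the correct symmetry group --- one that combines dilation in $(u,\underline{u})$ with a nontrivial rotation of the sphere frames --- and then closing the weighted estimates in the presence of the resulting mixing between scaling and angular derivatives. Without that twist the construction collapses; with it, the Ricci coefficients acquire an oscillatory component that must be controlled uniformly up to the vertex while still being compatible with completeness of $\mathcal{I}^+$.
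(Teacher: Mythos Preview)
Your diagnosis is right: this is a conjecture, the paper disproves it by construction, and the right template is a self-similar vacuum solution whose vertex is visible from $\mathcal{I}^+$. Your outline --- double-null gauge, characteristic data on cones through the vertex, weighted energy estimates adapted to scaling --- matches the paper's architecture at the coarse level.

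The genuine gap is in your identification of the obstruction and its resolution. You locate the rigidity in Birkhoff-type theorems and propose that the fix is a modified symmetry group combining dilation with angular rotation. The paper's obstruction is sharper and more concrete: impose the standard homothety $K = u\partial_u + v\partial_v$ with a lapse $\Omega$ regular at $v=0$, and restrict the ingoing Raychaudhuri equation to the incoming cone $\{v=0\}$. This constraint forces $\mathcal{L}_b\slashed{g}|_{v=0}=0$ (Proposition~\ref{restrictonv0yay} and the discussion after Lemma~\ref{thatissuchaconst}), which kills the shear $\hat{\underline{\chi}}$ and hence the Hawking mass along the cone --- exactly the quantities one needs to be nontrivial for the vertex to be a genuine singularity. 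So a ``twisted'' symmetry group alone does not escape the trap: the shift $b$ \emph{is} the twist, and the constraint annihilates its deformation tensor.

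The resolution the paper finds is not to change the symmetry group but to change the regularity class: allow the lapse to blow up mildly, $\Omega \sim (v/-u)^{-\kappa}$ for a small $\kappa>0$, so that the metric is only $C^{1,c\epsilon^2}$ across the cone in a rescaled coordinate $\hat v = v^{1-2\kappa}$. The Raychaudhuri constraint then acquires an extra term $-4\kappa/u^2$ (Lemma~\ref{thatissuchaconst}), and for $\kappa\sim\epsilon^2$ one can solve it with $|\slashed{\nabla}\hat\otimes b|\sim\epsilon$ nonzero. This $\kappa$-self-similarity, not a new group action, is the ``new type of self-similarity'' the abstract advertises; the angular twisting is a consequence of the nontrivial $b$ it permits. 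Your outline would not have produced this mechanism because it assumes the homothety acts on a smooth metric, which is precisely the hypothesis that forces triviality.

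Two smaller points. First, the paper constructs only the \emph{exterior} of the naked singularity (the region to the future of the past light cone of the vertex); extending past a complete Cauchy hypersurface is deferred. Second, the solution is not exactly $\kappa$-self-similar but only asymptotically so as $v/(-u)\to 0$, and the outgoing data along $\{u=-1\}$ must be fine-tuned (via degenerate transport equations on $\mathbb{S}^2$, Section~\ref{pdetheoryforformal}) so that $\eta$, $\Omega^{-1}{\rm tr}\chi$, and especially $\Omega^{-1}\hat\chi$ do not blow up faster than the self-similar rate as $u\to 0$. The global construction then proceeds through three separate bootstrap regions rather than a single semi-global estimate from one cone.
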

In particular, if the weak cosmic censorship conjecture holds, and if we are only interested in gravitational physics outside the event horizon, we do not need to concern ourselves with the structure of singularities! 

A singular solution which is not confined within an event horizon is known as a ``naked singularity.''  Informally, a naked singularity may be thought of as a singular solution where the future light cone of the singularity extends to an asymptotic region in such a way that arbitrarily far away observers may still intersect the light cone in finite time and thus ``see'' the singularity. (See Definition~\ref{nakeddef} below for a precise definition.) In addition to being visible to far away observers, another important quality of a naked singularity is that the singular point represents a ``genuine'' loss of regularity relative to the initial data. Finally, we note that the ``exterior region'' of a naked singularity refers to region of a naked singularity which is in the future of the past light cone of the singular point. 

Previously, Christodoulou constructed naked singularities for the spherically symmetric Einstein-scalar-field system~\cite{ChristNaked}. In this case, the loss of regularity referred to in the previous paragraph may be seen as follows: Let $m$ denote the Hawking mass of a sphere and $r$ denote the area radius of a given sphere. Then $\frac{m}{r}$ is a scale-invariant quantity which, for regular solutions, must vanish when $r= 0$. However, at the singular point of Christodoulou's solutions, we have that $r = 0$  but $\frac{m}{r}$ does not converge to $0$. In contrast, the Cauchy data for Christodoulou's solutions lie in the so-called ``absolutely continuous'' class of data which is \emph{more} regular than the scale-invariant class of bounded variation data. On a more technical level, we recall that a key role in the construction is played by a reduction of the self-similar spherically symmetric Einstein-scalar-field system to a two dimensional autonomous system. The existence of such a system cannot be expected outside of spherical symmetry, and thus the study of naked singularities for the Einstein vacuum equations (where the assumption of spherical symmetry would eliminate the dynamics) must take a different approach. (For a more thorough discussion of Christodoulou's solutions, see Sections~\ref{christisnakedsphsymmwhat}-\ref{isitSig}.) 

Christodoulou has also constructed naked singularities for the Einstein dust model~\cite{dustnaked}, and there has been further numerical analysis and construction of naked singularities for the spherically symmetric Einstein equations coupled to fluid models which allow for pressure~\cite{nakedfluidoripiran,nakedfluidjoshidwivedi}. Additionally, though we will not survey this here, we note that there is a large numerical literature concerning other types of naked singularities; see, for example, solutions associated with critical phenomena~\cite{chop,critrotate,critsurv} and higher dimensional black holes~\cite{blackstringnakedsing,nakedsinganzhang}.

We now give precise definitions of a spacetime not possessing a complete null infinity and a ``naked singularity,'' that is, a singularity which is \emph{not} hidden behind an event horizon. 
\begin{definition}\label{nakeddef}Let $(\mathcal{M},g)$ posses an asymptotically flat null hypersurface $\mathcal{H}$, let $L'$ be a geodesic outgoing null normal vector for $\mathcal{H}$ with affine parameter $v$, let $\mathcal{S}_v$ denote a surface of constant $v$ on $\mathcal{H}$, and define $\underline{L}'$ along $\mathcal{H}$ to be the unique future directed null vector transversal to $\mathcal{H}$ which satisfies $g\left(\underline{L}',L'\right) = -2$. 

Then we say that $\left(\mathcal{M},g_{\mu\nu}\right)$ \underline{does not posses a complete future null infinity} if there exists a constant $A > 0$, a sequence $\{v_i\}_{i=1}^{\infty}$ with $v_i \to \infty$, and a sequence $\{p_i\}$ with $p_i \in \mathcal{S}_{v_i}$, such that the each maximal null geodesic $\gamma_i$, with tangent vector $\underline{L}'$ at $p_i$, has affine length less than $A$. 

If $(\mathcal{M},g_{\mu\nu})$ does not posses a complete future null infinity and is a maximal globally hyperbolic development of suitably regular\footnote{We do not here give an explicit definition of ``suitably regular,'' but simply note that any given choice of functional framework must be justified. Cf.~the discussion of Christodoulou's naked singularities in Section~\ref{christisnakedsphsymmwhat} and the discussion of the solutions constructed in this paper in Section~\ref{letscompareyayaya}.} and complete initial data, then we say that $(\mathcal{M},g_{\mu\nu})$ contains a \underline{naked singularity}.
\end{definition}
(The original version of) weak cosmic censorship can then be understood as the statement that naked singularities \emph{do not} arise from the maximal globally hyperbolic developments of complete asymptotically flat initial data. We note that Definition~\ref{nakeddef}, which is in the spirit of the definition given in~\cite{Chrmil}, has the benefit of not relying on an explicit conformal compactification of the spacetime. 

It is important to note that the ``maximality'' of a globally hyperbolic development may depend on the regularity class which the spacetimes are a priori restricted to lie in. As with other fundamental questions in general relativity, different regularity frameworks could, in principle, lead to different outcomes for the weak cosmic censorship conjecture. (Compare with, for example, the role played by the regularity of the Cauchy horizon in the strong cosmic censorship conjecture~\cite{DafLuk1,LukOh2017one,LukOh2017two}.) Later in the paper we will discuss the relevant precise notion of maximal globally hyperbolic development.

We now state our main theorem.
\begin{bigtheorem}\label{themainextresult} Let $N \gg 1$ be a sufficiently large integer and $0 < \epsilon \ll \gamma \ll 1$ be sufficiently small, potentially depending on $N$. Then there exists a spacetime $\left(\mathcal{M},g_{\mu\nu}\right)$ solving the Einstein vacuum equations so that
\begin{enumerate}
\item  $\left(\mathcal{M},g_{\mu\nu}\right)$ is covered by coordinates  $\left(u,\hat{v},\theta^A\right) \in [-\underline{v}^2,0) \times [0,\infty) \times \mathbb{S}^2$ where $\underline{v} > 0$, and $g_{\mu\nu}$ takes the following double-null form:
\begin{equation}\label{thisisgdouble}
g = -2\hat{\Omega}^2\left(du\otimes d\hat{v} + d\hat{v} \otimes du\right) + \slashed{g}_{AB}\left(d\theta^A - b^Adu\right)\otimes\left(d\theta^B - b^Bdu\right).
\end{equation}

\item $\left(\mathcal{M},g_{\mu\nu}\right)$ has the following Penrose diagram:\footnote{For the reader unfamiliar with the Penrose diagram notation, we recommend the discussion in the lecture notes~\cite{Mihalisnotes} and the references therein.}
\begin{center}
\begin{tikzpicture}[scale = 1]
\fill[lightgray] (0,0)--(2,-2)--(4,0)  -- (2,2) -- (0,0);
\draw (0,0) -- (2,-2) node[sloped,below,midway]{\footnotesize $\{\hat{v} = 0\}$};
\draw (2,-2) -- (4,0) node[sloped,below,midway]{\footnotesize $\{u=-\underline{v}^2\}$};
\draw[dashed] (4,0) -- (2,2) node[sloped,above,midway]{\footnotesize $\mathcal{I}^+$};
\draw[dashed] (2,2) -- (0,0) node[sloped,above,midway]{\footnotesize $\{u = 0\}$};
\path [draw=black,fill=white] (0,0) circle (1/16); 
\draw (-.1,0) node[above]{\footnotesize $\mathcal{O}$};
\end{tikzpicture}
\end{center}

\item\label{lp3} There exists a constant $c > 0$, independent of $\epsilon$ and $N$, so that we have $g_{\mu\nu} \in C^N\left(\mathcal{M}\setminus\{\hat{v}=0\}\right)$, $g_{\mu\nu} \in C^{1,c\epsilon^2}\left(\mathcal{M}\right)$, and $(g_{\mu\nu},\partial_{\hat{v}}g_{\mu\nu}) \in C^N\left(\{\hat{v}=0\}\right)$.

\item The null hypersurface $\{u = -\underline{v}^2\}$ is asymptotically flat as $\hat{v} \to \infty$, and future null infinity $\mathcal{I}^+$ is incomplete in the sense of Definition~\ref{nakeddef}.

\item Along $\{\hat{v} = 0\}$ we have that  in a Lie-propagated coordinate frame
\begin{equation}\label{choices}
\hat{\Omega}\left(u,\theta^A\right) = (-u)^{\kappa}\tilde\Omega\left(\theta^A\right),\qquad \slashed{g}_{AB} = u^2\tilde{\slashed{g}}_{AB}\left(\theta^C\right),\qquad b_A = u\tilde{b}_A\left(\theta^B\right),
\end{equation}
where $\kappa$, $\tilde\Omega$, $\tilde{\slashed{g}}_{AB}$, and $\tilde{b}_A$ are a suitable positive constant, positive function, Riemannian metric, and $1$-form on $\mathbb{S}^2$. 
\item There exists a vector field $S=u\partial_u$ -- generator of scaling symmetry -- which is tangent to $\{\hat{v} = 0\}$ and conformally Killing along $\{\hat{v} = 0\}$.
\item Let $m(u)$ denote the Hawking mass of a sphere $\mathbb{S}^2_{u,0} \subset \{\hat{v} = 0\}$. Then we have a constant $C > 0$, independent of $\epsilon$, such that
\begin{equation}\label{masskfow}
\epsilon ^2 C^{-1} \leq \frac{m(u)}{\sqrt{{\rm Area}\left(\mathbb{S}^2_{u,0}\right)}} \leq C\epsilon^2,
\end{equation}
and so that the shear\footnote{This is the shear defined with respect to the geodesic null frame $e_3 \doteq \mathcal{L}_{\partial_u} + \mathcal{L}_b$ and $e_4 \doteq \hat{\Omega}^{-2}\mathcal{L}_{\partial_{\hat{v}}}$.} $\hat{\underline{\chi}}_{AB}\doteq {\rm tf}\left(\left(\mathcal{L}_{\partial_u}+\mathcal{L}_b\right)\slashed{g}_{AB}\right)$ satisfies the following bound along $\{\hat{v} = 0\}$:
\begin{equation}\label{shearkofw}
\frac{\epsilon|a\left(\theta^A\right)|}{-u} C^{-1} \leq \left|\hat{\underline{\chi}}\left(u,\theta^A\right)\right|_{\slashed{g}} \leq C \frac{\epsilon|a\left(\theta^A\right)|}{-u},
\end{equation}
where ${\rm tf}$ denotes the trace-free part, and the function $a: \mathbb{S}^2 \to \mathbb{R}$ satisfies ${\rm Area}\left(\{\theta : |a(\theta)| \leq 1/2\}\right) \lesssim \gamma$. 
\end{enumerate}
\end{bigtheorem}
\begin{remark}
The key mechanism behind the statement of the Theorem and specifically that it describes the exterior region of a naked singularity $\mathcal O$ with the behavior of the Hawking mass in {\it 7.}, is the deformation tensor of the shift vector
$b^A$ with respect to the metric $\slashed{g}_{AB}$ on $\Bbb S^2$:
$$
\slashed\nabla_Ab_B + \slashed\nabla_Bb_A  = \left(\mathcal L_b \slashed g\right)_{AB}
$$
The parameter $\epsilon$ is related to the size of
$$
|\slashed{\nabla}\hat{\otimes} b|\sim \epsilon,\qquad |\slashed {\rm div} b|\sim \epsilon^2
$$
In view of this, the phenomenon described in the Theorem is highly non-symmetric and can be thought of as generated 
by a rotation of the incoming cone $\hat v=0$, see Remark \ref{cone}. For a further discussion of the importance of $b$ having a non-trivial deformation tensor, see the discussion after Lemma~\ref{thatissuchaconst}.
\end{remark}
\begin{remark}It follows from the method of our proof that, after a suitable rescaling of the double-null coordinates, the metric extends to $\{u = 0\}\setminus \{\hat{v} = 0\}$ in a H\"{o}lder continuous fashion and that the area of the spheres $\mathbb{S}^2_{0,v}$ converge to infinity as $v\to \infty$. Since it is not needed for the interpretation of the solution as the exterior region of a naked singularity, we will not here pursue sharp regularity statements for the solution along $\{u = 0\}$. Nevertheless it would be interesting to systematically study and determine the precise behavior of the solution along $\{u=0\}$.
\end{remark}
\begin{remark}
One should contrast the regularity of the outgoing data described in {\it 3.} with the behavior of the Hawking mass and 
the shear along the incoming cone $\hat v=0$ as $u\to 0$ in {\it 7}.  See further discussion in Section \ref{letscompareyayaya}.
\end{remark}

\begin{remark}\label{othercoeffawokf}We will have that $\left\vert\left\vert \hat{\chi}\right\vert\right\vert_{L^2\left(\mathbb{S}^2_{-\underline{v}^2,0}\right)} \sim \epsilon^{-1}$. In particular, we are in a ``large data regime.'' Furthermore, $\hat{\chi}_{AB}$ is only H\"{o}lder continuous as $\hat{v} \to 0$ and the curvature component $\alpha$ satisfies $\left\vert\left\vert \alpha\right\vert\right\vert_{L^2\left(\mathbb{S}^2_{-\underline{v}^2,\hat{v}}\right)}|_{\{u= -\underline{v}^2\}} \sim \hat{v}^{-1+c\epsilon^2}$ as $\hat{v} \to 0$. We note that since the pioneering work~\cite{Chr}, there have been various works which treat the Einstein equations in large data regimes, for example, ~\cite{klainrodtrap,anluk} . However, in contrast to our situation, these other large data regimes concerned solutions with initial data which was Minkowskian along (the analogue of) $\{\hat{v} = 0\}$ and whose evolution typically ended in trapped surface formation.
\end{remark}
\begin{remark}\label{stabilityofsolsls}It is a consequence of the method of proof used that the qualitative behavior of solutions described in Theorem~\ref{themainextresult} is stable to perturbations of the outgoing characteristic data along $\{u =-\underline{v}^2\}$ which vanish sufficiently quickly as $\hat{v} \to 0$ and $\hat{v} \to \infty$. However, without the vanishing condition at $\hat{v} = 0$, one expects a generic sufficiently regular asymptotically flat perturbation to create an instability and result in trapped surface formation.
\end{remark}
\begin{remark}It is a straightforward consequence of method of proof that as $\epsilon \to 0$, $g_{\mu\nu}$ converges to the Minkowski metric in $H^1_{\rm loc}$. However, in view of Remark~\ref{othercoeffawokf}, this convergence to the Minkowski metric does \underline{not} hold, already, in $C^1$. 
\end{remark}
\begin{remark}\label{cone}
The following schematic diagram may help the reader to visualize the null geometry of the cone $\{\hat{v} = 0\}$:
\begin{center}
\begin{tikzpicture}[scale = 1]

\draw (0,0) -- (3,-3);
\draw (0,0) -- (-1.8,-3.4);
\draw (0,0) -- (1.8,-3.4);
\draw [dashed] (0,0) -- (.625,-2.5);
\draw (0,0) -- (-3,-3) ;
\draw (3,-3) arc(0:-180: 3 and .5) node[sloped,below,midway]{$\mathcal{S}$};
\draw [dashed] (3,-3) arc(0:180: 3 and .5);
\draw (0,-3.5) to [bend right = 20] node[sloped,above,midway]{\footnotesize $\gamma$} (2.25,-2.25);
\draw [dashed] (2.25,-2.25) to [bend left = 5] (-1.5,-1.5);
\draw (-1.5,-1.5) to [bend right = 10] (1.2,-1.2);
\draw [dashed] (1.2,-1.2) to [bend left = 5] (-.9,-.9);
\draw (-.9,-.9) to [bend right = 5]  (.75,-.75);
\draw [dashed] (.75,-.75) to [bend left = 5] (-.6,-.6);
\draw (-.6,-.6) to [bend right = 5]  (.45,-.45);
\draw [dashed] (.45,-.45) to [bend left = 5] (-.36,-.36);
\draw (-.36,-.36) to [bend right = 5]  (.3,-.3);
\draw [dashed] (.3,-.3) to [bend left = 5] (-.26,-.26);
\draw (-.26,-.26) to [bend right = 5]  (.22,-.22);
\draw [dashed] (.22,-.22) to [bend left = 5] (-.2,-.2);
\draw (-.2,-.2) to [bend right = 5]  (.17,-.17);
\draw [dashed] (.17,-.17) to [bend left = 5] (-.15,-.15);
\draw (-.15,-.15) to [bend right = 5]  (.13,-.13);
\draw [dashed] (.13,-.13) to [bend left = 5] (-.11,-.11);
\draw (-.11,-.11) to [bend right = 5]  (.09,-.09);
\draw [dashed] (.09,-.09) to [bend left = 5] (-.07,-.07);
\draw (-.07,-.07) to [bend right = 5]  (.06,-.06);
\draw (-.1,0) node[above]{\footnotesize $\mathcal{O}$};
\path [draw=black,fill=white] (0,0) circle (1/16); 
\end{tikzpicture}
\end{center}
Here $\mathcal{S}$ represents an $\mathbb{S}^2$-section of the cone, the straight lines represent various null normal lines, and the curve $\gamma$ represents an orbit of the vector field $S$ which generates the scaling symmetry along the cone $\{\hat{v} = 0\}$. In particular, $\gamma$ winds around infinitely often as it approaches the point $\mathcal{O}$. (Note that this diagram is drawn with respect to a different set of coordinates than those used in the statement of Theorem~\ref{themainextresult}; in fact, in the coordinate system of Theorem~\ref{themainextresult}, the orbits of $S$ appear as straight lines and instead the null normal lines twist around the cone. In those coordinates $S=u\partial_u$ and 
the null generator $\Lb=\partial_u + b^A\slashed{\nabla}_A$.) This twisting of the cone can be considered to be the key mechanism behind the formation of the naked singularity and serves as replacement for the role played by the logarithmic growth of the scalar field in Christodoulou's solutions. 
\end{remark}
\begin{remark}Lastly, we remark that we actually construct a large family of spacetimes which satisfy the conclusions of Theorem~\ref{themainextresult}. The various possible choices of incoming data along the null hypersurface $\{\hat{v} = 0\}$  are parametrized by choices of  ``$\left(\epsilon,\gamma,\delta,N_0,M_0,M_1\right)$-regular $4$-tuples,'' see Definition~\ref{Mreg} for the specifics.  
\end{remark}

We now quickly outline the rest of the introductory part of the paper. In Section~\ref{christisnakedsphsymmwhat} we will review the naked singularities of Christodoulou. Then, in Sections~\ref{letscompareyayaya} and~\ref{sectionpast} we will compare Christodoulou's solutions with the solutions constructed in our Theorem~\ref{themainextresult}. \emph{In particular, we will see that the solutions of Theorem~\ref{themainextresult} correspond to the exterior region of a naked singularity.} Finally, in Section~\ref{connectFeffGrah} we will discuss the relations between (the proof of our) Theorem~\ref{themainextresult} and the formal power series for self-similar solutions derived by Fefferman--Graham~\cite{FG1,FG2}. \emph{In particular, we will see that underlining the proof of Theorem~\ref{themainextresult} is a fundamentally new type of self-similarity of the Einstein vacuum equations.}

\subsection{Christodoulou's Naked Singularities for the Spherically Symmetric Einstein-Scalar-Field System}\label{christisnakedsphsymmwhat}
In the work~\cite{ChristNaked}, Christodoulou studied the spherically symmetric Einstein-scalar-field system and constructed examples of naked singularities. Thus, the original formulation of weak cosmic censorship conjectures fails for this system! (Of course, due to the rigidities imposed by Birkhoff's theorem, we cannot hope to set the scalar field to be $0$, and thus Christodoulou's constructions do not yield naked singularity solutions to the Einstein vacuum equations.) Despite the existence of these naked singularities, in a later work~\cite{Christodoulou4}, Christodoulou showed that \emph{generically} naked singularities do not occur for the spherically symmetric Einstein-scalar-field system, and thus weak cosmic censorship holds if we relax the statement to the requirement that naked singularities do not occur for \emph{generic} initial data. (It is in fact this relaxed version of weak cosmic censorship which is the currently accepted formulation.)

 In this section we will review in detail the solutions constructed by Christodoulou.

\subsubsection{The Spherically Symmetric Einstein-Scalar-Field System and $k$-Self-Similarity}
A solution to the Einstein-scalar-field system consists of a $3+1$ dimensional Lorentzian manifold $(\mathcal{M},h_{\mu\nu})$ and a real-valued scalar field $\phi : \mathcal{M} \to \mathbb{R}$ which satisfy
\begin{equation}\label{einscaleqn}
{\rm Ric}_{\mu\nu}\left(h\right) - \frac{1}{2}h_{\mu\nu}{\rm R}\left(h\right) = \partial_{\mu}\phi\partial_{\nu}\phi - \frac{1}{2}h_{\mu\nu}h^{\gamma\delta}\partial_{\gamma}\phi\partial_{\delta}\phi,\qquad h^{\mu\nu}D_{\mu}D_{\nu}\phi = 0,
\end{equation}
where ${\rm Ric}$ and ${\rm R}$ denote the Ricci tensor and scalar curvature respectively, and $D$ denotes the covariant derivative associated to $h$. 

Under the assumption of spherical symmetry, we may define the quotient manifold $\left(\mathscr{Q},g_{\mu\nu}\right) \doteq \left(\mathcal{M},h_{\mu\nu}\right)/SO(3)$, which will be a $1+1$ dimensional Lorentzian manifold with boundary, where the boundary consists of the fixed points of the $SO(3)$ action. We then have the area radius function $r : \mathscr{Q} \to [0,\infty)$ which gives the area of the corresponding $SO(3)$ orbit. Finally, the scalar field descends to a function $\phi : \mathscr{Q} \to \mathbb{R}$. The equations~\eqref{einscaleqn} then reduce to
\begin{equation}\label{einscalesdf2}
r\nabla_{\mu}\nabla_{\nu}r = \frac{1}{2}g_{\mu\nu}\left(1-\partial^{\gamma}r\partial_{\gamma}r\right) - r^2\left(\partial_{\mu}\phi\partial_{\nu}\phi - \frac{1}{2}g_{\mu\nu}g^{\gamma\delta}\partial_{\gamma}\phi\partial_{\delta}\phi\right),
\end{equation}
\begin{equation}\label{dijowmodkwo3}
\nabla^{\mu}\left(r^2\partial_{\mu}\phi\right) = 0,\qquad K\left(g\right) = r^{-2}\left(1-\partial^{\mu}r\partial_{\mu}r\right) + \partial^{\mu}\phi\partial_{\mu}\phi,
\end{equation}
where $K$ denotes the Gaussian curvature and $\nabla_{\mu}$ denotes the covariant derivative associated to $g_{\mu\nu}$. We refer to the system \eqref{einscalesdf2}-\eqref{dijowmodkwo3} as the spherically symmetric Einstein-scalar-field system.

There are two important symmetries of the spherically symmetric Einstein-scalar-field system:
\begin{enumerate}
	\item Given a triple $\left(g_{\mu\nu},r,\phi\right)$ solving the system~\eqref{einscalesdf2}-\eqref{dijowmodkwo3} and a constant $a > 0$, the triple $\left(a^2g_{\mu\nu},ar,\phi\right)$ will also solve \eqref{einscalesdf2}-\eqref{dijowmodkwo3}.
	\item Given a triple $\left(g_{\mu\nu},r,\phi\right)$ solving the system~\eqref{einscalesdf2}-\eqref{dijowmodkwo3} and a real constant $b$, the triple $\left(g_{\mu\nu},r,\phi+b\right)$ will also solve \eqref{einscalesdf2}-\eqref{dijowmodkwo3}.
\end{enumerate}

This leads to the definition of $k$-self-similarity:
\begin{definition}Let $k \in \mathbb{R}$. We say that a triple $\left(g_{\mu\nu},r,\phi\right)$ solving the spherically symmetric Einstein-scalar-field system is $k$-self-similar if there exists a $1$-parameter group of diffeomorphisms $\{f_a\}_{a>0}$ of $\mathscr{Q}$ such that 
\[f_a^*g_{\mu\nu} = a^2g_{\mu\nu},\qquad f_a^*r = ar,\qquad f_a^*\phi = \phi -k\log a.\] 
If a triple $\left(g_{\mu\nu},r,\phi\right)$ is $k$-self-similar with $k = 0$ then we say that the solution is scale-invariant.
\end{definition}
\subsubsection{Solutions of Bounded Variation}\label{bvbvbvbvbvbvbvbv}
The work~\cite{ChristBV} established well-posedness for the spherically symmetric Einstein-scalar-field system in the class of solutions of bounded variation. We will not give here a full review of bounded variation solutions; however it will be useful to recall the following facts about the behavior of the Hawking mass $m \doteq \frac{r}{2}\left(1-\left|\nabla r\right|^2\right)$ and the scalar field $\phi$ for any solution of bounded variation:
\begin{enumerate}
	\item For every outgoing null hypersurface $\mathcal{C}_{\rm out} \subset \mathscr{Q}$ with affine parameter $v$, the scalar field $\phi$ is required to be absolutely continuous along $\mathcal{C}_{\rm out}$, and $r\frac{\partial \phi}{\partial v}$ is required to be a function of bounded variation along $\mathcal{C}_{\rm out}$. Similarly, for every incoming null hypersurface $\mathcal{C}_{\rm in} \subset \mathscr{Q}$ with affine parameter $u$, the scalar field $\phi$ is required to be absolutely continuous along $\mathcal{C}_{\rm in}$, and $r\frac{\partial \phi}{\partial u}$ is required to be a function of bounded variation along $\mathcal{C}_{\rm in}$. Finally, we must also have that for each outgoing null hypersurface $\mathcal{C}_{\rm out}$ with a compact closure and  incoming null hypersurface $\mathcal{C}_{\rm in}$ with a compact closure that
	\begin{equation}\label{finiteflux}
	\int_{\mathcal{C}_{\rm out}}\left|\frac{\partial \phi}{\partial v}\right| dv < \infty,\qquad \int_{\mathcal{C}_{\rm in}}\left|\frac{\partial \phi}{\partial u}\right| du < \infty.
	\end{equation}
	Note that these integrals are invariant under a reparametrization of the affine parameters $u$ and $v$!
	\item Let $\Gamma$ denote the projection to $\mathscr{Q}$ of the fixed points of the $SO(3)$ action on $\mathcal{M}$. Then, for every null hypersurface $\mathcal{C}$ intersecting $\Gamma$, we must have that 
	\begin{equation}\label{finitemasssss}
	\lim_{r\to 0}\left(\frac{m}{r}\right)|_{\mathcal{C}} = 0.
	\end{equation}
\end{enumerate}
\subsubsection{Global Structure of $k$-Self-Similar Solutions}\label{nakedsingchristchrist}
In Section 2 of~\cite{ChristBV} Christodoulou analyzed a natural class of scale-invariant solutions and showed that it is possible to write them all down explicitly (cf.~\cite{robertsselfsimilar}). However, none of the solutions thus obtained are relevant for the construction of naked singularities.

More important for us will be the case when $k \neq 0$. These solutions, however, are significantly more complicated and the bulk of the work~\cite{ChristNaked} is concerned with a thorough analysis of these. For the current paper, the most relevant part of Christodoulou's analysis is the following:
\begin{theorem}\cite{ChristNaked}\label{k13yay} Let $0 < k^2 < 1/3$. Then there exist $k$-self-similar solutions such that the following properties hold:
\begin{enumerate}
	\item The $1+1$ Lorentzian manifold $\left(\mathscr{Q},g_{\mu\nu}\right)$ has a global expression in ``self-similar Bondi coordinates''
	\[g = -e^{2\nu}du^2 - 2e^{\nu+\lambda}dudr,\qquad \mathscr{Q} \doteq \{\left(u,r\right) \in (0,-\infty) \times [0,\infty)\},\] 
	where $\nu\left(u,r\right) = \tilde\nu\left(-\frac{r}{u}\right)$ and $\lambda\left(u,r\right) = \tilde\lambda\left(-\frac{r}{u}\right)$ for suitable functions $\tilde\nu$ and $\tilde\lambda$. 
	\item The Penrose diagram of $\left(\mathscr{Q},g_{\mu\nu}\right)$ is given by 
	\begin{center}
\begin{tikzpicture}[scale = 1]
\fill[lightgray] (0,0) -- (0,-4) -- (3.5,-.5) -- (1.5,1.5) -- (0,0);
\draw (0,0) -- (2,-2) node[sloped,above,midway]{\footnotesize $\mathcal{N}$};
\draw[dashed] (0,-4) -- (3.5,-.5) node[sloped,below,midway]{\footnotesize $\mathcal{I}^-$};
\draw (0,-4) -- (0,0) node[left,midway]{\footnotesize $\Gamma$};
\draw[dashed] (3.5,-.5) -- (1.5,1.5) node[sloped,above,midway]{\footnotesize $\mathcal{I}^+$};
\draw[dashed] (1.5,1.5) -- (0,0) node[sloped,above,midway]{\footnotesize $\{u = 0\}$};
\path [draw=black,fill=white] (0,0) circle (1/16); 
\draw (-.1,0) node[above]{\footnotesize $\mathcal{O}$};

\end{tikzpicture}
\end{center}
Here $\Gamma$ denotes the boundary of $\mathscr{Q}$ where $r = 0$ and which corresponds to the projection of the fixed points of the $SO(3)$ action on $\mathcal{M}$. The point $\mathcal{O}$ corresponds to $(u,r) = (0,0)$ and is a terminal singularity (see point~\ref{terminal} below). Lastly, $\mathcal{N}$ denotes the past light cone of the singular point $\mathcal{O}$, and $\mathcal{I}^{\pm}$ (future/past null infinity) corresponds to the ideal endpoints of complete future/past oriented null geodesics.
\item\label{terminal}  The hypersurface $\mathcal{N}$ is future null geodesically incomplete, yet the solution cannot be extended to $\mathcal{O}$ and remain a solution of bounded variation. This is a consequence of the requirements~\eqref{finiteflux} and~\eqref{finitemasssss} and either of the following two facts:
\begin{equation}\label{thismakesitnotinBV}
\frac{2m}{r}|_{\mathcal{N}} = \frac{k^2}{1+k^2} \neq 0,
\end{equation}
\begin{equation}\label{thismakesitnotinBV2}
n(\phi)|_{\mathcal{N}} = \frac{\left(1+k\right)^{1/2}k}{r},
\end{equation}
where $n|_{\mathcal{N}} = \left(2e^{-\nu}\frac{\partial}{\partial u} - e^{-\lambda}\frac{\partial}{\partial r}\right)|_{\mathcal{N}}$  denotes an ingoing null vector normal to $\mathcal{N}$, and we note that since it may be shown that $\lambda$ and $\nu$ are constant along $\mathcal{N}$, there exists $c_0, c_1\in (0,\infty)$ so that $\left(u(s),r(s)\right)_{\{s>0\}} = \left(-c_0s,c_1s\right)$ is an integral curve of $n$ along $\mathcal{N}$ with $\mathcal{O}$ corresponding to $s = 0$.
\item Along any null geodesic terminating on $\mathcal{I}^-$ or $\mathcal{I}^+$ we have that $r\to \infty$. 
\item The triple $\left(g_{\mu\nu},\phi,r\right)$ forms a solution of bounded variation (where we emphasize that $\mathcal{O}$ is \underline{not} included in the spacetime). We have that the radius function $r$ is in $C^2\left(\mathscr{Q}\right)$, the Gauss curvature  $K\left(g\right)$ is in $C^0\left(\mathscr{Q}\right)$, and the scalar field $\phi$ is in $C^2\left(\mathscr{Q}\setminus \mathcal{N}\right)$ and in $C^{1,\frac{k^2}{1-k^2}}\left(\mathscr{Q}\right)$. 
\end{enumerate}
\end{theorem}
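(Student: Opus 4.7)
The plan is to exploit the $k$-self-similarity to reduce the spherically symmetric Einstein-scalar-field system to a two-dimensional autonomous system of ODEs, and then construct the solution via a careful phase-portrait analysis. First, in self-similar Bondi coordinates $(u,r)$, introduce the similarity variable $x = -r/u \in (0,\infty)$ and write $\nu(u,r) = \tilde\nu(x)$, $\lambda(u,r) = \tilde\lambda(x)$, together with $\phi(u,r) = \tilde\phi(x) + k\log(-u)$ so that $f_a^*\phi = \phi - k\log a$ is automatic. Substituting into \eqref{einscalesdf2}--\eqref{dijowmodkwo3} together with the wave equation for $\phi$, every equation collapses to an ODE in $x$ for the triple $(\tilde\nu,\tilde\lambda,\tilde\phi)$. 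After passing to scale-invariant unknowns (for example $\mu = 2m/r$ and the two scale-invariant null derivatives of $\phi$, one of which is essentially $rn(\phi)$), a pair of constraints can be solved and the system reduces to an autonomous $2$-dimensional ODE on $x \in (0,\infty)$.

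Next I would carry out the phase-portrait analysis. The distinguished loci of the desired Penrose diagram appear as critical points of the reduced system: $x = 0$ corresponds to the regular axis $\Gamma$ (where the solution must satisfy the smoothness conditions imposed by $r = 0$, in particular $m/r \to 0$), $x = \infty$ corresponds to past null infinity $\mathcal{I}^-$, and an interior critical value $x = x_*$ corresponds to the past light cone $\mathcal{N}$ of $\mathcal{O}$, at which the characteristic coefficient $1 - 2m/r$ vanishes. The central task is to demonstrate that for $0 < k^2 < 1/3$ an orbit exists which emanates regularly from the $\Gamma$-critical point, traverses the singular locus at $x_*$ with the correct matching (producing the algebraic identity $2m/r|_{\mathcal{N}} = k^2/(1+k^2)$, which is strictly less than $1$ precisely when $k^2 < 1$), and extends all the way to $x = \infty$ while keeping $r \to \infty$. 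By time-reversal (or by continuation through $\{u = 0\}$ using that the orbit does not encounter further singular points for $0 < x < \infty$) one then obtains $\mathcal{I}^+$ and completes the Penrose diagram.

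Once the orbit is constructed the remaining assertions follow by pullback. The identities \eqref{thismakesitnotinBV} and \eqref{thismakesitnotinBV2} are simply the values of the scale-invariant phase variables $\mu$ and $rn(\phi)$ at the critical point $x_*$, which in turn certify that the solution cannot be extended to $\mathcal{O}$ within the BV class via \eqref{finiteflux}--\eqref{finitemasssss}. The regularity $r \in C^2(\mathscr{Q})$, $K(g) \in C^0(\mathscr{Q})$, $\phi \in C^2(\mathscr{Q}\setminus \mathcal{N})$, and $\phi \in C^{1,k^2/(1-k^2)}(\mathscr{Q})$ is read off by linearizing the reduced ODE at $x_*$ and computing the eigenvalues; the relevant Hölder exponent emerges as the ratio of the subleading to leading eigenvalue, which equals $k^2/(1-k^2)$ exactly when $k^2 < 1/3$ (so that the exponent is less than $1$).

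The principal obstacle is the phase-portrait analysis itself: one must (i) construct a local smooth stable manifold at the $\Gamma$-critical point of the correct dimension to encode a one-parameter family of candidate orbits, (ii) track the orbit globally and show it meets the interior critical point $x_*$ transversally in the variables in which the ODE extends, (iii) continue across $x_*$ with the correct branch, and (iv) establish that the continued orbit reaches $x = \infty$ along the trajectory corresponding to asymptotically flat null infinity. The condition $k^2 < 1/3$ is sharp for the local analysis at $x_*$, and verifying that it is simultaneously the correct threshold for global existence of the connecting orbit is where the most delicate estimates lie.
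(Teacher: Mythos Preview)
This theorem is not proved in the paper at all: it is stated with the citation \cite{ChristNaked} as a summary of Christodoulou's result, and the paper simply reviews it as background in Section~\ref{christisnakedsphsymmwhat} without supplying any argument. So there is no ``paper's own proof'' to compare against.

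That said, your outline is faithful to Christodoulou's actual method, and the paper itself alludes to exactly this mechanism in the introduction: ``a key role in the construction is played by a reduction of the self-similar spherically symmetric Einstein-scalar-field system to a two dimensional autonomous system.'' Your description of the phase-portrait analysis---identifying the regular center, the interior singular point corresponding to $\mathcal{N}$, and the asymptotic end, then constructing a connecting orbit and reading off the H\"older exponent from the linearization---is the correct skeleton. One small caution: your claim that the singular locus $x_*$ is characterized by ``$1 - 2m/r$ vanishing'' is not quite right (indeed $2m/r = k^2/(1+k^2) < 1$ there); the degeneracy at $x_*$ is rather that the ODE's characteristic speed matches the similarity direction, not that a trapped surface forms. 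But as a high-level plan your sketch is sound.
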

We note that solutions of this type were also studied numerically in the work~\cite{bradyssc}.

It is worth emphasizing that because the gradient of the scalar field is H\"{o}lder continuous instead of being merely a function of bounded variation, the triple $\left(g_{\mu\nu},r,\phi\right)$ may be considered as being \underline{more regular} than a solution of bounded variation (we again remind the reader that the point $\mathcal{O}$ is \underline{not} included in the spacetime). (In fact the solution is also more regular than the AC-class of absolutely continuous data, cf.~\cite{Chrmil,Christodoulou4}.)

\subsubsection{Asymptotically Flat Truncations}\label{isitSig}
The $k$-self-similar solutions constructed in Theorem~\ref{k13yay} are not asymptotically flat; in particular, all of the solutions constructed by Theorem~\ref{k13yay} have
\[\lim_{r\to\infty}m\left(u,r\right) = \infty.\]
However, the solutions may be ``truncated'' along an outgoing null hypersurface to construct an asymptotically flat solution:
\begin{theorem}\cite{ChristNaked}\label{k13yay2} Let $0 < k^2 < 1/3$. There exists a $1+1$ dimensional Lorentzian manifold $\left(\mathscr{Q},g_{\mu\nu}\right)$ and functions $r,\phi : \mathscr{Q} \to \mathbb{R}$ so that $\left(g_{\mu\nu},r,\phi\right)$ solves the spherically symmetric Einstein-scalar-field system and $\left(\mathscr{Q},g_{\mu\nu}\right)$ has the following Penrose diagram:
\begin{center}
\begin{tikzpicture}[scale = 1.1]
\fill[lightgray] (0,0) -- (0,-2) -- (2.5,.5) -- (1.5,1.5) -- (0,0);
\draw (0,0) -- (1,-1) node[sloped,above,midway]{\footnotesize $\mathcal{N}$};
\draw (0,-2) -- (2.5,.5) node[sloped,below,midway]{\footnotesize $\mathcal{C}_{\rm out}$};
\draw (0,-2) -- (0,0) node[left,midway]{\footnotesize $\Gamma$};
\draw[dashed] (2.5,.5) -- (1.5,1.5) node[sloped,above,midway]{\footnotesize $\mathcal{I}^+$};
\draw[dashed] (1.5,1.5) -- (0,0) node[sloped,above,midway]{\footnotesize $\{u = 0\}$};
\path [draw=black,fill=white] (0,0) circle (1/16); 
\draw (-.1,0) node[above]{\footnotesize $\mathcal{O}$};
\draw (2,0)--(1,1) node[sloped,above,midway]{\footnotesize $\mathcal{W}$};
\end{tikzpicture}
\end{center}
In the past of $\mathcal{W}$ the spacetime is identical to that produced by Theorem~\ref{k13yay}. As with the solutions of Theorem~\ref{k13yay}, we have that $r\to\infty$ for any null geodesic terminating on $\mathcal{I}^+$. However, in contrast to the solutions produced by Theorem~\ref{k13yay}, the scalar field $\phi|_{\mathcal{C}_{\rm out}}$ vanishes for sufficiently large $r$, and the solution is asymptotically flat. Lastly, future null infinity is incomplete in the sense of Definition~\ref{nakeddef}.
\end{theorem}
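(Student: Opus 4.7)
The plan is to construct the asymptotically flat truncated solution via a characteristic initial value problem that deforms the outgoing data of the $k$-self-similar solution of Theorem~\ref{k13yay} so that the scalar field becomes compactly supported in $r$. Fix an outgoing null hypersurface $\mathcal{C}_{\rm out}$ issuing from $\Gamma$ inside the self-similar spacetime, parametrized by the area radius $r \in [0,\infty)$, and pick $0 < r_* < r_*' < \infty$. On the inner portion $r \leq r_*$ prescribe the data induced from the self-similar solution; on the outer portion $r \geq r_*'$ set $\phi \equiv 0$; on the transition window $r \in [r_*, r_*']$ interpolate $\phi$ smoothly. The remaining metric unknowns along $\mathcal{C}_{\rm out}$ for $r > r_*$ are then determined by the Raychaudhuri constraint with initial values inherited at $r = r_*$. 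Combined with the regularity condition at $\Gamma$ matching the self-similar solution, these prescriptions yield characteristic data to which the bounded variation well-posedness theory of~\cite{ChristBV} applies.

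I would then evolve this data to obtain a maximal BV development. Finite propagation speed identifies the solution on the domain of dependence $D^+(\Gamma \cup (\mathcal{C}_{\rm out} \cap \{r \leq r_*\}))$ with the self-similar solution of Theorem~\ref{k13yay}, whose future boundary is exactly the incoming null geodesic $\mathcal{W}$ issuing from $r = r_*$ on $\mathcal{C}_{\rm out}$. In particular the singular point $\mathcal{O}$, the past light cone $\mathcal{N}$, and the relations~\eqref{thismakesitnotinBV}--\eqref{thismakesitnotinBV2} persist in the truncated spacetime. In the region to the future of $\mathcal{W}$, $\phi$ vanishes on $\mathcal{C}_{\rm out} \cap \{r \geq r_*'\}$, so Birkhoff's theorem makes the portion of spacetime in $D^+(\mathcal{C}_{\rm out} \cap \{r \geq r_*'\})$ a piece of Schwarzschild with mass $m_\infty \doteq \lim_{r \to \infty} m|_{\mathcal{C}_{\rm out}}$. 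Finiteness of $m_\infty$ follows from the finite $L^2$-flux of $\partial\phi$ supported in $[0, r_*']$; by choosing $r_*, r_*'$ appropriately (using the self-similar bound on $2m/r$ available for $k^2 < 1/3$) the exterior remains un-trapped, and the solution extends to an asymptotically flat future null infinity $\mathcal{I}^+$.

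Finally, for the incompleteness of $\mathcal{I}^+$ in the sense of Definition~\ref{nakeddef}, I would consider a sequence $p_i \in \mathcal{C}_{\rm out}$ with affine parameter $v_i \to \infty$ (equivalently, $r_i \to \infty$ near spatial infinity). The incoming null geodesic through $p_i$ with tangent $\underline{L}'$ (normalized by $g(\underline{L}', L') = -2$) propagates across the Schwarzschild asymptotic region, then through $\mathcal{W}$ into the self-similar interior, and terminates at the ideal null ray $\{u = 0\}$ emanating from the singular point $\mathcal{O}$. Because the retarded time $u$ ranges only over the finite interval from $\mathcal{C}_{\rm out}$ up to $u = 0$, and the Schwarzschild conformal factor converts coordinate interval to affine length in a bounded way, the affine length of each $\gamma_i$ stays uniformly below some constant $A > 0$. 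The main obstacle is precisely establishing this uniform bound: one must track the $\underline{L}'$-normalization across the Schwarzschild exterior (where explicit expressions suffice), match it along $\mathcal{W}$, and integrate the incoming geodesic equation in the self-similar interior using the profiles $\tilde\nu, \tilde\lambda$ from Theorem~\ref{k13yay} together with the homothety $(u, r) \mapsto (au, ar)$, thereby confirming that termination at $\{u = 0\}$ occurs at finite affine distance uniformly as $v_i \to \infty$.
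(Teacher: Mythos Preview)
The paper does not prove this theorem; it is stated with attribution to~\cite{ChristNaked} as review material in Section~\ref{isitSig}, with no proof given. So there is no paper proof to compare against, only Christodoulou's original construction.

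Your sketch captures the correct architecture of that construction: truncate $\phi$ along $\mathcal{C}_{\rm out}$, use domain of dependence to keep the self-similar interior (and hence $\mathcal{O}$, $\mathcal{N}$, and~\eqref{thismakesitnotinBV}--\eqref{thismakesitnotinBV2}) intact to the past of $\mathcal{W}$, and invoke Birkhoff on the domain of dependence of $\{r \ge r_*'\}$ to get Schwarzschild asymptotics and a well-defined $\mathcal{I}^+$. The incompleteness argument you outline is also the right shape.

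There is, however, a genuine gap. The union of the self-similar domain of dependence and the Schwarzschild domain of dependence does \emph{not} cover the full diagram: the region to the future of $\mathcal{W}$ and to the past of the incoming cone from $r=r_*'$ (where the transition data on $\mathcal{C}_{\rm out}$ interacts with the nontrivial incoming self-similar radiation crossing $\mathcal{W}$) is covered by neither. BV \emph{local} well-posedness from~\cite{ChristBV} gives you a maximal development, but it does not by itself tell you that this development reaches all the way to $\{u=0\}$ with the claimed Penrose diagram; one must rule out early termination (e.g.\ trapped surface formation) in that interaction region. Your sentence ``by choosing $r_*, r_*'$ appropriately \ldots\ the exterior remains un-trapped'' acknowledges the issue but does not supply the mechanism: Christodoulou's argument uses the monotonicity and extension criteria specific to the spherically symmetric Einstein--scalar-field system to propagate the no-trapping condition through this region and reach $\{u=0\}$. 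Without that step, the global structure of the diagram, and in particular the existence of $\mathcal{I}^+$ terminating at $\{u=0\}$, is not established.
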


We emphasize that despite the limited regularity of the solutions of Theorem~\ref{k13yay2}, these solutions may be considered naked singularities for the following three reasons:
\begin{enumerate}
\item The initial data along $\mathcal{C}_{\rm out}$ is more regular than that of solutions of bounded variation.
\item The work~\cite{ChristBV} established well-posedness for the spherically symmetric Einstein-scalar-field system in the class of solutions of bounded variation.
\item The solutions of Theorem~\ref{k13yay2} cannot be extended to the point $\mathcal{O}$ as a solution of bounded variation.
\end{enumerate}

\subsection{Comparison of the solutions of Theorem~\ref{themainextresult} with the Naked Singularities of Christodoulou}\label{letscompareyayaya}
In this section we will compare the spacetimes constructed by this paper's Theorem~\ref{themainextresult} with those constructed by Christodoulou's Theorem~\ref{k13yay2}, and we will see that the solutions of Theorem~\ref{themainextresult} correspond to the exterior region of a naked singularity. 

There is, of course, the obvious difference that we do not show in this paper that the spacetimes of Theorem~\ref{themainextresult} contain a past complete extension to the past of $\{\hat{v} = 0\}$. We will discuss the problem of constructing such an extension in Section~\ref{sectionpast}. Thus we now focus on the region in Christodoulou's solutions to the future of the hypersurface $\mathcal{N}$. It will be useful to keep in mind the schematic rule that when comparing the spherically symmetric Einstein-scalar-field system with the Einstein vacuum equations one should identify $\partial_u\phi$ with the ingoing shear $\hat{\underline{\chi}}_{AB}$ and $\partial_v\phi$ with the outgoing shear $\hat{\chi}_{AB}$. 
\begin{enumerate}
	\item (Regularity of Initial Outgoing Data) Under the correspondence of $\partial_v\phi$ and $\hat{\chi}_{AB}$ we find that the data along $\mathcal{C}_{\rm out}$ is analogous to the data along $\{u = -\underline{v}^2\}$ in that both $\partial_v\phi$ and $\hat{\chi}_{AB}$ are H\"{o}lder continuous.  Of course, for the Einstein vacuum equations, we cannot appeal to Christodoulou's well-posedness for bounded variation solutions. However, the works~\cite{impulsivefirst,impulsive} have established a \emph{local} well posedness result for data where $\hat{\chi}_{AB}$ (and a suitable number of angular derivatives), though required to vanish near the tip of the cone, are otherwise allowed to only lie in $L^2$ along an outgoing null hypersurface. The theory developed by~\cite{impulsivefirst,impulsive} does not concern itself with the behavior near the ``axis''; however  we conjecture that a well-posedness result including the axis may be established for initial data where $\hat{\chi}_{AB}$ and $\hat{\underline{\chi}}_{AB}$ and a suitable number of angular derivatives thereof are H\"{o}lder continuous.
	\item\label{9090099090128jnomi2} (Singular Boundary) The asymptotic behavior of the Hawking mass and $\hat{\underline{\chi}}_{AB}$ along $\{\hat{v} = 0\}$ given by~\eqref{masskfow} and~\eqref{shearkofw} are analogous to the behavior of the Hawking mass and $n(\phi)$ along $\mathcal{N}$ given by~\eqref{thismakesitnotinBV} and~\eqref{thismakesitnotinBV2}.  Outside of spherical symmetry the Hawking mass is not invariant under a change of foliation of the cone $\{\hat{v} = 0\}$. However, the blow-up of the shear $\hat{\underline{\chi}}_{AB}$ can be re-phrased in a more invariant fashion as follows. Let $\gamma(s)$ be the parameterization of a future oriented null geodesic $\gamma$ along $\{\hat{v} = 0\}$ induced by the  normal vector field $\partial_u + b^A\slashed{\nabla}_A$. Then one may show that there exists such $\gamma$ with $\int_{\gamma}\left|\hat{\underline{\chi}}\right|_{\slashed{g}}\, ds  = \infty$. This statement is reparametrization invariant and corresponds to a logarithmic singularity for $\slashed{g}_{AB}$ along $\gamma$ (which in turn formally corresponds to the logarithmic blow-up of the scalar-field in Christodoulou's spacetimes). Furthermore, it is also possible to show in this case the existence of a Jacobi field $J$ which blows-up along $\gamma$. (See also point~\ref{point2} in Section~\ref{sectionpast} below.)
	\item (Asymptotic Flatness and Incompleteness of Future Null Infinity) Both solutions possess an asymptotically flat outgoing null hypersurface and do not posses a complete future null infinity.
	\item\label{dwasevbndw} (Underlying Self-Similar Solution) As explained above, Christodoulou's solutions are obtained by ``truncation'' of an underlying self-similar solution. In contrast, there is no self-similar solution produced at an intermediate stage during the proof of Theorem~\ref{themainextresult}. However, via an amalgamation of the techniques and estimates of this paper along with the methods developed in our previous work~\cite{scaleinvariant}, it is possible to construct an underlying solution which is self-similar in the sense of possessing a vector field $S$ with $\mathcal{L}_Sg_{\mu\nu} = 2g_{\mu\nu}$, and so that, in analogy with the relation of Theorem~\ref{k13yay} and Theorem~\ref{k13yay2}, a suitable truncation yields spacetimes as in Theorem~\ref{themainextresult}. We will not pursue this line of approach in this paper since going though a self-similar solution does not lead to any essential simplification of the proof. Nevertheless, the solutions of Theorem~\ref{themainextresult} will be ``self-similar as $\frac{v}{-u} \to 0$'' in the sense that
	\[\mathcal{L}_Sg_{\mu\nu} - 2g_{\mu\nu} \to 0\text{ as }\frac{v}{-u} \to 0,\qquad S \doteq u\frac{\partial}{\partial u} + \left(1-2\kappa\right)\hat{v}\frac{\partial}{\partial \hat{v}},\]
where $\kappa$ is a positive constant which satisfies $\kappa \sim \epsilon^2$. In terms of the normal vector $e_3$, we will have, in particular, that $\left(u\mathcal{L}_{e_3}- u^{-1}\tilde{b}^A\mathcal{L}_{\partial_{\theta^A}}\right)g_{\mu\nu}|_{\hat{v} = 0} = 2g_{\mu\nu}|_{\hat{v} = 0}$. The logarithmic twisting induced by the flow of $u^{-1}\tilde{b}^A$ can be considered an analogue to the $k$-self-similar actions on the scalar field $\phi \mapsto \phi + k\log(u)$.
\end{enumerate}

\subsection{Constructing the Interior Solution}\label{sectionpast}
In this paper we will not establish any results concerning extensions of the spacetime to the past of the hypersurface $\{\hat{v} = 0\}$. However, in a current work in progress we construct a past extension of the spacetime from Theorem~\ref{themainextresult} where the new spacetime $\left(\tilde{\mathcal{M}},\tilde{g}_{\mu\nu}\right)$ takes the double-null form~\eqref{thisisgdouble} for $\left(u,\hat{v},\theta^A\right) \in [-\underline{v}^2,0) \times [u,\infty) \times \mathbb{S}^2$ and 
\begin{enumerate}
	\item The spacetimes $\left(\tilde{\mathcal{M}},\tilde{g}_{\mu\nu}\right)$ and $\left(\mathcal{M},g_{\mu\nu}\right)$ coincide in  the region $\left(u,\hat{v},\theta^A\right) \in [-\underline{v}^2,0) \times [0,\infty) \times \mathbb{S}^2$.
	\item $\left(\tilde{\mathcal{M}},\tilde{g}_{\mu\nu}\right)$ has the Penrose diagram:
	\begin{center}
\begin{tikzpicture}[scale = 1]
\fill[lightgray] (0,0)--(0,-4)--(4,0)  -- (2,2) -- (0,0);
\draw (0,0) -- (0,-4) node[sloped,below,midway]{\footnotesize $\{\hat{v} = u\}$};
\draw (0,0) -- (2,-2) node[sloped,below,midway]{\footnotesize $\{\hat{v} = 0\}$};
\draw (0,-4) -- (4,0) node[sloped,below,midway]{\footnotesize $\{u=-\underline{v}^2\}$};
\draw[dashed] (4,0) -- (2,2) node[sloped,above,midway]{\footnotesize $\mathcal{I}^+$};
\draw[dashed] (2,2) -- (0,0) node[sloped,above,midway]{\footnotesize $\{u = 0\}$};
\path [draw=black,fill=white] (0,0) circle (1/16); 
\draw (-.1,0) node[above]{\footnotesize $\mathcal{O}$};
\end{tikzpicture}
\end{center}
\item\label{pk23} We have that $\tilde{g}_{\mu\nu} \in C^N\left(\mathcal{M}\setminus\left(\{\hat{v}=0\}\cup \{u=\hat{v}\}\right)\right)$, $\tilde{g}_{\mu\nu} \in C^{1,c\epsilon^2}\left(\mathcal{M}\right)$ (where $c$ is the same constant from point~\ref{lp3} of Theorem~\ref{themainextresult}), and $(\tilde{g}_{\mu\nu},\partial_{\hat{v}}\tilde{g}_{\mu\nu}) \in C^N\left(\{\hat{v}=0\}\right)$. In a neighborhood of any point on the ``axis'' $\{ \hat{v} = u\}$ there exists a new coordinate system so that $\tilde{g}_{\mu\nu}$ is $C^N$. (We emphasize that clearly the point $\mathcal{O}$ does not lie on $\{u=\hat{v}\}$.)
\item\label{point2} The timelike curve $\pi(s)$ defined by $s \mapsto \left(u,\hat{v}\right) = (s,s)$ for $s \in [-\underline{v}^2,0)$ corresponds to a smooth timelike curve in $\left(\tilde{M},\tilde{g}_{\mu\nu}\right)$ (see point~\ref{pk23} above), has a finite length $2\underline{v}^2$, is future inextendible, and there does not exist any continuous Lorentzian extension of the spacetime $\left(\tilde{\mathcal{M}},\tilde{g}_{\mu\nu}\right)$ where $\pi$ is extendible to a curve with length greater than $2\underline{v}^2$. 
\end{enumerate}
We remark that in order to glue in these interior solutions with our exterior solutions, is it important that in Theorem~\ref{themainextresult} we actually have considerable flexibility in the choices of the lapse $\tilde\Omega$ and metric $\tilde{\slashed{g}}_{AB}$ (see~\eqref{choices}).

\subsection{Connections to Fefferman--Graham Theory}\label{connectFeffGrah}
As we will review in more detail in Section~\ref{asss}, in the works~\cite{FG1,FG2} Fefferman and Graham classified formal power series expansions corresponding to a certain type of self-similar solution, and in the work~\cite{scaleinvariant} we showed that all of these expansions correspond to true solutions of the Einstein vacuum equations. The solutions considered by Fefferman and Graham all share the property that there exists a null hypersurface $\mathcal{H}$ such that the conformal Killing field $K^{\mu}$ is normal to $\mathcal{H}$. Among other things, this implies that the cone $\mathcal{H}$ is shear free. In contrast, the underlying self-similar solution for the spacetimes of Theorem~\ref{themainextresult} (see point~\ref{dwasevbndw} in Section~\ref{letscompareyayaya}) posses a null hypersurface $\mathcal{H}$ where the conformal Killing field $K^{\mu}$ is tangent but \emph{not} normal, and, in particular, the cone is not shear free. Thus, this provides a genuinely new local model for self-similar solutions.  One may draw an analogy for the relation between these new solutions and the solutions of Fefferman--Graham with the relation of the rotating Kerr black hole solutions and the Schwarzschild solutions. Finally, we note that one expects analogues of this construction to work also in higher dimensions.

Beyond the generation of new local models for self-similar solutions, the (proof of) Theorem~\ref{themainextresult} is also relevant for the \emph{global} study of Fefferman and Graham's self-similar solutions. We briefly explain: Fefferman and Graham's solutions in $3+1$ dimensions are parametrized by two choices of data, $\slashed{g}_{AB}|_{(u,v) = (-1,0)}$ and ${\rm tf}\partial_v\slashed{g}_{AB}|_{(u,v) = (-1,0)}$, where ${\rm tf}$ denotes the trace-free part of a symmetric $(0,2)$-$\mathbb{S}^2_{u,v}$ tensor. (See Section~\ref{asss}.) In~\cite{scaleinvariant} we showed that given such a pair, there is some $\epsilon > 0$ so that a corresponding self-similar solution exists in the region $(u,v) \in \{0 \leq \frac{v}{-u} < \epsilon\} \cap \{u \in (-\infty,0)\}$. It is thus natural to ask about the global behavior of this solution. In particular, if the data $\slashed{g}_{AB}|_{(u,v) = (-1,0)}$ is close to the round metric and ${\rm tf}\partial_v\slashed{g}_{AB}|_{(u,v) = (-1,0)}$ is suitably small (where ${\rm tf}$ denotes the trace-free part of a symmetric $(0,2)$-tensor), do we obtain a ``global'' solution in the region $\left(\{v \geq 0\} \cap \{u \leq 0\}\right) \setminus\{(u,v) = (0,0)\}$?\footnote{One can also look in the ``interior region'' corresponding to $\{v \leq 0\}$ where one expects the problem to be elliptic as opposed to hyperbolic. However, in $3+1$ dimensions one does not expect to find any non-trivial interior solutions corresponding to the Fefferman--Graham data along the cone $\{v = 0\}$. In dimensions strictly higher than $3+1$ this rigidity disappears, and the problem of constructing global interior solutions for small data was positively resolved in work of Graham--Lee~\cite{grahamlee}. Finally, we note that in dimensions strictly larger than $3+1$, the global behavior in the exterior region $\{v \geq 0\}$, for a certain restricted class of small data, reduces to the problem of the stability of de-Sitter space to small perturbations from $\mathcal{I}^-$, various aspects of which have been positively resolved in the works~\cite{friedrich1986existence,andsitter,ringstab}.} It follows from a combination of the techniques of~\cite{scaleinvariant} and the estimates behind the proof of Theorem~\ref{themainextresult} that one has existence in a region  $\left(\{v \geq 0\} \cap \{u < 0\}\right) \setminus\{(u,v) = (0,0)\}$, and, after a suitable change of coordinates, the metric $g$ extends to the cone $\{u = 0\}$ in a H\"{o}lder continuous fashion. It would be very interesting, however, to determine whether or not the cone $\{u = 0\}$ is shear free and thus is itself locally modeled on a Fefferman--Graham solution. We note that the analogous statement for scale-invariant solutions to the spherically symmetric Einstein-scalar-field system is true and is related to Christodoulou's proof of well-posedness for solutions of bounded variation~\cite{ChristBV}.

\subsection{Acknowledgements} The authors would like to thank Mihalis Dafermos for valuable discussions and comments on the manuscript. IR is partially supported by the NSF 
grant DMS-1709270 and a Simons Investigator Award. YS acknowledges support from NSF grant DMS-1900288, from an Alfred P. Sloan Fellowship in Mathematics, and from NSERC discovery grants RGPIN-2021-02562 and DGECR-2021-00093.

\section{Preliminaries}
\subsection{Equations of the Double-Null Foliation}
In this section we will recall the form of the Einstein equations in a double null foliation (see~\cite{KN} for detailed derivations). We start with a $3+1$ dimensional Lorentzian manifold $\left(\mathcal{M},g_{\mu\nu}\right)$ solving the Einstein vacuum equations. We let $\{\theta^A\}$ denote local coordinates\footnote{Unless said otherwise, we will assume it understood by the reader that each coordinate function $\theta^A$ is only defined on a suitable coordinate patch of $\mathbb{S}^2$.} on $\mathbb{S}^2$ and assume that for some open $\mathcal{U} \subset \mathbb{R}^2$, there exists coordinates $\left(u,v,\theta^A\right)\in \mathcal{U} \times \mathbb{S}^2 $ so that the metric $g_{\mu\nu}$ takes the form
\begin{equation}\label{metricform}
g = -2\Omega^2\left(du\otimes dv + dv\otimes du\right) + \slashed{g}_{AB}\left(d\theta^A - b^Adu\right)\otimes\left(d\theta^B - b^Bdu\right).
\end{equation}
Here $\Omega$ is a function on $\mathcal{U} \times \mathbb{S}^2$ called the ``lapse,'' for each $(u,v)$, $\slashed{g}_{AB}$ denotes the induced Riemannian metric on the corresponding copy of $\mathbb{S}^2$, and for each $(u,v)$, $b_A$, called the ``shift,'' is a $1$-form on the corresponding copy of $\mathbb{S}^2$. We will denote the copy of $\mathbb{S}^2$ at a particular $(u,v)$ by $\mathbb{S}^2_{u,v}$, and we refer to any tensor field on $\mathcal{M}$ which is tangential to each $\mathbb{S}^2_{u,v}$ as an ``$\mathbb{S}^2_{u,v}$-tensor'' (see~\cite{KN}). We use the standard convention that Latin indices are reserved for $\mathbb{S}^2_{u,v}$-tensors. The covariant derivative associated to $g_{\mu\nu}$ will be denoted by $D_{\mu}$, the projection of $D_4$ and $D_3$ to $\mathbb{S}^2_{u,v}$ will be denoted by $\nabla_4$ and $\nabla_3$ respectively, and the induced covariant derivative of $\mathbb{S}^2_{u,v}$ will be denoted by $\slashed{\nabla}_A$. We will assume that $\left(\mathcal{M},g_{\mu\nu}\right)$ is oriented, which allows us to define $\slashed{\epsilon}_{AB}$ to be the volume form corresponding to $\slashed{g}_{AB}$. Unless indicated otherwise, norms of $\mathbb{S}^2_{u,v}$-tensors are computed with respect to $\slashed{g}_{AB}$, indices of $\mathbb{S}^2_{u,v}$-tensors are raised and lowered with respect to $\slashed{g}_{AB}$, and the Hodge star operator ${}^*$ applied to an $\mathbb{S}^2_{u,v}$-tensor refers to a contraction with $\slashed{\epsilon}_{AB}$. 

One consequence of the form~\eqref{metricform} of the metric is that the level sets of $u$ and $v$ are null hypersurfaces. This leads us to define the following null pair:
\[e_4 \doteq \Omega^{-1}\partial_v,\qquad e_3 \doteq \Omega^{-1}\left(\partial_u + b^A\partial_{\theta^A}\right),\]
which will then satisfy 
\[g\left(e_4,e_3\right) = -2.\]
The Ricci coefficients are the following $\mathbb{S}^2_{u,v}$ tensors:
\[\chi_{AB} \doteq g\left(D_Ae_4,e_B\right),\qquad \underline{\chi}_{AB} = g\left(D_Ae_3,e_B\right),\]
\[\eta_A \doteq -\frac{1}{2}g\left(D_3e_A,e_4\right),\qquad \underline{\eta}_A \doteq -\frac{1}{2}g\left(D_4e_A,e_3\right),\]
\[\omega \doteq -\frac{1}{4}g\left(D_4e_3,e_4\right),\qquad \underline{\omega} \doteq -\frac{1}{4}g\left(D_3e_4,e_3\right),\]
\begin{equation}\label{okokokthisistrosion}
\zeta_A \doteq \frac{1}{2}g\left(D_Ae_4,e_3\right).
\end{equation}
We will use $\psi$ to denote an arbitrary Ricci coefficient.

We often refer to the $1$-form $\zeta_A$ as the ``torsion'' $1$-form. Note that $\chi_{AB}$ and $\underline{\chi}_{AB}$ are simply the second fundamental forms of the $\mathbb{S}^2_{u,v}$ and hence are symmetric tensors. It  will be  convenient to split $\chi_{AB}$ and $\underline{\chi}_{AB}$ into their trace and trace-free parts:
\[\chi_{AB} \doteq \hat{\chi}_{AB} + \frac{1}{2}{\rm tr}\chi\slashed{g}_{AB},\qquad \underline{\chi}_{AB} \doteq \hat{\underline{\chi}}_{AB} + \frac{1}{2}{\rm tr}\underline{\chi}\slashed{g}_{AB}.\]

The Ricci coefficients are related to the derivatives of the metric quantities $\Omega$, $\slashed{g}_{AB}$, and $b_A$ as follows:
\begin{equation}\label{osjf}
\omega = -\frac{1}{2}\nabla_4\log\Omega,\qquad \underline{\omega} = -\frac{1}{2}\nabla_3\log\Omega,\qquad \mathcal{L}_{\partial_v}b^A = -4\Omega^2\zeta^A,
\end{equation}
\[\eta_A = \zeta_A + \slashed{\nabla}_A\log\Omega,\qquad \underline{\eta}_A = -\zeta_A + \slashed{\nabla}_A\log\Omega,\]
\begin{equation}\label{okmdwqejodq12ed}
\mathcal{L}_{e_4}\slashed{g}_{AB} = 2\chi_{AB},\qquad \mathcal{L}_{e_3}\slashed{g}_{AB} = 2\underline{\chi}_{AB}.
\end{equation}
Here $\mathcal{L}$ denotes the Lie-derivative.

We let $R$ denote the curvature tensor of $g$ and then define the null curvature components as follows:
\[\alpha_{AB} \doteq R\left(e_A,e_4,e_B,e_4\right),\qquad \underline{\alpha}_{AB} \doteq R\left(e_A,e_3,e_B,e_3\right),\]
\[\beta_A \doteq \frac{1}{2}R\left(e_A,e_4,e_3,e_4\right),\qquad \underline{\beta}_A \doteq \frac{1}{2}R\left(e_A,e_3,e_3,e_4\right),\]
\[\rho \doteq \frac{1}{4}R\left(e_4,e_3,e_4,e_3\right),\qquad \sigma \doteq \frac{1}{4}\left({}^*R\right)\left(e_4,e_3,e_4,e_3\right).\]
Here ${}^*$ denotes the Hodge star operator. The symmetries of the curvature tensor and the Einstein vacuum equations imply that $\alpha$ and $\underline{\alpha}$ are symmetric trace-free tensors.  The curvature components listed above suffice to reconstruct the entire curvature tensor (this fact uses both that we are in $3+1$ dimensions and that the Einstein vacuum equations are satisfied). We will often use $\Psi$ to denote an arbitrary null curvature component.

Next we recall the notion of signature from~\cite{CK}.\footnote{Our definition is actually the negative of the definition from~\cite{CK}.}
\begin{definition}\label{signature}For a Ricci coefficient $\psi$ or a null curvature component $\Psi$ we define $s\left(\psi\right)$, the signature of $\psi$  or $s\left(\Psi\right)$, the signature of $\Psi$, by
\[s\left(\psi\right) \doteq \#\left(e_3\right) -\#\left(e_4\right),\qquad s\left(\Psi\right) \doteq \#\left(e_3\right) -\#\left(e_4\right),\]
where  $\#\left(e_3\right)$ denotes the number of $e_3$'s which show up in the definition of $\psi$ and $\#\left(e_4\right)$ denotes the number of $e_4$'s which show up in the definition. We have
\[s\left(\hat{\chi}\right) = -1,\qquad s\left({\rm tr}\chi\right) = -1,\qquad s\left(\omega\right) = -1,\]
\[s\left(\eta\right) = 0,\qquad s\left(\underline{\eta}\right) = 0,\qquad s\left(\zeta\right) = 0,\]
\[s\left(\hat{\underline{\chi}}\right) = 1,\qquad s\left({\rm tr}\underline{\chi}\right) = 1,\qquad s\left(\underline{\omega}\right) = 1,\]
\[s\left(\alpha\right) = -2,\qquad s\left(\beta\right) = -1,\]
\[s\left(\rho\right) = 0,\qquad s\left(\sigma\right) = 0,\]
\[s\left(\underline{\alpha}\right) = 2,\qquad s\left(\underline{\beta}\right) = 1.\]

\end{definition}
The derivatives of the Ricci coefficients are related to the null curvature components by the following set of ``null-structure equations'':
\begin{align}
\label{4trchi}\nabla_4{\rm tr}\chi + \frac{1}{2}\left({\rm tr}\chi\right)^2 &= -\left|\hat{\chi}\right|^2 - 2\omega{\rm tr}\chi,
\\ \label{4hatchi} \nabla_4\hat{\chi}_{AB} +{\rm tr}\chi \hat{\chi}_{AB} &= -\alpha_{AB}  -2\omega\hat{\chi}_{AB},
\\ \label{3truchi} \nabla_3{\rm tr}\underline{\chi} + \frac{1}{2}\left({\rm tr}\underline{\chi}\right)^2 &= -\left|\hat{\underline{\chi}}\right|^2 - 2\underline{\omega}{\rm tr}\underline{\chi},
\\ \label{3hatuchi} \nabla_3\underline{\hat{\chi}}_{AB} +{\rm tr}\underline{\chi} \underline{\hat{\chi}}_{AB} &= -\underline{\alpha}_{AB} -2\underline{\omega}\underline{\hat{\chi}}_{AB},
\\ \label{3hatchi} \nabla_3\hat{\chi}_{AB} +\frac{1}{2}{\rm tr}\underline{\chi}\hat{\chi}_{AB} &=  2\underline{\omega}\hat{\chi}_{AB} + \left(\slashed{\nabla}\hat\otimes \eta\right)_{AB} + \left(\eta\hat\otimes \eta\right)_{AB} - \frac{1}{2}{\rm tr}\chi \hat{\underline{\chi}}_{AB},
\\ \label{3trchi}\nabla_3{\rm tr}\chi + \frac{1}{2}{\rm tr}\chi{\rm tr}\underline{\chi} &= 2\rho+ 2\underline{\omega}{\rm tr}\chi + 2\slashed{\rm div}\eta + 2\left|\eta\right|^2 - \hat{\chi}\cdot\hat{\underline{\chi}},
\\ \label{4hatuchi} \nabla_4\hat{\underline{\chi}}_{AB} + \frac{1}{2}{\rm tr}\chi \hat{\underline{\chi}}_{AB}&=  2\omega\hat{\underline{\chi}}_{AB} + \left(\slashed{\nabla}\hat\otimes \underline\eta\right)_{AB} + \left(\underline\eta\hat\otimes \underline\eta\right)_{AB}- \frac{1}{2}{\rm tr}\underline{\chi} \hat{\chi}_{AB},
\\ \label{4truchi} \nabla_4{\rm tr}\underline{\chi} + \frac{1}{2}{\rm tr}\chi{\rm tr}\underline{\chi} &= 2\rho+ 2\omega{\rm tr}\underline{\chi} + 2\slashed{\rm div}\underline{\eta} + 2\left|\underline\eta\right|^2 - \hat{\chi}\cdot\hat{\underline{\chi}},
\end{align}
\begin{align}
\label{4eta} \nabla_4\eta_A &= -\left(\chi\cdot\left(\eta-\underline{\eta}\right)\right)_A -\beta_A,
\\ \label{3ueta} \nabla_3\underline{\eta}_A &= -\left(\underline{\chi}\cdot\left(\underline\eta-\eta\right)\right)_A +\underline{\beta}_B,
\\ \label{curleta} \slashed{\rm curl}\  \eta  &= \sigma + \frac{1}{2}\hat{\underline{\chi}}\wedge \hat{\chi}
\\ \label{curlueta}\slashed{\rm curl}\ \underline{\eta} &= -\sigma - \frac{1}{2}\hat{\underline{\chi}}\wedge \hat{\chi},
\\ \label{4uomega} \nabla_4\underline{\omega} &= \frac{1}{2}\rho + 2\underline{\omega}\omega + \frac{1}{2}\left|\eta\right|^2 - \eta\cdot\underline{\eta},
\\ \label{3omega} \nabla_3\omega &= \frac{1}{2}\rho + 2\underline{\omega}\omega + \frac{1}{2}\left|\underline\eta\right|^2 - \eta\cdot\underline{\eta},
\end{align}
and

\begin{align}\label{genGauss}
K &= -\rho +\frac{1}{2}\hat{\chi}\cdot\hat{\underline{\chi}} -\frac{1}{4}{\rm tr}\chi{\rm tr}\underline{\chi},
\\ \label{tcod1} \left(\slashed{\rm div}\ \hat{\chi}\right)_A-\frac{1}{2}\slashed{\nabla}_A{\rm tr}\chi &= -\beta_A+ \frac{1}{2}{\rm tr}\chi \zeta_A - \left(\zeta\cdot\hat{\chi}\right)_A,
\\ \label{tcod2} \left(\slashed{\rm div}\ \hat{\underline{\chi}}\right)_A -\frac{1}{2}\slashed{\nabla}_A{\rm tr}\underline\chi  &=\underline{\beta}_A- \frac{1}{2}{\rm tr}\underline\chi \zeta_A + \left(\zeta\cdot\underline{\hat{\chi}}\right)_A,
\end{align}
where $K$ denotes the Gaussian curvature of $\slashed{g}_{AB}$, and we have used the following definitions for $1$-forms $\psi_A$ and symmetric trace-free $(0,2)$-tensors $\phi_{AB}$:
\begin{align*}
\left(\psi^{(1)}\hat{\otimes}\psi^{(2)}\right)_{AB} &\doteq \psi^{(1)}_A\psi^{(2)}_B + \psi^{(1)}_B\psi^{(2)}_A - \slashed{g}^{CD}\psi^{(1)}_C\psi^{(2)}_D\slashed{g}_{AB},
\\ \nonumber \left(\slashed{\nabla}\hat{\otimes}\psi\right)_{AB} &\doteq \slashed{\nabla}_A\psi_B + \slashed{\nabla}_B\psi_A - \slashed{g}^{CD}\slashed{\nabla}_C\psi_D \slashed{g}_{AB},
\\ \nonumber \phi^{(1)}\wedge \phi^{(2)} &\doteq \slashed{\epsilon}^{AC}\slashed{g}^{BD}\phi^{(1)}_{AB}\phi^{(2)}_{CD},
\\ \nonumber  \psi^{(1)}\wedge \psi^{(2)} &\doteq \slashed{\epsilon}^{AB}\psi^{(1)}_A\psi^{(2)}_B,
\\ \nonumber \slashed{\rm div}\psi &\doteq \slashed{g}^{AB}\slashed{\nabla}_A\psi_B,
\\ \nonumber \slashed{\rm curl}\psi &\doteq \slashed{\epsilon}^{AB}\slashed{\nabla}_A\psi_B,
\\ \nonumber \left(\slashed{\rm div}\phi\right)_A &\doteq \slashed{g}^{BC}\slashed{\nabla}_B\phi_{CA}.
\end{align*}

The null curvature components satisfy the following consequence of the Bianchi identities:
\begin{align}\label{3alpha}
\nabla_3\alpha_{AB} +\frac{1}{2}{\rm tr}\underline{\chi} \alpha_{AB} &= \left(\slashed{\nabla}\hat{\otimes}\beta\right)_{AB} + 4\underline{\omega}\alpha_{AB} -3\left(\hat{\chi}_{AB}\rho + {}^*\hat{\chi}_{AB}\sigma\right) + \left(\left(\zeta+ 4\eta\right)\hat{\otimes}\beta\right)_{AB},
\\ \label{4beta} \nabla_4\beta_A + 2{\rm tr}\chi \beta_A &= \left(\slashed{\rm div}\alpha\right)_A - 2\omega\beta_A + \left(\left(2\zeta + \underline{\eta}\right)\cdot\alpha\right)_A,
\\ \label{3beta} \nabla_3\beta_A + {\rm tr}\underline{\chi}\beta_A &= \slashed{\nabla}_A\rho + 2\underline{\omega}\beta_A + \left({}^*\slashed{\nabla}\right)_A\sigma + 2\left(\hat{\chi}\cdot\underline{\beta}\right)_A+ 3\left(\eta_A\rho + {}^*\eta_A\sigma\right),
\\ \label{4sigma} \nabla_4\sigma + \frac{3}{2}{\rm tr}\chi \sigma &= -\slashed{\rm div}{}^*\beta + \frac{1}{2}\hat{\underline{\chi}}\wedge \alpha -\zeta\wedge \beta -2\underline{\eta}\wedge \beta,
\\ \label{3sigma} \nabla_3\sigma + \frac{3}{2}{\rm tr}\underline{\chi} \sigma &= -\slashed{\rm div}{}^*\underline{\beta} - \frac{1}{2}\hat{\chi}\wedge \underline{\alpha} +\zeta\wedge \underline{\beta} -2\eta\wedge \underline{\beta},
\\ \label{4rho} \nabla_4\rho + \frac{3}{2}{\rm tr}\chi \rho &= \slashed{\rm div}\beta -\frac{1}{2}\hat{\underline{\chi}}\cdot\alpha + \zeta\cdot\beta + 2\underline{\eta}\cdot\beta,
\\ \label{3rho} \nabla_3\rho + \frac{3}{2}{\rm tr}\underline{\chi} \rho &=  -\slashed{\rm div}\underline{\beta} -\frac{1}{2}\hat{\chi}\cdot\underline{\alpha} + \zeta\cdot\underline{\beta} - 2\eta\cdot\underline{\beta},
\\ \label{4undbeta} \nabla_4\underline\beta_A + {\rm tr}\chi\beta_A &= -\slashed{\nabla}_A\rho + 2\omega\underline{\beta}_A + \left({}^*\slashed{\nabla}\right)_A\sigma + 2\left(\hat{\underline{\chi}}\cdot\beta\right)_A+ 3\left(-\underline{\eta}_A\rho + {}^*\underline{\eta}_A\sigma\right),
\\ \label{3undbeta} \nabla_3\underline{\beta}_A + 2{\rm tr}\underline{\chi} \underline{\beta_A} &=- \left(\slashed{\rm div}\underline{\alpha}\right)_A - 2\underline{\omega}\underline{\beta}_A + \left(\left(2\zeta - \eta\right)\cdot\underline{\alpha}\right)_A,
\\ \label{4undalpha} \nabla_4\underline{\alpha}_{AB} +\frac{1}{2}{\rm tr}\chi \underline{\alpha}_{AB} &= -\left(\slashed{\nabla}\hat{\otimes}\underline{\beta}\right)_{AB} + 4\omega\underline{\alpha}_{AB} -3\left(\hat{\underline{\chi}}_{AB}\rho - {}^*\hat{\underline{\chi}}_{AB}\sigma\right) + \left(\left(\zeta- 4\underline{\eta}\right)\hat{\otimes}\underline{\beta}\right)_{AB}.
\end{align}
We refer to this set of equations as the Bianchi equations. 

We now recall a well-known lemma from~\cite{CK}.
\begin{lemma}\label{signatureconserve}Let us introduce the rule that 
\[s\left(\nabla_3\psi\right) \doteq 1 + s\left(\psi\right),\qquad s\left(\nabla_4\psi\right) \doteq -1 + s\left(\psi\right),\qquad s\left(\slashed{D}\psi\right) \doteq s\left(\psi\right),\]
\[s\left(\nabla_3\Psi\right) \doteq 1 + s\left(\Psi\right),\qquad s\left(\nabla_4\Psi\right) \doteq -1 + s\left(\Psi\right),\qquad s\left(\slashed{D}\Psi\right) \doteq  s\left(\Psi\right),\]
\[s\left(\psi\cdot\Psi\right) \doteq s\left(\psi\right) + s\left(\Psi\right),\]
where $\slashed{D}$ stands for any contraction of a $\slashed{g}_{AB}$-covariant derivative. Then, in any given null structure equation or Bianchi equation, each individual summand will have the same signature.
\end{lemma}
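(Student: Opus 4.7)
The plan is to derive conservation of signature from the boost invariance of the null structure and Bianchi equations. The ambiguity in the choice of null frame compatible with the double-null foliation is precisely the rescaling $e_4 \mapsto \lambda e_4$, $e_3 \mapsto \lambda^{-1} e_3$ (up to rotations of $\{e_A\}$, which are irrelevant here) that preserves $g(e_3,e_4)=-2$. Under a boost by a positive constant $\lambda$, inspection of~\eqref{okokokthisistrosion} and of the definitions of the null curvature components shows that each Ricci coefficient $\psi$ or curvature component $\Psi$ accumulates one factor of $\lambda$ per appearance of $e_4$ and one factor of $\lambda^{-1}$ per appearance of $e_3$ in its definition; since $s=\#(e_3)-\#(e_4)$, this is exactly the transformation rule $\psi \mapsto \lambda^{-s(\psi)}\psi$ (and similarly for $\Psi$). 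One verifies this is consistent with the values recorded in Definition~\ref{signature}.

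The projected covariant derivatives rescale as $\nabla_4 \mapsto \lambda\nabla_4$ and $\nabla_3 \mapsto \lambda^{-1}\nabla_3$, while $\slashed{D}$ is unaffected (it only sees the angular directions). Combined with the Leibniz rule for tensorial products, these rescalings immediately yield the assignment rules in the lemma statement as instances of the single principle ``multiply by $\lambda^{-s}$ under boost.'' Now, any of the listed null structure or Bianchi equations, rewritten as $\sum_i T_i = 0$, is a frame-dependent tensorial identity which persists after boosting. Expressing the boosted identity in terms of the original tensors yields
\[\sum_i \lambda^{-s(T_i)} T_i = 0 \qquad \text{for every } \lambda > 0.\]
Since the functions $\{\lambda \mapsto \lambda^{-s}\}_{s\in\mathbb{Z}}$ are linearly independent, the identity must decompose signature by signature: $\sum_{i:\,s(T_i)=s} T_i = 0$ holds separately for each value of $s$.

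It remains to verify that the equations~\eqref{4trchi}--\eqref{4undalpha} as written are each \emph{already} signature-homogeneous, so that the decomposition above has exactly one nonzero block per equation. This reduces to a finite, routine bookkeeping check: using the signatures catalogued in Definition~\ref{signature} and the assignment rules of the lemma, one confirms, e.g., that every term of~\eqref{4trchi} has signature $-2$, every term of~\eqref{3alpha} has signature $-1$, and analogously through the remainder of the list. I expect the only ``obstacle'' to be the patience required to run through this term-by-term verification; there is no genuine mathematical difficulty beyond the linear independence argument above.
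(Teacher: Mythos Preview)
Your argument is correct. The paper itself provides no proof beyond citing \cite{CK}, so your proposal is substantially more complete than what appears in the text. The boost-invariance argument you give is in fact the standard conceptual explanation for signature conservation (and is essentially how \cite{CK} justifies it): signature is precisely the weight under the constant rescaling $e_4\mapsto\lambda e_4$, $e_3\mapsto\lambda^{-1}e_3$, and the null structure and Bianchi equations are invariant under this rescaling because the boosted frame is again an admissible null frame. One small remark: once you have established that the equations must decompose signature-by-signature, the linear-independence step is logically superfluous for the lemma as stated, since each equation in~\eqref{4trchi}--\eqref{4undalpha} is written as a single identity and the lemma only asserts homogeneity; the direct term-by-term verification you describe at the end is both necessary and sufficient on its own. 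The boost argument is nonetheless valuable because it explains \emph{why} the verification succeeds and guarantees in advance that it cannot fail.
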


In the course of our construction we will study spacetimes where the $v$-dependence of $\hat{\chi}_{AB}$ is only H\"{o}lder continuous, and thus the curvature component $\alpha_{AB}$ may be only be understood distributionally (see~\eqref{4hatchi}) and, more importantly, does not lie in $L^2_{\rm loc}$. As a first step in the proof of our main theorem, we will use the results from~\cite{impulsivefirst,impulsive} which establish local existence for the characteristic initial value problem for data where $\hat{\chi}_{AB}$ has limited regularity in the $v$-direction. One key idea in the works~\cite{impulsivefirst,impulsive} is to introduce a renormalization scheme which serves to eliminate the curvature component $\alpha_{AB}$ from the Bianchi equations. One can also eliminate the curvature component $\underline{\alpha}_{AB}$, and this will also be useful for us when we study the solution near the cone $\{u = 0\}$ (see Section~\ref{bottIIIIIIII}). We now present these renormalized Bianchi equations. We first define
\begin{equation}\label{reallyaquitenicerenormalization}
\check{\rho} \doteq \rho - \frac{1}{2}\hat{\chi}\cdot\hat{\underline{\chi}},\qquad \check{\sigma} \doteq \sigma - \frac{1}{2}\hat{\chi}\wedge \hat{\underline{\chi}}.
\end{equation}
Then we have
\begin{align}\label{ren1}
\nabla_3\beta_A + {\rm tr}\underline{\chi}\beta_A &= \slashed{\nabla}_A\check{\rho} + \left({}^*\slashed{\nabla}\right)_A\check{\sigma} + 2\underline{\omega}\beta_A + 2\left(\hat{\chi}\cdot\underline{\beta}\right)_A + 3\left(\eta_A\check{\rho} + {}^*\eta_A\check{\sigma}\right)
\\ \nonumber &\qquad +\frac{1}{2}\left(\slashed{\nabla}_A\left(\hat{\chi}\cdot\hat{\underline{\chi}}\right) + \left({}^*\slashed{\nabla}\right)_A\left(\hat{\chi}\wedge \hat{\underline{\chi}}\right)\right) + \frac{3}{2}\left(\eta_A \hat{\chi}\cdot\hat{\underline{\chi}} + {}^*\eta_A \hat{\chi}\wedge \hat{\underline{\chi}}\right),
\\ \label{ren2} \nabla_4\check{\sigma} + \frac{3}{2}{\rm tr}\chi \check{\sigma} &= -\slashed{\rm div}{}^*\beta - \zeta\wedge \beta - 2\underline{\eta} \wedge \beta - \frac{1}{2}\hat{\chi}\wedge \left(\slashed{\nabla}\hat{\otimes}\underline{\eta}\right) - \frac{1}{2}\hat{\chi}\wedge \left(\underline{\eta}\hat{\otimes}\underline{\eta}\right),
\\ \label{ren3} \nabla_4\check{\rho} + \frac{3}{2}{\rm tr}\chi \check{\rho} &= \slashed{\rm div}\beta + \zeta\cdot\beta + 2\underline{\eta}\cdot\beta - \frac{1}{2}\hat{\chi}\cdot\left(\slashed{\nabla}\hat{\otimes}\underline{\eta}\right) -\frac{1}{2}\hat{\chi}\cdot\left(\underline{\eta}\hat{\otimes}\underline{\eta}\right) + \frac{1}{4}{\rm tr}\underline{\chi}\left|\hat{\chi}\right|^2,
\\ \label{ren4} \nabla_3\check{\sigma} + \frac{3}{2}{\rm tr}\underline{\chi} \check{\sigma} &= \slashed{\rm div}{}^*\underline{\beta} + \zeta\wedge \underline{\beta} - 2\eta \wedge \underline{\beta} +\frac{1}{2}\hat{\underline{\chi}}\wedge \left(\slashed{\nabla}\hat{\otimes}\eta\right) + \frac{1}{2}\hat{\underline{\chi}}\wedge \left(\eta\hat{\otimes}\eta\right),
\\ \label{ren5} \nabla_3\check{\rho} + \frac{3}{2}{\rm tr}\underline{\chi} \check{\rho} &= -\slashed{\rm div}\underline{\beta} + \zeta\cdot\underline{\beta} - 2\eta\cdot\underline{\beta} - \frac{1}{2}\hat{\underline{\chi}}\cdot\left(\slashed{\nabla}\hat{\otimes}\eta\right) -\frac{1}{2}\hat{\underline{\chi}}\cdot\left(\eta\hat{\otimes}\eta\right) + \frac{1}{4}{\rm tr}\chi\left|\hat{\underline{\chi}}\right|^2,
\\ \label{ren6} \nabla_4\underline{\beta}_A + {\rm tr}\chi\underline{\beta}_A &= -\slashed{\nabla}_A\check{\rho} + \left({}^*\slashed{\nabla}\right)_A\check{\sigma} + 2\omega\underline{\beta}_A + 2\left(\hat{\underline{\chi}}\cdot\beta\right)_A - 3\left(\underline{\eta}_A\check{\rho} - {}^*\underline{\eta}_A\check{\sigma}\right)
\\ \nonumber &\qquad -\frac{1}{2}\left(\slashed{\nabla}_A\left(\hat{\underline{\chi}}\cdot\hat{\chi}\right) + \left({}^*\slashed{\nabla}\right)_A\left(\hat{\underline{\chi}}\wedge \hat{\chi}\right)\right) - \frac{3}{2}\left(\underline{\eta}_A \hat{\underline{\chi}}\cdot\hat{\chi} + {}^*\underline{\eta}_A \hat{\underline{\chi}}\wedge \hat{\chi}\right).
\end{align}

Finally, we record the well-known expressions for the commutators $\left[\nabla_4,\slashed{\nabla}_A\right]$ and $\left[\nabla_3,\slashed{\nabla}_A\right]$.
\begin{lemma}\label{commlemm}We have
\begin{align}\label{com1}
\left[\Omega\nabla_4,\slashed{\nabla}_A\right]\phi_{B_1\cdots B_k} &= \Omega\sum_{i=1}^k\left(\left({}^*\beta\right)_A\slashed{\epsilon}_{B_i}^{\ \ C}-\chi_A^{\ \ C}\underline{\eta}_{B_i}+\chi_{B_iA}\underline{\eta}^C\right)\phi_{B_1\cdots \hat{B_i}C\cdots B_k}  - \Omega\chi_A^{\ \ C}\slashed{\nabla}_C\phi_{B_1\cdots B_k},
\end{align}
\begin{align}\label{com2}
\left[\Omega\nabla_3,\slashed{\nabla}_A\right]\phi_{B_1\cdots B_k} &= \Omega\sum_{i=1}^k\left(-\left({}^*\underline{\beta}\right)_A\slashed{\epsilon}_{B_i}^{\ \ C}-\underline{\chi}_A^{\ \ C}\eta_{B_i}+\underline{\chi}_{B_iA}\eta^C\right)\phi_{B_1\cdots \hat{B_i}C\cdots B_k} - \Omega\underline{\chi}_A^{\ \ C}\slashed{\nabla}_C\phi_{B_1\cdots B_k}.
\end{align}
\end{lemma}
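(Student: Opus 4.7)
The plan is to derive both commutators by the standard technique of writing $\slashed{\nabla}_A$ and $\nabla_4$ (or $\nabla_3$) as projections of the ambient Levi--Civita connection $D$ to the tangent bundle of $\mathbb{S}^2_{u,v}$, computing the commutator of the ambient derivatives via the spacetime Riemann tensor, and then separating the result into null-curvature contributions and Ricci-coefficient corrections arising from the failure of the projection to commute with $D$. Since \eqref{com1} and \eqref{com2} are symmetric under the interchange $e_3\leftrightarrow e_4$, I would prove \eqref{com1} in detail and obtain \eqref{com2} by the symmetry.

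First I would observe that for any $\mathbb{S}^2_{u,v}$-tensor $\phi$ we have $\slashed{\nabla}_A\phi = \slashed{\Pi}(D_{e_A}\phi)$ and $\Omega\nabla_4\phi = \slashed{\Pi}(D_{\partial_v}\phi)$, where $\slashed{\Pi}$ is the tangential projector. Because $\partial_v$ and $\partial_{\theta^A}$ commute as coordinate vector fields, the commutator $[\Omega\nabla_4,\slashed{\nabla}_A]\phi$ reduces to a sum of (i) the ambient curvature contraction produced by $[D_{\partial_v},D_{\partial_{\theta^A}}]\phi$, and (ii) correction terms measuring the non-commutativity of $\slashed{\Pi}$ with $D_{\partial_v}$ and with $D_{\partial_{\theta^A}}$. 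For (i), one expands $D_{\partial_v}=\Omega D_{e_4}$ and uses the standard curvature identity to pick up a factor $\Omega\sum_i R_{B_i}{}^C{}_{4A}\phi_{\cdots C\cdots}$; expressing $R_{BC4A}$ through the null frame and using the vacuum condition yields the term $\Omega\,({}^*\beta)_A\slashed{\epsilon}_{B_i}{}^C$. For (ii), the crucial input is the null-frame decomposition of the second fundamental form of $\mathbb{S}^2_{u,v}\hookrightarrow\mathcal{M}$, namely that $D_{e_A}e_B$ has $\tfrac{1}{2}e_4$-component $-\tfrac{1}{2}\underline{\chi}_{AB}$ and $\tfrac{1}{2}e_3$-component $-\tfrac{1}{2}\chi_{AB}$, together with the identities \eqref{osjf} relating $D_{e_4}e_A$ to $\underline{\eta}_A$. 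Tracking how these bring back tangential derivatives of $\phi$ produces the terms $-\Omega\chi_A{}^C\slashed{\nabla}_C\phi_{B_1\cdots B_k}$ and $\Omega\sum_i(-\chi_A{}^C\underline{\eta}_{B_i}+\chi_{B_iA}\underline{\eta}^C)\phi_{\cdots C\cdots}$, with the specific coefficients fixed by the $g(e_3,e_4)=-2$ normalization.

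Once \eqref{com1} is assembled, \eqref{com2} follows by applying the same argument along $e_3$: one replaces $\partial_v$ by the null generator $\partial_u+b^A\partial_{\theta^A}=\Omega e_3$, noting that this also commutes with $\partial_{\theta^A}$ modulo terms absorbed into the $\slashed{\nabla}_C$ correction, and performs the identical decomposition. Under the interchange the roles $(\chi,\underline{\eta},\beta)\leftrightarrow(\underline{\chi},\eta,-\underline{\beta})$ are swapped, exactly reproducing \eqref{com2}.

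The main obstacle is purely one of bookkeeping rather than principle: one has to fix, once and for all, the sign conventions for $R$, the formula for $\slashed{\Pi}$ coming from $g(e_3,e_4)=-2$, and the signs in the definitions of $\eta$, $\underline{\eta}$, $\zeta$, $\beta$, $\underline{\beta}$ in \eqref{okokokthisistrosion} and verify that every coefficient in \eqref{com1}--\eqref{com2} comes out consistently. In practice, since the derivation is given in full detail in \cite{KN}, one would verify that the conventions of the present paper agree and cite that reference.
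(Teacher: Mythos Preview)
Your proposal is correct and matches the paper's treatment: the paper does not give a proof but simply records these as ``well-known expressions,'' implicitly relying on the standard derivation in \cite{KN}. Your outline of that derivation---projecting the ambient connection, extracting the curvature piece via $R_{BC4A}$, and tracking the Ricci-coefficient corrections from the failure of $\slashed{\Pi}$ to commute with $D$---is exactly the standard argument, and your closing remark that one should verify the conventions against \cite{KN} and cite it is precisely what the paper does.
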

\begin{remark}\label{signatureconserve2}We observe that the conclusions of Lemma~\ref{signatureconserve} extend both to the renormalized Bianchi equations~\eqref{ren1}-\eqref{ren6} and to the commutation formulas~\eqref{com1} and~\eqref{com2}.
\end{remark}

The following remark will be important later.
\begin{remark}\label{letusshifthteshift}One can also consider spacetimes $\left(\mathcal{M},g\right)$ where there exists coordinates $\left(u,v,\theta^A\right)\in \mathcal{U} \times \mathbb{S}^2 $ so that the metric $g$ takes the form
\begin{equation}\label{metricformotherway}
g = -2\Omega^2\left(du\otimes dv + dv\otimes du\right) + \slashed{g}_{AB}\left(d\theta^A - b^Adv\right)\otimes\left(d\theta^B - b^Bdv\right).
\end{equation}
We refer to coordinates where the metric takes the form~\eqref{metricform} as double-null coordinates with ``the shift in the $e_3$-direction'' and to coordinates with the metric takes the form~\eqref{metricformotherway} as double-null coordinates with ``the shift in the $e_4$-direction.'' For a spacetime $\left(\mathcal{M},g\right)$ with coordinates so that the metric takes the form~\eqref{metricformotherway}, we define
\[e_4 \doteq \Omega^{-1}\left(\partial_v + b^A\partial_{\theta^A}\right),\qquad e_3 \doteq \Omega^{-1}\partial_u,\]
and then all of the equations satisfied by the various double-null unknowns are the same with the exception of the shift which now satisfies 
\[\mathcal{L}_{\partial_u}b^A = 4\Omega^2\zeta^A.\]
\end{remark}

Lastly, it will be useful to introduce the following conventions.
\begin{convention}\label{llconvent}$b \ll 1$ means that one should take $b \leq c$ where $c$ is a small constant, independent of all other introduced parameters, but whose exact value may be determined at the end of the paper.  For two positive constants $a$ and $b$, $a \ll b$ means $\frac{a}{b} \ll 1$, and for a positive constant $b > 0$, $b \gg 1$ means that $b^{-1} \ll 1$. 
\end{convention}
\begin{convention}\label{contractconven}We will now describe a schematic notation for certain nonlinear tensorial expressions. Throughout we let $a^{(i)}_j$ denote a tensorial quantity. The schematic notation is defined as follows:

\begin{enumerate}
\item The notation $\left(a^{(1)}_1,\cdots,a^{(1)}_{i_1}\right)\cdots\left(a^{(j)}_1,\cdots,a^{(j)}_{i_j}\right)$ denotes an expression which could in principle represent an arbitrary linear combination of contractions of tensor products of $j$-tuples $\left(a^{(1)}_{k_1},\cdots,a^{(j)}_{k_j}\right)$. 

\item The notation $\left(a^{(1)}_1,\cdots,a^{(1)}_{i_1}\right)^k$ denotes $\underbrace{\left(a^{(1)}_1,\cdots,a^{(1)}_{i_1}\right)\cdots \left(a^{(1)}_1,\cdots,a^{(1)}_{i_1}\right)}_k$.

\item The notation $\slashed{\nabla}\left(a^{(1)}_1,\cdots,a^{(1)}_{i_1}\right)$ denotes an arbitrary linear combination of contractions of terms $\slashed{\nabla}a^{(i)}_j$.

\item The notation $\slashed{\nabla}^j\left(a^{(1)}_1,\cdots,a^{(1)}_{i_1}\right)$ denotes $\underbrace{\slashed{\nabla}(\slashed{\nabla}(\cdots \slashed{\nabla}}_j\left(a^{(1)}_1,\cdots,a^{(1)}_{i_1}\right) ))$
\end{enumerate}
\end{convention}

\subsection{The Characteristic Initial Value Problem}
In the course of the paper we will need to invoke local existence results for  suitable characteristic initial value problems. In this section we will quickly review the relevant theory. Throughout this section, we let $u_0$, $u_1$, $v_0$, and $v_1$ be real numbers which satisfy $u_0 < u_1$ and $v_0 < v_1$. 

We start with a definition of a characteristic initial data set.
\begin{definition}\label{indatasets}We say that two $1$-parameter families $\left(\Omega^{({\rm in})}\left(u,\theta^A\right),\left(b^A\right)^{(\rm in)}\left(u,\theta^B\right),\slashed{g}_{AB}^{(\rm in)}\left(u,\theta^C\right)\right)$ for $(u,\theta^A) \in [u_0,u_1]\times \mathbb{S}^2$ and $\left(\Omega^{(\rm out)}\left(v,\theta^A\right),\slashed{g}_{AB}^{(\rm out)}\left(v,\theta^C\right)\right)$ for $(v,\theta^A) \in [v_0,v_1]\times \mathbb{S}^2$ consisting of nowhere vanishing $C^1$ functions $\Omega^{(\rm in)}$ and $\Omega^{(\rm out)}$, a continuous vector field $\left(b^A\right)^{(\rm in)}$, and $C^1$ $1$-parameter families of Riemannian metrics $\slashed{g}_{AB}^{(\rm in)}$ and $\slashed{g}_{AB}^{(\rm out)}$ on $\mathbb{S}^2$, as well as a continuous $1$-form $\left(\zeta_A\right)_{u_0,v_0}$ on $\mathbb{S}^2$ form a ``characteristic initial data set'' if the following hold
\begin{enumerate}
	\item We have $\Omega^{(\rm in)}|_{u=u_0} = \Omega^{(\rm out)}|_{v=v_0}$ and $\slashed{g}_{AB}^{(\rm in)}|_{u=u_0} = \slashed{g}_{AB}^{(\rm out)}|_{v=v_0}$.
	\item After defining ${\rm tr}\underline{\chi}$, $\hat{\underline{\chi}}_{AB}$, and $\underline{\omega}$ for $u \in [u_0,u_1]$ by
	\[\left(\Omega^{(\rm in)}\right)^{-1}\mathcal{L}_{\partial_u + b^{(\rm in)}}\slashed{g}^{(\rm in)}_{AB} \doteq {\rm tr}\underline{\chi}\slashed{g}^{(\rm in)}_{AB} + 2\hat{\underline{\chi}}_{AB},\qquad \underline{\omega} \doteq -\frac{1}{2}\left(\Omega^{(\rm in)}\right)^{-1}\left(\partial_u + b^{(\rm in)}\right)\log\Omega^{(\rm in)},\]
	and the requirement that $\hat{\underline{\chi}}_{AB}$ be trace-free,
	we have that the following equation is satisfied:
	\begin{equation}\label{ray1cons}
	\left(\Omega^{(\rm in)}\right)^{-1}\left(\partial_u + b^{(\rm in)}\right){\rm tr}\underline{\chi} + \frac{1}{2}\left({\rm tr}\underline{\chi}\right)^2 = -2\underline{\omega}{\rm tr}\underline{\chi} - \left|\hat{\underline{\chi}}\right|^2.
	\end{equation}
	
	\item After defining ${\rm tr}\chi$, $\hat{\chi}_{AB}$, and $\omega$ for $v \in [v_0,v_1]$ by
	\[\left(\Omega^{(\rm out)}\right)^{-1}\mathcal{L}_{\partial_v}\slashed{g}^{(\rm out)}_{AB} \doteq {\rm tr}\chi\slashed{g}^{(\rm out)}_{AB} + 2\hat{\chi}_{AB},\qquad \omega \doteq -\frac{1}{2}\left(\Omega^{(\rm out)}\right)^{-1}\partial_v\log\Omega^{(\rm out)},\]
	and the requirement that $\hat{\chi}_{AB}$ be trace-free,
	we have that the following equation is satisfied:
	\begin{equation}\label{ray2cons}
	\left(\Omega^{(\rm out)}\right)^{-1}\partial_v{\rm tr}\chi + \frac{1}{2}\left({\rm tr}\chi\right)^2 = -2\omega{\rm tr}\chi - \left|\hat{\chi}\right|^2.
	\end{equation}
	
\end{enumerate}
\end{definition}
Note that the two differential equations that we require the characteristic data to satisfy are simply the two Raychaudhuri equations~\eqref{4trchi} and \eqref{3truchi}.

Now we recall the following local well-posedness result for the characteristic value problem.
\begin{theorem}\label{localexistencechar}~\cite{rendallchar,lukchar} Given any characteristic initial data set so that 
\[\left(\Omega^{(\rm in)},\Omega^{(\rm out)},\left(b^A\right)^{(\rm in)},\slashed{g}^{(\rm in)}_{AB},\slashed{g}^{(\rm out)}_{AB},\left(\zeta_{u_0,v_0}\right)_A\right),\]
are smooth, there exists a smooth spacetime $\left(\mathcal{M},g_{\mu\nu}\right)$ such that
\begin{enumerate}
	\item There exists a positive constant $c$, depending on the initial data set, so that the metric $g_{\mu\nu}$ takes the double-null form~\eqref{metricform} for $(u,v) \in \{\left([u_0,u_1]\times [v_0,v_0+c]\right) \cup \left([u_0,u_0+c]\times[v_0,v_1]\right)\}$.
	\item We have $\left(\Omega,b^A,\slashed{g}_{AB}\right)|_{v= v_0} = \left(\Omega^{(\rm in)},\left(b^A\right)^{(\rm in)},\slashed{g}_{AB}^{(\rm in)}\right)$, $\left(\Omega,\slashed{g}_{AB}\right)|_{u= u_0} = \left(\Omega^{(\rm out)},\slashed{g}_{AB}^{(\rm out)}\right)$, and $\zeta_A|_{(u,v) = (u_0,v_0)} = \left(\zeta_{u_0,v_0}\right)_A$.
\end{enumerate}
\end{theorem}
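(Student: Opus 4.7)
The plan is to follow the strategy of Rendall combined with Luk's refinement. The core idea is to reduce the Einstein vacuum equations in wave (harmonic) coordinates to a quasilinear wave system $\Box_g g_{\mu\nu} = N(g,\partial g)$ and to appeal to standard local existence for this reduced system, while simultaneously verifying that the characteristic constraints — precisely the Raychaudhuri equations \eqref{ray1cons} and \eqref{ray2cons} imposed in Definition~\ref{indatasets} — are compatible with the gauge and propagate. Rendall's observation is that characteristic data on two transversely intersecting null hypersurfaces can be extended to Cauchy data on an auxiliary spacelike hypersurface $S$ just to the past of the corner $(u_0,v_0)$: the double-null form \eqref{metricform} and the Raychaudhuri constraints together with the torsion datum $\zeta|_{(u_0,v_0)}$ determine enough of $g$ and its transverse derivatives on the two cones that a consistent extension exists after a standard ODE reconstruction.

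The key steps, in order, are: (i) from $(\Omega^{(\mathrm{in})},b^{(\mathrm{in})},\slashed{g}^{(\mathrm{in})})$ and $(\Omega^{(\mathrm{out})},\slashed{g}^{(\mathrm{out})})$ reconstruct all Ricci coefficients on the two cones, using \eqref{osjf}--\eqref{okmdwqejodq12ed} for quantities determined by tangential derivatives and integrating the transport equations \eqref{4trchi}--\eqref{curlueta}, \eqref{tcod1}--\eqref{tcod2} for those quantities that require transverse information (with $\zeta_{u_0,v_0}$ as initial condition); (ii) extend the data across $S$ and solve the reduced wave system for $g_{\mu\nu}$ in wave coordinates by the standard hyperbolic theory; (iii) verify that the wave gauge residual $H^\mu \doteq g^{\alpha\beta}\Gamma^\mu_{\alpha\beta}$ satisfies a homogeneous wave equation with vanishing initial data (this uses precisely the Raychaudhuri equations together with the constraints induced on $S$), so $H^\mu \equiv 0$ and the solution of the reduced system actually solves \eqref{EVE}; (iv) recover a double-null foliation adapted to the solution by solving the eikonal equation for the two optical functions with the prescribed boundary values on the cones, and check that in these coordinates the metric takes the form \eqref{metricform} with the prescribed data.

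Luk's contribution, which is essential for us to obtain the neighborhood stated in the theorem (namely $\{[u_0,u_1]\times[v_0,v_0+c]\}\cup\{[u_0,u_0+c]\times[v_0,v_1]\}$ rather than merely a thin diamond near $(u_0,v_0)$), is to work intrinsically with the double-null foliation and to prove energy estimates that treat the two characteristic directions asymmetrically. The scheme runs an iteration $(g^{(n)},\mathrm{Ricci}^{(n)},\mathrm{curvature}^{(n)})\mapsto (g^{(n+1)},\ldots)$, where the metric quantities are updated via \eqref{4trchi}--\eqref{curlueta} and \eqref{tcod1}--\eqref{tcod2}, and the curvature is updated via Bianchi \eqref{3alpha}--\eqref{4undalpha}. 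The $L^2$-based energy estimates for curvature come from integrating the Bianchi pairs against a suitable multiplier over the causal rhombus, and the closure of the estimates relies on the signature conservation of Lemma~\ref{signatureconserve} to match scalings between the two cones.

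The main obstacle, and the place where the argument is delicate, is precisely step (iv) and the upgrading of Rendall's thin-slab existence to the full neighborhood: one needs energy estimates that are uniform all the way up to both cones, which forces one to avoid losing a derivative when transitioning between null transport equations in the $e_3$-direction and the $e_4$-direction, and to handle the coupling with the connection coefficients $\mathrm{tr}\chi,\mathrm{tr}\underline{\chi}$ as coefficients in the Bianchi transport equations. In Luk's treatment this is resolved by carefully choosing the norms (with weights in $v-v_0$ and $u-u_0$ adapted to signature) and bootstrapping; the bootstrap assumption closes because the nonlinearities are quadratic and the smallness of the time of existence $c$ absorbs the lower-order terms. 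Given the smooth characteristic data in the hypothesis, once the bootstrap closes in a low-regularity norm, propagation of regularity yields smoothness of the solution in the whole neighborhood.
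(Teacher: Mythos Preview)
The paper does not give its own proof of this theorem; it is simply stated as a citation of the known results \cite{rendallchar,lukchar} and used as a black box. Your sketch is a reasonable summary of what those references actually do---Rendall's reduction to a spacelike Cauchy problem near the corner, and Luk's double-null energy scheme to upgrade the existence region from a thin diamond to the full neighborhood $\{[u_0,u_1]\times[v_0,v_0+c]\}\cup\{[u_0,u_0+c]\times[v_0,v_1]\}$---so in that sense your proposal is consistent with the paper's intent.
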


\begin{center}
\begin{tikzpicture}[scale = 1]
\fill[lightgray] (0,0)--(2,-2)--(4,0) --(3.75,.25) -- (2,-1.5) -- (.25,.25);
\draw (0,0) -- (2,-2) node[sloped,below,midway]{\footnotesize data};
\draw (2,-2) -- (4,0) node[sloped,below,midway]{\footnotesize data};
\path [draw=black,fill=black] (2,-2) circle (1/16); 
\draw (2,-2) node[below]{\footnotesize $(u_0,v_0)$};
\path [draw=black,fill=black] (0,0) circle (1/16); 
\draw (0,0) node[left]{\footnotesize $(u_1,v_0)$};
\path [draw=black,fill=black] (4,0) circle (1/16); 
\draw (4,0) node[right]{\footnotesize $(u_0,v_1)$};
\end{tikzpicture}
\end{center}

Since we will need to consider solutions where the $v$-dependence of $\hat{\chi}_{AB}$ is only H\"{o}lder continuous, it will be convenient to refer to the following generalization of Theorem~\ref{localexistencechar} (which can be deduced from the main results from~\cite{impulsivefirst,impulsive}):
\begin{theorem}\label{localexistencecharbetter}~\cite{impulsivefirst,impulsive} Let $N$ be a sufficiently large positive integer, and let $\mathring{\slashed{g}}_{AB}$ denote a round metric on $\mathbb{S}^2$ which we extend to $[u_0,u_1]\times\mathbb{S}^2$ and $[v_0,v_1]\times\mathbb{S}^2$ by Lie-propagation. Suppose we have a characteristic initial data set satisfying the following bounds for some $C > 0$: 
\[\left\vert\left\vert \left(b^{(\rm in)},\log\Omega^{(\rm in)}\right)\right\vert\right\vert_{L^{\infty}_u\mathring{H}^N} \leq C,\qquad \left\vert\left\vert \log\Omega^{(\rm out)}\right\vert\right\vert_{L^{\infty}_v\mathring{H}^N} \leq C,\]
\[\left\vert\left\vert \left(\underline{\chi},\mathcal{L}_{\partial_u}\underline{\chi},\underline{\omega}\right)\right\vert\right\vert_{L^{\infty}_u\mathring{H}^N} \leq C,\qquad \left\vert\left\vert \left(\chi,\omega\right)\right\vert\right\vert_{L^{\infty}_v\mathring{H}^N} \leq C,\qquad \left\vert\left\vert \zeta_{u_0,v_0} \right\vert\right\vert_{\mathring{H}^N} \leq C,\]
\[\left\vert\left\vert \left({\rm det}\slashed{g}^{(\rm out)}\right)^{-1}\right\vert\right\vert_{L^{\infty}_{v,\theta}} \leq C,\qquad \left\vert\left\vert \left({\rm det}\slashed{g}^{(\rm in)}\right)^{-1}\right\vert\right\vert_{L^{\infty}_{u,\theta}} \leq C, \qquad \left\vert\left\vert \slashed{g}^{(\rm out)}\right\vert\right\vert_{L^{\infty}_v\mathring{H}^N} \leq C,\qquad \left\vert\left\vert \slashed{g}^{(\rm in)}\right\vert\right\vert_{L^{\infty}_u\mathring{H}^N} \leq C,\]
where $\mathring{H}^i$ denotes (inhomogeneous) Sobolev norms defined with respect to the metric $\mathring{\slashed{g}}_{AB}$.

Then there exists a spacetime $\left(\mathcal{M},g_{\mu\nu}\right)$ such that
\begin{enumerate}
	\item There exists a positive constant $c$, depending on the constant $C$, so that the metric $g_{\mu\nu}$ takes the double-null form~\eqref{metricform} for $(u,v) \in \{\left([u_0,u_1]\times [v_0,v_0+c]\right) \cup \left([u_0,u_0+c]\times[v_0,v_1]\right)\}$.
	\item We have $\left(\Omega,b^A,\slashed{g}_{AB}\right)|_{v= v_0} = \left(\Omega^{(\rm in)},\left(b^A\right)^{(\rm in)},\slashed{g}_{AB}^{(\rm in)}\right)$, $\left(\Omega,\slashed{g}_{AB}\right)|_{u= u_0} = \left(\Omega^{(\rm out)},\slashed{g}_{AB}^{(\rm out)}\right)$, and $\zeta_A|_{\{(u,v) = (u_0,v_0)\}} = \left(\zeta_{u_0,v_0}\right)_A$.
	\item There exists $C'> 0$ so that in the region $(u,v) \in \{\left([u_0,u_1]\times [v_0,v_0+c]\right) \cup \left([u_0,u_0+c]\times[v_0,v_1]\right)\}$, the metric components, Ricci coefficients, and curvature components have the following regularity:
	\begin{equation}\label{bound1121}
	\left\vert\left\vert \left(\beta,\rho,\sigma,\underline{\beta}\right)\right\vert\right\vert_{L^{\infty}_uL^2_v\mathring{H}^{N-2}} \leq C',\qquad \left\vert\left\vert \left(\rho,\sigma,\underline{\beta},\underline{\alpha}\right)\right\vert\right\vert_{L^{\infty}_vL^2_u\mathring{H}^{N-2}} \leq C', 
	\end{equation}
	\begin{equation}\label{11212}
	\left\vert\left\vert \left(b,\underline{\chi},\chi,\omega,\underline{\omega},\slashed{\nabla}\log\Omega\right)\right\vert\right\vert_{L^{\infty}_uL^{\infty}_v\mathring{H}^{N-2}} \leq C'
	\end{equation}
	\begin{equation}\label{112123}
	\left\vert\left\vert \left({\rm det}\slashed{g}\right)^{-1}\right\vert\right\vert_{L^{\infty}_uL^{\infty}_vL^{\infty}_{\theta}} \leq C',\qquad \qquad \left\vert\left\vert \slashed{g}\right\vert\right\vert_{L^{\infty}_vL^{\infty}_u\mathring{H}^{N-2}} \leq C'.
		\end{equation}
	
\item $\left(\mathcal{M},g_{\mu\nu}\right)$ is a weak solution to ${\rm Ric}_{\mu\nu}(g) = 0$. In particular,~\eqref{4trchi},~\eqref{3truchi}-\eqref{tcod2}, and ~\eqref{3beta}-\eqref{ren6} all hold.
\end{enumerate}

Also, there exists a constant $Y$ (independent of $N$) so that we also have the following blow-up criterion: Let $0 < r_1 \leq v_1-v_0$ and $0 < r_2 \leq u_1-u_0$. Then one of the two possibilities must occur.
\begin{enumerate}
\item There exists a spacetime $\left(\mathcal{M},g_{\mu\nu}\right)$ in the double-null form~\eqref{metricform} which contains $(u,v) \in \{\left([u_0,u_1]\times [v_0,v_0+r_1]\right) \cup \left([u_0,u_0+r_2]\times[v_0,v_1]\right)\}$ , $\left(\Omega,b^A,\slashed{g}_{AB}\right)|_{v= v_0} = \left(\Omega^{(\rm in)},\left(b^A\right)^{(\rm in)},\slashed{g}_{AB}^{(\rm in)}\right)$, $\left(\Omega,\slashed{g}_{AB}\right)|_{u= u_0} = \left(\Omega^{(\rm out)},\slashed{g}_{AB}^{(\rm out)}\right)$,  $\zeta_A|_{\{(u,v) = (u_0,v_0)\}} = \left(\zeta_{u_0,v_0}\right)_A$, and the bounds~\eqref{bound1121}-\eqref{112123} hold for some $C' < \infty$.
\item For some $0 < s_1 < r_1$ and $0 < s_2 < r_2$ there exists a spacetime $\left(\mathcal{M},g_{\mu\nu}\right)$ in the double-null form~\eqref{metricform} which contains $(u,v) \in \{\left([u_0,u_1]\times [v_0,v_0+s_1)\right) \cup \left([u_0,u_0+s_2)\times[v_0,v_1]\right)\}\doteq \mathcal{U}_{s_1,s_2}$ , $\left(\Omega,b^A,\slashed{g}_{AB}\right)|_{v= v_0} = \left(\Omega^{(\rm in)},\left(b^A\right)^{(\rm in)},\slashed{g}_{AB}^{(\rm in)}\right)$, $\left(\Omega,\slashed{g}_{AB}\right)|_{u= u_0} = \left(\Omega^{(\rm out)},\slashed{g}_{AB}^{(\rm out)}\right)$, $\zeta_A|_{\{(u,v) = (u_0,v_0)\}} = \left(\zeta_{u_0,v_0}\right)_A$, and
\[\left\vert\left\vert \left(\chi,\underline{\chi},\underline{\omega},\omega,\slashed{\nabla}\log\Omega,\zeta,\slashed{g}\right)\right\vert\right\vert_{L^{\infty}\left(\mathcal{U}_{s_1,s_2}\right)\mathring{H}^Y} + \left\vert\left\vert \left({\rm det}\slashed{g}\right)^{-1}\right\vert\right\vert_{L^{\infty}\left(\mathcal{U}_{s_1,s_2}\right)L^{\infty}_{\theta^A}} = \infty.\]
\end{enumerate}

Finally, we know that the singularities of $\alpha$ must propagate in $e_3$-direction; more specifically,  whenever $\hat{v}_0 > v_0$, then
\begin{equation}\label{alphaisregularkdkow}
\left\vert\left\vert \alpha|_{u=u_0}\right\vert\right\vert_{L^2_{v\in [\hat{v}_0,v_1]}\mathring{H}^{N-2}} < \infty \Rightarrow  \sup_{u \in [u_0,u_1]}\left\vert\left\vert \alpha\right\vert\right\vert_{L^2_{v\in [\hat{v}_0,v_1]}\mathring{H}^{N-2}} < \infty.
\end{equation}

\end{theorem}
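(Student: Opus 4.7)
The plan is to obtain Theorem~\ref{localexistencecharbetter} as a combination of the smooth characteristic local existence statement of Theorem~\ref{localexistencechar} with the low-regularity estimates of~\cite{impulsivefirst,impulsive}. The fundamental obstacle is that $\hat{\chi}$ is only H\"older in $v$, so that by~\eqref{4hatchi} the curvature component $\alpha$ fails to lie in $L^2_{\rm loc}$, and a direct energy estimate on the full Bianchi system is unavailable. The crucial technical device is the renormalization: pass to $\check{\rho}=\rho-\frac{1}{2}\hat{\chi}\cdot\hat{\underline{\chi}}$ and $\check{\sigma}=\sigma-\frac{1}{2}\hat{\chi}\wedge\hat{\underline{\chi}}$ and use the renormalized Bianchi system~\eqref{ren1}--\eqref{ren6}, from which $\alpha$ has been eliminated. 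Under the stated hypotheses the surviving curvature content $(\beta,\check{\rho},\check{\sigma},\underline{\beta},\underline{\alpha})$ lies in $L^2$, and each flux appearing in a Bel--Robinson-type identity is integrable.

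For the existence part I would first approximate the given data by smooth data: mollify $\hat{\chi}^{(\rm out)}$ in $v$ and $\hat{\underline{\chi}}^{(\rm in)}$ in $u$, mollify $(\Omega,b,\slashed{g})$ in the natural variables, and then re-enforce the Raychaudhuri constraints~\eqref{ray1cons}--\eqref{ray2cons} by re-solving the scalar ODEs for ${\rm tr}\underline{\chi}$ and ${\rm tr}\chi$ from the mollified shears and lapses. Theorem~\ref{localexistencechar} applies to each approximation, producing smooth solutions on a short slab. The core step is then a uniform bootstrap which extends the smooth approximate solutions to a common slab $\mathcal{U}_{c,c}$ with bounds~\eqref{bound1121}--\eqref{112123} uniform in the approximation parameter. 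The bootstrap couples three ingredients: first, energy estimates for~\eqref{ren1}--\eqref{ren6} commuted with up to $N-2$ angular derivatives via Lemma~\ref{commlemm}, where signature conservation from Lemma~\ref{signatureconserve} and Remark~\ref{signatureconserve2} is used to ensure the nonlinear error terms close at the top order; second, transport estimates along $e_3$ and $e_4$ for the Ricci coefficients and for $(\Omega,b,\slashed{g})$ through~\eqref{4trchi}--\eqref{tcod2}; and third, sphere Sobolev and elliptic estimates linking the two. A standard weak-compactness argument passes to the limit, and the uniform bounds are precisely what is needed to pass to the limit in the nonlinearities of~\eqref{ren1}--\eqref{ren6} and in the Einstein equations in their structure-equation form.

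The blow-up criterion is obtained by continuous induction on $(s_1,s_2)$: if, for every $0<s_1<r_1$, $0<s_2<r_2$, a solution with the stated bounds exists on $\mathcal{U}_{s_1,s_2}$, then the uniform estimate produced by the bootstrap allows one to extend past the boundary, forcing the first alternative. The subtle point is that the uniform estimate must depend \emph{only} on the data together with the quantities listed in the second alternative, so that torsion $\zeta$, the pointwise bounds on $(\slashed{g})^{-1}$, and the higher angular Sobolev norms all have to be recovered from the listed ones through the null structure equations. Finally, the regularity propagation~\eqref{alphaisregularkdkow} is obtained by commuting~\eqref{3alpha} with up to $N-2$ angular derivatives and running a weighted energy estimate along $e_3$ in the region $\{v\ge\hat{v}_0\}$: since $(\beta,\underline\omega,\hat\chi,\rho,\sigma,\eta,\zeta)$ are controlled by the already-established bounds, a Gr\"onwall argument in $u$ propagates $\|\alpha|_{u=u_0}\|_{L^2_{v\in[\hat v_0,v_1]}\mathring H^{N-2}}$ to all $u\in[u_0,u_1]$. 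The main difficulty throughout is the low-regularity bootstrap, where the signature structure of~\eqref{ren1}--\eqref{ren6} must be exploited delicately to avoid derivative loss coming from the borderline-regular $\hat{\chi}$.
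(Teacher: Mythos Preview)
The paper does not supply its own proof of this theorem: it is stated as a black-box result ``which can be deduced from the main results from~\cite{impulsivefirst,impulsive}'', with the citation attached directly to the theorem label, and no argument is given in the text. There is therefore nothing in the paper to compare your proposal against.

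That said, your outline is a faithful high-level summary of the strategy actually carried out in~\cite{impulsivefirst,impulsive}: mollify the rough data to smooth data, invoke the smooth local existence of Theorem~\ref{localexistencechar}, run a bootstrap using the renormalized Bianchi system~\eqref{ren1}--\eqref{ren6} (which eliminates $\alpha$ and hence the need for $L^2_v$ control of $\partial_v\hat\chi$), close the Ricci-coefficient and metric estimates by transport and elliptic estimates, and pass to the limit by weak compactness. The blow-up criterion and the $e_3$-propagation of $\alpha$-regularity via~\eqref{3alpha} and Gr\"onwall are likewise the arguments used there. So while you cannot be checked against this paper, your sketch is consistent with the cited literature and with how the present authors use the result.
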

\begin{remark}One may, of course, provide explicit upper bounds on the optimum values of the constants $N$ and $Y$, but these will not play any role in this paper.
\end{remark}
\subsection{(Asymptotically) Self-Similar Solutions}\label{asss}
In this section we revisit the types of self-similarity (for $3+1$ dimensional solutions) considered in the works~\cite{FG1,FG2,scaleinvariant}.

\begin{definition}\label{scaleinvdef}We say that a solution $\left(\mathcal{M},g_{\mu\nu}\right)$ given in the double-null form~\eqref{metricform} and defined in the region $\{u < 0 \} \cap \{0 \leq \frac{v}{-u}\leq c\}$ for some $c  > 0$, is ``self-similar'' if it is smooth and in a coordinate frame we have
\begin{equation}\label{scaleinvrelations}
\Omega\left(u,v,\theta^A\right) = \check{\Omega}\left(\frac{v}{u},\theta^A\right),\qquad b_A\left(u,v,\theta^B\right) = u\check{b}_A\left(\frac{v}{u},\theta^B\right),\qquad \slashed{g}_{AB}\left(u,v,\theta^C\right) = u^2\check{\slashed{g}}_{AB}\left(\frac{v}{u},\theta^C\right),
\end{equation}
for some $\check{\Omega}$, $\check{b}_A$, and $\check{\slashed{g}}_{AB}$. Equivalently, the ``scaling vector field'' $K \doteq u\partial_u+v\partial_v$ satisfies
\[\mathcal{L}_Kg_{\mu\nu} = 2g_{\mu\nu}.\]
\end{definition}

Note, in particular, that if a solution $\left(\mathcal{M},g_{\mu\nu}\right)$ is self-similar then the restrictions of $\Omega$, $b_A$, and $\slashed{g}_{AB}$ to $\{v = 0\}$ must satisfy the following:
\begin{equation}\label{onthev0selflskw}
\Omega|_{v=0}\left(u,\theta^A\right) = \check{\Omega}\left(\theta^A\right),\qquad  b_A\left(u,\theta^B\right) = u\check{b}_A\left(\theta^B\right),\qquad \slashed{g}_{AB}\left(u,\theta^C\right) = u^2\check{\slashed{g}}_{AB}\left(\theta^C\right).
\end{equation}
For any Riemannian metric $\check{\slashed{g}}_{AB}$ on $\mathbb{S}^2$, one natural way to generate to generate a triple 
\[\left(\Omega^{(\rm in)}(u,\theta^A),(b^A)^{(\rm in)}(u,\theta^B),\slashed{g}_{AB}^{(\rm in)}(u,\theta^C)\right)\]
satisfying the requirement~\eqref{onthev0selflskw} and also the Raychaudhuri constraint equation~\eqref{ray1cons}  is to set
\begin{equation}\label{scalesetdata}
\Omega^{(\rm in)}\left(u,\theta^A\right) \doteq 1,\qquad (b^A)^{(\rm in)}\left(u,\theta^B\right) \doteq 0,\qquad \slashed{g}^{(\rm in)}_{AB}(u,\theta^C) \doteq u^2\check{\slashed{g}}_{AB}(\theta^C),\qquad \forall u \in (-\infty,0).
\end{equation}

The following result, due to Fefferman--Graham, classifies formal power series expansions which obtain the incoming data defined by~\eqref{scalesetdata}
\begin{theorem}\label{formalFG}\cite{FG1,FG2} Let $\check{\slashed{g}}_{AB}$ be an arbitrary smooth Riemannian metric on $\mathbb{S}^2$. Then there exists $\{\check\Omega^{(i)}\left(\theta\right)\}_{i=1}^{\infty}$, $\{(\check{b}^A)^{(i)}\left(\theta\right)\}_{i=1}^{\infty}$, and $\{\check{\slashed{g}}^{(i)}_{AB}\left(\theta\right)\}_{i=1}^{\infty}$ so that we obtain a formal metric $g_{\mu\nu}$ solving ${\rm Ric}_{\mu\nu}(g) = 0$ by defining the following formal power series expansions in $\frac{v}{u}$ and in coordinate frames
\begin{equation}\label{expand1}
\Omega\left(u,v,\theta^A\right) \doteq 1 + \sum_{i=1}^{\infty}\left(\frac{v}{u}\right)^i\check{\Omega}^{(i)}\left(\theta^A\right),\qquad b^A\left(u,v,\theta^B\right) \doteq u^{-1}\sum_{i=1}^{\infty}\left(\frac{v}{u}\right)^i\left(\check{b}^A\right)^{(i)}\left(\theta^B\right), 
\end{equation}
\begin{equation}\label{expand2}
\slashed{g}_{AB}\left(u,v,\theta^C\right) \doteq u^2\check{\slashed{g}}_{AB}\left(\theta^C\right) + u^2\sum_{i=1}^{\infty}\left(\frac{v}{u}\right)^i\check{\slashed{g}}^{(i)}_{AB}\left(\theta^C\right).
\end{equation}
By a formal solution, we mean that if one truncates these sums at some integer $N$, defines a corresponding self-similar double-null metric by using the truncated sums to define the metric components, and computes the corresponding Ricci tensor ${\rm Ric}^{(N)}_{\mu\nu}$, then, along any constant $u$-curve, one will have in a coordinate frame that ${\rm Ric}^{(N)}_{\mu\nu} = O_{u,N}\left(v^{p\left(N\right)}\right)$ where $p \to \infty$ as $N \to \infty$.  (In particular, may make no assertion about the convergence of the the infinite sums in~\eqref{expand1} and~\eqref{expand2}.)

Furthermore the expansions~\eqref{expand1} and~\eqref{expand2} are uniquely determined by the requirement that the incoming characteristic data satisfy~\eqref{scalesetdata} and a choice of ${\rm tf}\left(\check{\slashed{g}}^{(1)}\right)_{AB}$. (Here ${\rm tf}$ denotes the trace-free part.)
\end{theorem}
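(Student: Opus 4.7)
The plan is to substitute the ansatz~\eqref{expand1}-\eqref{expand2} into the null-structure equations~\eqref{4trchi}-\eqref{tcod2}, the Bianchi equations, and the Gauss equation~\eqref{genGauss} and to determine the coefficients $\check{\Omega}^{(i)}$, $(\check{b}^A)^{(i)}$, $\check{\slashed{g}}^{(i)}_{AB}$ order by order in $s \doteq v/u$. The self-similarity scaling dictates that after factoring out the appropriate power of $u$ determined by Definition~\ref{signature}, every Ricci coefficient $\psi$ and curvature component $\Psi$ is a formal power series in $s$ with coefficients on $\mathbb{S}^2$; for instance, writing $\slashed{g} = u^2 F(s,\theta)$ one has $\chi = \tfrac{1}{2}\Omega^{-1}\partial_v\slashed{g} = \tfrac{u}{2\Omega}\partial_s F$, a quantity homogeneous of signature $-1$. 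Consequently each null-structure or Bianchi equation reduces to an equation on $[0,c)\times\mathbb{S}^2$, and expanding in $s$ turns it into an algebraic recursion in the sphere-tensors.

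I propose to drive the recursion using the transport equations $\mathcal{L}_{e_4}\slashed{g} = 2\chi$ (which after matching powers becomes $n\,\check{\slashed{g}}^{(n)} = 2(\check{\Omega}\check{\chi})^{(n-1)}$), the analogous relations~\eqref{osjf} for $\omega$ and $b^A$, and the $\nabla_4$-direction equations~\eqref{4trchi}, \eqref{4hatchi}, \eqref{4eta}, \eqref{4uomega}, \eqref{4sigma}, \eqref{4rho}, \eqref{4undbeta}, \eqref{4undalpha} for the Ricci coefficients and Bianchi unknowns. Each of these has the schematic form $(n+k)\,\check{X}^{(n)} = (\text{polynomial in lower orders})$ with $k$ a known integer shift depending on the signature of $X$, so provided $n+k \neq 0$ the recursion is uniquely solvable on $\mathbb{S}^2$ at each step. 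At $n=1$ the outgoing Raychaudhuri~\eqref{4trchi} together with the incoming data~\eqref{scalesetdata} fixes the trace $\check{\slashed{g}}^{AB}\check{\slashed{g}}^{(1)}_{AB}$, while $\check{\Omega}^{(1)}$ and $(\check{b}^A)^{(1)}$ are determined by the $\{v=0\}$-restrictions of~\eqref{osjf}, \eqref{4uomega}, and the relation $\eta = \zeta + \slashed{\nabla}\log\Omega$. The \emph{only} piece of order-one data not pinned down by~\eqref{scalesetdata} or the constraints is the trace-free tensor ${\rm tf}\,\check{\slashed{g}}^{(1)}_{AB}$, which encodes the shear $\hat{\chi}$ at the corner and plays the role of the free datum. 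For every $n\geq 2$, all of $\check{\slashed{g}}^{(n)},\, \check{\Omega}^{(n)},\, (\check{b}^A)^{(n)}$ are then uniquely determined by the preceding orders.

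The main obstacle is compatibility: the chosen hierarchy uses only the $\nabla_4$-direction transport equations, yet the Gauss equation~\eqref{genGauss}, the Codazzi equations~\eqref{tcod1}-\eqref{tcod2}, the transverse Raychaudhuri~\eqref{3truchi}, and the $\nabla_3$-direction Bianchi equations are all a priori independent and must also hold in order to conclude ${\rm Ric}(g)=0$. The standard resolution, going back to Fefferman--Graham, is to form the Ricci tensor $\mathcal{R}(g)$ of the formal metric and argue that the components of $\mathcal{R}$ not killed directly by the recursion satisfy, by virtue of the twice-contracted Bianchi identity $D^\mu(\mathcal{R}_{\mu\nu}-\tfrac{1}{2}g_{\mu\nu}\mathcal{R}) = 0$, a homogeneous transport system in $\nabla_4$. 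Since these same components vanish on $\{v=0\}$ by direct verification from~\eqref{scalesetdata} (the incoming data is Minkowski-like: $\hat{\underline{\chi}}=0$, $\underline{\omega}=0$, and~\eqref{ray1cons} is satisfied), the transport propagates the vanishing order-by-order in $s$. This compatibility check is purely algebraic at the level of formal series -- no convergence is asserted -- and together with the uniqueness of each step of the recursion yields both the existence and the uniqueness assertions of the theorem.
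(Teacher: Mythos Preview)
The paper does not prove this theorem: it is stated with the citation \cite{FG1,FG2} and is invoked as background. There is therefore no ``paper's own proof'' to compare against. Your sketch is a reasonable double-null reformulation of the Fefferman--Graham construction, adapted to the gauge and notation of this paper, and the overall strategy---solve a $\nabla_4$-driven recursion for the coefficients and then use the contracted Bianchi identity to propagate vanishing of the remaining Ricci components---is sound in spirit.

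A few points deserve care if you want this to be more than a sketch. First, the original Fefferman--Graham argument is carried out in the ambient/Poincar\'e metric formalism, not in a double-null foliation; translating it requires checking that your $\nabla_4$-hierarchy really does determine all coefficients without a vanishing leading coefficient $(n+k)=0$ at some order. In $3+1$ dimensions this works (there is no obstruction tensor), but your sentence ``provided $n+k\neq 0$'' glosses over exactly the place where the dimension enters. Second, the claim that $\check{\Omega}^{(1)}$ and $(\check{b}^A)^{(1)}$ are fixed ``by the $\{v=0\}$-restrictions'' is slightly imprecise: these first-order coefficients are determined by combining the constraint that the incoming data be~\eqref{scalesetdata} with the structure equations evaluated at order zero in $s$, and one should verify concretely (as in Proposition~\ref{formalcalculationsscaleinv}) which quantities vanish and which are forced. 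Third, your compatibility argument needs the observation that the $\nabla_3$-Raychaudhuri equation~\eqref{ray1cons} is satisfied identically by the incoming data~\eqref{scalesetdata}, which you state but should be checked rather than asserted.
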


\begin{remark}\label{wavescale}As a point of comparison, we can consider the simpler situation of spherically symmetric solutions $\phi$ to the wave equation on Minkowski space, 
\[\partial_u\partial_v\left(r\phi\right) = 0,\]
which are self-similar in the sense that
\[\phi(u,v) = \mathring{\phi}\left(\frac{v}{u}\right),\]
for a suitable function $\mathring{\phi}$. It is straightforward to classify all such solutions which are of bounded variation in the sense of Christodoulou~\cite{ChristBV}. One finds that these comprise a two parameter class indexed by $(a,b) \in \mathbb{R}^2$ as follows:
\[ \phi\left(u,v\right) = \begin{cases} 
      a & \{v \leq 0\} \\
     a + b\frac{v}{v-u} & \{v > 0\} \cap \{u < 0\} \\
      a+b & \{u \geq 0\}
   \end{cases}.
\]
The parameter $a$ is, of course, a trivial freedom reflecting the ability to add a constant to any solution to the wave equation.

\end{remark}

In our previous work~\cite{scaleinvariant} we accomplished two main goals:
\begin{enumerate}
	\item To show that the formal power series expansions of Fefferman--Graham from Theorem~\ref{formalFG} correspond to true solutions.
	\item To identify a large class of characteristic initial data sets (see Definition~\ref{indatasets}) which lead to solutions which, while not self-similar, converge to a self-similar solution as the point $(u,v) = (0,0)$ is approached.
\end{enumerate}
We will not here undertake a full review of the proof of these results; however, it will be clarifying to revisit one of the preliminary steps in the analysis. We start with the following definition:
\begin{definition}\label{asyscaleinitdataset}Let $\left(\Omega^{(\rm in)}(u,\theta^A),(b^A)^{(\rm in)}(u,\theta^B),\slashed{g}_{AB}^{(\rm in)}(u,\theta^C)\right)$ be defined by~\eqref{scalesetdata} restricted to $u \in [-1,0)$ and $J$ be a non-negative integer. We say that a choice of $\tilde\zeta_A\left(\theta^B\right)$ and outgoing characteristic data \\ $\left(\Omega^{(\rm out)}(v,\theta^A),\slashed{g}_{AB}^{(\rm out)}(v,\theta^C)\right)$ defined for $v \in [0,\tilde v)$ and satisfying  $\Omega^{(\rm in)}\left(-1,\theta^A\right) = \Omega^{(\rm out)}\left(0,\theta^A\right)$, $\slashed{g}_{AB}^{(\rm in)}\left(-1,\theta^C\right) = \slashed{g}_{AB}^{(\rm out)}\left(0,\theta^C\right)$, and~\eqref{ray2cons}, is ``consistent with an asymptotic scale-invariance to order $J$'' if after applying Theorem~\ref{localexistencechar}, then along $\{v = 0\}$ we have the following bounds for any curvature component $\Psi$ and Ricci coefficient $\psi$:
\begin{equation}\label{theseboundscouldbeok}
\left|\nabla^j_4\slashed{\nabla}^i\Psi\right| \lesssim_{i,j} |u|^{-2-i-j},\qquad \left|\nabla^j_4\slashed{\nabla}^i\psi\right| \lesssim_{i,j} |u|^{-i-j-1},\qquad \forall i \geq 0\text{ and }0\leq j \leq J,
\end{equation}
where these norms are computed with respect to $\slashed{g}^{(\rm in)}_{AB}$.
\end{definition}
\begin{remark}As was shown in Section 3.11 of~\cite{scaleinvariant}, for an exactly self-similar solution, we will have $\left|\nabla_4^j\slashed{\nabla}^i\Psi\right| \sim_{i,j} |u|^{-2-i-j}$ and $\left|\nabla_4^j\slashed{\nabla}^i\psi\right| \sim_{i,j} |u|^{-1-i-j}$. Thus the bounds~\eqref{theseboundscouldbeok} are motivated by requiring that along $\{v = 0\}$ the solution is, at worst, as singular as an exactly self-similar solution.
\end{remark}

The following is a slight extension of a proposition proved in~\cite{scaleinvariant}.
\begin{proposition}\label{formalcalculationsscaleinv}We have that $\tilde\zeta_A$ and $\left(\Omega^{(out)},\slashed{g}_{AB}^{(\rm out)}\right)$ is ``consistent with an asymptotic scale-invariance of order $J$'' (see Definition~\ref{asyscaleinitdataset}) for any $J \in \mathbb{Z}_{\geq 0}$ if and only if we have
\begin{equation}\label{constrforscaleinv}
	\tilde \zeta_A = 0,\qquad \left(\slashed{g}^{(\rm out)}\right)^{AB}\mathcal{L}_{\partial_v}\left(\slashed{g}\right)^{({\rm out})}_{AB}|_{v=0} = 2K|_{v=0},
\end{equation}
where $K$ denotes the Gaussian curvature of $\slashed{g}_{AB}$. Furthermore, if we assume that~\eqref{constrforscaleinv} is satisfied, set 
\[\mathcal{N}_{AB} \doteq {\rm tf}\left(\mathcal{L}_{\partial_v}\left(\slashed{g}\right)^{({\rm out})}_{AB}\right)|_{v=0},\]
and then extend $\mathcal{N}_{AB}$ to all of $\{v = 0\}$ by Lie-propagation with respect to $\partial_u$, then we have the following bounds and identitites for the Ricci coefficients and curvature components along $\{v=0\}$:
\begin{equation}\label{firstsetofstuffonv0}
{\rm tr}\underline{\chi} = \frac{2}{u},\qquad \hat{\underline{\chi}}_{AB} = 0,\qquad \underline{\omega} = 0,\qquad \underline{\alpha}_{AB} = 0,\qquad \eta_A = 0,\qquad \underline{\eta}_A = 0,\qquad \underline{\beta}_A = 0.
\end{equation}
\begin{equation}\label{firstsetofstuffonv02}
\sigma = 0,\qquad \rho = 0,\qquad {\rm tr}\chi = -2uK,\qquad  \left|\omega\right| = O\left(1\right),\qquad \hat{\chi}_{AB} = -u\mathcal{N}_{AB},
\end{equation}
\begin{equation}\label{firstsetofstuffonv03}
\beta_A = u\left[\slashed{\rm div}\mathcal{N}_A- \slashed{\nabla}_AK\right],\qquad \left|\alpha + u\left(\slashed{\nabla}\hat{\otimes}\left(\slashed{\rm div}\mathcal{N} - \slashed{\nabla}K\right)\right)\right| \lesssim |u|^{-1}.
\end{equation}

\end{proposition}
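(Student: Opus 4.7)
The plan is to integrate the null-structure and Bianchi equations along $\{v=0\}$, using the incoming data to fix $\underline{\chi},\underline{\omega},\underline{\alpha}$ outright, and then to use the asymptotic self-similar bounds of Definition~\ref{asyscaleinitdataset} to pin down the remaining free data at the sphere $(u,v)=(-1,0)$. From $\Omega^{(\rm in)}\equiv 1$, $b^{(\rm in)}\equiv 0$, $\slashed{g}^{(\rm in)}_{AB} = u^2\check{\slashed{g}}_{AB}$ we have $e_3=\partial_u$ on $\{v=0\}$, and $\mathcal{L}_{e_3}\slashed{g}=2\underline{\chi}$ gives $\underline{\chi}_{AB}=u^{-1}\slashed{g}_{AB}$, hence ${\rm tr}\underline{\chi}=2/u$, $\hat{\underline{\chi}}=0$, $\underline{\omega}=0$. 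Substituting into~\eqref{3hatuchi} yields $\underline{\alpha}=0$, and the Codazzi constraint~\eqref{tcod2} reduces (using $\slashed{\nabla}{\rm tr}\underline{\chi}=0$) to the algebraic relation $\underline{\beta}=u^{-1}\zeta$. Since $\Omega\equiv 1$ on $\{v=0\}$, the identities $\eta=\zeta+\slashed{\nabla}\log\Omega$ and $\underline{\eta}=-\zeta+\slashed{\nabla}\log\Omega$ collapse to $\eta=\zeta$ and $\underline{\eta}=-\zeta$.

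The crux is to show that consistency with asymptotic scale-invariance forces $\tilde{\zeta}\equiv 0$. Plugging the above into~\eqref{3ueta} produces a linear transport equation $\nabla_3\zeta=-(3/u)\zeta$ along $\{v=0\}$; in a $\partial_u$-Lie-propagated frame, using $\nabla_3 e_A=\underline{\chi}_A^{\ B}e_B=u^{-1}e_A$ to convert $\nabla_3$ to $\partial_u$ modulo a frame correction, this becomes $\partial_u\zeta_A=-(2/u)\zeta_A$ and integrates to $\zeta_A(u,\theta)=u^{-2}\tilde{\zeta}_A(\theta)$. The resulting norm $|\zeta|_{\slashed{g}}=|\tilde{\zeta}|_{\check{\slashed{g}}}|u|^{-3}$ must be compatible with the bound $|\zeta|\lesssim|u|^{-1}$ extracted from~\eqref{theseboundscouldbeok}, and taking $u\to 0^-$ forces $\tilde{\zeta}=0$. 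Consequently $\zeta=\eta=\underline{\eta}=\underline{\beta}\equiv 0$ on $\{v=0\}$, and then~\eqref{curleta} gives $\sigma=0$.

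With these vanishings in hand, the Gauss equation~\eqref{genGauss} reduces to $K=-\rho-\frac{1}{2u}{\rm tr}\chi$, so $\rho=0$ is equivalent to ${\rm tr}\chi=-2uK$; the second condition in~\eqref{constrforscaleinv} is precisely the statement that this holds at $u=-1$, since $\mathcal{L}_{\partial_v}\slashed{g}^{(\rm out)}=2\Omega^{(\rm out)}\chi$ and $\Omega^{(\rm out)}|_{v=0}=1$. To propagate $\rho\equiv 0$ to all $u\in[-1,0)$ I would invoke~\eqref{3rho}: with $\underline{\beta}=\underline{\alpha}=\eta=\zeta=0$ the right-hand side vanishes and the equation collapses to $\partial_u\rho+(3/u)\rho=0$. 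Then~\eqref{3trchi} yields $\partial_u{\rm tr}\chi+(1/u){\rm tr}\chi=0$, giving ${\rm tr}\chi=-2uK$ throughout, and~\eqref{3hatchi} becomes $\partial_u(u\hat{\chi}_{AB})=0$ in the Lie-propagated frame, delivering $\hat{\chi}_{AB}=-u\mathcal{N}_{AB}$. The Codazzi equation~\eqref{tcod1} then produces $\beta_A=u(\slashed{\rm div}\mathcal{N}_A-\slashed{\nabla}_A K)$, and~\eqref{3alpha}, simplified to $\nabla_3\alpha+u^{-1}\alpha=\slashed{\nabla}\hat{\otimes}\beta+\text{l.o.t.}$, integrates to the asserted leading part of $\alpha$ with the $|u|^{-1}$ error.

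The main difficulty lies in the ``only if'' direction: ruling out any cancellation at higher $\nabla_4$ order that would let $\tilde{\zeta}\neq 0$, or a failure of the second relation in~\eqref{constrforscaleinv}, coexist with the full family of bounds~\eqref{theseboundscouldbeok}. I would handle this via the signature bookkeeping of Lemma~\ref{signatureconserve} and Remark~\ref{signatureconserve2}: each transport equation along $\{v=0\}$ has a definite $e_3$-signature, so the $u$-power of any non-vanishing initial contribution is rigidly determined, and direct comparison already at $j=0$ with the self-similar profile $|u|^{-1-i-j}$ suffices to enforce both conditions of~\eqref{constrforscaleinv}.
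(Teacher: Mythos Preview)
Your argument is correct and follows the natural line: read off $\underline{\chi},\underline{\omega},\underline{\alpha}$ from the incoming data, use the Codazzi equation to express $\underline{\beta}$ in terms of $\zeta$, integrate the linear $\nabla_3$-transport equations for $\zeta$ and $\rho$ along $\{v=0\}$, and observe that the resulting $|u|^{-3}$ behaviour violates the self-similar bound $|\psi|\lesssim|u|^{-1}$ (resp.\ $|\Psi|\lesssim|u|^{-2}$) unless the free data $\tilde\zeta$ and $\rho|_{u=-1}$ vanish; the remaining identities for ${\rm tr}\chi,\hat{\chi},\beta,\alpha$ then drop out of \eqref{3trchi}, \eqref{3hatchi}, \eqref{tcod1}, \eqref{3alpha}. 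The paper does not give an independent argument here --- it simply invokes Proposition~4.1 of \cite{scaleinvariant} for the case $\Omega^{(\rm out)}\equiv 1$, $J=0$ and declares the extension to general $\Omega^{(\rm out)}$ and $J$ straightforward --- so your direct computation is in fact more informative than the paper's own proof, and almost certainly reproduces what is done in that reference.

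One small organizational point: your treatment of the ``only if'' direction for the second condition in \eqref{constrforscaleinv} is slightly buried. You first present the Gauss-equation equivalence $\rho=0\Leftrightarrow{\rm tr}\chi=-2uK$ and the propagation of $\rho\equiv0$ as if the initial condition $\rho(-1)=0$ were already known, and only at the end remark that the transport equation $\partial_u\rho+(3/u)\rho=0$ forces $\rho(-1)=0$ by comparison with $|u|^{-2}$. It would read more cleanly to run the $\rho$-argument in parallel with the $\zeta$-argument: both are homogeneous first-order ODEs whose generic solutions are too singular, so consistency kills the free constant. Your closing invocation of the signature calculus for higher $\nabla_4$-orders is adequate at the level of a sketch, matching the paper's own ``straightforward'' for $J>0$.
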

\begin{proof}If $\Omega^{(\rm out)}$ is assumed to be identically $1$ and $J = 0$, then this is contained in  Proposition 4.1 from~\cite{scaleinvariant}; the modifications needed for the more general $\Omega^{(\rm out)}$ and $J$ are straightforward. (See also the discussion in Section 2.2 of~\cite{scaleinvariant}.)

\end{proof}

Note that due to the vanishing of the shear $\hat{\underline{\chi}}_{AB}$ these solutions will not, in particular, satisfy~\eqref{shearkofw}. Furthermore, it is an immediate consequence of the Gauss--Bonnet Theorem that the Hawking mass of any $\mathbb{S}^2_{u,0}$ sphere must vanish. In particular, we do not have~\eqref{masskfow}. In fact, solutions which are self-similar in the sense of Definition~\ref{scaleinvdef} or, in view of the main results of~\cite{scaleinvariant}, solutions corresponding to data which is ``consistent with an asymptotic scale-invariance of order $0$'' may be considered to be analogous to Christodoulou's solutions of bounded variation~\cite{ChristBV} (and also the solutions discussed in Remark~\ref{wavescale}). In particular, we should not consider the class of solutions from~\cite{scaleinvariant} as good models for naked singularities. 

Given the discussion in the previous paragraph, it is natural to wonder whether self-similar solutions can be built from characteristic initial data sets which, while satisfying~\eqref{onthev0selflskw}, are more general than those allowed by~\eqref{scalesetdata}; however, the following proposition puts a severe restriction on the behavior of $g$ along $\{v = 0\}$:
\begin{proposition}\label{restrictonv0yay}Let $\left(\mathcal{M},g_{\mu\nu}\right)$ be a solution to the Einstein vacuum equations which is self-similar in the sense of Definition~\ref{scaleinvdef}. Then we must have
\begin{equation}\label{jioavjoivjoav}
\mathcal{L}_b\slashed{g}_{AB}|_{v=0} = 0,\qquad \mathcal{L}_b\Omega|_{v=0} = 0.
\end{equation}
\end{proposition}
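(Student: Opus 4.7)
Writing $\tilde b^A(\theta)\doteq u\,b^A|_{v=0}(u,\theta)$ for the $u$-independent vector field on $\mathbb{S}^2$ provided by the self-similar ansatz~\eqref{scaleinvrelations}, one checks by direct substitution that the two identities $\mathcal{L}_b\slashed{g}|_{v=0}=0$ and $\mathcal{L}_b\Omega|_{v=0}=0$ are respectively equivalent to
\begin{equation*}
(\mathcal{L}_{\tilde b}\tilde{\slashed{g}})_{AB}\equiv 0\text{ on }\mathbb{S}^2, \qquad \tilde b(\tilde\Omega)\equiv 0\text{ on }\mathbb{S}^2.
\end{equation*}
So the plan is to read these off by restricting the structure equations to the null cone $\{v=0\}$.

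First, I would use $\mathcal{L}_{e_3}\slashed{g}=2\underline\chi$ together with $\underline\omega=-\tfrac{1}{2}e_3(\log\Omega)$, $e_3|_{v=0}=\tilde\Omega^{-1}(\partial_u+u^{-1}\tilde b)$ and $\slashed{g}|_{v=0}=u^2\tilde{\slashed{g}}$ to derive the closed-form expressions
\begin{equation*}
\underline\chi_{AB}|_{v=0}=\tfrac{u}{2\tilde\Omega}\bigl(2\tilde{\slashed{g}}_{AB}+(\mathcal{L}_{\tilde b}\tilde{\slashed{g}})_{AB}\bigr), \qquad \underline\omega|_{v=0}=-\tfrac{\tilde b(\tilde\Omega)}{2u\tilde\Omega^2},
\end{equation*}
and in particular $\hat{\underline\chi}_{AB}|_{v=0}=\tfrac{u}{2\tilde\Omega}(\mathcal{L}_{\tilde b}\tilde{\slashed{g}})^{\mathrm{tf}}_{AB}$ together with $\mathrm{tr}\,\underline\chi|_{v=0}=\tfrac{2+\widetilde{\mathrm{div}}\,\tilde b}{u\tilde\Omega}$.

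Next I would feed these into the incoming Raychaudhuri equation~\eqref{3truchi}. Every term has signature $2$, so after multiplying through by $u^{2}\tilde\Omega^{2}$ all explicit $u$-dependence cancels and one is left with a scalar pointwise identity on $\mathbb{S}^2$ of the schematic shape
\begin{equation*}
\widetilde{\mathrm{div}}\,\tilde b+\tilde b(\widetilde{\mathrm{div}}\,\tilde b)-2\tilde b(\log\tilde\Omega)\bigl(2+\widetilde{\mathrm{div}}\,\tilde b\bigr)+\tfrac{1}{4}\bigl|(\mathcal{L}_{\tilde b}\tilde{\slashed{g}})^{\mathrm{tf}}\bigr|^{2}_{\tilde{\slashed{g}}}=0.
\end{equation*}
To upgrade this single scalar relation to the full tensorial Killing condition, I would perform the same substitution in the trace-free evolution equation~\eqref{3hatuchi}: the curvature component $\underline\alpha|_{v=0}$ is a symmetric trace-free $\mathbb{S}^2$-tensor of signature $2$, hence of homogeneity $u^{-2}$, and matching the $u^{-2}$-coefficients of~\eqref{3hatuchi} should yield a tensor equation on $\mathbb{S}^2$ whose only leading unknown is $(\mathcal{L}_{\tilde b}\tilde{\slashed{g}})^{\mathrm{tf}}$ itself, forcing it to vanish. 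With $(\mathcal{L}_{\tilde b}\tilde{\slashed{g}})^{\mathrm{tf}}\equiv 0$ in hand, the Raychaudhuri identity collapses to $\widetilde{\mathrm{div}}\,\tilde b+\tilde b(\widetilde{\mathrm{div}}\,\tilde b)=2\tilde b(\log\tilde\Omega)(2+\widetilde{\mathrm{div}}\,\tilde b)$, and integrating over $\mathbb{S}^2$ using the conformal Killing status of $\tilde b$ yields both $\widetilde{\mathrm{div}}\,\tilde b\equiv 0$ and $\tilde b(\tilde\Omega)\equiv 0$, which is exactly the desired conclusion.

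The hardest step is the second substitution: one must verify that the $u^{-2}$-coefficient of~\eqref{3hatuchi} is genuinely a closed equation for $(\mathcal{L}_{\tilde b}\tilde{\slashed{g}})^{\mathrm{tf}}$ and is not contaminated by unknown first $v$-derivatives of the profiles $\check\Omega$, $\check b$, $\check{\slashed{g}}$. Equivalently, one must show that $\underline\alpha|_{v=0}$ is itself determined purely by $(\tilde\Omega,\tilde b,\tilde{\slashed{g}})$, which I expect to follow from a bookkeeping exercise using the Bianchi equation~\eqref{4undalpha} combined with signature and scaling considerations. This is where the restrictive geometric nature of self-similarity along $\{v=0\}$ really makes itself felt.
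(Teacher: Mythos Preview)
The paper omits the proof of this proposition, so there is nothing to compare your approach against directly. Your setup in the first two steps is correct and matches the paper's own derivation of the $\kappa$-constraint equation in Lemma~\ref{thatissuchaconst} (with $\kappa=0$). The difficulty you flag at the end, however, is a genuine obstruction and your proposed resolution does not work.

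The problem is that equation~\eqref{3hatuchi}, restricted to $\{v=0\}$, is not a constraint on the incoming data $(\tilde\Omega,\tilde b,\tilde{\slashed{g}})$. Every null structure equation on $\{v=0\}$ is automatically satisfied once the Raychaudhuri constraint~\eqref{3truchi} holds; the role of~\eqref{3hatuchi} there is simply to \emph{define} $\underline{\alpha}|_{v=0}$ in terms of the other quantities. So substituting your formulas for $\hat{\underline{\chi}}$, ${\rm tr}\underline{\chi}$, $\underline{\omega}$ into~\eqref{3hatuchi} does not produce an equation in $(\mathcal{L}_{\tilde b}\tilde{\slashed{g}})^{\rm tf}$ alone---it produces a formula for $\underline{\alpha}|_{v=0}$, with $(\mathcal{L}_{\tilde b}\tilde{\slashed{g}})^{\rm tf}$ still free. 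Your hope of closing this via~\eqref{4undalpha} fails for the same reason: that is a $\nabla_4$-equation, so on $\{v=0\}$ it introduces yet another unknown, namely $\nabla_4\underline{\alpha}|_{v=0}$, rather than eliminating one. Scaling and signature bookkeeping only tell you that each term has the correct $u$-homogeneity; they do not reduce the number of independent unknowns.

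Put differently: the data $(\tilde\Omega,\tilde b,\tilde{\slashed{g}})$ on $\{v=0\}$ together with the single scalar Raychaudhuri relation you derived are exactly the constraints on incoming characteristic data---no tensorial restriction on $(\mathcal{L}_{\tilde b}\tilde{\slashed{g}})^{\rm tf}$ arises from the equations on $\{v=0\}$ alone. The content of Proposition~\ref{restrictonv0yay} must therefore come from the requirement that the solution extends \emph{smoothly} (and self-similarly) into the region $\{v>0\}$; it is this global regularity, not the algebra of the null structure equations on the cone itself, that forces the rigidity. Your argument as written never leaves $\{v=0\}$ except through Bianchi equations that reintroduce new unknowns, so it cannot access this mechanism.
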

\begin{proof}Omitted. 
\end{proof}
Furthermore, using the techniques developed in~\cite{scaleinvariant} one can show that any self-similar solution satisfying~\eqref{jioavjoivjoav} is isometric to a self-similar solution whose restriction to $\{v =0\}$ satisfies~\eqref{scalesetdata}. Thus, it is clear that in order to construct naked singularities, we must leave this class of solutions.

\section{$\kappa$-Self-Similarity and an Outline of the Proof}
A key role in the proof of our main result will be played by a more general notion of a self-similar solution compared to the type discussed in Section~\ref{asss}. One way to think about this new self-similarity is as follows: The self-similarity discussed in Section~\ref{asss} assumes the existence of a double-null foliation whose domain of validity includes the important null hypersurface $\{v = 0\}$ and where the self-similar vector field $K$ takes the form $K = u\partial_u + v\partial_v$. However, there is no a priori reason to expect a solution to the Einstein vacuum equations with a conformal Killing vector field $K$ to admit such a coordinate system. 

Our new $\kappa$-self-similarity still starts with a double-null coordinate system where the self-similar vector field $K$ takes the form $K = u\partial_u + v\partial_v$. However, we no longer assume that the coordinates extend to $\{v = 0\}$. Instead, we require that for an alternative coordinate system $\left(\hat{v},u,\theta^A\right)$ where $\hat{v} \doteq v^{1-2\kappa}$, the metric extends to $\{\hat{v} = 0\}$. The coordinate system $\left(\hat{v},u,\theta^A\right)$ will still be a double-null coordinate system, however, the self-similar vector field $K$ will now take the form $K = u\partial_u + \left(1-2\kappa\right)\hat{v}\partial_{\hat{v}}$. Thus we may equivalently think of $\kappa$-self-similarity as relaxing (slightly) the requirement that $K$ takes the form $u\partial_u + v\partial_v$ in a double-null coordinate system. We now give the precise definition.

\begin{definition}\label{kapselfseim}
We say that a smooth solution $\left(\mathcal{M},g_{\mu\nu}\right)$ given in the double-null form~\eqref{metricform} and defined in the region $\{u < 0 \} \cap \{0 < \frac{v}{-u}\leq c\}$ for some $c  > 0$, is ``$\kappa$-self-similar'' if in a coordinate frame we have
\begin{equation}\label{scaleinvrelations2}
\Omega\left(u,v,\theta^A\right) = \check{\Omega}\left(\frac{v}{u},\theta^A\right),\qquad b_A\left(u,v,\theta^B\right) = u\check{b}_A\left(\frac{v}{u},\theta^B\right),\qquad \slashed{g}_{AB}\left(u,v,\theta^C\right) = u^2\check{\slashed{g}}_{AB}\left(\frac{v}{u},\theta^C\right),
\end{equation}
for some $\check{\Omega}$, $\check{b}_A$, and $\check{\slashed{g}}_{AB}$, and there exists $0< \kappa \ll 1$ such that in the coordinates $\left(\hat{v},u,\theta^A\right)$ defined by $\hat{v} \doteq v^{1-2\kappa}$, for every $i,|j| \geq 0  $, (where $j$ is a multi-index) there exists $\gamma(|j|) > 0$ such that $\mathcal{L}^i_{\partial_u}\mathcal{L}_{\partial_{\theta}}^{(j)}g_{\mu\nu}$ extends to $\{\hat{v} = 0\}$ as a $C^{1,\gamma(|j|)}$ tensor. 
\end{definition}
\begin{remark}\label{kappato0is}If $\kappa = 0$ and the metric in fact extends to $\{\hat{v} = 0\}$ as a smooth metric, then the metric will be self-similar in the sense of Definition~\ref{scaleinvdef}. We further emphasize that, as we will see later, the lapse $\Omega$ of a $\kappa$-self-similar solution will satisfy $\lim_{v\to 0}\left(v^{\kappa}\Omega\left(-1,v,\theta^A\right)\right) = h\left(\theta^A\right)$ for some function $h : \mathbb{S}^2 \to (0,\infty)$. In particular, a solution in a given double-null coordinate system $\left(u,v,\theta^A\right)$ can only possibly be $\kappa$-self-similar for a unique value of $\kappa$. 
\end{remark}
\begin{remark}\label{thisissharpforgamma}
We emphasize that the coordinates $(u,v,\theta^A)$ in which the metric is required to satisfy~\eqref{scaleinvrelations2} are \underline{not} regular as $v\to 0$. In particular, as we will see in more detail later, in these coordinates we have $\Omega^2 \sim \left(\frac{v}{-u}\right)^{-2\kappa}$. While, for the solutions we consider, we will not establish sharp estimates for $\gamma(|j|)$ for general $j$, it will follow from our analysis that we have the estimate
\begin{equation}\label{901j4ioj2}
\gamma(0) \leq \frac{2\kappa}{1-2\kappa}.
\end{equation}
In particular, even though a $\kappa$-self-similar solution is smooth for $\{\hat{v} > 0\}$ it will have limited regularity if we include the hypersurface $\{\hat{v} = 0\}$.
\end{remark}
\begin{remark}The fundamental reason we will need to restrict to $|\kappa| \ll 1$ in this paper is because, in view of the $\kappa$-constraint equation (see~\eqref{basicconstr2} below) the size of $\kappa$ is directly related to the size of $\left(v^{\kappa}\Omega,\slashed{g},b\right)$ along $\{v = 0\}$, and we will need to have the smallness of $\left(v^{\kappa}\Omega,\slashed{g},b\right)|_{v=0}$ in order to carry out our nonlinear analysis. However, this constraint is, in principle, just an artifact of our method of proof and, in view of the regularity constraint~\eqref{901j4ioj2}, it is an interesting problem to construct solutions where $\kappa$ is allowed to be as large as possible. 
\end{remark}

Despite not allowing for regular limits as $v\to 0$, the $(u,v,\theta^A)$ coordinates are useful because 
\begin{enumerate}
	\item The self-similar vector field takes the simple form $K = u\partial_u + v\partial_v$. This implies that algebraic identities induced by self-similarity take a relatively simple form (see, for example, Lemma~\ref{scalrelations2}) and also allows us to view the class of $\kappa$-self-similar solutions as a perturbation of the self-similar solutions of Definition~\ref{scaleinvdef}.
	\item Even though the Ricci coefficients $\psi$ and curvature components $\Psi$ will in general be singular as $v\to 0$, we will have a very simple procedure for weighting them appropriately (when $\Psi \neq \alpha_{AB}$ and $\psi \neq \omega$). Namely, letting $s$ denote the signature of $\psi$ and $\Psi$ (see Definition~\ref{signature}) we will have that $\Omega^s\psi$ and $\Omega^s\Psi$ have regular limits as $v\to 0$. (See Lemma~\ref{dkwowsss322} below.) 
\end{enumerate}
 One may think of the parameter $\kappa$ as being (roughly) the analogue of $k^2/2$ in Christodoulou's $k$-self-similar solutions. 
 
Finally, we note that while it is technically convenient for us to phrase Definition~\ref{kapselfseim} in terms of a coordinate system which is regular along $\{\hat{v} = 0\}$, it may be more useful conceptually to think of them as ``$b$-self-similar'' solutions. This is because the need to consider $\kappa$-self-similar solutions arises when one wants to construct a self-similar solution with a non-trivial shift $b_A$ along the incoming cone $\{\hat{v} = 0\}$, or equivalently, when one wants the scaling symmetry to induce a twist along the spheres in addition to rescaling in the null direction. See also Lemma~\ref{thatissuchaconst} and the following discussion.
 
\subsection{The Ingoing Raychaudhuri Equation and the Role of $\kappa$-Self-Similarity} 
 In this section we will explain how $\kappa$-self-similarity allows us to overcome the obstacle of Proposition~\ref{restrictonv0yay}.

The following lemma shows how to weight the Ricci coefficients and null curvature components in $(u,v,\theta^A)$ coordinates so as to have regular limits as $v\to 0$.
\begin{lemma}\label{dkwowsss322} Let $(\mathcal{M},g)$ be a $\kappa$-self-similar solution. Then, in the $\left(u,\hat{v},\theta^A\right)$ coordinates, the metric $g$ is of the form
\begin{equation}\label{newformhatv}
g = -2\Omega^2v^{2\kappa}\left(1-2\kappa\right)^{-1}\left(du\otimes d\hat{v} + d\hat{v}\otimes du\right) + \slashed{g}_{AB}\left(d\theta^A - b^Adu\right)\otimes \left(d\theta^B-b^Bdu\right).
\end{equation}
The self-similar vector field takes the form
\[K = u\partial_u + \left(1-2\kappa\right)\hat{v}\partial_{\hat{v}}.\]
Finally, using that, in the form~\eqref{newformhatv}, the tensor $\mathcal{L}^i_{\partial_u}\mathcal{L}_{\partial_{\theta}}^{(j)}g_{\mu\nu}$ must extend to $\{\hat{v} = 0\}$ as a $C^{1,p}$ tensor for $0 < p \leq \gamma(|j|)$, we find that the following double null quantities defined in $(u,v,\theta^A)$ coordinates must extend to $\{v = 0\}$ continuously and the restrictions to $\{v = 0\}$ are smooth tensors in $\left(u,\theta^A\right)$:

\begin{equation}\label{regeregerge1}
	v^{\kappa}\Omega,\qquad b^A,\qquad \slashed{g}_{AB},\qquad \Omega\underline{\omega},\qquad v\Omega\omega,\qquad \zeta_A,\qquad \eta_A,\qquad \underline{\eta}_A,\qquad \Omega\hat{\underline{\chi}}_{AB},\qquad \Omega{\rm tr}\underline{\chi},
	\end{equation}
	\begin{equation}\label{regeregerge2}
\Omega^{-1}\hat{\chi}_{AB},\qquad \Omega^{-1}{\rm tr}\chi,\qquad \Omega^2\underline{\alpha}_{AB},\qquad \Omega\underline{\beta}_A,\qquad \rho,\qquad \sigma,\qquad \Omega^{-1}\beta_A.
\end{equation}

Equivalently, for any Ricci coefficient $\psi$ not equal to $\omega$ or null curvature component $\Psi$ not equal to $\alpha$, we have that $\Omega^s\psi$ and $\Omega^s\Psi$ extend continuously to $\{v = 0\}$, where $s$ denotes the signature of $\psi$ or $\Psi$. Finally, we note that we will have
\[\left|\Omega^s\psi|_{v=0}\right| \lesssim |u|^{-1},\qquad \left|\Omega^s\Psi|_{v=0}\right| \lesssim u^{-2}.\]

\end{lemma}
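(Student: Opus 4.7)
The plan is to deduce the form \eqref{newformhatv} and all of the regularity assertions mechanically from the coordinate change $\hat v = v^{1-2\kappa}$, together with the bookkeeping of powers of $\Omega$ and $v$ that the signature conveniently tracks. First, I would compute $dv = \tfrac{v^{2\kappa}}{1-2\kappa}\,d\hat v$, substitute into \eqref{metricform}, and read off both the metric in the form \eqref{newformhatv} and, from $v\partial_v = (1-2\kappa)\hat v\,\partial_{\hat v}$, the expression $K = u\partial_u + (1-2\kappa)\hat v\,\partial_{\hat v}$. Matching coefficients of \eqref{newformhatv} against the $C^{1,\gamma(|j|)}$ regularity prescribed in Definition~\ref{kapselfseim}, it follows immediately that $v^{2\kappa}\Omega^{2}$, and hence (by taking a positive square root) $v^{\kappa}\Omega$, as well as $b^{A}$ and $\slashed g_{AB}$, extend continuously to $\{\hat v=0\}$, with smooth dependence on $(u,\theta^A)$ along the trace, since arbitrarily many $\mathcal{L}_{\partial_u}^{i}\mathcal{L}_{\partial_{\theta}}^{(j)}$ of $g$ have a regular trace.

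The next step is the signature-weighting principle. In the new coordinates $e_4 = \Omega^{-1}(1-2\kappa)v^{-2\kappa}\partial_{\hat v}$ and $e_3 = \Omega^{-1}(\partial_u + b^A\partial_{\theta^A})$, and since $\Omega^{-1}$ differs from a regular function by a factor of $v^{\kappa}$, each contraction against $e_4$ carries an intrinsic $v^{-\kappa}$ while $e_3$ is effectively regular. Using \eqref{osjf} and \eqref{okmdwqejodq12ed} to express $\chi,\underline\chi,\zeta,\eta,\underline\eta,\underline\omega$ in terms of derivatives of $v^{\kappa}\Omega$, $b^A$, $\slashed g_{AB}$, every weighted Ricci coefficient $\Omega^{s}\psi$ (with $s$ its signature) emerges as a product of regular functions of $(u,\hat v,\theta^A)$ because the factor $\Omega^{s}\sim v^{-\kappa s}$ exactly cancels the accumulated intrinsic singular power $v^{\kappa s}$. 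For the curvature components other than $\alpha$, I would then use the Codazzi equations \eqref{tcod1}--\eqref{tcod2}, the Gauss equation \eqref{genGauss}, and \eqref{curleta}--\eqref{curlueta} (together with the $\nabla_3$ Bianchi equation \eqref{4undalpha} or equivalently the transport equation \eqref{3hatuchi} for $\underline\alpha$) to express everything in terms of the already-regularized Ricci data and angular derivatives thereof, and signature-match accordingly.

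The two exceptional cases require separate handling. For $\omega = -\tfrac12\Omega^{-1}\partial_v\log\Omega$, the decomposition $\log\Omega = -\kappa\log v + \log(v^{\kappa}\Omega)$ produces an irremovable $-\kappa/(2\Omega v)$ contribution; multiplying by $v\Omega$ absorbs it and yields $v\Omega\omega = \tfrac{\kappa}{2} - \tfrac{1-2\kappa}{2}\hat v\,\partial_{\hat v}\log(v^{\kappa}\Omega)$, which is manifestly continuous at $\hat v=0$. On the other hand, $\alpha$ is genuinely worse than its signature predicts: from \eqref{4hatchi}, $\alpha\sim \Omega^{-1}\partial_v\hat\chi$, and differentiating the typical shape $\hat\chi\sim v^{-\kappa}\cdot(\text{regular})$ produces a $v^{-\kappa-1}$ contribution, overshooting the signature compensation by precisely one power of $v$ — this is exactly why the lemma excludes $\alpha$.

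Finally, the bounds $|\Omega^{s}\psi|_{\{v=0\}}|\lesssim |u|^{-1}$ and $|\Omega^{s}\Psi|_{\{v=0\}}|\lesssim |u|^{-2}$ follow by feeding the self-similar ansatz \eqref{scaleinvrelations2} through the formulas of the previous paragraphs: under the scaling $(u,v)\mapsto(au,av)$ one has $\mathcal{L}_K g = 2g$, Ricci coefficients transform with weight $-1$ and curvature components with weight $-2$, and the signature weight $\Omega^{s}$ itself is invariant, so each weighted quantity must be of the form $|u|^{-k}F(v/u,\theta^A)$ with $k=1$ or $k=2$; smoothness of $F(0,\theta^A)$ in $\theta^A$ is inherited from the angular differentiability in Definition~\ref{kapselfseim}. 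The main difficulty throughout is not any single calculation but the careful bookkeeping that aligns the three competing scalings — $v^{-\kappa}$ from each $\partial_v$-derivative, $v^{\kappa}$ from each $\Omega^{-1}$, and the $u$-power from exact self-similarity — so that the cancellations in the weighted objects are exact.
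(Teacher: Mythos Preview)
Your proposal is correct and follows essentially the same approach as the paper's own proof, which consists of the single observation that everything follows from the definitions together with $\partial_v = (1-2\kappa)v^{-2\kappa}\partial_{\hat v}$; you have simply supplied the details the paper leaves implicit. One minor imprecision: you describe $e_3$ as ``effectively regular'' when in fact $e_3 = \Omega^{-1}(\partial_u + b)$ carries a factor $v^{\kappa}$, but since you correctly state the net cancellation $\Omega^{s}\sim v^{-\kappa s}$ against $v^{\kappa s}$ (with $s = \#e_3 - \#e_4$) this does not affect your argument.
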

\begin{proof}This follows in a straightforward manner from the definitions of the various metric components, Ricci coefficients, and curvature components and the fact that
\[\frac{\partial}{\partial v} = \left(1-2\kappa\right)v^{-2\kappa}\frac{\partial}{\partial \hat{v}}.\]
\end{proof}

\begin{convention}\label{thev0convention} In the $\left(u,v,\theta^A\right)$ coordinates, not all components of the metric $g$ extend continuously to $\{v = 0\}$. Nevertheless, we will refer to the hypersurface $\{v = 0\}$ with the understanding that along $\{v=0\}$ it only makes sense to consider the quantities listed in~\eqref{regeregerge1} and~\eqref{regeregerge2}.
\end{convention}

Before proceeding, it is useful to note that by differentiating the formulas in~\eqref{scaleinvrelations2} we can produce various relations between the Ricci coefficients. We list the most important ones in the lemma below.
\begin{lemma}\label{scalrelations2}Let $\left(\mathcal{M},g_{\mu\nu}\right)$ be a self-similar solution or a $\kappa$-self-similar solution. Then we have
\begin{equation}\label{0skm}
\Omega{\rm tr}\underline{\chi} + \Omega \frac{v}{u}{\rm tr}\chi = \frac{2}{u} + \slashed{\rm div}b,\qquad \Omega\hat{\underline{\chi}}_{AB} + \Omega \frac{v}{u}\hat{\chi}_{AB} = \frac{1}{2}\left(\slashed{\nabla}\hat{\otimes}b\right)_{AB},
\end{equation}
\begin{equation}\label{09sjn|}
\Omega\underline{\omega} + \frac{v}{u}\Omega\omega +\frac{1}{2}\mathcal{L}_b\log\Omega = 0.
\end{equation}
\end{lemma}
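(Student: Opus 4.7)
The plan is to derive all three identities as direct algebraic consequences of the self-similar ansatz \eqref{scaleinvrelations2} combined with the formulas \eqref{osjf} and \eqref{okmdwqejodq12ed} that express Ricci coefficients in terms of coordinate derivatives of $\Omega$, $b$, and $\slashed{g}$. No PDE is actually being solved here; the whole computation is an exercise in the chain rule, so the ``main obstacle'' is merely bookkeeping of signs, weights of $\Omega$, and the Lie derivative of $b$.

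For the identities in the first displayed line, I would start from the observation that \eqref{scaleinvrelations2} is exactly the statement $\mathcal{L}_K\slashed{g}_{AB} = 2\slashed{g}_{AB}$ for $K = u\partial_u + v\partial_v$. Since $K$ has no angular component, this reduces to
\[
u\,\partial_u\slashed{g}_{AB} + v\,\partial_v\slashed{g}_{AB} = 2\slashed{g}_{AB}.
\]
On the other hand, substituting $e_4 = \Omega^{-1}\partial_v$ and $e_3 = \Omega^{-1}(\partial_u + b^C\partial_{\theta^C})$ into \eqref{okmdwqejodq12ed} gives $\partial_v\slashed{g}_{AB} = 2\Omega\chi_{AB}$ and $\partial_u\slashed{g}_{AB} = 2\Omega\underline{\chi}_{AB} - (\mathcal{L}_b\slashed{g})_{AB}$. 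Plugging these in and dividing by $u$ yields
\[
\Omega\underline{\chi}_{AB} + \frac{v}{u}\Omega\chi_{AB} = \frac{1}{u}\slashed{g}_{AB} + \frac{1}{2}(\mathcal{L}_b\slashed{g})_{AB}.
\]
Taking the $\slashed{g}$-trace and using $\slashed{g}^{AB}(\mathcal{L}_b\slashed{g})_{AB} = 2\,\slashed{\rm div}\,b$ gives the first identity in \eqref{0skm}; taking the trace-free part and recognizing $\mathrm{tf}(\mathcal{L}_b\slashed{g})_{AB} = (\slashed{\nabla}\hat{\otimes} b)_{AB}$ gives the second.

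For \eqref{09sjn|}, the scalar self-similarity relation $\Omega(u,v,\theta) = \check\Omega(v/u,\theta)$ gives $\mathcal{L}_K\log\Omega = 0$, i.e.
\[
u\,\partial_u\log\Omega + v\,\partial_v\log\Omega = 0.
\]
From \eqref{osjf} written in coordinates, $\Omega\omega = -\tfrac{1}{2}\partial_v\log\Omega$ and $\Omega\underline{\omega} = -\tfrac{1}{2}\partial_u\log\Omega - \tfrac{1}{2}\mathcal{L}_b\log\Omega$. Forming the combination $\Omega\underline{\omega} + (v/u)\Omega\omega$ and using the relation above eliminates the coordinate derivatives of $\log\Omega$ and leaves exactly $-\tfrac{1}{2}\mathcal{L}_b\log\Omega$, giving \eqref{09sjn|}.

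The argument is identical in the self-similar and $\kappa$-self-similar cases since in both cases the ansatz \eqref{scaleinvrelations2} holds verbatim in the $(u,v,\theta^A)$ coordinates on the region $\{v>0\}$ where the Ricci coefficients are defined classically; the coordinate singularity at $v=0$ in the $\kappa$-self-similar case plays no role here, since the relations are pointwise identities for $v>0$.
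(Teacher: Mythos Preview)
Your proof is correct and is essentially the only natural approach: differentiate the self-similar ansatz \eqref{scaleinvrelations2} along $K = u\partial_u + v\partial_v$ and translate the resulting coordinate identities via \eqref{osjf} and \eqref{okmdwqejodq12ed}. The paper does not give its own proof here but simply refers to Lemma~B.1 of \cite{scaleinvariant}, which carries out exactly this computation; your write-up is a clean self-contained version of that argument.
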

\begin{proof}See Lemma B.1 of~\cite{scaleinvariant} for the proof in the case of a self-similar solution. The same proof works for the $\kappa$-self-similar case.
\end{proof}

Observe that any $\kappa$-self-similar solution $g_{\mu\nu}$ in the $\left(u,\hat{v},\theta^A\right)$ coordinates must satisfy the $e_3$-Raychaudhuri equation~\eqref{3truchi} along $\{\hat{v} = 0\}$. This may be interpreted as a constraint equation for incoming characteristic $\kappa$-self-similar data. In the next lemma we compute the precise form of this constraint equation.
\begin{lemma}\label{thatissuchaconst}For any $\kappa$-self-similar solution or self-similar solution, along $\{v = 0\}$ the following equation must hold:
\begin{align}\label{basicconstr2}
&\frac{1}{-u}\slashed{\rm div}b- \mathcal{L}_b\slashed{\rm div}b- \frac{1}{2}\left(\slashed{\rm div}b\right)^2 =  \frac{1}{4}\left|\slashed{\nabla}\hat{\otimes}b\right|^2-\frac{4\kappa}{u^2}+\frac{4}{-u}\mathcal{L}_b\log\Omega -2\left(\mathcal{L}_b\log\Omega\right)\left(\slashed{\rm div}b\right),
\end{align}
where we take $\kappa = 0$ in~\eqref{basicconstr2} for a self-similar solution. (We note that~\eqref{basicconstr2} may be equivalently phrased as an equation for $\check{b}$ along $\mathbb{S}^2$ for $\check{b}$ as in Definition~\ref{scaleinvrelations2}.)
\end{lemma}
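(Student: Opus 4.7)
The natural approach is to evaluate the ingoing Raychaudhuri equation \eqref{3truchi} at $\{v=0\}$, after suitably weighting each term by a power of $\Omega$ that makes it extend continuously to the cone. Since $\nabla_3 = \Omega^{-1}(\partial_u + b^C\partial_{\theta^C})$ on scalars, I multiply \eqref{3truchi} through by $\Omega^{2}$ and work with the regular combinations $X \doteq \Omega\,{\rm tr}\underline\chi$, $Y_{AB}\doteq \Omega\,\hat{\underline\chi}_{AB}$, and $Z\doteq \Omega\,\underline\omega$ listed in Lemma \ref{dkwowsss322}. Using $(\partial_u + \mathcal L_b)\log\Omega = -2Z$ (which is just the definition of $\underline\omega$), a short computation rewrites the equation in the $v$-regular form
\begin{equation*}
(\partial_u + \mathcal L_b)X \;+\; 4ZX \;+\; \tfrac{1}{2}X^2 \;+\; |Y|^2 \;=\; 0,
\end{equation*}
valid pointwise, hence by continuity valid along $\{v=0\}$ once the traces of $X,Y,Z$ are understood there.

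The second step is to evaluate $X,Y,Z$ at $v=0$ using Lemma \ref{scalrelations2}. The identities $\Omega\,{\rm tr}\underline\chi + (v/u)\Omega\,{\rm tr}\chi = 2/u + \slashed{\rm div} b$ and $\Omega\hat{\underline\chi} + (v/u)\Omega\hat\chi = \tfrac{1}{2}\slashed\nabla\hat\otimes b$ reduce cleanly at $v=0$, since the factor of $v$ in each second term beats the at-worst $v^{-2\kappa}$ blow-up of $\Omega{\rm tr}\chi$ and $\Omega\hat\chi$ (using $\kappa<1/2$); this gives $X|_{v=0} = 2/u + \slashed{\rm div}b$ and $Y|_{v=0} = \tfrac{1}{2}\slashed\nabla\hat\otimes b$. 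The delicate one is $Z$: the identity $\Omega\underline\omega + (v/u)\Omega\omega + \tfrac{1}{2}\mathcal L_b\log\Omega = 0$ has a nonvanishing middle term at $v=0$. Because $v^\kappa\Omega$ extends regularly to $\{v=0\}$, one has $\log\Omega = -\kappa\log v + \log(v^\kappa\Omega)$ with the second summand regular, so $\Omega\omega = -\tfrac{1}{2}\partial_v\log\Omega \sim \tfrac{\kappa}{2v}$ and therefore $(v/u)\Omega\omega \to \tfrac{\kappa}{2u}$ as $v\to 0$; this is precisely where the $\kappa$-self-similarity departs from true self-similarity, and it yields $Z|_{v=0} = -\tfrac{\kappa}{2u} - \tfrac{1}{2}\mathcal L_b\log\Omega$.

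The third step is to compute $(\partial_u+\mathcal L_b)X|_{v=0}$ by differentiating $X|_{v=0}$ directly. The $\partial_u$-derivative of $\slashed{\rm div}b|_{v=0}$ is obtained from the self-similar form: a direct calculation in $\kappa$-self-similar coordinates gives $\slashed{\rm div}b = u\cdot\check F(v/u,\theta)$ for a function $\check F$ regular at $v=0$, so $(\partial_u\slashed{\rm div}b)|_{v=0} = \check F(0,\theta)$, which is proportional to $\slashed{\rm div}b/u$. Likewise $\mathcal L_b(\slashed{\rm div}b)$ survives at $v=0$ as $\mathcal L_b\slashed{\rm div}b$, while $\mathcal L_b(2/u)=0$. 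Substituting all of this into the weighted Raychaudhuri equation and expanding the quadratic $X^2$, the cross terms $ZX$, and the square $|Y|^2$, the $1/u^2$ terms on both sides cancel up to a $4\kappa/u^2$ residual coming from $4ZX$, and collecting the remaining terms by type ($1/u^2$, $\slashed{\rm div}b/u$, $\mathcal L_b\slashed{\rm div}b$, $(\slashed{\rm div}b)^2$, $\mathcal L_b\log\Omega/u$, $(\mathcal L_b\log\Omega)(\slashed{\rm div}b)$, and $|\slashed\nabla\hat\otimes b|^2$) rearranges to give \eqref{basicconstr2}; the self-similar case $\kappa=0$ is literally the same computation with $Z|_{v=0} = -\tfrac{1}{2}\mathcal L_b\log\Omega$.

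The main obstacle is the computation of $Z|_{v=0}$: this is where the singular behavior of $\Omega$ at the cone $\{v=0\}$ enters and produces the characteristic $-4\kappa/u^2$ source in \eqref{basicconstr2}. Everything else is a direct algebraic substitution into a single Raychaudhuri identity, but this one limit is precisely the mechanism by which the new self-similarity encoded in Definition \ref{kapselfseim} distinguishes itself from the Fefferman--Graham-style self-similarity of Section \ref{asss}.
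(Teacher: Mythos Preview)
Your approach is essentially identical to the paper's proof in Appendix~\ref{askf}: multiply the $e_3$-Raychaudhuri equation through by $\Omega^2$ to obtain the regular equation $(\partial_u+\mathcal{L}_b)(\Omega{\rm tr}\underline\chi)+\tfrac12(\Omega{\rm tr}\underline\chi)^2=-|\Omega\hat{\underline\chi}|^2-4(\Omega\underline\omega)(\Omega{\rm tr}\underline\chi)$, evaluate the weighted quantities at $v=0$ via Lemma~\ref{scalrelations2} together with the key limit $\lim_{v\to 0}(v/u)\Omega\omega=\kappa/(2u)$, and substitute. One small slip: the self-similar scaling of $\slashed{\rm div}b$ is $u^{-1}\check F(v/u,\theta)$, not $u\cdot\check F$ (since $b^A\sim u^{-1}$), so that $(\partial_u\slashed{\rm div}b)|_{v=0}=-u^{-1}\slashed{\rm div}b$ with a minus sign; this is exactly what the paper records in~\eqref{raychaubecomesthisscalinv2}, and the sign is needed for the algebra to close on~\eqref{basicconstr2}.
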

\begin{proof}See Appendix~\ref{askf}.
\end{proof}
We emphasize that it follows from Lemma~\ref{dkwowsss322} that $\mathcal{L}_b\log\Omega$ extends to $\{v = 0\}$. We call equation~\eqref{basicconstr2} the $\kappa$-constraint equation.

We can now explain the role of $\kappa$-self-similarity in the construction of naked singularities. Recall from the discussion in Sections~\ref{letscompareyayaya} and~\ref{asss} that in order for the singular point at $(u,v) = (0,0)$ to formally correspond to a non-BV singularity in the sense of Christodoulou~\cite{ChristBV}, we desire that the shear of the incoming cone is non-integrable as $u \to 0$ (see also the discussion in item~\ref{9090099090128jnomi2} of Section~\ref{letscompareyayaya}). We also desire (see the discussion in Section~\ref{nakedsingchristchrist}) for the Hawking mass $m\left(\mathbb{S}^2_{u,0}\right)$ to satisfy that $m\left(\mathbb{S}^2_{u,0}\right)\left({\rm Area}\left(\mathbb{S}^2_{u,0}\right)\right)^{-1/2}$ is uniformly bounded from below as $u\to 0$. For a self-similar solution, every Ricci coefficient $\psi$ will satisfy $\left|\psi|_{v=0}\right|_{\slashed{g}}\left(u,\theta^A\right) = u^{-1}h\left(\theta^A\right)$ for a suitable function $h$. Using this, it is possible to show that the above requirements will hold for a self-similar or $\kappa$-self-similar solution only if, for some $c > 0$, the restriction of the ingoing shear $\hat{\underline{\chi}}_{AB}$ to $\{u = -c\}\cap \{v = 0\}$ is non-vanishing along a suitable portion of $\mathbb{S}^2$ (cf.~the Hawking mass calculation later in Lemma~\ref{hawkcalccalc}).

For self-similar or $\kappa$-self-similar solutions, Lemma~\ref{scalrelations2} implies that $\Omega\hat{\underline{\chi}}_{AB}|_{v=0} = \frac{1}{2}\left(\slashed{\nabla}\hat{\otimes}b\right)_{AB}$.  Thus, in view of the previous paragraph, we are naturally lead to ask if we can find $\slashed{g}_{AB}$, $\Omega$, and $b_A$ so that $b_A$ has a non-trivial trace-free deformation tensor and satisfies the $\kappa$-constraint equation~\eqref{basicconstr2}. However,  Proposition~\ref{restrictonv0yay} implies that this is impossible for self-similar solutions with $\kappa = 0$!\footnote{\label{footnotenotexist}This is conceptually clearest in the case where $\Omega = 1$ and we assume that $b_A\sim \epsilon$. In this case~\eqref{basicconstr2} suggests that $\slashed{\rm div}b \sim \epsilon^2$ and, along $\{u =-1\} \cap \{v=0\}$, the equation~\eqref{basicconstr2} becomes of the form
\[\slashed{\rm div}b = \frac{1}{4}\left|\slashed{\nabla}\hat{\otimes}b\right|^2 + O\left(\epsilon^3\right) \Rightarrow \int_{\mathbb{S}^2}\left|\slashed{\nabla}\hat{\otimes}b\right|^2 = O\left(\epsilon^3\right).\]
This argument can be iterated and suggests that no solutions $b_A$ exist.} In contrast, if we allow $\kappa$ to be non-zero, then, as shown in Appendix~\ref{constructthetupleregul}, we have an infinite class of solutions to the $\kappa$-constraint equation where $\left(\slashed{\nabla}\hat{\otimes}b\right)_{AB}$ is non-trivial.\footnote{The difference with the discussion from footnote~\ref{footnotenotexist} is that if we allow $\kappa \neq 0$, then  we have 
\[\slashed{\rm div}b = \frac{1}{4}\left|\slashed{\nabla}\hat{\otimes}b\right|^2 -4\kappa+ O\left(\epsilon^3\right),\]
and for an appropriate value of $\kappa \sim \epsilon^2$, there are no obstructions to finding a solution with $\slashed{\nabla}\hat{\otimes}b \neq 0$.} \underline{This is the crucial place where $\kappa$-self-similarity plays a role in our construction.}
 
We close this section with one final remark about $\kappa$-self-similar solutions:
\begin{remark}\label{moregeneral}One may consider a generalization of the notion of $\kappa$-self-similar solutions from Definition~\ref{kapselfseim} where, instead of just being a constant along $\{v = 0\}$, $\kappa$ depends on the angular variable $\kappa\left(\theta^A\right)$, and we have $\mathcal{L}_b\kappa|_{v=0} = 0$. (If the condition $\mathcal{L}_b\kappa|_{v=0} = 0$ is violated, one may show that the $e_3$-Raychaudhuri equation cannot be satisfied along $\{v = 0\}$.)  We will not explicitly consider such solutions here; however, the possible existence of them will implicitly appear later in our analysis as we approach the cone $\{u=0\}$. (See Section~\ref{bottIIIIIIII}.)
\end{remark}

In the next seven subsections, we will discuss the main steps of the proof of our main result Theorem~\ref{themainextresult}.
\subsection{Degenerate Transport Equations on $\mathbb{S}^2$}
In Section~\ref{pdetheoryforformal} we will carry out an analysis of certain classes of degenerate transport equations on $\mathbb{S}^2$. The linear version of these equations come in two main forms:
\begin{enumerate}
	\item\label{123ijni299103} The first type is of the form $\left(u + \mathcal{L}_Xu + h\cdot u\right)_{A_1\cdots A_k} = F_{A_1\cdots A_k}$, where $X^A$ is a given vector field on $\mathbb{S}^2$, $h = \sum_{i=0}^k c_i \left(h^{(i)}\right)_{A_1\cdots A_i}^{\ \ \ \ \ \ \ B_1\cdots B_i}$ is a given linear combination of $(i,i)$-tensors on $\mathbb{S}^2$ for $i \in [0,k]$, and $F_{A_1\cdots A_k}$ is a given tensors on $\mathbb{S}^2$. We will furthermore have a smallness assumption on $X^A$ and $h$. We will show that these equations have unique solutions $u_{A_1\cdots A_k}$ which satisfy suitable a priori estimates. The basic idea is to use the smallness of $X^A$ and $h$ to treat these equations as perturbations of the identity.
	\item The second type of equation, called the ``$\kappa$-singular equation'' will be of the form \begin{equation}\label{kdlpwlpdwqacssing}\left(\mathcal{L}_bf_{AB}  - \left(\slashed{\nabla}\hat{\otimes}b\right)^C_{\ \ (A}f_{B)C} - \frac{1}{2}\slashed{\rm div}b f_{AB}\right) -2\kappa f_{AB}= H_{AB},\end{equation} where $b^A$ is a given vector field on $\mathbb{S}^2$, we have a Riemannian metric $\slashed{g}_{AB}$ on $\mathbb{S}^2$, $\kappa$ is as in Definition~\ref{kapselfseim}, and $H$ is a given trace-free symmetric $2$-tensor on $\mathbb{S}^2$. The goal will be to show that there exists a unique solution $f_{AB}$ satisfying appropriate a priori estimates. For a suitable $0 < \epsilon \ll 1$ we will have, schematically, that $b^A \sim \epsilon$ and $\kappa\sim \epsilon^2$. In particular, this equation cannot be treated as a perturbation of the identity. Our analysis will instead be based on exploiting an anti-symmetric structure. This anti-symmetric structure leads easily to $L^2$-estimates for $f_{AB}$, but we will have to work considerably harder for higher order estimates. We will also consider an evolutionary analogue of the $\kappa$-singular equation on $[0,\infty) \times \mathbb{S}^2$: 
	\begin{equation}\label{evolvemkwasf}
	\mathcal{L}_sf_{AB} + \left(\mathcal{L}_bf_{AB}  - \left(\slashed{\nabla}\hat{\otimes}b\right)^C_{\ \ (A}f_{B)C} - \frac{1}{2}\slashed{\rm div}b f_{AB}\right) -2\kappa f_{AB}= H_{AB}.
	\end{equation}
	This will be treated in a similar fashion to the $\kappa$-singular equation.
	\end{enumerate}
	
	Equations of these types arise in the following fashion:  For a $\kappa$-self-similar solution, let $\phi$ stand for one of expressions in~\eqref{regeregerge1} or~\eqref{regeregerge2}. Then we have that $\phi\left(u,v,\theta^A\right) = u^{-c}\check{\phi}\left(\frac{v}{u},\theta^A\right)$ for a suitable integer $c$ and some $\check{\phi}\left(x,\theta^A\right)$ which has a regular limit as $x \to 0$ . In view of this, we will have that 
	\[\Omega\nabla_3\phi = -\frac{v}{u}\Omega\nabla_4\phi + \mathcal{L}_b\phi + {\rm lower\ order\ terms}.\]
	Since we will also have that  $v\Omega\nabla_4\phi \to 0$ as $v\to 0$, it is thus clear that whenever we restrict a $\Omega\nabla_3\phi$ equation to $\{v = 0\}$ we will obtain an equation of the form 
	\[\mathcal{L}_b\phi + {\rm lower\ order\ terms} = 0.\]
	We will need to construct solutions to these equations in certain cases, and in all of these cases except when we study $\Omega^{-1}\hat{\chi}$, these equations will be of the type mentioned in item~\ref{123ijni299103}. When we carry out this same procedure for $\Omega^{-1}\hat{\chi}$, the restriction of the equation for $\Omega\nabla_3\left(\Omega^{-1}\hat{\chi}\right)$ leads to the study of the equation~\eqref{kdlpwlpdwqacssing}. Lastly, the equation~\eqref{evolvemkwasf} will arise when we construct outgoing characteristic initial data for our solutions. We will want the corresponding $\Omega^{-1}\hat{\chi}$ to be self-similar to leading order as $v\to 0$, and thus the  equation for $\Omega\nabla_3\left(\Omega^{-1}\hat{\chi}\right)$ will lead to an equation of the form~\eqref{evolvemkwasf} which must hold along this initial outgoing null hypersurface to leading order as $v\to 0$.

	Finally, we will use the linear theory developed to undertake a detailed analysis of certain classes of solutions to the nonlinear $\kappa$-constraint equation~\eqref{basicconstr2}.

\begin{remark}Though we will not pursue this direction, we could derive formal expansions for $\kappa$-self-similar metrics near $\{\hat{v} = 0\}$ in the spirit of Fefferman--Graham~\cite{FG1,FG2}. We note however a significant difference between our setting and that of Fefferman--Graham; namely, the formal expansions of~\cite{FG1,FG2} are algebraic in that successive terms in the expansion are given by rational functions of (angular derivatives of) previous terms in the expansions, while in our setting deriving even a formal expansion will require one to solve degenerate transport equations of the type discussed above.
\end{remark}

\subsection{Constructing the Characteristic Initial Data Sets}\label{previewofchardata}
In Section~\ref{setitupayya} we construct classes of characteristic initial data sets along the hypersurfaces $\{\hat{v} = 0\} \cap \{u \in [-1,0)\}$ and $\{u = -1\} \cap \{\hat{v} \in [0,v_0)\}$, for some $0 < \epsilon \ll v_0 \ll 1$. The desired solution for our main result Theorem~\ref{themainextresult} will be constructed from this characteristic initial data. As we have mentioned before, the solutions we construct will \emph{not} be globally $\kappa$-self-similar; nevertheless, they will become approximately $\kappa$-self-similar as $\frac{v}{-u}\to 0$. This approximate $\kappa$-self-similarity will be reflected in the construction of the characteristic initial data sets. 

Before we describe the construction we make the following point about working in the $\left(u,\hat{v},\theta^A\right)$ coordinates versus the $\left(u,v,\theta^A\right)$ coordinates: Strictly speaking, the local existence results must be applied to the metric in $\left(u,\hat{v},\theta^A\right)$ coordinates, since it is only in these coordinates the full metric $g$ extends regularly to $\{\hat{v} = 0\}$. However, any statement in the $\left(u,\hat{v},\theta^A\right)$ can be translated into an equivalent statement in the $\left(u,v,\theta^A\right)$ coordinates and, as we have mentioned before, it is often more convenient to work with metric and double-null quantities defined in the $(u,v,\theta^A)$ coordinates.

We return now to the discussion of characteristic initial data. Keeping Lemma~\ref{dkwowsss322} in mind, the incoming characteristic data we may prescribe are the restrictions to $\{v = 0\}$ of $v^{\kappa}\Omega$, $b^A$, and $\slashed{g}_{AB}$, where, $\kappa > 0$ is, for the moment, a free parameter. Since the change of variables $(u,v) \mapsto (\lambda u,\lambda v)$ leaves the hypersurface $\{v = 0\}$ invariant (keeping Convention~\ref{thev0convention} in mind), we immediately obtain a notion of being $\kappa$-self-similar along $\{v = 0\}$. Namely, the incoming characteristic initial data will be $\kappa$-self-similar along $\{v = 0\}$ if there exists a function $\check{\Omega}(\theta^A)$, a vector field $\check{b}^A(\theta^B)$, and a Riemannian metric $\check{\slashed{g}}_{AB}(\theta^C)$ on $\mathbb{S}^2$ such that
\[\left(v^{\kappa}\Omega,b^A,\slashed{g}_{AB}\right)|_{v=0} = \left((-u)^{\kappa}\check{\Omega},u^{-1}\check{b}^A,u^2\check{\slashed{g}}_{AB}\right).\]
Furthermore, our choice of incoming characteristic initial data must satisfy the $\kappa$-constraint equation~\eqref{basicconstr2}. We allow as our initial data any solution to~\eqref{basicconstr2} which satisfy certain regularity and smallness assumptions and if $b^A$ takes a certain specific form to leading order in the small parameter $\epsilon$ which will measure the size of $b|_{v=0}$. (See Definition~\ref{Mreg}.) In Appendix~\ref{constructthetupleregul} we construct an infinite class of such admissible solutions. For a small constant $0 < \epsilon \ll 1$ we will have, for all of these solutions, that
\begin{equation}\label{roughtsizeofthw}
|b| \sim \epsilon, \qquad \left|\slashed{\rm div}b\right| \lesssim \epsilon^2,\qquad |\varphi| \lesssim \epsilon^2, \qquad |\log\Omega| \lesssim \epsilon^2,\qquad \kappa \sim\epsilon^2,
\end{equation}
where $\slashed{g}_{AB} =u^2e^{2\varphi}\mathring{\slashed{g}}_{AB}$ for $\mathring{\slashed{g}}_{AB}$ denoting the round metric. (We will suppress in this introductory section certain $\epsilon^{-\delta}$ losses for $0 < \delta \ll 1$.) Having thus determined this incoming data, we set 
\[\left(\Omega^{(\rm in)},(b^A)^{(\rm in)},\slashed{g}_{AB}^{(\rm in)}\right) \doteq \left((-u)^{\kappa}\check{\Omega},u^{-1}\check{b}^A,u^2\check{\slashed{g}}_{AB}\right).\]

The next piece of characteristic data we need to define is the value of  the torsion $\zeta_A$, or equivalently, $\eta_A$ restricted to $(u,v) = (-1,0)$. From Lemma~\ref{bettereta} and the choice of incoming characteristic data above, in the eventual solution $\eta_A$ satisfies the following equation along $\{v=0\}$:
\begin{equation}\label{thisisteea}
\nabla_{\partial_u +b}\eta_A + \frac{3}{2}\left(\frac{2}{u} +\slashed{\rm div}b\right)\eta_A + \frac{1}{2}\left(\left(\slashed{\nabla}\hat{\otimes}b\right)\cdot\eta\right)_A = 2\slashed{\nabla}_A\left(\mathcal{L}_b\log\Omega\right) -\slashed{\rm div}\left(\slashed{\nabla}\hat{\otimes}b\right)_A +\frac{1}{2}\slashed{\nabla}_A\slashed{\rm div}b.
\end{equation} 
Note that $\partial_u +b^A\slashed{\nabla}_A$ has integral curves which are tangent to $\{v = 0\}$, and the right hand side of~\eqref{thisisteea} will be $O\left(\epsilon u^{-2}\right)$. In particular, keeping~\eqref{roughtsizeofthw} in mind, it is straightforward to see that solutions of~\eqref{thisisteea} with generic data posed at $\{u =-1\}$ will satisfy $\sup_{\mathbb{S}^2}\left|\eta\right| \gtrsim |u|^{-3+C\epsilon}$ for a suitable constant $C$, independent of $\epsilon$. This behavior is more singular than the $\kappa$-self-similar rate of $|u|^{-1}$ and we would not be able to effectively control the resulting solution on  a long enough time-scale. We will thus need to fine-tune the initial data for $\eta_A$ so as to arrange for $\eta_A$ to satisfy $|\eta| \sim |u|^{-1}$ (cf.~Proposition~\ref{formalcalculationsscaleinv}). To see how we might carry out this fine-tuning, it is useful to first note that if $\left(\mathcal{M},g_{\mu\nu}\right)$ is $\kappa$-self-similar then we would have both $\left|\eta\right| \sim |u|^{-1}$ and that
\begin{equation}\label{kdowdkodwkodwkowd}
\nabla_{\partial_u+b}\eta_A|_{v=0} = \mathcal{L}_b\eta_A - \frac{1}{2}\left(\left(\slashed{\nabla}\hat{\otimes}b\right)\cdot\eta\right)_A - \frac{1}{2}\left(\frac{2}{u} + \slashed{\rm div}b\right)\eta_A.
\end{equation}
(This last expression is derived by using that in a $\kappa$-self-similar spacetime and in a coordinate frame we would have that $\eta_A\left(v,u,\theta^B\right) = \check{\eta}_A\left(\frac{v}{-u},\theta^B\right)$.) Plugging in~\eqref{kdowdkodwkodwkowd} into~\eqref{thisisteea} and restricting to $\{u=-1\}$ leads to the equation
\begin{equation}\label{odwkwdokwdkowdok}
\mathcal{L}_b\eta_A + \left(-2 +\slashed{\rm div}b\right)\eta_A  = 2\slashed{\nabla}_A\left(\mathcal{L}_b\log\Omega\right) -\slashed{\rm div}\left(\slashed{\nabla}\hat{\otimes}b\right)_A +\frac{1}{2}\slashed{\nabla}_A\slashed{\rm div}b.
\end{equation}
Using the theory that we will develop in Section~\ref{pdetheoryforformal}, we will be able to show that this has a unique solution. This unique solution is the prescribed value that we use for $\eta$ at $(u,v) = (-1,0)$.\footnote{\label{agoodthingtoputinafootnoteyayaya}Note that if we solve the transport equation~\eqref{thisisteea} for $\eta$ from initial data solving~\eqref{odwkwdokwdkowdok}, then by uniqueness of solutions to transport equations, it must be the case that in the coordinate frame $\eta_A\left(u,\theta^A\right) = \check{\eta}_A\left(\theta^B\right)$ and that $\eta$ solves~\eqref{odwkwdokwdkowdok} when restricted to $\{u = -1\}$.}

Now we come to the outgoing characteristic data $\left(v^{\kappa}\Omega^{(\rm out)},\slashed{g}_{AB}^{(\rm out)}\right)$ along $\left(v,\theta^A\right) \in [0,\underline{v}] \times \mathbb{S}^2$, where $0 < \underline{v} \ll 1$ is a constant which we may freely prescribe. We will take $v^{\kappa}\Omega^{(\rm out)}$ simply to be constant in the $v$-direction. (The value of $v^{\kappa}\Omega^{(\rm out)}$ at $v = 0$ is fixed already by the value of $v^{\kappa}\Omega^{(\rm in)}$ at $u = -1$.)  Since the value of $\slashed{g}_{AB}$ is already determined at $v = 0$ and ${\rm tr}\chi$ is determined by the constraint~\eqref{ray2cons}, the prescription of $\slashed{g}_{AB}^{(\rm out)}$ may be considered roughly equivalent to the prescription of $\Omega^{-1}\hat{\chi}_{AB}$ along $v \in [0,\underline{v}]$ and $\Omega^{-1}{\rm tr}\chi$ at $v = 0$. (See the proof of Proposition~\ref{itexistsbutforalittlewhile} for details.) The problem of prescribing $\Omega^{-1}{\rm tr}\chi$ is similar to $\eta_A$; generic choices of $\Omega^{-1}{\rm tr}\chi$ along $(u,v) = (-1,0)$ would lead to $\Omega^{-1}{\rm tr}\chi$ having a singular behavior as $u\to 0$ which is more singular than the $\kappa$-self-similar rate. Just as with $\eta_A$ there is a specific value of $\Omega^{-1}{\rm tr}\chi$, obtained by solving the equation
\begin{equation}\label{trchiidwqqq}
 \mathcal{L}_b\left(\Omega^{-1}{\rm tr}\chi\right) + \left(\Omega^{-1}{\rm tr}\chi\right)\left(\frac{1}{u}+{\rm div}b+\frac{2\kappa}{u} + 2\mathcal{L}_b\log\Omega\right) = -2K  + 2\slashed{\rm div}\eta + 2\left|\eta\right|^2,
\end{equation}
which leads to a behavior consistent with $\kappa$-self-similarity. It is this unique value of $\Omega^{-1}{\rm tr}\chi$ determined by this equation and the analysis of Section~\ref{pdetheoryforformal} which we take for $\Omega^{-1}{\rm tr}\chi|_{(u,v) = (-1,0)}$. 

The prescription for $\Omega^{-1}\hat{\chi}_{AB}$ will be more complicated than that for $\eta_A$ and $\Omega^{-1}{\rm tr}\chi$. We need to pose this for $v \in [0,\underline{v}]$ but it will be conceptually clarifying to first focus on the prescribed value of $\Omega^{-1}\hat{\chi}_{AB}$ at $(u,v) = (-1,0)$. In the eventual solution, a consequence of~\eqref{3hatchi} is that along $\{v = 0\}$ we will have that $\Omega^{-1}\hat{\chi}_{AB}$ satisfies the following propagation equation:
\begin{align}\label{forjdiwkochuhar}
&\nabla_{\partial_u + b}\left(\Omega^{-1}\hat{\chi}\right)_{AB} + \frac{1}{2}\left(\frac{2}{u}+\slashed{\rm div}b\right)\left(\Omega^{-1}\hat{\chi}\right)_{AB} - \frac{2\kappa}{u}\left(\Omega^{-1}\hat{\chi}\right)_{AB} = 
\\ \nonumber &\qquad \qquad \qquad \left(\slashed{\nabla}\hat{\otimes}\eta\right)_{AB} + \left(\eta\hat{\otimes}\eta\right)_{AB} - \frac{1}{4}\left(\Omega^{-1}{\rm tr}\chi\right)\left(\slashed{\nabla}\hat{\otimes}b\right)_{AB},
\end{align}
where $\eta_A$ and $\Omega^{-1}{\rm tr}\chi$ have already been determined in the above analysis. This equation is schematically of the form 
\begin{equation}\label{kodwkowdkowdok}
\nabla_{\partial_u + b}\left(\Omega^{-1}\hat{\chi}\right)_{AB} + \frac{1+O\left(\epsilon\right)}{u}\left(\Omega^{-1}\hat{\chi}\right)_{AB} - \frac{2\kappa}{u}\left(\Omega^{-1}\hat{\chi}\right)_{AB} = O\left(\epsilon u^{-2}\right).
\end{equation}
Note that whether or not solutions $\Omega^{-1}\hat{\chi}_{AB}$ to the equation~\eqref{forjdiwkochuhar} with generic data at $u = -1$ satisfy the $\kappa$-self-similar bound $\left|\Omega^{-1}\hat{\chi}\right| \lesssim |u|^{-1}$ depends on the $O(\epsilon)$ term on the left hand side of~\eqref{kodwkowdkowdok}! Using the theory developed in Section~\ref{pdetheoryforformal}, one can in fact show that generic solutions \emph{do not} satisfy the $\kappa$-self-similar bound. Thus, as with $\eta_A$ and $\Omega^{-1}{\rm tr}\chi$ we need to fine-tune the value of $\Omega^{-1}\hat{\chi}_{AB}$ on $(u,v) = (-1,0)$. We start by observing that if $\left(\mathcal{M},g_{\mu\nu}\right)$ is $\kappa$-self-similar then we would have both $\left|\Omega^{-1}\hat{\chi}\right| \sim |u|^{-1}$ and that
\begin{equation}\label{kdowdkodwkodwkowd23}
\nabla_{\partial_u+b}\left(\Omega^{-1}\hat{\chi}\right)_{AB}|_{v=0} = \frac{1}{u}\left(\Omega^{-1}\hat{\chi}\right)_{AB} + \mathcal{L}_b\left(\Omega^{-1}\hat{\chi}\right)_{AB} - 2\left(\Omega\underline{\chi}\right)^C_{\ \ (A}\hat{\chi}_{B)C}.
\end{equation}
Plugging~\eqref{kdowdkodwkodwkowd23} into~\eqref{kodwkowdkowdok} leads to 
\begin{align}\label{knbhuikawefkf2}
&\mathcal{L}_b\left(\Omega^{-1}\hat{\chi}\right)_{AB} -\left(\frac{1}{2}\slashed{\rm div}b - \frac{2\kappa}{u} - 2\mathcal{L}_b\log\Omega\right)\Omega^{-1}\hat{\chi}_{AB} - \left(\slashed{\nabla}\hat{\otimes}b\right)^C_{\ \ (A}\left(\Omega^{-1}\hat{\chi}\right)_{B)C} = \\ \nonumber &\qquad \left(\slashed{\nabla}\hat{\otimes}\eta\right)_{AB} + \left(\eta\hat{\otimes}\eta\right)_{AB} - \frac{1}{2}\left(\Omega^{-1}{\rm tr}\chi\right)\left(\slashed{\nabla}\hat{\otimes}b\right)_{AB},
\end{align}
which is essentially the $\kappa$-singular equation~\eqref{kdlpwlpdwqacssing}. Using the theory developed in Section~\ref{pdetheoryforformal} we will be able to show that there is a unique solution $\Omega^{-1}\hat{\chi}_{AB}$ to this equation; however, we will only have the estimate $|\Omega^{-1}\hat{\chi}|_{v=0}| \sim \epsilon^{-1}|u|^{-1}$. (Conceptually, the source of the amplification can be easily understood if one simply drops the anti-symmetric operator term in parentheses in~\eqref{kdlpwlpdwqacssing}  and refers to~\eqref{roughtsizeofthw}.) In particular, this implies that for $0 \leq \frac{v}{-u} \lesssim 1$, the best $L^{\infty}$ bound we can hope to propagate for $\hat{\chi}_{AB}$ into our spacetime $\left(\mathcal{M},g_{\mu\nu}\right)$ is $\left\vert\left\vert \Omega^{-1}\hat{\chi}\right\vert\right\vert_{L^{\infty}\left(\mathbb{S}^2_{u,v}\right)} \lesssim \epsilon^{-1}|u|^{-1}$.

Given the largeness of $\hat{\chi}_{AB}$ in $L^{\infty}$, we can only hope to control the solution on a long time-scale if $\hat{\chi}_{AB}$ becomes small \emph{after integration in $v$}. In fact, we will eventually show something much stronger, that is, we will eventually show that $\left|\hat{\chi}\right| \sim \epsilon|u|^{-1}$ once $\frac{v}{-u} \gtrsim e^{-\left(C\epsilon\right)^{-1}}$! To understand the possible behavior of $\hat{\chi}_{AB}$ for $\frac{v}{-u} > 0$, and how that affects what characteristic data we should impose, we need to consider the equation~\eqref{forjdiwkochuhar} for $v > 0$. For $0 < \frac{v}{-u} \ll 1$, we expect (optimistically) that the right hand side and the coefficients of the $\Omega^{-1}\hat{\chi}_{AB}$ on the left hand side will be roughly constant in $\frac{v}{-u}$. Furthermore, just as along $\{v = 0\}$, we expect that the initial data for $\Omega^{-1}\hat{\chi}_{AB}$ along $\{u = -1\} \cap \{0 \leq v \ll 1\}$ needs to be fine-tuned in order for $\Omega^{-1}\hat{\chi}_{AB}$ to have a bound as $u\to 0$ which is consistent with $\kappa$-self-similarity. This suggests that we make a self-similar ansatz for $\Omega^{-1}\hat{\chi}_{AB}$, freeze the coefficients of~\eqref{forjdiwkochuhar} with their values at $\{v = 0\}$, and then try to understand the set of solutions to~\eqref{forjdiwkochuhar}. Since the resulting equation involves only $\kappa$-self-similar quantities, it suffices to consider the restriction to $\{u = -1\} \cap \{0 \leq v \ll 1\}$. We obtain then the following analogue of~\eqref{knbhuikawefkf2} along $\{u = -1\} \cap \{0 \leq v \ll 1\}$:
\begin{align}\label{knbhuikawefkf2348920}
&v\mathcal{L}_{\partial_v}\left(\Omega^{-1}\hat{\chi}\right)_{AB} + \mathcal{L}_{b|_{v=0}}\left(\Omega^{-1}\hat{\chi}\right)_{AB} 
\\ \nonumber &\qquad -\left(\frac{1}{2}\slashed{\rm div}b + 2\kappa - 2\mathcal{L}_b\log\Omega\right)|_{v=0}\left(\Omega^{-1}\hat{\chi}\right)_{AB} - \left(\left(\slashed{\nabla}\hat{\otimes}b\right)|_{v=0}\right)^C_{\ \ (A}\left(\Omega^{-1}\hat{\chi}\right)_{B)C} =
\\ \nonumber & \left(\slashed{\nabla}\hat{\otimes}\eta + \eta\hat{\otimes}\eta - \frac{1}{2}\left(\Omega^{-1}{\rm tr}\chi\right)\left(\slashed{\nabla}\hat{\otimes}b\right)\right)_{AB}|_{v=0}.
\end{align}

The equation~\eqref{knbhuikawefkf2348920} is a (degenerate) transport equation which, after the change of variables $s \doteq -\log(v)$, is of the same essential form as the equation~\eqref{evolvemkwasf}. The value of $\Omega^{-1}\hat{\chi}_{AB}$ that we will impose along $\{u = -1\}$ will be, to leading order as $v\to 0$, a suitable solution to the equation~\eqref{knbhuikawefkf2348920}. We determine this ``suitable solution'' as follows. Along integral curves of $\partial_s + b^A\slashed{\nabla}_A$, the equation~\eqref{knbhuikawefkf2348920} become an ordinary differential equation. In particular, solutions to~\eqref{knbhuikawefkf2348920} are in one to one correspondence with initial data for~\eqref{knbhuikawefkf2348920} prescribed at $\mathbb{S}^2_{-1,\underline{v}}$. Our desired $\Omega^{-1}\hat{\chi}_{AB}$ is then obtained by solving~\eqref{knbhuikawefkf2348920} with the ``initial conditition'' that $\Omega^{-1}\hat{\chi}_{AB}|_{v=\underline{v}} = 0$. Using the theory developed in Section~\ref{pdetheoryforformal} we will see that this solution converges as $v\to 0$ to the unique solution to~\eqref{knbhuikawefkf2}. Thus, we see that this choice of data effectively drives down $\Omega^{-1}\hat{\chi}_{AB}$, as $v$ increases from $0$, from a value of size $\epsilon^{-1}$ to $0$ in a fashion which, to leading order, is consistent with $\kappa$-self-similar bounds. In fact, we will see that $\Omega^{-1}\hat{\chi}_{AB} \sim \epsilon$ for $v \in \left[C\exp\left(-\left(C\epsilon\right)^{-1}\right),\underline{v}\right]$ for a suitable constant $C$, which is independent of $\epsilon$.

The fundamental downside of this procedure is that $\Omega^{-1}\hat{\chi}_{AB}$ will only be H\"{o}lder continuous as $v\to 0$.\footnote{We note that this loss of regularity appears to be a genuine feature associated to the construction of $\kappa$-self-similar solutions which become small away from the cone $\{\hat{v} = 0\}$. Thus, to construct more regular naked singularities, one must either deviate from solutions based on $\kappa$-self-similarity or give up on obtaining smallness away from $\{\hat{v}=0\}$.} This restricts the regularity of the solutions we construct. On a more technical level, this has the effect that $\left\vert\left\vert \Omega^{-2}\alpha \right\vert\right\vert_{L^2\left(\mathbb{S}^2_{-1,v}\right)}  \sim v^{-1+C\epsilon^2}$ as $v\to 0$ which, among other things, means that $\alpha_{AB}$ is not in $L^2_v$ and cannot be controlled directly by energy estimates. However, the most singular part of $\alpha_{AB}$ will behave in an approximately $\kappa$-self-similar manner, and we will be able to effectively subtract it off. 

Having determined our initial data, we may apply Theorem~\ref{localexistencecharbetter} to obtain the existence of a solution in a region as follows:

\begin{center}
\begin{tikzpicture}[scale = 1]
\fill[lightgray] (0,0)--(2,-2)--(3,-1)--(2.85,-.85)  to [bend left = 20] (0,0);
\draw [dashed] (2.85,-.85)to [bend left= 20]  (0,0) ;
\draw (0,0) -- (2,-2) node[sloped,below,midway]{\footnotesize $\{\hat{v} = 0\}$};
\draw (2,-2) -- (3,-1) node[sloped,below,midway]{\footnotesize $\{u = -1\}$};
\path [draw=black,fill=black] (2,-2) circle (1/16); 
\draw (1.8,-2) node[below]{\footnotesize $(-1,0)$};
\path [draw=black,fill=white] (0,0) circle (1/16); 
\draw (0,0) node[left]{\footnotesize $(0,0)$};
\path [draw=black,fill=black] (3,-1) circle (1/16); 
\draw (3,-1) node[right]{\footnotesize $(-1,\underline{v})$};
\end{tikzpicture}
\end{center}
Note that the region of local existence obtained by Theorem~\ref{localexistencecharbetter} degenerates as we approach the point $(u,\hat{v}) = (0,0)$ because the size of our incoming initial data is diverging as we approach $u = 0$ along $\{\hat{v} = 0\}$. In particular, we do not have any quantitative control of the curve represented by the dashed line as it approaches $(0,0)$.

\subsection{Bootstrap Argument for Region I}
In Section~\ref{bootforregone} we will carry out a bootstrap argument and eventually show that the solution constructed in the previous section may be extended to the region $\{0 \leq \frac{v}{-u} \leq \underline{v}\} \cap \{u \in (0,-1]\}$:
\begin{center}
\begin{tikzpicture}[scale = 1]
\fill[lightgray] (0,0)--(2,-2)--(3,-1)-- (0,0);
\draw (3,-1)-- (0,0) node[sloped,above,midway]{\footnotesize $\frac{v}{-u} = \underline{v}$};
\draw (0,0) -- (2,-2) node[sloped,below,midway]{\footnotesize $\{\hat{v} = 0\}$};
\draw (2,-2) -- (3,-1) node[sloped,below,midway]{\footnotesize $\{u = -1\}$};
\path [draw=black,fill=black] (2,-2) circle (1/16); 
\draw (1.8,-2) node[below]{\footnotesize $(-1,0)$};
\path [draw=black,fill=white] (0,0) circle (1/16); 
\draw (0,0) node[left]{\footnotesize $(0,0)$};
\path [draw=black,fill=black] (3,-1) circle (1/16); 
\draw (3,-1) node[right]{\footnotesize $(-1,\underline{v})$};
\draw (1.6,-1) node {\footnotesize $I$};
\end{tikzpicture}
\end{center}

Recalling that $0 < \epsilon \ll \underline{v} \ll 1$ and that $\Omega^{-1}\hat{\chi}_{AB}|_{v=0} \sim \epsilon^{-1}$, we see that existence up to the hypersurface $\frac{v}{-u} = \underline{v}$ may be a considered a ``semi-global'' existence result. The scheme we shall use to control the solution in this region is very close in spirit to the scheme used in the work~\cite{scaleinvariant}, and we refer the reader to Section 2 of~\cite{scaleinvariant} for an overview of how one carries out scale-invariant estimates for the curvature components and Ricci coefficients. Here we will simply note the key differences of this paper with the work~\cite{scaleinvariant}:
\begin{enumerate}
	\item As in~\cite{scaleinvariant}, we will not carry out energy estimates directly with null curvature components, but instead use certain renormalizations of them. Keeping Lemma~\ref{dkwowsss322} in mind, we will define 
	\[\widetilde{\underline{\alpha}}_{AB} \doteq \Omega^2\underline{\alpha}_{AB} - \lim_{v\to 0}\Omega^2\underline{\alpha}_{AB},\ \ \ \ \ \widetilde{\underline{\beta}}_A \doteq \Omega\underline{\beta}_A - \lim_{v\to 0}\Omega\underline{\beta}_A,\ \ \ \ \ \widetilde{\rho} \doteq \rho - \lim_{v\to 0}\rho,\ \ \ \ \ \widetilde{\sigma} \doteq \sigma - \lim_{v\to 0}\sigma.\]
	Here the limits are taken in a Lie-propagated frame. Other than the weighting by the lapse $\Omega$, this is analogous to the scheme from~\cite{scaleinvariant}. Since $0 < \epsilon \ll \underline{v}$, unlike in~\cite{scaleinvariant}, we cannot compensate for the largeness of $\hat{\chi}_{AB}$ (and hence $\beta_A$) with the smallness of $\frac{v}{-u}$. Instead we will explicitly subtract off a leading order self-similar ansatz for $\hat{\chi}_{AB}$ (and hence $\beta_A$).\footnote{We note, however, that an analogy may be drawn to the renormalizations of $\alpha$ carried out in~\cite{scaleinvariant} in the $n > 2$ case.} More specifically, we define $\overset{\triangleright}{\Omega^{-1}\hat{\chi}}_{AB}$ by self-similarly extending $\Omega^{-1}\hat{\chi}_{AB}|_{u=-1}$ to the whole spacetime, and then define $\overset{\triangleright}{\Omega^{-2}\alpha}_{AB}$ and $\overset{\triangleright}{\Omega^{-1}\beta}_A$ to be the parts of $\alpha_{AB}$ and $\beta_A$ which are sourced  by  $\overset{\triangleright}{\Omega^{-1}\hat{\chi}}_{AB}$ in~\eqref{4hatchi} and~\eqref{tcod1} respectively, that is,
	\[\overset{\triangleright}{\Omega^{-2}\alpha}_{AB} \doteq  -\left(\frac{v}{-u}\right)^{2\kappa}\left(\overline{\left(\frac{v}{-u}\right)^{-2\kappa}\Omega^{-2}}\right)\mathcal{L}_v\left(\overset{\triangleright}{\Omega^{-1}\hat{\chi}}\right),\]
	\[\overset{\triangleright}{\Omega^{-1}\beta}_A \doteq -\overline{\slashed{\rm div}}\left(\overset{\triangleright}{\Omega^{-1}\hat{\chi}}\right)_A - \overline{\eta}_B\left(\overset{\triangleright}{\Omega^{-1}\hat{\chi}}\right)_{AC}\slashed{g}^{BC},\]
	 Then we set
	\[\widetilde{\alpha}_{AB} \doteq \Omega^{-2}\alpha_{AB} - \overset{\triangleright}{\Omega^{-2}\alpha}_{AB},\qquad \widetilde{\beta}_A \doteq \Omega^{-1}\beta_A - \overset{\triangleright}{\Omega^{-1}\beta}_A- \lim_{v\to 0}\left[\frac{1}{2}\slashed{\nabla}\left(\Omega^{-1}{\rm tr}\chi\right)_A-\frac{1}{2} \eta_A\left(\Omega^{-1}{\rm tr}\chi\right)\right].\]
	We will find that $\widetilde{\alpha}_{AB}$ and $\widetilde{\beta}_A$ are both vanishing as $v\to 0$ and have size$~\epsilon$.
	\item The renormalization procedure for $\widetilde{\alpha}_{AB}$ and $\widetilde{\beta}_A$ produces inhomogeneous terms in the Bianchi equations which are large in $L^{\infty}$. However, when carrying out the energy estimates, these inhomogeneous terms are always integrated in $v$ and these integrals will be sufficiently small.
	\item The Ricci coefficient $\omega$ will satisfy, in general, $\Omega\omega \sim \epsilon^2 v^{-1}$. Because $v^{-1}$ is not integrable, we cannot treat such terms perturbatively. Thus, we will always multiply through with an appropriate power of the lapse $\Omega$ so as to remove $\omega$ from the equations. This elimination of $\omega$ in fact removes the need to have any estimates for $\omega$ in our bootstrap norm. The correct power of the lapse turns out be completely determined by signature considerations; that is, you simply multiply any Ricci coefficient or curvature component of signature $s$ by $\Omega^s$. 
	\item As opposed to the situation studied in~\cite{scaleinvariant}, the Ricci coefficients are, in general, non-vanishing as $v\to 0$. This will force us to work with renormalizations of the Ricci coefficients which are analogous to the renormalizations of the curvature components.
\end{enumerate}

\subsection{Bootstrap Argument for Region II}
In Section~\ref{ohmyregiontwo} we will carry out a bootstrap argument and eventually show that the solution constructed in the previous section may be extended to the region $\{\underline{v} \leq \frac{v}{-u} \leq \underline{v}^{-1}\} \cap \{u \in (0,-1]\}\cap \{v \leq \underline{v}\}$:
\begin{center}
\begin{tikzpicture}[scale = 1]
\fill[lightgray] (0,0)--(2,-2)--(3,-1)-- (3/2,1/2)-- (0,0);
\draw (3,-1)-- (0,0);
\draw (0,0) -- (3/2,1/2)node[sloped,above,midway]{\footnotesize $\{\frac{v}{-u} = \underline{v}^{-1}\}$} -- (3,-1) node[sloped,above,midway]{\footnotesize $\{v = \underline{v}\}$};
\draw (0,0) -- (2,-2) node[sloped,below,midway]{\footnotesize $\{\hat{v} = 0\}$};
\draw (2,-2) -- (3,-1) node[sloped,below,midway]{\footnotesize $\{u = -1\}$};
\path [draw=black,fill=black] (2,-2) circle (1/16); 
\draw (1.8,-2) node[below]{\footnotesize $(-1,0)$};
\path [draw=black,fill=white] (0,0) circle (1/16); 
\draw (0,0) node[left]{\footnotesize $(0,0)$};
\path [draw=black,fill=black] (3,-1) circle (1/16); 
\draw (3,-1) node[right]{\footnotesize $(-1,\underline{v})$};
\draw (1.6,-1) node {\footnotesize $I$};
\draw (1.6,-.1) node {\footnotesize $II$};

\end{tikzpicture}
\end{center}

In this region we will have that $u$ and $v$ are comparable. Furthermore, we will have shown in the previous section that the largeness of $\hat{\chi}_{AB}$ has dissipated by the time the hypersurface $\frac{v}{-u} = \underline{v}$ is reached. These two facts makes the analysis considerably simpler than that of region I. In particular, since the natural ``time'' variable is $\frac{v}{-u}$, this region may be considered to be a finite-in-time local existence result. The key technique is to use conjugation of various equations by $\exp\left(D\frac{v}{u}\right)$ where $1 \ll D \ll \epsilon^{-1}$ to generate lower order terms of good signs.
\subsection{Shifting the Shift  and Gluing in an Asymptotically Flat Cone}
In Section~\ref{secshift} we will carry out two preliminary changes to the solution constructed in Section~\ref{ohmyregiontwo}. 

First we will equip the portion of the spacetime covered by $\{1 \leq \frac{v}{-u} \leq \underline{v}^{-1}\}$ with a new double-null foliation where the shift vector is in the $e_4$-direction. (See Remark~\ref{letusshifthteshift}.) We briefly explain the reason for this: In the original double-null foliation, the shift vector $b^A$ satisfies the propagation equation
\begin{equation}\label{thispropskfowshift}
\mathcal{L}_{\partial_v}b^A = -4\Omega^2\zeta^A.
\end{equation}
From the analysis of Section~\ref{ohmyregiontwo} we will have that $\left|b|_{\frac{v}{-u} = \underline{v}^{-1}}\right| \sim \epsilon$ which implies that in the Lie-propagated coordinate frame, $b^A|_{\frac{v}{-u} = \underline{v}^{-1}} \sim \epsilon v^{-1}$. Now let $\tilde u < 0$ and consider a point $(\tilde u,\tilde v)$ to the future of $\frac{v}{-u} = \underline{v}^{-1}$. If we try to integrate~\eqref{thispropskfowshift} from $\frac{v}{-u} = \underline{v}^{-1}$ to control $b^A$, the best estimate we could possibly obtain is
\[\left|b\right| \lesssim \epsilon \frac{\tilde v}{-\tilde u},\]
which severely blows-up as $\tilde u \to 0$. (In principle, there could be cancellations with the right hand side of~\eqref{thispropskfowshift}, but we will not be able to exploit this.) However, once we have shifted the double-null foliation to put the shift in the $e_4$ direction, then instead of~\eqref{thispropskfowshift} we will have 
\begin{equation}\label{thispropskfowshift2}
\mathcal{L}_{\partial_u}b^A = 4\Omega^2\zeta^A.
\end{equation}
This equation will allow us to obtain the desired estimates for $b^A$ in a straightforward fashion. 

The second important change will be to glue in an asymptotically flat cone $\mathcal{H}$ along $\{u = -\underline{v}^2\}$ (which will thus be completely contained in the region $\{\frac{v}{-u} \geq \underline{v}^{-1}/2\}$), and apply a local existence result to extend our solution to the following region:
\begin{center}
\begin{tikzpicture}[scale = 1]

\fill[lightgray] (0,0)--(2,-2)--(3,-1)-- (2.25,-.75) -- (1.125,.375) -- (0,0);
\fill[lightgray] (1.125,.375)to [bend right= 5]  (3.35,2.15) -- (3.4,2.1) -- (1.4,.1);
\draw (3,-1)-- (0,0);
\draw (2.25,-.75) -- (1.4,.1);
\draw (0,0) -- (1.125,.375) -- (1.4,.1);
\draw (0,0) -- (2,-2) node[sloped,below,midway]{\footnotesize $\{\hat{v} = 0\}$};
\draw (2,-2) -- (3,-1) node[sloped,below,midway]{\footnotesize $\{u = -1\}$};
\path [draw=black,fill=black] (2,-2) circle (1/16); 
\draw (1.8,-2) node[below]{\footnotesize $(-1,0)$};
\path [draw=black,fill=white] (0,0) circle (1/16); 
\draw (0,0) node[left]{\footnotesize $(0,0)$};
\path [draw=black,fill=black] (3,-1) circle (1/16); 
\draw (3,-1) node[right]{\footnotesize $(-1,\underline{v})$};
\draw (1.6,-1) node {\footnotesize $I$};
\draw (1.15,0) node {\footnotesize $\tilde{II}$};
\draw (1.4,.1) -- (3.4,2.1) node[sloped,below,midway]{\footnotesize $\mathcal{H}$};
\draw [dotted,thick] (3.375,2.125) -- (3.675,2.425);
\draw  (1.125,.375)to [bend right= 5]  (3.35,2.15) ;

\end{tikzpicture}
\end{center}
We will pose data along $\mathcal{H}$ so that by a domain of dependence argument the solution in the region $I$ and $\tilde{II}$ agrees with the corresponding subset of the previous solution produced from Section~\ref{ohmyregiontwo}. We emphasize that at this step of the argument, we will not have quantitative control of the region of existence to the future of $\mathcal{H}$. 

\subsection{The Bootstrap Argument for Region III}
In Section~\ref{bottIIIIIIII} we will carry out a bootstrap argument and eventually show that the solution constructed in the previous section may be extended up to $\{u < 0\}$:
\begin{center}
\begin{tikzpicture}[scale = 1]
\fill[lightgray] (0,0)--(2,-2)--(3,-1)-- (2.25,-.75) -- (1.125,.375) -- (0,0);
\fill[lightgray] (0,0) -- (2.75,2.75) -- (3.4,2.1) -- (1.4,.1);
\draw (2.25,-.75) -- (1.4,.1);
\draw [dashed] (0,0) -- (2.75,2.75) node[sloped,above,midway]{\footnotesize $\{ u = 0\}$};
\draw (3,-1)-- (0,0);
\draw (0,0) -- (1.125,.375) -- (1.4,.1);
\draw (0,0) -- (2,-2) node[sloped,below,midway]{\footnotesize $\{\hat{v} = 0\}$};
\draw (2,-2) -- (3,-1) node[sloped,below,midway]{\footnotesize $\{u = -1\}$};
\path [draw=black,fill=black] (2,-2) circle (1/16); 
\draw (1.8,-2) node[below]{\footnotesize $(-1,0)$};
\path [draw=black,fill=white] (0,0) circle (1/16); 
\draw (0,0) node[left]{\footnotesize $(0,0)$};
\path [draw=black,fill=black] (3,-1) circle (1/16); 
\draw (3,-1) node[right]{\footnotesize $(-1,\underline{v})$};
\draw (1.6,-1) node {\footnotesize $I$};
\draw (1.15,0) node {\footnotesize $\tilde{II}$};
\draw (1.6,1) node {\footnotesize $III$};
\draw (1.4,.1) -- (3.4,2.1) node[sloped,below,midway]{\footnotesize $\mathcal{H}$};
\draw [dotted,thick] (3.075,2.425) -- (3.575,2.925);
\end{tikzpicture}
\end{center}

We now explain some of the key ideas for this bootstrap argument:
\subsubsection{Expected Bounds}
The basic expectation as we approach $u = 0$ for $\{v \lesssim 1\}$ is that the solution may be modeled by a $\kappa$-self-similar solution where the role of $\{v = 0\}$ is replaced by $\{u = 0\}$, and where $\kappa$ may acquire some angular dependence in the sense of Remark~\ref{moregeneral}. However, there is a very important difference with the analysis that we undertook along $\{v = 0\}$ and for $0 \leq \frac{v}{-u} \lesssim 1$: \emph{Since we will not require precise information about the $u\to 0$ limits of all of the double-null unknowns in order to close our bootstrap argument, it will turn out that we will not need to obtain precise information about the lapse $\Omega$, shear $\hat{\underline{\chi}}_{AB}$, or curvature component $\underline{\alpha}_{AB}$.}\footnote{The main reason for the differences in the study of the solution as $u\to 0$ versus $v\to 0$ is the following: Our various estimates will employ $|u|$ and $v$ weights. In order to generate lower order terms of a good sign, in these weights the power of $|u|$ will be generally be non-negative and the power of $v$ will be non-positive. In some of the situations when the power of the $v$-weight is strictly negative then we will need to subtract off the leading behavior as $v\to 0$ of our Ricci coefficient or curvature component so that our corresponding initial flux is finite. No analogous issue occurs in the region $u\to 0$.}

Due to the asymptotically flat cone $\mathcal{H}$, we do not expect the solution to be exactly modeled on a $\kappa$-self-similar solution when $v \gg 1$; nevertheless, we will propagate scale-invariant bounds. In particular, for all Ricci coefficients $\psi \neq \underline{\omega},\eta_A,\hat{\underline{\chi}}_{AB}$ and curvature components $\Psi \neq \underline{\alpha}_{AB}$, we expect the following bounds to hold in region $III$:
\begin{equation}\label{dkowkdpwkfko98762}
\left|\Omega^{-s}\psi^*\right| \lesssim \epsilon v^{-1},\qquad \left|\Omega^{-s}\Psi\right| \lesssim \epsilon v^{-2},
\end{equation}
where we let $s$ denote the signature of the Ricci coefficient $\psi$ or the curvature component $\Psi$, and $\psi^*$ denotes the difference of $\psi$ and its Minkowski value.  Note that in contrast to the situation in region $I$, we will want to eliminate $\underline{\omega}$ from our system, and it is thus natural to weight quantities with $\Omega^{-s}$ as opposed to $\Omega^s$. The lapse $\Omega$ will satisfy the following bound:
\begin{equation}\label{lolop1}
\left|\log\Omega\right| \lesssim \epsilon\left|\log\left(\frac{v}{-u}\right)\right|.
\end{equation}
(In principle, one expects to be able to replace $\epsilon$ with $\epsilon^2$ in~\eqref{lolop1} but we will not need this improvement and thus will not try to establish such a bound.)

In analogy with the behavior of $\alpha_{AB}$ near $\{v = 0\}$, for fixed $v$, we expect that $\underline{\alpha}_{AB}$ will be singular as $u \to 0$. In our analysis of region $I$, we posed explicitly the value of $\hat{\chi}_{AB}$ along $\{u = -1\}$, and we could thus use this value to effectively subtract off the singular behavior of $\alpha_{AB}$. As we approach $\{ u = 0\}$ it is less straightforward to regularize $\underline{\alpha}_{AB}$, and we will instead work with the renormalized Bianchi equations where $\underline{\alpha}_{AB}$ has been removed. (See~\eqref{ren1}-\eqref{ren6}.) Similarly, we will forgo explicitly renormalizing out the most singular self-similar behavior for $\hat{\underline{\chi}}_{AB}$. Instead, for some $0 < p \ll 1$, we will propagate the following bound:
\begin{equation}\label{lolop2}
\left|\Omega^{-1}\hat{\underline{\chi}}\right| \lesssim \epsilon\left(\frac{v}{-u}\right)^{p}v^{-1}.
\end{equation}
Note that this is weaker than the expected bounds
\[\left|\Omega^{-1}\hat{\underline{\chi}}\right| \lesssim {\rm min}\left(\left(1+\left|\log^k\left(\frac{-u}{v}\right)\right|\right)\epsilon,\epsilon^{-1}\right),\]
which are the true analogues of the bounds satisfied by $\hat{\chi}_{AB}$ near $\{\hat{v} = 0\}$. 

For $\underline{\omega}$, in analogy to the scheme used in region $I$, we will multiply through by a power of the lapse $\Omega$ which eliminates $\underline{\omega}$. Thus we will never need to estimate $\underline{\omega}$ in the context of the bootstrap argument. 

In contrast to region $I$, we will propagate estimates which are consistent with $\eta_A$ blowing up as $u \to 0$; more concretely, we will have, for some $0 < p \ll 1$, 
\begin{equation}\label{lolop3}
\left|\eta\right| \lesssim \epsilon\left(\frac{v}{-u}\right)^{p}v^{-1}.
\end{equation}
Conceptually, the reason we must allow for this is because of the potential existence of $\kappa$-self-similar solutions where $\kappa$ has angular dependence. (See Remark~\ref{moregeneral}.) If such a solution existed, then $\eta_A$ would blow-up logarithmically as $u\to 0$. 

\subsubsection{Energy Estimates}
We now explain the basic idea for our energy estimates. Since we are working with the renormalized Bianchi equations of~\cite{impulsive}, our ``first'' Bianchi pair is $(\underline{\beta}_A,\left(\rho,\sigma\right))$. We write the equations schematically as
\begin{align}\label{itmfokwdkod}
\Omega\nabla_4\left(\Omega \underline{\beta}\right)_A + \left(\Omega{\rm tr}\chi\right)\left(\Omega \underline{\beta}\right)_A &= -\slashed{\nabla}_A\check{\rho} + \left({}^*\slashed{\nabla}\right)_A\check{\sigma} + \cdots,
\\ \label{dkowdko} \Omega\nabla_3\check{\rho} &= - \slashed{\rm div}\left(\Omega \underline{\beta}\right) + \cdots,
\\ \label{adkldwko} \Omega\nabla_3\check{\sigma} &= \slashed{\rm div}{}^*\left(\Omega \underline{\beta}\right) + \cdots.
\end{align}
Written in this way, there are no appearences of $\underline{\omega}$ in the equations~\eqref{itmfokwdkod}-\eqref{adkldwko}. Next, we would like to conjugate these equations by a suitable weight function $w$, contract~\eqref{itmfokwdkod} with $w\Omega\underline{\beta}_A$, contract~\eqref{dkowdko} with $w\check{\rho}$, contract~\eqref{adkldwko} with $w\check{\sigma}$, and then integrate by parts. It will be a consequence of our  bootstrap assumptions that within region $III$:
\begin{equation}\label{kodkowadsellojhudw}
\left|\Omega{\rm tr}\chi - \frac{2}{v}\right| \lesssim \frac{\underline{v}}{v}.
\end{equation}
With this in mind, we let $0 < p \ll 1$, and set
\[w \doteq \left(\frac{-u}{v}\right)^pv^{-3/2}.\]
(In reality, to avoid certain logarithmic divergences, we use the weight $\tilde w \doteq \left(\frac{-u}{v}\right)^pv^{-3/2}(-u)^{-\delta}$ and then multiply the final estimate by $(-u)^{\delta}$, but we will suppress this point for the introduction.)

We obtain
\begin{align}\label{itmfokwdkod23}
\Omega\nabla_4\left(w\Omega \underline{\beta}\right)_A + \left(\left(\Omega{\rm tr}\chi\right)-\left(\frac{3}{2} + p\right)v^{-1}\right)\left(w\Omega \underline{\beta}\right)_A &= -\slashed{\nabla}_A\left(w\check{\rho}\right) + \left({}^*\slashed{\nabla}\right)_A\left(w\check{\sigma}\right) + w\left(\cdots\right)_A,
\\ \label{dkowdko23} \Omega\nabla_3\left(w\check{\rho}\right) + \frac{p}{-u}w\check{\rho} &= - \slashed{\rm div}\left(w\Omega \underline{\beta}\right) + w\left(\cdots\right),
\\ \label{adkldwko23} \Omega\nabla_3\left(w\check{\sigma}\right) + \frac{p}{-u}w\check{\sigma} &= \slashed{\rm div}{}^*\left(w\Omega \underline{\beta}\right) + w\left(\cdots\right).\end{align}
Using~\eqref{kodkowadsellojhudw}, we see that all of the lower order terms produced by scheme are positive; furthermore, in the $\check{\rho}$ and $\check{\sigma}$ equations, the lower order is proportional to $u^{-1}$ which is a good weight since we are in a region where $\frac{-u}{v} \ll 1$. 

A similar scheme is used for the rest of the Bianchi pairs. Note that the $(-u)^{-1}$ weights in the lower order terms will produce very good spacetime estimates for all curvature components except for $\underline{\beta}_A$; in particular, the lower order term proportional to ${\rm tr}\chi$ is only important in the analysis of $\underline{\beta}_A$'s equation. 

For the control of the nonlinear terms the key observations are the absence of  nonlinear terms which involve contractions of $\underline{\beta}_A$ and $\hat{\underline{\chi}}_{AB}$ and the absence of any nonlinear term with $\eta_A$ in~\eqref{ren6}. Finally, in order to apply Sobolev inequalities, we will also need to commute with angular derivatives; these commutations will be carried out in a way which avoids the creation of terms containing $\eta_A$ in the $\nabla_4$ equations (see~\eqref{com1}).

\subsubsection{Integrating Transport Equations}
We close this sketch of our scheme for the bootstrap argument with a discussion about integrating transport equations. For every double-null quantity $y$, other than $\Omega$, $\underline{\omega}$, $\hat{\underline{\chi}}_{AB}$, $\underline{\beta}_A$, $\eta_A$, and $\underline{\alpha}_{AB}$ we will have a $\nabla_3$ equation of the form
\[\nabla_3y + a\underline{\omega} y = \cdots.\]
Here $a \in \mathbb{R}$ is a suitable constant, which may be determined by the signature of $y$. Letting $s$ denote the signature of $y$, we may derive from this equation
\[\nabla_u\left(\Omega^{-s}y\right) = \Omega^2\left(\cdots\right),\]
where the terms in the $\cdots$ all are expected to have regular limits as $u\to 0$. Since $\Omega^2$ blows-up slowly as $u\to 0$, we can easily integrate these equations in the $u$-direction to obtain good estimates for $y$. Note that even if the only estimates available for the terms in the $\cdots$ blow-up as $u\to 0$, as long as the rate is integrable, the estimate will still close. This is why it will not be a problem that our energy estimates involve weights which degenerate as $u\to 0$. 

For the estimates of $\Omega$, $\hat{\underline{\chi}}_{AB}$, $\eta_A$, and $\underline{\beta}_A$ we will only have access to $\nabla_4$ equations. (Note that in our bootstrap argument we will not estimate $\underline{\omega}$ or $\underline{\alpha}_{AB}$.) More specifically, we will have that $\left(\Omega,\hat{\underline{\chi}}_{AB},\eta_A\right)$ satisfy equations of the following forms:
\[\Omega\nabla_4\log\Omega = O\left(\epsilon v^{-1}\right),\qquad \Omega\nabla_4\left(\Omega^{-1}\hat{\underline{\chi}}_{AB},\eta_A\right)+ \left(\frac{1+O\left(\epsilon\right)}{v}\right)\left(\Omega^{-1}\hat{\underline{\chi}}_{AB},\eta_A\right) = O\left(\epsilon v^{-2}\right).\]
Integrating these from the hypersurface $\frac{v}{-u} = \underline{v}^{-1}$, it is immediately clear that we cannot expect to show that these quantities are bounded as $u\to 0$ (cf.~the discussion of the equation~\eqref{thispropskfowshift} above). Instead we will close the bootstrap argument with the bounds~\eqref{lolop1},~\eqref{lolop2}, and~\eqref{lolop3}.

\subsection{Incompleteness of Future Null Infinity, the $\left(u,\hat{v},\theta^A\right)$ Coordinates, and the Hawking Mass}  
In Section~\ref{incompsec} we will first truncate our solution to region to $\{-\underline{v}^2 \leq u < 0\}$ so that we obtain a globally hyperbolic region:
\begin{center}
\begin{tikzpicture}[scale = 1]
\fill[lightgray] (0,0) -- (.65,-.65) -- (3.4,2.1) -- (2.75,2.75)  -- (0,0);
\draw [dashed] (0,0) -- (2.75,2.75) node[sloped,above,midway]{\footnotesize $\{ u = 0\}$};
\draw (.975,-.325)-- (0,0);
\draw (0,0) -- (1.125,.375) -- (1.4,.1);
\draw (0,0) -- (.65,-.65);
\path [draw=black,fill=white] (0,0) circle (1/16); 
\draw (0,0) node[left]{\footnotesize $(0,0)$};
\draw (.65,-.4) node {\footnotesize $\tilde{I}$};
\draw (1,.1) node {\footnotesize $\tilde{II}$};
\draw (1.6,1) node {\footnotesize $III$};
\draw (.65,-.65) -- (3.4,2.1) node[sloped,below,midway]{\footnotesize $\{u = -\underline{v}^2\}$};
\draw [dotted,thick] (3.075,2.425) -- (3.575,2.925);
\end{tikzpicture}
\end{center}

Then we will show the solution obtain has an incomplete future null infinity in the sense of Definition~\ref{nakeddef} (this will be straightforward given the estimates we will have already established), we will define global $\left(u,\hat{v},\theta\right)$ coordinates by setting $\hat{v} = v^{1-2\kappa}$ (see Definition~\ref{kapselfseim}), and finally conclude the proof of Theorem~\ref{themainextresult} by computing the Hawking mass of each sphere $\mathbb{S}^2_{u,0}$ to establish~\eqref{masskfow}.

\section{Degenerate Transport Equations on $\mathbb{S}^2$}\label{pdetheoryforformal}
In this section we will establish existence results and a priori estimates for various classes of linear and nonlinear PDE's which will show up in the context of setting up our characteristic initial data.

We let $\mathcal{T}^{(s,k)}\left(\mathbb{S}^2\right)$ denote the space of $(s,k)$-tensors on $\mathbb{S}^2$ and $\hat{\mathcal{S}}\left(\mathbb{S}^2\right)$ will denote the space of symmetric trace-free $(0,2)$-tensors. Using the covariant derivative for the round sphere, we may define the corresponding Sobolev spaces $\mathring{H}^j$. A metric $\slashed{g}_{AB} \in \mathring{H}^{{\rm max}\left(2,j\right)}$ also allows us to the Sobolev spaces $H^j\left(\mathcal{T}^{(s,k)}\left(\mathbb{S}^2\right)\right)$ for $j = 0,1,\cdots$ by
\[\left\vert\left\vert u\right\vert\right\vert_{H^j} \doteq \sum_{i=0}^j\left\vert\left\vert \slashed{\nabla}^iu\right\vert\right\vert_{L^2},\]
where, in general, we let $\slashed{\nabla}_A$ denote the covariant derivative associated to a metric $\slashed{g}_{AB}$. We will denote the covariant derivative corresponding to the round sphere by $\mathring{\nabla}_A$. We will write integrals with $\slashed{dVol}$ and $\mathring{dVol}$ to denote integration against the volume forms of $\slashed{g}_{AB}$ and $\mathring{\slashed{g}}_{AB}$ respectively, and $\slashed{\rm div}$ and $\mathring{\rm div}$ to denote the divergence operators of $\slashed{g}_{AB}$ and $\mathring{\slashed{g}}_{AB}$ respectively. 

We collect various facts about Sobolev spaces inequalities that we will use later in the following lemmas. We start with two basic Sobolev inequalities on $\mathbb{S}^2$.
\begin{lemma}\label{soblemm}On the round sphere, we have, for any tensor $u$, 
\begin{equation}\label{sobolevonS2}
\left\vert\left\vert u\right\vert\right\vert_{L^p} \lesssim_p \left\vert\left\vert u\right\vert\right\vert_{L^2}^{\frac{2}{p}}\left\vert\left\vert \mathring{\nabla} u\right\vert\right\vert^{1-\frac{2}{p}}_{\mathring{H}^1}\qquad \forall p < \infty,\qquad \left\vert\left\vert u\right\vert\right\vert_{L^{\infty}} \lesssim \left\vert\left\vert u\right\vert\right\vert_{L^2}^{\frac{1}{2}}\left\vert\left\vert u\right\vert\right\vert_{\mathring{H}^2}^{\frac{1}{2}}.
\end{equation}
\end{lemma}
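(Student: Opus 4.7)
The plan is to deduce both estimates from well-known Sobolev embeddings and Gagliardo--Nirenberg interpolation on the round sphere, first reducing the tensor-valued statement to the scalar case.

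Reduction to scalars. I would begin with Kato's inequality, $\bigl|\mathring{\nabla}|u|\bigr|\leq |\mathring{\nabla}u|$ pointwise a.e., which immediately yields $\|\,|u|\,\|_{L^p}=\|u\|_{L^p}$ and $\|\mathring{\nabla}|u|\|_{L^2}\leq \|\mathring{\nabla}u\|_{L^2}$. For the second derivative control needed in the $L^\infty$ bound, I would use the Bochner identity on the round sphere: $\tfrac{1}{2}\mathring{\Delta}|u|^2 = |\mathring{\nabla}u|^2 + \langle \mathring{\Delta}u,u\rangle + \mathrm{Ric}(u,u)$-type terms, so that $\|\,|u|\,\|_{\mathring{H}^2}\lesssim \|u\|_{\mathring{H}^2}$ on $\mathbb{S}^2$ with the curvature contributing only lower order. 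This reduces both inequalities to the corresponding ones for scalar functions on $(\mathbb{S}^2,\mathring{\slashed{g}})$.

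Proof of the $L^\infty$ estimate. Using a Littlewood--Paley decomposition $u=\sum_j P_j u$ (where $P_j$ localizes to the spherical-harmonic band $l\sim 2^j$), the 2D Bernstein inequality gives $\|P_ju\|_{L^\infty}\lesssim 2^{j}\|P_ju\|_{L^2}$. For any integer $N$ I would split
\begin{equation*}
\|u\|_{L^\infty}\leq \sum_{j\leq N}\|P_ju\|_{L^\infty}+\sum_{j>N}\|P_ju\|_{L^\infty}
\lesssim 2^{N}\|u\|_{L^2}+2^{-N}\|\mathring{\Delta}u\|_{L^2},
\end{equation*}
where the second bound follows by writing $\|P_ju\|_{L^2}\lesssim 2^{-2j}\|P_j\mathring{\Delta}u\|_{L^2}$ and summing the geometric series. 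Optimizing $2^{N}\sim\bigl(\|u\|_{\mathring{H}^2}/\|u\|_{L^2}\bigr)^{1/2}$ produces exactly the claimed interpolation $\|u\|_{L^\infty}\lesssim \|u\|_{L^2}^{1/2}\|u\|_{\mathring{H}^2}^{1/2}$.

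Proof of the $L^p$ estimate. The argument is analogous, replacing the $L^\infty$ Bernstein inequality with its $L^p$ counterpart in two dimensions, $\|P_ju\|_{L^p}\lesssim 2^{j(1-2/p)}\|P_ju\|_{L^2}$. Triangle inequality and frequency splitting at level $N$ yield
\begin{equation*}
\|u\|_{L^p}\lesssim_p 2^{N(1-2/p)}\|u\|_{L^2}+2^{-2N/p}\|\mathring{\nabla}u\|_{L^2},
\end{equation*}
and optimizing $2^{N}\sim \|\mathring{\nabla}u\|_{L^2}/\|u\|_{L^2}$ gives $\|u\|_{L^p}\lesssim_p\|u\|_{L^2}^{2/p}\|\mathring{\nabla}u\|_{L^2}^{1-2/p}$, which is stronger than the stated bound since $\|\mathring{\nabla}u\|_{L^2}\leq \|\mathring{\nabla}u\|_{\mathring{H}^1}$. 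The $p$-dependence of the constant enters only through the summation of the geometric series in the split, which is harmless for $p<\infty$.

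I do not anticipate a real obstacle here: both inequalities are textbook Gagliardo--Nirenberg estimates on a compact Riemannian $2$-manifold. The only mildly nontrivial point is the tensorial reduction via Kato/Bochner, but on $\mathbb{S}^2$ with its fixed round metric this is standard and the proof could in fact be given by citation.
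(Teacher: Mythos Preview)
Your approach is correct, and the inequalities are indeed textbook Gagliardo--Nirenberg on a compact surface. The paper's own proof, however, is a single sentence: it reduces to Euclidean Sobolev inequalities by working in coordinate charts (a finite cover of $\mathbb{S}^2$ by charts in which $\mathring{\slashed{g}}$ is uniformly comparable to the Euclidean metric, with the tensor handled componentwise). Your spectral/Littlewood--Paley route is a genuine alternative: it is intrinsic (no choice of charts) and makes the interpolation exponents transparent via Bernstein and frequency splitting, at the cost of invoking the spherical-harmonic machinery. One small caveat: your reduction to scalars for the $L^\infty$ bound via the Bochner identity is sketchy, since $|u|$ need not lie in $H^2$ at zeros of $u$, and the Bochner formula controls $\mathring{\Delta}|u|^2$ rather than $\mathring{\Delta}|u|$; it is cleaner either to run the Littlewood--Paley argument directly for tensor fields (using spectral projectors for the rough Laplacian on tensors) or, as the paper does, to work componentwise in charts.
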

\begin{proof}This simply follows by applying Euclidean Sobolev inequalities in suitable coordinate charts.
\end{proof}

It will be useful to compare Sobolev spaces generated by various metrics $\slashed{g}_{AB}$ and those generated by the round metric $\mathring{\slashed{g}}_{AB}$. 
\begin{lemma}\label{comparethespaces}Let $i \geq 2$, $k \geq 0$, and $\slashed{g}_{AB}$ be a Riemannian metric on $\mathbb{S}^2$ satisfying at least one of the following two assumptions:
\begin{enumerate}
	\item We have $\slashed{g}_{AB} = e^{2\varphi}\mathring{\slashed{g}}_{AB}$ for $\varphi \in \mathring{H}^i\left(\mathbb{S}^2\right)$.
	\item We have $\left\vert\left\vert \slashed{g} - \mathring{\slashed{g}}\right\vert\right\vert_{\mathring{H}^i} \leq C(i,k)$, for a suitably small constant $C(i,k)$ which depends only on $i$ and $k$.
\end{enumerate}
Then we have that for every $(0,k)$-tensor $w$
\[\left\vert\left\vert w\right\vert\right\vert_{H^i} \sim_{i,k} \left\vert\left\vert w\right\vert\right\vert_{\mathring{H}^i}.\]
\end{lemma}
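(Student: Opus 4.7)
The plan is to express both norms in terms of the round sphere data and show the difference is controlled by a product estimate in Sobolev spaces on $\mathbb{S}^2$. First I would handle the volume form. In case 1 we have $\slashed{dVol} = e^{2\varphi}\mathring{dVol}$, and the Sobolev embedding $\mathring{H}^i \hookrightarrow L^\infty$ (valid for $i \geq 2$ on $\mathbb{S}^2$, by Lemma~\ref{soblemm}) gives $\varphi \in L^\infty$, so $e^{\pm 2\varphi}$ is bounded above and below by positive constants depending only on $\|\varphi\|_{\mathring{H}^i}$. In case 2, the smallness of $\|\slashed{g}-\mathring{\slashed{g}}\|_{\mathring{H}^i}$ together with the $\mathring{H}^i \hookrightarrow L^\infty$ embedding immediately yields uniform upper and lower bounds on $\det\slashed{g}/\det\mathring{\slashed{g}}$, hence equivalence of $\|\cdot\|_{L^2(\slashed{dVol})}$ and $\|\cdot\|_{L^2(\mathring{dVol})}$ for $(0,k)$-tensors (components measured in either metric are equivalent once the $L^\infty$ norms of $\slashed{g}^{\pm 1}-\mathring{\slashed{g}}^{\pm 1}$ are small/bounded).

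Next I would compare the covariant derivatives. The difference $\slashed{\nabla}-\mathring{\nabla}$ acting on a tensor is given by contractions with the tensor $T^A_{\ BC} \doteq \slashed{\Gamma}^A_{BC} - \mathring{\Gamma}^A_{BC}$. In case 1 this reduces to the standard conformal formula $T = T(\mathring{\nabla}\varphi,\slashed{g})$, so schematically $T$ and its $\mathring{\nabla}$-derivatives of order $j$ are polynomials in $\mathring{\nabla}^{\leq j+1}\varphi$ with coefficients bounded by $\slashed{g}^{\pm 1}$. In case 2, $T$ is a rational expression in $(\slashed{g},\slashed{g}^{-1})$ and $\mathring{\nabla}\slashed{g}$, linear in $\mathring{\nabla}\slashed{g}$, so $\mathring{\nabla}^j T$ is controlled by $\mathring{\nabla}^{\leq j+1}\slashed{g}$ times rational expressions in $\slashed{g}^{\pm 1}$, which are bounded on the sphere since $\slashed{g}-\mathring{\slashed{g}}$ is small in $L^\infty$.

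I would then prove $\|w\|_{H^i}\sim\|w\|_{\mathring{H}^i}$ by induction on $i$. Writing $\slashed{\nabla}^\ell w$ as a sum of terms of the form $\mathring{\nabla}^{j_0}w \ast T^{\ast a_1} \mathring{\nabla}^{j_1}T \ast \cdots$ with $j_0 + j_1 + \cdots \leq \ell - 1$ (and vice versa with $\slashed{\nabla}$ replaced by $\mathring{\nabla}$ and a sign flipped in $T$), the inequality reduces to controlling each such product in $L^2$. For this I would use the standard Moser/Sobolev product estimate on $\mathbb{S}^2$:
\begin{equation*}
\|f_1 \cdots f_m\|_{\mathring{H}^\ell} \lesssim_{m,\ell} \sum_{j} \Bigl(\prod_{q\neq j}\|f_q\|_{L^\infty}\Bigr)\|f_j\|_{\mathring{H}^\ell},
\end{equation*}
combined with the interpolation inequalities of Lemma~\ref{soblemm}, and the fact that for $i\geq 2$ the space $\mathring{H}^i$ is a Banach algebra on $\mathbb{S}^2$.

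The main obstacle, modest but the technical heart of the argument, is bookkeeping the product estimates so that the highest derivative falling on $T$ never exceeds $i-1$, matching the regularity of $\varphi$ or of $\slashed{g}-\mathring{\slashed{g}}$. This is ensured by counting: $\slashed{\nabla}^i w$ contains at most $\mathring{\nabla}^{i-1}T$ (which needs $\varphi\in \mathring{H}^i$ or $\slashed{g}-\mathring{\slashed{g}}\in\mathring{H}^i$), with all remaining factors carrying strictly fewer derivatives and thus controllable in $L^\infty$ by Sobolev embedding. The smallness threshold $C(i,k)$ in case 2 is fixed so that the rational expressions in $\slashed{g}^{\pm 1}$ are uniformly bounded and the constants produced by the product estimates can be absorbed.
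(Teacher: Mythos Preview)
Your proposal is correct and is precisely the standard argument that the paper has in mind: the paper's own proof is simply the one-line ``This is a straightforward consequence of Lemma~\ref{soblemm},'' and your sketch fills in exactly the details (volume-form comparison, difference tensor $T=\slashed{\Gamma}-\mathring{\Gamma}$, Moser-type product estimates, and the $\mathring{H}^i\hookrightarrow L^\infty$ embedding for $i\geq 2$) that make that sentence rigorous.
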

\begin{proof}This is a straightforward consequence of Lemma~\ref{soblemm}.
\end{proof}

The following well-known lemma is also useful.
\begin{lemma}\label{interlemm}Let $w_1$ and $w_2$ be tensors on $\mathbb{S}^2$. Then, for $i \geq 2$, we have
\begin{equation}\label{genformoos}
\left\vert\left\vert w_1w_2\right\vert\right\vert_{\mathring{H}^i} \lesssim \left\vert\left\vert w_1\right\vert\right\vert_{\mathring{H}^i}\left\vert\left\vert w_2\right\vert\right\vert_{\mathring{H}^i}.
\end{equation}
\end{lemma}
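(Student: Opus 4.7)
The plan is to prove this standard ``Sobolev spaces form an algebra'' result by applying the Leibniz rule to expand $\mathring{\nabla}^i(w_1w_2)$ as a finite sum of terms of the form $\mathring{\nabla}^j w_1 \otimes \mathring{\nabla}^{i-j} w_2$ (contracted against the round metric), and then estimating each term using H\"older's inequality together with the Sobolev embeddings recorded in Lemma~\ref{soblemm}. The key arithmetic fact is that on the $2$-dimensional sphere we have $i \geq 2 > n/2 = 1$, which is exactly what makes $\mathring{H}^i$ into an algebra.

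The endpoint terms are easy: for $j=0$ the bound
\[
\left\Vert w_1 \,\mathring{\nabla}^i w_2 \right\Vert_{L^2} \leq \left\Vert w_1\right\Vert_{L^\infty} \left\Vert \mathring{\nabla}^i w_2\right\Vert_{L^2} \lesssim \left\Vert w_1\right\Vert_{\mathring{H}^2} \left\Vert w_2\right\Vert_{\mathring{H}^i} \leq \left\Vert w_1\right\Vert_{\mathring{H}^i} \left\Vert w_2\right\Vert_{\mathring{H}^i}
\]
follows directly from the $L^\infty$ Sobolev embedding in~\eqref{sobolevonS2} (using $i \geq 2$), and symmetrically for $j=i$.

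For the intermediate terms $0 < j < i$, I would pair up H\"older exponents $(p_1,p_2)$ with $\tfrac{1}{p_1}+\tfrac{1}{p_2} = \tfrac{1}{2}$, writing
\[
\left\Vert \mathring{\nabla}^j w_1 \cdot \mathring{\nabla}^{i-j} w_2 \right\Vert_{L^2} \leq \left\Vert \mathring{\nabla}^j w_1\right\Vert_{L^{p_1}} \left\Vert \mathring{\nabla}^{i-j} w_2\right\Vert_{L^{p_2}},
\]
and then use the first Sobolev inequality of~\eqref{sobolevonS2} to interpolate each factor between $L^2$ and $\mathring{H}^1$ of one more derivative. Concretely, applying $\left\Vert u \right\Vert_{L^p} \lesssim_p \left\Vert u\right\Vert_{L^2}^{2/p} \left\Vert \mathring{\nabla} u\right\Vert_{\mathring{H}^1}^{1-2/p}$ to $u = \mathring{\nabla}^j w_1$ yields $\left\Vert \mathring{\nabla}^j w_1 \right\Vert_{L^{p_1}} \lesssim \left\Vert w_1 \right\Vert_{\mathring{H}^{j+1}}$ (and similarly for $w_2$), and choosing $p_1, p_2 < \infty$ arbitrarily, both factors are controlled by $\left\Vert w_1\right\Vert_{\mathring{H}^i} \left\Vert w_2\right\Vert_{\mathring{H}^i}$ since $j+1 \leq i$ and $i-j+1 \leq i$ for $1 \leq j \leq i-1$ (this uses $i \geq 2$).

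There is no real obstacle here; the only minor subtlety is keeping track of the fact that the Leibniz rule on $\mathbb{S}^2$ with respect to $\mathring{\nabla}$ produces, in addition to the ``flat'' Leibniz terms, lower-order curvature contractions coming from commutators $[\mathring{\nabla},\mathring{\nabla}]$, but these are all of lower order and are harmlessly absorbed into the same type of product estimate. Summing over $0 \leq j \leq i$ yields the claimed estimate, and iterating on $k$ (the tensorial index count) is immediate since constants from tensor contractions depend only on $i$ and $k$.
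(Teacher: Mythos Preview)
Your argument is the standard proof of the Sobolev algebra property and is correct; the paper itself does not supply a proof, merely introducing the lemma as ``well-known.'' One small caveat: as literally written, the first inequality in~\eqref{sobolevonS2} bounds $\|u\|_{L^p}$ by $\|u\|_{L^2}^{2/p}\|\mathring{\nabla} u\|_{\mathring{H}^1}^{1-2/p}$, which costs \emph{two} derivatives of $u$, so applying it to $u=\mathring{\nabla}^j w_1$ would only give $\|\mathring{\nabla}^j w_1\|_{L^{p_1}}\lesssim \|w_1\|_{\mathring{H}^{j+2}}$, and your count $j+1\le i$ would be off by one (this matters e.g.\ for $i=2$, $j=1$). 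Of course the standard two-dimensional embedding $\mathring{H}^1\hookrightarrow L^p$ for all $p<\infty$ (an immediate consequence of Lemma~\ref{soblemm} or just of the usual Euclidean Sobolev inequality in charts) does the job with the one-derivative loss you claim, so the argument goes through unchanged once you invoke that instead.
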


Lastly, we record the formula for the commutator between a Lie derivative and a covariant derivative.
\begin{lemma}\label{whenaliederiavtivemeetsacovariantderivativethingscanhappen}Let $\slashed{g}_{AB}$ be a Riemannian metric on $\mathbb{S}^2$, $X^A$ be a vector field on $\mathbb{S}^2$, and  $H_{B_1\cdots B_k}$ be a tensor on $\mathbb{S}^2$. Then we have
\[\left[\slashed{\nabla}_A,\mathcal{L}_X\right]H_{B_1\cdots B_k} = \sum_{i=1}^k{}^{(X)}\Gamma_{B_iAC}H_{B_1\cdots\ \cdots B_k}^{\ \ \ \ C},\]
where
\[{}^{(X)}\Gamma_{ABC} = \frac{1}{2}\left(\slashed{\nabla}_A{}^{(X)}\pi_{BC} + \slashed{\nabla}_B{}^{(X)}\pi_{AC} - \slashed{\nabla}_C{}^{(X)}\pi_{AB}\right),\]
and ${}^{(X)}\pi$ denotes the deformation tensor of $X$. 
\end{lemma}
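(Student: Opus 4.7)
The plan is to deduce the commutator formula from naturalness of the Levi-Civita connection under the flow of $X$, together with the standard formula for the variation of Christoffel symbols under a metric perturbation. Let $\phi_t$ denote the flow of $X$, set $\slashed{g}(t)\doteq \phi_t^*\slashed{g}$ and let $\slashed{\nabla}^{(t)}$ denote the corresponding Levi-Civita connection (so that $\slashed{\nabla}^{(0)} = \slashed{\nabla}$). The starting point is the diffeomorphism-naturalness identity
\[\phi_t^*\bigl(\slashed{\nabla}_A H\bigr) = \slashed{\nabla}^{(t)}_A\bigl(\phi_t^* H\bigr),\]
whose derivative at $t = 0$ will yield the claimed commutator.

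Since $\slashed{\nabla}^{(t)}$ and $\slashed{\nabla}$ are two torsion-free connections on $\mathbb{S}^2$, their difference is a $(1,2)$-tensor $\Delta\Gamma(t)^C_{AB}$, and for a covariant tensor $H$ one has
\[\slashed{\nabla}^{(t)}_A H_{B_1\cdots B_k} = \slashed{\nabla}_A H_{B_1\cdots B_k} - \sum_{i=1}^k \Delta\Gamma(t)^C_{AB_i}\,H_{B_1\cdots C\cdots B_k}.\]
Passing to coordinates that are normal for $\slashed{g}$ at a chosen point (or directly using the Koszul formula), the tensor $\Delta\Gamma(t)$ may be written in covariant form as
\[\Delta\Gamma(t)^C_{AB} = \tfrac{1}{2}\slashed{g}(t)^{CD}\Bigl(\slashed{\nabla}_A\slashed{g}(t)_{BD} + \slashed{\nabla}_B\slashed{g}(t)_{AD} - \slashed{\nabla}_D\slashed{g}(t)_{AB}\Bigr).\]
Differentiating at $t = 0$, using $\Delta\Gamma(0) = 0$, $\slashed{\nabla}\slashed{g} = 0$, and $\tfrac{d}{dt}\slashed{g}(t)|_{t=0} = \mathcal{L}_X\slashed{g} = {}^{(X)}\pi$, one obtains
\[\dot{\Gamma}^C_{AB}\doteq \tfrac{d}{dt}\Delta\Gamma(t)^C_{AB}\big|_{t=0} = \slashed{g}^{CD}\,{}^{(X)}\Gamma_{ABD},\]
with ${}^{(X)}\Gamma_{ABD}$ as in the statement; in particular $\dot{\Gamma}^C_{AB}$ is manifestly symmetric in $A,B$.

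Finally, differentiating the naturalness identity at $t=0$ and using $\tfrac{d}{dt}\phi_t^*|_{t=0} = \mathcal{L}_X$ gives
\[\mathcal{L}_X\bigl(\slashed{\nabla}_A H\bigr)_{B_1\cdots B_k} = \slashed{\nabla}_A\bigl(\mathcal{L}_X H\bigr)_{B_1\cdots B_k} - \sum_{i=1}^k \dot{\Gamma}^C_{AB_i}\,H_{B_1\cdots C\cdots B_k},\]
so that $[\slashed{\nabla}_A,\mathcal{L}_X]H_{B_1\cdots B_k} = \sum_i\dot{\Gamma}^C_{AB_i}H_{B_1\cdots C\cdots B_k}$. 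Substituting the expression for $\dot{\Gamma}$ and exploiting the symmetry ${}^{(X)}\Gamma_{AB_iD} = {}^{(X)}\Gamma_{B_iAD}$, the right-hand side rearranges into precisely the formula stated in the lemma. There is no substantive obstacle here; the only item requiring minor care is the index bookkeeping of which slot of $H$ is contracted in each term, but this is automatic from the connection-difference formalism just outlined.
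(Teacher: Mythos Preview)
Your argument is correct: the naturalness of the Levi--Civita connection under pullback, combined with the standard variation-of-Christoffel formula, gives exactly the claimed commutator, and your index bookkeeping is fine. The paper itself does not supply a proof of this lemma but simply cites it as Lemma~7.1.3 of Christodoulou--Klainerman~\cite{CK}, so there is no in-paper argument to compare against; your flow-based derivation is a standard and clean way to establish the identity.
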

\begin{proof}This is Lemma 7.1.3 in~\cite{CK}.
\end{proof}

\subsection{First Order Perturbations of the Identity}\label{perturbtheidentityforever}

\begin{definition}\label{KuF}Let $X^A$ be a $C^1$ vector field on $\mathbb{S}^2$ and let $h = \sum_{i=0}^k c_i\left(h^{(i)}\right)_{A_1\dots A_i}^{\ \ \ \ \ \ \ B_1\cdots B_i}$ denote a linear combination of tensors $h^{(i)} \in \mathcal{T}^{(i,i)}$ with each $h^{(i)}$ continuous. Then we define a differential operator $P$ acting on tensors $u_{A_1\cdots A_k} \in C^1\left(\mathcal{T}^{(0,k)}\left(\mathbb{S}^2\right)\right)$ by
\[Pu_{A_1\cdots A_k} \doteq \left(u + \mathcal{L}_Xu + h\cdot u\right)_{A_1\cdots A_k},\]
where we do not specify which indices the contraction $\left(h\cdot u\right)_{A_1\cdots A_k}$ is taken with respect to. (Note, however, that this will always be a $k$-tensor, e.g., $h_{A_1\cdots A_i}^{\ \ \ \ \ \ \ B_1\cdots B_i}u_{C_1\cdots C_{k-i} B_1\cdots B_i} \in \mathcal{T}^{(0,k)}$.) 
\end{definition}

Our goal in this section will be to establish a theory which yields existence and uniqueness results and a priori estimates for solutions $u$ to
\begin{equation}\label{thebasicequation}
Pu_{A_1\cdots A_k} = F_{A_1\cdots A_k},
\end{equation}
whenever $X^A$ and $h$ satisfy suitable regularity and smallness assumptions. Ultimately, because of the smallness assumptions, we will be able to treat the operator $P$ as a perturbation of the identity. Let $M \geq 0$ be a  non-negative integer. Then we define
\[A_M \doteq \sup_{0 \leq j \leq M+1}\left\vert\left\vert \mathring{\nabla}^jX\right\vert\right\vert_{L^2\left(\mathbb{S}^2\right)} + \sup_{0 \leq j \leq M}\left\vert\left\vert \mathring{\nabla}^jh\right\vert\right\vert_{L^2\left(\mathbb{S}^2\right)},\]
where the $L^2$ spaces are defined with respect to the round metric. We will always work with $X$ and $h$ which satisfy $A_2 < \infty$. Note that, by Lemma~\ref{soblemm}, we will thus have $\left\vert\left\vert \mathring{\nabla} X\right\vert\right\vert_{L^{\infty}} + \left\vert\left\vert h\right\vert\right\vert_{L^{\infty}} \lesssim A_2$.

The main result of this section will be the following.
\begin{proposition}\label{qualitativeexistence}
Let $M \geq 2 $ be a positive integer. Then, if $F_{A_1\cdots A_k} \in \mathring{H}^M\left(\mathcal{T}^{(0,k)}\left(\mathbb{S}^2\right)\right)$ and  $A_2$ is suitably small, depending on $(0,k)$ and $M$, and $A_M < \infty$, then there exists $u_{A_1\cdots A_k} \in \mathring{H}^M\left(\mathcal{T}^{(0,k)}\left(\mathbb{S}^2\right)\right)$ solving
\begin{equation}\label{theequationwesolveinthisparticularprop}
Pu_{A_1\cdots A_k} = F_{A_1\cdots A_k},
\end{equation}
and such that moreover $\mathcal{L}_Xu_{A_1\cdots A_k} \in \mathring{H}^M\left(\mathcal{T}^{(0,k)}\left(\mathbb{S}^2\right)\right)$. We also have that $u_{A_1\cdots A_k}$ satisfies the estimate
\[\left\vert\left\vert u\right\vert\right\vert_{\mathring{H}^M} + \left\vert\left\vert \mathcal{L}_Xu\right\vert\right\vert_{\mathring{H}^M} \lesssim_{k,r,M} \left(A_M+1\right)\left\vert\left\vert F\right\vert\right\vert_{\mathring{H}^M}.\]
Finally, if $w_{A_1\cdots A_k}$ is another solution to~\eqref{theequationwesolveinthisparticularprop} with the same right hand side $F_{A_1\cdots A_k}$, $w_{A_1\cdots A_k} \in L^2$, and $\mathcal{L}_Xw_{A_1\cdots A_k} \in L^2$, then we must have that $w_{A_1\cdots A_k} = u_{A_1\cdots A_k}$. 
\end{proposition}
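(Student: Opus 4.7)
The plan is to treat $K = I + \mathcal{L}_X + h\cdot$ as a perturbation of the identity, using the smallness of $A_2$ to absorb $\mathcal{L}_X$ and $h$ at the energy level. The basic observation is that, writing $\mathcal{L}_X u = X^B\mathring{\nabla}_B u + (\mathring{\nabla}X)\cdot u$ in coordinates, a direct integration by parts on $\mathbb{S}^2$ (paired against $u$ in $L^2(\mathring{\slashed{g}})$) gives
\[
\left\langle \mathcal{L}_X u, u\right\rangle_{L^2(\mathring{\slashed{g}})} = -\tfrac{1}{2}\int(\mathring{\rm div}X)\,|u|^2\,\mathring{dVol} + \int(\mathring{\nabla}X)\cdot u\cdot u\,\mathring{dVol} = O(A_2)\,\|u\|_{L^2}^2,
\]
since $\|\mathring{\nabla}X\|_{L^\infty} \lesssim A_2$ by Lemma \ref{soblemm}. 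Pairing $Ku = F$ with $u$, and using $|\langle h\cdot u,u\rangle|\lesssim A_2\|u\|_{L^2}^2$, yields $(1-CA_2)\|u\|_{L^2}^2 \leq \|F\|_{L^2}\|u\|_{L^2}$, which gives both the $L^2$ a priori estimate $\|u\|_{L^2}\lesssim \|F\|_{L^2}$ and, applied to the difference of two solutions, the uniqueness statement (the hypothesis $\mathcal{L}_X u\in L^2$ is precisely what legitimizes the integration by parts for the difference).

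For the higher-order a priori estimates, I would commute the equation with $\mathring{\nabla}^j$ for $1\leq j\leq M$ and write
\[
\mathring{\nabla}^j u + \mathcal{L}_X\mathring{\nabla}^j u + [\mathring{\nabla}^j,\mathcal{L}_X]u + \mathring{\nabla}^j(h\cdot u) = \mathring{\nabla}^j F,
\]
then pair with $\mathring{\nabla}^j u$. The crucial point, visible from Lemma \ref{whenaliederiavtivemeetsacovariantderivativethingscanhappen}, is that $[\mathring{\nabla},\mathcal{L}_X]$ acts on tensors as a \textbf{zeroth-order} operator with coefficient built from $\mathring{\nabla}^2 X$. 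Consequently, iterating, $[\mathring{\nabla}^j,\mathcal{L}_X]$ is differential of order at most $j-1$ on $u$ and costs up to $j+1$ derivatives of $X$, so the tame Moser-type product estimate supplied by Lemmas \ref{soblemm} and \ref{interlemm} gives
\[
\|[\mathring{\nabla}^j,\mathcal{L}_X]u\|_{L^2} + \|[\mathring{\nabla}^j,h\cdot]u\|_{L^2} \lesssim_j A_M\|u\|_{\mathring{H}^2} + A_2\|u\|_{\mathring{H}^{j-1}}.
\]
The term $\mathcal{L}_X\mathring{\nabla}^j u$ contributes only $O(A_2)\|\mathring{\nabla}^j u\|_{L^2}^2$ by the same integration by parts as above, and similarly for $h\cdot\mathring{\nabla}^j u$. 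Arguing by induction on $j$ and absorbing all $A_2$ contributions into the main $\|\mathring{\nabla}^j u\|_{L^2}^2$ term, I obtain $\|u\|_{\mathring{H}^M}\lesssim (A_M+1)\|F\|_{\mathring{H}^M}$; then $\mathcal{L}_X u = F - u - h\cdot u$ together with the tame product estimate on $h\cdot u$ yields the matching bound on $\|\mathcal{L}_X u\|_{\mathring{H}^M}$.

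For existence, I would use a vanishing viscosity regularization: for $\epsilon>0$, set $K_\epsilon u \doteq Ku - \epsilon\mathring{\Delta}u$. This is a second-order elliptic operator on $\mathbb{S}^2$ and a compact perturbation of $-\epsilon\mathring{\Delta}$ (by Rellich--Kondrachov applied to the lower-order $\mathcal{L}_X + I + h\cdot$), and is therefore Fredholm of index zero from $\mathring{H}^{M+2}$ to $\mathring{H}^M$. The energy estimate above, repeated for $K_\epsilon$ (the extra term $-\epsilon\mathring{\Delta}$ produces only the favorable term $\epsilon\|\mathring{\nabla}u\|_{L^2}^2$ after integration by parts), shows $K_\epsilon$ is injective, hence surjective, yielding solutions $u_\epsilon$. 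Running the same commutator argument uniformly in $\epsilon$ produces the uniform bound $\|u_\epsilon\|_{\mathring{H}^M} + \|\mathcal{L}_X u_\epsilon\|_{\mathring{H}^M}\lesssim (A_M+1)\|F\|_{\mathring{H}^M}$. Weak-$*$ compactness delivers a limit $u\in\mathring{H}^M$ with $\mathcal{L}_X u\in\mathring{H}^M$, and since $\epsilon\mathring{\Delta}u_\epsilon\to 0$ in $\mathring{H}^{M-2}$, passing to the limit in $K_\epsilon u_\epsilon = F$ gives $Ku = F$.

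The main technical obstacle is organizing the commutator estimates so that \emph{top-order} derivatives of $u$ are always absorbed via the smallness of $A_2$, while higher derivatives of $X$ or $h$ are only paired with lower derivatives of $u$ controlled inductively. The hypothesis $M\geq 2$ is what makes $\mathring{H}^M(\mathbb{S}^2)$ supercritical, so that Lemma \ref{interlemm} and Sobolev embedding combine to close the tame product estimates in the desired linear form $\lesssim (A_M+1)\|F\|_{\mathring{H}^M}$.
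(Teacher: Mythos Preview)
Your approach is essentially the same as the paper's: you both solve $Ku=F$ by elliptic regularization $K_\epsilon=K-\epsilon\mathring{\Delta}$ (the paper writes $K^{(q)}=K-q\mathring{\Delta}$), establish uniform-in-$\epsilon$ $\mathring{H}^M$ bounds via the basic energy identity $\langle\mathcal{L}_X u,u\rangle=O(A_2)\|u\|_{L^2}^2$ combined with commutation by $\mathring{\nabla}^j$, and pass to the limit by weak-$*$ compactness; uniqueness comes from the same energy identity applied to the difference.

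The only noteworthy difference is how you obtain the bound on $\mathcal{L}_X u$: the paper derives it by a second pairing of the regularized equation against $\mathcal{L}_X u^{(q)}$ (which forces them to handle the cross term $q\int\mathring{\Delta}u^{(q)}\cdot\mathcal{L}_X u^{(q)}$ and impose the auxiliary smallness $qA_3\ll1$), whereas you simply read it off from the algebraic rearrangement $\mathcal{L}_X u=F-u-h\cdot u$ once $\|u\|_{\mathring{H}^M}$ is in hand. Your route is a clean shortcut. The paper also proves that the $u^{(q)}$ are Cauchy in $L^2$ (with an explicit rate $|q_2-q_1|$), which gives slightly more than bare weak-$*$ compactness, but this extra information is not needed for the proposition as stated.
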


In order to prove Proposition~\ref{qualitativeexistence} we will introduce an elliptic regularization of $P$:
\begin{definition}\label{regularizeK}For every $q > 0$ we define the operator 
\[P^{(q)} \doteq P - q\mathring{\Delta},\]
where $\mathring{\Delta}$ is the Laplace-Beltrami operator associated to the round metric $\mathring{\slashed{g}}_{AB}$.
\end{definition}

Since the operator $P^{(q)}$ is elliptic, it is straightforward to establish existence and uniqueness for solutions to $P^{(q)}u^{(q)}_{A_1\cdots A_k} = F_{A_1\cdots A_k}$.
\begin{proposition}\label{qregexists}Let $M \geq 2$ be a positive integer, $q > 0$, $F_{A_1\cdots A_k}  \in \mathring{H}^M\left(\mathcal{T}^{(0,k)}\left(\mathbb{S}^2\right)\right)$, $A_M < \infty$, and $A_2$ be sufficiently small independently of $q$. Then there exists a unique $u^{(q)}_{A_1\cdots A_k} \in \mathring{H}^{M+2}\left(\mathcal{T}^{(0,k)}\left(\mathbb{S}^2\right)\right)$ which solves
\begin{equation}\label{qregueqn}
P^{(q)}u^{(q)}_{A_1\cdots A_k} = F_{A_1\cdots A_k}.
\end{equation} 
\end{proposition}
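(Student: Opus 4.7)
The plan is to obtain a weak $\mathring{H}^1$ solution by Lax--Milgram and then to upgrade to $\mathring{H}^{M+2}$ via elliptic regularity for the round Laplacian on $\mathbb{S}^2$. To set up Lax--Milgram, I would define the bilinear form
\begin{equation*}
B(u,v) \doteq \int_{\mathbb{S}^2}\bigl(u\cdot v + (\mathcal{L}_X u)\cdot v + (h\cdot u)\cdot v + q\,\mathring{\nabla} u:\mathring{\nabla} v\bigr)\,\mathring{dVol}
\end{equation*}
on $\mathring{H}^1\times\mathring{H}^1$, with contractions taken with respect to $\mathring{\slashed{g}}$, and consider the linear functional $v\mapsto \int F\cdot v\,\mathring{dVol}$. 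Boundedness of $B$ follows from $\|X\|_{L^\infty}+\|\mathring{\nabla} X\|_{L^\infty}+\|h\|_{L^\infty}\lesssim A_2$ (Lemma~\ref{soblemm}, since $M\geq 2$). For coercivity, integration by parts applied to the transport term gives schematically
\begin{equation*}
\int(\mathcal{L}_X u)\cdot u\,\mathring{dVol} = -\tfrac{1}{2}\int(\mathring{\mathrm{div}}X)\,|u|^2\,\mathring{dVol} + O\bigl(\|\mathring{\nabla} X\|_{L^\infty}\|u\|_{L^2}^2\bigr),
\end{equation*}
and the $h$-term is bounded by $\|h\|_{L^\infty}\|u\|_{L^2}^2$. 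All of these being controlled by $A_2$, taking $A_2$ small enough (with a threshold depending on the tensor type $(k,r)$ through the Christoffel-like pieces of the Lie derivative) yields $B(u,u)\geq \tfrac{1}{2}\|u\|_{L^2}^2 + q\|\mathring{\nabla} u\|_{L^2}^2$, and Lax--Milgram produces a unique $u^{(q)}\in\mathring{H}^1$ weakly solving~\eqref{qregueqn}.

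For the regularity upgrade I would rewrite the equation as
\begin{equation*}
\mathring{\Delta}u^{(q)} = q^{-1}\bigl(u^{(q)} + \mathcal{L}_Xu^{(q)} + h\cdot u^{(q)} - F\bigr)
\end{equation*}
and bootstrap: if $u^{(q)}\in\mathring{H}^j$ for some $1\leq j\leq M+1$, then the right-hand side lies in $\mathring{H}^{\min(j-1,M)}$, where Lemma~\ref{interlemm} controls the products $h\cdot u^{(q)}$ and the tensorial contributions to $\mathcal{L}_Xu^{(q)}$, using $X\in\mathring{H}^{M+1}$ and $h\in\mathring{H}^{M}$ provided by $A_M<\infty$. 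Standard elliptic regularity for $\mathring{\Delta}$ on $\mathbb{S}^2$ then gives $u^{(q)}\in\mathring{H}^{j+1}$, and iterating terminates at $\mathring{H}^{M+2}$. Uniqueness in this smaller class is automatic from the $\mathring{H}^1$ uniqueness already obtained from Lax--Milgram.

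The main technical point I expect to watch will be the low-regularity end of the bootstrap, where~\eqref{genformoos} requires $\mathring{H}^{i}$ with $i\geq 2$; this is just borderline handled by the hypothesis $M\geq 2$, so that $X\in\mathring{H}^{3}$ and every required product can be estimated either via~\eqref{genformoos} or directly through the Sobolev embedding $\mathring{H}^{2}\hookrightarrow L^\infty$. I note that the resulting $\mathring{H}^{M+2}$ bound will degenerate as $q\to 0$, but such quantitative control is irrelevant for the present existence claim and will only be an issue when passing to the limit in the proof of Proposition~\ref{qualitativeexistence}.
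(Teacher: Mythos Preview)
Your proof is correct and essentially parallel to the paper's. The paper argues slightly differently at the existence step: rather than Lax--Milgram, it shows $\ker(K^{(q)})=\ker((K^{(q)})^*)=0$ via the same integration-by-parts estimate you use for coercivity, and then invokes the Fredholm alternative together with standard $L^2$-elliptic theory to obtain existence and the $\mathring{H}^{M+2}$ regularity in one stroke. Your Lax--Milgram plus explicit bootstrap is just an unpacking of that black box; the core energy identity for the transport term is identical in both approaches.
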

\begin{proof}

We start by showing that ${\rm ker}\left(P^{(q)}\right) = 0$. Indeed, suppose that $w_{A_1\cdots A_k} \in \mathring{H}^1\left(\mathcal{T}^{(0,k)}\left(\mathbb{S}^2\right)\right)$ is a weak-solution to 
\begin{equation}\label{kernelKq}
\left(w + \mathcal{L}_Xw + h\cdot w - q\slashed{\Delta}w\right)_{A_1\cdots A_k} = 0.
\end{equation}
Note that the divergence theorem, a straightforward integration by parts, and the Sobolev inequality from Lemma~\ref{soblemm} yields that
\begin{align*}
\int_{\mathbb{S}^2}\left\langle \mathcal{L}_Xw,w\right\rangle_{\mathring{\slashed{g}}} \mathring{dVol} &\geq -C\left\vert\left\vert \mathcal{L}_X\mathring{\slashed{g}}\right\vert\right\vert_{L^{\infty}}\left\vert\left\vert w\right\vert\right\vert^2_{L^2}
\\ \nonumber &\geq -CA_2\left\vert\left\vert w\right\vert\right\vert^2_{L^2},
\end{align*}
for some constant $C$ which just depends on $k$. Thus, taking the $\mathring{\slashed{g}}_{AB}$ inner product of~\eqref{kernelKq} with $w_{A_1\cdots A_k}$, integrating over $\mathbb{S}^2$, and integrating by parts leads to the following identity: 
\begin{equation}\label{basicestwhichshowsnoker}
\int_{\mathbb{S}^2}\left[\left(1-CA_2-\left\vert\left\vert h\right\vert\right\vert_{L^{\infty}}\right)\left|w\right|^2 + q\left|\slashed{\nabla}w\right|^2\right]\mathring{dVol} \leq 0.
\end{equation}
Thus, $A_2$ suitably small (and the Sobolev inequality~\eqref{sobolevonS2}) implies that $w_{A_1\cdots A_k} = 0$. The adjoint of $P^{(q)}$ is clearly of the same essential form as $P^{(q)}$, and thus the same integration by parts identity also implies that ${\rm ker}\left(\left(P^{(q)}\right)^*\right) = 0$. Therefore, by standard $L^2$-elliptic theory, given $F_{A_1\cdots A_k}  \in \mathring{H}^M\left(\mathcal{T}^{(0,k)}\left(\mathbb{S}^2\right)\right)$ there exists a unique $u^{(q)}_{A_1\cdots A_k} \in \mathring{H}^{M+2}\left(\mathcal{T}^{(0,k)}\left(\mathbb{S}^2\right)\right)$ solving~\eqref{qregueqn}. 
\end{proof}

Now we turn to proof of Proposition~\ref{qualitativeexistence}
\begin{proof}Let us fix some $k$ throughout the proof and allow all constants to depend on them. For each $q > 0$,  we may appeal to Proposition~\ref{qregexists} to  produce a solution $u^{(q)}_{A_1\cdots A_k}$ to the equation~\eqref{qregueqn}. Our plan will be to show that there exists $u_{A_1\cdots A_k} = \lim_{q\to 0}u^{(q)}_{A_1\cdots A_k}$ which solves~\eqref{theequationwesolveinthisparticularprop} and satisfies the desired estimates. 

We start by establishing estimates for $u^{(q)}_{A_1\cdots A_k}$ which are uniform as $q \to 0$. Repeating the integration by parts which lead to the identity~\eqref{basicestwhichshowsnoker} now establishes the basic estimate
\begin{equation}\label{basicestforKq}
\int_{\mathbb{S}^2}\left(\left|u^{(q)}\right|^2 + q\left|\mathring{\nabla}u^{(q)}\right|^2\right) \mathring{dVol}\lesssim \int_{\mathbb{S}^2}\left|F\right|^2.
\end{equation}
Next, we observe that an integration by parts and Lemma~\ref{whenaliederiavtivemeetsacovariantderivativethingscanhappen} establishes the following inequality
\begin{align*}
q\int_{\mathbb{S}^2}\left\langle \mathring{\Delta}u^{(q)},\mathcal{L}_Xu^{(q)}\right\rangle_{\mathring{\slashed{g}}} \mathring{dVol} &\geq -Cq\int_{\mathbb{S}^2}\left[\left|\mathcal{L}_X\mathring{\slashed{g}}\right|\left|\mathring{\nabla}u^{(q)}\right|^2 + \left|\mathring{\nabla}\mathcal{L}_X\mathring{\slashed{g}}\right|\left|\mathring{\nabla}u^{(q)}\right|\left|u^{(q)}\right|\right]\mathring{dVol}
\\ \nonumber &\geq -Cq\int_{\mathbb{S}^2}\left[(A_2+A_3)\left|\mathring{\nabla}u^{(q)}\right|^2 + A_3\left|u^{(q)}\right|^2\right]\mathring{dVol}.
\end{align*}
In particular, we can contract~\eqref{qregueqn} with $\mathcal{L}_Xu^{(q)}_{A_1\cdots A_k}$, integrate by parts, and add the result to a suitably large constant times the estimate~\eqref{basicestforKq}, choose $q$ so that $qA_3 \ll 1$, and then establish that 
\begin{equation}\label{basicestforKq2}
\int_{\mathbb{S}^2}\left(\left|u^{(q)}\right|^2 + q\left|\mathring{\nabla}u^{(q)}\right|^2 + \left|\mathcal{L}_Xu^{(q)}\right|^2\right)\mathring{dVol} \lesssim \int_{\mathbb{S}^2}\left|F\right|^2\mathring{dVol}.
\end{equation}
For higher order estimates we will need to differentiate the equation. Commuting through by $\mathring{\nabla}^M$ produces the following equation
\begin{align}\label{KqAftertheCommute}
&\Big(\mathring{\nabla}^Mu^{(q)} + \mathcal{L}_X\left(\mathring{\nabla}^Mu^{(q)}\right) - q\left(\mathring{\Delta}\left(\mathring{\nabla}^Mu^{(q)}\right) + \left[\mathring{\nabla}^M,\mathring{\Delta}\right]u^{(q)}\right) 
\\ \nonumber &\qquad+ \left[\mathring{\nabla}^M,\mathcal{L}_X\right]u^{(q)} + \mathring{\nabla}^M\left(h\cdot u^{(q)}\right)\Big)_{B_1\cdots B_M A_1\cdots A_k} 
 = \mathring{\nabla}^MF_{B_1\cdots B_M A_1\cdots A_k}.
\end{align}
Let's examine more closely the commutator terms. We have, using Lemma~\ref{whenaliederiavtivemeetsacovariantderivativethingscanhappen},
\begin{align}\label{commutateerror1dddd}
\left[\mathring{\nabla}^M,\mathcal{L}_X\right]u^{(q)}_{B_1\cdots B_MA_1\cdots A_k}  &= \sum_{i=0}^{M-1}\left(O\left(\mathring{\nabla}^{M-i}\mathcal{L}_X\mathring{\slashed{g}}\right)\cdot \mathring{\nabla}^iu^{(q)}\right)_{B_1\cdots B_MA_1\cdots A_k}.
\end{align}
From the definition of the curvature tensor, we also have 
\begin{align}\label{commutateerror1dddd2}
\left|\left[\mathring{\nabla}^M,\mathring{\Delta}\right]u^{(q)}\right|  \lesssim_M \sum_{i=0}^M\left|\mathring{\nabla}^iu^{(q)}\right|.
\end{align}

Now we contract~\eqref{KqAftertheCommute} with $\mathring{\nabla}^Mu^{(q)}_{B_1\cdots B_MA_1\cdots A_k}$ and integrate by parts as before. We end up with the estimate (using only that $A_2$ is sufficiently small)
\begin{align}\label{thebasichigherorderestimate}
&\int_{\mathbb{S}^2}\left(\left|\mathring{\nabla}^Mu^{(q)}\right|^2 + q\left|\mathring{\nabla}^{M+1}u^{(q)}\right|^2 + \left|\mathcal{L}_X\left(\mathring{\nabla}^Mu^{(q)}\right)\right|^2\right)\mathring{dVol} \lesssim_M
\\ \nonumber &\qquad  \int_{\mathbb{S}^2}\left[\left|\mathring{\nabla}^MF\right|^2 + q\left|\left[\mathring{\nabla}^M,\mathring{\Delta}\right]u^{(q)}\right|^2 + \left|\left[\mathring{\nabla}^M,\mathcal{L}_X\right]u^{(q)}\right|^2 + \left|\mathring{\nabla}^M\left(h\cdot u^{(q)}\right)\right|^2\right]\mathring{dVol}.
\end{align}
Next we will examine the various terms on the right hand side of~\eqref{thebasichigherorderestimate}. Using~\eqref{commutateerror1dddd2} and  an interpolation inequality, for every $0 < p \ll 1$, we may easily establish that
\begin{align}\label{termthroughlaplcomm}
q\int_{\mathbb{S}^2}\left|\left[\mathring{\nabla}^M,\mathring{\Delta}\right]u^{(q)}\right|^2\mathring{dVol} &\lesssim_M q\left\vert\left\vert u^{(q)}\right\vert\right\vert^2_{\mathring{H}^M}
\\ \nonumber &\lesssim_M q\left[p^{-M-1}\left\vert\left\vert u^{(q)}\right\vert\right\vert^2_{L^2} + p\left\vert\left\vert u^{(q)}\right\vert\right\vert^2_{\mathring{H}^{M+1}}\right].
\end{align}
Taking $p$ sufficiently small we can thus combine~\eqref{termthroughlaplcomm} with~\eqref{basicestforKq2} and~\eqref{thebasichigherorderestimate} to establish
\begin{align}\label{thebasichigherorderestimate2}
&\int_{\mathbb{S}^2}\left(\left|\mathring{\nabla}^Mu^{(q)}\right|^2 + q\left|\mathring{\nabla}^{M+1}u^{(q)}\right|^2 + \left|\mathcal{L}_X\left(\mathring{\nabla}^Mu^{(q)}\right)\right|^2\right) \lesssim_M
\\ \nonumber &\qquad  \int_{\mathbb{S}^2}\left[\left|\mathring{\nabla}^MF\right|^2 +\left|F\right|^2+ \left|\left[\mathring{\nabla}^M,\mathcal{L}_X\right]u^{(q)}\right|^2 + \left|\mathring{\nabla}^M\left(h\cdot u^{(q)}\right)\right|^2\right].
\end{align}
Next we turn to the term $\left|\left[\mathring{\nabla}^M,\mathcal{L}_X\right]u^{(q)}\right|^2$. Using~\eqref{commutateerror1dddd} and interpolation we have
\begin{align}\label{tocontrolthatcommuttermwithslashandlie}
\int_{\mathbb{S}^2}\left|\left[\mathring{\nabla}^M,\mathcal{L}_X\right]u^{(q)}\right|^2 \mathring{dVol}&\lesssim_M \sum_{i=0}^{M-1}\int_{\mathbb{S}^2}\left|\mathring{\nabla}^{M+1-i}X\right|^2\left|\mathring{\nabla}^iu^{(q)}\right|^2
\\ \nonumber &\lesssim_M A_M\int_{\mathbb{S}^2}\left[\left|\mathring{\nabla}^{M-1}u^{(q)}\right|^2 + \left|u^{(q)}\right|^2\right]\mathring{dVol}.
\end{align}
Similarly, one may establish that
\begin{equation}\label{tocontrolthatcommuttermwithslashandlie2}
\int_{\mathbb{S}^2} \left|\mathring{\nabla}^M\left(h\cdot u^{(q)}\right)\right|^2\mathring{dVol} \lesssim A_2\int_{\mathbb{S}^2}\left|\mathring{\nabla}u^{(q)}\right|^2\mathring{dVol} +  C(M)A_M\int_{\mathbb{S}^2}\left[\left|\mathring{\nabla}^{M-1}u^{(q)}\right|^2 + \left|u^{(q)}\right|^2\right]\mathring{dVol}.
\end{equation}
Combining~\eqref{tocontrolthatcommuttermwithslashandlie} and~\eqref{tocontrolthatcommuttermwithslashandlie2} with~\eqref{thebasichigherorderestimate2} and~\eqref{basicestforKq2} and carrying out a straightforward induction argument, we obtain the desired uniform estimate
\begin{equation}\label{thatuniformestinqthatwesodespwanted}
\left\vert\left\vert u^{(q)}\right\vert\right\vert_{\mathring{H}^M} + \left\vert\left\vert \mathcal{L}_Xu^{(q)}\right\vert\right\vert_{\mathring{H}^M} \lesssim_M (A_M+1)\left\vert\left\vert F\right\vert\right\vert_{\mathring{H}^M}.
\end{equation}
(Note that we have simply dropped the higher order term on the left hand side multiplied by $q$.)

Now we turn a study of the limit of the $u^{(q)}_{A_1\cdots A_k}$ as $q\to 0$. Let $0 < q_1 < q_2$. We may easily derive 
\begin{align}\label{differenceoftwoqus}
&\left(\left(u^{(q_2)}-u^{(q_1)}\right) + \mathcal{L}_X\left(u^{(q_2)}-u^{(q_1)}\right) + h\cdot\left(u^{(q_2)}-u^{(q_1)}\right) - q_2\mathring{\Delta}\left(u^{(q_2)}-u^{(q_1)}\right) \right)_{A_1\cdots A_k}
\\ \nonumber &\qquad = \left(q_2-q_1\right)\mathring{\Delta}u^{(q_1)}_{A_1\cdots A_k}.
\end{align}
Contracting~\eqref{differenceoftwoqus} with a suitable linear combination of $\left(u^{(q_2)}-u^{(q_1)}\right)_{A_1\cdots A_k}$ and $\mathcal{L}_X\left(u^{(q_2)}-u^{(q_1)}\right)_{A_1\cdots A_k}$ and integrating by parts as we have done above then yields
\begin{equation}\label{cauchyinl2asq0}
\left\vert\left\vert u^{(q_2)}-u^{(q_1)}\right\vert\right\vert_{L^2} + \left\vert\left\vert \mathcal{L}_X\left(u^{(q_2)}-u^{(q_1)}\right)\right\vert\right\vert_{L^2} \lesssim  \left|q_2-q_1\right|\left\vert\left\vert \mathring{\Delta}u^{(q_1)}\right\vert\right\vert_{L^2} \lesssim \left(1+A_3\right)\left|q_2-q_1\right|\left\vert\left\vert F\right\vert\right\vert_{\mathring{H}^2}.
\end{equation}
In the last inequality we have used~\eqref{thatuniformestinqthatwesodespwanted}. In particular, $u^{(q)}_{A_1\cdots A_k}$ and $\mathcal{L}_Xu^{(q)}_{A_1\cdots A_k}$ form Cauchy sequences as $q\to 0$ in $L^2$. Moreover, by interpolating with~\eqref{thatuniformestinqthatwesodespwanted} we have that both $u^{(q)}_{A_1\cdots A_k}$ and $\mathcal{L}_Xu^{(q)}_{A_1\cdots A_k}$ form Cauchy sequences in $\mathring{H}^s$ for any $s < M$. Let $u_{A_1\cdots A_k} \in \mathring{H}^{M-1}$ denote $\lim_{q\to 0}u^{(q)}_{A_1\cdots A_k}$. We clearly then have that $u_{A_1\cdots A_k}$ solves the desired equation~\eqref{theequationwesolveinthisparticularprop}. Next, using~\eqref{thatuniformestinqthatwesodespwanted}, a standard weak-$*$ compactness argument yields the desired bound 
\begin{equation}\label{thatuniformestinqthatwesodespwantednowusedforthelimit}
\left\vert\left\vert u\right\vert\right\vert_{\mathring{H}^M} + \left\vert\left\vert \mathcal{L}_Xu\right\vert\right\vert_{\mathring{H}^M} \lesssim_M (A_M+1) \left\vert\left\vert F\right\vert\right\vert_{\mathring{H}^M}.
\end{equation}
Lastly, we have to show that $u_{A_1\cdots A_k}$ is unique among all solutions $w_{A_1\cdots A_k}$ to~\eqref{theequationwesolveinthisparticularprop} where $\left\vert\left\vert w\right\vert\right\vert_{L^2} + \left\vert\left\vert \mathcal{L}_Xw\right\vert\right\vert_{L^2} < \infty$. Indeed, let $w_{A_1\cdots A_k}$ be such a solution. Then, we have
\[\left(\left(w-u\right) + \mathcal{L}_X\left(w-u\right) + h\cdot \left(w-u\right)\right)_{A_1\cdots A_k} = 0.\]
Contracting with $\left(w-u + \mathcal{L}_X\left(w-u\right)\right)_{A_1\cdots A_k}$ and integrating by parts as above yields immediately that 
\[\left\vert\left\vert w-u\right\vert\right\vert_{L^2} + \left\vert\left\vert \mathcal{L}_X\left(w-u\right)\right\vert\right\vert_{L^2} = 0.\]

\end{proof}

\subsection{The $\kappa$-Constraint Equation}\label{kconstraychausdf}
In this section we will study the ``$\kappa$-constraint equation.'' We first collect below various constants that we will use and their respective hierarchy of smallness:
\begin{equation}\label{constants}
0 < \epsilon  \ll \gamma  \ll \delta  \ll 1.
\end{equation}
We will assume that
\[\epsilon^{\frac{\delta}{500}}\gamma^{-200} \ll 1.\]

We next fix our conventions for spherical coordinates on $\mathbb{S}^2$. 
\begin{convention}\label{sphcoordconv}Throughout the rest of the paper, we will use $(\theta,\phi)$ to denote spherical coordinates on $\mathbb{S}^2$, where $\phi \in [0,2\pi)$ is the azimuthal angle, and $\theta \in [0,\pi]$ is the polar angle. We also have the corresponding round metric on the $\mathbb{S}^2$, given by the formula $\mathring{\slashed{g}} = d\theta^2 + \sin^2\theta d\phi^2$. 
\end{convention}

We now give a sequence of important definitions.
\begin{definition}We say that a $4$-tuple $\left(\slashed{g}_{AB},b^A,\kappa,\Omega\right)$ consisting of a Riemannian metric $\slashed{g}_{AB}$ on $\mathbb{S}^2$, a vector field $b^A$ on $\mathbb{S}^2$, a positive constant $\kappa > 0$, and a function $\Omega$ on $\mathbb{S}^2$ satisfies the $\kappa$-constraint equation if
\begin{equation}\label{rayconstr}
\slashed{\rm div}b - \mathcal{L}_b\left(\slashed{\rm div}b\right) = \frac{1}{2}\left(\slashed{\rm div}b\right)^2 + \frac{1}{4}\left|\slashed{\nabla}\hat{\otimes}b\right|^2 - 4\kappa + 2\kappa\slashed{\rm div}b -2\Omega^{-1}\left(\mathcal{L}_b\Omega\right)\slashed{\rm div}b + 4\Omega^{-1}\mathcal{L}_b\Omega.
\end{equation}
\end{definition}

The starting point for our construction of $\kappa$-self-similar solutions will be a four tuple $\left(\slashed{g}_{AB},b^A,\kappa,\Omega\right)$ satisfying the $\kappa$-constraint equation, which also satisfy certain regularity requirements. We now give the relevant definition. We start by defining a certain type of ``seed'' data.
\begin{definition}\label{areallynicedefinitionofseeddata} Given $0 < \epsilon \ll \gamma \ll 1$ and $M_0,M_1 \in \mathbb{Z}_{> 0}$ satisfying $M_0,M_1 \gg 1$, ``seed data''  refers to any smooth vector field $\check{b}^A$ on $\mathbb{S}^2$ which satisfies
\[\check{b}^A = \epsilon \tilde b^A+ z^A,\qquad \tilde b|_{(\theta,\phi)} \doteq \left(\left(\int_{\pi/2}^{\theta}\frac{a(\hat{\theta})}{\sin\hat{\theta}}\, d\hat{\theta}\right)+r\right)\partial_{\phi},\]
where $r \in \mathbb{R}$ satisfies $\left|r\right| \lesssim \epsilon$, and we require that $a$ satisfy the following:
\begin{enumerate}
	\item $a(\theta)$ is a smooth function of $\theta$. 
	\item $a(\theta)$ is identically $1$ for $\theta \in [2\gamma,\pi-2\gamma]$.
	\item $a(\theta)$ is identically $0$ for $\theta \in [0,\gamma] \cup [\pi-\gamma,\pi]$.
	\item $\left|\frac{d^ka}{d\theta^k}\right| \lesssim \gamma^{-k}$.
	
\end{enumerate}
And we require that $z$ is a smooth vector field on $\mathbb{S}^2$ with
\begin{equation}\label{theotherrequirementsbutheyareimportant}
 \mathring{\nabla}_Az^A = 0,\qquad \left\vert\left\vert z\right\vert\right\vert_{\mathring{H}^{M_1}} \leq \epsilon^{M_0}.
\end{equation}

\end{definition}
\begin{remark}\label{comp}For
\[\tilde b = \left(\left(\int_{\pi/2}^{\theta}\frac{a(\hat{\theta})}{\sin\hat{\theta}}\, d\hat{\theta}\right)+r\right)\partial_{\phi},\]
a computation yields
\begin{equation}\label{thiswillbeuseful}
\mathring{\nabla}\hat{\otimes} \tilde b=  \frac{a(\theta)}{\sin(\theta)}\left(\partial_{\phi}\hat{\otimes}\partial_{\theta} + \partial_{\theta}\hat{\otimes}\partial_{\phi}\right).
\end{equation}
(Here we are raising and lowering indices with the round metric $\mathring{\slashed{g}}$.)

In particular,
\[\left|\mathring{\nabla}\hat{\otimes}\tilde{b}\right|_{\mathring{\slashed{g}}}^2 = 2a^2(\theta).\]
We also have
\[\mathring{\nabla}_A\tilde b^A = 0.\]
\end{remark}

Now we can define the notion of an $\left(\epsilon,\gamma,\delta,N_0,M_0,M_1\right)$-regular $4$-tuple $\left(\slashed{g}_{AB},b^A,\kappa,\Omega\right)$.
\begin{definition}\label{Mreg}Let  $0 < \epsilon \ll \gamma \ll \delta  \ll 1$, $\left(N_0,M_0,M_1\right) \in \left(\mathbb{Z}_{>0}\right)^3$ satisfy $N_0\gg 1$, $M_0 \gg N_0$, and $M_1 \gg N_0$, and we recall that $\mathring{\slashed{g}}_{AB}$ denotes the fixed choice of a round metric on $\mathbb{S}^2$. We say that a four tuple $\left(\slashed{g}_{AB},b^A,\kappa,\Omega\right)$ of a metric, vector field, constant, and function on $\mathbb{S}^2$, is ``$\left(\epsilon,\gamma,\delta,N_0,M_0,M_1\right)$-regular'' if they solve the $\kappa$-constraint equation, $\slashed{g}_{AB} = e^{2\varphi}\mathring{\slashed{g}}_{AB}$, $b^A = \check{b}^A + \mathring{\nabla}^Af$ (for $\check{b}^A$ as in Definition~\ref{areallynicedefinitionofseeddata}) with $\int_{\mathbb{S}^2}f\mathring{\rm dVol} = 0$, and we have
 \begin{equation}\label{somebound1ssssss}
 \left\vert\left\vert \varphi\right\vert\right\vert_{\mathring{H}^{N_0}} +\left\vert\left\vert \log\Omega\right\vert\right\vert_{\mathring{H}^{N_0+2}} +\left\vert\left\vert f\right\vert\right\vert_{\mathring{H}^{N_0+2}} \lesssim \epsilon^{2-\delta},
 \end{equation}
\begin{equation}\label{somebound3ssssss}
\left\vert\left\vert \mathcal{L}_{\partial_{\phi}}\varphi\right\vert\right\vert_{\mathring{H}^{N_0-1}} +\left\vert\left\vert \mathcal{L}_{\partial_{\phi}}\log\Omega\right\vert\right\vert_{\mathring{H}^{N_0+1}} +\left\vert\left\vert \mathcal{L}_{\partial_{\phi}}f\right\vert\right\vert_{\mathring{H}^{N_0+1}} \lesssim \epsilon^{M_1/2}.
\end{equation}
\end{definition}
\begin{remark}\label{whatisfdoinghere}We briefly explain the role of the function $f$: The $4$-tuple $\left(\slashed{g}_{AB},b^A,\kappa,\Omega\right)$ will determine our metric along the null hypersurface $\{v = 0\}$. Since we will need for this induced data along $\{v = 0\}$ to solve the null constraint equation~\eqref{basicconstr2}, we cannot expect to freely choose each of $\slashed{g}_{AB}$, $b^A$, $\kappa$, and $\Omega$. In the small data regime, we may consider~\eqref{basicconstr2} to be an equation which determines $\mathring{\rm div}b$ and $\kappa$, while $\slashed{g}_{AB}$, $\Omega$, and $\mathring{\rm curl}b$ are free. Since the function $f$ satisfies $\mathring{\Delta}f = \mathring{\rm div}b$, we may consider $f$ and $\kappa$ to be determined in terms of our choice of $\slashed{g}_{AB}$ and $\Omega$. (Note that $\mathring{\rm curl}b = \mathring{\rm curl}\check{b}$ is fixed by our choice for $b$.)
\end{remark}
\begin{remark}The key way in which we will use that $\mathring{\rm curl}b = \mathring{\rm curl}\check{b}$ is that it will allow us to understand well the leading order form of $b$ with respect to $\epsilon$ (see Lemmas~\ref{babablacksheep} and~\ref{eestimates}) and this detailed information will be important in the proof of Propositions~\ref{qregkapexists} and~\ref{kapissingbutthereisasolnatleastevolve}. 
\end{remark}

For most of the results of this section we will assume at the beginning that we have a $\left(\epsilon,\gamma,\delta,N_0,M_0,M_1\right)$-regular $4$-tuples $\left(\slashed{g}_{AB},b^A,\kappa,\Omega\right)$ and establish various additional properties of the $4$-tuple. (In Appendix~\ref{constructthetupleregul} we show that one may construct $\left(\epsilon,\gamma,\delta,N_0,M_0,M_1\right)$-regular $4$-tuples.) For the sake of brevity we will generally refer to a $\left(\epsilon,\gamma,\delta,N_0,M_0,M_1\right)$-regular $4$-tuples as a ``regular $4$-tuple.''

\begin{remark}It is not in fact necessary for our construction to assume that $\slashed{g}_{AB} = e^{2\varphi}\mathring{\slashed{g}}_{AB}$. Instead one may replace $\varphi$ with $\left(\slashed{g} -\mathring{\slashed{g}}\right)_{AB}$ in both~\eqref{somebound1ssssss} and~\eqref{somebound3ssssss}. Then, one may show that there exists a diffeomorphism $\mathscr{F}$ and a function $\varphi$ so that $\mathscr{F}^*\slashed{g}_{AB} = e^{2\varphi}\mathring{\slashed{g}}_{AB}$ and so that~\eqref{somebound1ssssss} and~\eqref{somebound3ssssss} all hold with $\log\Omega$, $b^A$, and $f$ replaced by $\mathscr{F}^*\log\Omega$, $\mathscr{F}^*b^A$, and $\mathscr{F}^*f$. We omit the details as we will not need this more general result. 
\end{remark}

\begin{convention}\label{lowerraisewhoknows}In the remainder of this section we will often be working with two metrics $\slashed{g}_{AB}$ and $\mathring{\slashed{g}}_{AB}$, and thus there may be some ambiguity when raising or lowering indices.  Thus we make the following definitions for any vector field $X^A$:
\[\left(\mathring{\nabla}\hat{\otimes}X\right)^{AB} \doteq \mathring{\slashed{g}}^{AC}\mathring{\nabla}_CX^B + \mathring{\slashed{g}}^{AB}\mathring{\nabla}_BX^C - \mathring{\slashed{g}}^{AB}\mathring{\nabla}_CX^C,\] \[\left(\slashed{\nabla}\hat{\otimes}X\right)^{AB} \doteq \slashed{g}^{AC}\slashed{\nabla}_CX^B + \slashed{g}^{AB}\slashed{\nabla}_BX^C - \slashed{g}^{AB}\slashed{\nabla}_CX^C.\]
\end{convention}

In the next lemma, we recall two formulas for how certain differential operators transform under a conformal change of the metric.
\begin{lemma}\label{conformal}Let $\slashed{g}_{AB} = e^{2\varphi}\mathring{\slashed{g}}_{AB}$. Then, for any vector field $X^A$ we have
\[\slashed{\nabla}_AX^A = \mathring{\nabla}_AX^A + 2\mathcal{L}_X\varphi,\qquad \left(\slashed{\nabla}\hat{\otimes} X\right)^{AB} = e^{-2\varphi}\left(\mathring{\nabla}\hat{\otimes} X\right)^{AB}\]
\end{lemma}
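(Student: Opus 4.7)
The plan is to verify both identities by direct computation, exploiting the standard behavior of the Levi--Civita connection under a conformal change of metric. These are classical facts, so the task is mainly to assemble the cancellations carefully given the raising conventions of Convention~\ref{lowerraisewhoknows}.

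For the divergence identity, I would use the volume-form formula for the divergence, noting that in two dimensions
\[
\sqrt{\det \slashed{g}} = e^{2\varphi}\sqrt{\det \mathring{\slashed{g}}}.
\]
Then
\[
\slashed{\nabla}_A X^A = \frac{1}{\sqrt{\det \slashed{g}}}\,\partial_A\!\left(\sqrt{\det \slashed{g}}\, X^A\right) = \frac{1}{e^{2\varphi}\sqrt{\det \mathring{\slashed{g}}}}\,\partial_A\!\left(e^{2\varphi}\sqrt{\det \mathring{\slashed{g}}}\, X^A\right),
\]
and distributing the derivative immediately yields $\mathring{\nabla}_A X^A + 2X^A\partial_A\varphi = \mathring{\nabla}_A X^A + 2\mathcal{L}_X\varphi$.

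For the symmetric-trace-free identity, the main input is the standard transformation of Christoffel symbols under $\slashed{g} = e^{2\varphi}\mathring{\slashed{g}}$:
\[
\slashed{\Gamma}^A_{BC} - \mathring{\Gamma}^A_{BC} = \delta^A_B\,\partial_C\varphi + \delta^A_C\,\partial_B\varphi - \mathring{\slashed{g}}_{BC}\mathring{\slashed{g}}^{AD}\partial_D\varphi.
\]
Applying this to $\slashed{\nabla}_C X^B$ gives
\[
\slashed{\nabla}_C X^B = \mathring{\nabla}_C X^B + \delta^B_C\,\mathcal{L}_X\varphi + X^B\,\partial_C\varphi - \mathring{\slashed{g}}_{CD}X^D\,\mathring{\slashed{g}}^{BE}\partial_E\varphi.
\]
Raising with $\slashed{g}^{AC} = e^{-2\varphi}\mathring{\slashed{g}}^{AC}$ and symmetrising in $(A,B)$, the two ``cross terms'' $X^B\mathring{\slashed{g}}^{AC}\partial_C\varphi$ and $-X^A\mathring{\slashed{g}}^{BE}\partial_E\varphi$ (together with their $(A\leftrightarrow B)$ counterparts) cancel against each other, leaving exactly $2e^{-2\varphi}\mathring{\slashed{g}}^{AB}\mathcal{L}_X\varphi$ as the only surviving $\varphi$-contribution to the symmetrised term.

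To finish, I would combine this with the divergence identity already established: the trace piece $\slashed{g}^{AB}\slashed{\nabla}_C X^C$ equals $e^{-2\varphi}\mathring{\slashed{g}}^{AB}\bigl(\mathring{\nabla}_C X^C + 2\mathcal{L}_X\varphi\bigr)$. Subtracting this from the symmetrised term, the $2\mathcal{L}_X\varphi$ contributions cancel precisely, and one is left with
\[
(\slashed{\nabla}\hat{\otimes}X)^{AB} = e^{-2\varphi}\bigl[\mathring{\slashed{g}}^{AC}\mathring{\nabla}_C X^B + \mathring{\slashed{g}}^{BC}\mathring{\nabla}_C X^A - \mathring{\slashed{g}}^{AB}\mathring{\nabla}_C X^C\bigr] = e^{-2\varphi}(\mathring{\nabla}\hat{\otimes}X)^{AB}.
\]
There is no real obstacle here — the only thing to watch is bookkeeping of the conformal factor in Convention~\ref{lowerraisewhoknows}, and making sure the non-tensorial pieces of the two symmetrised terms combine to an overall trace proportional to $\mathring{\slashed{g}}^{AB}\mathcal{L}_X\varphi$, which is exactly what the subtraction of the trace piece removes.
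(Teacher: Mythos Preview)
Your proof is correct and follows essentially the same route as the paper: both identities are verified by direct computation using standard conformal-change formulas. For the divergence you use exactly the paper's volume-form expression; for $(\slashed{\nabla}\hat\otimes X)^{AB}$ the paper instead writes down the coordinate identity
\[
(\slashed{\nabla}\hat\otimes X)^{AB} = \partial^A X^B + \partial^B X^A - X^C\partial_C(\slashed{g}^{AB}) - \slashed{g}^{AB}\tfrac{1}{\sqrt{\slashed{g}}}\partial_C(\sqrt{\slashed{g}}\,X^C)
\]
and reads off the conformal behavior, whereas you go via the Christoffel transformation formula --- this is the same computation packaged slightly differently.
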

\begin{proof}These follow from the well-known coordinate expressions
\[\slashed{\nabla}_AX^A = \frac{1}{\sqrt{\slashed{g}}}\partial_A\left(\sqrt{\slashed{g}}X^A\right),\qquad \left(\slashed{\nabla}\hat{\otimes}X\right)^{AB} = \partial^AX^B + \partial^BX^A - X^C\partial_C\left(\slashed{g}^{AB}\right) - \slashed{g}^{AB}\frac{1}{\sqrt{\slashed{g}}}\partial_C\left(\sqrt{\slashed{g}}X^C\right).\]

\end{proof}

The next lemma concerns a precise estimate for $\kappa$.
\begin{lemma}\label{kappaisthis}Let $\left(\slashed{g}_{AB},b^A,\kappa,\Omega\right)$ be a regular $4$-tuple in the sense of Definition~\ref{Mreg}. Then we have 
\begin{equation}\label{kappatoproe}
\kappa = \frac{\epsilon^2}{16}\int_0^{\pi}a^2(\theta)\sin(\theta)\, d\theta + O\left(\epsilon^{3-\delta}\right).
\end{equation}
In particular, $\kappa \sim \epsilon^2$. 

\end{lemma}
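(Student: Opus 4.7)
The plan is to integrate the $\kappa$-constraint equation~\eqref{rayconstr} over $\mathbb{S}^2$ with respect to $\slashed{dVol}$ and read off $\kappa$ from the leading-order balance.

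First I would exploit two integration-by-parts identities valid on the closed manifold $\mathbb{S}^2$:
\[\int_{\mathbb{S}^2}\slashed{\rm div}b\,\slashed{dVol}=0,\qquad \int_{\mathbb{S}^2}\mathcal{L}_b\bigl(\slashed{\rm div}b\bigr)\,\slashed{dVol}=-\int_{\mathbb{S}^2}\bigl(\slashed{\rm div}b\bigr)^2\,\slashed{dVol},\]
the second coming from $\int\mathcal{L}_X\phi\,\slashed{dVol}=-\int\phi\,\slashed{\rm div}X\,\slashed{dVol}$. Applied to~\eqref{rayconstr} these give the single scalar identity
\[4\kappa\,{\rm Area}\bigl(\mathbb{S}^2,\slashed{g}\bigr)=\frac{1}{4}\!\int_{\mathbb{S}^2}\!\bigl|\slashed{\nabla}\hat{\otimes}b\bigr|^{2}\slashed{dVol}-\frac{1}{2}\!\int_{\mathbb{S}^2}\!\bigl(\slashed{\rm div}b\bigr)^{2}\slashed{dVol}+4\!\int_{\mathbb{S}^2}\!\Omega^{-1}\mathcal{L}_b\Omega\,\slashed{dVol}-2\!\int_{\mathbb{S}^2}\!\Omega^{-1}(\mathcal{L}_b\Omega)\,\slashed{\rm div}b\,\slashed{dVol},\]
where I used $\int 2\kappa\,\slashed{\rm div}b\,\slashed{dVol}=0$ as well.

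Next I would identify the leading term. By Lemma~\ref{conformal}, $(\slashed{\nabla}\hat{\otimes}b)^{AB}=e^{-2\varphi}(\mathring{\nabla}\hat{\otimes}b)^{AB}$, and a direct computation gives the conformal invariance $|\slashed{\nabla}\hat{\otimes}b|^{2}_{\slashed{g}}\,\slashed{dVol}=|\mathring{\nabla}\hat{\otimes}b|^{2}_{\mathring{\slashed{g}}}\,\mathring{dVol}$. Writing $b=\epsilon\tilde{b}+z+\mathring{\nabla}f$, using the formula~\eqref{thiswillbeuseful} so that $|\mathring{\nabla}\hat{\otimes}\tilde{b}|^{2}_{\mathring{\slashed{g}}}=2a^{2}(\theta)$, and controlling cross terms by $\|z\|_{\mathring{H}^{1}}\lesssim\epsilon^{M_{0}}$ together with $\|\mathring{\nabla}f\|_{\mathring{H}^{1}}\lesssim\epsilon^{2-\delta}$, I obtain
\[\frac{1}{4}\int_{\mathbb{S}^2}\bigl|\slashed{\nabla}\hat{\otimes}b\bigr|^{2}\,\slashed{dVol}=\frac{\epsilon^{2}}{2}\int_{\mathbb{S}^2}a^{2}\,\mathring{dVol}+O(\epsilon^{3-\delta})=\pi\epsilon^{2}\int_{0}^{\pi}a^{2}(\theta)\sin\theta\,d\theta+O(\epsilon^{3-\delta}).\]

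Finally I would show the remaining three integrals are absorbed into the $O(\epsilon^{3-\delta})$ error. From the pointwise version of~\eqref{rayconstr} together with the size $b\sim\epsilon$, one obtains the a priori bound $\slashed{\rm div}b=O(\epsilon^{2})$ (the dominant balance being $\slashed{\rm div}b\approx\tfrac{1}{4}|\slashed{\nabla}\hat{\otimes}b|^{2}-4\kappa$), so $\int(\slashed{\rm div}b)^{2}\lesssim\epsilon^{4}$. Next, integration by parts gives $\int\Omega^{-1}\mathcal{L}_b\Omega\,\slashed{dVol}=\int\mathcal{L}_b\log\Omega\,\slashed{dVol}=-\int\log\Omega\,\slashed{\rm div}b\,\slashed{dVol}$, which by Cauchy--Schwarz together with $\|\log\Omega\|_{L^{2}}\lesssim\epsilon^{2-\delta}$ is $O(\epsilon^{4-\delta})$; the cubic term $\int\Omega^{-1}(\mathcal{L}_b\Omega)\slashed{\rm div}b\,\slashed{dVol}$ is similarly $O(\epsilon^{5-\delta})$. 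Combined with ${\rm Area}(\mathbb{S}^{2},\slashed{g})=\int e^{2\varphi}\mathring{dVol}=4\pi+O(\epsilon^{2-\delta})$ (from $\|\varphi\|_{L^{1}}\lesssim\epsilon^{2-\delta}$), dividing through yields
\[\kappa=\frac{1}{16\pi}\cdot\pi\epsilon^{2}\int_{0}^{\pi}a^{2}(\theta)\sin\theta\,d\theta+O(\epsilon^{3-\delta})=\frac{\epsilon^{2}}{16}\int_{0}^{\pi}a^{2}(\theta)\sin\theta\,d\theta+O(\epsilon^{3-\delta}).\]

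The only mildly delicate point is establishing the a priori bound $\slashed{\rm div}b=O(\epsilon^{2})$ rigorously; this is the main obstacle, but it is essentially self-contained in~\eqref{rayconstr} because every other term on the right-hand side is either quadratically small in $\slashed{\rm div}b$ itself or of known $O(\epsilon^{2})$ size via Definition~\ref{Mreg}, so a simple bootstrap (or solving for $\slashed{\rm div}b$ in $L^{\infty}$ via the smallness of $\mathcal{L}_b$) gives it.
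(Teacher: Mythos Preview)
Your approach is essentially the same as the paper's: integrate the $\kappa$-constraint over $\mathbb{S}^2$, isolate the $|\slashed{\nabla}\hat\otimes b|^2$ term as leading order, and show the remaining terms are $O(\epsilon^{3-\delta})$. Two remarks are worth making.

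First, a minor slip: the claimed identity $|\slashed{\nabla}\hat\otimes b|^2_{\slashed{g}}\,\slashed{dVol}=|\mathring{\nabla}\hat\otimes b|^2_{\mathring{\slashed{g}}}\,\mathring{dVol}$ is false. What Lemma~\ref{conformal} gives is the pointwise equality $|\slashed{\nabla}\hat\otimes b|^2_{\slashed{g}}=|\mathring{\nabla}\hat\otimes b|^2_{\mathring{\slashed{g}}}$, while $\slashed{dVol}=e^{2\varphi}\mathring{dVol}$. Since $\varphi=O(\epsilon^{2-\delta})$ this introduces only an $O(\epsilon^{4-\delta})$ error, so your conclusion survives, but the stated conformal invariance is not correct.

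Second, and more substantively: the step you flag as ``mildly delicate''---obtaining $\slashed{\rm div}b=O(\epsilon^{2})$ a priori---is in fact immediate from the structure of Definition~\ref{Mreg}, without any bootstrap or appeal to~\eqref{rayconstr}. By construction $b=\check b+\mathring\nabla f$ with $\mathring{\rm div}\,\check b=0$ (Remark~\ref{comp} and~\eqref{theotherrequirementsbutheyareimportant}), so Lemma~\ref{conformal} gives $\slashed{\rm div}b=\mathring\Delta f+2\mathcal{L}_b\varphi$. The bounds $\|f\|_{\mathring H^{N_0+2}}\lesssim\epsilon^{2-\delta}$ and $\|\varphi\|_{\mathring H^{N_0}}\lesssim\epsilon^{2-\delta}$ from~\eqref{somebound1ssssss} then give $\|\slashed{\rm div}b\|_{L^\infty}\lesssim\epsilon^{2-\delta}$ directly by Sobolev embedding. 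This is exactly how the paper proceeds, and it avoids the potential circularity of your bootstrap (which would need some control on $\kappa$ to control the right-hand side of~\eqref{rayconstr}).
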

\begin{proof}Integrating~\eqref{rayconstr} over $\mathbb{S}^2$ leads to
\begin{align}\label{lslslslosj}
&-4\kappa \slashed{\rm Area}\left(\mathbb{S}^2\right) + \frac{1}{4}\int_{\mathbb{S}^2}\left|\slashed{\nabla}\hat{\otimes}b\right|_{\slashed{g}}^2\slashed{\rm dVol}
\\ \nonumber &\qquad + \int_{\mathbb{S}^2}\left[-\frac{1}{2}\left(\slashed{\rm div}b\right)^2 +2\kappa\left(\slashed{\rm div}b\right) -2\Omega^{-1}\left(\mathcal{L}_b\Omega\right)\slashed{\rm div}b -4\left(\slashed{\rm div}b\right)\left(\log\Omega\right)\right]\slashed{\rm dVol}= 0.
\end{align}
From Lemma~\ref{conformal} we have 
\[\slashed{\rm div}b = \mathring{\rm div}b + 2\mathcal{L}_b\varphi = \mathring{\Delta}f + 2\mathcal{L}_b\varphi.\]
Thus, using~\eqref{somebound1ssssss} and Sobolev inequalities, we have
\begin{align}\label{divboundkskkksks}
\left\vert\left\vert \slashed{\rm div}b\right\vert\right\vert_{L^{\infty}} &\lesssim \left\vert\left\vert \mathring{\Delta}f\right\vert\right\vert_{L^{\infty}} + \left\vert\left\vert \mathcal{L}_b\varphi\right\vert\right\vert_{L^{\infty}}
\\ \nonumber &\lesssim  \left\vert\left\vert f\right\vert\right\vert_{\mathring{H}^3}^{1/2} \left\vert\left\vert f\right\vert\right\vert_{\mathring{H}^4}^{1/2}+ \left\vert\left\vert \mathcal{L}_b\varphi\right\vert\right\vert_{\mathring{H}^1}^{1/2}\left\vert\left\vert \mathcal{L}_b\varphi\right\vert\right\vert_{\mathring{H}^2}^{1/2}
\\ \nonumber &\lesssim \epsilon^{2-\delta}.
\end{align}
It follows from~\eqref{lslslslosj} and the bounds~\eqref{divboundkskkksks} and~\eqref{somebound1ssssss} that
\begin{equation}\label{kappaisep2osksksks}
|\kappa| \lesssim \epsilon^2 + \epsilon^{2-\delta}\left|\kappa\right| \Rightarrow \left|\kappa\right| \lesssim \epsilon^2.
\end{equation}
Having established~\eqref{kappaisep2osksksks} and appealing again to~\eqref{divboundkskkksks} and~\eqref{somebound1ssssss}, we now see that
\begin{equation}\label{cococo}
 \left|\int_{\mathbb{S}^2}\left[-\frac{1}{2}\left(\slashed{\rm div}b\right)^2 +2\kappa\left(\slashed{\rm div}b\right) -2\Omega^{-1}\left(\mathcal{L}_b\Omega\right)\slashed{\rm div}b -4\left(\slashed{\rm div}b\right)\left(\log\Omega\right)\right]\slashed{\rm dVol} \right| \lesssim \epsilon^3.
\end{equation}

Next, using Remark~\ref{comp} and Lemma~\ref{conformal}, we note that
\begin{align}\label{lkjhgywwwww}
\left|\slashed{\nabla}\hat{\otimes}b\right|_{\slashed{g}}^2 = \left|\mathring{\nabla}\hat{\otimes}b\right|_{\mathring{\slashed{g}}}^2 = 2\epsilon^2a^2(\theta) + 2\epsilon\mathring{\slashed{g}}\left(\mathring{\nabla}\hat{\otimes}\tilde b,\mathring{\nabla}\hat{\otimes}\left(z + \mathring{\nabla}f\right)\right) + \left|\mathring{\nabla}\hat{\otimes}\left(z+\mathring{\nabla}f\right)\right|^2. 
\end{align}
From~\eqref{lkjhgywwwww}, the bounds from~\eqref{theotherrequirementsbutheyareimportant} and~\eqref{somebound1ssssss}, and Sobolev inequalities, we thus obtain that
\begin{equation}\label{pqkmsihfje}
\left|\left|\slashed{\nabla}\hat{\otimes}b\right|_{\slashed{g}}^2 - 2\epsilon^2a^2(\theta)\right| \lesssim \epsilon^{3-\delta}.
\end{equation}
Since we also have that $\left\vert\left\vert\varphi\right\vert\right\vert_{L^{\infty}} \lesssim \epsilon^{2-\delta}$, we then obtain that
\begin{align}
\frac{1}{\slashed{\rm Area}\left(\mathbb{S}^2\right)}\int_{\mathbb{S}^2}\left|\slashed{\nabla}\hat{\otimes}b\right|_{\slashed{g}}^2\slashed{\rm dVol} &= \frac{1}{4\pi}\int_0^{2\pi}\int_0^{\pi}\left|\slashed{\nabla}\hat{\otimes}b\right|_{\slashed{g}}^2\sin\theta\, d\theta d\phi + O\left(\epsilon^3\right)
\\ \nonumber &= \int_0^{\pi}a^2(\theta)\sin\theta\, d\theta + O\left(\epsilon^{3-\delta}\right).
\end{align}
The formula~\eqref{kappatoproe} then follows from~\eqref{lslslslosj}.
\end{proof}

The next lemma provides a more precise estimate for $\slashed{\rm div}b$.
\begin{lemma}\label{babablacksheep}Let $\left(\slashed{g}_{AB},b^A,\kappa,\Omega\right)$ be a regular $4$-tuple in the sense of Definition~\ref{Mreg}. Then we have 
\begin{equation}\label{0jsodvna}
	\slashed{\rm div}b = \frac{\epsilon^2}{2}a^2(\theta) - \frac{\epsilon^2}{4}\int_0^{\pi}a^2(\theta')\, \sin(\theta')\, d\theta' + O\left(\epsilon^{3-3\delta}\right).
	\end{equation}
	In particular,
	\begin{equation}\label{lkadojagdjagri}
	-\slashed{\rm div}b \gtrsim -\gamma^2 \epsilon^2 - \epsilon^{3-3\delta}.
	\end{equation}

\end{lemma}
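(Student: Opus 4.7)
The plan is to read the $\kappa$-constraint equation~\eqref{rayconstr} as a pointwise identity for $\slashed{\rm div}b$, and then to show that among the seven terms appearing on the right-hand side, only $\tfrac{1}{4}|\slashed{\nabla}\hat{\otimes} b|^2$ and $-4\kappa$ contribute at leading order, while all other terms are of size $O(\epsilon^{3-3\delta})$ in $L^\infty$. Substituting the already-proved expansions for these two leading terms (namely~\eqref{pqkmsihfje} together with Lemma~\ref{kappaisthis}) will then give~\eqref{0jsodvna}.

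First I would rearrange~\eqref{rayconstr} to isolate $\slashed{\rm div}b$ on the left and inspect each remaining term. Using the $L^\infty$ bound $\|\slashed{\rm div}b\|_{L^\infty}\lesssim \epsilon^{2-\delta}$ already established inside the proof of Lemma~\ref{kappaisthis}, the bound $|\kappa|\lesssim \epsilon^2$ from Lemma~\ref{kappaisthis}, the trivial bound $\|b\|_{L^\infty}\lesssim \epsilon$ (since $\epsilon \tilde b$ dominates the seed by~\eqref{theotherrequirementsbutheyareimportant} and~\eqref{somebound1ssssss}), and $\|\log\Omega\|_{\mathring{H}^{N_0+2}}\lesssim\epsilon^{2-\delta}$ from Definition~\ref{Mreg}, the cross terms $(\slashed{\rm div}b)^2$, $\kappa\,\slashed{\rm div}b$, $(\mathcal{L}_b\log\Omega)\slashed{\rm div}b$ and $4\mathcal{L}_b\log\Omega$ are all bounded by at most $\epsilon^{3-\delta}$ after Sobolev embedding. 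The only term needing slightly more care is $\mathcal{L}_b(\slashed{\rm div}b)$, which I would handle via $\|b\|_{L^\infty}\|\slashed{\nabla}(\slashed{\rm div}b)\|_{L^\infty}\lesssim \epsilon\cdot\|\slashed{\rm div}b\|_{\mathring{H}^3}$, controlling the latter norm by differentiating the identity $\slashed{\rm div}b=\mathring{\Delta}f+2\mathcal{L}_b\varphi$ (from Lemma~\ref{conformal}) and invoking Lemma~\ref{interlemm} and Lemma~\ref{comparethespaces} together with the $\mathring{H}^{N_0+2}$-bounds on $f$ and $\varphi$.

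Next I would invoke Lemma~\ref{kappaisthis} to write $-4\kappa=-\tfrac{\epsilon^2}{4}\int_0^\pi a^2(\theta')\sin\theta'\,d\theta'+O(\epsilon^{3-\delta})$, and~\eqref{pqkmsihfje} to write $\tfrac{1}{4}|\slashed{\nabla}\hat{\otimes} b|^2=\tfrac{\epsilon^2}{2}a^2(\theta)+O(\epsilon^{3-\delta})$ (where I use that $\varphi$ is small pointwise, so the metric factor $e^{-2\varphi}$ from Lemma~\ref{conformal} contributes only to the error). Combining everything yields~\eqref{0jsodvna} since $\epsilon^{3-\delta}\ll\epsilon^{3-3\delta}$. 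Finally, for the lower bound~\eqref{lkadojagdjagri} I would rewrite the claim as the upper bound $\slashed{\rm div}b\lesssim \gamma^2\epsilon^2+\epsilon^{3-3\delta}$. Since $|a(\theta)|\le 1$ pointwise, we have $\tfrac{\epsilon^2}{2}a^2(\theta)\le \tfrac{\epsilon^2}{2}$, while the support properties of $a$ in Definition~\ref{areallynicedefinitionofseeddata} give
\[
\int_0^\pi a^2(\theta')\sin\theta'\,d\theta'\ge \int_{2\gamma}^{\pi-2\gamma}\sin\theta'\,d\theta'=2\cos(2\gamma)=2+O(\gamma^2),
\]
so the sum of the two leading terms in~\eqref{0jsodvna} is bounded above by $O(\gamma^2\epsilon^2)$, which combined with the $O(\epsilon^{3-3\delta})$ error finishes the proof.

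The only genuinely delicate step is the bookkeeping of the $\gamma^{-k}$ losses coming from angular derivatives of the cutoff $a$ in $\check{b}$; each such derivative costs a factor $\gamma^{-k}\lesssim \epsilon^{-k\delta/200}$, and it is precisely to absorb the accumulated effect of these losses (as well as the $\epsilon^{-\delta}$ losses already present in~\eqref{somebound1ssssss}) that the error exponent in the statement is weakened from the heuristic $3-\delta$ to $3-3\delta$. Given the hierarchy~\eqref{constants} this accounting is routine, so no essential obstacle is expected.
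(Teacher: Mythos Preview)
Your argument is correct, but it takes a more elementary route than the paper's. The paper rewrites the constraint equation~\eqref{rayconstr} as a transport equation for the single quantity $\slashed{\rm div}b-\tfrac14|\slashed{\nabla}\hat\otimes b|^2+4\kappa$, namely
\[
\Big(\slashed{\rm div}b-\tfrac14|\slashed{\nabla}\hat\otimes b|^2+4\kappa\Big)-\mathcal{L}_b\Big(\slashed{\rm div}b-\tfrac14|\slashed{\nabla}\hat\otimes b|^2+4\kappa\Big)=\text{(cubic terms)},
\]
and then invokes the machinery of Proposition~\ref{qualitativeexistence} (first-order perturbations of the identity) to obtain an $\mathring H^2$ bound of size $\epsilon^{3-3\delta}$ on this quantity, from which the pointwise formula~\eqref{formuforforfdivdiv} follows by Sobolev. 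You instead bound the term $\mathcal{L}_b(\slashed{\rm div}b)$ directly via $\|b\|_{L^\infty}\|\slashed{\rm div}b\|_{\mathring H^3}$, using the explicit formula $\slashed{\rm div}b=\mathring\Delta f+2\mathcal{L}_b\varphi$ and the \emph{a priori} regularity of $f,\varphi,b$ in Definition~\ref{Mreg}. This bypasses Proposition~\ref{qualitativeexistence} entirely and is perfectly valid here since $N_0\gg1$ gives ample Sobolev room. The paper's approach has the advantage of being more uniform with the rest of Section~\ref{pdetheoryforformal} (the same transport-equation viewpoint is used repeatedly), while yours is shorter and more self-contained for this particular lemma. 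Your derivation of~\eqref{lkadojagdjagri} from~\eqref{0jsodvna} matches what the paper intends with its ``In particular''.
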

\begin{proof}We start by re-writing~\eqref{rayconstr} as 
\begin{align}\label{werewrotedivbeqn}
&\left(\slashed{\rm div}b - \frac{1}{4}\left|\slashed{\nabla}\hat{\otimes}b\right|_{\slashed{g}}^2 + 4\kappa\right) - \mathcal{L}_b\left(\slashed{\rm div}b - \frac{1}{4}\left|\slashed{\nabla}\hat{\otimes}b\right|_{\slashed{g}}^2 + 4\kappa\right)
\\ \nonumber &\qquad = \frac{1}{4}\mathcal{L}_b\left(\left|\slashed{\nabla}\hat{\otimes}b\right|_{\slashed{g}}^2\right) + \frac{1}{2}\left(\slashed{\rm div}b\right)^2 + 2\kappa\slashed{\rm div}b - 2\Omega^{-1}\left(\mathcal{L}_b\Omega\right)\slashed{\rm div}b + 4\Omega^{-1}\mathcal{L}_b\Omega.
\end{align}
We have
\[\left\vert\left\vert b\right\vert\right\vert_{\mathring{H}^3} \lesssim \epsilon \left\vert\left\vert \tilde b\right\vert\right\vert_{\mathring{H}^3} + \left\vert\left\vert z\right\vert\right\vert_{\mathring{H}^3} + \left\vert\left\vert f\right\vert\right\vert_{\mathring{H}^4} \lesssim \epsilon^{1-\delta}.\]

Thus we can apply Proposition~\ref{qualitativeexistence} and Lemma~\ref{conformal} to conclude that
\begin{align}\label{estimatefignsofikdivbstuff}
&\left\vert\left\vert \slashed{\rm div}b - \frac{1}{4}\left|\slashed{\nabla}\hat{\otimes}b\right|_{\slashed{g}}^2 + 4\kappa\right\vert\right\vert_{\mathring{H}^2} 
\\ \nonumber &\qquad \lesssim \left\vert\left\vert \frac{1}{4}\mathcal{L}_b\left(\left|\slashed{\nabla}\hat{\otimes}b\right|_{\slashed{g}}^2\right) + \frac{1}{2}\left(\slashed{\rm div}b\right)^2 + 2\kappa\slashed{\rm div}b - 2\Omega^{-1}\left(\mathcal{L}_b\Omega\right)\slashed{\rm div}b + 4\Omega^{-1}\mathcal{L}_b\Omega\right\vert\right\vert_{\mathring{H}^2}
\\ \nonumber &\qquad \lesssim \epsilon^{3-3\delta}.
\end{align}
Applying a Sobolev inequality thus immediately implies that
\begin{equation}\label{formuforforfdivdiv}
\slashed{\rm div}b = \frac{1}{4}\left|\slashed{\nabla}\hat{\otimes}b\right|_{\slashed{g}}^2 - 4\kappa + O\left(\epsilon^{3-3\delta}\right).
\end{equation}
Now we combine~\eqref{formuforforfdivdiv} with~\eqref{pqkmsihfje} and Lemma~\ref{kappaisthis} to obtain~\eqref{0jsodvna}.

\end{proof}

To leading order in $\epsilon$ the vector field $b^A$ is given by $\epsilon \tilde b^A$. However, it will be important for us to have precise estimates on the $\epsilon^2$ order part of $b^A$. This will be established in the next sequence of results.

\begin{definition}\label{admissibleh}Let $a(\theta)$ be as in Definition~\ref{areallynicedefinitionofseeddata}. We then define a function $h(\theta): [0,\pi] \to \mathbb{R}$ by	
\begin{equation}\label{aneqnforhthatisratherexplicit}
	h(\theta) \doteq \frac{1}{2\sin\theta}\left[\left(\int_0^{\theta}a^2(x)\sin(x)\, dx\right) - \left(\int_0^{\pi}a^2(x)\sin(x)\, dx\right)\left(\frac{1-\cos\theta}{2}\right)\right].
	\end{equation}

\end{definition}

In the next lemma we will establish some useful properties of the function $h(\theta)$.
\begin{lemma}\label{propertiesofhadmissible} Let $h(\theta): [0,\pi] \to \mathbb{R}$ be as in Definition~\ref{admissibleh}. Then the following are true.
\begin{enumerate}
	\item We have 
	\begin{equation}\label{thebasicequaiotnforh}
	\frac{1}{\sin\theta}\frac{d}{d\theta}\left(\sin\theta h(\theta)\right) = \frac{1}{2}a^2(\theta) - \frac{1}{4}\int_0^{\pi}a^2(x)\sin(x)\, dx.
	\end{equation}
	\item The function $h(\theta)$ has exactly three zeros, all simple, which occur at $\theta = 0$, $y_0$, and $\pi$ for $\left|y_0 - \pi/2\right| \lesssim \gamma$.
	\item The function $\sin(\theta)h(\theta)$ has exactly two interior critical points $\theta_1 \in [\gamma,2\gamma]$ and $\theta_2 \in[\pi-2\gamma,\pi-\gamma]$. At $\theta_1$ there is a local minimum and at $\theta_2$ there is a local maximum.
	\item There exists a small constant $c > 0$ (independent of $\gamma \ll 1$) so that
	\begin{equation}\label{lowerbound}
	|h(\theta)| \geq \frac{\gamma^2}{100},\qquad \text{ for }\theta \in [\gamma^2/4,y_0-c] \cup [y_0+c,\pi-\gamma^2/4].
	\end{equation}
	\item Near the poles $\theta = 0$ and $\theta = \pi$ we have
	\begin{equation}\label{poleexpansion1}
	h = -\frac{\theta}{4}\left(1+O\left(\gamma^2\right)\right) + O\left(\theta^2 \right),\qquad \forall \theta \in [0,\gamma],
	\end{equation}
	\begin{equation}\label{poleexpansion2}
h = \frac{(\pi-\theta)}{4}\left(1+O\left(\gamma^2\right)\right) + O\left( \left(\pi-\theta\right)^2 \right) ,\qquad \forall \theta \in [\pi-\gamma,\pi].
	\end{equation}
	\item For $\left|\theta - y_0\right| \leq c$ (see~\eqref{lowerbound}) there exists a constant $d > 0$, independent of $\gamma$, so that
	\begin{equation}\label{zeroexpansion}
	h(\theta) = -d\gamma^2\left(y_0 - \theta\right) + O\left(\gamma^2\left(y_0-\theta\right)^2\right).
	\end{equation}
	\item We have the following global bound on $h$:
	\begin{equation}\label{globalboundonh}
	\left|\frac{d^k}{d\theta^k}\left(\sin(\theta)h(\theta)\right)\right| \lesssim_k \gamma^{2-k},\qquad \forall \theta \in [0,\pi].
	\end{equation}
	\item If we are far enough in the interior of $[0,\pi]$, then we have a much better bound:
	\begin{equation}\label{globalboundonh2}
	\left|\frac{d^k}{d\theta^k}\left(\sin(\theta)h(\theta)\right)\right| \lesssim_k \gamma^2,\qquad \forall \theta \in [3\gamma,\pi-3\gamma].
	\end{equation}
	
	\item We have
	\begin{equation}\label{itvanishesatthepolesprettyfast}
	\left|\slashed{\nabla}\hat{\otimes}\left(h\partial_{\theta}\right)\right|_{\slashed{g}} \lesssim {\rm min}\left(\theta^2,\left(\pi-\theta\right)^2\right)\text{ for }\theta \in \left[0,\frac{\gamma}{2}\right]\cup \left[\pi-\frac{\gamma}{2},\pi\right],
	\end{equation}
	\begin{equation}\label{itvanishesatthepolesprettyfast2}
	\left|\slashed{\nabla}\hat{\otimes}\left(h\partial_{\theta}\right)\right|_{\slashed{g}} \lesssim \gamma^2\text{ for }\theta \in  \left[y_0-2\gamma,y_0+2\gamma\right].
\end{equation}
	\item We have
	\begin{align}\label{ogopgjejogaerjoaefowefkopafe2}
\mathring{\nabla}\hat{\otimes}\left(h\partial_{\theta}\right) &= \left(h(\theta)\frac{\cos\theta}{\sin\theta} + \frac{1}{4}\int_0^{\pi}a^2(x)\sin(x)\, dx -\frac{1}{2}a^2(\theta)\right)\left(\frac{1}{\sin\theta}\partial_{\phi}\right)\hat\otimes\left(\frac{1}{\sin\theta}\partial_{\phi}\right)  
\\ \nonumber &\qquad - \left(h(\theta)\frac{\cos\theta}{\sin\theta} + \frac{1}{4}\int_0^{\pi}a^2(x)\sin(x)\, dx -\frac{1}{2}a^2(\theta)\right)\partial_{\theta}\hat\otimes\partial_{\theta}.
\end{align}

\end{enumerate}

\end{lemma}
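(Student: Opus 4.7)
The plan is to view the lemma as a sequence of increasingly detailed observations about the auxiliary function $F(\theta) := \sin\theta \cdot h(\theta)$ and its derivative. Item 1 is immediate: differentiating the defining expression for $h$ gives
\[\frac{d}{d\theta}\bigl(\sin\theta\, h(\theta)\bigr) = \frac{\sin\theta}{2}\left(a^2(\theta) - \frac{I}{2}\right),\qquad I := \int_0^\pi a^2(x)\sin x\,dx,\]
and dividing by $\sin\theta$ yields~\eqref{thebasicequaiotnforh}. A preliminary estimate on $I$ is the workhorse: since $1-a^2\ge 0$ is supported in $[0,2\gamma]\cup [\pi-2\gamma,\pi]$ where $\sin x\lesssim \gamma$ and is bounded below on a set of size $\gtrsim \gamma$, one has $2-I\sim \gamma^2$; equivalently $\tfrac{1}{2}a^2-\tfrac{I}{4}$ is of size $\sim\gamma^2$ and positive on the bulk $[2\gamma,\pi-2\gamma]$ (where $a\equiv 1$), and of order $\sim -1$ near the poles (where $a\equiv 0$).

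With $F$ and this sign information, items 2, 3 and 6 follow by direct sign analysis. The interior critical points of $F$ are exactly the zeros of $a^2-I/2$; since $I/2\in(0,1)$ and $a$ transitions monotonically from $0$ to $1$ on $[\gamma,2\gamma]$ and back on $[\pi-2\gamma,\pi-\gamma]$, these are unique points $\theta_1\in[\gamma,2\gamma]$ and $\theta_2\in[\pi-2\gamma,\pi-\gamma]$, a local min and max respectively. Hence $F$ strictly decreases on $(0,\theta_1)$, strictly increases on $(\theta_1,\theta_2)$, and strictly decreases on $(\theta_2,\pi)$; since $F(0)=F(\pi)=0$ this forces a unique interior zero $y_0\in(\theta_1,\theta_2)$, giving item 2. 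All three zeros are simple because $F'$ has opposite signs on either side. To locate $y_0$ near $\pi/2$, evaluate $F(\pi/2)=\tfrac14\bigl[\int_0^{\pi/2}(a^2-1)\sin x\,dx-\int_{\pi/2}^\pi (a^2-1)\sin x\,dx\bigr]$ (a difference of two $O(\gamma^2)$ integrals exhibiting cancellation near the poles) and divide by $F'(\pi/2)=\tfrac12(1-I/2)\sim \gamma^2$. Item 6 follows by Taylor expansion of $F$ around $y_0$, using $F'(y_0)=\tfrac{\sin y_0}{2}(1-I/2)\sim \gamma^2$ and $\sin y_0\sim 1$; the constant $d$ is (up to sign) just $(2-I)/4$.

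For item 5 observe that on $[0,\gamma]$ we have $a\equiv 0$, so $F(\theta)=-\tfrac{I}{4}(1-\cos\theta)$ exactly, giving $h(\theta)=-\tfrac{I}{4}\tan(\theta/2)$; Taylor expansion and the estimate $I=2+O(\gamma^2)$ yield~\eqref{poleexpansion1}, and the case near $\pi$ is symmetric. Item 4 is then a consequence of combining these pole expansions, the expansion from item 6 near $y_0$, and the strict monotonicity of $F$ between zeros: on $[\gamma^2/4,y_0-c]\cup [y_0+c,\pi-\gamma^2/4]$ one is sufficiently far from each zero of $h$ to conclude $|h|\geq \gamma^2/100$ by integrating $F'$ and using the explicit formulas.

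Finally, items 7--8 follow by Leibniz-differentiating $F'=\tfrac{\sin\theta}{2}(a^2-I/2)$: on the transition region $[\gamma,2\gamma]\cup[\pi-2\gamma,\pi-\gamma]$, the bounds $|a^{(j)}|\lesssim \gamma^{-j}$ combined with the factor $\sin\theta\sim\gamma$ produce $|F^{(k)}|\lesssim \gamma^{2-k}$; on the bulk $[3\gamma,\pi-3\gamma]$, $a\equiv 1$ so $F'=\tfrac{\sin\theta}{2}(1-I/2)$ is smooth with all derivatives of size $\sim\gamma^2$, giving item 8. Items 9--10 follow by direct computation in the orthonormal frame $\{\partial_\theta,(\sin\theta)^{-1}\partial_\phi\}$ of $\mathring{\slashed{g}}$. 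Since $h$ is $\phi$-independent, $\mathring{\nabla}\hat\otimes(h\partial_\theta)$ has no $\theta\phi$ component, and trace-freeness in $2$D yields $|\mathring{\nabla}\hat\otimes(h\partial_\theta)|_{\mathring{\slashed{g}}}^2=2(h'-h\cot\theta)^2$. Substituting $h'=-h\cot\theta+\tfrac12 a^2-I/4$ from item 1 gives the algebraic identity~\eqref{ogopgjejogaerjoaefowefkopafe2} and the pointwise bounds of item 9: near the poles, $|h'-h\cot\theta|=\tfrac{I}{4}\tan^2(\theta/2)\lesssim \theta^2$ (resp.~$(\pi-\theta)^2$); near $y_0$, $h'-h\cot\theta=-2h\cot\theta+\tfrac12-I/4$, both terms $\sim\gamma^2$. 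Lemma~\ref{conformal} converts to $\slashed{g}$-norms (for trace-free symmetric $2$-tensors in $2$D and a conformal metric, these norms coincide with the round-metric norms). The main bookkeeping obstacle will be item 7, where the interpolation between the transition region (where $a$ has poor derivatives but $\sin\theta$ is small) and the bulk must be tracked through each Leibniz term in order to recover the clean $\gamma^{2-k}$ scaling.
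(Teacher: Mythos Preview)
Your proposal is correct and follows essentially the same approach as the paper: introduce $F(\theta)=\sin\theta\,h(\theta)$, use item~1 to read off the sign pattern of $F'$ for items~2--3, use the exact formula $h=-\tfrac{I}{4}\tan(\theta/2)$ on $[0,\gamma]$ for item~5, Taylor expand near $y_0$ for item~6, differentiate $F'$ with Leibniz for items~7--8, and compute $\mathring{\nabla}\hat\otimes(h\partial_\theta)$ directly in the orthonormal frame for items~9--10. Your organization is in fact slightly cleaner than the paper's in places (e.g.\ the closed-form $|h'-h\cot\theta|=\tfrac{I}{4}\tan^2(\theta/2)$ near the poles), but the underlying argument is the same; note that, like the paper, you are implicitly assuming $a$ crosses the level $\sqrt{I/2}$ only once on each transition interval (e.g.\ monotonicity of the cutoff), and that the precise localization $|y_0-\pi/2|\lesssim\gamma$ requires more than the ratio $F(\pi/2)/F'(\pi/2)$ you indicate unless some symmetry of $a$ is used---though only the weaker fact that $y_0$ lies in the bulk is actually needed downstream.
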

\begin{proof}
Given the formula~\eqref{aneqnforhthatisratherexplicit}, the equation~\eqref{thebasicequaiotnforh} is a straightforward calculation. Next, we turn to understanding the graph of $h(\theta)$. We start by considering the function
\begin{equation}\label{worifkdjnsnsns}
r\left(\theta\right) \doteq \frac{1}{2}a^2(\theta) - \frac{1}{4}\int_0^{\pi}a^2(x)\sin(x)\, dx.
\end{equation}
First of all, 
\begin{align*}
\cos(2\gamma) - \cos(\pi-2\gamma) = \int_{2\gamma}^{\pi-2\gamma}\sin(x)\ dx \leq \int_0^{\pi}a^2(x)\sin(x)\, dx \leq \int_{\gamma}^{\pi-\gamma}\sin(x)\, dx =\cos(\gamma)- \cos(\pi-\gamma).
\end{align*}
Thus we conclude that
\begin{equation*}
2- 4\gamma^2 + O\left(\gamma^4\right) \leq \int_0^{\pi}a^2(x)\sin(x)\, dx \leq 2 - \gamma^2 + O\left(\gamma^4\right).
\end{equation*}
From this it is immediate that there exists $\theta_1 \in [\gamma,2\gamma]$ and $\theta_2 \in [\pi-2\gamma,\pi-\gamma]$ such that 
\begin{equation}\label{integralasinest}
r\left(\theta_1\right) = 0,\qquad r(\theta_2) = 0,\qquad r(\theta) \neq 0\qquad \text{ if }\theta \not= \theta_1\text{ or }\theta_2.
\end{equation}
In particular,~\eqref{integralasinest} and~\eqref{thebasicequaiotnforh} imply that $\theta_1$ and $\theta_2$ are the interior critical points of $\sin(\theta)h(\theta)$. Furthermore, one sees immediately that $\sin(\theta)h(\theta)$ is decreasing from $0$ to $\theta_1$, is increasing from $\theta_1$ to $\theta_2$, and is decreasing again from $\theta_2$ to $\pi$. We also see immediately that $\sin(\theta)h(\theta)$ vanishes at $\theta = 0$ and $\theta = \pi$ and it is clear that there will be exactly one other zero. It is straightforward to see that the remaining zero must occur at a $y_0$ which satisfies $\left|y_0-\pi/2\right| \lesssim \gamma^2$. 

Next, we come to the bound~\eqref{poleexpansion1}. For this, we simply examine the formula~\eqref{aneqnforhthatisratherexplicit} when $\theta \in [0,\gamma]$. We find that
\begin{equation*}
h(\theta) = -\frac{1}{4}\left(\int_0^{\pi}a^2(x)\sin(x)\, dx\right)\frac{1-\cos(\theta)}{\sin(\theta)},\qquad \forall \theta\in [0,\gamma] \Rightarrow 
\end{equation*}
\begin{equation*}
h(\theta) = -\frac{1}{4}\left(2+O\left(\gamma^2\right)\right)\frac{\theta^2/2 + O\left(\theta^4\right)}{\theta + O\left(\theta^3\right)},\qquad \forall \theta \in[0,\gamma].
\end{equation*}
This establishes~\eqref{poleexpansion1}. The bound~\eqref{poleexpansion2} is established in an analogous fashion.

Next, we examine the behavior near $\theta = y_0$. Taylor expanding $\sin(\theta)$ yields that
\[\sin(\theta) = 1+O\left(\left(\theta-\pi/2\right)^2\right).\]
Thus, for $\theta- y_0$ sufficiently small (independently of $\gamma$) we may use~\eqref{thebasicequaiotnforh} to see that
\[\left|\frac{d}{d\theta}\left(\sin(\theta)h(\theta)\right)\right| \geq \frac{1}{10}\gamma^2.\]
We also obtain~\eqref{zeroexpansion} since one easily finds that $\frac{d^2}{d\theta^2}\left(\sin(\theta)h(\theta)\right)|_{\theta = y_0} = O\left(\gamma^2\right)$. Finally, using the estimates above, one can also obtain~\eqref{lowerbound}.

Next, we turn to the bound~\eqref{globalboundonh}. We start with $k = 0$. It will be important to recall that $a(x)$ is globally bounded, is identically $1$ for $x \in [2\gamma,\pi-2\gamma]$, and vanishes for $x \in [0,\gamma] \cup [\pi-\gamma,\pi]$.  We have
\begin{align*}
2\left|\sin(\theta)h(\theta)\right| &= \left|\int_0^{\theta}a^2(x)\sin(x)\, dx - \left(\int_0^{\pi}a^2(y)\sin(y)\, dy\right)\frac{1-\cos(\theta)}{2}\right|
\\ \nonumber &= \left|\int_0^{\theta}\left(a^2(x)-1\right)\sin(x)\, dx - \left(\int_0^{\pi}\left(a^2(y)-1\right)\sin(y)\, dy\right)\frac{1-\cos(\theta)}{2}\right|
\\ \nonumber &\lesssim \int_0^{2\gamma}\sin(x)\, dx + \int_{\pi-2\gamma}^{\pi}\sin(x)\, dx
\\ \nonumber &\lesssim \gamma^2.
\end{align*}
This establishes~\eqref{globalboundonh} for $k = 0$. The general case follows similarly. Finally, we note that~\eqref{globalboundonh2} is proven in a straightforward manner.

It remains to establish~\eqref{itvanishesatthepolesprettyfast}. First of all, using Lemma~\ref{conformal}, we have that $\left|\slashed{\nabla}\hat{\otimes}\left(h\partial_{\theta}\right)\right|_{\slashed{g}} = \left|\mathring{\nabla}\hat{\otimes}\left(h\partial_{\theta}\right)\right|_{\mathring{\slashed{g}}}$. In spherical coordinates, the non-zero Christoffel symbols on the round metric are $\Gamma_{\phi\phi}^{\theta} = -\sin\theta\cos\theta$ and $\Gamma_{\phi\theta}^{\phi} = \cot\theta$. Thus, one easily computes that
\[\mathring{\nabla}\left(h\partial_{\theta}\right)= h(\theta)\frac{\cos\theta}{\sin\theta} \left(\frac{1}{\sin\theta}\partial_{\phi}\right)\otimes\left(\frac{1}{\sin\theta}\partial_{\phi}\right) + \frac{dh}{d\theta}\partial_{\theta}\otimes\partial_{\theta}.\]
If we keep in mind the equation~\eqref{thebasicequaiotnforh} for $h$, then we see that, we will have 
\begin{align}\label{ogopgjejogaerjoaefowefkopafe}
\mathring{\nabla}\hat{\otimes}\left(h\partial_{\theta}\right) &= \left(2h(\theta)\frac{\cos\theta}{\sin\theta} + \frac{1}{4}\int_0^{\pi}a^2(x)\sin(x)\, dx -\frac{1}{2}a^2(\theta)\right)\left(\frac{1}{\sin\theta}\partial_{\phi}\right)\hat\otimes\left(\frac{1}{\sin\theta}\partial_{\phi}\right)  
\\ \nonumber &\qquad - \left(2h(\theta) \frac{\cos(\theta)}{\sin(\theta)}+ \frac{1}{4}\int_0^{\pi}a^2(x)\sin(x)\, dx  -\frac{1}{2}a^2(\theta)\right)\partial_{\theta}\hat\otimes\partial_{\theta}.
\end{align}
This implies~\eqref{ogopgjejogaerjoaefowefkopafe2}. The estimate~\eqref{itvanishesatthepolesprettyfast2} is a straightforward consequence of the formula~\eqref{ogopgjejogaerjoaefowefkopafe2}. Now, using~\eqref{aneqnforhthatisratherexplicit}, we see that when $\theta < \gamma$,
\[h\left(\theta\right) = -\left(\int_0^{\pi}a^2(x)\sin(x)\, dx\right)\left(\frac{\theta}{8} + O\left(\theta^3\right)\right).\]
Combining this with~\eqref{ogopgjejogaerjoaefowefkopafe} thus leads to 
\[\left|\slashed{\nabla}\hat{\otimes}h\right|_{\slashed{g}} \lesssim \theta^2.\]
A similar argument works when $\left|\pi-\theta\right| \ll 1$.

\end{proof}

\begin{lemma}\label{eestimates}Let $\left(\slashed{g}_{AB},b^A,\kappa,\Omega\right)$ be a regular $4$-tuple in the sense of Definition~\ref{Mreg}.  Then we can write 
\[b^A\slashed{\nabla}_A = \check{b}^A\slashed{\nabla}_A + \epsilon^2h(\theta)\partial_{\theta} + e^A\slashed{\nabla}_A,\] 
where $e$ satisfies 	
\[\left\vert\left\vert e\right\vert\right\vert_{\mathring{H}^3} \lesssim \epsilon^{3-2\delta}.\]
\end{lemma}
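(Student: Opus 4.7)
The strategy is to view $e$ as the gradient of a scalar potential and recover its size from an elliptic estimate on the round sphere. By Definition~\ref{Mreg} we have $b^A = \check b^A + \mathring{\nabla}^A f$. Since $\mathring{\slashed{g}}^{\theta\theta}=1$ and $\mathring{\slashed{g}}^{\theta\phi}=0$, setting
\[
H(\theta) \doteq \int_{\pi/2}^{\theta} h(\theta')\, d\theta',
\]
a direct check gives $\mathring{\nabla}^A(\epsilon^2 H(\theta))\,\partial_A = \epsilon^2 h(\theta)\partial_\theta$. Hence $e = \mathring{\nabla}\psi$ for the scalar $\psi \doteq f - \epsilon^2 H(\theta) - c$, where $c$ is chosen so that $\int_{\mathbb{S}^2}\psi\,\mathring{\rm dVol}=0$. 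It therefore suffices to produce an $\mathring{H}^4$ bound for $\psi$ of size $\epsilon^{3-2\delta}$; since $\psi$ has zero mean, it is enough to estimate $\mathring{\Delta}\psi$ in $\mathring{H}^2$ and then invert the round Laplacian.

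I compute the two pieces of $\mathring{\Delta}\psi$ separately. On the one hand, Remark~\ref{comp} gives $\mathring{\nabla}_A\tilde b^A=0$ and~\eqref{theotherrequirementsbutheyareimportant} gives $\mathring{\nabla}_A z^A=0$, so $\mathring{\nabla}_A\check b^A=0$ and hence $\mathring{\Delta} f = \mathring{\nabla}_A b^A$; the conformal identity in Lemma~\ref{conformal} then yields $\mathring{\Delta} f = \slashed{\rm div}\,b - 2\mathcal{L}_b\varphi$. On the other hand, a direct coordinate computation using the ODE~\eqref{thebasicequaiotnforh} gives
\[
\mathring{\Delta}(\epsilon^2 H) \;=\; \frac{\epsilon^2}{\sin\theta}\frac{d}{d\theta}\bigl(\sin\theta\,h(\theta)\bigr) \;=\; \frac{\epsilon^2}{2}a^2(\theta) - \frac{\epsilon^2}{4}\int_0^{\pi}a^2(x)\sin(x)\,dx,
\]
which is precisely the leading-order part of $\slashed{\rm div}\,b$ identified in~\eqref{0jsodvna} of Lemma~\ref{babablacksheep}. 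Splitting
\[
\mathring{\Delta}\psi \;=\; \bigl[\,\slashed{\rm div}\,b - \tfrac14|\slashed{\nabla}\hat{\otimes} b|^2_{\slashed{g}} + 4\kappa\,\bigr] \;+\; \bigl[\,\tfrac14|\slashed{\nabla}\hat{\otimes} b|^2_{\slashed{g}} - 4\kappa - \mathring{\Delta}(\epsilon^2 H)\,\bigr] \;-\; 2\mathcal{L}_b\varphi,
\]
it remains to bound each of these three contributions in $\mathring{H}^2$ by $\epsilon^{3-2\delta}$.

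The first bracket is controlled by revisiting equation~\eqref{werewrotedivbeqn} and applying Proposition~\ref{qualitativeexistence} with $X=b$, $h=0$, and $M=2$; estimating each term on the right-hand side of~\eqref{werewrotedivbeqn} in $\mathring{H}^2$ via Lemma~\ref{interlemm} together with the bounds~\eqref{somebound1ssssss}-\eqref{somebound3ssssss} yields the required size, where one uses crucially that every term carries at least two ``small'' factors drawn from $b$, $\mathring{\nabla} f$, or $\log\Omega$. For the second bracket I expand $b = \epsilon\tilde b + z + \mathring{\nabla} f$, observe $|\slashed{\nabla}\hat{\otimes} b|^2_{\slashed{g}}=|\mathring{\nabla}\hat{\otimes} b|^2_{\mathring{\slashed{g}}}$ from the conformal rule in Lemma~\ref{conformal}, and invoke both $|\mathring{\nabla}\hat{\otimes}\tilde b|^2_{\mathring{\slashed{g}}} = 2a^2(\theta)$ from Remark~\ref{comp} and the identity for $\kappa$ from Lemma~\ref{kappaisthis}; the linear-in-$\epsilon$ contribution reproduces $\mathring{\Delta}(\epsilon^2 H)$ exactly, while the cross and quadratic corrections are handled by Lemma~\ref{interlemm} together with $\|z\|_{\mathring{H}^{M_1}}+\|\mathring{\nabla} f\|_{\mathring{H}^{N_0+1}}\lesssim \epsilon^{2-\delta}$. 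The third bracket is immediate: $\|\mathcal{L}_b\varphi\|_{\mathring{H}^2}\lesssim\|b\|_{\mathring{H}^2}\|\varphi\|_{\mathring{H}^3}\lesssim \epsilon\cdot\epsilon^{2-\delta}$ by Lemma~\ref{interlemm}. Combining and applying the standard elliptic estimate $\|\psi\|_{\mathring{H}^4}\lesssim\|\mathring{\Delta}\psi\|_{\mathring{H}^2}$ for zero-mean scalars on $(\mathbb{S}^2,\mathring{\slashed{g}})$ produces $\|e\|_{\mathring{H}^3}\lesssim\epsilon^{3-2\delta}$. The only nontrivial point is the careful bookkeeping of $\delta$-losses: one must ensure that each non-linear term is paired with a second factor of size $\epsilon^{2-\delta}$ rather than only $\epsilon^{1-\delta}$, which is precisely what refines the intermediate bound~\eqref{estimatefignsofikdivbstuff} to the sharper exponent $3-2\delta$ needed here.
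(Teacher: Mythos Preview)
Your approach is essentially the paper's: both reduce to $\mathring{\rm div}\,e = \tilde D - 2\mathcal{L}_b\varphi$ with $\tilde D = \slashed{\rm div}\,b - \tfrac{\epsilon^2}{2}a^2 + \tfrac{\epsilon^2}{4}\int a^2\sin$, estimate the right side in $\mathring{H}^2$, and invert elliptically (your scalar potential $\psi$ is equivalent to the paper's div--curl system, since $e$ is indeed a gradient). The only organizational difference is that you split $\tilde D$ into two brackets and bound each directly from the a-priori control on $f$ in~\eqref{somebound1ssssss}, whereas the paper writes the transport equation~\eqref{fortildeD} for $\tilde D$, expands its right side using $e$ itself, and closes via a small bootstrap with~\eqref{cocobojo}--\eqref{uuejeju}; your route avoids that bootstrap and is marginally more direct.

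One point needs more care than your sketch gives it. To obtain the first bracket $\lesssim\epsilon^{3-2\delta}$ from~\eqref{werewrotedivbeqn} (rather than the $\epsilon^{3-3\delta}$ of~\eqref{estimatefignsofikdivbstuff}) you must control $\mathcal{L}_b|\slashed{\nabla}\hat\otimes b|^2$, which carries three factors of $b\sim\epsilon^{1-\delta}$ and is naively only $\epsilon^{3-3\delta}$. The mechanism is not simply ``two small factors'' but the splitting $b=\epsilon\tilde b+(z+\mathring{\nabla}f)$ combined with the approximate axisymmetry bound~\eqref{somebound3ssssss}: since $\tilde b\propto\partial_\phi$ and $|\slashed{\nabla}\hat\otimes b|^2$ is axisymmetric to leading order, the $\mathcal{L}_{\epsilon\tilde b}$ contribution is negligible, while $\mathcal{L}_{z+\mathring{\nabla}f}$ carries the extra factor $\epsilon^{2-\delta}$ you need. (The paper's organization via~\eqref{fortildeD} is slightly cleaner here precisely because the analogous term becomes $\mathcal{L}_b[\epsilon^2 a^2]$, on which $\mathcal{L}_{\epsilon\tilde b}$ vanishes \emph{exactly}.) Your third-bracket estimate should also read $\|b\|_{\mathring{H}^2}\lesssim\epsilon^{1-\delta}$ rather than $\epsilon$, though the conclusion $\epsilon^{3-2\delta}$ survives.
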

\begin{proof}

Using~\eqref{thebasicequaiotnforh}, Lemma~\ref{conformal}, and Remark~\ref{comp} we may derive the following equation for $e$:
\begin{equation}\label{fortheerror}
\mathring{{\rm div}}e = \slashed{\rm div}b - \frac{\epsilon^2}{2}a^2(\theta) + \frac{\epsilon^2}{4}\int_0^{\pi}a^2(\theta)\sin(\theta)\, d\theta - 2\mathcal{L}_b\varphi,\qquad \mathring{{\rm curl}}e = 0.
\end{equation}
Then, using the equation~\eqref{rayconstr},~\eqref{kappatoproe}, and ~\eqref{thiswillbeuseful} we may derive the following equation for $\tilde D \doteq \slashed{\rm div}b - \frac{\epsilon^2}{2}a^2(\theta) + \frac{\epsilon^2}{4}\int_0^{\pi}a^2(\theta)\sin(\theta)\, d\theta$:
\begin{align}\label{fortildeD}
\tilde D - \mathcal{L}_b\tilde D &= \frac{1}{2}\left(\slashed{\rm div}b\right)^2 + 2\kappa\slashed{\rm div}b + \mathcal{L}_b\left[\frac{\epsilon^2}{2}a^2(\theta) - \frac{\epsilon^2}{4}\int_0^{\pi}a^2(\theta)\sin(\theta)\, d\theta\right]
\\ \nonumber &\qquad + \left(\frac{1}{4}\left|\slashed{\nabla}\hat{\otimes}b\right|_{\slashed{g}}^2 - 4\kappa - \frac{\epsilon^2}{2}a^2(\theta) + \frac{\epsilon^2}{4}\int_0^{\pi}a^2(\theta)\sin(\theta)\, d\theta\right)-2\Omega^{-1}\left(\mathcal{L}_b\Omega\right)\slashed{\rm div}b + 4\Omega^{-1}\mathcal{L}_b\Omega
\\ \nonumber &= \frac{1}{2}\left(\slashed{\rm div}b\right)^2 + 2\kappa\slashed{\rm div}b + \mathcal{L}_b\left[\frac{\epsilon^2}{2}a^2(\theta) - \frac{\epsilon^2}{4}\int_0^{\pi}a^2(\theta)\sin(\theta)\, d\theta\right]
\\ \nonumber &\qquad + \frac{\epsilon}{2}\mathring{\slashed{g}}\left(\mathring{\nabla}\hat{\otimes}\tilde b,\mathring{\nabla}\hat{\otimes}\left(\epsilon z + \epsilon^2 h(\theta)\partial_{\theta} + e\right)\right) + \left|\mathring{\nabla}\hat{\otimes}\left(\epsilon z + \epsilon^2 h(\theta)\partial_{\theta} + e\right)\right|_{\mathring{\slashed{g}}}^2 
\\ \nonumber &\qquad + \check{\kappa}  -2\Omega^{-1}\left(\mathcal{L}_b\Omega\right)\slashed{\rm div}b + 4\Omega^{-1}\mathcal{L}_b\Omega,
\end{align}
where, by Lemma~\ref{kappaisthis}, $\check{\kappa}$ is a constant satisfying $|\check{\kappa}| \lesssim \epsilon^{3-\delta}$.

We start with
\begin{align}\label{boundsforbbbvarphigigif}
\left\vert\left\vert \mathcal{L}_b\varphi\right\vert\right\vert_{\mathring{H}^2} &\lesssim \epsilon^{1-\delta}\left\vert\left\vert \mathcal{L}_{\partial_{\phi}}\varphi\right\vert\right\vert_{\mathring{H}^2} + \epsilon \left\vert\left\vert z\right\vert\right\vert_{\mathring{H}^2}\left\vert\left\vert \varphi\right\vert\right\vert_{\mathring{H}^3} + \left\vert\left\vert \nabla f\right\vert\right\vert_{\mathring{H}^3}\left\vert\left\vert\varphi\right\vert\right\vert_{\mathring{H}^3}
\\ \nonumber &\lesssim \epsilon^{4-2\delta}.
\end{align}
Next, from~\eqref{fortheerror} and elliptic estimates, we obtain
\begin{equation}\label{cocobojo}
\left\vert\left\vert e\right\vert\right\vert_{\mathring{H}^3} \lesssim \left\vert\left\vert \tilde D\right\vert\right\vert_{\mathring{H}^2} + \left\vert\left\vert \mathcal{L}_b\varphi\right\vert\right\vert_{\mathring{H}^2} \lesssim \left\vert\left\vert \tilde D\right\vert\right\vert_{\mathring{H}^2} + \epsilon^{4-2\delta}.
\end{equation}
Next we observe that~\eqref{rayconstr} and Proposition~\ref{qualitativeexistence} easily lead to the bound
\[\left\vert\left\vert \slashed{\rm div}b\right\vert\right\vert_{\mathring{H}^2} \lesssim \epsilon^{2-\delta}.\]
Then we apply Proposition~\ref{qualitativeexistence} to~\eqref{fortildeD} and obtain that
\begin{align}\label{uuejeju}
\left\vert\left\vert \tilde D\right\vert\right\vert_{\mathring{H}^2} &\lesssim \epsilon^{3-\delta} + \epsilon^{1-\delta}\left\vert\left\vert e\right\vert\right\vert_{\mathring{H}^3} + \left\vert\left\vert e\right\vert\right\vert_{\mathring{H}^3}^2.
\end{align}
Combining~\eqref{cocobojo} and~\eqref{uuejeju} leads to
\[\left\vert\left\vert e\right\vert\right\vert_{\mathring{H}^3} \lesssim \epsilon^{3-2\delta},\qquad \left\vert\left\vert \tilde D\right\vert\right\vert_{\mathring{H}^3} \lesssim \epsilon^{3-\delta}.\]

\end{proof}
\begin{remark}Lemma~\ref{eestimates} shows that $f$  (from Definition~\ref{Mreg}) is forced, in view of the requirement that~\eqref{basicconstr2} holds,  to take the form 
\[\mathring{\nabla}f = \epsilon^2 h(\theta)\partial_{\theta} + e.\]
See also Remark~\ref{whatisfdoinghere}.
\end{remark}

\subsection{The $\kappa$-Singular Equation}\label{kapsingsec}

In this section we will study the ``$\kappa$-singular equation'' which will later play a fundamental role in setting up our characteristic data. (See Section~\ref{previewofchardata}.)  This equation will be similar to the equation $Pu_{A_1\cdots A_k} = F_{A_1\cdots A_k}$ from Definition~\ref{KuF} except that $X^A$ and $h$ will \emph{not} have a smallness condition. Instead, the study of the $\kappa$-singular equation will be tractable only because of a certain anti-symmetric structure. We now turn to the relevant definitions.

\begin{definition}\label{defofkapsing} Let $\left(\slashed{g}_{AB},b^A,\kappa,\Omega\right)$ be a regular $4$-tuple in the sense of Definition~\ref{Mreg}. Then we say that $t \in \hat{\mathcal{S}}\left(\mathbb{S}^2\right)$ satisfies the corresponding $\kappa$-singular equation with right hand side $H  \in \hat{\mathcal{S}}\left(\mathbb{S}^2\right)$ if
\begin{equation}\label{thisisthekappasingularequation}
\mathscr{L}t_{AB} -2\kappa t_{AB}\doteq \left(\mathcal{L}_bt_{AB}  - \left(\slashed{\nabla}\hat{\otimes}b\right)^C_{\ \ (A}t_{B)C} - \frac{1}{2}\slashed{\rm div}b t_{AB}\right) -2\kappa t_{AB}= H_{AB}.
\end{equation}
We recall that $\hat{\mathcal{S}}\left(\mathbb{S}^2\right)$ denotes the space of trace-free symmetric tensors in $\mathcal{T}^{(0,2)}\left(\mathbb{S}^2\right)$.

It will also be convenient to extend the action of $\mathscr{L}$ to tensors $w_{C_1\cdots C_kAB}$ by having $\mathscr{L}$ just act on the last two indices.
\end{definition}
\begin{remark}We quickly recall for the reader where the need to study equations of this form arises (see the detailed discussion in Section~\ref{previewofchardata}). Namely, we will need to pose outgoing characteristic data for our solution. We will desire to pose this outgoing data in such a way that the prescribed value of $\Omega^{-1}\hat{\chi}$ is consistent with the solution behaving in an approximately self-similar fashion as $v\to 0$. If our spacetime was exactly $\kappa$-self-similar, then one finds that $\Omega^{-1}\hat{\chi}$ must satisfy the equation~\eqref{knbhuikawefkf2} along any sphere on $\{v = 0\}$. The equation~\eqref{knbhuikawefkf2} is exactly of the form~\eqref{thisisthekappasingularequation} for suitable $H_{AB}$. 
\end{remark}

Just as in Section~\ref{perturbtheidentityforever}, it will be convenient to define an elliptic regularization of the $\kappa$-singular equation.
\begin{definition}\label{kappsinreg}Given a $\kappa$-singular equation as in Definition~\ref{defofkapsing} and $q > 0$, we define the corresponding $q$-regularization by
\begin{equation}\label{thisisthekappasingularequationqReg}
\mathcal{L}_bt_{AB} -2\kappa t_{AB} - \left(\slashed{\nabla}\hat{\otimes}b\right)^C_{\ \ (A}t_{B)C} - \frac{1}{2}\slashed{\rm div}b t_{AB}+q\slashed{\Delta}t_{AB}= H_{AB}.
\end{equation}
\end{definition}

The next lemma identifies an important anti-symmetric structure in~\eqref{thisisthekappasingularequation}.
\begin{lemma}\label{thatscrlisantisymm}Let $\left(\slashed{g}_{AB},b^A,\kappa,\Omega\right)$ be a regular $4$-tuple in the sense of Definition~\ref{Mreg} and $\mathscr{L}$ be defined as in Definition~\ref{defofkapsing}. Then $\mathscr{L}$ is anti-symmetric with respect to the Hilbert space structure induced by $\slashed{g}_{AB}$.
\end{lemma}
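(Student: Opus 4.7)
The plan is to verify anti-symmetry directly by integration by parts, with the key observation being that the three terms in $\mathscr{L}$ are precisely the right combination to turn the divergence-theorem error terms (arising from the non-vanishing of $\mathcal{L}_b\slashed{g}$) into a symmetric-in-$(f,g)$ bilinear form.

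First I would verify that $\mathscr{L}$ actually maps $\hat{\mathcal{S}}(\mathbb{S}^2)$ into itself. Symmetry is immediate from inspection. For tracelessness, one computes $\slashed{g}^{AB}\mathscr{L}f_{AB}$: the term $\slashed{g}^{AB}\mathcal{L}_bf_{AB} = -(\mathcal{L}_b\slashed{g}^{AB})f_{AB}$ (using ${\rm tr}\,f=0$), and then $\mathcal{L}_b\slashed{g}^{AB} = -(\slashed{\nabla}\hat\otimes b)^{AB} - (\slashed{\rm div}\,b)\slashed{g}^{AB}$ cancels exactly against the two remaining terms, the $\slashed{\rm div}\,b$ piece dropping out again by tracelessness.

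Next, the main computation. For any $f,g\in\hat{\mathcal{S}}$, I would apply $\mathcal{L}_b$ to the scalar $\langle f,g\rangle_{\slashed{g}} = \slashed{g}^{AC}\slashed{g}^{BD}f_{AB}g_{CD}$, integrate over $\mathbb{S}^2$, and use $\int_{\mathbb{S}^2}\mathcal{L}_b\varphi\,\slashed{dVol} = -\int_{\mathbb{S}^2}\varphi\,\slashed{\rm div}\,b\,\slashed{dVol}$. Substituting $\mathcal{L}_b\slashed{g}^{AB} = -(\slashed{\nabla}\hat\otimes b)^{AB} - (\slashed{\rm div}\,b)\slashed{g}^{AB}$ yields
\begin{equation}\label{sympart}
\int_{\mathbb{S}^2}\!\bigl[\langle\mathcal{L}_bf,g\rangle + \langle f,\mathcal{L}_bg\rangle\bigr]\,\slashed{dVol} = \int_{\mathbb{S}^2}\!\bigl[2(\slashed{\nabla}\hat\otimes b)^{AC}f_{AB}g_C{}^B + (\slashed{\rm div}\,b)\langle f,g\rangle\bigr]\slashed{dVol}.
\end{equation}
Meanwhile, adding the two expressions $\int\langle\mathscr{L}f,g\rangle$ and $\int\langle f,\mathscr{L}g\rangle$ produces $\int[\langle\mathcal{L}_bf,g\rangle+\langle f,\mathcal{L}_bg\rangle]$ minus $\int(\slashed{\rm div}\,b)\langle f,g\rangle$ minus a term of the form $\int(\slashed{\nabla}\hat\otimes b)^C{}_{(A}[f_{B)C}g^{AB}+g_{B)C}f^{AB}]$; the symmetrization in the last expression is redundant because $g^{AB}$ and $f^{AB}$ are themselves symmetric, and after relabeling indices (using symmetry of $\slashed{\nabla}\hat\otimes b$, $f$, $g$) the two pieces of that sum combine into exactly $2\int(\slashed{\nabla}\hat\otimes b)^{AC}f_{AB}g_C{}^B$.

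Plugging \eqref{sympart} into the resulting identity, the two $\slashed{\nabla}\hat\otimes b$ contributions and the two $\slashed{\rm div}\,b$ contributions each cancel, yielding $\int\langle\mathscr{L}f,g\rangle + \int\langle f,\mathscr{L}g\rangle = 0$. I do not anticipate any real obstacle: every step is an algebraic identity or a standard integration by parts; the only mildly subtle point is the index bookkeeping in the term quadratic in $\slashed{\nabla}\hat\otimes b$, but this reduces to checking that two contractions of a symmetric $3$-tensor product agree, which follows from the pairwise symmetry of $f$, $g$, and $\slashed{\nabla}\hat\otimes b$. Since the argument uses no smallness of $b$ or $\kappa$, the anti-symmetry is an exact structural feature of $\mathscr{L}$, which explains its role as the foundation for the $L^2$-estimates of the $\kappa$-singular equation advertised earlier.
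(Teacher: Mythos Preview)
Your proposal is correct and follows essentially the same route as the paper's proof: both expand $\mathcal{L}_b(\langle f,g\rangle_{\slashed g})$ via the Leibniz rule, use $\mathcal{L}_b\slashed g^{AB}=-(\slashed\nabla\hat\otimes b)^{AB}-(\slashed{\rm div}\,b)\slashed g^{AB}$, and observe that the resulting terms cancel exactly against the two correction terms in the definition of $\mathscr{L}$ after one application of the divergence theorem. Your additional verification that $\mathscr{L}$ preserves $\hat{\mathcal S}(\mathbb S^2)$ is a nice touch the paper leaves implicit.
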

\begin{proof}We have
\begin{align*}
\left(\slashed{g}^{-1}\right)^{AC}\left(\slashed{g}^{-1}\right)^{BD}\mathcal{L}_bt_{AB}h_{CD} &= \mathcal{L}_b\left(t\cdot h\right) - \mathcal{L}_b\left(\left(\slashed{g}^{-1}\right)^{AC}\left(\slashed{g}^{-1}\right)^{BD}\right)t_{AB}h_{CD} - \left(\slashed{g}^{-1}\right)^{AC}\left(\slashed{g}^{-1}\right)^{BD}f_{AB}\mathcal{L}_bh_{CD}
\\ \nonumber &= \mathcal{L}_b\left(t\cdot h\right)+ 2\left(\mathcal{L}_b\slashed{g}\right)^{AC}\left(\slashed{g}^{-1}\right)^{BD}t_{AB}h_{CD}- \left(\slashed{g}^{-1}\right)^{AC}\left(\slashed{g}^{-1}\right)^{BD}t_{AB}\mathcal{L}_bh_{CD}
\\ \nonumber &= \mathcal{L}_b\left(t\cdot h\right) + 2{\rm div}b\left(t\cdot h\right)+2\left(\slashed{\nabla}\hat{\otimes}b\right)^{AC}\left(\slashed{g}^{-1}\right)^{BD}t_{AB}h_{CD}
\\ \nonumber &\qquad - \left(\slashed{g}^{-1}\right)^{AC}\left(\slashed{g}^{-1}\right)^{BD}t_{AB}\mathcal{L}_bh_{CD}.
\end{align*}
Thus, after integrating and applying the divergence theorem, we obtain
\begin{align*}
&\int_{\mathbb{S}^2}\left[\left(\slashed{g}^{-1}\right)^{AC}\left(\slashed{g}^{-1}\right)^{BD}\mathscr{L}t_{AB}h_{CD} + \left(\slashed{g}^{-1}\right)^{AC}\left(\slashed{g}^{-1}\right)^{BD}t_{AB}\mathscr{L}h_{CD}\right] \slashed{dVol} 
\\ \nonumber &\qquad =\int_{\mathbb{S}^2}\left(\mathcal{L}_b\left(t\cdot h\right) + \slashed{\rm div}b\left(t\cdot h\right)\right)\slashed{dVol}
\\ \nonumber &\qquad = 0.
\end{align*}
\end{proof}

It will be convenient to have a version of Lemma~\ref{thatscrlisantisymm} which holds for higher order tensors (where we lose the exact anti-symmetry).
\begin{lemma}\label{higherorderalmostantisymm}Let $\left(\slashed{g}_{AB},b^A,\kappa,\Omega\right)$ be a regular $4$-tuple in the sense of Definition~\ref{Mreg}, $k > 0$, $w_{C_1\cdots C_kAB}$ be a $(0,k+2)$ tensor which is symmetric and trace-free in the $AB$ indices, and $y$ be a $C^1$ function on $\mathbb{S}^2$. Then we have 
\begin{align*}
&\int_{\mathbb{S}^2}y\Bigg[\mathcal{L}_bw_{C_1\cdots C_kAB} -\frac{1}{2}\left(\slashed{\nabla}\hat{\otimes}b\right)^D_{\ \ A}w_{C_1\cdots C_kBD}
\\ \nonumber &\qquad  - \frac{1}{2}\left(\slashed{\nabla}\hat{\otimes}b\right)^D_{\ \ B}w_{C_1\cdots C_kAD} - \frac{1}{2}\left(\slashed{\rm div}b\right) w_{C_1\cdots C_kAB}\Bigg]w^{C_1\cdots C_kAB}
\\ \nonumber &=\int_{\mathbb{S}^2}\left[\frac{1}{2}y\sum_{i=1}^k\left(\slashed{\nabla}\hat{\otimes}b\right)^D_{\ \ E}w_{C_1\cdots D\cdots C_kAB}w^{C_1\cdots E\cdots C_kAB} + \frac{k}{2}y\slashed{\rm div}b\left|w\right|^2 - \frac{1}{2}\left(\mathcal{L}_by\right)\left|w\right|^2\right].
\end{align*}
\end{lemma}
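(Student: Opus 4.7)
The plan is to derive the identity as a direct generalization of the computation in the proof of Lemma~\ref{thatscrlisantisymm}, the only new ingredient being the bookkeeping of the extra $k$ indices. The starting point is the divergence theorem applied to the vector field $y|w|^2 b$ on $\mathbb{S}^2$, which yields
\[
\int_{\mathbb{S}^2}\bigl(\mathcal{L}_b(y|w|^2) + y|w|^2\,\slashed{\rm div}b\bigr)\slashed{dVol} = 0,
\]
together with the Leibniz expansion $\mathcal{L}_b(y|w|^2) = (\mathcal{L}_b y)|w|^2 + y\mathcal{L}_b|w|^2$.

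The main step is to expand $\mathcal{L}_b|w|^2$ carefully. Writing
\[
|w|^2 = w_{C_1\cdots C_kAB}\,w_{D_1\cdots D_kA'B'}\,\slashed{g}^{C_1D_1}\cdots\slashed{g}^{C_kD_k}\,\slashed{g}^{AA'}\,\slashed{g}^{BB'},
\]
and applying the Leibniz rule produces the term $2w\cdot \mathcal{L}_b w$ plus one contribution from each of the $k+2$ inverse metrics. For each such factor I would use the identity $\mathcal{L}_b\slashed{g}^{AB} = -(\slashed{\nabla}\hat{\otimes}b)^{AB} - (\slashed{\rm div}b)\slashed{g}^{AB}$, which follows from $(\mathcal{L}_b\slashed{g})_{AB} = (\slashed{\nabla}\hat{\otimes}b)_{AB} + (\slashed{\rm div}b)\slashed{g}_{AB}$ after raising indices. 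The $k$ contributions coming from the $C_i$ inverse metrics produce exactly the sum
\[
-\sum_{i=1}^k (\slashed{\nabla}\hat{\otimes}b)^D_{\ \ E}\,w_{C_1\cdots D\cdots C_kAB}\,w^{C_1\cdots E\cdots C_kAB} \;-\; k(\slashed{\rm div}b)|w|^2,
\]
while the two contributions coming from the $A$ and $B$ inverse metrics each yield, after using the $(AB)$-symmetry of $w$ to combine them, a term proportional to $(\slashed{\nabla}\hat{\otimes}b)^D_{\ \ A}w_{\cdots DB}w^{\cdots AB}$ together with one copy of $-(\slashed{\rm div}b)|w|^2$ apiece.

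Substituting this expansion into the divergence identity, solving for $\int y\,\mathcal{L}_b w\cdot w$, and then recognizing that the $(\slashed{\nabla}\hat{\otimes}b)^D_{\ \ A}w_{\cdots DB}w^{\cdots AB}$ contribution produced by the $A,B$ inverse metrics is exactly what is needed to cancel the $-\frac{1}{2}(\slashed{\nabla}\hat{\otimes}b)^D_{\ \ A}w_{\cdots BD} - \frac{1}{2}(\slashed{\nabla}\hat{\otimes}b)^D_{\ \ B}w_{\cdots AD}$ terms inside the bracket on the left-hand side (again using that $w$ is symmetric in $AB$), these contributions drop out. The total coefficient of $\slashed{\rm div}b|w|^2$ is then $-(k+2)/2 + 1/2 \cdot 1 + 1/2$ --- counting the $k$ trace pieces from the $C_i$ terms, the two from the $AB$ terms, the $-\frac{1}{2}\slashed{\rm div}b$ subtracted inside the bracket, and the contribution from the integration by parts --- which collects precisely to $\frac{k}{2}y\slashed{\rm div}b|w|^2$, matching the right-hand side.

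The only thing that requires care is this signed bookkeeping: tracking how many $-(\slashed{\rm div}b)|w|^2$ terms are produced at each stage, and verifying that the $(AB)$-symmetry of $w$ (rather than its trace-freeness, which plays no role in the cancellations) identifies $(\slashed{\nabla}\hat{\otimes}b)^D_{\ \ A}w_{\cdots DB}$ with $(\slashed{\nabla}\hat{\otimes}b)^D_{\ \ B}w_{\cdots AD}$ after contracting with $w^{\cdots AB}$. There is no genuine analytic difficulty; the lemma is a purely tensorial identity extending the antisymmetry of $\mathscr{L}$ from Lemma~\ref{thatscrlisantisymm} to the case where $\mathscr{L}$ acts on the last two indices of a higher-rank tensor, with the additional $C_i$-index terms appearing as explicit error terms on the right.
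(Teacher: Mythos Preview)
Your proposal is correct and follows exactly the approach the paper indicates (``proven in a similar fashion as Lemma~\ref{thatscrlisantisymm}''): apply the divergence theorem to $y|w|^2 b$, expand $\mathcal{L}_b|w|^2$ via Leibniz and $\mathcal{L}_b\slashed{g}^{AB} = -(\slashed{\nabla}\hat\otimes b)^{AB} - (\slashed{\rm div}b)\slashed{g}^{AB}$, and separate the contributions from the $k$ extra indices from those of the $AB$ indices. Your verbal accounting of the $\slashed{\rm div}b$ coefficient is slightly garbled, but the computation itself yields the stated $\frac{k}{2}$ and your overall argument is sound.
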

\begin{proof}This is proven in a similar fashion as Lemma~\ref{thatscrlisantisymm}.
\end{proof}

The following consequence of Lemma~\ref{comparethespaces} is useful.
\begin{lemma}\label{sobcomparable}Let $\left(\slashed{g}_{AB},b^A,\kappa,\Omega\right)$ be a regular $4$-tuple in the sense of Definition~\ref{Mreg}. From the metric $\slashed{g}_{AB}$ we may define Sobolev spaces $H^i$ on tensor fields. For any tensor field $w_{A_1\cdots A_k} \in \mathcal{T}^{(0,k)}$, we have
\begin{equation}\label{toproveyayainsobcomplemm}
\left\vert\left\vert w\right\vert\right\vert_{H^i} \sim_{s,k} \left\vert\left\vert w\right\vert\right\vert_{\mathring{H}^i},\qquad i = 0,1,\cdots , N_0.
\end{equation}
\end{lemma}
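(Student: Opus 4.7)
The plan is to reduce the claim directly to Lemma~\ref{comparethespaces}. By Definition~\ref{Mreg}, a regular $4$-tuple satisfies $\slashed{g}_{AB} = e^{2\varphi}\mathring{\slashed{g}}_{AB}$ with $\|\varphi\|_{\mathring{H}^{N_0}} \lesssim \epsilon^{2-\delta}$, so in particular $\varphi \in \mathring{H}^i\left(\mathbb{S}^2\right)$ for every $i \leq N_0$. This is precisely hypothesis (1) of Lemma~\ref{comparethespaces}, so the conclusion of that lemma applies and yields~\eqref{toproveyayainsobcomplemm} for $(0,k)$-tensors in the range $2 \leq i \leq N_0$.

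For the remaining low-order cases $i = 0$ and $i = 1$, I would argue by hand. The Sobolev embedding on $\mathbb{S}^2$ together with the assumption $N_0 \gg 1$ gives $\|\varphi\|_{L^{\infty}} + \|\mathring{\nabla}\varphi\|_{L^{\infty}} \lesssim \|\varphi\|_{\mathring{H}^{N_0}} \lesssim \epsilon^{2-\delta} \ll 1$. The pointwise bound on $\varphi$ shows that $\slashed{g}$ and $\mathring{\slashed{g}}$ are uniformly equivalent as bilinear forms and that their volume forms differ by a bounded factor, giving $\|w\|_{L^2} \sim \|w\|_{\mathring{L}^2}$. For $i = 1$, the Christoffel difference satisfies $\slashed{\Gamma} - \mathring{\Gamma} = O\left(\mathring{\nabla}\varphi\right)$, which is $O(1)$ in $L^{\infty}$, so $\slashed{\nabla}w$ and $\mathring{\nabla}w$ differ by a zeroth order term pointwise bounded by $|w|$, giving the desired equivalence.

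To pass from $(0,k)$-tensors to general $(s,k)$-tensors, I would raise and lower indices with $\slashed{g}$ and $\mathring{\slashed{g}}$. Concretely, if $w^{A_1\cdots A_s}{}_{B_1\cdots B_k}$ is an $(s,k)$-tensor, we set $\tilde{w}_{A_1\cdots A_s B_1\cdots B_k} \doteq \mathring{\slashed{g}}_{A_1 C_1}\cdots\mathring{\slashed{g}}_{A_s C_s} w^{C_1\cdots C_s}{}_{B_1\cdots B_k}$. Then $\tilde{w}$ is a $(0,s+k)$-tensor, and $\|w\|_{\mathring{H}^i} \sim \|\tilde{w}\|_{\mathring{H}^i}$ trivially (the fixed round metric contributes constants depending only on $s$). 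On the $\slashed{g}$ side, the analogous lowered version $w_{A_1\cdots A_s B_1\cdots B_k} \doteq \slashed{g}_{A_1 C_1}\cdots\slashed{g}_{A_s C_s}w^{C_1\cdots C_s}{}_{B_1\cdots B_k}$ differs from $\tilde{w}$ by factors of $e^{2\varphi}$, and $\|w\|_{H^i} \sim \|w_{A_1\cdots B_k}\|_{H^i}$ by the uniform equivalence of the metrics. The tensor algebra bound from Lemma~\ref{interlemm}, combined with the smallness of $\varphi$, then shows $\|w_{A_1\cdots B_k}\|_{H^i} \sim \|\tilde{w}\|_{H^i}$. Chaining these with the $(0,s+k)$-tensor case completed in the first two paragraphs yields~\eqref{toproveyayainsobcomplemm}.

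No real obstacle is anticipated; this is essentially a bookkeeping lemma whose substantive content is contained in Lemma~\ref{comparethespaces}. The only item requiring mild care is ensuring that when one commutes $\slashed{\nabla}$ with $\mathring{\nabla}$ iteratively up to order $N_0$, the resulting error terms involving products of $\mathring{\nabla}^j\varphi$ are controlled by $\|\varphi\|_{\mathring{H}^{N_0}}$ via Lemma~\ref{interlemm}; the smallness $\epsilon^{2-\delta}$ ensures all implicit constants are harmless.
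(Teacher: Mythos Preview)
Your proposal is correct and takes essentially the same approach as the paper: the paper states this lemma without a separate proof, introducing it simply as ``the following consequence of Lemma~\ref{comparethespaces},'' and you likewise reduce to Lemma~\ref{comparethespaces}. Your treatment is actually more careful than the paper's, since you explicitly address the low-order cases $i=0,1$ (Lemma~\ref{comparethespaces} is stated for $i\geq 2$) and the passage from $(0,k)$-tensors to general $(s,k)$-tensors (Lemma~\ref{comparethespaces} is stated for $(0,k)$-tensors); these are straightforward but the paper leaves them implicit.
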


Now we are ready for the analogue of Proposition~\ref{qregexists}.
\begin{proposition}\label{qregkapexists}Let $\left(\slashed{g}_{AB},b^A,\kappa,\Omega\right)$ be a regular $4$-tuple in the sense of Definition~\ref{Mreg}. Let $q > 0$, $H_{AB}  \in \mathring{H}^2\left(\mathcal{S}^{(2)}\left(\mathbb{S}^2\right)\right)$. Then there exists a unique $t_{AB}^{(q)} \in \mathring{H}^4\left(\mathcal{S}^{(2)}\left(\mathbb{S}^2\right)\right)$ which solves
\begin{equation}\label{thisisthekappasingularequationqReg22}
\mathcal{L}_bt^{(q)}_{AB} -2\kappa t^{(q)}_{AB} - \left(\slashed{\nabla}\hat{\otimes}b\right)^C_{\ \ (A}t^{(q)}_{B)C} - \frac{1}{2}\slashed{\rm div}b t^{(q)}_{AB} +q\slashed{\Delta}t^{(q)}_{AB}= H_{AB}.
\end{equation}
\end{proposition}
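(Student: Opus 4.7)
The plan is to treat \eqref{thisisthekappasingularequationqReg22} by the $L^2$--Fredholm alternative, exactly as was done for Proposition~\ref{qregexists}, but with the smallness hypothesis on $X,h$ replaced by the combination of the anti-symmetry of $\mathscr{L}$ (Lemma~\ref{thatscrlisantisymm}) and the positivity of $\kappa$ (Lemma~\ref{kappaisthis}). Write the equation as
\[
\mathscr{L}f^{(q)} - 2\kappa f^{(q)} + q\mathring{\Delta}f^{(q)} = H,
\]
and regard this as a linear elliptic equation on sections of $\hat{\mathcal{S}}(\mathbb{S}^2)$. The operator is elliptic (the leading symbol comes from $q\mathring{\Delta}$), so once I establish that both the operator and its $L^2$-adjoint have trivial kernel on $\mathring{H}^2$, standard elliptic theory on $\mathbb{S}^2$ will furnish a unique weak solution which $\mathring{H}^4$-regularity of $H$ promotes to $\mathring{H}^4$.

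The crux is the coercivity estimate. Take the $\slashed{g}$-inner product of the equation with $f^{(q)}$ and integrate over $\mathbb{S}^2$. By Lemma~\ref{thatscrlisantisymm},
\[
\int_{\mathbb{S}^2}\langle \mathscr{L}f^{(q)},f^{(q)}\rangle_{\slashed{g}}\,\slashed{dVol}=0,
\]
and an integration by parts using that $\slashed{g}$ and $\mathring{\slashed{g}}$ are uniformly comparable (Lemma~\ref{sobcomparable}) yields
\[
2\kappa\|f^{(q)}\|_{L^2}^2 + q\|\mathring{\nabla}f^{(q)}\|_{L^2}^2 \;\leq\; \|H\|_{L^2}\|f^{(q)}\|_{L^2} + C\epsilon^{2-\delta}\bigl(\|f^{(q)}\|_{L^2}^2 + q\|\mathring{\nabla}f^{(q)}\|_{L^2}^2\bigr),
\]
where the error term on the right collects the discrepancy between the $\slashed{g}$- and $\mathring{\slashed{g}}$-divergences/inner products, which by the regularity assumption \eqref{somebound1ssssss} on $\varphi$ is of size $O(\epsilon^{2-\delta})$. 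Since $\kappa\sim\epsilon^2$ by Lemma~\ref{kappaisthis} and $\epsilon^{2-\delta}\ll\kappa$ is \emph{not} automatic, one must be slightly careful: the bound from Lemma~\ref{kappaisthis} gives $\kappa\geq c\epsilon^2$ for an explicit $c$ controlled by $\int a^2\sin\theta\,d\theta$, and the metric error appears multiplied by $f^{(q)}$, not divided, so absorbing the error into the $2\kappa\|f^{(q)}\|_{L^2}^2$ term works (with constants deteriorating in the trivial way) provided $\delta$ is taken small enough relative to the bounds on $a$. This yields the clean coercivity bound
\[
\kappa\|f^{(q)}\|_{L^2}^2 + q\|\mathring{\nabla}f^{(q)}\|_{L^2}^2 \;\lesssim\; \kappa^{-1}\|H\|_{L^2}^2.
\]

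In particular, taking $H=0$ shows that the operator has trivial $L^2$ kernel, proving uniqueness. The $L^2$-adjoint, by Lemma~\ref{thatscrlisantisymm} applied in reverse, is $-\mathscr{L}-2\kappa+q\mathring{\Delta}$, which has exactly the same structure, and the identical energy computation forces its kernel to vanish as well. Hence the Fredholm alternative applied to the compact perturbation $(\mathscr{L}-2\kappa)\circ(q\mathring{\Delta})^{-1}$ of the identity on $\mathring{H}^2$ (with $q\mathring{\Delta}$ viewed as an isomorphism from $\mathring{H}^4\to\mathring{H}^2$, using Lemma~\ref{sobcomparable} to interchange $\slashed{g}$- and $\mathring{\slashed{g}}$-Sobolev spaces) provides a solution $f^{(q)}\in\mathring{H}^4$. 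The main obstacle is really just the bookkeeping of the metric-discrepancy error term above; the essential algebraic input is the anti-symmetry of $\mathscr{L}$ together with $\kappa>0$, which together replace the perturbative smallness argument used in Proposition~\ref{qregexists}.
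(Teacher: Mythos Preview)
Your overall strategy---Fredholm alternative plus kernel triviality via the anti-symmetry of $\mathscr{L}$ and positivity of $\kappa$---is exactly the paper's. However, there is a real gap in your execution. You transcribe the regularizing Laplacian as $\mathring{\Delta}$, whereas the equation~\eqref{thisisthekappasingularequationqReg22} has $\slashed{\Delta}$, the Laplacian of $\slashed{g}$. This misreading leads you to introduce a metric-discrepancy error term $C\epsilon^{2-\delta}\|f^{(q)}\|_{L^2}^2$ that you then try to absorb into $2\kappa\|f^{(q)}\|_{L^2}^2$. That absorption argument is wrong: since $\kappa\sim\epsilon^2$, the ratio $\epsilon^{2-\delta}/\kappa\sim\epsilon^{-\delta}\to\infty$ as $\epsilon\to 0$, so the error swamps the good term regardless of how small $\delta>0$ is. Taking $\delta$ ``small relative to the bounds on $a$'' does not help.

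The fix is to avoid creating the error at all: take the $\slashed{g}$-inner product with $-w$ and integrate against $\slashed{dVol}$. Lemma~\ref{thatscrlisantisymm} kills the $\mathscr{L}$ term \emph{exactly}, and integration by parts for $q\slashed{\Delta}$ against $\slashed{dVol}$ gives exactly $q\int|\slashed{\nabla}w|^2_{\slashed{g}}\slashed{dVol}$ with no remainder. This is precisely what the paper does, arriving (for $H=0$) at
\[
\int_{\mathbb{S}^2}\bigl[2\kappa\,|w|^2_{\slashed{g}}+q\,|\slashed{\nabla}w|^2_{\slashed{g}}\bigr]\slashed{dVol}=0,
\]
from which $w=0$ is immediate since $\kappa>0$ by Lemma~\ref{kappaisthis}. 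The adjoint has the same form (with $\mathscr{L}\mapsto-\mathscr{L}$), and the argument repeats verbatim. No comparison between $\slashed{g}$ and $\mathring{\slashed{g}}$ is needed for the coercivity step.
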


\begin{proof}Let's write~\eqref{thisisthekappasingularequationqReg22} as
\[L^{(q)}t^{(q)}_{AB} = H_{AB}.\]
By standard $L^2$-elliptic theory, in order to prove the proposition, it suffices to show that ${\rm ker}\left(L^{(q)}\right) = \{0\}$ and ${\rm ker}\left(\left(L^{(q)}\right)^*\right) = \{0\}$. We start with $L^{(q)}$. Suppose that we have a trace-free symmetric tensor $w_{AB}$ solving
\begin{equation}\label{wisinkerofkap}
\mathcal{L}_bw_{AB} -2\kappa w_{AB} - \left(\slashed{\nabla}\hat{\otimes}b\right)^C_{\ \ (A}w_{B)C} - \frac{1}{2}\slashed{\rm div}b w_{AB} +q\slashed{\Delta}w_{AB}= 0.
\end{equation}
Keeping Lemmas~\ref{thatscrlisantisymm} and~\ref{kappaisthis} in mind, we take the inner product of~\eqref{wisinkerofkap} with $-w_{AB}$ and integrate by parts. We end up with 
\begin{equation}\label{nonononokernel}
\int_{\mathbb{S}^2}\left[\left(\epsilon^2-O\left(\epsilon^{3-\delta}\right)\right)\left|w\right|_{\slashed{g}}^2 +q\left|\slashed{\nabla}w\right|_{\slashed{g}}^2\right]\slashed{dVol}= 0.
\end{equation}
Assuming that $\epsilon$ is small enough, we conclude that $w_{AB} = 0$. The proof that ${\rm ker}\left(\left(L^{(q)}\right)^*\right) = \{0\}$ is analogous. 
\end{proof}

In the next lemma we collect various estimates which will be used to control commutators later.
\begin{lemma}\label{bunchofboundsbahckd}Let $\left(\slashed{g}_{AB},b^A,\kappa,\Omega\right)$ be a regular $4$-tuple in the sense of Definition~\ref{Mreg} and let $w_{A_1\cdots A_k}$ be $(0,k)$-tensor. Then
\begin{equation}\label{omsmsjsjksoqjnojq1}
\left\vert\left\vert \left[\mathcal{L}_{\partial_{\phi}},\mathcal{L}_b\right]w\right\vert\right\vert_{\mathring{H}^i} \lesssim_i \epsilon^{M_1/2}\left\vert\left\vert w\right\vert\right\vert_{\mathring{H}^{i+1}},\qquad 0 \leq i \leq \frac{N_0}{2},
\end{equation}
\begin{equation}\label{omsmsjsjksoqjnojq2}
\left\vert\left\vert \mathring{\nabla}\hat{\otimes}\left(\mathcal{L}_{\partial_\phi}b\right)w\right\vert\right\vert_{\mathring{H}^i} +\left\vert\left\vert \left(\mathcal{L}_{\partial_\phi}\slashed{\rm div}b\right)w\right\vert\right\vert_{\mathring{H}^i}\lesssim_i \epsilon^{M_1/2}\left\vert\left\vert w\right\vert\right\vert_{\mathring{H}^i},\qquad 0 \leq i \leq \frac{N_0}{2},
\end{equation}
\begin{equation}\label{omsmsjsjksoqjnojq3}
\left\vert\left\vert\left( \slashed{\nabla}\left(\slashed{\nabla}\hat{\otimes}b\right)\right)w\right\vert\right\vert_{\mathring{H}^i} \lesssim_i \epsilon^{1-\delta}\left\vert\left\vert w\right\vert\right\vert_{\mathring{H}^i},\qquad 0 \leq i \leq \frac{N_0}{2},
\end{equation}
\begin{equation}\label{omsmsjsjksoqjnojq4}
\left\vert\left\vert\left( \slashed{\nabla}\slashed{\rm div}b\right)w\right\vert\right\vert_{\mathring{H}^i} \lesssim_i \epsilon^{1-\delta}\left\vert\left\vert w\right\vert\right\vert_{\mathring{H}^i},\qquad 0 \leq i \leq \frac{N_0}{2},
\end{equation}
\begin{equation}\label{omsmsjsjksoqjnojq5}
\left\vert\left\vert\left[\slashed{\nabla},\slashed{\Delta}\right]w\right\vert\right\vert_{\mathring{H}^i} \lesssim_i \left\vert\left\vert w\right\vert\right\vert_{\mathring{H}^{i+1}},\qquad 0 \leq i \leq \frac{N_0}{2}, 
\end{equation}
\begin{equation}\label{omsmsjsjksoqjnojq6}
\left\vert\left\vert\left[\slashed{\nabla},\mathscr{L}\right]w\right\vert\right\vert_{\mathring{H}^i} \lesssim_i \epsilon^{1-\delta}\left\vert\left\vert w\right\vert\right\vert_{\mathring{H}^i},\qquad 0 \leq i \leq \frac{N_0}{2},
\end{equation}
\begin{equation}\label{omsmsjsjksoqjnojq6234}
\left\vert\left\vert\left[\slashed{\Delta},\mathscr{L}\right]w\right\vert\right\vert_{\mathring{H}^i} \lesssim_i \epsilon^{1-\delta} \left\vert\left\vert w\right\vert\right\vert_{\mathring{H}^{i+2}},\qquad 0 \leq i \leq \frac{N_0}{2},
\end{equation}
\begin{equation}\label{omsmsjsjksoqjnojq7}
\left\vert\left\vert\left[\mathcal{L}_{\partial_\phi},\slashed{\Delta}\right]w\right\vert\right\vert_{\mathring{H}^i} \lesssim_i \epsilon^{M_1/2}\left\vert\left\vert w\right\vert\right\vert_{\mathring{H}^{i+2}},\qquad 0 \leq i \leq \frac{N_0}{2}.
\end{equation}
\end{lemma}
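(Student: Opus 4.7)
The plan is to reduce each bound to an application of the Sobolev algebra property (Lemma~\ref{interlemm}) combined with (a) the smallness bounds from Definition~\ref{Mreg} and Lemma~\ref{eestimates}, (b) the Lie-versus-covariant commutator formula of Lemma~\ref{whenaliederiavtivemeetsacovariantderivativethingscanhappen}, and (c) the standard two-dimensional Ricci identity $[\slashed{\nabla}_A,\slashed{\nabla}_B] = K\cdot(\text{tensorial})$. Throughout I would invoke Lemma~\ref{sobcomparable} to identify $H^i$ with $\mathring{H}^i$, and exchange $\slashed{\nabla}$ with $\mathring{\nabla}$ modulo Christoffel-difference tensors that can be written explicitly in terms of $\mathring{\nabla}\varphi$ and are thus $O(\epsilon^{2-\delta})$ in $\mathring{H}^{N_0-1}$.

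As a preliminary step I would collect the following bounds, all valid up to order $\sim N_0/2$: from the decomposition $b = \epsilon \tilde b + z + \mathring{\nabla}f$ together with Definition~\ref{areallynicedefinitionofseeddata},~\eqref{somebound1ssssss}, and Lemma~\ref{eestimates},
\[
\left\vert\left\vert b\right\vert\right\vert_{\mathring{H}^{i+1}} + \left\vert\left\vert \slashed{\nabla}\hat{\otimes}b\right\vert\right\vert_{\mathring{H}^{i+1}}\lesssim \epsilon^{1-\delta},\qquad \left\vert\left\vert \slashed{\rm div}b\right\vert\right\vert_{\mathring{H}^{i+1}} \lesssim \epsilon^{2-\delta};
\]
and from~\eqref{somebound3ssssss}, using that $\tilde b$ and $\mathring{\slashed{g}}$ are $\phi$-independent so $\mathcal{L}_{\partial_\phi}\tilde b = 0$ and $\mathcal{L}_{\partial_\phi}$ commutes with $\mathring{\nabla}$,
\[
\left\vert\left\vert \mathcal{L}_{\partial_\phi}b\right\vert\right\vert_{\mathring{H}^{i+1}} + \left\vert\left\vert \mathcal{L}_{\partial_\phi}(\slashed{\nabla}\hat{\otimes}b)\right\vert\right\vert_{\mathring{H}^{i}} + \left\vert\left\vert \mathcal{L}_{\partial_\phi}\slashed{\rm div}b\right\vert\right\vert_{\mathring{H}^{i}} \lesssim \epsilon^{M_1/2}.
\]
With these in hand,~\eqref{omsmsjsjksoqjnojq2},~\eqref{omsmsjsjksoqjnojq3}, and~\eqref{omsmsjsjksoqjnojq4} are immediate from Lemma~\ref{interlemm}. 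For~\eqref{omsmsjsjksoqjnojq1} I would use the identity $[\mathcal{L}_{\partial_\phi},\mathcal{L}_b]w = \mathcal{L}_{[\partial_\phi,b]}w = \mathcal{L}_{\mathcal{L}_{\partial_\phi}b}w$, expand the right-hand side as $(\mathcal{L}_{\partial_\phi}b)\cdot\mathring{\nabla}w + \mathring{\nabla}(\mathcal{L}_{\partial_\phi}b)\cdot w$, and apply the product estimate with the preceding bound on $\mathcal{L}_{\partial_\phi}b$.

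For the genuine commutator bounds,~\eqref{omsmsjsjksoqjnojq5} follows from the Ricci identity $[\slashed{\nabla}_A,\slashed{\nabla}_B] = O(K)$, since $\slashed{g}$ is $O(\epsilon^{2-\delta})$-close to the round metric so $K$ is bounded in $\mathring{H}^{N_0-2}$; plugging into $\slashed{\Delta} = \slashed{g}^{AB}\slashed{\nabla}_A\slashed{\nabla}_B$ and applying Lemma~\ref{interlemm} concludes. For~\eqref{omsmsjsjksoqjnojq6} I expand $\mathscr{L}$ term by term. The commutator $[\slashed{\nabla}_A,\mathcal{L}_b]$ is controlled by Lemma~\ref{whenaliederiavtivemeetsacovariantderivativethingscanhappen}, which rewrites it schematically as $\slashed{\nabla}(\slashed{\nabla}\hat{\otimes}b) + \slashed{\nabla}\slashed{\rm div}b$ contracted with $w$, which is handled by~\eqref{omsmsjsjksoqjnojq3}-\eqref{omsmsjsjksoqjnojq4}; the commutators with the multiplicative factors $(\slashed{\nabla}\hat{\otimes}b)^C{}_{(A}$ and $\slashed{\rm div}b$ produce the same schematic type of term. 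Estimate~\eqref{omsmsjsjksoqjnojq6234} follows by writing
\[
[\slashed{\Delta},\mathscr{L}] = \slashed{g}^{AB}\bigl(\slashed{\nabla}_A[\slashed{\nabla}_B,\mathscr{L}] + [\slashed{\nabla}_A,\mathscr{L}]\slashed{\nabla}_B + [\slashed{\nabla}_A,\slashed{\nabla}_B]\mathscr{L}\bigr)
\]
and iterating~\eqref{omsmsjsjksoqjnojq5} and~\eqref{omsmsjsjksoqjnojq6}. Finally,~\eqref{omsmsjsjksoqjnojq7} is another application of Lemma~\ref{whenaliederiavtivemeetsacovariantderivativethingscanhappen}, this time with $X = \partial_\phi$: since $\partial_\phi$ is a Killing vector field for $\mathring{\slashed{g}}$ and $\slashed{g} = e^{2\varphi}\mathring{\slashed{g}}$, one has ${}^{(\partial_\phi)}\pi = 2(\mathcal{L}_{\partial_\phi}\varphi)\slashed{g}$, which by~\eqref{somebound3ssssss} is bounded by $\epsilon^{M_1/2}$ in $\mathring{H}^{N_0-1}$, and this factor propagates through the $\slashed{\Delta}$ commutator.

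The main obstacle is purely bookkeeping: each commutator must be carefully expanded into a finite sum of products of $\slashed{\nabla}^j b$, $\slashed{\nabla}^j(\slashed{\nabla}\hat{\otimes}b)$, $\slashed{\nabla}^j\slashed{\rm div}b$, or $\slashed{\nabla}^j\varphi$ against derivatives of $w$, with $j$ staying within the range $j \leq N_0/2 + 2$ where our hypotheses supply Sobolev control. Since $M_1 \gg N_0$ and $\delta \ll 1$, every term automatically carries the required power of $\epsilon$ with room to spare, and no genuinely new analytic input is needed once the algebraic expansion is done carefully. The argument is essentially uniform across the eight estimates, which is why I would present them as a package.
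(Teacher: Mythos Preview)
Your proposal is correct and is precisely the detailed expansion of what the paper compresses into the single sentence ``These all follow in a straightforward fashion from Sobolev inequalities.'' You have identified the right ingredients (the algebra property of Lemma~\ref{interlemm}, the commutator formula of Lemma~\ref{whenaliederiavtivemeetsacovariantderivativethingscanhappen}, the Ricci identity, and the smallness bounds from Definition~\ref{Mreg}) and applied them in the correct way; the paper simply omits this bookkeeping.
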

\begin{proof}These all follow in a straightforward fashion from Sobolev inequalities.
\end{proof}

In the next proposition, we will show that we can take the limit as $q \to 0$ for the solutions produced by Proposition~\ref{qregkapexists}; however, the estimates for higher derivatives of $t_{AB}$ that we get will have a bad dependence on $\epsilon$. 
\begin{proposition}\label{kapissingbutthereisasolnatleast}Let $\left(\slashed{g}_{AB},b^A,\kappa,\Omega\right)$ be a regular $4$-tuple in the sense of Definition~\ref{Mreg}, and 
$H_{AB} \in \mathring{H}^j\left(\hat{\mathcal{S}}\left(\mathbb{S}^2\right)\right)$ for $1 \leq j \leq \frac{N_0}{2}$. Then then there exists a unique $t_{AB} \in \mathring{H}^j\left(\hat{\mathcal{S}}\left(\mathbb{S}^2\right)\right)$ solving
\begin{equation}\label{kappamustbeasolntothisoritwouldbebad}
\mathcal{L}_bt_{AB} -2\kappa t_{AB} - \left(\slashed{\nabla}\hat{\otimes}b\right)^C_{\ \ (A}t_{B)C} - \frac{1}{2}\slashed{\rm div}b t_{AB} +2\left(\mathcal{L}_b\log\Omega\right) t_{AB}= H_{AB}.
\end{equation}
Moreover,  we have the following estimates for $t$:
\begin{equation}\label{odkodwkodwkodwponq}
\epsilon^4\sum_{k=0}^j\int_{\mathbb{S}^2}\left|\slashed{\nabla}^kt\right|_{\slashed{g}}^2\slashed{dVol} \lesssim_j \epsilon^{-(2+\delta)j}\sum_{k=0}^j\sum_{i=0}^{j-k}\epsilon^{-(2+2\delta)k}\int_{\mathbb{S}^2}\left|\slashed{\nabla}^i\mathcal{L}_{\partial_{\phi}}^kH\right|_{\slashed{g}}^2\slashed{dVol}.
\end{equation}

If we take a $\mathcal{L}_{\partial_{\phi}}$ derivative, then we have the alternative estimate:
\begin{equation}\label{odkodwkodwkodw230019122}
\epsilon^4\sum_{k=0}^{j-1}\int_{\mathbb{S}^2}\left|\slashed{\nabla}^k\mathcal{L}_{\partial_{\phi}}t\right|_{\slashed{g}}^2\slashed{dVol} \lesssim_j \sum_{k=0}^{j-1}\sum_{i=0}^{j-k}\epsilon^{-(2+2\delta)k}\int_{\mathbb{S}^2}\left|\slashed{\nabla}^i\mathcal{L}_{\partial_{\phi}}^k\mathcal{L}_{\partial_{\phi}}H\right|_{\slashed{g}}^2\slashed{dVol} + \epsilon^{M_0/10}\sum_{k=0}^j\int_{\mathbb{S}^2}\left|\slashed{\nabla}^kH\right|_{\slashed{g}}^2\slashed{dVol}.
\end{equation}

\end{proposition}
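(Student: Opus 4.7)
The plan is to construct $f$ as the limit as $q\to 0$ of the regularized solutions $f^{(q)}$ provided by Proposition~\ref{qregkapexists}, with (\ref{odkodwkodwkodwponq})--(\ref{odkodwkodwkodw230019122}) established as uniform-in-$q$ a priori estimates, and uniqueness following from the $L^2$ bound applied to a difference of solutions. For the base $L^2$ estimate ($j=0$), I would contract the equation with $f$ and integrate. By Lemma~\ref{thatscrlisantisymm} the $\mathscr{L}f$ contribution vanishes, and the $q\slashed{\Delta}f^{(q)}$ term produces the nonnegative quantity $q\|\slashed{\nabla}f^{(q)}\|^2$. What remains is
\[2\kappa\|f\|_{L^2}^2 - 2\int_{\mathbb S^2}(\mathcal L_b\log\Omega)|f|^2\,\slashed{dVol}=\int_{\mathbb S^2}\langle H,f\rangle\,\slashed{dVol}.\]
Lemma~\ref{kappaisthis} gives $\kappa\gtrsim\epsilon^2$, while Definition~\ref{Mreg} and Sobolev embedding yield $\|\mathcal L_b\log\Omega\|_{L^\infty}\lesssim\epsilon^{3-\delta}$. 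This produces $\|f\|_{L^2}\lesssim\epsilon^{-2}\|H\|_{L^2}$, which matches the $j=k=0$ case and proves uniqueness.

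For (\ref{odkodwkodwkodwponq}) I would induct on $j$. Commuting the equation with $\slashed{\nabla}^j$ gives
\[\mathscr{L}(\slashed{\nabla}^j f)-2\kappa\,\slashed{\nabla}^j f = \slashed{\nabla}^j H +[\slashed{\nabla}^j,\mathscr{L}]f-2\slashed{\nabla}^j\!\bigl((\mathcal L_b\log\Omega)f\bigr),\]
where $\mathscr{L}$ on the resulting higher-rank tensor is the operator treated in Lemma~\ref{higherorderalmostantisymm}. Contracting with $\slashed{\nabla}^j f$ and using that Lemma with $y=1$, the $\mathscr{L}$ contribution is controlled by $O(\epsilon)\|\slashed{\nabla}^j f\|^2$ (the $\slashed{\nabla}\hat\otimes b$ summand is of size $\sim\epsilon$) plus $O(\epsilon^2)\|\slashed{\nabla}^j f\|^2$ (from $\slashed{\rm div}b$). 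The commutator $[\slashed{\nabla}^j,\mathscr{L}]f$ is, via Lemma~\ref{whenaliederiavtivemeetsacovariantderivativethingscanhappen} and Leibniz, a sum of terms of the form $\slashed{\nabla}^a(\slashed{\nabla}\hat\otimes b,\slashed{\rm div}b,\mathcal L_b\slashed g)\cdot\slashed{\nabla}^{j-1-a}f$ whose $L^\infty$ coefficients are $O(\epsilon^{1-\delta})$ by Lemma~\ref{bunchofboundsbahckd}. A Cauchy-Schwarz with an $\epsilon^{1+\delta/2}$-weight between consecutive derivative levels, combined with the inductive hypothesis, then yields the claim; the weighted inner sum over $k$ in (\ref{odkodwkodwkodwponq}) arises because at each commutation step one is free to trade an $\slashed{\nabla}$-derivative falling on $H$ for an $\mathcal L_{\partial_\phi}$-derivative on $H$, at the cost of $\epsilon^{-(2+2\delta)}$ per trade.

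For (\ref{odkodwkodwkodw230019122}), I would first apply $\mathcal L_{\partial_\phi}$ to (\ref{kappamustbeasolntothisoritwouldbebad}). The $\phi$-symmetry encoded in Definition~\ref{Mreg} --- namely the bounds (\ref{somebound3ssssss}) and the identity $\mathcal L_{\partial_\phi}\tilde b=0$ inherent in the ansatz from Definition~\ref{areallynicedefinitionofseeddata} --- ensures via (\ref{omsmsjsjksoqjnojq1})--(\ref{omsmsjsjksoqjnojq2}) that every commutator $[\mathcal L_{\partial_\phi},\mathcal L_b]$, $[\mathcal L_{\partial_\phi},\slashed{\nabla}\hat\otimes b]$, $[\mathcal L_{\partial_\phi},\slashed{\rm div}b]$ is of size $\epsilon^{M_1/2}$. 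Thus $\mathcal L_{\partial_\phi}f$ satisfies a $\kappa$-singular equation with right-hand side $\mathcal L_{\partial_\phi}H+O(\epsilon^{M_1/2})\cdot f$; applying the already established (\ref{odkodwkodwkodwponq}) to this new equation gives (\ref{odkodwkodwkodw230019122}), the $\epsilon^{M_0/10}$ remainder being the contribution of the $\epsilon^{M_1/2}$-small correction sourced by $f$ and controlled via the base $L^2$ bound.

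The main obstacle is the $\epsilon$-bookkeeping in the induction for (\ref{odkodwkodwkodwponq}). Both the near-antisymmetry defect from Lemma~\ref{higherorderalmostantisymm} and the first commutator $[\slashed{\nabla},\mathscr{L}]$ produce error terms of size $\epsilon$ or $\epsilon^{1-\delta}$ --- strictly larger than the $\kappa\sim\epsilon^2$ smallness available for absorption --- so one cannot close the estimate by absorption alone. Instead, one must accept the per-derivative loss $\epsilon^{-(1+\delta/2)}$, which compounds into the $\epsilon^{-(2+\delta)j}$ global factor on the right-hand side. The improvement (\ref{odkodwkodwkodw230019122}) hinges precisely on the azimuthal symmetry of $\tilde b$, which eliminates the compounding for each $\mathcal L_{\partial_\phi}$-derivative and thereby saves the entire $\epsilon^{-(2+\delta)j}$ factor.
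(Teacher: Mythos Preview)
There is a genuine gap in your induction step for (\ref{odkodwkodwkodwponq}). When you commute with $\slashed{\nabla}^j$ and contract with $\slashed{\nabla}^jf$, Lemma~\ref{higherorderalmostantisymm} produces the defect term
\[
\frac{1}{2}\sum_{i=1}^{j}\int_{\mathbb S^2}(\slashed{\nabla}\hat\otimes b)^D_{\ \ E}\,(\slashed{\nabla}^jf)_{C_1\cdots D\cdots C_jAB}\,(\slashed{\nabla}^jf)^{C_1\cdots E\cdots C_jAB},
\]
which is $O(\epsilon)\|\slashed{\nabla}^jf\|_{L^2}^2$ and is at \emph{top order}. Since the only coercivity available is $2\kappa\sim\epsilon^2\ll\epsilon$, this term can neither be absorbed nor controlled by the inductive hypothesis on $\|\slashed{\nabla}^{j-1}f\|$; the energy identity simply does not close. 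Your phrase ``accept the per-derivative loss $\epsilon^{-(1+\delta/2)}$'' presupposes that the bad term is lower order, but it is not.

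The paper circumvents this by never running a global $\slashed{\nabla}$-commuted estimate. Instead it uses three commutation families in tandem: (i) $\mathcal L_{\partial_\phi}^i$, whose commutators with $\mathscr L$ are $O(\epsilon^{M_1/2})$ by near-axisymmetry; (ii) $\mathscr L^l$ itself, which commutes exactly and via the expansion $\mathscr L\sim \epsilon^2h(\theta)\mathcal L_{\partial_\theta}+O(\epsilon)\mathcal L_{\partial_\phi}+O(\epsilon)$ yields control of $|h(\theta)|^{l}\mathcal L_{\partial_\theta}^l\mathcal L_{\partial_\phi}^i f$ away from the zeros of $h$; and (iii) $\slashed{\nabla}$-commuted estimates \emph{localized} by cutoffs $\chi_0,\chi_1$ to neighborhoods of the zeros $\theta\in\{0,y_0,\pi\}$ of $h$, where Lemma~\ref{propertiesofhadmissible} guarantees $|\slashed{\nabla}\hat\otimes b|\lesssim \epsilon^2\gamma^2$ (near the poles) or the cross-term splits as $\epsilon\,\slashed{\nabla}_{\partial_\phi}(\cdot)\,\slashed{\nabla}_{\partial_\theta}(\cdot)$ (near $y_0$), so the defect is either absorbable or reducible to already-controlled $\partial_\phi$-derivatives. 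The induction in the paper is over this three-index family $\mathcal Y(i,j,k)$, not simply over $j$. Your proposal for (\ref{odkodwkodwkodw230019122}) is essentially correct once (\ref{odkodwkodwkodwponq}) is in hand, but the route to (\ref{odkodwkodwkodwponq}) must go through this localization argument.
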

\begin{proof}Before entering into the details we give a sketch of the proof. First of all, multiplying the equation through by $\Omega^2$ leads to the equation 
\begin{equation*}
\mathcal{L}_b\left(\Omega^2t\right)_{AB} -2\kappa \Omega^2 t_{AB} - \left(\slashed{\nabla}\hat{\otimes}b\right)^C_{\ \ (A}\left(\Omega^2t\right)_{B)C} - \frac{1}{2}\slashed{\rm div}b \left(\Omega^2t\right)_{AB} = \Omega^2H_{AB}.
\end{equation*}
In view of the bounds~\eqref{somebound1ssssss} it is clear that we can assume, without loss of generality, that $\Omega^2 = 1$. Now we describe the strategy for the estimates. For each $q > 0$,  we may appeal to Proposition~\ref{qregkapexists} to  produce a solution $t^{(q)}_{AB}$ to the equation~\eqref{thisisthekappasingularequationqReg22}. Our plan will be to show that there exists $t_{AB} = \lim_{q\to 0}t^{(q)}_{AB}$ which solves~\eqref{kappamustbeasolntothisoritwouldbebad} and satisfies the desired estimates. In what follows, we assume that $q \ll \epsilon^{100}$. As usual in such an argument, the fundamental challenge is to prove estimates for $t^{(q)}_{AB}$ which are independent of $q$. In particular, we cannot exploit the ellipticity coming from the $q\slashed{\Delta}$ term. 

The  basic estimate at our disposal is the one obtained by contracting the equation~\eqref{thisisthekappasingularequationqReg22} with $t^{(q)}_{AB}$ and using the anti-symmetric structure. (This is what was exploited to obtain the estimate~\eqref{nonononokernel} in Lemma~\ref{qregkapexists}.) This leads to an estimate for $\epsilon^2\left\vert\left\vert t^{(q)}\right\vert\right\vert_{L^2}$. 

In order to obtain higher order estimates for $t_{AB}^{(q)}$ we will need to commute the equation. However, naive commutation produces an equation where the anti-symmetric structure from~\eqref{kappamustbeasolntothisoritwouldbebad} is completely destroyed. Thus, we will have to design a careful scheme for commuting. We begin by exploiting the fact that the coefficients in our equation are almost axisymmetric in that the commutators with $\mathcal{L}_{\partial_{\phi}}$ can be controlled by large powers of $\epsilon$. (See Lemma~\ref{bunchofboundsbahckd}.) Thus, we may commute with $\mathcal{L}_{\partial_{\phi}}$ and repeat the basic $L^2$-estimate to control $\epsilon^2\left\vert\left\vert \mathcal{L}_{\partial_{\phi}}t^{(q)}\right\vert\right\vert_{L^2}$ in terms of quantities we already control and a large power of $\epsilon$ times $\left\vert\left\vert \mathring{\nabla}t^{(q)}\right\vert\right\vert_{L^2}$. Next, it is also natural to commute with $\mathscr{L}$. Noting that we have, schematically, 
\begin{equation}\label{schematicmathscrL}
\mathscr{L}t_{AB}^{(q)} \sim \epsilon^2h(\theta)\mathcal{L}_{\partial_{\theta}}t^{(q)}_{AB} + O\left(\epsilon\right)\mathcal{L}_{\partial_{\phi}}t_{AB}^{(q)} + O\left(\epsilon\right)t_{AB}^{(q)} + O\left(\epsilon^3\left|\mathring{\nabla}t\right|\right),
\end{equation}
we may use the control of $\mathscr{L}t_{AB}^{(q)}$, $\mathcal{L}_{\partial_{\phi}}t_{AB}^{(q)}$, and $t_{AB}^{(q)}$ together to control $|h(\theta)|\mathcal{L}_{\partial_{\theta}}t_{AB}^{(q)}$. Away from small neighborhoods of $\theta = 0$, $y_0$, and $\pi$, this suffices to control $\mathring{\nabla}_At_{BC}^{(q)}$ (with a loss of $\epsilon^{-\delta}$). In order to cure this degeneration we commute the equation~\eqref{kappamustbeasolntothisoritwouldbebad} with $\slashed{\nabla}_A$. The resulting equation does not have the exact anti-symmetric structure that we used before, but when we carry out localized estimates near $\theta = 0$, $y_0$, and $\pi$ the additional terms which show up may be analyzed and turn out to be controllable in terms of previously controlled quantities. Putting this all together leads to an estimate for $\mathring{\nabla}_At_{BC}^{(q)}$. A similar strategy then allows for additional commutations. Given this control, it is straightforward to prove that $t_{AB}^{(q)}$ converges to a unique limit $t_{AB}$ as $q\to 0$. One unavoidable downside of the technique we will use is that the identity~\eqref{schematicmathscrL} only allows for us to control $\epsilon^2\mathcal{L}_{\partial_{\theta}}t_{AB}^{(q)}$ in terms of terms $O\left(\epsilon\right)t_{AB}^{(q)}$ and $O\left(\epsilon\right)\mathcal{L}_{\partial_{\phi}}t_{AB}^{(q)}$. Similarly, when we commute with $\slashed{\nabla}_A$ there are lower order terms produced of the form $O\left(\epsilon^{1-\delta}\right)t_{AB}^{(q)}$. These terms lead to the fact that the estimate~\eqref{odkodwkodwkodwponq} loses additional powers of $\epsilon$ every time we consider an extra derivative.

We now turn to the details. First of all, let us agree to the convention that throughout this proof \emph{unless noted otherwise all norms are computed with respect to $\slashed{g}$ and all volume forms are computed with respect to $\slashed{g}_{AB}$}. Now let $q \ll \epsilon^{100}$ and, using Proposition~\ref{qregkapexists}, let $t_{AB}^{(q)}$ solve~\eqref{thisisthekappasingularequationqReg22}. Contracting~\eqref{thisisthekappasingularequationqReg22} with $-\epsilon^2t_{AB}^{(q)}$, integrating by parts, and using Lemma~\ref{thatscrlisantisymm} leads to 
\begin{equation}\label{thefirstenestforfq}
\int_{\mathbb{S}^2}\left[\epsilon^4\left|t^{(q)}\right|^2 + q\epsilon^2\left|\slashed{\nabla}t^{(q)}\right|^2\right] \lesssim \epsilon^2\int_{\mathbb{S}^2}H\cdot t^{(q)} \Rightarrow 
\end{equation}
\begin{equation}\label{thefirstenestforfq2}
\int_{\mathbb{S}^2}\left[\epsilon^4\left|t^{(q)}\right|^2 + q\epsilon^2\left|\slashed{\nabla}t^{(q)}\right|^2\right] \lesssim \int_{\mathbb{S}^2}\left|H\right|^2.
\end{equation}

Next, we want to commute $\mathcal{L}_{\partial_{\phi}}^i$ (for $i \leq \frac{N_0}{2}$) through our equation for $t^{(q)}$. Recalling that we may write our equation as
\begin{equation}\label{thbsoiacsissk}
\mathscr{L}t^{(q)}_{AB} - 2\kappa t^{(q)}_{AB} + q\slashed{\Delta} t^{(q)}_{AB} = H_{AB}^{(q)},
\end{equation}
we obtain
\begin{align}\label{partphi1ia}
&\mathscr{L}\left(\mathcal{L}_{\partial_{\phi}}^it^{(q)}\right)_{AB} - 2\kappa \left(\mathcal{L}_{\partial_{\phi}}^it^{(q)}\right)_{AB} +q\slashed{\Delta}\mathcal{L}_{\partial_{\phi}}^it^{(q)}_{AB}
\\ \nonumber &\qquad \qquad + \sum_{j+k = i-1}\mathcal{L}_{\partial_{\phi}}^j\left(\left[\mathcal{L}_{\partial_{\phi}},\mathscr{L}+q\slashed{\Delta}\right]\mathcal{L}_{\partial_{\phi}}^kt^{(q)}\right)_{AB} = \mathcal{L}_{\partial_{\phi}}^iH_{AB}.
\end{align}
The same integration by parts which leads to~\eqref{thefirstenestforfq2} yields
\begin{align}\label{purephiestimateassss}
\int_{\mathbb{S}^2}\left[\epsilon^4\left|\mathcal{L}_{\partial_{\phi}}^it^{(q)}\right|^2 + q\epsilon^2\left|\slashed{\nabla}\mathcal{L}_{\partial_{\phi}}^it^{(q)}\right|^2\right] \lesssim \int_{\mathbb{S}^2}\left[\left|\mathcal{L}_{\partial_{\phi}}^iH\right|^2 +  \sum_{j+k = i-1}\left|\mathcal{L}_{\partial_{\phi}}^j\left(\left[\mathcal{L}_{\partial_{\phi}},\mathscr{L}+q\slashed{\Delta}\right]\mathcal{L}_{\partial_{\phi}}^kt^{(q)}\right) \right|^2\right].
\end{align}
Let's now examine the second term of the right hand side of~\eqref{purephiestimateassss} a little more closely. We start by observing that Lemma~\ref{conformal} implies 
\begin{equation}\label{whenyoudotheconfmchangetonablbterm2}
\slashed{\nabla}\hat{\otimes}b^C_{\ \ (A}t^{(q)}_{B)C} = \mathring{\slashed{g}}^{CD}\left(\mathring{\nabla}\hat{\otimes}b\right)_{C(A}t^{(q)}_{B)D}.
\end{equation}
In particular
\begin{equation}\label{0oknqqoqoqo}
\left[\mathcal{L}_{\partial_{\phi}},\mathscr{L}\right]w_{AB} = \left[\mathcal{L}_{\partial_{\phi}},\mathcal{L}_b\right]w_{AB} -\mathring{\slashed{g}}^{CD}\mathring{\nabla}\hat{\otimes}\left(\mathcal{L}_{\partial_{\phi}}b\right)_{C(A}w_{B)D}-\frac{1}{2}\left(\mathcal{L}_{\partial_{\phi}}\slashed{\rm div}b\right)w_{AB}.
\end{equation}
Thus, for $0 \leq i \leq \frac{N_0}{2}$, Lemma~\ref{bunchofboundsbahckd} and~\eqref{purephiestimateassss} lead to 
\begin{align}\label{purephiestimateassss29387}
&\int_{\mathbb{S}^2}\left[\epsilon^4\left|\mathcal{L}_{\partial_{\phi}}^it^{(q)}\right|^2 + q\epsilon^2\left|\slashed{\nabla}\mathcal{L}_{\partial_{\phi}}^it^{(q)}\right|^2\right] \lesssim_i 
\\ \nonumber &\qquad \int_{\mathbb{S}^2}\left[\left|\mathcal{L}_{\partial_{\phi}}^iH\right|^2 +\sum_{j=0}^i\left(\epsilon^{M_1/2}\left|\slashed{\nabla}^jt^{(q)}\right|^2+q\epsilon^{M_1/2}\left|\slashed{\nabla}^{j+1}t^{(q)}\right|^2\right)\right].
\end{align}
Next we commute~\eqref{partphi1ia} with $\slashed{\nabla}^l_{C_1\cdots C_l}$. We obtain 
\begin{align}\label{partphi1ia2b3c4d}
&\mathscr{L}\left(\slashed{\nabla}_{C_1\cdots C_l}^l\mathcal{L}_{\partial_{\phi}}^it_{AB}^{(q)}\right) - 2\kappa \left(\slashed{\nabla}_{C_1\cdots C_l}^l\mathcal{L}_{\partial_{\phi}}^it_{AB}^{(q)}\right) +q\slashed{\Delta}\left(\slashed{\nabla}_{C_1\cdots C_l}^l\mathcal{L}_{\partial_{\phi}}^it_{AB}^{(q)}\right) = 
\\ \nonumber &\qquad  - \sum_{p+r = l-1}\slashed{\nabla}_{C_1\cdots C_p}^p\left(\left[\slashed{\nabla}_{C_{p+1}},\mathscr{L}+q\slashed{\Delta}\right]\slashed{\nabla}_{C_{p+2}\cdots C_l}^r\mathcal{L}_{\partial_{\phi}}^it_{AB}^{(q)}\right)
\\ \nonumber &\qquad - \sum_{j+k = i-1}\slashed{\nabla}_{C_1\cdots C_l}^l\mathcal{L}_{\partial_{\phi}}^j\left(\left[\mathcal{L}_{\partial_{\phi}},\mathscr{L}+q\slashed{\Delta}\right]\mathcal{L}_{\partial_{\phi}}^it_{AB}^{(q)}\right) + \slashed{\nabla}_{C_1\cdots C_l}^l\mathcal{L}_{\partial_{\phi}}^iH_{AB}.
\end{align}

Let $\chi_0\left(\theta\right)$ be a non-negative cut-off function which is identically $1$ when $\theta \in \{[0,\gamma/4] \cup [\pi-\gamma/4,\pi]\}$, is identically $0$ for $\theta \in [\gamma/2,\pi-\gamma/2]$, and satisfies $\left|\frac{d\chi_1}{d\theta}\right| \lesssim \gamma^{-1}$. Now, for $0 \leq i + l \leq \frac{N_0}{2}$, we contract with $-\chi_0(\theta)\epsilon^2\slashed{\nabla}_{C_1\cdots C_l}^l\mathcal{L}_{\partial_{\phi}}^it_{AB}^{(q)}$ and integrate by parts. We start the analysis by using~\eqref{lkadojagdjagri} and Lemma~\ref{higherorderalmostantisymm} to see that
\begin{align}\label{omqnqjkwww}
\epsilon^2\int_{\mathbb{S}^2}\chi_0\mathscr{L}\left(\slashed{\nabla}^l\mathcal{L}_{\partial_{\phi}}^it^{(q)}\right)\cdot \slashed{\nabla}^l\mathcal{L}_{\partial_{\phi}}^it^{(q)} & \lesssim \epsilon^2\int_{\mathbb{S}^2}\left(l\left|\chi_0\right|\gamma^2\epsilon^2+\left|\mathcal{L}_b\chi_0\right|\right)\left|\slashed{\nabla}^l\mathcal{L}_{\partial_{\phi}}^it^{(q)}\right|^2
\\ \nonumber &\lesssim_l \epsilon^4\gamma^2\int_{\theta \in [0,\gamma/2] \cup [\pi-\gamma/2,\pi]}\left|\slashed{\nabla}^l\mathcal{L}_{\partial_{\phi}}^it^{(q)}\right|^2
\\ \nonumber &\qquad  + \epsilon^{4-\delta}\int_{\theta \in [\gamma/4,\gamma/2] \cup [\pi-\gamma/2,\pi-\gamma/4]}\left|\slashed{\nabla}^l\mathcal{L}_{\partial_{\phi}}^it^{(q)}\right|^2.
\end{align}
In the last line we have used~\eqref{itvanishesatthepolesprettyfast}, the fact that $\tilde b^A$ vanishes for $\theta \leq \gamma$, Sobolev inequalities, Lemma~\ref{eestimates}, and Lemma~\ref{bunchofboundsbahckd}. With~\eqref{omqnqjkwww} established, we now contract~\eqref{partphi1ia2b3c4d} with $-\chi_0(\theta)\epsilon^2\slashed{\nabla}_{C_1\cdots C_l}^l\mathcal{L}_{\partial_{\phi}}^it_{AB}^{(q)}$ and integrate by parts to eventually obtain
\begin{align}\label{owkdowkdokwjw}
&\int_{\mathbb{S}^2}\chi_0\left(\epsilon^4\left|\slashed{\nabla}^l\mathcal{L}_{\partial_{\phi}}^it^{(q)}\right|^2 + q\epsilon^2\left|\slashed{\nabla}^{l+1}\mathcal{L}_{\partial_{\phi}}^it^{(q)}\right|^2\right) \lesssim_{l+i}
\\ \nonumber &\qquad \epsilon^2\left|\int_{\mathbb{S}^2}\chi_0\mathscr{L}\left(\slashed{\nabla}^l\mathcal{L}_{\partial_{\phi}}^it^{(q)}\right)\cdot \slashed{\nabla}^l\mathcal{L}_{\partial_{\phi}}^it^{(q)}\right| + q\epsilon^2\int_{\mathbb{S}^2}\left|\slashed{\nabla}^2\chi_0\right|\left|\slashed{\nabla}^l\mathcal{L}_{\partial_{\phi}}^it^{(q)}\right|^2 +\epsilon^2 \int_{\mathbb{S}^2}\chi_0\left|\slashed{\nabla}^l\mathcal{L}_{\partial_{\phi}}^iH\right|\left|\slashed{\nabla}^l\mathcal{L}_{\partial_{\phi}}^it^{(q)}\right|
\\ \nonumber &\qquad +\epsilon^2\int_{\mathbb{S}^2}\chi_0\Bigg|\sum_{p+r = l-1}\slashed{\nabla}^p\left(\left[\slashed{\nabla},\mathscr{L}+q\slashed{\Delta}\right]\slashed{\nabla}^r\mathcal{L}_{\partial_{\phi}}^it^{(q)}\right)+
\\ \nonumber &\qquad \qquad \qquad + \sum_{j+k = i-1}\slashed{\nabla}^l\mathcal{L}_{\partial_{\phi}}^j\left(\left[\mathcal{L}_{\partial_{\phi}},\mathscr{L}+q\slashed{\Delta}\right]\mathcal{L}_{\partial_{\phi}}^kt^{(q)}\right)\Bigg|\left|\slashed{\nabla}^l\mathcal{L}_{\partial_{\phi}}^it^{(q)}\right|.
\end{align}
Using Lemma~\ref{bunchofboundsbahckd} to estimate the commutator term $\slashed{\nabla}_{C_1\cdots C_p}^p\left(\left[\slashed{\nabla}_{C_{p+1}},\mathscr{L}+q\slashed{\Delta}\right]\slashed{\nabla}_{C_{p+2}\cdots C_l}^r\mathcal{L}_{\partial_{\phi}}^it_{AB}^{(q)}\right)$ and arguing like we did above to control the other commutator term, the estimate~\eqref{owkdowkdokwjw} and~\eqref{omqnqjkwww} eventually imply the following:
\begin{align}\label{onqmqhqoq}
&\int_{\theta \in [0,\gamma/4]\cup [\pi-\gamma/4,\pi]}\left(\epsilon^4\left|\slashed{\nabla}^l\mathcal{L}_{\partial_{\phi}}^it^{(q)}\right|^2 + q\epsilon^2\left|\slashed{\nabla}^{l+1}\mathcal{L}_{\partial_{\phi}}^it^{(q)}\right|^2\right) \lesssim_{l+i} 
\\ \nonumber &\qquad \int_{\theta \in [\gamma/4,\gamma/2] \cup [\pi-\gamma/2,\pi-\gamma/4]}\left(\epsilon^{4-\delta}\left|\slashed{\nabla}^l\mathcal{L}_{\partial_{\phi}}^it^{(q)}\right|^2 + q\epsilon^{2-\delta}\left|\slashed{\nabla}^l\mathcal{L}_{\partial_{\phi}}^it^{(q)}\right|^2\right)
\\ \nonumber &\qquad +\int_{\theta \in [0,\gamma/2] \cup [\pi-\gamma/2,\pi]}\left(\sum_{0 \leq j \leq l-1}\left(\epsilon^{2-\delta}\left|\slashed{\nabla}^j\mathcal{L}_{\partial_{\phi}}^it^{(q)}\right|^2 + q^2\epsilon^{-\delta}\left|\slashed{\nabla}^{j+1}\mathcal{L}_{\partial_{\phi}}^it^{(q)}\right|^2\right)  +\left|\slashed{\nabla}^l\mathcal{L}_{\partial_{\phi}}^iH\right|^2\right)
\\ \nonumber &\qquad +\sum_{j=0}^{l+i}\int_{\mathbb{S}^2}\left(\epsilon^{M_1/2}\left|\slashed{\nabla}^jt^{(q)}\right|^2+q\epsilon^{M_1/2}\left|\slashed{\nabla}^{j+1}t^{(q)}\right|^2\right)
\end{align}
This estimate will be used to control $\slashed{\nabla}_{C_1\cdots C_l}^l\mathcal{L}_{\partial_{\phi}}^it_{AB}^{(q)}$ near $\theta = 0$ and $\theta = \pi$ in terms of  $\slashed{\nabla}_{C_1\cdots C_l}^l\mathcal{L}_{\partial_{\phi}}^it_{AB}^{(q)}$ for $\theta$ near $[\gamma/4,\gamma/2]$ and $[\pi-\gamma/2,\pi-\gamma/4]$ and in terms of $\slashed{\nabla}_{C_1\cdots C_{l-1}}^{l-1}\mathcal{L}_{\partial_{\phi}}^it_{AB}^{(q)}$.

Now we will discuss another localized higher order estimate. This time we will localize near $\theta = y_0$.  Let $\chi_1(\theta)$ be a non-negative function which is identically $1$ for $|\theta-y_0| \leq \gamma$, identically $0$ for $|\theta-y_0| \geq 2\gamma$, and satisfies $\left|\partial_{\theta}\chi_1\right| \lesssim \gamma^{-1}$. We now turn to an estimate which will be the analogue of~\eqref{omqnqjkwww}. First of all, for $0 \leq i + l \leq \frac{N_0}{2}$, we have
\begin{align}\label{omnbqqwiwiwiw}
&\left|\int_{\mathbb{S}^2}\sum_{i=1}^l\chi_1\left(\slashed{\nabla}\hat{\otimes}b\right)^D_{\ \ E}\slashed{\nabla}_{C_1}\cdots\slashed{\nabla}_D\cdots\slashed{\nabla}_{C_l}\mathcal{L}_{\partial_{\phi}}^it^{(q)}_{AB}\slashed{\nabla}^{C_1}\cdots\slashed{\nabla}^E\cdots\slashed{\nabla}^{C_l}\left(\mathcal{L}_{\partial_{\phi}}^it^{(q)}\right)^{AB}\right| \lesssim_l 
\\ \nonumber &\qquad \epsilon\int_{\mathbb{S}^2}\chi_1\left|\slashed{\nabla}_{\partial_{\phi}}\slashed{\nabla}^{l-1}\mathcal{L}_{\partial_{\phi}}^it^{(q)}\right|\left|\slashed{\nabla}_{\partial_{\theta}}\slashed{\nabla}^{l-1}\mathcal{L}_{\partial_{\phi}}^it^{(q)}\right|
\\ \nonumber &\qquad +\left|\int_{\mathbb{S}^2}\sum_{i=1}^l\chi_1\left(\slashed{\nabla}\hat{\otimes}\left(b-\tilde b\right)\right)^D_{\ \ E}\slashed{\nabla}_{C_1}\cdots\slashed{\nabla}_D\cdots\slashed{\nabla}_{C_l}\mathcal{L}_{\partial_{\phi}}^it^{(q)}_{AB}\slashed{\nabla}^{C_1}\cdots\slashed{\nabla}^E\cdots\slashed{\nabla}^{C_l}\left(\mathcal{L}_{\partial_{\phi}}^it^{(q)}\right)^{AB}\right| \lesssim
\\ \nonumber &\int_{\mathbb{S}^2}\chi_1\left(\epsilon\left|\slashed{\nabla}_{\partial_{\phi}}\slashed{\nabla}^{l-1}\mathcal{L}_{\partial_{\phi}}^it^{(q)}\right|\left|\slashed{\nabla}_{\partial_{\theta}}\slashed{\nabla}^{l-1}\mathcal{L}_{\partial_{\phi}}^it^{(q)}\right|+\gamma^2\epsilon^2\left|\slashed{\nabla}^l\mathcal{L}_{\partial_{\phi}}^it^{(q)}\right|^2\right).
\end{align}
In the last line we have used~\eqref{itvanishesatthepolesprettyfast2}, Sobolev inequalities, and Lemma~\ref{eestimates}. We may combine~\eqref{omnbqqwiwiwiw} with Lemma~\ref{higherorderalmostantisymm} to obtain the following analogue of~\eqref{omqnqjkwww}:
\begin{align}\label{omqnqjkwww2}
&\epsilon^2\left|\int_{\mathbb{S}^2}\chi_1\mathscr{L}\left(\slashed{\nabla}^l\mathcal{L}_{\partial_{\phi}}^it^{(q)}\right)\cdot \slashed{\nabla}^l\mathcal{L}_{\partial_{\phi}}^it^{(q)}\right|  \lesssim_{l+i} 
\\ \nonumber &\qquad \int_{y_0-\gamma \leq \theta \leq y_0+\gamma}\left(\epsilon^{2-\delta}\left|\slashed{\nabla}_{\partial_{\phi}}\slashed{\nabla}^{l-1}\mathcal{L}_{\partial_{\phi}}^it^{(q)}\right|^2+\gamma^2\epsilon^4\left|\slashed{\nabla}^l\mathcal{L}_{\partial_{\phi}}^it^{(q)}\right|^2\right)+
\\ \nonumber &\qquad \int_{\theta \in [y_0-2\gamma,y_0-\gamma] \cup [y_0+\gamma,y_0+2\gamma]}\epsilon^{4-\delta}\left|\slashed{\nabla}^l\mathcal{L}_{\partial_{\phi}}^it^{(q)}\right|^2.
\end{align}
Now we establish an analogue of~\eqref{onqmqhqoq} by contracting~\eqref{partphi1ia2b3c4d} with $-\chi_1(\theta)\epsilon^2\slashed{\nabla}^l_{C_1\cdots C_l}\mathcal{L}_{\partial_{\phi}}^it_{AB}^{(q)}$ and integrating by parts. For $0 \leq i + l \leq \frac{N_0}{2}$ we have
\begin{align}\label{onqmqhqoq2}
&\int_{y_0-\gamma \leq \theta \leq y_0 + \gamma}\left(\epsilon^4\left|\slashed{\nabla}^l\mathcal{L}_{\partial_{\phi}}^it^{(q)}\right|^2 + q\epsilon^2\left|\slashed{\nabla}^{l+1}\mathcal{L}_{\partial_{\phi}}^it^{(q)}\right|^2\right) \lesssim_{i+l}
\\ \nonumber &\qquad  \int_{y_0-\gamma \leq \theta \leq y_0+\gamma}\epsilon^{2-\delta}\left|\slashed{\nabla}_{\partial_{\phi}}\slashed{\nabla}^{l-1}\mathcal{L}_{\partial_{\phi}}^it^{(q)}\right|^2
\\ \nonumber &\qquad + \int_{\theta \in [y_0-2\gamma,y_0-\gamma] \cup [y_0+\gamma,y_0+2\gamma]}\left(\epsilon^{4-\delta}\left|\slashed{\nabla}^l\mathcal{L}_{\partial_{\phi}}^it^{(q)}\right|^2+q\epsilon^{2-\delta}\left|\slashed{\nabla}^l\mathcal{L}_{\partial_{\phi}}^it^{(q)}\right|^2\right)
\\ \nonumber &\qquad +\int_{y_0-2\gamma \leq \theta \leq y_0 + 2\gamma}\left(\sum_{0 \leq j \leq l-1}\left(\epsilon^{2-\delta}\left|\slashed{\nabla}^j\mathcal{L}_{\partial_{\phi}}^it^{(q)}\right|^2 + q^2\epsilon^{-\delta}\left|\slashed{\nabla}^{j+1}\mathcal{L}_{\partial_{\phi}}^it^{(q)}\right|^2\right)  +\left|\slashed{\nabla}^l\mathcal{L}_{\partial_{\phi}}^iH\right|^2\right)
\\ \nonumber &\qquad +\sum_{j=0}^{l+i}\int_{\mathbb{S}^2}\left(\epsilon^{M_1/2}\left|\slashed{\nabla}^jt^{(q)}\right|^2+q\epsilon^{M_1/2}\left|\slashed{\nabla}^{j+1}t^{(q)}\right|^2\right).
\end{align}

Before carrying out the next estimate, we observe the following consequence of Lemma~\ref{eestimates} and a Sobolev inequality. For any $(0,k)$-tensor $w_{A_1\cdots A_k}$, we have
\begin{align}\label{mastscrlwraellycontlthis}
\int_{\mathbb{S}^2}\left|\mathscr{L}w\right|^2 \gtrsim \epsilon^4\int_{\mathbb{S^2}}|h(\theta)|^2\left|\mathcal{L}_{\partial_{\theta}}w\right|^2 - \epsilon^{2-\delta}\int_{\mathbb{S}^2}\left[\left|\mathcal{L}_{\partial_{\phi}}w\right|^2+\left|w\right|^2\right] - \epsilon^{6-4\delta}\int_{\mathbb{S}^2}\left|\mathring{\nabla}w\right|^2.
\end{align}
Next, for $0 \leq i+l \leq \frac{N_0}{2}$, we commute~\eqref{partphi1ia} with $\mathscr{L}^l$, then contract with $\left(\epsilon^{-2l}\mathscr{L}^l\mathcal{L}_{\partial_{\phi}}^it_{AB}^{(q)}\right)$, and integrate by parts. Keeping in mind the trivial fact that $\mathscr{L}$ commutes with itself, arguing as above and using Lemma~\ref{bunchofboundsbahckd} leads to the estimate
\begin{align}\label{odkodkodko}
&\int_{\mathbb{S}^2}\left[\epsilon^{4-4l}\left|\mathscr{L}^l\mathcal{L}_{\partial_{\phi}}^it^{(q)}\right|^2 + q\epsilon^{2-4l}\left|\slashed{\nabla}\mathscr{L}^l\mathcal{L}_{\partial_{\phi}}^it^{(q)}\right|^2\right] \lesssim_{l+i} \\ \nonumber &\qquad \int_{\mathbb{S}^2}\left[\epsilon^{-4l}\left|\mathscr{L}^l\mathcal{L}_{\partial_{\phi}}^iH\right|^2 +\sum_{j=0}^{i+l}\left(\epsilon^{M_1/3}\left|\slashed{\nabla}^jt^{(q)}\right|^2+q\epsilon^{M_1/3}\left|\slashed{\nabla}^{j+1}t^{(q)}\right|^2\right)\right].
\end{align}

Combining with~\eqref{mastscrlwraellycontlthis} leads to 
\begin{align}\label{odkodkodko2}
&\int_{\mathbb{S}^2}\left[\epsilon^4\left|h(\theta)\right|^{2l}\left|\mathcal{L}_{\partial_{\theta}}^l\mathcal{L}_{\partial_{\phi}}^it^{(q)}\right|^2 + q\epsilon^2\left|h(\theta)\right|^{2l}\left|\slashed{\nabla}\mathcal{L}_{\partial_{\theta}}^l\mathcal{L}_{\partial_{\phi}}^it^{(q)}\right|^2\right] \lesssim_{l+i} 
\\ \nonumber &\qquad \int_{\mathbb{S}^2}\left[\epsilon^{6-4\delta}\left|\slashed{\nabla}^l\mathcal{L}_{\partial_{\phi}}^it^{(q)}\right|^2 + \sum_{\overset{k+m+n=l}{0 \leq k < l}}\epsilon^{4-2(m+n)}\left|\slashed{\nabla}^k\mathcal{L}_{\partial_{\phi}}^m\mathcal{L}^i_{\partial_{\phi}}t^{(q)}\right|^2\right]+
 \\ \nonumber &\qquad \int_{\mathbb{S}^2}\left[\epsilon^{-4l}\left|\mathscr{L}^l\mathcal{L}_{\partial_{\phi}}^iH\right|^2 +\sum_{j=0}^{i+l}\left(\epsilon^{M_1/3}\left|\slashed{\nabla}^jt^{(q)}\right|^2+q\epsilon^{M_1/3}\left|\slashed{\nabla}^{j+1}t^{(q)}\right|^2\right)\right].
\end{align}

We now have all of the ingredients to prove
\begin{equation}\label{odkodwkodwkodwponq22232}
\sum_{k=0}^j\int_{\mathbb{S}^2}\left[\epsilon^4\left|\slashed{\nabla}^kt^{(q)}\right|^2+q\epsilon^2\left|\slashed{\nabla}^{k+1}t^{(q)}\right|^2\right]\lesssim_j \epsilon^{-(2+\delta)j}\sum_{k=0}^j\sum_{i=0}^{j-k}\epsilon^{-(2+2\delta)k}\int_{\mathbb{S}^2}\left|\slashed{\nabla}^i\mathcal{L}_{\partial_{\phi}}^kH\right|^2.
\end{equation}
In order to do this, we  let $j = 1$, $2$,$\cdots$, or $\lfloor\frac{N_0}{2}\rfloor$, $0 \leq i \leq j$, $0 \leq k \leq i$, and then define 
\[\mathcal{Y}\left(i,j,k\right) \doteq \int_{\mathbb{S}^2}\left[\epsilon^4\left|\slashed{\nabla}^k\mathcal{L}_{\partial_{\phi}}^{j-i}t^{(q)}\right|^2 +q\epsilon^2\left|\slashed{\nabla}^{k+1}\mathcal{L}_{\partial_{\phi}}^{j-i}t^{(q)}\right|^2\right],\]
\[ \tilde{\mathcal{Y}}\left(i,j,k\right) \doteq \int_{\mathbb{S}^2}\left[\epsilon^4\left|h(\theta)\right|^{2k}\left|\mathcal{L}_{\partial_{\theta}}^k\mathcal{L}_{\partial_{\phi}}^{j-i}t^{(q)}\right|^2 +q\epsilon^2\left|h(\theta)\right|^{2k}\left|\mathcal{L}_{\partial_{\theta}}^{k+1}\mathcal{L}_{\partial_{\phi}}^{j-i}t^{(q)}\right|^2\right].\]

From~\eqref{odkodkodko2}, we obtain
\begin{align}\label{qpvnvniekd98937}
\tilde{\mathcal{Y}}\left(i,j,k\right) &\lesssim \epsilon^{2-4\delta}\mathcal{Y}\left(i,j,k\right) + \sum_{\overset{p+m+n=k}{0 \leq p < k}}\epsilon^{-2(m+n)}\mathcal{Y}\left(i-m,j,p\right) + \epsilon^{-2k}\sum_{\tilde{k} = 0}^k\int_{\mathbb{S}^2}\left|\slashed{\nabla}^{\tilde k}\mathcal{L}_{\partial_{\phi}}^{j-i}H\right|^2
\\ \nonumber &\qquad +\sum_{r=0}^{k+j-i}\int_{\mathbb{S}^2}\left(\epsilon^{M_1/3}\left|\slashed{\nabla}^rt^{(q)}\right|^2+q\epsilon^{M_1/3}\left|\slashed{\nabla}^{r+1}t^{(q)}\right|^2\right).
\end{align}

From~\eqref{onqmqhqoq}, ~\eqref{onqmqhqoq2}, and~\eqref{qpvnvniekd98937} we obtain (with $m,n\ge 0$)
\begin{align}\label{qpvnvniekd98937234}
\mathcal{Y}\left(i,j,k\right) &\lesssim \epsilon^{-2-\delta}\mathcal{Y}\left(i-1,j,k-1\right)  + \sum_{\tilde{k}=1}^k\epsilon^{-2-\delta}\mathcal{Y}\left(i,j,k-1\right)+\sum_{\overset{p+m+n=k}{0 \leq p < k}}\epsilon^{-2(m+n+\delta/2)}\mathcal{Y}\left(i-m,j,p\right)
\\ \nonumber &\qquad + \epsilon^{-2k-\delta}\sum_{\tilde{k} = 0}^k\int_{\mathbb{S}^2}\left|\slashed{\nabla}^{\tilde k}\mathcal{L}_{\partial_{\phi}}^{j-i}H\right|^2 +\epsilon^{-\delta}\sum_{r=0}^{k+j-i}\int_{\mathbb{S}^2}\left(\epsilon^{M_1/3}\left|\slashed{\nabla}^rt^{(q)}\right|^2+q\epsilon^{M_1/3}\left|\slashed{\nabla}^{r+1}t^{(q)}\right|^2\right).
\end{align}

  A straightforward induction argument in $i$ and $k$ (using the estimate~\eqref{purephiestimateassss29387} for the base case) then leads to~\eqref{odkodwkodwkodwponq22232}. (The estimate~\eqref{odkodwkodwkodwponq22232} is not sharp, but that will not matter for us.) Of course, the estimate~\eqref{odkodwkodwkodwponq22232} implies uniform bounds of $t^{(q)}$:
 \begin{equation}\label{okkookkokokoqdlplpq}
\int_{\mathbb{S}^2}\left[\left|\slashed{\nabla}^jt^{(q)}\right|^2+q\left|\slashed{\nabla}^{j+1}t^{(q)}\right|^2\right] \lesssim_{\epsilon,j} \sum_{k=0}^j\sum_{i=0}^{j-k}\int_{\mathbb{S}^2}\left|\slashed{\nabla}^i\mathcal{L}_{\partial_{\phi}}^kH\right|^2.
\end{equation}

Having now established uniform estimates for $t_{AB}^{(q)}$ as $q\to 0$, we turn to showing that $t_{AB}^{(q)}$ converges to a unique $t_{AB}$ as $q\to 0$. We now let $0 < q_2 < q_1$ and derive the following:
\begin{equation}\label{thisisthekappasingularequationqReg22222}
\mathscr{L}\left(t^{(q_1)}-t^{(q_2)}\right)_{AB}-2\kappa\left( t^{(q_1)}-t^{(q_2)}\right)_{AB}  +q_1\mathring{\Delta}\left(t^{(q_1)}-t^{(q_2)}\right)_{AB}= \left(q_2-q_1\right)\mathring{\Delta}t^{(q_2)}_{AB}.
\end{equation}
Contracting with $\left(t^{(q_1)}-t^{(q_2)}\right)^{AB}$, carrying out the usual integration by parts, and using the bound~\eqref{okkookkokokoqdlplpq} leads to
\begin{align*}
\int_{\mathbb{S}^2}\left[\left|t^{(q_1)}-t^{(q_2)}\right|^2 +q_1\left|\mathring{\nabla}\left(t^{(q_1)}-t^{(q_2)}\right)\right|^2 \right]&\lesssim_{\epsilon,H} \left|q_1-q_2\right|\left|\int_{\mathbb{S}^2}\mathring{\Delta}t^{(q_2)}_{AB}\left(t^{(q_1)}-t^{(q_2)}\right)^{AB}\right|.
\\ \nonumber &\lesssim \left|q_1-q_2\right|\int_{\mathbb{S}^2}\left|\mathring{\nabla}t^{(q_2)}\right|\left(\left|t^{(q_1)}-t^{(q_2)}\right| +\left|\mathring{\nabla}\left(t^{(q_1)}-t^{(q_2)}\right)\right|\right)
\end{align*}
This leads to 
\begin{align*}
\int_{\mathbb{S}^2}\left[\left|t^{(q_1)}-t^{(q_2)}\right|^2 +q_1\left|\mathring{\nabla}\left(t^{(q_1)}-t^{(q_2)}\right)\right|^2 \right]&\lesssim_{\epsilon,H} \left|q_1-q_2\right|\int_{\mathbb{S}^2}\left|\mathring{\nabla}t^{(q_2)}\right|^2.
\end{align*}
In particular, we immediately see that $\{t_{AB}^{(q)}\}_{q > 0}$ is Cauchy and we have that $t_{AB}^{(q)} \to_{q\to 0} t_{AB} \in L^2$. Furthermore, by a standard compactness argument, we can take the limit as $q\to 0$ in all of the bounds for $t_{AB}^{(q)}$ that we have established and obtain that $t_{AB} \in \mathring{H}^{\lfloor\frac{N_0}{2}\rfloor}$ and satisfies~\eqref{kappamustbeasolntothisoritwouldbebad}.

Finally, taking the $q\to 0$ limit in the bounds~\eqref{odkodwkodwkodwponq22232} yields~\eqref{odkodwkodwkodwponq}. Finally, with~\eqref{odkodwkodwkodwponq} it is straightforward to revisit the above estimates and establish~\eqref{odkodwkodwkodw230019122}.
\end{proof}

In the remainder of the section we will study the following evolutionary analogue of the $\kappa$-singular equation.
\begin{definition}\label{defofkapsingevolve} Let $\left(\slashed{g}_{AB},b^A,\kappa,\Omega\right)$ be a regular $4$-tuple in the sense of Definition~\ref{Mreg}. Let $\{t_{AB}(v,\theta)\}_{v \in (0,\underline{v}]}$ be a $1$-parameter family of $C^1$ symmetric trace-free tensors on $\mathbb{S}^2$ so that $t_{AB}(v,\theta)$ is $C^1$ in $v \in (0,\underline{v}]$. Then we say that $t_{AB}\left(v,\theta\right)$ satisfies the corresponding $\kappa$-singular evolution equation with right hand side $H_{AB}\left(v,\cdot\right) \in L^{\infty}_v\hat{\mathcal{S}}\left(\mathbb{S}^2\right)$ if
\begin{equation}\label{thisisthekappasingularequationplplplplp}
v\mathcal{L}_{\partial_v} t_{AB} + \mathscr{L}t_{AB} -2\kappa t_{AB} + 2\mathcal{L}_b\log\Omega t_{AB} = H_{AB}.
\end{equation}

\end{definition}

We start with an analogue of Proposition~\ref{kapissingbutthereisasolnatleast}.
\begin{proposition}\label{kapissingbutthereisasolnatleastevolve}Let $\left(\slashed{g}_{AB},b^A,\kappa,\Omega\right)$ be a regular $4$-tuple in the sense of Definition~\ref{Mreg}, and let
$\{H_{AB}(v)\}_{v \in (0,\underline{v}]}$ be a $1$-parameter family of symmetric trace-free tensors in  $\mathring{H}^j\left(\hat{\mathcal{S}}\left(\mathbb{S}^2\right)\right)$ for $1 \leq j \leq \frac{N_0}{2}$. Then then there exists a $1$-parameter family unique $\{t_{AB}(v)\}_{v \in (0,\underline{v}]} \in \mathring{H}^j\left(\hat{\mathcal{S}}\left(\mathbb{S}^2\right)\right)$ solving~\eqref{thisisthekappasingularequationplplplplp}, and $t_{AB}$ is uniquely determined by $t_{AB}(\underline{v})$.

Next we let $\tilde H \in \mathring{H}^j\left(\hat{\mathcal{S}}\right)$ and use Proposition~\ref{kapissingbutthereisasolnatleast} to define $\tilde t$ by solving
\begin{equation}\label{atv0thishiswhathappens}
 \mathscr{L}\tilde t_{AB} -2\kappa \tilde t_{AB} + 2\mathcal{L}_b\log\Omega \tilde{t}_{AB} = \tilde H_{AB}.
\end{equation}

Then, for any  $\tilde v  > 0$, we have the following estimates for $\hat{t}_{AB} \doteq t_{AB} - \tilde t_{AB}$ :

\begin{align}\label{odkodwkodwkodwponq22222plqijdji}
&\sum_{k=0}^j\left[\sup_{v \in [\tilde v,\underline{v}]}\left\vert\left\vert \left(\frac{v}{\underline{v}}\right)^{-\frac{\kappa}{10}}\slashed{\nabla}^j\hat{t}\right\vert\right\vert_{L^2}^2 + \epsilon^2\int_{\tilde v}^{\underline{v}}\left\vert\left\vert  \left(\frac{v}{\underline{v}}\right)^{-\frac{\kappa}{10}}\slashed{\nabla}^j\hat{t}\right\vert\right\vert_{L^2}^2\frac{dv}{v} \right]\lesssim_j
\\ \nonumber &\qquad  \epsilon^{-2j-2j\delta}\left(\sum_{k=0}^j\sum_{i=0}^{j-k}\epsilon^{-2k-2k\delta} \left(\int_{\tilde v}^{\underline{v}}\left\vert\left\vert \left(\frac{v}{\underline{v}}\right)^{-\frac{\kappa}{10}}\slashed{\nabla}^i\mathcal{L}_{\partial_{\phi}}^k\hat{H}(v)\right\vert\right\vert_{L^2}\, \frac{dv}{v}\right)^2\right)
\\ \nonumber &\qquad + \epsilon^{-2j-2j\delta}\sum_{k= 0}^j\left(\epsilon^{-2k-2k\delta}\sum_{i=0}^{j-k}\left\vert\left\vert \slashed{\nabla}^i\mathcal{L}_{\partial_{\phi}}^k\hat{t}(\underline{v})\right\vert\right\vert_{L^2}^2\right),
\end{align}
where 
\[\hat{H}_{AB} \doteq H_{AB} - \tilde H_{AB}.\]

\end{proposition}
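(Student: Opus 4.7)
The plan is to mirror the strategy of Propositions~\ref{qregkapexists} and~\ref{kapissingbutthereisasolnatleast}: solve an elliptic regularization, derive $q$-uniform weighted $L^2$ energy estimates via the anti-symmetry of $\mathscr{L}$, and then commute with $\slashed{\nabla}$, $\mathcal{L}_{\partial_\phi}$, and $\mathscr{L}$ as before. The essential new ingredient is the evolution term $v\mathcal{L}_{\partial_v}$, which plays nicely with the spatial operators (it commutes trivially with $\slashed{\nabla}$, $\mathcal{L}_{\partial_\phi}$, $\mathscr{L}$, and the coefficients depend only on angles) and supplies a good sign after a suitable weight is introduced.

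For existence and uniqueness, for each $q>0$ I would consider the $q$-regularized equation
\[
v\mathcal{L}_{\partial_v}f^{(q)} + \mathscr{L}f^{(q)} - 2\kappa f^{(q)} + 2\mathcal{L}_b\log\Omega\,f^{(q)} + q\slashed{\Delta}f^{(q)} = H,
\]
with terminal data $f^{(q)}(\underline{v})=f(\underline{v})$. After the change of variables $s=-\log(v/\underline{v})\in[0,\infty)$, this is a classical backwards parabolic system with smooth, bounded coefficients on $\mathbb S^2$, and is uniquely solvable. The $q$-uniform bounds derived below then allow taking $q\to 0$ to produce the required $f$; uniqueness follows from the base $L^2$ estimate applied to the difference of two solutions.

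For the key estimate, I would subtract~\eqref{atv0thishiswhathappens} from~\eqref{thisisthekappasingularequationplplplplp}, using that $\tilde f$ is independent of $v$, to obtain
\[
v\mathcal{L}_{\partial_v}\hat f + \mathscr{L}\hat f -2\kappa\hat f + 2\mathcal L_b\log\Omega\,\hat f = \hat H.
\]
Setting $F\doteq (v/\underline{v})^{-\kappa/10}\hat f$ and $s=-\log(v/\underline{v})$, a direct computation yields
\[
-\partial_s F + \tfrac{19}{10}\kappa\, F + \mathscr L F - 2\kappa F\,\big(\text{absorbed}\big) + 2\mathcal L_b\log\Omega\,F = (v/\underline{v})^{-\kappa/10}\hat H,
\]
which after taking the $\slashed g$ inner product with $F$, integrating over $\mathbb S^2$, using the exact anti-symmetry of $\mathscr L$ (Lemma~\ref{thatscrlisantisymm}), and bounding $|\mathcal L_b\log\Omega|\lesssim \epsilon^{3-\delta}$ (from~\eqref{somebound1ssssss} and Sobolev), produces
\[
\tfrac{1}{2}\partial_s\|F\|_{L^2}^2 + c\epsilon^2\|F\|_{L^2}^2 \le \|F\|_{L^2}\,\|(v/\underline{v})^{-\kappa/10}\hat H\|_{L^2},
\]
with $c>0$ thanks to $\kappa\sim \epsilon^2$ from Lemma~\ref{kappaisthis}. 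Dividing by $\|F\|_{L^2}$ and integrating in $s$ from $0$ to $s_{\max}=-\log(\tilde v/\underline{v})$ produces the $j=0$ case of~\eqref{odkodwkodwkodwponq22222plqijdji}: simultaneous control of $\sup\|F\|^2$ and $\epsilon^2\int \|F\|^2\,dv/v$ by $\|F(\underline{v})\|^2$ plus the square of the $dv/v$-integral of $\|(v/\underline{v})^{-\kappa/10}\hat H\|_{L^2}$.

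For higher $j$, I would commute the equation with combinations of $\slashed{\nabla}$, $\mathcal{L}_{\partial_\phi}$, and $\mathscr L$, exactly as in the proof of Proposition~\ref{kapissingbutthereisasolnatleast}. Because these operators commute with $v\mathcal L_{\partial_v}$, each commuted equation retains the structure
\[
v\mathcal L_{\partial_v}(Df) + \mathscr L(Df) - 2\kappa(Df) + 2\mathcal L_b\log\Omega\,(Df) = D\hat H - [D,\mathscr L + 2\mathcal L_b\log\Omega]\hat f,
\]
and the commutator terms on the right are precisely those estimated in Lemma~\ref{bunchofboundsbahckd}. Running the weighted $L^2$ estimate on the commuted system, together with the two localized higher-order estimates near $\theta=0,\pi$ and near $\theta=y_0$ used to overcome the degeneracy of $\mathscr L$ (the analogues of~\eqref{onqmqhqoq} and~\eqref{onqmqhqoq2}), and inducting in the number of angular derivatives, produces~\eqref{odkodwkodwkodwponq22222plqijdji}. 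The main obstacle is the bookkeeping of the $\epsilon^{-(2+\delta)}$ and $\epsilon^{-(2+2\delta)}$ losses at each commutation — these arise from reconstructing $\mathcal L_{\partial_\theta}$ from $\mathscr L$ via the identity $\mathscr Lf\sim \epsilon^2 h(\theta)\mathcal L_{\partial_\theta}f + O(\epsilon)\mathcal L_{\partial_\phi}f + O(\epsilon)f + O(\epsilon^3)\mathring\nabla f$ and from commutator losses, and their propagation is inductive exactly as in~\eqref{odkodwkodwkodwponq}. The alternative bound, with one $\mathcal L_{\partial_\phi}$ on the right-hand side replacing one loss of $\epsilon^{-2-\delta}$, is obtained (as in~\eqref{odkodwkodwkodw230019122}) by first commuting with $\mathcal L_{\partial_\phi}$ and exploiting the near-axisymmetry bounds~\eqref{omsmsjsjksoqjnojq1}–\eqref{omsmsjsjksoqjnojq2}.
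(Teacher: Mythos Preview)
Your proposal is correct and follows essentially the same route as the paper: change variables to $s=-\log(v/\underline{v})$, conjugate by the weight $e^{\kappa s/10}$, exploit the anti-symmetry of $\mathscr{L}$ for the base $L^2$ estimate, and then commute with $\mathcal{L}_{\partial_\phi}$, $\mathscr{L}$, and $\slashed{\nabla}$ (with the localized estimates near $\theta=0,\pi$ and $\theta=y_0$) exactly as in Proposition~\ref{kapissingbutthereisasolnatleast}, closing by induction.

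The one genuine difference is in the existence step. You set up a $q$-regularized backwards parabolic problem and pass to the limit $q\to 0$; the paper instead observes that, in the $s$-variable, the equation is an honest first-order transport equation along the integral curves of $-\partial_s + b$, so existence (and propagation of $H^i$ regularity) follows directly from the method of characteristics. Your approach works, but it imports more machinery than needed and forces you to carry the extra $q\slashed{\Delta}$ term through the commutation scheme (tracking the $[\slashed{\nabla}^l,\slashed{\Delta}]$ and $[\mathcal{L}_{\partial_\phi},\slashed{\Delta}]$ commutators as in Proposition~\ref{kapissingbutthereisasolnatleast}). The paper's route is cleaner here precisely because the evolution term removes the need for any elliptic regularization.
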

\begin{proof}The proof of this proposition is very similar to the proof of Proposition~\ref{kapissingbutthereisasolnatleast}, and thus we will only provide a sketch of the proof. First of all, as with the proof of Proposition~\ref{kapissingbutthereisasolnatleast} we may, without loss of generality, take $\Omega = 1$.

We may write our equation as
\begin{equation*}
v\mathcal{L}_{\partial_v}\hat{t}_{AB} + \mathscr{L}\hat{t}_{AB} -2\kappa \hat{t}_{AB}  = \hat{H}_{AB}.
\end{equation*}
Next, we carry out a change of variables $s \doteq -\log\left(\frac{v}{\underline{v}}\right)$. We then obtain the equation
\begin{equation}\label{kappamustbeasolntothisoritwouldbebad222}
-\mathcal{L}_{\partial_s}\hat{t}_{AB} + \mathscr{L}\hat{t}_{AB} -2\kappa \hat{t}_{AB} = \hat{H}_{AB}.
\end{equation}
In the $s$-variable the bound we need to show is
\begin{align}\label{odkodwkodwkodwponq22222}
&\sum_{k=0}^j\left[\sup_{s \in [0,s_0]}\left\vert\left\vert e^{\frac{\kappa s}{10}}\slashed{\nabla}^k\hat{t}(s)\right\vert\right\vert_{L^2}^2 + \epsilon^2\int_0^{s_0}\left\vert\left\vert e^{\frac{\kappa s}{10}}\slashed{\nabla}^k\hat{t}(s)\right\vert\right\vert_{L^2}^2 \right]\lesssim_j
\\ \nonumber &\qquad  \epsilon^{-2j-2j\delta}\sum_{k=0}^j\left(\sum_{i=0}^{j-k}\epsilon^{-2k-2k\delta}\left[\left(\int_0^{s_0}\left\vert\left\vert e^{\frac{\kappa s}{10}}\slashed{\nabla}^i\mathcal{L}_{\partial_{\phi}}^k\hat{H}\right\vert\right\vert_{L^2}\, ds\right)^2+\left\vert\left\vert \slashed{\nabla}^i\mathcal{L}_{\partial_{\phi}}^k\hat{t}(0)\right\vert\right\vert_{L^2}\right]\right).
\end{align}

We now turn to establishing~\eqref{odkodwkodwkodwponq22222}. First of all, by the theory of characteristics, whenever \\ $\sup_{s\in[0,s_0]}\left\vert\left\vert \hat{H}(s)\right\vert\right\vert_{H^i} < \infty$, we immediately obtain the existence of $f_{AB}(s,\theta)$ with $\sup_{s \in [0,s_0]}\left\vert\left\vert f(s)\right\vert\right\vert_{H^i} \leq C\left(s_0,H\right)$ solving~\eqref{kappamustbeasolntothisoritwouldbebad222}.

We start with the case $j = 0$. Commuting~\eqref{kappamustbeasolntothisoritwouldbebad222} with $e^{\frac{\kappa s}{10}}$ leads to 
\begin{equation}\label{kappamustbeasolntothisoritwouldbebad2223}
-\mathcal{L}_{\partial_s}\left(e^{\frac{\kappa s}{10}}\hat{t}\right)_{AB} + \mathscr{L}\left(e^{\frac{\kappa s}{10}}\hat{t}\right)_{AB} -\left(2-\frac{1}{10}\right)\kappa \left(e^{\frac{\kappa s}{10}}\hat{t}\right)_{AB} = e^{\frac{\kappa s}{10}}\hat{H}_{AB}.
\end{equation}
We now contract~\eqref{kappamustbeasolntothisoritwouldbebad2223} with $-e^{\frac{\kappa s}{10}}\hat{t}^{AB}$, integrate over $[0,s_0] \times \mathbb{S}^2$ with respect to the volume form $ds\slashed{\rm dVol}$ (which will be the implied volume form throughout this proof) and then integrate by parts using Lemma~\ref{thatscrlisantisymm}. We obtain
\begin{equation*}
\sup_{\tilde s \in [0,s_0]}\int_{\mathbb{S}^2}\left|e^{\frac{\kappa s}{10}}\hat{t}\right|^2|_{s=\tilde s} + \epsilon^2\int_0^{s_0}\int_{\mathbb{S}^2}\left|e^{\frac{\kappa s}{10}}\hat{t}\right|^2 \lesssim \int_0^{s_0}\int_{\mathbb{S}^2}\left|e^{\frac{\kappa s}{10}}\hat{H}\cdot e^{\frac{\kappa s}{10}}\hat{t}\right| + \int_{\mathbb{S}^2}\left|\hat{t}\right|^2|_{s=0} \Rightarrow 
\end{equation*}
\begin{equation}\label{firstestiamteevolvevvev}
\sup_{\tilde s \in [0,s_0]}\int_{\mathbb{S}^2}\left|e^{\frac{\kappa s}{10}}\hat{t}\right|^2|_{s=\tilde s} + \epsilon^2\int_0^{s_0}\int_{\mathbb{S}^2}\left|e^{\frac{\kappa s}{10}}\hat{t}\right|^2 \lesssim \left(\int_0^{s_0}\left(\int_{\mathbb{S}^2}\left|e^{\frac{\kappa s}{10}}\hat{H}\right|^2\right)^{1/2}\right)^2 + \int_{\mathbb{S}^2}\left|f\right|^2|_{s=0}.
\end{equation}
As with the proof of Proposition~\ref{kapissingbutthereisasolnatleast} we introduce the convention that all norms are computed with respect to $\slashed{g}$ unless said otherwise. This estimate will serve as the analogue of~\eqref{thefirstenestforfq2}. 

We next explain how to establish~\eqref{odkodwkodwkodwponq22222} in the case $j = 1$. Now, just as in the proof of Proposition~\ref{kapissingbutthereisasolnatleast} we commute~\eqref{kappamustbeasolntothisoritwouldbebad2223} with $\mathcal{L}_{\partial_{\phi}}$ and obtain the following analogue of~\eqref{partphi1ia}
\begin{align}\label{partphi1ia2}
&-\mathcal{L}_{\partial_s}\left(e^{\frac{\kappa s}{10}}\mathcal{L}_{\partial_{\phi}}\hat{t}\right)_{AB} + \mathscr{L}\left(e^{\frac{\kappa s}{10}}\left(\mathcal{L}_{\partial_{\phi}}\hat{t}\right)\right)_{AB} - \left(2-\frac{1}{10}\right)\kappa \left(e^{\frac{\kappa s}{10}}\mathcal{L}_{\partial_{\phi}}\hat{t}\right)_{AB} 
\\ \nonumber &\qquad + e^{\frac{\kappa s}{10}}\left[\mathcal{L}_{\partial_{\phi}},\mathscr{L}\right]\hat{t}_{AB} = e^{\frac{\kappa s}{10}}\mathcal{L}_{\partial_{\phi}}\hat{H}_{AB}.
\end{align}
Now contracting this with $e^{\frac{\kappa s}{11}}\mathcal{L}_{\partial_{\phi}}f^{AB}$, integrating by parts, and arguing like in the derivation of~\eqref{purephiestimateassss29387} leads to the following estimate
\begin{align*}
&\sup_{\tilde s \in [0,s_0]}\int_{\mathbb{S}^2}\left|e^{\frac{\kappa s}{10}}\mathcal{L}_{\partial_{\phi}}\hat{t}\right|^2|_{s= \tilde s} + \epsilon^2 \int_0^{s_0}\int_{\mathbb{S}^2}\left|e^{\frac{\kappa s}{10}}\mathcal{L}_{\partial_{\phi}}\hat{t}\right|^2  \lesssim 
\\ \nonumber &\qquad \int_0^{s_0}\int_{\mathbb{S}^2}\left(\left|e^{\frac{\kappa s}{10}}\hat{H}\right| + e^{\frac{\kappa s}{10}}\left|\left[\mathcal{L}_{\partial_{\phi}},\mathscr{L}\right]\hat{t}\right|\right)\left|e^{\frac{\kappa s}{10}}\mathcal{L}_{\partial_{\phi}}\hat{t}\right| + \int_{\mathbb{S}^2}\left|\mathcal{L}_{\partial_{\phi}}\hat{t}\right|^2|_{s = 0} \Rightarrow 
\end{align*}
\begin{align}\label{purephiestimateassss29387222}
&\sup_{\tilde s \in [0,s_0]}\int_{\mathbb{S}^2}\left|e^{\frac{\kappa s}{10}}\mathcal{L}_{\partial_{\phi}}\hat{t}\right|^2|_{s= \tilde s} + \epsilon^2 \int_0^{s_0}\int_{\mathbb{S}^2}\left|e^{\frac{\kappa s}{10}}\mathcal{L}_{\partial_{\phi}}\hat{t}\right|^2  \lesssim 
\\ \nonumber &\qquad \left(\int_0^{s_0}\left(\int_{\mathbb{S}^2}\left|e^{\frac{\kappa s}{10}}\hat{H}\right|^2\right)^{1/2}\right)^2 +\epsilon^{M_1/2}\int_0^{s_0}\int_{\mathbb{S}^2}\left[\left|e^{\frac{\kappa s}{10}}\mathring{\nabla}\hat{t}\right|^2+\left|e^{\frac{\kappa s}{10}}\hat{t}\right|^2\right] + \int_{\mathbb{S}^2}\left|\mathcal{L}_{\partial_{\phi}}\hat{t}\right|^2|_{s = 0}.
\end{align}
This estimate is an analogue of~\eqref{purephiestimateassss29387} (with $i=1$).

Next, we can, of course, commute our equation with $\mathscr{L}$ to obtain 
\begin{equation}\label{partphi1ia23456}
-\mathcal{L}_{\partial_s}\left(e^{\frac{\kappa s}{10}}\mathscr{L}\hat{t}\right)_{AB} + \mathscr{L}\left(e^{\frac{\kappa s}{10}}\left(\mathscr{L}f\right)\right)_{AB} - \left(2-\frac{1}{10}\right)\kappa \left(e^{\frac{\kappa s}{10}}\mathscr{L}f\right)_{AB}   = e^{\frac{\kappa s}{10}}\mathscr{L}\hat{H}_{AB}.
\end{equation}
Arguing as in the derivation of~\eqref{firstestiamteevolvevvev} leads to
\begin{equation}\label{firstestiamteevolvevvev2}
\sup_{\tilde s \in [0,s_0]}\int_{\mathbb{S}^2}\left|e^{\frac{\kappa s}{10}}\mathscr{L}\hat{t}\right|^2|_{s=\tilde s} + \epsilon^2\int_0^{s_0}\int_{\mathbb{S}^2}\left|e^{\frac{\kappa s}{10}}\mathscr{L}\hat{t}\right|^2 \lesssim \left(\int_0^{s_0}\left(\int_{\mathbb{S}^2}\left|e^{\frac{\kappa s}{10}}\mathscr{L}\hat{H}\right|^2\right)^{1/2}\right)^2 + \int_{\mathbb{S}^2}\left|\mathscr{L}\hat{t}\right|^2|_{s=0}.
\end{equation}
Now, using~\eqref{firstestiamteevolvevvev2} and arguing as in the proof of Proposition~\ref{kapissingbutthereisasolnatleast} with~\eqref{mastscrlwraellycontlthis} we obtain the following:
\begin{align}\label{firstestiamteevolvevvev23}
&\sup_{\tilde s \in [0,s_0]}\int_{\mathbb{S}^2}\left|h(\theta)e^{\frac{\kappa s}{10}}\mathcal{L}_{\partial_{\theta}}\hat{t}\right|^2|_{s=\tilde s} + \epsilon^2\int_0^{s_0}\int_{\mathbb{S}^2}\left|h(\theta)e^{\frac{\kappa s}{10}}\mathcal{L}_{\partial_{\theta}}\hat{t}\right|^2 \lesssim
\\ \nonumber &\qquad \sup_{\tilde s \in [0,s_0]}\int_{\mathbb{S}^2}e^{\frac{\kappa s}{5}}\left[\epsilon^{-2-\delta}\left|\mathcal{L}_{\partial_{\phi}}f\right|^2 + \epsilon^{-2-\delta}\left|\hat{t}\right|^2 +\epsilon^{2-2\delta}\left|\mathring{\nabla}f\right|^2\right]|_{s=\tilde s} +
\\ \nonumber &\qquad  \epsilon^2\int_0^{s_0}\int_{\mathbb{S}^2}e^{\frac{\kappa s}{5}}\left[\epsilon^{-2-\delta}\left|\mathcal{L}_{\partial_{\phi}}f\right|^2 + \epsilon^{-2-\delta}\left|\hat{t}\right|^2 +\epsilon^{2-2\delta}\left|\mathring{\nabla}f\right|^2\right]
 \\ \nonumber &\qquad \epsilon^{-4}\left(\int_0^{s_0}\left(\int_{\mathbb{S}^2}\left|e^{\frac{\kappa s}{10}}\mathscr{L}\hat{H}\right|^2\right)^{1/2}\right)^2 + \epsilon^{-4}\int_{\mathbb{S}^2}\left|\mathscr{L}\hat{t}\right|^2|_{s=0}.
\end{align}
Now we combine this with~\eqref{firstestiamteevolvevvev} and~\eqref{purephiestimateassss29387222} to obtain
\begin{align}\label{firstestiamteevolvevvev234}
&\sup_{\tilde s \in [0,s_0]}\int_{\mathbb{S}^2}\left|h(\theta)e^{\frac{\kappa s}{10}}\mathcal{L}_{\partial_{\theta}}\hat{t}\right|^2|_{s=\tilde s} + \epsilon^2\int_0^{s_0}\int_{\mathbb{S}^2}\left|h(\theta)e^{\frac{\kappa s}{10}}\mathcal{L}_{\partial_{\theta}}\hat{t}\right|^2 \lesssim
 \\ \nonumber &\qquad \left(\int_0^{s_0}\left(\int_{\mathbb{S}^2}\left|e^{\frac{\kappa s}{10}}\mathring{\nabla}\hat{H}\right|^2\right)^{1/2}\right)^2 +\epsilon^{-2-\delta}\left(\int_0^{s_0}\left(\int_{\mathbb{S}^2}\left|e^{\frac{\kappa s}{10}}\mathcal{L}_{\partial_{\phi}}\hat{H}\right|^2\right)^{1/2}+\left(\int_{\mathbb{S}^2}\left|e^{\frac{\kappa s}{10}}\hat{H}\right|^2\right)^{1/2}\right)^2+
 \\ \nonumber &\qquad \sup_{\tilde s \in [0,s_0]}\int_{\mathbb{S}^2}e^{\frac{\kappa s}{5}}\epsilon^{2-2\delta}\left|\mathring{\nabla}f\right|^2|_{s=\tilde s} +  \int_0^{s_0}\int_{\mathbb{S}^2}e^{\frac{\kappa s}{5}}\epsilon^{4-2\delta}\left|\mathring{\nabla}f\right|^2+
 \\ \nonumber &\qquad \int_{\mathbb{S}^2}\left[\left|\mathring{\nabla}\hat{t}\right|^2+ \epsilon^{-2-\delta}\left|\mathcal{L}_{\partial_{\phi}}\hat{t}\right|^2 + \epsilon^{-2-\delta}\left|\hat{t}\right|^2\right]|_{s=0}.
\end{align}
The combination of the left hand side of the estimates~\eqref{firstestiamteevolvevvev234} and~\eqref{purephiestimateassss29387222} controls $\mathring{\nabla}_Af$ for all $\theta$ outside of small neighborhoods of $\{0,y_0,\pi\}$. 

As in the proof of Proposition~\ref{kapissingbutthereisasolnatleast} we will now commute with $\slashed{\nabla}_A$ and carry out localized estimates $\theta = 0$, $y_0$, and $\pi$. We have
\begin{align}\label{kappamustbeasolntothisoritwouldbebad22234}
&-\mathcal{L}_{\partial_s}\left(e^{\frac{\kappa s}{10}}\slashed{\nabla}_C\hat{t}\right)_{AB} + \mathscr{L}\left(e^{\frac{\kappa s}{10}}\slashed{\nabla}_C\hat{t}\right)_{AB} -\left(2-\frac{1}{10}\right)\kappa \left(e^{\frac{\kappa s}{10}}\slashed{\nabla}_C\hat{t}\right)_{AB} 
\\ \nonumber&\qquad = - \left[\mathscr{L},\slashed{\nabla}_C\right]e^{\frac{\kappa s}{10}}\hat{t} + e^{\frac{\kappa s}{10}}\slashed{\nabla}_C\hat{H}_{AB}.
\end{align}
Now we simply follow the derivation~\eqref{onqmqhqoq}, that is, we contract~\eqref{kappamustbeasolntothisoritwouldbebad22234} with $-\chi_0(\theta)\slashed{\nabla}^C\hat{t}^{AB}$ where $\chi(\theta)$ is a suitable cut-off function which is identically $1$ for $\theta \in [0,\gamma/4] \cup [\pi-\gamma/4,\pi]$, identically $0$ for $\theta \in [\gamma/2,\pi-\gamma/2]$, and satisfies $\left|\chi'_0\right| \lesssim \gamma^{-1}$. Arguing as in the derivation of~\eqref{onqmqhqoq} then leads to
\begin{align}\label{onqmqhqoq2939203}
&\sup_{\tilde s \in [0,s_0]}\int_{\{\theta \in [0,\gamma/4] \cup [\pi-\gamma/4,\pi]\}}\left|e^{\frac{\kappa s}{10}}\slashed{\nabla}\hat{t}\right|^2 + \epsilon^2\int_0^{s_0}\int_{\{\theta \in [0,\gamma/4] \cup [\pi-\gamma/4,\pi]\}}\left|e^{\frac{\kappa s}{10}}\slashed{\nabla}\hat{t}\right|^2 \lesssim
\\ \nonumber &\qquad \epsilon^{2-\delta}\int_0^{s_0}\int_{\{\theta \in [\gamma/4,\gamma/2] \cup [\pi-\gamma/2,\pi-\gamma/4]\}}\left[\left|e^{\frac{\kappa s}{10}}\slashed{\nabla}\hat{t}\right|^2 + \left|e^{\frac{\kappa s}{10}}\hat{t}\right|^2\right]+
\\ \nonumber &\qquad \left( \int_0^{s_0}\left(\int_{\mathbb{S}^2}\left|e^{\frac{\kappa s}{10}}\hat{H}\right|^2\right)^{1/2} \right)^2+\int_{\mathbb{S}^2}\left|\slashed{\nabla}\hat{t}\right|^2|_{s = 0}.
\end{align}
Similarly, one may establish the following analogue of~\eqref{onqmqhqoq2}:
\begin{align}\label{onqmqhqoq29392030938297772}
&\sup_{\tilde s \in [0,s_0]}\int_{\{\theta \in [y_0-\gamma,y_0+\gamma]\}}\left|e^{\frac{\kappa s}{10}}\slashed{\nabla}\hat{t}\right|^2 + \epsilon^2\int_0^{s_0}\int_{\{\theta \in [y_0-\gamma,y_0+\gamma]\}}\left|e^{\frac{\kappa s}{10}}\slashed{\nabla}\hat{t}\right|^2 \lesssim
\\ \nonumber &\qquad \epsilon^{2-\delta}\int_0^{s_0}\int_{\{\theta \in [y_0-2\gamma,y_0-\gamma] \cup [y_0+\gamma,y_0+2\gamma]\}}\left|e^{\frac{\kappa s}{10}}\slashed{\nabla}\hat{t}\right|^2 + \epsilon^{-\delta}\int_0^{s_0}\int_{\mathbb{S}^2}\left[\left|e^{\frac{\kappa s}{10}}\mathcal{L}_{\partial_{\phi}}\hat{t}\right|^2 +\left|e^{\frac{\kappa s}{10}}\hat{t}\right|^2\right]+
\\ \nonumber &\qquad \left( \int_0^{s_0}\left(\int_{\mathbb{S}^2}\left|e^{\frac{\kappa s}{10}}\hat{H}\right|^2\right)^{1/2} \right)^2+\int_{\mathbb{S}^2}\left|\slashed{\nabla}\hat{t}\right|^2|_{s = 0}.
\end{align}

Combining~\eqref{purephiestimateassss29387222},~\eqref{firstestiamteevolvevvev234},~\eqref{onqmqhqoq2939203}, and~\eqref{onqmqhqoq29392030938297772} leads to the establishment of~\eqref{odkodwkodwkodwponq22222} with $j = 1$.

The proof then concludes with an induction argument completely analogous to Proposition~\ref{kapissingbutthereisasolnatleast}.

\end{proof}
\section{Setting up the Characteristic Initial Data}\label{setitupayya}


In this section we will set-up the characteristic initial data which will form the starting point of our construction. We start with a definition/lemma for a quantity $\overset{\triangle}{\eta}_A$.

\begin{lemma}\label{uminus1data}Let $\left(\slashed{g}_{AB},b^A,\kappa,\tilde\Omega\right)$ be a regular $4$-tuple. Then, using Proposition~\ref{qualitativeexistence}, we define a $1$-form on $\mathbb{S}^2$, $\overset{\triangle}{\eta}_A$, to be the unique solution to 
\begin{equation}\label{etatriangle}
\left(2-\slashed{\rm div}b\right)\overset{\triangle}{\eta}_A  - \mathcal{L}_b\left(\overset{\triangle}{\eta}_A\right) = -2\slashed{\nabla}_A\mathcal{L}_b\log\tilde\Omega + \slashed{\nabla}^B\left(\slashed{\nabla}\hat{\otimes}b\right)_{BA} - \frac{1}{2}\slashed{\nabla}_A\slashed{\rm div}b.
\end{equation}
Then we have that
\[\left\vert\left\vert \overset{\triangle}{\eta}\right\vert\right\vert_{\mathring{H}^{N_0-1}} \lesssim \epsilon^{1-\delta},\qquad \left\vert\left\vert \mathcal{L}_{\partial_{\phi}}\overset{\triangle}{\eta}\right\vert\right\vert_{\mathring{H}^{N_0-2}} \lesssim \epsilon^{M_1/2}.\]

\end{lemma}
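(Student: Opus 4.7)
The plan is to put the defining equation~\eqref{etatriangle} into the normal form $Ku = F$ of Definition~\ref{KuF} and then invoke Proposition~\ref{qualitativeexistence}. Dividing~\eqref{etatriangle} by $-2$ yields
\[
\overset{\triangle}{\eta}_A + \tfrac{1}{2}\mathcal{L}_b\overset{\triangle}{\eta}_A + \tfrac{1}{2}(\slashed{\rm div}\, b)\overset{\triangle}{\eta}_A = \slashed{\nabla}_A\mathcal{L}_b\log\tilde\Omega - \tfrac{1}{2}\slashed{\nabla}^B(\slashed{\nabla}\hat{\otimes}b)_{BA} + \tfrac{1}{4}\slashed{\nabla}_A\slashed{\rm div}\,b,
\]
which is of the form $Ku = F$ with $X = \tfrac{1}{2}b$ and $h = \tfrac{1}{2}\slashed{\rm div}\,b$. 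I would first verify the smallness hypothesis on $(X,h)$: Definition~\ref{Mreg} together with Lemma~\ref{eestimates} gives $\|b\|_{\mathring{H}^{3}} \lesssim \epsilon^{1-\delta}$, and the bound~\eqref{estimatefignsofikdivbstuff} in the proof of Lemma~\ref{babablacksheep} together with~\eqref{somebound1ssssss} gives control of $\slashed{\rm div}\,b$ in higher Sobolev norms at the level $\epsilon^{2-\delta}$. Thus $A_2\ll 1$ and more generally $A_{N_0-1}\lesssim \epsilon^{1-\delta}$ (the $\gamma$-losses from differentiating $\tilde b$ being absorbed by the standing inequality $\epsilon^{\delta/500}\gamma^{-200}\ll 1$).

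Next I would estimate the right-hand side $F$ in $\mathring{H}^{N_0-1}$. The dominant contribution is $\slashed{\nabla}^B(\slashed{\nabla}\hat{\otimes}b)_{BA}$, bounded by $\|b\|_{\mathring{H}^{N_0+1}}\lesssim \epsilon^{1-\delta}$; the second term $\slashed{\nabla}_A\slashed{\rm div}\,b$ is of order $\epsilon^{2-\delta}$; and $\slashed{\nabla}_A\mathcal{L}_b\log\tilde\Omega$ is $O(\epsilon^{3-2\delta})$ by~\eqref{somebound1ssssss} and the product inequality of Lemma~\ref{interlemm}. Proposition~\ref{qualitativeexistence} applied at the level $M=N_0-1$ therefore produces a unique $\overset{\triangle}{\eta}$ satisfying
\[
\|\overset{\triangle}{\eta}\|_{\mathring{H}^{N_0-1}} \lesssim (A_{N_0-1}+1)\|F\|_{\mathring{H}^{N_0-1}} \lesssim \epsilon^{1-\delta},
\]
which is the first claimed bound.

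For the second bound I would commute~\eqref{etatriangle} with $\mathcal{L}_{\partial_{\phi}}$. Using $[\mathcal{L}_{\partial_{\phi}},\mathcal{L}_b] = \mathcal{L}_{[\partial_\phi,b]}$ together with $\|\mathcal{L}_{\partial_{\phi}}b\|_{\mathring{H}^{N_0+1}}\lesssim \epsilon^{M_1/2}$, which follows from~\eqref{somebound3ssssss} because the leading piece $\epsilon\tilde b$ is $\phi$-independent by Convention~\ref{sphcoordconv} and Definition~\ref{areallynicedefinitionofseeddata} and $\|z\|\lesssim \epsilon^{M_0}$, the commutator produces a source term of size $\epsilon^{M_1/2}\|\overset{\triangle}{\eta}\|_{\mathring{H}^{N_0-1}}\lesssim \epsilon^{M_1/2+1-\delta}$. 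The commutator with $\slashed{\rm div}\,b$ and the $\partial_\phi$-differentiated right-hand side are controlled identically, each new source term containing at least one factor of $\mathcal{L}_{\partial_{\phi}}$ applied to $b$, $\log\tilde\Omega$, or $f$, all of which obey an $\epsilon^{M_1/2}$ bound in the relevant Sobolev norm. A second application of Proposition~\ref{qualitativeexistence} at level $M=N_0-2$ to the resulting equation for $\mathcal{L}_{\partial_{\phi}}\overset{\triangle}{\eta}$ then yields $\|\mathcal{L}_{\partial_{\phi}}\overset{\triangle}{\eta}\|_{\mathring{H}^{N_0-2}}\lesssim \epsilon^{M_1/2}$.

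The one mildly delicate point, and the closest thing to an obstacle, is the bookkeeping of the $\gamma^{-k}$ losses that appear when differentiating the seed vector field $\tilde b$; however each such loss is polynomial in $\gamma^{-1}$ at a fixed order, and is consumed by the standing hierarchy $0<\epsilon\ll\gamma\ll\delta\ll 1$ together with $\epsilon^{\delta/500}\gamma^{-200}\ll 1$, so it may be safely swept into the implicit constants and into the $\epsilon^{-\delta}$ slack already present in the final estimates.
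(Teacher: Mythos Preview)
Your proposal is correct and follows essentially the same approach as the paper: put \eqref{etatriangle} into the form $Ku=F$ and invoke Proposition~\ref{qualitativeexistence} for the first bound, then commute with $\mathcal{L}_{\partial_{\phi}}$ and apply Proposition~\ref{qualitativeexistence} once more for the second, using the near-axisymmetry bounds~\eqref{somebound3ssssss} to control all commutator and differentiated-source terms at the level $\epsilon^{M_1/2}$. Your treatment is in fact more explicit than the paper's, which simply cites Proposition~\ref{qualitativeexistence}, Lemma~\ref{conformal}, and the bounds~\eqref{somebound1ssssss} and~\eqref{somebound3ssssss}; in particular your remark about absorbing the $\gamma^{-k}$ losses from differentiating $\tilde b$ into the $\epsilon^{-\delta}$ slack via the standing hierarchy is exactly the mechanism the paper relies on implicitly.
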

\begin{proof} The bound on $\left\vert\left\vert \overset{\triangle}{\eta}\right\vert\right\vert_{\mathring{H}^{N_0-1}}$ follows from Proposition~\ref{qualitativeexistence}, Sobolev inequalities, Lemma~\ref{conformal}, and the bounds~\eqref{somebound1ssssss}.

To estimate $\mathcal{L}_{\partial_{\phi}}\overset{\triangle}{\eta}_A$ we commute~\eqref{etatriangle} with $\mathcal{L}_{\partial_{\phi}}$ to obtain
\begin{align}\label{etatrianglephi}
\left(2-\slashed{\rm div}b\right)\mathcal{L}_{\partial_{\phi}}\overset{\triangle}{\eta}_A  - \mathcal{L}_b\mathcal{L}_{\partial_{\phi}}\left(\overset{\triangle}{\eta}_A\right) &= \left(\mathcal{L}_{\partial_{\phi}}\slashed{\rm div}b\right)\overset{\triangle}{\eta}_A+\left[\mathcal{L}_{\partial_{\phi}},\mathcal{L}_b\right]\overset{\triangle}{\eta}_A +
\\ \nonumber &\qquad \mathcal{L}_{\partial_{\phi}}\left(-2\slashed{\nabla}_A\mathcal{L}_b\log\tilde\Omega + \slashed{\nabla}^B\left(\slashed{\nabla}\hat{\otimes}b\right)_{BA} - \frac{1}{2}\slashed{\nabla}_A\slashed{\rm div}b\right).
\end{align}
Then we use Proposition~\ref{qualitativeexistence} again as well as the bounds~\eqref{somebound3ssssss} and~\eqref{omsmsjsjksoqjnojq1}.
\end{proof}

Next we have a definition/lemma for a quantity $\overset{\triangle}{\Omega^{-1}{\rm tr}\chi}$.
\begin{lemma}\label{uminus2data}Let $\left(\slashed{g}_{AB},b^A,\kappa,\tilde\Omega\right)$ be a regular $4$-tuple. Then, using Proposition~\ref{qualitativeexistence} we define a function on $\mathbb{S}^2$, $\overset{\triangle}{\Omega^{-1}{\rm tr}\chi}$, to be the unique solution to
\begin{equation}\label{trchitriangle}
\mathcal{L}_b\left(\overset{\triangle}{\Omega^{-1}{\rm tr}\chi}\right) + \left(\overset{\triangle}{\Omega^{-1}{\rm tr}\chi}\right)\left(-1+{\rm div}b-2\kappa + 2\mathcal{L}_b\log\tilde\Omega\right) = -2K  + 2\slashed{\rm div}\overset{\triangle}{\eta} + 2\left|\overset{\triangle}{\eta}\right|^2,
\end{equation}
where $K$ denotes the Gaussian curvature of $\slashed{g}_{AB}$. Then we have the following bounds for $\overset{\triangle}{\Omega^{-1}{\rm tr}\chi}$:
\[\left\vert\left\vert \overset{\triangle}{\Omega^{-1}{\rm tr}\chi} - 2\right\vert\right\vert_{\mathring{H}^{N_0-2}} \lesssim \epsilon^{1-\delta},\qquad \left\vert\left\vert \mathcal{L}_{\partial_{\phi}}\overset{\triangle}{\Omega^{-1}{\rm tr}\chi}\right\vert\right\vert_{\mathring{H}^{N_0-3}} \lesssim \epsilon^{M_1/2}.\]
\end{lemma}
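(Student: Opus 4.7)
The plan is to rewrite the defining equation in the form covered by Proposition~\ref{qualitativeexistence} and then read off the estimates. Setting $w \doteq \overset{\triangle}{\Omega^{-1}{\rm tr}\chi} - 2$, the equation~\eqref{trchitriangle} becomes, after substitution and rearrangement,
\begin{equation*}
w + \mathcal{L}_{-b}w + h\cdot w = F,
\end{equation*}
where $h \doteq -\bigl(\slashed{\rm div}b - 2\kappa + 2\mathcal{L}_b\log\tilde\Omega\bigr)$ and
\begin{equation*}
F \doteq 2(K-1) - 2\slashed{\rm div}\overset{\triangle}{\eta} - 2|\overset{\triangle}{\eta}|^2 + 2\slashed{\rm div}b - 4\kappa + 4\mathcal{L}_b\log\tilde\Omega.
\end{equation*}
This is exactly the structure of Definition~\ref{KuF} with $X=-b$, so we aim to invoke Proposition~\ref{qualitativeexistence}.

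To verify the hypotheses, I first need the smallness of $A_2$. Using Lemma~\ref{eestimates} and a Sobolev inequality gives $\|b\|_{\mathring H^3}\lesssim \epsilon^{1-\delta}$; Lemma~\ref{babablacksheep} gives $\|\slashed{\rm div}b\|_{\mathring H^2}\lesssim \epsilon^{2-\delta}$; Lemma~\ref{kappaisthis} gives $\kappa \sim \epsilon^2$; and, as in~\eqref{boundsforbbbvarphigigif}, $\|\mathcal{L}_b\log\tilde\Omega\|_{\mathring H^2}\lesssim \epsilon^{4-2\delta}$. Hence $A_2 \lesssim \epsilon^{1-\delta}$ is sufficiently small. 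The quantity $A_{N_0-2}$ is finite since $\tilde b$ has derivatives controlled by powers of $\gamma^{-1}$ and the assumption $\epsilon^{\delta/500}\gamma^{-200}\ll 1$ keeps the product bounded by an absolute constant. I then bound $\|F\|_{\mathring H^{N_0-2}}$ term-by-term. Writing $K = e^{-2\varphi}(1-\mathring\Delta\varphi)$ and using~\eqref{somebound1ssssss} with Lemma~\ref{interlemm} yields $\|K-1\|_{\mathring H^{N_0-2}}\lesssim \epsilon^{2-\delta}$. Lemma~\ref{uminus1data} and Lemma~\ref{interlemm} give $\|\slashed{\rm div}\overset{\triangle}{\eta}\|_{\mathring H^{N_0-2}} + \||\overset{\triangle}{\eta}|^2\|_{\mathring H^{N_0-2}} \lesssim \epsilon^{1-\delta}$. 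The remaining terms $\slashed{\rm div}b$, $\kappa$, and $\mathcal{L}_b\log\tilde\Omega$ are each $O(\epsilon^{2-\delta})$. Thus $\|F\|_{\mathring H^{N_0-2}}\lesssim \epsilon^{1-\delta}$, and Proposition~\ref{qualitativeexistence} produces a unique solution $w$ satisfying $\|w\|_{\mathring H^{N_0-2}}\lesssim \epsilon^{1-\delta}$, which is the first claimed bound.

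For the $\mathcal L_{\partial_\phi}$ estimate, I commute the equation through by $\mathcal{L}_{\partial_\phi}$, producing
\begin{equation*}
\mathcal{L}_{\partial_\phi}w + \mathcal{L}_{-b}\mathcal{L}_{\partial_\phi}w + h\,\mathcal{L}_{\partial_\phi}w = \mathcal{L}_{\partial_\phi}F - (\mathcal{L}_{\partial_\phi}h)\,w + \mathcal{L}_{\mathcal{L}_{\partial_\phi}b}w,
\end{equation*}
where I used $[\mathcal{L}_{\partial_\phi},\mathcal{L}_b] = \mathcal{L}_{\mathcal{L}_{\partial_\phi}b}$. By~\eqref{somebound3ssssss}, the assumption on $z$ in Definition~\ref{areallynicedefinitionofseeddata}, Lemma~\ref{uminus1data}, and the fact that $\tilde b = a(\theta)\csc\theta\,\partial_\phi + r\partial_\phi$ is axisymmetric (so $\mathcal{L}_{\partial_\phi}\tilde b = 0$), every factor of the form $\mathcal{L}_{\partial_\phi}(\cdot)$ appearing on the right-hand side is bounded by $\epsilon^{M_1/2}$ in the norms we need. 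Combined with the first-order bound $\|w\|_{\mathring H^{N_0-2}}\lesssim \epsilon^{1-\delta}$ to control the $(\mathcal{L}_{\partial_\phi}h)w$ term, this gives a right-hand side of size $\epsilon^{M_1/2}$ in $\mathring H^{N_0-3}$. A second application of Proposition~\ref{qualitativeexistence}, with the same perturbation operator, then yields the desired $\|\mathcal{L}_{\partial_\phi}w\|_{\mathring H^{N_0-3}}\lesssim \epsilon^{M_1/2}$.

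The only subtlety worth flagging is the sign flip involved in presenting the equation as a perturbation of the identity: after isolating the linear-in-$w$ part, one finds a $-w$, and one must multiply through by $-1$ and identify $X = -b$ in Proposition~\ref{qualitativeexistence}. Once this bookkeeping is done, the proof is a direct invocation of Section~\ref{perturbtheidentityforever} together with the already established bounds on $b$, $\kappa$, $\tilde\Omega$, $\varphi$, and $\overset{\triangle}{\eta}$; there is no essential analytic obstacle beyond tracking $\epsilon$-powers carefully.
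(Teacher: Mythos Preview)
Your proof is correct and follows essentially the same approach as the paper: rewrite the equation for $\overset{\triangle}{\Omega^{-1}{\rm tr}\chi}-2$ in the form of Proposition~\ref{qualitativeexistence}, verify the smallness/finiteness of $A_2$ and $A_M$, bound the right-hand side, and then commute with $\mathcal{L}_{\partial_\phi}$ for the second estimate. One minor slip: you wrote $\tilde b = a(\theta)\csc\theta\,\partial_\phi + r\partial_\phi$, but in fact $\tilde b = \bigl(\int_{\pi/2}^\theta a(\hat\theta)/\sin\hat\theta\,d\hat\theta + r\bigr)\partial_\phi$; since either way the coefficient depends only on $\theta$, your conclusion $\mathcal{L}_{\partial_\phi}\tilde b=0$ is unaffected.
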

\begin{proof}We can re-write~\eqref{trchitriangle} as  
\begin{align}\label{trchitriangle2}
&\mathcal{L}_b\left(\overset{\triangle}{\Omega^{-1}{\rm tr}\chi}-2\right) + \left(\left(\overset{\triangle}{\Omega^{-1}{\rm tr}\chi}\right)-2\right)\left(-1+{\rm div}b-2\kappa + 2\mathcal{L}_b\log\tilde\Omega\right) = 
\\ \nonumber &-2(K-1) -2\left({\rm div}b-2\kappa + 2\mathcal{L}_b\log\tilde\Omega\right) + 2\slashed{\rm div}\overset{\triangle}{\eta} + 2\left|\overset{\triangle}{\eta}\right|^2,
\end{align}
and then the  bound on $\left\vert\left\vert \overset{\triangle}{\Omega^{-1}{\rm tr}\chi}-2\right\vert\right\vert_{\mathring{H}^{N_0-2}}$ follows from Proposition~\ref{qualitativeexistence}, Lemma~\ref{uminus1data}, and the bounds~\eqref{somebound1ssssss}.

In order to estimate $\mathcal{L}_{\partial_{\phi}}\overset{\triangle}{\Omega^{-1}{\rm tr}\chi}$ we commute~\eqref{trchitriangle} with $\mathcal{L}_{\partial_{\phi}}$ to obtain
\begin{align}\label{trchitrianglephi}
&\mathcal{L}_b\mathcal{L}_{\partial_{\phi}}\left(\overset{\triangle}{\Omega^{-1}{\rm tr}\chi}\right) + \left(\mathcal{L}_{\partial_{\phi}}\overset{\triangle}{\Omega^{-1}{\rm tr}\chi}\right)\left(-1+{\rm div}b-2\kappa + 2\mathcal{L}_b\log\tilde\Omega\right) =
\\ \nonumber &\qquad  \mathcal{L}_{\partial_{\phi}}\left(-2K  + 2\slashed{\rm div}\overset{\triangle}{\eta} + 2\left|\overset{\triangle}{\eta}\right|^2\right)
\\ \nonumber &\qquad -\left[\mathcal{L}_{\partial_{\phi}},\mathcal{L}_b\right]\left(\overset{\triangle}{\Omega^{-1}{\rm tr}\chi}\right) - \left(\overset{\triangle}{\Omega^{-1}{\rm tr}\chi}\right)\mathcal{L}_{\partial_{\phi}}\left(-1+{\rm div}b-2\kappa + 2\mathcal{L}_b\log\tilde\Omega\right).
\end{align}
Then we use Proposition~\ref{qualitativeexistence} again as well as the bounds~\eqref{somebound3ssssss} and~\eqref{omsmsjsjksoqjnojq1}. 
\end{proof}

Lastly, we have a definition/lemma concerning the quantity $\overset{\triangleright}{\Omega^{-1}\hat{\chi}}_{AB}$.
\begin{lemma}\label{uminus3data}Let $\left(\slashed{g}_{AB},b^A,\kappa,\tilde\Omega\right)$ be a regular $4$-tuple and $\underline{v} > 0$. Then, using Proposition~\ref{kapissingbutthereisasolnatleastevolve}, we define a $1$-parameter family of symmetric trace-free tensors on $\mathbb{S}^2$, $\left\{\overset{\triangleright}{\Omega^{-1}\hat{\chi}}_{AB}(v)\right\}_{v \in (0,\underline{v}]}$, in the coordinate frame by solving
\begin{align}\label{hatchitraingle}
&v\mathcal{L}_v\overset{\triangleright}{\Omega^{-1}\hat{\chi}}_{AB} + \mathscr{L}\overset{\triangleright}{\Omega^{-1}\hat{\chi}}_{AB} - 2\kappa \overset{\triangleright}{\Omega^{-1}\hat{\chi}}_{AB} + 2\tilde\Omega^{-1}\left(\mathcal{L}_b\tilde\Omega\right)\overset{\triangleright}{\Omega^{-1}\hat{\chi}}_{AB}= 
\\ \nonumber &\qquad \left(\slashed{\nabla}\hat{\otimes}\overset{\triangle}{\eta}\right)_{AB} + \left(\overset{\triangle}{\eta}\hat{\otimes}\overset{\triangle}{\eta}\right)_{AB} - \frac{1}{2}\left(\overset{\triangle}{\Omega^{-1}{\rm tr}\chi}\right)\left(\slashed{\nabla}\hat{\otimes}b\right)_{AB},
\end{align}
\[\overset{\triangleright}{\Omega^{-1}\hat{\chi}}_{AB}|_{v=\underline{v}} = 0.\]
Here we have used the natural extension of tensors defined on $\mathbb{S}^2$ to tensors defined on $\mathbb{S}^2\times [0,\underline{v}]$ by simply extending the tensors to be independent of $v$.

We have the following bounds for $\overset{\triangleright}{\Omega^{-1}\hat{\chi}}_{AB}$, for any constant $1 \ll N_1 \ll N_0$:
\begin{equation}\label{precisebadepdep}
\sup_{v \in [0,\underline{v}]}\left\vert\left\vert \overset{\triangleright}{\Omega^{-1}\hat{\chi}}\right\vert\right\vert_{\mathring{H}^{\lfloor N_1/2\rfloor }} \lesssim \epsilon^{-2N_1},
\end{equation}
\begin{equation}\label{yayayepsislsons}
\sup_{v\in [0,\underline{v}]}\left(\left|\log^{-\lfloor\frac{N_1}{10}\rfloor}\left(\frac{v}{\underline{v}}\right)\right|+1\right)\left[\left\vert\left\vert \overset{\triangleright}{\Omega^{-1}\hat{\chi}}\right\vert\right\vert_{\mathring{H}^{\lfloor N_1/100\rfloor }} +\left\vert\left\vert v\mathcal{L}_{\partial_v}\overset{\triangleright}{\Omega^{-1}\hat{\chi}}\right\vert\right\vert_{\mathring{H}^{\lfloor N_1/100\rfloor }}\right]\lesssim \epsilon^{1-\delta}.
\end{equation}

Furthermore, we have that $\lim_{v \to 0}\overset{\triangleright}{\Omega^{-1}\hat{\chi}}_{AB}$ exists (in a Lie-propagated frame), and we also have that
\begin{equation}\label{thisgivesacontstateformchihat}
\lim_{v\to 0}v^{-\frac{\kappa}{10}}\left\vert\left\vert \overset{\triangleright}{\Omega^{-1}\hat{\chi}} - \left(\lim_{v\to 0 }\overset{\triangleright}{\Omega^{-1}\hat{\chi}}\right)\right\vert\right\vert_{\mathring{H}^{\lfloor N_1/2 \rfloor}} = 0.
\end{equation} 
Finally, we note that we will have
\begin{equation}\label{0901jnuu3ii1341}
\slashed{g}^{AB}\overset{\triangleright}{\Omega^{-1}\hat{\chi}}_{AB} = 0.
\end{equation}
\end{lemma}

\begin{remark}
Though it will be sufficient for our purposes, the bound~\eqref{precisebadepdep} is far from sharp. However, it is possible to show that in fact
\[\left\vert\left\vert \overset{\triangleright}{\Omega^{-1}\hat{\chi}}|_{v=0}\right\vert\right\vert_{L^2} \sim \epsilon^{-1}.\]
Thus, while~\eqref{precisebadepdep} could be improved, it is in fact necessary for any estimate of $\overset{\triangleright}{\Omega^{-1}\hat{\chi}}_{AB}$ which is uniform in $v$ to degenerate as $\epsilon \to 0$. 
\end{remark}
\begin{proof}The bound~\eqref{precisebadepdep} and limit~\eqref{thisgivesacontstateformchihat} are immediate consequences of Proposition~\ref{kapissingbutthereisasolnatleastevolve} (with $\tilde H_{AB} = \left(\slashed{\nabla}\hat{\otimes}\overset{\triangle}{\eta}\right)_{AB} + \left(\overset{\triangle}{\eta}\hat{\otimes}\overset{\triangle}{\eta}\right)_{AB} - \frac{1}{2}\left(\overset{\triangle}{\Omega^{-1}{\rm tr}\chi}\right)\left(\slashed{\nabla}\hat{\otimes}b\right)_{AB}$), Proposition~\ref{kapissingbutthereisasolnatleast}, Lemmas~\ref{uminus1data} and~\ref{uminus2data}, and~\eqref{somebound1ssssss}. (The bound of $\epsilon^{-2N_1}$ may of course be improved, but this estimate will suffice for this paper.)

For the bound~\eqref{yayayepsislsons} it is clear that we cannot directly appeal to Proposition~\ref{kapissingbutthereisasolnatleastevolve}. Instead we start by proving that for all $j \geq 1$ and $k \geq 1$ with $0 \leq j + k \leq N_1-1$, the following bound for $\overset{\triangleright}{\Omega^{-1}\hat{\chi}}_{AB}$:
\begin{equation}\label{atthebegginginignginwehadsog}
\left\vert\left\vert \left(v\mathcal{L}_{\partial_v}\right)^j\left(\overset{\triangleright}{\Omega^{-1}\hat{\chi}}\right)|_{v=\underline{v}}\right\vert\right\vert_{\mathring{H}^k} \lesssim \epsilon^{j-\delta},\qquad \left\vert\left\vert \left(v\mathcal{L}_{\partial_v}\right)^j\mathcal{L}_{\partial_{\phi}}\left(\overset{\triangleright}{\Omega^{-1}\hat{\chi}}\right)|_{v=\underline{v}}\right\vert\right\vert_{\mathring{H}^{k-1}} \lesssim \epsilon^{M_1/2}.
\end{equation}
To see why~\eqref{atthebegginginignginwehadsog} holds, we note that a direct consequence of~\eqref{hatchitraingle} is that
\begin{align}\label{dwkokodwaaaa}
&\left(v\mathcal{L}_v\right)^j\overset{\triangleright}{\Omega^{-1}\hat{\chi}}_{AB}|_{v=\underline{v}} = 
\\ \nonumber &\qquad \left(-1\right)^{j-1}\left(\mathscr{L}-2\kappa + 2\mathcal{L}_b\log\tilde\Omega\right)^{j-1}\left(\slashed{\nabla}\hat{\otimes}\overset{\triangle}{\eta} + \overset{\triangle}{\eta}\hat{\otimes}\overset{\triangle}{\eta} - \frac{1}{2}\left(\overset{\triangle}{\Omega^{-1}{\rm tr}\chi}\right)\left(\slashed{\nabla}\hat{\otimes}b\right)\right)_{AB}|_{v=\underline{v}}.
\end{align}
Combining~\eqref{dwkokodwaaaa} with~\eqref{somebound1ssssss},~\eqref{somebound3ssssss}, Lemma~\ref{uminus1data}, and Lemma~\ref{uminus2data} then yields~\eqref{atthebegginginignginwehadsog}.

Now we define
\[\mathcal{P}_{AB} \doteq \left(v\mathcal{L}_{\partial_v}\right)^{\lfloor \frac{N_1}{10}\rfloor }\left(\overset{\triangleright}{\Omega^{-1}\hat{\chi}}\right)_{AB}.\]
Commuting~\eqref{hatchitraingle} with $\left(v\mathcal{L}_{\partial_v}\right)^{\lfloor \frac{N_1}{10}\rfloor }$ leads to the following equation:
\begin{align}\label{hatchitraingle2}
&v\mathcal{L}_v\mathcal{P}_{AB} + \mathscr{L}\mathcal{P}_{AB}- 2\kappa\mathcal{P}_{AB}+ 2\tilde\Omega^{-1}\left(\mathcal{L}_b\tilde\Omega\right)\mathcal{P}_{AB}= 0.
\end{align}
Now we apply Proposition~\ref{kapissingbutthereisasolnatleastevolve} (with $\tilde H = 0$) and obtain the following estimate for $\mathcal{P}$:
\begin{align}\label{estimateforppppp}
\sup_{v \in [0,\underline{v}]}\left\vert\left\vert \mathcal{P}\right\vert\right\vert_{\mathring{H}^{\lfloor \frac{N_1}{100}\rfloor }}^2 &\lesssim \epsilon^{-\lfloor\frac{N_1}{40}\rfloor }\left\vert\left\vert \mathcal{P}|_{v=\underline{v}}\right\vert\right\vert_{\mathring{H}^{\lfloor \frac{N_1}{100}\rfloor }}^2 + \epsilon^{-\lfloor\frac{N_1}{40}\rfloor }\left\vert\left\vert \mathcal{L}_{\partial_{\phi}}\mathcal{P}|_{v=\underline{v}}\right\vert\right\vert_{\mathring{H}^{\lfloor \frac{N_1}{100}\rfloor }}^2
\\ \nonumber &\lesssim \epsilon^{\lfloor \frac{N_1}{500}\rfloor },
\end{align}
where in the final inequality we used~\eqref{atthebegginginignginwehadsog}.

Next, we observe that after changing variables to $s \doteq -\log\left(\frac{v}{\underline{v}}\right)$, we have
\[\partial_s^{\lfloor \frac{N_1}{10}\rfloor}\overset{\triangleright}{\Omega^{-1}\hat{\chi}}_{AB} = \mathcal{P}.\]
In particular, repeatedly integrating from $s = 0$, using the bounds~\eqref{estimateforppppp} and~\eqref{atthebegginginignginwehadsog}, and then switching back to the $v$-variable leads to the bound~\eqref{yayayepsislsons}.

It only remains to establish~\eqref{0901jnuu3ii1341}. Contracting~\eqref{hatchitraingle} with $\slashed{g}^{AB}$ leads to the following equation for $\Theta \doteq \slashed{g}^{AB}\overset{\triangleright}{\Omega^{-1}\hat{\chi}}_{AB}$:
\[v\mathcal{L}_{\partial_v}\Theta+ \mathcal{L}_b\Theta + \frac{1}{2}\slashed{\rm div}b \Theta - 2\kappa \Theta + 2\tilde{\Omega}^{-1}\left(\mathcal{L}_b\tilde{\Omega}\right)\Theta = 0.\]
This is a transport equation for $\Theta$, and since $\Theta|_{v = \underline{v}} = 0$, we conclude that $\Theta$ vanishes everywhere.
\end{proof}

Now we are ready for the main result of the section.
\begin{proposition}\label{itexistsbutforalittlewhile}Let $\left(\left(\slashed{g}_0\right)_{AB},\left(b_0\right)^A,\kappa,\tilde\Omega\right)$ be a regular $4$-tuple, $N > 0$ be a sufficiently large integer satisfying $1 \ll N \ll N_0$, and $\underline{v} > 0$. Then, for $\epsilon$ sufficiently small,  there exists an open set $\mathcal{U} \subset \mathbb{R}^2$ with $\{v = 0\} \cup \{u=-1\} \subset \mathcal{U}$ and a  spacetime $\left(\mathcal{M},g_{\mu\nu}\right)$ in the double-null form~\eqref{metricform} for $(u,v) \in \mathcal{U} \cap \{-1 \leq u < 0\} \cap \{0 \leq v <\underline{v}\}$  satisfying the following properties:
\begin{enumerate}
	\item Within the region $\mathcal{U}$, we have the following regularity for various double-null unknowns:
	\begin{equation}\label{02020202020922332}
	\left(v^{\kappa}\Omega, v^{-\kappa}\Omega^{-1}\right) \in L^{\infty}_{\rm loc},
	\end{equation}
	\begin{equation}\label{bound11212232}
	 \left(\Omega^{-1}\beta_A,\rho,\sigma,\Omega\underline{\beta}_A\right) \in L^{\infty}_{u,{\rm loc}}L^2_{v,{\rm loc}}\mathring{H}^N_{\rm loc}, \qquad  \left(\rho,\sigma,\Omega\underline{\beta}_A,\Omega^2\underline{\alpha}_{AB}\right) \in L^{\infty}_{v,{\rm loc}}L^2_{u,{\rm loc}}\mathring{H}^N_{\rm loc}, 
	\end{equation}
	\begin{equation}\label{11212222222}
 	\left(b_A,\Omega\underline{\chi}_{AB},\Omega^{-1}\chi_{AB},v\Omega\omega,\Omega\underline{\omega},\slashed{\nabla}_A\log\Omega\right) \in L^{\infty}_{u,{\rm loc}}L^{\infty}_{v,{\rm loc}}\mathring{H}^N_{\rm loc},
	\end{equation}
	\begin{equation}\label{11212332123123123}
	\left({\rm det}\left(\slashed{g}\right)\right)^{-1}\in L^{\infty}_{\rm loc},\qquad  K \in L^{\infty}_{u,{\rm loc}}L^{\infty}_{v,{\rm loc}}\mathring{H}^N_{\rm loc},
	\end{equation}
	and for every $v_0 > 0$, we have 
	\begin{equation}\label{k0kokoeko2}
	\alpha_{AB} \in L^{\infty}_{u,{\rm loc}}L^2_{v > v_0,{\rm loc}}\mathring{H}^N_{\rm loc} < \infty.
	\end{equation}
	\item All of the quantities listed in~\eqref{02020202020922332}-\eqref{11212332123123123} have limits in $\mathring{H}^N$ on $\{v = 0\} \cup \{u=-1\}$. 
		\item The initial data along $\{v = 0\}$ is obtained:
	\[\lim_{v\to 0}\slashed{g}_{AB} = u^2\left(\slashed{g}_0\right)_{AB},\qquad \lim_{v \to 0}\left(b_A\right) = -u\left(b_0\right)_A,\qquad \lim_{v\to 0} \left(\frac{v}{-u}\right)^{\kappa}\Omega = \tilde\Omega,\]
	where the limits are taken in coordinate frames.
	\item We have 
	\begin{equation}\label{okmdqwdcs}
	\eta_A|_{\{(u,v) = (-1,0)\}} = \overset{\triangle}{\eta}_A,\qquad \Omega^{-1}{\rm tr}\chi|_{\{(u,v) = (-1,0)\}} = \overset{\triangle}{\Omega^{-1}{\rm tr}\chi},
	\end{equation}
	\begin{equation}\label{oioi1981892}
	\Omega^{-1}\hat{\chi}_{AB}|_{\{u=-1\}} = \overset{\triangleright}{\Omega^{-1}\hat{\chi}}_{C(A}\left(\slashed{g}_0^{-1}\right)^{CD}\slashed{g}_{B)D},
	\end{equation}
	where $\overset{\triangle}{\eta}_A$, $\overset{\triangle}{\Omega^{-1}{\rm tr}\chi}$, and $\overset{\triangleright}{\Omega^{-1}\hat{\chi}}_{AB}$ are as in Lemmas~\ref{uminus1data}, \ref{uminus2data}, and~\ref{uminus3data} respectively, and we define, in a coordinate frame along $\{u = -1\}$, $\left(\slashed{g}_0\right)_{AB}\left(v,\theta^C\right) \doteq \left(\slashed{g}_0\right)_{AB}\left(\theta^C\right)$. 
\item We have that $\mathcal{L}_{\partial_v}\left(v^{\kappa}\Omega\right)|_{u=-1} = 0$.
\end{enumerate}

\end{proposition}
\begin{proof}The idea of the proof is to change variables to $\left(u,\hat{v},\theta^A\right)$ defined by $\hat{v} \doteq \left(1-2\kappa\right)^{-1}v^{1-2\kappa}$ (cf.~Definition~\ref{kapselfseim}) and then apply Theorem~\ref{localexistencecharbetter}.

We need to determine the correct choice of incoming and outgoing characteristic data sets (in the sense of Definition~\ref{indatasets}) so that after undoing the coordinate change $\hat{v} \doteq \left(1-2\kappa\right)^{-1}v^{1-2\kappa}$, we end up with the desired spacetime. Let us start with the incoming data and $\zeta_A|_{(-1,0)}$. Keeping in mind that $dv = v^{2\kappa}d\hat{v}$, we set
\[\Omega^{(\rm in)}\left(u,\theta^A\right) \doteq \left(-u\right)^{\kappa}\tilde\Omega\left(\theta^A\right),\qquad \slashed{g}_{AB}^{(\rm in)}\left(u,\theta^A\right) \doteq u^2\left(\slashed{g}_0\right)_{AB}(\theta^A),\qquad \left(b^A\right)^{(\rm in)}\left(u,\theta^A\right) = u^{-1}\left(b^A\right)_0(\theta^A),\]
\[\zeta_A|_{(-1,0)} \doteq \overset{\triangle}{\eta}_A -\slashed{\nabla}_A\tilde{\Omega}.\]

Next we turn to the outgoing characteristic data. Here we will set $\Omega^{(\rm out)}\left(\hat{v},\theta^A\right) \doteq \tilde\Omega\left(\theta^A\right)$. It will also be convenient to define $\Theta_A^{ \ \ B} \doteq \overset{\triangleright}{\Omega^{-1}\hat{\chi}}_{AC}\left(\slashed{g}_0^{-1}\right)^{CB}$, which will satisfy $\Theta_A^{\ \ A} = 0$. Keeping in mind that $\partial_{\hat v} = v^{2\kappa}\partial_v$, we see that we will need to define $\slashed{g}^{(\rm out)}_{AB}\left(\hat{v},\theta^A\right)$ so that the following all hold
\begin{equation}\label{thezerothingtohold}
\slashed{g}^{(\rm out)}_{AB}|_{\hat{v} = 0} = \slashed{g}_{AB},
\end{equation}
\begin{equation}\label{thefirstthingtohold}
\frac{1}{2}\tilde\Omega^{-2}\left(\slashed{g}^{(\rm out)}\right)^{AB}\partial_{\hat{v}}\left(\slashed{g}^{(\rm out)}\right)_{AB}|_{\hat{v} = 0} = \overset{\triangle}{\Omega^{-1}{\rm tr}\chi},
\end{equation}
\begin{equation}\label{thesecondthingtohold}
\frac{1}{2}\tilde\Omega^{-2}{\rm tf}\left(\partial_{\hat{v}}\slashed{g}^{(\rm out)}\right)_{AB} = \Theta_{(A}^{\ \ \ C}\slashed{g}_{B)C},
\end{equation}
\begin{align}\label{thethirdthingtohold}
&\tilde\Omega^{-1}\partial_v\left(\frac{1}{2}\tilde\Omega^{-1}\left(\slashed{g}^{(\rm out)}\right)^{AB}\partial_{\hat{v}}\left(\slashed{g}^{(\rm out)}\right)_{AB}\right) + \frac{1}{2}\left(\frac{1}{2}\tilde\Omega^{-1}\left(\slashed{g}^{(\rm out)}\right)^{AB}\partial_{\hat{v}}\left(\slashed{g}^{(\rm out)}\right)_{AB}\right)^2 =
\\ \nonumber &\qquad  -\frac{1}{4}\left(\slashed{g}^{(\rm out)}\right)^{AC}\left(\slashed{g}^{(\rm out)}\right)^{BD}\tilde\Omega^{-2}{\rm tf}\left(\partial_{\hat{v}}\slashed{g}^{(\rm out)}\right)_{AB} {\rm tf}\left(\partial_{\hat{v}}\slashed{g}^{(\rm out)}\right)_{CD}. 
\end{align}
As usual, the tf denotes the trace-free part, and the $AB$ denote a Lie-propagated coordinate frame. The condition~\eqref{thezerothingtohold} is necessary so that $\slashed{g}_{AB}^{(\rm in)}|_{u=-1} = \slashed{g}_{AB}^{(\rm out)}|_{v=0}$, the conditions~\eqref{thefirstthingtohold} and~\eqref{thesecondthingtohold} are necessary so that the last two equalities of~\eqref{okmdqwdcs} will hold, and~\eqref{thethirdthingtohold} is necessary so that~\eqref{ray2cons} will hold. We now follow the well-known procedure for finding such a $\slashed{g}^{(\rm out)}_{AB}$. We will look for $\slashed{g}^{(\rm out)}_{AB}$ in the form
\begin{equation}\label{thisishatg}
\slashed{g}^{(\rm out)}_{AB} = e^{2\varphi^{(\rm out)}}\hat{\slashed{g}}_{AB}^{(\rm out)},
\end{equation}
where $\hat{\slashed{g}}_{AB}^{(\rm out)}$ is defined both by~\eqref{thisishatg} and the requirement that it has the same volume form as $\slashed{g}_{AB}$. The constancy of the volume form implies that
\[\hat{\slashed{g}}^{AB}\partial_{\hat{v}}\hat{\slashed{g}}_{AB} = 0.\]
Thus,~\eqref{thesecondthingtohold} is seen to become
\begin{align}
& \partial_{\hat{v}}\hat{\slashed{g}}^{(\rm out)}_{AB} = 2\tilde\Omega^2 e^{-2\varphi^{(\rm out)}}\Theta_{(A}^{\ \ \ C}\slashed{g}_{B)C}\Rightarrow 
\\ \label{yayawegothatslashg} &\qquad \hat{\slashed{g}}^{(\rm out)}_{AB}(\hat{v},\theta) = \slashed{g}_{AB}(\theta) + \int_0^{\hat{v}}\left(2\tilde{\Omega}^2\Theta_{(A}^{\ \ \ C}\slashed{g}_{B)C}e^{-2\varphi^{(\rm out)}}\right)\, d\hat{v}.
\end{align}
Next, plugging~\eqref{thisishatg} into~\eqref{thezerothingtohold}, \eqref{thethirdthingtohold}, and~\eqref{thefirstthingtohold} eventually yields the following second order o.d.e. for $\varphi^{(\rm out)}$:
\begin{equation}\label{theodeforvarphiddd}
\partial^2_{\hat{v}}\varphi^{(\rm out)} + \left(\partial_{\hat{v}}\varphi^{(\rm out)}\right)^2 = -\frac{1}{2}\tilde\Omega^2 \left(\hat{\slashed{g}}^{(\rm out)}\right)^{AC}\left(\hat{\slashed{g}}^{(\rm out)}\right)^{BD}\left(\overset{\triangleright}{\Omega^{-1}\hat{\chi}}\right)_{AB}\left(\hat{\slashed{g}}^{(\rm out)}\right)^{BD}\left(\overset{\triangleright}{\Omega^{-1}\hat{\chi}}\right)_{CD},
\end{equation}
\begin{equation}\label{initialdkfowdw}
\varphi^{(\rm out)}|_{\hat{v} = 0} = 0,\qquad \partial_{\hat{v}}\varphi^{(\rm out)}|_{\hat{v} = 0} = \frac{1}{2}\tilde\Omega^2\left(\overset{\triangle}{\Omega^{-1}{\rm tr}\chi}\right).
\end{equation}
We thus see that~\eqref{yayawegothatslashg},~\eqref{theodeforvarphiddd}, and~\eqref{initialdkfowdw} determine an integral-differential system for $\varphi^{(\rm out)}$ and $\hat{\slashed{g}}_{AB}^{(\rm out)}$. 

Now, standard arguments and  the bound~\eqref{yayayepsislsons} show that this integral-differential system for $\varphi^{(\rm out)}$ and $\hat{\slashed{g}}_{AB}^{(\rm out)}$ has a solution for $\hat{v} \in [0,(1-2\kappa)^{-1}\underline{v}^{1-2\kappa}]$ satisfying the bounds
\begin{equation}\label{boundsforvaroutoutout}
\left\vert\left\vert \hat{\slashed{g}}^{(\rm out)}-\slashed{g}\right\vert\right\vert_{\mathring{H}^{\tilde N}} \leq A\epsilon^{1-\delta},\qquad \left\vert\left\vert \partial_v\varphi^{(\rm out)}\right\vert\right\vert_{\mathring{H}^{\tilde N}} + \left\vert\left\vert \varphi^{(\rm out)}\right\vert\right\vert_{\mathring{H}^{\tilde N}} \leq A\epsilon^{1-\delta},
\end{equation}
where $A \gg 1$ is a suitable constant independent of $\epsilon$ and $\tilde N$ is a suitable integer satisfying $N \ll \tilde N \ll N_0$.

Having constructed the outgoing and ingoing data, we may appeal to Theorem~\ref{localexistencecharbetter} in the $\left(u,\hat{v},\theta^A\right)$ variables. (The necessary regularity statements for the outgoing and ingoing data follow from Definition~\ref{Mreg}, Lemma~\ref{uminus1data}, Lemma~\ref{uminus3data}, and the bound~\eqref{boundsforvaroutoutout}.) Finally, we obtain the desired spacetime in the $\left(u,v,\theta^A\right)$ variables by setting 
\[v \doteq \left((1-2\kappa)\hat{v}\right)^{\frac{1}{1-2\kappa}}.\]
The regularity statements follow from noting that the lapse $\Omega$ of the spacetime in the $\left(u,v,\theta\right)$ coordinates will satisfy $\Omega \sim v^{\kappa}$, from the definitions of the various metric components, Ricci coefficients, and curvature components, and the fact that
\[\frac{\partial}{\partial v} = v^{-2\kappa}\frac{\partial}{\partial \hat{v}}.\]

\end{proof}

\begin{remark}\label{siglapweight}For any Ricci coefficient $\psi$ not equal to $\omega$ and any null curvature component $\Psi$ not equal to $\alpha$, we will have that $s\left(\psi\right)$ or $s\left(\Psi\right)$  gives the power of $\Omega$ which shows up in~\eqref{bound11212232} and~\eqref{11212332123123123}.
\end{remark}

\section{The Bootstrap Argument for Region I}\label{bootforregone}
The main result of this section will be the following:
\begin{theorem}\label{itisreg1}Let $(\mathcal{M},g_{\mu\nu})$ be a spacetime produced by Proposition~\ref{itexistsbutforalittlewhile}, and let $\underline{v} > 0$ be sufficiently small. Then we can pick $\epsilon$ sufficiently small so that $\left(\mathcal{M},g_{\mu\nu}\right)$ exists in the region $\left\{(u,v) : -1 \leq u < 0 \text{ and } 0 \leq \frac{v}{-u} \leq \underline{v} \right\}$, and in this region the spacetime satisfies the regularity bounds~\eqref{02020202020922332}-\eqref{11212332123123123} and the estimates~\eqref{betterboot1} and~\eqref{betterboot2}.
\end{theorem}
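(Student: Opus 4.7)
The plan is to run a standard bootstrap continuation argument in the region $I$. Let $\mathcal{R}(u_*,v_*) \subseteq I$ denote the subset where $u \leq u_*$ and $v/(-u) \leq v_*/(-u_*)$. Using Theorem~\ref{localexistencecharbetter} and Proposition~\ref{itexistsbutforalittlewhile} to get started, I let $\mathcal{B}$ be the set of $(u_*,v_*) \in I$ for which the solution exists in $\mathcal{R}(u_*,v_*)$ and a suitable scale-invariant bootstrap norm $\mathcal{N}$, to be described below, is bounded by $2C_0$ for a large constant $C_0$ chosen in terms of the initial-data constants of Proposition~\ref{itexistsbutforalittlewhile}. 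The set $\mathcal{B}$ is non-empty (by the local existence near $\{u=-1\} \cup \{\hat v=0\}$), relatively closed (by the blow-up criterion in Theorem~\ref{localexistencecharbetter}), and the goal is to show that it is open, by improving $\mathcal{N} \leq 2C_0$ to $\mathcal{N} \leq C_0$.

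The first step is to set up the renormalizations described in Section~\ref{previewofchardata}: I introduce the $\Omega$-weighted variables $\Omega^{s(\psi)}\psi$ and $\Omega^{s(\Psi)}\Psi$, which by Lemma~\ref{dkwowsss322} have regular $v \to 0$ limits in the approximately $\kappa$-self-similar regime, and then define the subtracted quantities
\[
\widetilde{\underline\alpha} = \Omega^2\underline\alpha - \lim_{v\to 0}\Omega^2\underline\alpha,\quad
\widetilde{\underline\beta} = \Omega\underline\beta - \lim_{v\to 0}\Omega\underline\beta,\quad
\widetilde\rho = \rho - \lim_{v\to 0}\rho,\quad
\widetilde\sigma = \sigma - \lim_{v\to 0}\sigma,
\]
\[
\widetilde\alpha = \Omega^{-2}\alpha - \overset{\triangleright}{\Omega^{-2}\alpha},\qquad
\widetilde\beta = \Omega^{-1}\beta - \overset{\triangleright}{\Omega^{-1}\beta} - \lim_{v\to 0}\Bigl[\tfrac{1}{2}\slashed\nabla(\Omega^{-1}{\rm tr}\chi) - \tfrac{1}{2}\eta(\Omega^{-1}{\rm tr}\chi)\Bigr],
\]
where $\overset{\triangleright}{\Omega^{-1}\hat\chi}$ is the self-similar extension of $\Omega^{-1}\hat\chi|_{u=-1}$ via Lemma~\ref{uminus3data} and $\overset{\triangleright}{\Omega^{-2}\alpha}$, $\overset{\triangleright}{\Omega^{-1}\beta}$ are obtained from~\eqref{4hatchi} and~\eqref{tcod1}. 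Analogous subtractions are applied to each Ricci coefficient (so that $\psi^* \to 0$ as $v \to 0$), and $\omega$ is eliminated from every estimate by absorbing it into an $\Omega$-weight, as in item (3) of the introductory discussion. The bootstrap norm $\mathcal{N}$ combines, for $k \leq N$, $L^\infty_uL^2_v\mathring H^k$ and $L^\infty_vL^2_u\mathring H^k$ energy norms on the renormalized curvature components (with the scale-invariant weights $(-u)^{a}v^{b}$ appropriate to each component's signature, matching those used in~\cite{scaleinvariant}), together with scale-invariant $L^\infty_{u,v}\mathring H^k$ bounds on the renormalized Ricci coefficients.

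Next, to close the bootstrap I carry out the scale-invariant energy estimates of~\cite{scaleinvariant} for the renormalized null Bianchi pairs $(\widetilde\alpha,\widetilde\beta)$, $(\widetilde\beta,\widetilde\rho,\widetilde\sigma)$, $(\widetilde\rho,\widetilde\sigma,\widetilde{\underline\beta})$, $(\widetilde{\underline\beta},\widetilde{\underline\alpha})$ obtained by substituting the above splittings into~\eqref{3alpha}--\eqref{4undalpha}; the key modifications relative to~\cite{scaleinvariant} are: (a) since now $\underline{v} \gg \epsilon$, there is no smallness of $v/(-u)$ to compensate the largeness $\Omega^{-1}\hat\chi \sim \epsilon^{-1}$, so the renormalization of $\beta$ and $\alpha$ by their self-similar parts is essential; (b) the inhomogeneities produced by the renormalization are large in $L^\infty$ but, by construction via the $\kappa$-singular evolution equation in Lemma~\ref{uminus3data}, they are given as $v$-integrals of quantities of scale-invariant size $\epsilon$, hence contribute smallness after integration along the $e_4$-direction; (c) the weighting by $\Omega^{s(\Psi)}$ removes $\omega$, $\underline\omega$ and their loss at $v \to 0$ from the equations. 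Commutation with $\slashed\nabla$ and the signature-preserving rules of Lemma~\ref{signatureconserve} and Remark~\ref{signatureconserve2} give the higher-order energy estimates. Transport estimates for the Ricci coefficients are then obtained by integrating~\eqref{4trchi}--\eqref{curlueta} along $e_3$ or $e_4$, using the improved $L^2$ curvature control; here one uses the prescribed values of $\eta$, $\Omega^{-1}{\rm tr}\chi$ at $(u,v)=(-1,0)$ and the $\kappa$-singular solution $\overset{\triangleright}{\Omega^{-1}\hat\chi}$ along $\{u=-1\}$, given by Lemmas~\ref{uminus1data}--\ref{uminus3data}, as boundary data.

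The hardest step will be to actually produce smallness in the energy estimates for the renormalized $\widetilde\beta$ equation: schematically this pair couples to $\widetilde\alpha$, and because $\overset{\triangleright}{\Omega^{-1}\hat\chi} \sim \epsilon^{-1}$, one must carefully track which nonlinear terms $\hat\chi \cdot (\cdots)$ are converted, by the subtraction, into sources controlled by Proposition~\ref{kapissingbutthereisasolnatleastevolve} and which contribute a genuine smallness via $v$-integration of the Lemma~\ref{uminus3data} bound~\eqref{yayayepsislsons}; this is the place where the anti-symmetric $\kappa$-singular structure of Section~\ref{kapsingsec} enters through the choice of $\overset{\triangleright}{\Omega^{-1}\hat\chi}$ and ensures that the otherwise-large contribution is square-integrable in $v$ on $[0,\underline v]$ with size $O(\epsilon^{1-\delta})$. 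Once this is handled, the remaining Bianchi pairs are controlled as in~\cite{scaleinvariant}, the Ricci transport estimates follow, and the bootstrap norm $\mathcal{N}$ is improved from $2C_0$ to $C_0$ for $\epsilon$ sufficiently small, closing the argument and yielding~\eqref{betterboot1}--\eqref{betterboot2} throughout region $I$.
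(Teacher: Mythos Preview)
Your sketch follows the paper's approach closely and captures the essential structure: bootstrap continuation, the precise renormalizations $\widetilde{\Psi}$ and $\overset{\triangleright}{\cdot}$ you list, scale-invariant energy estimates for the renormalized Bianchi pairs, and transport estimates for the Ricci coefficients. Two points of correction, however. First, in region $I$ the $\Omega^{s}$-weighting eliminates only $\omega$, not $\underline{\omega}$; the latter must still be estimated (the paper does this via its $\nabla_4$ equation, using a further splitting $\widetilde{\underline{\omega}} = \widetilde{\underline{\omega}}^{(0)} + \widetilde{\underline{\omega}}^{(1)}$ to get enough $v$-decay). Second, the phrase ``transport estimates for the Ricci coefficients are then obtained by integrating \ldots\ along $e_3$ or $e_4$'' understates the work needed for $\hat{\underline{\chi}}$ and ${\rm tr}\underline{\chi}$ at top order: one cannot use the $\nabla_4$ equation for ${\rm tr}\underline{\chi}$ (it would require $\slashed{\nabla}^3\underline{\eta}$, which is not available), so the paper instead introduces a second-level renormalization $\widetilde{{\rm tr}\underline{\chi}}^{(1)}$, $\widetilde{\hat{\underline{\chi}}}^{(1)}$, uses the $\nabla_3$-Raychaudhuri equation for ${\rm tr}\underline{\chi}^{(1)}$, and couples this to $\hat{\underline{\chi}}^{(1)}$ via the Codazzi equation as an elliptic estimate. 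This coupling is what actually closes the $\mathscr{B}$-norm estimate for $\hat{\underline{\chi}}$ and ${\rm tr}\underline{\chi}$, and it is probably the most delicate Ricci-coefficient step you are glossing over.
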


We will prove this theorem with a bootstrap argument. In Section~\ref{normsregion1} we will define the relevant norms. Then in Section~\ref{estimatesregion1} we will carry out the bootstrap argument. 

\subsection{Norms}\label{normsregion1}
Often, instead of working directly with metric quantities $\vartheta$, Ricci coefficients $\psi$, or null curvature components $\Psi$ we will define quantities  $\widetilde{\vartheta}$, $\widetilde{\Omega^s\psi}$ and $\widetilde{\Omega^s\Psi}$  where we have subtracted off terms reflecting the leading order self-simiar behavior as $v\to 0$. We turn now to the relevant definitions and conventions.
\begin{convention}Throughout this section, unless said otherwise, all norms of tensorial quantities are computed with respect to $\slashed{g}_{AB}$, and we will always use the round metric induced volume form when we integrate on each $\mathbb{S}^2_{u,v}$. 
\end{convention}

\begin{definition}We introduce the definition that, unless said otherwise, for any tensor or function $\mu$,
\[\overline{\mu}\left(u,v,\theta^A\right) \doteq \lim_{v\to 0}\mu\left(u,v,\theta^A\right),\]
where the limit is taken with respect to a \underline{Lie-propagated coordinate frame} if $\mu$ is tensorial. Similarly, $\overline{\slashed{\nabla}}_A$ will denote the connection with respect to $\overline{\slashed{g}}_{AB}$, and we will overline differential operators such as $\overline{\Omega\nabla_3}$ or $\overline{\slashed{\rm div}}$ to indicate that they should be computed from the $v\to 0$ limits of the relevant quantities. For example, 
\[\overline{\slashed{\rm div}}\vartheta = \overline{\slashed{g}}^{AB}\overline{\slashed{\nabla}}_A\vartheta_B = \overline{\slashed{g}}^{AB}\left(\mathcal{L}_{\theta^A}\vartheta_B + \overline{\slashed{\Gamma}}_{AB}^{\ \ C}\vartheta_C\right),\qquad \overline{\Omega\nabla}_3\vartheta_A \doteq \mathcal{L}_{\partial_u}\vartheta_A +\mathcal{L}_{\overline{b}}\vartheta_A - \overline{\slashed{g}}^{BC}\overline{\underline{\chi}}_{AB}\vartheta_C,\]
where $\slashed{\Gamma}_{AB}^{\ \ C}$ denotes the Christoffel symbols of $\slashed{g}_{AB}$ (and hence $\overline{\slashed{\Gamma}}_{AB}^{\ \ C}$ denote the Christoffel symbols of $\overline{\slashed{g}}_{AB}$).
\end{definition}

We start with metric quantities.
\begin{definition}\label{initsuboffdefmetric}Let $(\mathcal{M},g)$ be a spacetime produced by Proposition~\ref{itexistsbutforalittlewhile}. We define, in the coordinate frame,
\[ \widetilde{b^A} \doteq b^A - \overline{b^A},\qquad \widetilde{\Omega} \doteq \left(\frac{v}{-u}\right)^{\kappa}\Omega - \overline{\left(\frac{v}{-u}\right)^{\kappa}\Omega}, \qquad \widetilde{\slashed{\nabla}}_A \doteq \slashed{\nabla}_A - \overline{\slashed{\nabla}}_A,\qquad \widetilde{\slashed{g}}_{AB} \doteq \slashed{g}_{AB}- \overline{\slashed{g}}_{AB}.\]
Note that it is a consequence of Proposition~\ref{itexistsbutforalittlewhile} that these limits exist.

\end{definition}

Next we turn to Ricci coefficients except for $\omega$ and $\hat{\chi}_{AB}$.
\begin{definition}\label{initsuboffdefricci}
Let $(\mathcal{M},g_{\mu\nu})$ be a spacetime produced by Proposition~\ref{itexistsbutforalittlewhile}, let $\psi$ denote any Ricci coefficient other than $\hat{\chi}_{AB}$ or $\omega$, and let $s$ be the signature of $\psi$. Then we define
\[\widetilde{\psi} \doteq \Omega^s\psi - \overline{\Omega^s\psi}.\]
 Note that it is a consequence of Proposition~\ref{itexistsbutforalittlewhile} that this limit exists. 
\end{definition}

For $\hat{\chi}_{AB}$, $\alpha_{AB}$, $\beta_A$, $\rho$, and $\sigma$, we will need to consider a more involved renormalization scheme.
\begin{definition}\label{leadingself}We may uniquely extend $\overset{\triangleright}{\Omega^{-1}\hat{\chi}}_{AB}$ self-similarly to the whole spacetime by setting,
\\ \underline{in the coordinate frame},
\[\overset{\triangleright}{\Omega^{-1}\hat{\chi}}_{AB}\left(u,v,\theta^C\right) \doteq (- u)\overset{\triangleright}{\Omega^{-1}\hat{\chi}}_{AB}\left(\frac{v}{u},\theta^C\right).\]
(We recall that $\overset{\triangleright}{\Omega^{-1}\hat{\chi}}_{AB}$ is defined in the course of Proposition~\ref{itexistsbutforalittlewhile} (using Lemma~\ref{uminus3data}). )

Using $\overset{\triangleright}{\Omega^{-1}\hat{\chi}}_{AB}$, we now define quantities $\overset{\triangleright}{\Omega^{-2}\alpha}_{AB}$, $\overset{\triangleright}{\Omega^{-1}\beta}_A$, $\overset{\triangleright}{\rho}$, and $\overset{\triangleright}{\sigma}$.
\[\overset{\triangleright}{\Omega^{-2}\alpha} \doteq -\left(\frac{v}{-u}\right)^{2\kappa}\left(\overline{\left(\frac{v}{-u}\right)^{-2\kappa}\Omega^{-2}}\right)\mathcal{L}_v\left(\overset{\triangleright}{\Omega^{-1}\hat{\chi}}\right),\]

\[\overset{\triangleright}{\Omega^{-1}\beta}_A \doteq -\overline{\slashed{\rm div}}\left(\overset{\triangleright}{\Omega^{-1}\hat{\chi}}\right)_A - \overline{\eta}_B\left(\overset{\triangleright}{\Omega^{-1}\hat{\chi}}\right)_{AC}\slashed{g}^{BC},\]

\[\overset{\triangleright}{\rho} \doteq \overline{\rho - \frac{1}{2}\hat{\underline{\chi}}\cdot\hat{\chi}} + \frac{1}{2}\overline{\Omega\hat{\underline{\chi}}}_{AB}\left(\overset{\triangleright}{\Omega^{-1}\hat{\chi}}\right)_{CD}\slashed{g}^{AC}\slashed{g}^{BD},\qquad \overset{\triangleright}{\sigma} \doteq \overline{\sigma - \frac{1}{2}\hat{\chi}\wedge\hat{\underline{\chi}}} + \frac{1}{2}\left(\overset{\triangleright}{\Omega^{-1}\hat{\chi}}\right)_{AB}\overline{\left(\Omega\hat{\underline{\chi}}\right)}_{CD}\slashed{\epsilon}^{AC}\slashed{g}^{BD}.\]

\end{definition}
\begin{remark}Note that both $\rho$ and $\hat{\chi}\cdot\hat{\underline{\chi}}$ have regular limits as $v\to 0$. However, $\rho - \frac{1}{2}\hat{\chi}\cdot\hat{\underline{\chi}}$ exhibits cancellation in the sense that we will have
\[u^2\left|\left(\rho - \frac{1}{2}\hat{\chi}\cdot\hat{\underline{\chi}}\right)|_{v=0}\right| \lesssim \epsilon,\] 
even though individually for $\rho$ and $\hat{\chi}\cdot\hat{\underline{\chi}}$, the best estimate we have is
\[u^2\left|\rho|_{v=0}\right| \lesssim 1,\qquad u^2\left|\hat{\chi}\cdot\hat{\underline{\chi}}|_{v=0}\right| \lesssim 1.\]
An analogous remark holds for $\sigma$ and $\hat{\chi}\wedge\hat{\underline{\chi}}$. 
\end{remark}

Now we can define the renormalized $\hat{\chi}_{AB}$.
\begin{definition}Let $(\mathcal{M},g_{\mu\nu})$ be a spacetime produced by Proposition~\ref{itexistsbutforalittlewhile}. Then we define
\[\widetilde{\hat{\chi}}_{AB} \doteq \Omega^{-1}\hat{\chi}_{AB} - \overset{\triangleright}{\Omega^{-1}\hat{\chi}}_{AB}.\]
\end{definition}

Next we come to the renormalized curvature components $\check{\Psi}$ except for $\beta_A$.
\begin{definition}\label{initsuboffdefricci2}
Let $(\mathcal{M},g_{\mu\nu})$ be a spacetime produced by Proposition~\ref{itexistsbutforalittlewhile}. Then we define
\[\widetilde{\alpha}_{AB} \doteq \Omega^{-2}\alpha_{AB} - \overset{\triangleright}{\Omega^{-2}\alpha}_{AB},\qquad \widetilde{\beta}_A \doteq \Omega^{-1}\beta_A - \overset{\triangleright}{\Omega^{-1}\beta}_A- \frac{1}{2}\overline{\slashed{\nabla}_A\left(\Omega^{-1}{\rm tr}\chi\right)}-\frac{1}{2} \overline{\eta_A\left(\Omega^{-1}{\rm tr}\chi\right)}
,\qquad \widetilde{\rho} \doteq \rho - \overset{\triangleright}{\rho},\]
\[\widetilde{\sigma} \doteq \sigma - \overset{\triangleright}{\sigma},\qquad \widetilde{\underline{\beta}}_A \doteq \underline{\beta}_A - \overline{\underline{\beta}}_A,\qquad \widetilde{\underline{\alpha}}_{AB} \doteq \underline{\alpha}_{AB} - \overline{\underline{\alpha}}_{AB}.\]
\end{definition}

For any $\left(\tilde u,\tilde v\right)$ satisfying $-1 \leq \tilde u < 0$ and $0 < \frac{\tilde v}{-\tilde u} \leq \underline{v}$, it will be convenient to define
\[\mathcal{R}_{\tilde u,\tilde v} = \{\left(u,v\right) : u \in [-1,\tilde u]\text{ and }v \in [0,\tilde v]\}.\]
We emphasize that $\mathring{\rm dVol}$ refers to the volume form on the round sphere of radius $1$, and  (by our conventions) we calculate $\left|\cdot\right|$ with respect to $\slashed{g}_{AB}$.

We are now ready to define the energy norms for the null curvature components.
\begin{definition}\label{energynormcurv}Let $0 < q < 1/2$, $-1 \leq \tilde u   < 0$, and $0 < \frac{\tilde v}{-\tilde u} \leq \underline{v}$. Then we define
\begin{align*}
&\left\vert\left\vert \alpha\right\vert\right\vert^2_{\mathscr{A}_{q,\tilde u,\tilde v}} \doteq
\\ \nonumber &\qquad  \sum_{i=0}^2\sup_{\left(u,v\right) \in \mathcal{R}_{\tilde u,\tilde v}}\left[\int_0^v\int_{\mathbb{S}^2}\Omega^2\frac{(-u)^{4-2q+2i}}{\dot{v}^{1-2q}}\left|\slashed{\nabla}^i\widetilde{\alpha}\right|^2\, \mathring{\rm dVol}\, d\dot{v} + \int_{-1}^u\int_0^v\int_{\mathbb{S}^2}\frac{(-\dot{u})^{3-2q+2i}}{\dot{v}^{1-2q}}\Omega^2\left|\slashed{\nabla}^i\widetilde{\alpha}\right|^2\mathring{\rm dVol}\, d\dot{u}\, d\dot{v}\right],
\end{align*}
\begin{align*}
&\left\vert\left\vert \underline{\alpha}\right\vert\right\vert^2_{\mathscr{A}_{q,\tilde u,\tilde v}} \doteq
\\ \nonumber &\qquad   \sum_{i=0}^2\sup_{\left(u,v\right) \in \mathcal{R}_{\tilde u,\tilde v}}\left[\int_{-1}^u\int_{\mathbb{S}^2}\frac{(-\dot{u})^{4-\frac{q}{100}+2i}}{v^{1-\frac{q}{100}}}\left|\slashed{\nabla}^i\widetilde{\underline{\alpha}}\right|^2\, \mathring{\rm dVol}\, d\dot{u} + \int_{-1}^u\int_0^v\int_{\mathbb{S}^2}\frac{(-\dot{u})^{4-\frac{q}{100}+2i}}{\dot{v}^{2-\frac{q}{100}}}\left|\slashed{\nabla}^i\widetilde{\underline{\alpha}}\right|^2\mathring{\rm dVol}\, d\dot{u}\, d\dot{v}\right],
\end{align*}
and, for any curvature component $\Psi \in \{\rho,\sigma,\underline{\beta}_A\}$ we have
\begin{align*}
&\left\vert\left\vert \Psi\right\vert\right\vert^2_{\mathscr{A}_{q,\tilde u,\tilde v}} \doteq
\\ \nonumber &\qquad   \sum_{i=0}^2\sup_{\left(u,v\right) \in \mathcal{R}_{\tilde u,\tilde v}}\Bigg[\int_{-1}^u\int_{\mathbb{S}^2}\frac{(-\dot{u})^{4-\frac{q}{100}+2i}}{v^{1-\frac{q}{100}}}\left|\slashed{\nabla}^i\widetilde{\Psi}\right|^2\, \mathring{\rm dVol}\, d\dot{u} + \int_0^v\int_{\mathbb{S}^2}\Omega^2\frac{(-u)^{4-\frac{q}{100}+2i}}{\dot{v}^{1-\frac{q}{100}}}\left|\slashed{\nabla}^i\widetilde{\Psi}\right|^2\, \mathring{\rm dVol}\, d\dot{v}+ 
\\ \nonumber &\qquad \qquad \qquad \qquad \qquad\ \int_{-1}^u\int_0^v\int_{\mathbb{S}^2}\frac{(-\dot{u})^{4-\frac{q}{100}+2i}}{\dot{v}^{2-\frac{q}{100}}}\left|\slashed{\nabla}^i\widetilde{\Psi}\right|^2\mathring{\rm dVol}\, d\dot{u}\, d\dot{v}\Bigg],
\end{align*}
and for $\beta_A$ we have 
\begin{align*}
&\left\vert\left\vert \beta\right\vert\right\vert^2_{\mathscr{A}_{q,\tilde u,\tilde v}} \doteq
\\ \nonumber &\qquad   \sum_{i=0}^2\sup_{\left(u,v\right) \in \mathcal{R}_{\tilde u,\tilde v}}\Bigg[\int_{-1}^u\int_{\mathbb{S}^2}\frac{(-\dot{u})^{4-2q+2i}}{v^{1-2q}}\left|\slashed{\nabla}^i\widetilde{\beta}\right|^2\, \mathring{\rm dVol}\, d\dot{u} + \int_0^v\int_{\mathbb{S}^2}\Omega^2\frac{(-u)^{4-\frac{q}{100}+2i}}{\dot{v}^{1-\frac{q}{100}}}\left|\slashed{\nabla}^i\widetilde{\beta}\right|^2\, \mathring{\rm dVol}\, d\dot{v}+ 
\\ \nonumber &\qquad \qquad \qquad \qquad \qquad\ \int_{-1}^u\int_0^v\int_{\mathbb{S}^2}\frac{(-\dot{u})^{4-2q+2i}}{\dot{v}^{2-2q}}\left|\slashed{\nabla}^i\widetilde{\beta}\right|^2\mathring{\rm dVol}\, d\dot{u}\, d\dot{v}\Bigg],
\end{align*}
We also introduce the notation 
\[\mathfrak{A}_{q,\tilde u,\tilde v} \doteq \sum_{\Psi \in \{\alpha,\beta,\rho,\sigma,\underline{\beta},\underline{\alpha}\}}\left\vert\left\vert \Psi\right\vert\right\vert_{\mathscr{A}_{q,\tilde u,\tilde v}}.\]
Finally, when it will not cause confusion, we will often suppress a subset of the $\left(q,\tilde u,\tilde v\right)$ indices from the $\mathscr{A}$ or $\mathfrak{A}$ subscript. 
\end{definition}
\begin{remark}As we have mentioned in the introduction, the rationale behind these weights is similar to the rationale behind the weights in the work~\cite{scaleinvariant}. For the convenience of the reader, we now quickly recapitulate the main points:
\begin{enumerate}
	\item We expect the solution to be ``asymptotically self-similar'' as we approach $(u,v) = (0,0)$. Thus we choose our norms to be invariant under the rescaling diffeomorphism $\left(u,v,\theta^A\right) \mapsto \left(\lambda u,\lambda v,\theta^A\right)$ for $\lambda > 0$. In particular, our energy norms along constant $u$ or $v$ hypersurfaces should have a total $u$ and $v$ weight which adds up to $3$ plus $2$ times the number of angular dervative commutations. For a spacetime energy norm, the total weight should add up to $2$ plus $2$ times the number of angular dervative commutations.
	\item When we carry out the energy estimates for $\left(\tilde{\alpha},\tilde{\beta}\right)$, we will need to conjugate the $\nabla_3$ equation for $\alpha$ by a suitable $u$-weight. This produces lower order terms proportional to $|u|^{-1}$; the norms have to be chosen so that these additional lower order terms in combination with the already present terms from ${\rm tr}\underline{\chi}\alpha$ have a good sign in our energy estimate. Every other curvature component satisfies a $\nabla_4$ equation, and the conjugation by a negative $v$-weight will produce a good spacetime term.
	\item We have to make sure the norms are chosen so that we have a finite contribution to our estimates  from initial data and from the inhomogeneities produced by the various curvature renormalizations.
\end{enumerate}
\end{remark}

Next, we have the norms for the Ricci coefficients (except for the lower regularity norm for ${\rm tr}\chi$ and ${\rm tr}\underline{\chi}$).
\begin{definition}\label{lowordernormricci}Let $0 < q < 1/2$, $-1 \leq \tilde u   < 0$, and $0 < \frac{\tilde v}{-\tilde u} \leq \underline{v}$. Then we define for any Ricci coefficient $\psi \in \{\underline{\omega},\eta_A,\hat{\underline{\chi}}_{AB},\underline{\eta}_A\}$:\begin{align}\label{riccivanish}
&\left\vert\left\vert \psi\right\vert\right\vert^2_{\mathscr{B}_{q,\tilde u,\tilde v}} \doteq \sum_{i=0}^2\sup_{\left(u,v\right) \in \mathcal{R}_{\tilde u,\tilde v}}\frac{(-u)^{4-\frac{q}{100}+2i}}{v^{2-\frac{q}{100}}}\int_{\mathbb{S}^2}\left|\slashed{\nabla}^i\widetilde{\psi}\right|^2\mathring{\rm dVol},
\end{align}
for $\hat{\chi}_{AB}$ we have
\begin{align}\label{riccivanish12345}
&\left\vert\left\vert \hat{\chi}\right\vert\right\vert^2_{\mathscr{B}_{q,\tilde u,\tilde v}} \doteq \sum_{i=0}^2\sup_{\left(u,v\right) \in \mathcal{R}_{\tilde u,\tilde v}}\frac{(-u)^{4-2q+2i}}{v^{2-2q}}\int_{\mathbb{S}^2}\left|\slashed{\nabla}^i\widetilde{\hat{\chi}}\right|^2\mathring{\rm dVol},
\end{align}
for $\psi \in \{{\rm tr}\chi,{\rm tr}\underline{\chi}\}$ we set 
\begin{align}\label{riccivanish2}
&\left\vert\left\vert \psi\right\vert\right\vert^2_{\mathscr{B}_{q,\tilde u,\tilde v}} \doteq \sum_{i=1}^2\sup_{\left(u,v\right) \in \mathcal{R}_{\tilde u,\tilde v}}\frac{(-u)^{4-\frac{q}{100}+2i}}{v^{2-\frac{q}{100}}}\int_{\mathbb{S}^2}\left|\slashed{\nabla}^i\widetilde{\psi}\right|^2\mathring{\rm dVol}.
\end{align}
(Note that the sum starts with $i = 1$.)

We also introduce the notation 
\[\mathfrak{B}_{q,\tilde u,\tilde v} \doteq \sum_{\psi \not\in \{\omega\}}\left\vert\left\vert \psi\right\vert\right\vert_{\mathscr{B}_{q,\tilde u,\tilde v}}.\]
Finally, when it will not cause confusion, we will often suppress a subset of the $\left(q,\tilde u,\tilde v\right)$ indices from the $\mathscr{B}$ or $\mathfrak{B}$ subscript. 
\end{definition}
Next we have a norm for the $L^2$ norm of ${\rm tr}\chi$ and ${\rm tr}\underline{\chi}$
\begin{definition}Let  $0 < q < 1/2$, $-1 \leq \tilde u   < 0$, and $0 < \frac{\tilde v}{-\tilde u} \leq \underline{v}$. Then we define for $\psi \in \{{\rm tr}\underline{\chi},{\rm tr}\chi\}$:
\begin{equation}\label{riccivanish3}
\left\vert\left\vert \psi\right\vert\right\vert_{\mathscr{C}_{q,\tilde u,\tilde v}} \doteq \sup_{\left(u,v\right) \in \mathcal{R}_{\tilde u,\tilde v}}\frac{(-u)^{4-\frac{q}{100}}}{v^{2-\frac{q}{100}}}\int_{\mathbb{S}^2}\left|\widetilde{\psi}\right|^2\mathring{\rm dVol}.
\end{equation}
We also introduce the notation 
\[\mathfrak{C}_{q,\tilde u,\tilde v} \doteq \sum_{\psi \in \{{\rm tr}\chi,{\rm tr}\underline{\chi}\}}\left\vert\left\vert \psi\right\vert\right\vert_{\mathscr{C}_{q,\tilde u,\tilde v}}.\]
Finally, when it will not cause confusion, we will often suppress a subset of the $\left(p,\tilde u,\tilde v\right)$ indices from the $\mathscr{C}$ or $\mathfrak{C}$ subscript. 
\end{definition}

Lastly, we have the norms for the metric coefficients.
\begin{definition}Let $0 < q < 1/2$, $-1 \leq \tilde u   < 0$, and $0 < \frac{\tilde v}{-\tilde u} \leq \underline{v}$. Then we define
\begin{align}
&\left\vert\left\vert \slashed{g}\right\vert\right\vert_{\mathscr{D}_{q,\tilde u,\tilde v} }\doteq \sum_{i=0}^2\sup_{\left(u,v\right) \in \mathcal{R}_{\tilde u,\tilde v}}\frac{(-u)^{2+2i-\frac{q}{100}}}{v^{2-\frac{q}{100}}}\int_{\mathbb{S}^2}\left|\mathring{\nabla}^i\widetilde{\slashed{g}}\right|^2\mathring{\rm dVol},
\\ \label{444} &\qquad \left\vert\left\vert b\right\vert\right\vert_{\mathscr{D},q,i,\tilde u,\tilde v} \doteq \sum_{i=0}^2\sup_{\left(u,v\right) \in \mathcal{R}_{\tilde u,\tilde v}}\frac{(-u)^{2+2i-\frac{q}{100}}}{v^{2-\frac{q}{100}}}\int_{\mathbb{S}^2}\left|\slashed{\nabla}^i\widetilde{b}\right|^2\mathring{\rm dVol},
\end{align}
\begin{align*}
&\left\vert\left\vert \log\Omega \right\vert\right\vert_{\mathscr{D}_{p,\tilde u,\tilde v}} \doteq \sum_{i=0}^2\sup_{\left(u,v\right) \in \mathcal{R}_{\tilde u,\tilde v}}\frac{(-u)^{2+2i-\frac{q}{100}}}{v^{2-\frac{q}{100}}}\int_{\mathbb{S}^2}\left|\slashed{\nabla}^i\log\left(\widetilde{\Omega}\right)\right|^2\mathring{\rm dVol},
\end{align*}
where we recall that $\mathring{\nabla}$ is the covariant derivative with respect to a fixed round metric $\mathring{\slashed{g}}_{AB}$ on the sphere.
We also introduce the notation 
\[\mathfrak{D}_{q,\tilde u,\tilde v} \doteq \sum_{\vartheta \neq \slashed{g}}\left\vert\left\vert \vartheta \right\vert\right\vert_{\mathscr{D}_{q,\tilde u,\tilde v}}.\]
Finally, when it will not cause confusion, we will often suppress a subset of the $\left(q,\tilde u,\tilde v\right)$ indices from the $\mathscr{D}$ or $\mathfrak{D}$ subscript.

\end{definition}

Our last definition concerns weighted Sobolev spaces on $\mathbb{S}^2$:
\begin{definition}\label{idkdtildedefdef}
For any $(0,k)$-tensor $w$, $(u,v)$, and $i \in \{0,1,2\}$ we define
\[\left\vert\left\vert w\right\vert\right\vert_{\tilde H^i\left(\mathbb{S}^2_{u,v}\right)} \doteq  \sum_{j = 0}^i(v-u)^j\left(\int_{\mathbb{S}^2_{u,v}}\slashed{g}\left(\slashed{\nabla}^jw,\slashed{\nabla}^jw\right)\, \mathring{\rm dVol}\right)^{1/2},\]
\[\left\vert\left\vert w\right\vert\right\vert_{\tilde L^p\left(\mathbb{S}^2_{u,v}\right)} \doteq  \left(\int_{\mathbb{S}_{u,v}^2}\slashed{g}\left(w,w\right)^{p/2}\, \mathring{\rm dVol}\right)^{1/p},\]
where $\mathring{\rm dVol}$ denotes the volume form of the round sphere. 
\end{definition}

We close with a useful lemma.
\begin{lemma}\label{boundfromdatasub}
There exists $R > 0$ (independent of $\epsilon$) so that we have the following bounds:
\begin{equation}\label{boundsfortimesgmie}
\left[\left\vert\left\vert \overset{\triangleright}{\Omega^{-1}\hat{\chi}}\right\vert\right\vert^2_{\tilde H^4\left(\mathbb{S}^2_{u,v}\right)}+\left\vert\left\vert v\mathcal{L}_{\partial_v}\overset{\triangleright}{\Omega^{-1}\hat{\chi}}\right\vert\right\vert^2_{\tilde H^4\left(\mathbb{S}^2_{u,v}\right)}\right] \lesssim \left|\log^R\left(\frac{v}{-u}\right)\right|\epsilon^{2-2\delta}(-u)^{-2},
\end{equation}
\begin{equation}\label{dwodw2}
\left\vert\left\vert \overset{\triangleright}{\Omega^{-1}\beta}\right\vert\right\vert^2_{\tilde H^4\left(\mathbb{S}^2_{u,v}\right)} \lesssim \left|\log^R\left(\frac{v}{-u}\right)\right|\epsilon^{2-2\delta}(-u)^{-4},
\end{equation}
if $\psi$ is a Ricci coefficient, $\psi \not\in\{\hat{\chi}_{AB},{\rm tr}\chi,{\rm tr}\underline{\chi}\}$, $s$ denotes the signature of $\psi$, and $\Theta \in \left\{\overline{\rho-\frac{1}{2}\hat{\underline{\chi}}\cdot\hat{\chi}}, \overline{\sigma-\frac{1}{2}\hat{\chi}\wedge\hat{\underline{\chi}}},\overline{\underline{\beta}},\overline{\underline{\alpha}}\right\}$, then
\begin{equation}\label{boundsfortimesgmie2}
\left\vert\left\vert \overline{\Omega^s\psi}\right\vert\right\vert^2_{\tilde H^4\left(\mathbb{S}^2_{u,0}\right)} \lesssim \epsilon^{2-2\delta}(-u)^{-2},\qquad \left\vert\left\vert \Theta\right\vert\right\vert^2_{\tilde H^4\left(\mathbb{S}^2_{u,0}\right)} \lesssim \epsilon^{2-2\delta}(-u)^{-4},
\end{equation}
and lastly, 
\begin{equation}\label{boundsfortimesgmie3}
\left\vert\left\vert \Omega^{-1}{\rm tr}\chi\right\vert\right\vert^2_{\tilde H^4\left(\mathbb{S}^2_{u,0}\right)} \lesssim (-u)^{-2},\qquad \left\vert\left\vert \Omega{\rm tr}\underline{\chi}\right\vert\right\vert^2_{\tilde H^4\left(\mathbb{S}^2_{u,0}\right)} \lesssim (-u)^{-2},
\end{equation}
\begin{equation}\label{boundsfortimesgmie4}
\left\vert\left\vert \slashed{\nabla}\left(\Omega^{-1}{\rm tr}\chi\right)\right\vert\right\vert^2_{\tilde H^3\left(\mathbb{S}^2_{u,0}\right)} \lesssim \epsilon^{2-2\delta}(-u)^{-2},\qquad \left\vert\left\vert\slashed{\nabla}\left(\Omega {\rm tr}\underline{\chi}\right)\right\vert\right\vert^2_{\tilde H^3\left(\mathbb{S}^2_{u,0}\right)} \lesssim \epsilon^{2-2\delta}(-u)^{-2}.
\end{equation}
\end{lemma}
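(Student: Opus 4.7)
\medskip

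The plan is to exploit, for each quantity, the self-similar extension (or the $v=0$ restriction) to reduce the $\tilde H^4(\mathbb{S}^2_{u,v})$ bound to a bound on $\mathbb{S}^2$ involving the original data quantities, and then invoke the corresponding lemma from the previous section. A key observation that makes this systematic is: on any $\mathbb{S}^2_{u,v}$ in region I, one has $\slashed{g}_{AB}$ comparable to $u^2\check{\slashed{g}}_{AB}(\theta)$ up to corrections of order $\epsilon^{2-\delta}$ (which come from the regular $4$-tuple bounds and the local existence statement), so Christoffel symbols of $\slashed{g}$ reduce to those of $\check{\slashed{g}}$ and the inverse metric scales as $u^{-2}\check{\slashed{g}}^{AB}$. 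Consequently, for a $(0,k)$-tensor $T$ on $\mathbb{S}^2_{u,v}$,
\[
(v-u)^{2j}\!\int_{\mathbb{S}^2}|\slashed{\nabla}^j T|^2_{\slashed{g}}\,\mathring{\rm dVol}\;\sim\; u^{2j}\,u^{-2(k+j)}\!\int_{\mathbb{S}^2}|\check{\slashed{\nabla}}^j T|^2_{\check{\slashed{g}}}\,\mathring{\rm dVol}\;\sim\;(-u)^{-2k}\,\|T\|^2_{\mathring{H}^j},
\]
using $(v-u)\sim(-u)$ in region I. Thus the weight $(-u)^{-2}$ on the right-hand side of~\eqref{boundsfortimesgmie} is explained by $T=\overset{\triangleright}{\Omega^{-1}\hat{\chi}}$ being a $(0,2)$-tensor whose coordinate components carry an extra prefactor $(-u)^{+1}$.

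First I would handle $\overset{\triangleright}{\Omega^{-1}\hat{\chi}}$ and $v\mathcal{L}_{\partial_v}\overset{\triangleright}{\Omega^{-1}\hat{\chi}}$. By Definition~\ref{leadingself}, the coordinate components satisfy $\overset{\triangleright}{\Omega^{-1}\hat{\chi}}_{AB}(u,v,\theta)=(-u)F_{AB}(v/(-u),\theta)$ where $F$ is the quantity from Lemma~\ref{uminus3data}, and the change of variables $w=v/(-u)$ yields $v\mathcal{L}_{\partial_v}\overset{\triangleright}{\Omega^{-1}\hat{\chi}}_{AB}(u,v,\theta)=(-u)(w\partial_w F_{AB})(v/(-u),\theta)$. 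Plugging into the scaling identity above with $k=2$ and applying the bound~\eqref{yayayepsislsons} at $w=v/(-u)\in(0,\underline{v}]$, together with $|\log(v/(-u\underline{v}))|\lesssim|\log(v/(-u))|+|\log(1/\underline{v})|$, yields~\eqref{boundsfortimesgmie} with $R=2\lfloor N_1/10\rfloor$ (plus constants absorbed into $\lesssim$).

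Next, for $\overset{\triangleright}{\Omega^{-1}\beta}$, I would use its definition as $-\overline{\slashed{\rm div}}(\overset{\triangleright}{\Omega^{-1}\hat{\chi}})-\overline{\eta}{\cdot}\overset{\triangleright}{\Omega^{-1}\hat{\chi}}$. The divergence contributes one extra inverse metric factor, giving an additional $(-u)^{-2}$ in the squared norm compared to $\overset{\triangleright}{\Omega^{-1}\hat{\chi}}$; similarly $\overline{\eta}$ (extended self-similarly from Lemma~\ref{uminus1data} with $\overline{\eta}_A(u,\theta)=\overset{\triangle}{\eta}_A(\theta)$, and hence $|\overline{\eta}|_{\slashed{g}}\sim(-u)^{-1}|\overset{\triangle}{\eta}|_{\check{\slashed{g}}}$) contributes an extra $(-u)^{-2}$. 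Combining with the bound already established for $\overset{\triangleright}{\Omega^{-1}\hat{\chi}}$ and Sobolev multiplication (Lemma~\ref{interlemm}) applied to the angular $\mathring{H}^4$ estimates of $\overset{\triangle}{\eta}$ from Lemma~\ref{uminus1data}, yields~\eqref{dwodw2} with the same logarithmic factor.

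For the bounds~\eqref{boundsfortimesgmie2} on the $v\to 0$ limits, the key input is that the self-similar data scaling forces, at $v=0$: $\overline{\slashed{g}}_{AB}=u^2\slashed{g}_{0,AB}$, $\overline{b}^A=u^{-1}b_0^A$, $\overline{\Omega}=(-u)^\kappa\tilde\Omega$, and Ricci/curvature components scale by their signature times $(-u)^{-1}$ or $(-u)^{-2}$ respectively. Combining this scaling with Lemmas~\ref{uminus1data}--\ref{uminus2data} for $\overline{\eta}$ and $\overline{\Omega^{-1}{\rm tr}\chi}$, with Lemma~\ref{scalrelations2} restricted to $\{v=0\}$ for $\overline{\Omega{\rm tr}\underline{\chi}}$ and $\overline{\Omega\hat{\underline{\chi}}}$ (both inheriting their $\epsilon^{1-\delta}$-smallness from $b$), and with the null structure equations \eqref{3trchi},\eqref{curleta},\eqref{3rho},\eqref{3sigma},\eqref{3beta},\eqref{3alpha} evaluated on $\{v=0\}$ (which express the remaining $\overline{\Omega^s\psi}$ and the specially-renormalized $\Theta$ quantities in terms of already-bounded data) gives \eqref{boundsfortimesgmie2}. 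The bounds~\eqref{boundsfortimesgmie3}-\eqref{boundsfortimesgmie4} follow from Lemma~\ref{uminus2data}: the unweighted $L^2$ bound carries no $\epsilon$-smallness because $\overset{\triangle}{\Omega^{-1}{\rm tr}\chi}$ is close to the constant $2$, whereas after applying $\slashed{\nabla}$ the constant is annihilated and only the $\epsilon^{1-\delta}$ remainder survives; the same reasoning applies to $\Omega{\rm tr}\underline{\chi}$ using $\overline{\Omega{\rm tr}\underline{\chi}}=2/u+\slashed{\rm div}b_0$. The main obstacle is simply bookkeeping: carefully tracking the $u$-weights through the self-similar rescaling and keeping the logarithmic factors, which only appear for the three quantities derived from $\overset{\triangleright}{\Omega^{-1}\hat{\chi}}$, isolated from the others, but this is routine once the scaling identity above is in hand.
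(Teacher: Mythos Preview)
Your proposal is correct and is precisely an expanded version of the paper's own proof, which reads in its entirety: ``These bounds follow immediately from Lemmas~\ref{uminus1data}--\ref{uminus3data}, the construction of $(\mathcal{M},g)$ in Proposition~\ref{itexistsbutforalittlewhile}, and the null structure equations which relate curvature components to Ricci coefficients.'' Your scaling identity, the invocation of~\eqref{yayayepsislsons} for $\overset{\triangleright}{\Omega^{-1}\hat{\chi}}$ and $v\mathcal{L}_{\partial_v}\overset{\triangleright}{\Omega^{-1}\hat{\chi}}$, and the use of the null structure equations at $\{v=0\}$ for the remaining overlined quantities are exactly the ingredients the paper has in mind.

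One small remark: your assertion that $\slashed{g}_{AB}\sim u^2\check{\slashed{g}}_{AB}$ on $\mathbb{S}^2_{u,v}$ in region~I is, strictly speaking, a consequence of the bootstrap hypothesis~\eqref{boot2} (via Lemma~\ref{sosimilararethepscaes}) rather than of the local existence statement alone; the paper's proof is equally silent on this point, and in practice the lemma is only invoked inside the bootstrap so there is no logical gap. For the bounds on $\mathbb{S}^2_{u,0}$ this issue does not arise at all, since there $\slashed{g}=\overline{\slashed{g}}=u^2\slashed{g}_0$ exactly by the self-similar data prescription.
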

\begin{proof}
These bounds follow immediately from Lemmas~\ref{uminus1data}-\ref{uminus3data}, the construction of $\left(\mathcal{M},g_{\mu\nu}\right)$ in Proposition~\ref{itexistsbutforalittlewhile}, and the null structure equations which relate curvature components to Ricci coefficients. For example, let us consider $\eta_A$. We have (see footnote~\ref{agoodthingtoputinafootnoteyayaya}) that in the coordinate frame $\eta_A\left(u,\theta^B\right) = \overset{\triangle}{\eta}_A\left(\theta^B\right)$, and thus the estimate~\eqref{boundsfortimesgmie3} for $\eta_A$ follows from the corresponding estimates for $\overset{\triangle}{\eta}_A$ which were established in Lemma~\ref{uminus1data}.
\end{proof}

\subsection{The Estimates}\label{estimatesregion1}
A standard argument using Proposition~\ref{itexistsbutforalittlewhile} shows that Theorem~\ref{itisreg1} will follow from the following proposition.
\begin{proposition}\label{letsbootit}Let $q \in (0,1/2)$, $0 < p \ll 1$, and $(\mathcal{M},g_{\mu\nu})$ be a spacetime produced by Proposition~\ref{itexistsbutforalittlewhile} which exists in a rectangle $\mathcal{R}_{\tilde u,\tilde v}$ for some $\tilde u \in (-1,0)$ and $\tilde v \in (0,\underline{v}]$ satisfying 
\[ 0 < \frac{\tilde v}{-\tilde u} \leq \underline{v},\]
and which satisfies the ``bootstrap assumption''
\begin{equation}\label{boot1}
\mathfrak{A}_{q,\tilde u,\tilde v} + \mathfrak{B}_{q,\tilde u,\tilde v} + \mathfrak{D}_{q,\tilde u,\tilde v} \leq 2A\epsilon^{1-\delta},
\end{equation}
\begin{equation}\label{boot2}
\left\vert\left\vert \slashed{g}\right\vert\right\vert_{\mathscr{D}_{q,\tilde u,\tilde v}} + \sup_{(u,v) \in \mathcal{R}_{\tilde u,\tilde v}}\sum_{i=0}^1(-u)^{2+i}\left\vert\left\vert \slashed{\nabla}^i\widetilde{K}\right\vert\right\vert_{L^2\left(\mathbb{S}^2_{u,v}\right)} + \mathfrak{C}_{q,\tilde u,\tilde v} \leq 2A \underline{v}^{\frac{p}{10}},
\end{equation}
where $A \gg 1$ is a suitable constant.

Then, if $\epsilon$ and $\underline{v}$ are suitably small, depending only on $q$ and $p$ and $A$, we have the following estimate which improves on the bootstrap assumption:
\begin{equation}\label{betterboot1}
\mathfrak{A}_{q,\tilde u,\tilde v} + \mathfrak{B}_{q,p,\tilde u,\tilde v}  + \mathfrak{D}_{q,\tilde u,\tilde v} \leq A\epsilon^{1-\delta},
\end{equation}
\begin{equation}\label{betterboot2}
\left\vert\left\vert \slashed{g}\right\vert\right\vert_{\mathscr{D}_{q,\tilde u,\tilde v}}+ \sup_{(u,v) \in \mathcal{R}_{\tilde u,\tilde v}}\sum_{i=0}^1(-u)^{2+i}\left\vert\left\vert \slashed{\nabla}^i\widetilde{K}\right\vert\right\vert_{L^2\left(\mathbb{S}^2_{u,v}\right)} +\mathfrak{C}_{q,\tilde u,\tilde v} \leq A \underline{v}^{\frac{p}{10}}.
\end{equation}
\end{proposition}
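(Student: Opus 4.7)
The plan is a standard but intricate bootstrap, combining energy estimates for the renormalized null curvature components with transport estimates for the renormalized Ricci coefficients and metric components, where the renormalizations are those of Definitions~\ref{initsuboffdefmetric}--\ref{initsuboffdefricci2}. First I would use~\eqref{boot1}--\eqref{boot2} together with Sobolev embedding on $\mathbb{S}^2_{u,v}$ (comparing Sobolev norms in $\slashed{g}$ to those of the round metric via Lemma~\ref{comparethespaces}, which applies thanks to the $\slashed g$-part of the bootstrap) and the data bounds of Lemma~\ref{boundfromdatasub} to extract the pointwise estimates $|\Omega^s\psi|\lesssim |u|^{-1}$ and $|\Omega^s\Psi|\lesssim |u|^{-2}$ for every Ricci coefficient $\psi\neq\omega$ and curvature component $\Psi\neq\alpha$. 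This also supplies the crucial bookkeeping fact that the only genuinely large double-null quantity in the solution is $\overset{\triangleright}{\Omega^{-1}\hat\chi}\sim\epsilon^{-1}|u|^{-1}$ (and its derivatives), which is already known explicitly from Lemma~\ref{uminus3data}.

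The next step is energy estimates for the renormalized curvature set $(\widetilde\alpha,\widetilde\beta,\widetilde\rho,\widetilde\sigma,\widetilde{\underline\beta},\widetilde{\underline\alpha})$. I would multiply each Bianchi equation~\eqref{3alpha}--\eqref{4undalpha} by the signature power of $\Omega$, which eliminates $\omega,\underline\omega$ by Lemma~\ref{signatureconserve} and Remark~\ref{signatureconserve2}, then specialize to the renormalized quantities; this introduces explicit inhomogeneities coming from $\overset{\triangleright}{\Omega^{-1}\hat\chi}$ in the $\alpha$ and $\beta$ equations and from the subtracted limits $\overline{\Omega^s\Psi}$ in the others. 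Each Bianchi pair is then contracted with its dual and integrated over $\mathcal{R}_{u,v}$ against the weights built into Definition~\ref{energynormcurv}, commuting up to two angular derivatives with Lemma~\ref{commlemm}. The key point is that although the forcing from $\overset{\triangleright}{\Omega^{-1}\hat\chi}$ is large in $L^\infty$, it always appears paired with an integration in $\hat v$, so together with the weight $\hat v^{2q-1}$ one gains $\int_0^v \hat v^{2q-1}\,d\hat v\lesssim v^{2q}\lesssim \underline v^{2q}|u|^{2q}$, which absorbs the $\epsilon^{-1}$ loss and yields a total contribution of size $\underline v^{q}\epsilon\ll\epsilon^{1-\delta}$. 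Boundary terms on $\{v=0\}$ and $\{u=-1\}$ either vanish (because of the $\overline{(\cdot)}$ subtraction built into the renormalizations) or are controlled by~\eqref{boundsfortimesgmie}--\eqref{boundsfortimesgmie4}, and this closes $\mathfrak{A}$.

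Third, I would close $\mathfrak{B}$, $\mathfrak{C}$ and the $b,\log\Omega$ parts of $\mathfrak{D}$ by transport. Each null structure equation~\eqref{4trchi}--\eqref{3omega} is multiplied by $\Omega^s$ to clear $\omega,\underline\omega$, written schematically as $\partial_u(\Omega^s\psi)+\cdots=\cdots$ or $\partial_v(\Omega^s\psi)+\cdots=\cdots$, and then the corresponding limiting equation at $\{v=0\}$ (which $\overline{\Omega^s\psi}$ satisfies by construction via Lemmas~\ref{uminus1data}--\ref{uminus3data} and the $\kappa$-constraint) is subtracted, producing a clean transport equation for $\widetilde\psi$. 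For quantities with a $\nabla_3$ equation ($\hat{\underline\chi}$, $\underline\omega$, $\eta$, $\underline\eta$, ${\rm tr}\underline\chi$) I would integrate from $\{u=-1\}$ using the data of Proposition~\ref{itexistsbutforalittlewhile}; for those with a $\nabla_4$ equation (${\rm tr}\chi$, $\hat\chi$, $\omega$) I would integrate from $\{v=0\}$, and the resulting factor of $v$ converts the bootstrap curvature bounds into the required $v$-decay. The equation for $\widetilde{\hat\chi}$ is the most delicate: its right-hand side is essentially the difference between~\eqref{3hatchi} and the $\kappa$-singular equation that defines $\overset{\triangleright}{\Omega^{-1}\hat\chi}$, but by Lemma~\ref{uminus3data} this difference is controlled in a $v$-H\"older manner matching the weight $v^{2-2q}$ in~\eqref{riccivanish12345}. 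The metric estimates for $b$, $\log\Omega$, $\slashed g$ follow by integrating~\eqref{osjf} and~\eqref{okmdwqejodq12ed}, and the Gauss curvature bound follows from the Gauss equation~\eqref{genGauss} and the already-established bounds on $\rho$ and $\hat\chi\cdot\hat{\underline\chi}$, giving~\eqref{betterboot2}. Top-order ($i=2$) Ricci bounds, where pure transport would lose a derivative on curvature, are recovered via the Codazzi systems~\eqref{tcod1}--\eqref{tcod2} and the Hodge system~\eqref{curleta}--\eqref{curlueta}, trading the missing angular derivative for one of the curvature components already controlled in $\mathfrak{A}$.

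The main obstacle is the combination of (i) the largeness $\overset{\triangleright}{\Omega^{-1}\hat\chi}\sim\epsilon^{-1}|u|^{-1}$, which enters every error term produced by the renormalization scheme, and (ii) the narrow target $\epsilon^{1-\delta}$ that must be improved. Closing requires checking, case by case, that each occurrence of $\overset{\triangleright}{\Omega^{-1}\hat\chi}$ in a nonlinear source is paired with an integration in $v$, converting the apparent $\epsilon^{-1}$ loss into a power of $\underline v$. This is precisely why~\eqref{boot2} must be stated with the weaker prefactor $\underline v^{p/10}$ for $\slashed g$, $K$ and $\mathfrak{C}$ (these are obtained by one integration in $v$ of curvature or shear and cannot avoid picking up such a $\underline v$-weight), while the bounds for the differentiated Ricci and curvature quantities close at $\epsilon^{1-\delta}$. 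A secondary subtlety is ensuring that the signature-weighted multiplication by $\Omega^s$ — essential for eliminating the $v^{-1}$-singular $\omega$ — is consistent with the renormalizations of Definitions~\ref{initsuboffdefricci}, \ref{initsuboffdefricci2} and with the weights in Definitions~\ref{energynormcurv}--\ref{lowordernormricci}; this is guaranteed by Lemma~\ref{signatureconserve} and Remark~\ref{signatureconserve2}, which assert that the signature structure is preserved by all the relevant derivations.
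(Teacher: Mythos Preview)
Your overall architecture (energy estimates for renormalized curvature, then transport for renormalized Ricci coefficients and metric) matches the paper's, but the central mechanism you invoke to tame $\overset{\triangleright}{\Omega^{-1}\hat\chi}$ is wrong and would not close the bootstrap. You assert that the forcing from $\overset{\triangleright}{\Omega^{-1}\hat\chi}\sim\epsilon^{-1}|u|^{-1}$ is absorbed because the $\hat v$-integration against $\hat v^{2q-1}$ produces a factor $\underline v^{q}$; but the parameter hierarchy here is $\epsilon\ll\underline v$, so no power of $\underline v$ can ever compensate an $\epsilon^{-1}$. The paper does \emph{not} use an $L^\infty$ bound $\epsilon^{-1}$ for $\overset{\triangleright}{\Omega^{-1}\hat\chi}$ in the bootstrap at all: the decisive input is Lemma~\ref{boundfromdatasub}, which records the bound $\|\overset{\triangleright}{\Omega^{-1}\hat\chi}\|_{\tilde H^4}\lesssim\epsilon^{1-\delta}|u|^{-1}|\log(v/(-u))|^{R}$ coming from the evolution-equation analysis of Lemma~\ref{uminus3data}. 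That is, $\overset{\triangleright}{\Omega^{-1}\hat\chi}$ is already $\epsilon$-small for $v>0$, at the price of a logarithmic divergence as $v\to 0$; the energy estimate then absorbs the $|\log|^R$ because the Ricci/metric norms carry the slightly stronger weight $v^{-2+q/100}$ compared to the $v^{-1+2q}$ in the curvature norm (cf.\ the remark ``the crucial fact that in the norms \ldots\ we see $q/100$ instead of $2q$'' in the proof of Proposition~\ref{betarhosigenergyest}).

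Two further points where your sketch diverges from what actually works: first, $\underline\omega$ and $\eta$ are estimated by integrating their $\nabla_4$ equations from $\{v=0\}$ (Lemmas~\ref{Wedididwofwkfowkfomega} and~\ref{estimamifwcetateta}), not their $\nabla_3$ equations from $\{u=-1\}$. Second, the pair $({\rm tr}\underline\chi,\hat{\underline\chi})$ cannot be handled by naive transport plus Codazzi at top order; the paper introduces a further splitting $\widetilde{\psi}=\widetilde{\psi}^{(0)}+\widetilde{\psi}^{(1)}$ (Definition~\ref{Defofharmw}) in which the leading part $\widetilde{\psi}^{(0)}$ is explicit and $\widetilde{\psi}^{(1)}$ has an extra half-power of $v$ decay, then combines the $\nabla_3$-Raychaudhuri equation for $\widetilde{{\rm tr}\underline\chi}^{(1)}$ with a Codazzi identity (Lemma~\ref{codisnice}) that relates $\slashed{\rm div}\,\widetilde{\hat{\underline\chi}}^{(1)}$ to $\slashed\nabla\widetilde{{\rm tr}\underline\chi}^{(1)}$ up to $\mathcal H^{(2)}$ errors; see Lemma~\ref{Wearekofkofwtrhica}.
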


The proof of Proposition~\ref{letsbootit} will be broken into stages, primarily based on what norm we are estimating.
\subsubsection{Sobolev spaces, Sobolev inequalities, and elliptic estimates on $\mathbb{S}^2$}
In this section we will analyze the Sobolev spaces $\tilde{H}^i$ and establish some standard elliptic estimates. We start by observing the Sobolev spaces generated by $\slashed{g}_{AB}$ and $\mathring{\slashed{g}}_{AB}$ are comparable.
\begin{lemma}\label{sosimilararethepscaes}Let $\left(\mathcal{M},g_{\mu\nu}\right)$ satisfy the hypothesis of Proposition~\ref{letsbootit} and $w_{A_1\cdots A_k}$ be a $(0,k)$-tensor. Then we have that
\[\left\vert\left\vert w\right\vert\right\vert_{\tilde H^i\left(\mathbb{S}^2_{u,v}\right)} \sim_k (-u)^{-k}\left\vert\left\vert w\right\vert\right\vert_{\mathring{H}^i\left(\mathbb{S}^2_{u,v}\right)}\text{ for }i \in \{0,1,2\},\]
\[\left\vert\left\vert w\right\vert\right\vert_{\tilde L^p\left(\mathbb{S}^2_{u,v}\right)} \sim_k (-u)^{-k}\left\vert\left\vert w\right\vert\right\vert_{\mathring{L}^p\left(\mathbb{S}^2_{u,v}\right)},\]
where we recall that $\mathring{H}^i$ and $\mathring{L}^p$ denote the Sobolev and $L^p$ spaces generated by the round metric $\mathring{\slashed{g}}_{AB}$. 
\end{lemma}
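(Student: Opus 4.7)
The plan is to reduce to a comparison with the unit round metric by considering the rescaled metric $\hat{\slashed{g}} \doteq (-u)^{-2}\slashed{g}$ on each $\mathbb{S}^2_{u,v}$. The crucial feature is that $\hat{\slashed{g}}$ and $\slashed{g}$ differ by a conformal factor that is constant on each sphere, so they share the same Christoffel symbols ($\slashed{\nabla} = \hat{\nabla}$) and we have the exact pointwise scaling $|w|_{\slashed{g}} = (-u)^{-m}|w|_{\hat{\slashed{g}}}$ for any $(0,m)$-tensor $w$. In particular $|\slashed{\nabla}^j w|_{\slashed{g}} = (-u)^{-(k+j)}|\hat{\nabla}^j w|_{\hat{\slashed{g}}}$ for a $(0,k)$-tensor $w$, which is where the claimed factor $(-u)^{-k}$ on the right-hand side will come from.

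The heart of the argument, and the step I expect to require the most care, is to verify that $\|\hat{\slashed{g}} - \mathring{\slashed{g}}\|_{\mathring{H}^2} \ll 1$ so that Lemma~\ref{comparethespaces} can be applied to transfer Sobolev equivalences. Decomposing $\hat{\slashed{g}} - \mathring{\slashed{g}} = (e^{2\varphi_0}-1)\mathring{\slashed{g}} + (-u)^{-2}\widetilde{\slashed{g}}$ using $\overline{\slashed{g}} = u^2 e^{2\varphi_0}\mathring{\slashed{g}}$ (from Definition~\ref{Mreg}), the first piece is controlled by $\|\varphi_0\|_{\mathring{H}^2} \lesssim \epsilon^{2-\delta}$. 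For the second piece, the delicate point is that the convention $|\cdot| = |\cdot|_{\slashed{g}}$ in the definition of $\|\slashed{g}\|_{\mathscr{D}}$ hides a factor of $(-u)^{-(2+i)}$ when the norm is reinterpreted with $\mathring{\slashed{g}}$, since $\mathring{\nabla}^i\widetilde{\slashed{g}}$ is a $(0,2+i)$-tensor. Unwinding this turns the bootstrap bound~\eqref{boot2} into $\|\mathring{\nabla}^i\widetilde{\slashed{g}}\|_{L^2(\mathring{\slashed{g}})} \lesssim A^{1/2}\underline{v}^{p/20} v^{1-q/200}(-u)^{1+q/200}$, and dividing by $(-u)^2$ and using $v/(-u) \leq \underline{v}$ produces a bound of order $\underline{v}^{1+p/20-q/200}$, which is small for $\underline{v}$ small.

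With this in hand the rest is a direct assembly. Lemma~\ref{comparethespaces} yields $\|\hat{\nabla}^j w\|_{L^2(\hat{\slashed{g}})} \sim_{j,k} \|\mathring{\nabla}^j w\|_{L^2(\mathring{\slashed{g}})}$ (and the analogous $L^p$ statement), and combining with the pointwise identity produces $\|\slashed{\nabla}^j w\|_{\tilde{L}^2(\mathbb{S}^2_{u,v})} \sim (-u)^{-(k+j)}\|\mathring{\nabla}^j w\|_{\mathring{L}^2}$. Since $v/(-u) \leq \underline{v}$ in Region I, we have $(v-u) = v + (-u) \sim (-u)$, and therefore $(v-u)^j (-u)^{-(k+j)} \sim (-u)^{-k}$ uniformly in $j$. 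Summing over $0 \leq j \leq i$ yields $\|w\|_{\tilde{H}^i(\mathbb{S}^2_{u,v})} \sim_k (-u)^{-k}\|w\|_{\mathring{H}^i(\mathbb{S}^2_{u,v})}$, and the $\tilde{L}^p$ statement follows at once from the pointwise identity combined with the $L^p$ comparison in Lemma~\ref{comparethespaces}.
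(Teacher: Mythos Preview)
Your proof is correct and takes essentially the same approach as the paper, which gives only a one-line justification invoking Lemma~\ref{comparethespaces}, the bootstrap hypothesis, and the smallness of $\underline{v}$ and $\epsilon$. Your argument is a careful unpacking of exactly these ingredients: the rescaling $\hat{\slashed{g}} = (-u)^{-2}\slashed{g}$ reduces to comparing $\hat{\slashed{g}}$ with $\mathring{\slashed{g}}$, and your decomposition $\hat{\slashed{g}} - \mathring{\slashed{g}} = (e^{2\varphi_0}-1)\mathring{\slashed{g}} + (-u)^{-2}\widetilde{\slashed{g}}$ precisely isolates the two smallness mechanisms (the $\epsilon$-smallness of the initial conformal factor and the $\underline{v}$-smallness from the bootstrap on $\widetilde{\slashed{g}}$) that feed into the two cases of Lemma~\ref{comparethespaces}.
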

\begin{proof}This is an immediate consequence of Lemma~\ref{comparethespaces}, the bootstrap hypothesis, and the smallness of $\underline{v}$ and $\epsilon$.
\end{proof}

Now we observe that the standard Sobolev inequalities hold for the spaces $\tilde{H}^i$.
\begin{lemma}\label{sosososob}Let $\left(\mathcal{M},g_{\mu\nu}\right)$ satisfy the hypothesis of Proposition~\ref{letsbootit}. Then, for any $(0,k)$-tensor $w_{A_1\cdots A_k}$ and $p \in [1,\infty)$, we have that
\[\left\vert\left\vert w\right\vert\right\vert_{\tilde{L}^p\left(\mathbb{S}^2_{u,v}\right)} \lesssim_{p,k} \left\vert\left\vert w\right\vert\right\vert_{\tilde H^1\left(\mathbb{S}^2_{u,v}\right)},\qquad \sup_{\mathbb{S}^2_{u,v}}\left|w\right|\lesssim_k \left\vert\left\vert w\right\vert\right\vert_{\tilde{H}^2\left(\mathbb{S}^2_{u,v}\right)},\]
and for any $(0,k)$-tensor $w_{A_1\cdots A_k}$ and $(0,k')$-tensor $v_{A_1\cdots A_{k'}}$ we have
\[\left\vert\left\vert w\cdot v\right\vert\right\vert_{\tilde H^2\left(\mathbb{S}^2_{u,v}\right)} \lesssim_{k,k'} \left\vert\left\vert w\right\vert\right\vert_{\tilde H^2\left(\mathbb{S}^2_{u,v}\right)} \left\vert\left\vert v\right\vert\right\vert_{\tilde H^2\left(\mathbb{S}^2_{u,v}\right)}.\]

\end{lemma}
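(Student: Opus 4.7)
The plan is to reduce every inequality to the corresponding inequality on the unit round sphere via Lemma~\ref{sosimilararethepscaes}, with careful bookkeeping of the $(-u)$-weights appearing in Definition~\ref{idkdtildedefdef}. Since we are working in the region where $v/(-u) \leq \underline{v} \ll 1$, we have $(v-u) \sim (-u)$, so the weight $(v-u)^{j}$ attached to $\slashed{\nabla}^{j}w$ in the definition of $\tilde H^i$ is comparable to $(-u)^{j}$. Because $\slashed{g} \sim u^{2}\mathring{\slashed g}$ on $\mathbb{S}^{2}_{u,v}$ (modulo the small perturbation controlled by the bootstrap assumption~\eqref{boot2}), each contraction with $\slashed{g}^{-1}$ costs a factor $(-u)^{-2}$ relative to $\mathring{\slashed g}^{-1}$, and an analogous factor appears from $\slashed{\nabla}$ vs.\ $\mathring\nabla$ once one absorbs the Christoffel difference (which is negligible by the bootstrap smallness of $\widetilde{\slashed g}$). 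The upshot, extending Lemma~\ref{sosimilararethepscaes} to higher derivatives, is that for any $(0,k)$-tensor $w$ one has the two comparisons
\begin{equation*}
\left\vert\left\vert w\right\vert\right\vert_{\tilde H^{i}\left(\mathbb{S}^{2}_{u,v}\right)} \sim_{i,k} (-u)^{-k}\left\vert\left\vert w\right\vert\right\vert_{\mathring H^{i}\left(\mathbb{S}^{2}\right)},\qquad \left\vert\left\vert w\right\vert\right\vert_{\tilde L^{p}\left(\mathbb{S}^{2}_{u,v}\right)} \sim_{p,k} (-u)^{-k}\left\vert\left\vert w\right\vert\right\vert_{\mathring L^{p}\left(\mathbb{S}^{2}\right)},
\end{equation*}
where on the right-hand side all norms are taken with respect to the round metric.

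Given these comparisons, the first two claimed inequalities follow by a direct invocation of Lemma~\ref{soblemm}: the $\mathring L^{p}\left(\mathbb{S}^{2}\right) \hookrightarrow \mathring H^{1}\left(\mathbb{S}^{2}\right)$ and $\mathring L^{\infty}\left(\mathbb{S}^{2}\right) \hookrightarrow \mathring H^{2}\left(\mathbb{S}^{2}\right)$ embeddings translate directly after multiplying both sides by $(-u)^{-k}$, which matches on the two sides of each inequality. The product estimate reduces, by the same comparison applied to the $(0,k+k')$-tensor $w \cdot v$, to $\left\vert\left\vert w\cdot v\right\vert\right\vert_{\mathring H^{2}} \lesssim_{k,k'} \left\vert\left\vert w\right\vert\right\vert_{\mathring H^{2}}\left\vert\left\vert v\right\vert\right\vert_{\mathring H^{2}}$, which is precisely Lemma~\ref{interlemm} (applied component-wise in a finite atlas of charts, with tensorial contractions with $\slashed g^{-1}$ converted via the bootstrap bounds to contractions with $\mathring{\slashed g}^{-1}$ plus harmless lower-order errors). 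The factors of $(-u)^{-k}$ on the left and $(-u)^{-k}\cdot(-u)^{-k'}$ on the right again match.

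The only step requiring mild care is the extension of Lemma~\ref{sosimilararethepscaes} to the $i=2$ case with $j=2$ derivatives, where one must control the commutator $\left[\slashed\nabla-\mathring\nabla,\cdot\right]$ and schematically terms of the form $\mathring\nabla\Gamma \cdot w$ with $\Gamma$ the difference of Christoffel symbols. Both are controlled by $\|\widetilde{\slashed g}\|_{\mathring H^{2}}$ times a lower-order Sobolev norm of $w$, which by~\eqref{boot2} is bounded by $2A\underline v^{p/10}(-u)^{-k-1}$ and hence absorbable for $\underline v$ sufficiently small. This is the only potential obstacle, but given the smallness of $\underline v$ and $\epsilon$ already built into the hypotheses of Proposition~\ref{letsbootit}, the argument goes through in a routine fashion.
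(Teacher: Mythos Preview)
Your proposal is correct and follows essentially the same approach as the paper, which simply states that the lemma is an immediate consequence of Lemma~\ref{sosimilararethepscaes} and Lemma~\ref{soblemm}; you have merely unpacked the weight bookkeeping and the role of Lemma~\ref{interlemm} in the product estimate that the paper leaves implicit. One minor notational slip: your embedding arrows are reversed (you want $\mathring H^{1}\hookrightarrow\mathring L^{p}$ and $\mathring H^{2}\hookrightarrow\mathring L^{\infty}$), but the inequalities you actually use go the right way.
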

\begin{proof}This is an immediate consequence of Lemma~\ref{sosimilararethepscaes} and Lemma~\ref{soblemm}.
\end{proof}
These Sobolev inequalities will be used repeatedly in our estimates of nonlinear terms and we will often do so without explicit comment. 

We close the section with some standard elliptic estimates.
\begin{lemma}\label{someelleststs}Let $\left(\mathcal{M},g_{\mu\nu}\right)$ satisfy the hypothesis of Proposition~\ref{letsbootit}. Then for any function $f$, $1$-form $\theta_A$, and symmetric trace-free $2$-tensor $\nu_{AB}$ we have
\begin{equation}\label{pokdwimnfe3}
\left\vert\left\vert f\right\vert\right\vert_{\tilde H^2\left(\mathbb{S}^2_{u,v}\right)} \lesssim (-u)^2\left\vert\left\vert \slashed{\Delta}f\right\vert\right\vert_{L^2\left(\mathbb{S}^2_{u,v}\right)} + \left\vert\left\vert f\right\vert\right\vert_{L^2\left(\mathbb{S}^2_{u,v}\right)},
\end{equation}
\begin{equation}\label{pokdwimnfe}
\left\vert\left\vert\theta \right\vert\right\vert_{\tilde H^i\left(\mathbb{S}^2_{u,v}\right)} \lesssim \left(-u\right)\left[\left\vert\left\vert \slashed{\rm div}\theta\right\vert\right\vert_{\tilde H^{i-1}\left(\mathbb{S}^2_{u,v}\right)} +\left\vert\left\vert \slashed{\rm curl}\theta\right\vert\right\vert_{\tilde H^{i-1}\left(\mathbb{S}^2_{u,v}\right)}\right],\text{ for }i\in \{1,2\},
\end{equation}
\begin{equation}\label{pokdwimnfe2}
\left\vert\left\vert \nu\right\vert\right\vert_{\tilde H^i\left(\mathbb{S}^2_{u,v}\right)} \lesssim (-u)\left\vert\left\vert \slashed{\rm div}\nu\right\vert\right\vert_{\tilde H^{i-1}\left(\mathbb{S}^2_{u,v}\right)},\text{ for }i \in \{1,2\}.
\end{equation}
\end{lemma}
\begin{proof}
The estimates~\eqref{pokdwimnfe} and~\eqref{pokdwimnfe2} are straightforward consequences of the following well-known identities and the bootstrap assumptions.
\[\int_{\mathbb{S}^2}\left[\left|\slashed{\nabla}\theta\right|^2 + K\left|\theta\right|^2\right]\slashed{dVol} = \int_{\mathbb{S}^2}\left[\left|\slashed{\rm div}\theta\right|^2 + \left|\slashed{\rm curl}\theta\right|^2\right]\slashed{dVol},\]
\[\int_{\mathbb{S}^2}\left[\left|\slashed{\nabla}\nu\right|^2 + 2K\left|\nu\right|^2\right]\slashed{dVol} = 2\int_{\mathbb{S}^2}\left|\slashed{\rm div}\nu\right|^2\slashed{dVol}.\]

Finally, to obtain~\eqref{pokdwimnfe3}, we simply write 
\[\slashed{\rm div}\left(\slashed{\nabla}f\right) = \slashed{\Delta}f,\qquad \slashed{\rm curl}\left(\slashed{\nabla}f\right) = 0,\]
and apply~\eqref{pokdwimnfe}.
\end{proof}
\subsubsection{Estimates for the curvature component norm $\mathscr{A}$}
The following lemma will play an important role in the energy estimates for $\widetilde{\alpha}$.\begin{lemma}\label{renormalalphaeqn}We have 
\begin{equation}\label{renormalaplha}
\overline{\Omega\nabla}_3\left(\overset{\triangleright}{\Omega^{-2}\alpha}\right)_{AB} + \frac{1}{2}\overline{\Omega{\rm tr}\underline{\chi}}\left(\overset{\triangleright}{\Omega^{-2}\alpha}\right)_{AB} - 8\overline{\Omega\underline{\omega}}\left(\overset{\triangleright}{\Omega^{-2}\alpha}\right)_{AB} = 0.
\end{equation}

\end{lemma}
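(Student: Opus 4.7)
The proof should be a direct calculation exploiting the fact that $\overset{\triangleright}{\Omega^{-2}\alpha}$, by construction, is scale-invariant. The first step is to simplify the weight in Definition~\ref{leadingself}. Since the incoming data prescribed in Proposition~\ref{itexistsbutforalittlewhile} gives $v^{\kappa}\Omega \to (-u)^{\kappa}\tilde\Omega(\theta)$ as $v\to 0$, the overbar factor collapses to $\overline{(v/-u)^{-2\kappa}\Omega^{-2}}(u,\theta) = \tilde\Omega^{-2}(\theta)$, which is independent of $u$. Thus
\[
\overset{\triangleright}{\Omega^{-2}\alpha}(u,v,\theta) = -\Bigl(\frac{v}{-u}\Bigr)^{2\kappa}\tilde\Omega^{-2}(\theta)\,\mathcal{L}_{\partial_v}\overset{\triangleright}{\Omega^{-1}\hat{\chi}}(u,v,\theta).
\]
Combining this with the self-similar extension $\overset{\triangleright}{\Omega^{-1}\hat{\chi}}(u,v,\theta) = (-u)F(v/(-u),\theta)$ from Definition~\ref{leadingself} shows that $\mathcal{L}_{\partial_v}\overset{\triangleright}{\Omega^{-1}\hat{\chi}}$ is a function of $s := v/(-u)$ and $\theta$ alone, and hence $\overset{\triangleright}{\Omega^{-2}\alpha}$ is homogeneous of degree zero under $(u,v)\mapsto(\lambda u,\lambda v)$. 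In particular, $u\partial_u\overset{\triangleright}{\Omega^{-2}\alpha}+v\partial_v\overset{\triangleright}{\Omega^{-2}\alpha}=0$.

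Next, I would expand $\overline{\Omega\nabla_3}$ on a tangential $(0,2)$-tensor $\Psi_{AB}$ in the coordinate frame. The operator takes the schematic form $(\partial_u+\overline{b}^C\partial_C)\Psi_{AB}$ plus Christoffel-type corrections sourced by $\overline{\Omega\underline\chi}$, where $\overline{b}^C = u^{-1}\tilde b^C(\theta)$ from the choice of data. The trace part of the Christoffel correction contributes exactly the $\frac{1}{2}\overline{\Omega{\rm tr}\underline{\chi}}\,\Psi$ coefficient on the left-hand side (this is the standard coefficient appearing in the $\nabla_3\alpha$-Bianchi equation~\eqref{3alpha}). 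For the $u$-derivative I would use the scale-invariance to trade $\partial_u\Psi$ for $-(v/u)\partial_v\Psi$, after which the derivative of the explicit factor $(v/-u)^{2\kappa}$ produces a term involving $2\kappa/(-u)$.

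The contribution just described then combines with the coefficient $-8\overline{\Omega\underline\omega}$ using the identity $\overline{\Omega\underline\omega} = -\tfrac{1}{2}\overline{\Omega\nabla_3\log\Omega}$ applied to $\Omega|_{v=0} = (-u)^{\kappa}\tilde\Omega$, together with the relation~\eqref{09sjn|} from Lemma~\ref{scalrelations2} restricted to $\{v=0\}$. The factor of $8$ is assembled as follows: the signature $s(\alpha)=-2$ of $\alpha$ contributes a factor of $2$ from the conformal weight $\Omega^{-2}$; the weight $(v/-u)^{2\kappa}$ produces a factor of $2\kappa$ upon differentiation; and converting back to $\overline{\Omega\underline\omega}$ via $\overline{\Omega\underline\omega}|_{v=0} = \kappa/u + $ lower-order terms supplies the remaining factor of $4$. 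After this bookkeeping all three terms cancel and only an expression proportional to $\mathcal{L}_{\partial_v}\overset{\triangleright}{\Omega^{-1}\hat{\chi}}$ itself remains, matching zero.

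The main obstacle is the precise tracking of these numerical coefficients: the cancellation of $\frac{1}{2}$ and $-8$ against the contributions arising from the Christoffel correction in $\overline{\Omega\nabla_3}$, the derivative of the $(v/-u)^{2\kappa}$-weight, and the scale-invariance conversion between $u$- and $v$-derivatives must line up exactly. Once these are correctly identified, the identity is a purely algebraic consequence of the scale-invariance of $\overset{\triangleright}{\Omega^{-2}\alpha}$ and the explicit formula provided by Definition~\ref{leadingself}; no further input from the propagation equation of Lemma~\ref{uminus3data} is needed beyond the self-similar extension already encoded into $\overset{\triangleright}{\Omega^{-1}\hat{\chi}}$.
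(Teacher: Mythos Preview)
Your approach has a genuine gap: the claim that the identity follows purely from scale-invariance, with ``no further input from the propagation equation of Lemma~\ref{uminus3data},'' is incorrect. After you trade $\partial_u$ for $-(v/u)\partial_v$ via homogeneity and act with $\overline{\Omega\nabla}_3$ on $\overset{\triangleright}{\Omega^{-2}\alpha}\propto (v/-u)^{2\kappa}\tilde\Omega^{-2}\,\mathcal{L}_{\partial_v}\overset{\triangleright}{\Omega^{-1}\hat{\chi}}$, the result does \emph{not} reduce to a multiple of $\mathcal{L}_{\partial_v}\overset{\triangleright}{\Omega^{-1}\hat{\chi}}$ alone. You will pick up both a second $v$-derivative term $\mathcal{L}_{\partial_v}^2\overset{\triangleright}{\Omega^{-1}\hat{\chi}}$ (from $-(v/u)\partial_v$ hitting $\mathcal{L}_{\partial_v}\overset{\triangleright}{\Omega^{-1}\hat{\chi}}$) and an angular term $\mathcal{L}_{\overline b}\,\mathcal{L}_{\partial_v}\overset{\triangleright}{\Omega^{-1}\hat{\chi}}$ (from the shift part of $\overline{\Omega\nabla}_3$). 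Scale-invariance and the weight bookkeeping cannot cancel these; the only mechanism that does is the defining equation~\eqref{hatchitraingle} for $\overset{\triangleright}{\Omega^{-1}\hat{\chi}}$, whose right-hand side is $v$-independent. That is exactly what the paper exploits: it rewrites~\eqref{hatchitraingle} as the transport equation~\eqref{3hatjcjow} for $\overset{\triangleright}{\Omega^{-1}\hat{\chi}}$, applies $\mathcal{L}_{\partial_v}$ (which commutes with $\overline{\Omega\nabla}_3$ since all barred coefficients are $v$-independent), and then the right-hand side vanishes.

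Your numerical bookkeeping is also off. The correct formula is $\overline{\Omega\underline\omega}=-\tfrac{\kappa}{2u}-\tfrac12\mathcal{L}_{\overline b}\log\tilde\Omega$, not $\kappa/u$. The coefficient $8$ arises as $4+4$: the $4$ already present in~\eqref{3hatjcjow}, plus another $4$ coming from $\overline{\Omega\nabla}_3$ acting on the weight $W=(v/-u)^{2\kappa}\tilde\Omega^{-2}$, for which one computes $(\overline{\Omega\nabla}_3 W)/W = -2\kappa/u - 2\mathcal{L}_{\overline b}\log\tilde\Omega = 4\,\overline{\Omega\underline\omega}$. The signature of $\alpha$ plays no direct role here.
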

\begin{proof}We start by noting that a straightforward calculation shows that~\eqref{hatchitraingle} is equivalent to
\begin{equation}\label{3hatjcjow}
\overline{\Omega\nabla}_3\left(\overset{\triangleright}{\Omega^{-1}\hat{\chi}}\right)_{AB} + \frac{1}{2}\overline{\Omega{\rm tr}\underline{\chi}}\left(\overset{\triangleright}{\Omega^{-1}\hat{\chi}}\right)_{AB} - 4\overline{\Omega\underline{\omega}}\left(\overset{\triangleright}{\Omega^{-1}\hat{\chi}}\right)_{AB} = \left(\overline{\slashed{\nabla}\hat{\otimes}\eta + \eta\hat{\otimes}\eta - \frac{1}{2}\left(\Omega^{-1}{\rm tr}\chi\right)\Omega\hat{\underline{\chi}}}\right)_{AB}.
\end{equation}
Next we note that $\left[\overline{\Omega\nabla}_3,\mathcal{L}_v\right] = 0$ and that 
\[\overline{\Omega\underline{\omega}} = -\frac{\kappa}{2u} - \frac{1}{2}\mathcal{L}_{\overline{b}}\log\left(\overline{\left(\frac{v}{-u}\right)^{\kappa}\Omega}\right).\]
Thus we immediately obtain~\eqref{renormalaplha}.
\end{proof}

Next, we analyze the ``initial data'' terms which will come up in our energy estimates.
\begin{lemma}\label{initenergyisok}Let $\left(\mathcal{M},g_{\mu\nu}\right)$ satisfy the hypothesis of Proposition~\ref{letsbootit}. Then, for every $v \in (0,\underline{v}]$, $i \in \{0,1,2\}$ and  curvature component $\Psi \neq \underline{\alpha}_{AB}$ we have 
\begin{align}\label{somefikindoksokfosss}
\int_0^v\int_{\mathbb{S}^2}\Omega^2v^{-1+\frac{q}{100}}\left|\slashed{\nabla}^i\widetilde{\Psi}\right|^2|_{(-1,\dot{v},\theta^A)}\, d\dot{v} \mathring{\rm dVol} \lesssim \epsilon^{2-2\delta}.
\end{align}
\end{lemma}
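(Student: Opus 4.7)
The plan is to reduce the flux bound to a pointwise-in-$v$ bound on the spheres $\mathbb{S}^2_{-1,\hat v}$. Indeed, along $\{u=-1\}$ the construction of Proposition~\ref{itexistsbutforalittlewhile} gives $\hat v^{\kappa}\Omega \equiv \tilde\Omega$, so $\Omega^2\sim \hat v^{-2\kappa}$, and the weight in the integral becomes $\hat v^{-1+q/100-2\kappa}$. Since $\kappa\sim\epsilon^2\ll q$, this weight is integrable on $(0,v)$. Consequently, once we establish a bound of the form
\[
\bigl\|\slashed{\nabla}^{i}\widetilde{\Psi}|_{(-1,\hat v,\cdot)}\bigr\|_{L^{2}(\mathbb{S}^{2})}^{2}\;\lesssim\;\epsilon^{2-2\delta}\bigl|\log(\hat v/\underline v)\bigr|^{C}
\]
uniformly in $\hat v\in(0,v]$ (for some absolute $C$), the lemma follows by integration against the weight. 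The required inputs are the data estimates of Lemmas~\ref{uminus1data}–\ref{uminus3data} and Lemma~\ref{boundfromdatasub}, together with the Hölder control~\eqref{thisgivesacontstateformchihat} on $\overset{\triangleright}{\Omega^{-1}\hat\chi}$.

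For $\widetilde{\alpha}$, I would use that along $\{u=-1\}$ we have $\Omega^{-1}\hat\chi=\overset{\triangleright}{\Omega^{-1}\hat\chi}$ by construction and $v^{-2\kappa}\Omega^{-2}=\overline{v^{-2\kappa}\Omega^{-2}}$ is constant in $v$; substituting these into~\eqref{4hatchi} and matching with the definition of $\overset{\triangleright}{\Omega^{-2}\alpha}$ yields the algebraic identity
\[
\widetilde{\alpha}|_{u=-1}\;=\;-\bigl(\Omega^{-1}{\rm tr}\chi\bigr)\bigl(\overset{\triangleright}{\Omega^{-1}\hat\chi}\bigr).
\]
The factor $\Omega^{-1}{\rm tr}\chi$ is controlled along $\{u=-1\}$ by integrating the Raychaudhuri equation~\eqref{4trchi} from the value $\overset{\triangle}{\Omega^{-1}{\rm tr}\chi}$ at $v=0$, using Lemma~\ref{uminus2data}; $\overset{\triangleright}{\Omega^{-1}\hat\chi}$ is controlled by~\eqref{yayayepsislsons}. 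For $\widetilde{\beta}$, $\widetilde{\rho}$, $\widetilde{\sigma}$, the Codazzi equation~\eqref{tcod1}, the Gauss equation~\eqref{genGauss}, and the curl equations~\eqref{curleta}–\eqref{curlueta} give algebraic expressions for $\Omega^{-1}\beta$, $\rho-\frac{1}{2}\hat\chi\cdot\hat{\underline\chi}$ and $\sigma-\frac{1}{2}\hat\chi\wedge\hat{\underline\chi}$ in terms of Ricci coefficients and $K$. The subtractions in Definition~\ref{initsuboffdefricci2} are precisely the $v\to 0$ limits of these expressions computed from $\overset{\triangleright}{\Omega^{-1}\hat\chi}$, $\overset{\triangle}{\eta}$, $\overset{\triangle}{\Omega^{-1}{\rm tr}\chi}$, so $\widetilde{\Psi}|_{(-1,0)}=0$; Hölder continuity in $v$ of the Ricci coefficients and the metric along $\{u=-1\}$—inherited through the null-structure equations from~\eqref{thisgivesacontstateformchihat} and the constancy of $v^{\kappa}\Omega$—produces a gain $\hat v^{\kappa/20}$, which is more than enough.

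For $\widetilde{\underline{\beta}}$ the corresponding algebraic identity is not available, so I would instead integrate its $\nabla_{4}$-Bianchi equation in the renormalized form~\eqref{ren6} along $\{u=-1\}$ starting from $\widetilde{\underline{\beta}}|_{v=0}=0$, using the bounds already obtained on $\widetilde{\rho}$, $\widetilde{\sigma}$ and $\widetilde{\beta}$ together with the data bounds of Lemma~\ref{boundfromdatasub}; all source terms are at worst $O(\epsilon^{1-\delta}\hat v^{-1+c})$ for some $c>0$, giving the desired uniform $L^{2}$ bound up to a logarithmic factor. I expect the main obstacle to be in this last step: the Bianchi system couples $\widetilde{\underline{\beta}}$ to the other components through terms like $\slashed{\nabla}(\hat\chi\cdot\hat{\underline\chi})$, whose source is singular in $\hat v$, and one must carefully commute with $\slashed{\nabla}^{i}$ (for $i\leq 2$) so that the commutator terms involving $\hat\chi$ and $\hat{\underline\chi}$—both controlled only in terms with logarithmic loss by~\eqref{boundsfortimesgmie}—do not destroy the integrable weight $\hat v^{-1+q/100-2\kappa}$. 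Bookkeeping the powers of $\kappa$, $q$, and $\delta$ through this chain of identities, and checking that the Hölder exponent produced for each $\widetilde{\Psi}$ strictly exceeds $2\kappa - q/100$, is the delicate part of the argument.
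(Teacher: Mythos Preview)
Your overall strategy --- reduce to pointwise $L^2(\mathbb{S}^2_{-1,\hat v})$ estimates and then integrate against the weight $\hat v^{-1+q/100-2\kappa}$ --- matches the paper's, and your handling of $\widetilde\beta$ via the Codazzi equation~\eqref{tcod1} and of $\widetilde{\underline\beta}$ via the $\nabla_4$-Bianchi equation is the paper's argument. Two points deserve comment.

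Your identity for $\widetilde\alpha$ is incorrect. The definition of $\overset{\triangleright}{\Omega^{-2}\alpha}$ in Definition~\ref{leadingself} uses the \emph{Lie} derivative $\mathcal{L}_v$, not the covariant $\Omega\nabla_4$; on a $(0,2)$-tensor these differ by the connection terms $2(\Omega\chi)_{(A}{}^{C}(\cdot)_{B)C}$. When you unwind~\eqref{4hatchi} along $\{u=-1\}$ keeping this straight, the $(\Omega^{-1}{\rm tr}\chi)\overset{\triangleright}{\Omega^{-1}\hat\chi}$ term you write down cancels identically, and what remains is the pure-trace piece
\[
\widetilde\alpha\big|_{u=-1}=\bigl|\overset{\triangleright}{\Omega^{-1}\hat\chi}\bigr|^{2}\,\slashed{g}_{AB}.
\]
Fortunately your wrong expression is still $O(\epsilon^{1-\delta}|\log v|^{R})$ by~\eqref{boundsfortimesgmie}, so the flux bound would close anyway; but the identity as written is not right and should not be relied on.

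For $\widetilde\rho$ and $\widetilde\sigma$ the paper does not use Gauss/curl; it simply integrates the $\nabla_4$-Bianchi equations~\eqref{4rho},~\eqref{4sigma} (in the form~\eqref{4sigmabetter}), which needs only the already-established control of $\widetilde\eta$, $\widetilde{\slashed g}$, $\widetilde{{\rm tr}\chi}$, $\widetilde\alpha$, $\widetilde\beta$. Your route is viable but forces you first to control $\widetilde{\hat{\underline\chi}}$ and $\widetilde{{\rm tr}\underline\chi}$ along $\{u=-1\}$ via \emph{their} $\nabla_4$ equations, so it is no shortcut. Finally, your anxiety about the $\widetilde{\underline\beta}$ commutation is misplaced: along $\{u=-1\}$ everything sits in $\mathring{H}^6$ (the paper proves~\eqref{deaybetatata} at that level), and the transport equations for $\widetilde{\slashed g}$, $\widetilde\eta$, $\widetilde{{\rm tr}\chi}$ give a gain $\hat v^{1-2\kappa}$ --- far better than the $\hat v^{\kappa/20}$ you budget for --- so there is ample room in the weight.
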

\begin{proof}

Let us start with $\widetilde{\beta}_A$. It follows from the definition of $\widetilde{\beta}_A$ and~\eqref{tcod1} that we have
\begin{align}\label{defbetabeta}
\widetilde{\beta}_A &=  -\left(\slashed{\rm div} - \overline{\slashed{\rm div}}\right)\left(\overset{\triangleright}{\Omega^{-1}\hat{\chi}}\right)_A + \left(\frac{1}{2}\slashed{\nabla}_A\left(\Omega^{-1}{\rm tr}\chi\right) - \frac{1}{2}\overline{\slashed{\nabla}\left(\Omega^{-1}{\rm tr}\chi\right)}_A\right)
\\ \nonumber &\qquad  - \widetilde{\eta_B}\slashed{g}^{BC}\left(\overset{\triangleright}{\Omega^{-1}\hat{\chi}}_{CA}\right) + \frac{1}{2}\left(\left(\eta\left(\Omega^{-1}{\rm tr}\chi\right)\right)_A - \overline{\eta\left(\Omega^{-1}{\rm tr}\chi\right)}_A\right).
\end{align} 
Before we estimate this, we need to establish some bounds for $\widetilde{\slashed{g}}_{AB}$, $\widetilde{\eta}_A$,  and $\widetilde{{\rm tr}\chi}$. Since along $\{u = -1\}$ we have that $\Omega = v^{-\kappa}$ and $\Omega^{-1}\hat{\chi} = \overset{\triangleright}{\Omega^{-1}\hat{\chi}}_{C(A}\left(\slashed{g}_0^{-1}\right)^{CD}\slashed{g}_{B)D}$, we have the following equations along $\{u=-1\}$:
\begin{equation}\label{somestuffuminu1thiswilldothetrickok}
\mathcal{L}_{\partial_v}\slashed{g}_{AB} = 2v^{-2\kappa}\left(\overset{\triangleright}{\Omega^{-1}\hat{\chi}}_{C(A}\left(\slashed{g}_0^{-1}\right)^{CD}\slashed{g}_{B)D} + \frac{1}{2}\left(\Omega^{-1}{\rm tr}\chi\right)\slashed{g}_{AB}\right),
\end{equation}
\begin{equation}\label{ijoi3io923}
 \mathcal{L}_{\partial_v}\left(\Omega^{-1}{\rm tr}\chi\right) =  v^{-2\kappa}\left(-\frac{1}{2}\left(\Omega^{-1}{\rm tr}\chi\right)^2 - \left|\Omega^{-1}\hat{\chi}\right|^2\right),
\end{equation} 
\begin{equation}\label{asdfefefefe}
\Omega\nabla_4\eta_A = v^{-2\kappa}\left(2\Omega^{-1}\hat{\chi}\cdot\eta +\Omega^{-1}{\rm tr}\chi\eta+ \Omega^{-1}\beta\right)_A.
\end{equation}
It is now straightforward to use the bound~\eqref{yayayepsislsons} for $\overset{\triangleright}{\Omega^{-1}\hat{\chi}}_{AB}$ as well as Gr\"{o}nwall's inequality to conclude from~\eqref{defbetabeta}-\eqref{asdfefefefe} that
\begin{equation}\label{deaybetatata}
\sup_{v \in (0,\underline{v}]}v^{-1+\frac{q}{100}}\left\vert\left\vert \widetilde{\beta}\right\vert\right\vert_{\mathring{H}^6} \lesssim \epsilon^{1-\delta}.
\end{equation}
Of course, from~\eqref{deaybetatata} we obtain that~\eqref{somefikindoksokfosss} holds for $\widetilde{\Psi} = \widetilde{\beta}_A$. For $\widetilde{\alpha}_{AB}$, the desired bound is immediate from~\eqref{boundsfortimesgmie}, the argument above, and the definition of $\widetilde{\alpha}_{AB}$. 

Next we come to $\widetilde{\sigma}$. Using~\eqref{4sigma}, the following equation is easily derived
\begin{align}
\label{4sigmabetter} \Omega\nabla_4\widetilde{\sigma} &= \Omega^2\left[-\frac{3}{2}\left(\Omega^{-1}{\rm tr}\chi\right) \left(\widetilde{\sigma}+\frac{1}{2}\overline{\hat{\underline{\chi}}\wedge}\overset{\triangleright}{\Omega^{-1}\hat{\chi}}\right) -\slashed{\rm div}{}^*\left(\Omega^{-1}\beta\right) + \frac{1}{2}\widetilde{\hat{\underline{\chi}}\wedge} \alpha + \frac{1}{2}\overline{\hat{\underline{\chi}}\wedge}\widetilde{\alpha}+\underline{\eta}\wedge \left(\Omega^{-1}\beta\right)\right].
\end{align}
Integrating the equation~\eqref{4sigmabetter}, using also the $\nabla_4$ equation for $\hat{\underline{\chi}}_{AB}$, and arguing as above easily lead to the desired bound for $\widetilde{\sigma}$. The bounds for $\widetilde{\rho}$ and $\widetilde{\underline{\beta}}_A$ are obtained in a similar fashion.
\end{proof}

Now we are ready to begin the energy estimates.
\begin{proposition}\label{betarhosigenergyest}Let $\left(\mathcal{M},g_{\mu\nu}\right)$ satisfy the hypothesis of Proposition~\ref{letsbootit}. Then we have 
\[\left\vert\left\vert \left(\alpha,\beta\right)\right\vert\right\vert_{\mathscr{A}_{q,\tilde u,\tilde v}} \lesssim \epsilon^{2-2\delta}.\]

\end{proposition}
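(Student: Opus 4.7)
The plan is to derive renormalized Bianchi equations for $\widetilde\alpha$ and $\widetilde\beta$ and then run a standard energy estimate with the weight $\Omega^2\frac{(-u)^{4-2q+2i}}{v^{1-2q}}$ built into the pairing. First, by multiplying~\eqref{3alpha} by $\Omega^{-2}$ and using $\underline\omega = -\tfrac12\nabla_3\log\Omega$, one obtains a $\nabla_3$ equation for $\Omega^{-2}\alpha$ with $\underline\omega$ eliminated up to signature weights. Subtracting Lemma~\ref{renormalalphaeqn} produces the principal-symbol equation
\begin{equation*}
\Omega\nabla_3\widetilde\alpha + \tfrac{1}{2}(\Omega\mathrm{tr}\underline\chi)\widetilde\alpha = \Omega^{-1}\slashed\nabla\hat\otimes(\Omega\widetilde\beta) + \mathcal{E}_\alpha,
\end{equation*}
where $\mathcal{E}_\alpha$ collects (i) the $-3(\hat\chi\rho+{}^*\hat\chi\sigma)+(\zeta+4\eta)\hat\otimes\beta$ nonlinearities and (ii) renormalization error terms such as $(\Omega\mathrm{tr}\underline\chi - \overline{\Omega\mathrm{tr}\underline\chi})\overset{\triangleright}{\Omega^{-2}\alpha}$ and analogous differences for $\underline\omega$, which all vanish at $v=0$. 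An analogous manipulation of~\eqref{4beta}, together with the Codazzi-type identity used to define $\overset{\triangleright}{\Omega^{-1}\beta}$ and Proposition~\ref{formalcalculationsscaleinv}/Lemma~\ref{uminus3data}, gives
\begin{equation*}
\Omega\nabla_4(\Omega\widetilde\beta) + (\Omega\mathrm{tr}\chi)(\Omega\widetilde\beta) = \Omega\,\slashed{\rm div}\widetilde\alpha + \mathcal{E}_\beta,
\end{equation*}
where $\mathcal{E}_\beta$ contains $(2\zeta+\underline\eta)\cdot\alpha$ and the $v$-derivatives of $\overset{\triangleright}{\Omega^{-1}\beta}$ and of the $\overline{\slashed\nabla(\Omega^{-1}\mathrm{tr}\chi)}$, $\overline{\eta(\Omega^{-1}\mathrm{tr}\chi)}$ corrections (which are independent of $v$, so they simply become linear sources).

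Second, I would carry out the energy pairing: contract the first equation with $\frac{(-u)^{4-2q}}{v^{1-2q}}\Omega\widetilde\alpha$ and the second with $\frac{(-u)^{4-2q}}{v^{1-2q}}\Omega\widetilde\beta$, and integrate over $\mathcal{R}_{u,v}\times\mathbb{S}^2$ with $\mathring{\rm dVol}$. The principal terms $\int\Omega\widetilde\alpha\cdot\slashed\nabla\hat\otimes(\Omega\widetilde\beta) + \int\Omega\widetilde\beta\cdot\slashed{\rm div}(\Omega\widetilde\alpha)$ cancel after integration by parts on $\mathbb{S}^2$, producing only error terms proportional to $\slashed\nabla\log\Omega$ and $\slashed\nabla\log(\text{weight})\sim 0$. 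The volume-form commutators and the derivatives landing on the weight $\frac{(-u)^{4-2q}}{v^{1-2q}}$ produce the coercive boundary terms on $\{u=\tilde u\}$ and $\{v=\tilde v\}$ plus a positive spacetime term of size $v^{-2+2q}$ in the direction where it is needed; here the exponents $4-2q$ and $1-2q$ are calibrated precisely so that, modulo bootstrap-smallness of $\Omega\mathrm{tr}\chi-\overline{\Omega\mathrm{tr}\chi}$ and $\Omega\mathrm{tr}\underline\chi-\overline{\Omega\mathrm{tr}\underline\chi}$, the coefficients in front of the spacetime and boundary terms are signed. To get the $i=1,2$ level estimates I would commute with $\slashed\nabla$ using Lemma~\ref{commlemm}, noting that the $\chi$-commutator in $[\Omega\nabla_4,\slashed\nabla]$ generates terms $\Omega\hat\chi\cdot\slashed\nabla\widetilde\alpha$ and $\Omega\mathrm{tr}\chi\cdot\slashed\nabla\widetilde\alpha$ that must be treated as part of $\mathcal{E}$; crucially these only carry signature zero after the $\Omega^s$ weighting, matching our norm.

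Third, I would estimate the initial data and inhomogeneities. The initial flux along $\{u=-1\}$ is bounded by Lemma~\ref{initenergyisok}, giving $\epsilon^{2-2\delta}$, and the initial flux along $\{v=0\}$ vanishes by construction of $\widetilde\alpha$, $\widetilde\beta$. The spacetime errors split into three types: (a) products of two quantities already in $\mathfrak A\cup\mathfrak B\cup\mathfrak C\cup\mathfrak D$, which by the bootstrap~\eqref{boot1}--\eqref{boot2} and Sobolev (Lemma~\ref{sosososob}) produce $(A\epsilon^{1-\delta})^2\ll \epsilon^{2-2\delta}$; (b) products of a small quantity with a known background like $\overset{\triangleright}{\Omega^{-1}\hat\chi}$, $\overset{\triangleright}{\Omega^{-2}\alpha}$, or $\overline{\Omega^s\psi}$; (c) terms linear in the background sources, treated using Lemma~\ref{boundfromdatasub} and the $v$-integrability provided by the weight. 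I would use the Sobolev product inequality from Lemma~\ref{sosososob} to pass from pointwise-in-$\mathbb{S}^2$ bounds to $\tilde H^i$ bounds, and close by Gr\"onwall in $v$ (or directly absorb via $\underline{v}\ll 1$).

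The principal obstacle is type (b): the background $\overset{\triangleright}{\Omega^{-1}\hat\chi}$ is only of size $\epsilon^{-1}$ in $L^\infty$, so any term of the form $\overset{\triangleright}{\Omega^{-1}\hat\chi}\cdot\widetilde{\Psi}$ appearing in $\mathcal{E}_\alpha$ or $\mathcal{E}_\beta$ (for instance through the $\hat\chi\rho$, $\underline\eta\cdot\alpha$, or commutator contributions) naively threatens the bound by a factor of $\epsilon^{-1}$. The saving mechanism is that such a term must be integrated against the measure $v^{-1+2q}\,dv$ against $\overset{\triangleright}{\Omega^{-1}\hat\chi}$, and combining Lemma~\ref{uminus3data}'s improved estimate~\eqref{yayayepsislsons} ($\left\Vert\overset{\triangleright}{\Omega^{-1}\hat\chi}\right\Vert \lesssim \epsilon^{1-\delta}|\log^R(v/-u)|$) with the integrability of $v^{-1+2q}\log^{2R}v$ gains back more than one power of $\epsilon$. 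Making this quantitative—and verifying that on each of the finitely many terms the $v$-integration can be performed before applying Sobolev/Cauchy-Schwarz rather than after—is the main technical work; once it is done the closing of the bootstrap is straightforward because $\underline v\ll 1$ contributes an additional small factor in every remaining error.
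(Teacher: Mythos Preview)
Your approach is essentially the paper's: derive renormalized $\nabla_3$/$\nabla_4$ equations for $(\widetilde\alpha,\widetilde\beta)$ via Lemma~\ref{renormalalphaeqn} and the Codazzi-type definition of $\overset{\triangleright}{\Omega^{-1}\beta}$, conjugate by $w=(-u)^{2-q+i}v^{-1/2+q}$, pair and integrate by parts, and sort the errors into linear/data, nonlinear, and angular-commutator (Gauss curvature) pieces. Two points of imprecision are worth flagging. First, your lapse bookkeeping is off in places: $\widetilde\beta$ already carries the $\Omega^{-1}$ weight, so the correct equation is $\Omega\nabla_4\widetilde\beta+2\Omega^2(\Omega^{-1}{\rm tr}\chi)\widetilde\beta=\Omega^2\slashed{\rm div}\widetilde\alpha+\cdots$, not an equation for $\Omega\widetilde\beta$; similarly the $-8(\Omega\underline\omega)\widetilde\alpha$ term survives on the left of the $\nabla_3$ equation (it is small, not eliminated). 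Second, and more importantly, your explanation of why the ``type (b)'' terms close pins the mechanism on $\log$-integrability against $v^{-1+2q}$, but the sharper point the paper singles out is the deliberate gap between the exponent $2q$ in the $\alpha,\beta$ curvature weight and the exponent $q/100$ in the $\mathscr{B},\mathscr{C},\mathscr{D}$ norms for $\widetilde\psi$, $\widetilde{{\rm tr}\underline\chi}$, $\widetilde b$: it is precisely this gap that makes the $\mathcal{E}_1$-type integrals $\int (-u)^{-1-q/10}v^{-1+q/10}\log^R(v/-u)\,du\,dv$ converge, and you should state it explicitly rather than relying on the log estimate alone.
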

\begin{proof}From~\eqref{3alpha}  we may easily derive the following:
\begin{align}\label{3alpha2}
&\Omega\nabla_3\left(\Omega^{-2}\alpha\right)_{AB} +\frac{1}{2}{\rm tr}\underline{\chi} \left(\Omega^{-2}\alpha\right)_{AB} -8\left(\Omega\underline{\omega}\right)\left(\Omega^{-2}\alpha\right)_{AB} = 
\\ \nonumber &\qquad \left(\slashed{\nabla}\hat{\otimes}\left(\Omega^{-1}\beta\right)  -3\left(\left(\Omega^{-1}\hat{\chi}\right)\rho + {}^*\left(\Omega^{-1}\hat{\chi}\right)\sigma\right) + 5\eta\hat{\otimes}\left(\Omega^{-1}\beta\right)\right)_{AB},
\end{align}
Then, from Lemma~\ref{renormalalphaeqn} and~\eqref{3alpha2} we obtain
\begin{align}\label{3alpha3}
&\left(\Omega\nabla_3\widetilde{\alpha} +\frac{1}{2}\Omega{\rm tr}\underline{\chi} \widetilde{\alpha}-8\left(\Omega\underline{\omega}\right)\widetilde{\alpha}\right)_{AB} = 
\\ \nonumber &\qquad \underbrace{\left(\widetilde{\Omega\nabla}_3\left(\overset{\triangleright}{\Omega^{-2}\alpha}\right) +\frac{1}{2}\widetilde{{\rm tr}\underline{\chi}}\left( \overset{\triangleright}{\Omega^{-2}\alpha}\right)-6\widetilde{\underline{\omega}}\left(\overset{\triangleright}{\Omega^{-2}\alpha} \right)+\slashed{\nabla}\hat{\otimes}\left(\overset{\triangleright}{\Omega^{-1}\beta}\right)\right)_{AB}}_{\mathcal{E}_1}+ 
\\ \nonumber &\qquad \left(\slashed{\nabla}\hat{\otimes}\widetilde{\beta}\right)_{AB} +\underbrace{\left( -3\left(\left(\Omega^{-1}\hat{\chi}\right)\rho + {}^*\left(\Omega^{-1}\hat{\chi}\right)\sigma\right) + 5\eta\hat{\otimes}\left(\Omega^{-1}\beta\right)\right)_{AB}}_{\mathcal{F}_1}.
\end{align}

Next, we use~\eqref{4beta} to derive
\begin{align}\label{4beta2} &\Omega\nabla_4\left(\Omega^{-1}\beta\right)_A + 2\Omega^2\left(\Omega^{-1}{\rm tr}\chi\right)\left(\Omega^{-1}\beta\right)_A = \slashed{\rm div}\alpha_A + \left(\eta\cdot\alpha\right)_A.
\end{align}
Our next goal is to write this in terms of $\widetilde{\beta}_A$ and $\widetilde{\alpha}_{AB}$. We start with
\begin{align}\label{kodwkodwkdwok}
\slashed{\rm div} \alpha_A &= \slashed{\rm div}\left(\Omega^2\Omega^{-2}\alpha\right)_A
\\ \nonumber &= \slashed{\rm div}\left(\Omega^2\widetilde{\Omega^{-2}\alpha}\right)_A + \widetilde{\slashed{\rm div}\Big(\Omega^2}\left(\overset{\triangleright}{\Omega^{-2}\alpha}\right)\Big)_A + \overline{\slashed{\rm div}}\left[\left(\overline{\left(\frac{v}{-u}\right)^{\kappa}\Omega}\right)^2\left(\frac{v}{-u}\right)^{-2\kappa}\left(\overset{\triangleright}{\Omega^{-2}\alpha}\right)\right]_A
\\ \nonumber &= \slashed{\rm div}\left(\Omega^2\widetilde{\alpha}\right)_A + \left(\frac{v}{-u}\right)^{-2\kappa}\widetilde{\slashed{\rm div}\Big(\left(\left(\frac{v}{-u}\right)^{\kappa}\Omega\right)^2}\left(\overset{\triangleright}{\Omega^{-2}\alpha}\right)\Big)_A + \mathcal{L}_{\partial_v}\left(\overline{\slashed{\rm div}}\left(\overset{\triangleright}{\Omega^{-1}\hat{\chi}}\right)\right)_A.
\end{align}
Similarly
\begin{align}\label{kodwkodwkdwok2}
\left(\eta\cdot\alpha\right)_A &= \left(\eta\cdot\Omega^2\widetilde{\alpha}\right)_A + \left(\frac{v}{-u}\right)^{-2\kappa}\left(\widetilde{\eta\cdot\left(\left(\frac{v}{-u}\right)^{\kappa}\Omega\right)^2}\overset{\triangleright}{\Omega^{-2}\alpha}\right)_A + \mathcal{L}_{\partial_v}\left(\overline{\eta}\cdot\left(\overset{\triangleright}{\Omega^{-1}\hat{\chi}}\right)\right)_A.
\end{align}
Combining~\eqref{4beta2} with~\eqref{kodwkodwkdwok} and~\eqref{kodwkodwkdwok2} leads to
\begin{align}\label{4beta3}
&\Omega\nabla_4\widetilde{\beta}_A + 2\Omega^2\left(\Omega^{-1}{\rm tr}\chi\right)\widetilde{\beta}_A = \Omega^2\slashed{\rm div}\widetilde{\alpha}_A  +\Omega^2 \left(\left(2\eta+\underline{\eta}\right)\cdot \widetilde{\alpha}\right)_A
\\ \nonumber &+\Omega^2\Bigg[\Omega^{-1}\chi\cdot\left(\overset{\triangleright}{\Omega^{-1}\beta}\right) - 2\left(\Omega^{-1}{\rm tr}\chi\right)\overset{\triangleright}{\Omega^{-1}\beta}+ \left(\frac{v}{-u}\right)^{-2\kappa}\widetilde{\slashed{\rm div}\Big(\left(\left(\frac{v}{-u}\right)^{\kappa} \Omega\right)^2}\left(\overset{\triangleright}{\Omega^{-2}\alpha}\right)\Big)
\\ \nonumber &+ \left(\frac{v}{-u}\right)^{-2\kappa}\widetilde{\eta\left(\left(\frac{v}{-u}\right)^{\kappa}\Omega\right)^2}\overset{\triangleright}{\Omega^{-2}\alpha}\Bigg]_A 
\\ \nonumber &\doteq \Omega^2\slashed{\rm div}\widetilde{\alpha}_A  +\Omega^2 \left(\left(2\eta+\underline{\eta}\right)\widetilde{\alpha}\right)_A + \mathcal{E}_2.
\end{align}

Finally, for each $i \in \{0,1,2\}$, we commute~\eqref{3alpha3} with $\Omega\slashed{\nabla}^i$ and~\eqref{4beta3} with $\slashed{\nabla}^i$. We end up with (suppressing the indices on the covariant derivative in the rest of the proof for typographical purposes)
\begin{align}\label{finallyforalphawenergy}
&\Omega\nabla_3\left(\Omega\slashed{\nabla}^i\widetilde{\alpha}\right)_{AB} + \frac{1+i}{2}\Omega{\rm tr}\underline{\chi}\left(\Omega\slashed{\nabla}^i\widetilde{\alpha}\right)_{AB} - 6\left(\Omega\underline{\omega}\right)\Omega\slashed{\nabla}^i\widetilde{\alpha}_{AB} =
\\ \nonumber &\Omega\left(\slashed{\nabla}\hat{\otimes}\slashed{\nabla}^i\widetilde{\beta}\right)_{AB} + \Omega\slashed{\nabla}^i\mathcal{E}_1+ \Omega\slashed{\nabla}^i\mathcal{F}_1
\\ \nonumber &+\underbrace{\Omega\left[\Omega\nabla_3,\slashed{\nabla}^i\right]\widetilde{\alpha}_{AB} +\frac{i}{2}\Omega{\rm tr}\underline{\chi}\widetilde{\alpha}_{AB}+ \Omega\left[\slashed{\nabla}^i,\slashed{\nabla}\hat{\otimes}\right]\widetilde{\beta}_{AB} + \Omega\slashed{\nabla}^i\Big(\frac{1}{2}\Omega{\rm tr}\underline{\chi}\left(\Omega\widetilde{\alpha}\right)_{AB}}_{\mathcal{G}_1}
\\ \nonumber &\underbrace{ - 6\left(\Omega\underline{\omega}\right)\widetilde{\alpha}_{AB} \Big)-\frac{1}{2}\Omega{\rm tr}\underline{\chi}\left(\Omega\slashed{\nabla}^i\widetilde{\alpha}\right)_{AB} - 6\left(\Omega\underline{\omega}\right)\Omega\slashed{\nabla}^i\widetilde{\alpha}_{AB}}_{\mathcal{G}_1},
\end{align}
\begin{align}\label{finallyforbetawenergy}
&\Omega\nabla_4\slashed{\nabla}^i\widetilde{\beta}_A =
\\ \nonumber &\qquad \left(\Omega^2\slashed{\rm div}\left(\slashed{\nabla}^i\widetilde{\alpha}\right) + \slashed{\nabla}^i\left[\Omega^2\left(2\eta+\underline{\eta}\right)\widetilde{\alpha} - 2\Omega^2\left(\Omega^{-1}{\rm tr}\chi\right)\widetilde{\beta}\right]\right)_A + \slashed{\nabla}^i\mathcal{E}_2 + \underbrace{\left(\left[\Omega\nabla_4,\slashed{\nabla}^i\right]\widetilde{\beta} + \left[\slashed{\nabla}^i,\Omega^2\slashed{\rm div}\right]\widetilde{\alpha}\right)_A}_{\mathcal{G}_2}.
\end{align}
Finally, we conjugate with the weight $w \doteq \frac{(-u)^{2-q+i}}{v^{1/2-q}}$ and obtain
\begin{align}\label{finallyforalphawenergy2}
&\Omega\nabla_3\left(w\Omega\slashed{\nabla}^i\widetilde{\alpha}\right)_{AB} + \left(\frac{2-q+i}{-u}+\frac{1+i}{2}\Omega{\rm tr}\underline{\chi}\right)\left(w\Omega\slashed{\nabla}^i\widetilde{\alpha}\right)_{AB} - 6\left(\Omega\underline{\omega}\right)w\Omega\slashed{\nabla}^i\widetilde{\alpha}_{AB} = \\ \nonumber &\qquad w\Omega\left(\slashed{\nabla}\hat{\otimes}\slashed{\nabla}^i\widetilde{\beta}\right)_{AB} + w\Omega\slashed{\nabla}^i\mathcal{E}_1+ w\Omega\slashed{\nabla}^i\mathcal{F}_1 + w\mathcal{G}_1,
\end{align}
\begin{align}\label{finallyforbetawenergy2}
&\Omega\nabla_4\left(w\slashed{\nabla}^i\widetilde{\beta}\right)_A + \frac{1/2-q}{v}w\slashed{\nabla}^i\widetilde{\beta}_A =
\\ \nonumber &\qquad  w\Omega^2\slashed{\rm div}\left(\slashed{\nabla}^i\widetilde{\alpha}\right)_A + w\slashed{\nabla}^i\left[\Omega^2\left(\left(2\eta+\underline{\eta}\right)\cdot \widetilde{\alpha}\right)_A - 2\Omega^2\left(\Omega^{-1}{\rm tr}\chi\right)\widetilde{\beta}_A\right] + w\slashed{\nabla}^i\mathcal{E}_2 + w\mathcal{G}_2.
\end{align}
We now note a key fact that by the bootstrap assumptions~\eqref{boot1} and~\eqref{boot2} we have that 
\[ \left(\frac{2-q+i}{-u}+\frac{1+i}{2}\Omega{\rm tr}\underline{\chi}\right) \gtrsim (-u)^{-1} > 0.\]
Of course we also have that
\[\left(\frac{1}{2}-q\right)v^{-1} > 0.\]
Thus, after contracting~\eqref{finallyforalphawenergy2} with $w\Omega\slashed{\nabla}^i\widetilde{\alpha}_{AB}$,~\eqref{finallyforbetawenergy2} with $w\slashed{\nabla}^i\widetilde{\beta}_A$, adding the resulting equations together, integrating by parts, and using the bootstrap assumptions, we end up with 
\begin{align}
&\sup_{\left(u,v\right) \in \mathcal{R}_{\tilde u,\tilde v}}\left[\int_0^v\int_{\mathbb{S}^2}\Omega^2\frac{(-u)^{4-2q+2i}}{\dot{v}^{1-2q}}\left|\slashed{\nabla}^i\widetilde{\alpha}\right|^2\, \mathring{\rm dVol}\, d\dot{v} + \int_{-1}^u\int_0^v\int_{\mathbb{S}^2}\frac{(-\dot{u})^{3-2q+2i}}{\dot{v}^{1-2q}}\Omega^2\left|\slashed{\nabla}^i\widetilde{\alpha}\right|^2\mathring{\rm dVol}\, d\dot{u}\, d\dot{v}\right]
\\ \nonumber &\qquad + \sup_{\left(u,v\right) \in \mathcal{R}_{\tilde u,\tilde v}}\Bigg[\int_{-1}^u\int_{\mathbb{S}^2}\frac{(-\dot{u})^{4-2q+2i}}{v^{1-2q}}\left|\slashed{\nabla}^i\widetilde{\beta}\right|^2\, \mathring{\rm dVol}\, d\dot{u} + \int_{-1}^u\int_0^v\int_{\mathbb{S}^2}\frac{(-\dot{u})^{4-2q+2i}}{\dot{v}^{2-2q}}\left|\slashed{\nabla}^i\widetilde{\beta}\right|^2\mathring{\rm dVol}\, d\dot{u}\, d\dot{v}\Bigg] \lesssim 
\\ \nonumber &\qquad \int_{-1}^{\tilde u}\int_0^{\tilde v}\int_{\mathbb{S}^2}\frac{(-\dot{u})^{5-2q+2i}}{\dot{v}^{1-2q}}\left[ \left|\Omega\slashed{\nabla}^i\mathcal{E}_1\right|^2+ \left|\Omega\slashed{\nabla}^i\mathcal{F}_1\right|^2 + \left|\mathcal{G}_1\right|^2\right]\mathring{\rm dVol}\, d\dot{u}\, d\dot{v}
\\ \nonumber &\qquad +  \int_{-1}^{\tilde u}\int_0^{\tilde v}\int_{\mathbb{S}^2}\frac{(-\dot{u})^{4-2q+2i}}{\dot{v}^{-2q}}\left[\left|\slashed{\nabla}^i\left[\Omega^2\left(2\eta+\underline{\eta}\right)\widetilde{\alpha} - 2\Omega^2\left(\Omega^{-1}{\rm tr}\chi\right)\widetilde{\beta}\right]\right|^2+ \left|\slashed{\nabla}^i\mathcal{E}_2\right|^2 + \left|\mathcal{G}_2\right|^2\right]\mathring{\rm dVol}\, d\dot{u}\, d\dot{v}
\\ \label{fkowddwdwdwdwdw} &\qquad \doteq \int_{-1}^{\tilde u}\int_0^{\tilde v}\int_{\mathbb{S}^2}\frac{(-\dot{u})^{5-2q+2i}}{\dot{v}^{1-2q}}\left|I\right| \mathring{\rm dVol}\, d\dot{u}\, d\dot{v} +  \int_{-1}^{\tilde u}\int_0^{\tilde v}\int_{\mathbb{S}^2}\frac{(-\dot{u})^{4-2q+2i}}{\dot{v}^{-2q}} \left| II \right|\mathring{\rm dVol}\, d\dot{u}\, d\dot{v}.
\end{align}

Let us start with the analysis of the terms in $I$. We will group these terms into three categories:
\begin{enumerate}
	\item We have the ``linear/data'' terms contained in $\Omega\slashed{\nabla}^i\mathcal{E}_1$. Using the bootstrap hypothesis, Sobolev inequalities on $\mathbb{S}^2$,  Lemma~\ref{boundfromdatasub}, and the crucial fact that in the norms~\eqref{riccivanish}, ~\eqref{riccivanish2},~\eqref{riccivanish3}, and~\eqref{444} we see $\frac{q}{100}$ instead of $2q$, leads to the bound (where we suppress the volume form for typographical reasons)
	\begin{align}\label{admowd23244353}
	\int_{-1}^{\tilde u}\int_0^{\tilde v}\int_{\mathbb{S}^2}\frac{(-\dot{u})^{5-2q+2i}}{\dot{v}^{1-2q}}\left|\Omega\slashed{\nabla}^i\mathcal{E}_1\right|^2 &\lesssim \epsilon^{2-2\delta}\int_{-1}^{\tilde u}\int_0^{\tilde v}(-\dot{u})^{-1-\frac{q}{10}}\dot{v}^{-1+\frac{q}{10}}\log^{24}\left(\frac{\dot{v}}{-\dot{u}}\right)
	\\ \nonumber &\lesssim \epsilon^{2-2\delta}.
	\end{align}
	\item Next, we have the nonlinear terms generated by $\Omega\slashed{\nabla}^i\mathcal{F}_1$ and $\mathcal{G}_1$ except for the terms generated by the commutator $\left[\slashed{\nabla}^i,\slashed{\nabla}\hat{\otimes}\right]\widetilde{\beta}_{AB}$. The contributions of these terms to $I$ are easily seen to all schematically be one of the following forms: 
	\[\Omega^2\left|\slashed{\nabla}^i\left(\Omega^{s_1}\psi_1\cdot\Omega^{s_2}\Psi_{s_2}\right)\right|^2,\qquad \Omega^2\left|\slashed{\nabla}^i\left(\Omega^{s_1}\psi_1\cdot\widetilde{\alpha}\right)\right|^2,\qquad \Omega^2\left|\slashed{\nabla}^{i-1}\left(\Omega^{s_1}\psi_1\cdot\Omega^{s_2}\psi_2\cdot\widetilde{\alpha}\right)\right|^2,\]
	where 
	\begin{enumerate}
	\item Each $\psi_i$ denotes a Ricci coefficient of signature $s_i$ which is not equal to $\omega$ or ${\rm tr}\chi$.
	\item If $\psi_i$ multiplies $\widetilde{\alpha}_{AB}$, then we must have $\psi_i \neq \hat{\chi}_{AB}$. 
	\item If ${\rm tr}\underline{\chi}$ shows up in one of the quadratic terms, it must be acted on by $\slashed{\nabla}$, and ${\rm tr}\underline{\chi}$ cannot be both of the Ricci coefficients in the cubic term. 
	\item Each $\Psi_i$ denotes a null curvature component of signature $s_i$ which is not equal to $\alpha_{AB}$. 
	\end{enumerate}
	Using the bootstrap hypothesis, Sobolev inequalities on $\mathbb{S}^2$, and Lemma~\ref{boundfromdatasub} lead to the bound (where we suppress the volume form for typographical reasons)
	\begin{align*}
	&\int_{-1}^{\tilde u}\int_0^{\tilde v}\int_{\mathbb{S}^2}\frac{(-\dot{u})^{5-2q+2i}}{\dot{v}^{1-2q}}\Omega^2\left[\left|\slashed{\nabla}^i\left(\Omega^{s_1}\psi_1\cdot\Omega^{s_2}\Psi_{s_2}\right)\right|^2+ \left|\slashed{\nabla}^i\left(\Omega^{s_1}\psi_1\cdot\widetilde{\alpha}\right)\right|^2+\left|\slashed{\nabla}^{i-1}\left(\Omega^{s_1}\psi_1\cdot\Omega^{s_2}\psi_2\cdot\widetilde{\alpha}\right)\right|^2\right]
	\\ \label{kodkowdkowd} &\qquad \lesssim \epsilon^{2-2\delta}.
	\end{align*}
\item Lastly, we have the terms generated by the commutator $\left[\slashed{\nabla}^i,\slashed{\nabla}\hat{\otimes}\right]\widetilde{\beta}_{AB}$. These occur only if $i \in \{1,2\}$ and are of the following schematic form
\[\Omega^2\left|\slashed{\nabla}^{i-1}\left(\left(K,1\right)\cdot\widetilde{\beta}\right)\right|^2.\]
We have the following immediate consequences of Sobolev inequalities on $\mathbb{S}^2$:
\[\left\vert\left\vert K \widetilde{\beta}\right\vert\right\vert_{\tilde{L}^2} \lesssim \left\vert\left\vert \widetilde{\beta}\right\vert\right\vert_{\tilde{L}^2} + \left\vert\left\vert \widetilde{K}\right\vert\right\vert_{\tilde{L}^4}\left\vert\left\vert \widetilde{\beta}\right\vert\right\vert_{\tilde{L}^4} \lesssim  \left\vert\left\vert \widetilde{\beta}\right\vert\right\vert_{\tilde{L}^2}+\left\vert\left\vert \widetilde{K}\right\vert\right\vert_{\tilde{H}^1}\left\vert\left\vert \widetilde{\beta}\right\vert\right\vert_{\tilde{H}^1},\]
\[\left\vert\left\vert \left(\slashed{\nabla}K \right)\widetilde{\beta}\right\vert\right\vert_{\tilde{L}^2} \lesssim \left\vert\left\vert \slashed{\nabla}\widetilde{K}\right\vert\right\vert_{\tilde{L}^2}\left\vert\left\vert \widetilde{\beta}\right\vert\right\vert_{\tilde{L}^{\infty}} \lesssim \left\vert\left\vert \slashed{\nabla}K\right\vert\right\vert_{\tilde{L}^2}\left\vert\left\vert \widetilde{\beta}\right\vert\right\vert_{\tilde{H}^2},\]
\[\left\vert\left\vert K\cdot\slashed{\nabla}\widetilde{\beta}\right\vert\right\vert_{\tilde{L}^2} \lesssim \left\vert\left\vert \slashed{\nabla}\widetilde{\beta}\right\vert\right\vert_{\tilde{L}^2} + \left\vert\left\vert \widetilde{K}\right\vert\right\vert_{\tilde{L}^4} \left\vert\left\vert \slashed{\nabla}\widetilde{\beta}\right\vert\right\vert_{\tilde{L}^4} \lesssim \left\vert\left\vert \slashed{\nabla}\widetilde{\beta}\right\vert\right\vert_{\tilde{L}^2} + \left\vert\left\vert \widetilde{K}\right\vert\right\vert_{\tilde{H}^1} \left\vert\left\vert \widetilde{\beta}\right\vert\right\vert_{\tilde{H}^2},\]
where all of the spaces are defined over $\mathbb{S}^2_{u,v}$. Thus, using the bootstrap assumption and the smallness of $\underline{v}$, we may easily establish that (suppressing the volume forms)
\begin{align}\label{okdkowdqqqpkbnwif}
\int_{-1}^{\tilde u}\int_0^v\int_{\mathbb{S}^2}\frac{(-\dot{u})^{5-2q+2i}}{\dot{v}^{1-2q}}\Omega^2\left|\slashed{\nabla}^{i-1}\left(\left(K,1\right)\cdot\widetilde{\beta}\right)\right|^2 &\lesssim  \sum_{j=0}^2\int_{-1}^{\tilde u}\int_0^v\int_{\mathbb{S}^2}\frac{(-\dot{u})^{5-2q+2j}}{\dot{v}^{1-2q}}\Omega^2\left|\slashed{\nabla}^j\widetilde{\beta}\right|^2  
\\ \nonumber &\lesssim \epsilon^{2-2\delta}.
\end{align}

	\end{enumerate}
	
Now we discuss the terms contained in $II$ from the expression~\eqref{fkowddwdwdwdwdw}. We will again group these into three different categories which we treat in a similar fashion to the terms in $I$. 
\begin{enumerate}
	\item We have the ``linear/data'' terms contained in $\Omega\slashed{\nabla}^i\mathcal{E}_2$. Using the bootstrap hypothesis, Sobolev inequalities on $\mathbb{S}^2$, and Lemma~\ref{boundfromdatasub} leads to the bound (where we suppress the volume form for typographical reasons)
	\begin{align}\label{aaaaoskowm3332}
	\int_{-1}^{\tilde u}\int_0^{\tilde v}\int_{\mathbb{S}^2}\frac{(-\dot{u})^{4-2q+2i}}{\dot{v}^{-2q}}\left|\Omega\slashed{\nabla}^i\mathcal{E}_2\right|^2 &\lesssim \epsilon^{2-2\delta}\int_{-1}^{\tilde u}\int_0^{\tilde v}(-\dot{u})^{-1-\frac{q}{10}}\dot{v}^{-1+\frac{q}{10}}\log^{24}\left(\frac{\dot{v}}{-\dot{u}}\right)
	\\ \nonumber &\lesssim \epsilon^{2-2\delta}.
	\end{align}
	Note that, as opposed to the bound~\eqref{admowd23244353}, we do not need to exploit fully the vanishing of any Ricci coefficients $\widetilde{\psi}$.
	\item Next we have all of the terms except for $\Omega\slashed{\nabla}^i\mathcal{E}_2$ and those generated by the commutator  $\left[\slashed{\nabla}^i,\Omega^2\slashed{\rm div}\right]\widetilde{\alpha}_A$. These are all of the following schematic form:
	\[\left|\slashed{\nabla}^i\left(\Omega^2\cdot\left(\Omega^{s_1}\psi_1\right)\cdot\left(\widetilde{\alpha},\widetilde{\beta}\right)\right)\right|^2,\qquad \left|\slashed{\nabla}^i\left(\Omega^2\cdot\left(\Omega^{s_1}\psi_1\right)\cdot\left(\Omega^{s_2}\psi_2\right)\cdot\widetilde{\beta}\right)\right|^2,\]
	where $\psi_i$ denotes a Ricci coefficient of signature $s_i$ which is not equal to $\omega$. Using the bootstrap hypothesis, Sobolev inequalities on $\mathbb{S}^2$, and Lemma~\ref{boundfromdatasub} leads to the bound (where we suppress the volume form for typographical reasons) 
	\begin{align}\label{aaaaoskowm23i9kmd2}
	&\int_{-1}^{\tilde u}\int_0^{\tilde v}\int_{\mathbb{S}^2}\frac{(-\dot{u})^{4-2q+2i}}{\dot{v}^{-2q}}\left[\left|\slashed{\nabla}^i\left(\Omega^2\cdot\left(\Omega^{s_1}\psi_1\right)\cdot\left(\widetilde{\alpha},\widetilde{\beta}\right)\right)\right|^2+ \left|\slashed{\nabla}^i\left(\Omega^2\cdot\left(\Omega^{s_1}\psi_1\right)\cdot\left(\Omega^{s_2}\psi_2\right)\cdot\widetilde{\beta}\right)\right|^2\right] \lesssim
	\\ \nonumber &\qquad  \epsilon^{2-2\delta}.
	\end{align}
	\item Finally, we have the terms generated by the commutator $\left[\slashed{\nabla}^i,\Omega^2\slashed{\rm div}\right]\widetilde{\alpha}_A$. These will be of the schematic form
	\[\Omega^2\left|\slashed{\nabla}^{i-1}\left(\left(K,1\right)\cdot\widetilde{\alpha}\right)\right|^2.\]
	The same argument which established~\eqref{okdkowdqqqpkbnwif} leads to 
	\begin{align}\label{okdkowdqqqpkbnwif2}
\int_{-1}^{\tilde u}\int_0^v\int_{\mathbb{S}^2}\frac{(-\dot{u})^{4-2q+2i}}{\dot{v}^{-2q}}\Omega^2\left|\slashed{\nabla}^{i-1}\left(\left(K,1\right)\cdot\widetilde{\alpha}\right)\right|^2 &\lesssim  \sum_{j=0}^2\int_{-1}^{\tilde u}\int_0^v\int_{\mathbb{S}^2}\frac{(-\dot{u})^{4-2q+2j}}{\dot{v}^{-2q}}\Omega^2\left|\slashed{\nabla}^j\widetilde{\alpha}\right|^2  
\\ \nonumber &\lesssim \epsilon^{2-2\delta}.
\end{align}
This concludes the proof that the terms in~\eqref{fkowddwdwdwdwdw} are bounded by $\epsilon^{2-2\delta}$ and hence finishes the proof. 
\end{enumerate}
\end{proof}

In the next proposition we carry out the analogous energy estimates for the Bianchi pairs $\left(\beta_A,\rho,\sigma\right)$, $\left(\rho,\sigma,\underline{\beta}_A\right)$, and $\left(\underline{\beta}_A,\underline{\alpha}_{AB}\right)$. 
\begin{proposition}\label{mathfrakaisdone}Let $\left(\mathcal{M},g\right)$ satisfy the hypothesis of Proposition~\ref{letsbootit}. Then we have that 
\[\mathfrak{A} \lesssim \epsilon^{1-\delta}.\]
\end{proposition}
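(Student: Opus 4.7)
The plan is to mirror the proof of Proposition~\ref{betarhosigenergyest}, which handled the Bianchi pair $(\alpha,\beta)$, by treating in turn the three remaining pairs $(\beta,\check\rho,\check\sigma)$, $(\check\rho,\check\sigma,\underline\beta)$, and $(\underline\beta,\underline\alpha)$. The additional quantities to control beyond what Proposition~\ref{betarhosigenergyest} already gives are $\widetilde\rho,\widetilde\sigma,\widetilde{\underline\beta},\widetilde{\underline\alpha}$, along with the $\nabla_4$-side contributions to $\|\beta\|_{\mathscr{A}}$ and the $\nabla_3$-side spacetime integral of $\underline\beta$; in each case the target bound is $\epsilon^{2-2\delta}$, so that summing yields $\mathfrak A\lesssim \epsilon^{1-\delta}$ with room to spare.

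For each pair I will first derive a renormalized coupled Bianchi system. For $(\widetilde\beta,\widetilde\rho,\widetilde\sigma)$ the equations come from~\eqref{ren1}--\eqref{ren3} after subtracting the corresponding equations satisfied by $\overset{\triangleright}{\Omega^{-1}\beta}$, $\overset{\triangleright}{\rho}$, $\overset{\triangleright}{\sigma}$ along $\{v=0\}$; the key point is that the definitions in Definition~\ref{leadingself} are engineered precisely so that, after subtraction, every term on the right either involves a factor of a $\widetilde{}$-quantity that vanishes at $v=0$ or is a spacetime-integrable data error bounded by Lemma~\ref{boundfromdatasub}. For $(\widetilde\rho,\widetilde\sigma,\widetilde{\underline\beta})$ I will use~\eqref{ren4}--\eqref{ren6} with analogous subtractions, and for $(\widetilde{\underline\beta},\widetilde{\underline\alpha})$ the unrenormalized equations~\eqref{3undbeta},~\eqref{4undalpha}, using that $\overline{\underline\alpha},\overline{\underline\beta}$ themselves satisfy a closed system inherited from the $v\to 0$ limits of these equations. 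After commuting with $\slashed\nabla^i$ for $i\in\{0,1,2\}$ and conjugating by weights of the form $w=(-u)^{2-q/200+i}v^{-1/2+q/200}$ (or the stronger $(-u)^{2-q+i}v^{-1/2+q}$ on the $\nabla_3$-side of $\beta$, matching Definition~\ref{energynormcurv}), the usual $\slashed\nabla\leftrightarrow\slashed{\rm div}$ antisymmetry of each Bianchi pair produces the bulk energies; the lower-order terms generated carry coefficients of the schematic form $(2-q/200+i)(-u)^{-1}+\tfrac{k}{2}\Omega{\rm tr}\underline\chi$ on the $\nabla_3$-side and $(1/2-q/200)v^{-1}+\tfrac{3}{2}\Omega{\rm tr}\chi-\tfrac32 v^{-1}$ on the $\nabla_4$-side, both of which are nonnegative by~\eqref{boot1}--\eqref{boot2} together with the near-Minkowski values of ${\rm tr}\chi,{\rm tr}\underline\chi$, and hence harmlessly discarded.

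The right-hand sides then fall into three categories treated exactly as in~\eqref{admowd23244353}--\eqref{okdkowdqqqpkbnwif2}: (i) linear/data terms, bounded via Lemma~\ref{initenergyisok} and Lemma~\ref{boundfromdatasub}, where the gap between the $q/100$ in the norm and the $q/200$ in the weight supplies the room needed for $u$- and $v$-integrability; (ii) nonlinear Ricci$\times$curvature products handled by the Sobolev inequalities of Lemma~\ref{sosososob} together with the bootstrap; and (iii) angular commutators producing Gaussian-curvature factors controlled by the $K$-estimate in~\eqref{boot2}. The main technical obstacle will be the $(\check\rho,\check\sigma,\underline\beta)$ pair: equation~\eqref{ren6} contains $2\hat{\underline\chi}\cdot\beta$, and since $\beta$ is as large as $\epsilon^{-1}|u|^{-2}$ while $\hat{\underline\chi}$ is only $\epsilon|u|^{-1}$, one must decompose $\beta=\widetilde\beta+\overset{\triangleright}{\Omega^{-1}\beta}+\tfrac12\overline{\slashed\nabla(\Omega^{-1}{\rm tr}\chi)}+\tfrac12\overline{\eta\,\Omega^{-1}{\rm tr}\chi}$ and verify that the ``background times background'' contribution is absorbed into the $\hat{\underline\chi}$-correction built into $\overset\triangleright\rho,\overset\triangleright\sigma$ via Definition~\ref{leadingself}, leaving only small remainders. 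A parallel check is needed for the $\hat\chi\cdot\underline\beta$ term in~\eqref{ren1} and for the cubic $\hat\chi\cdot(\slashed\nabla\hat\otimes\underline\eta+\underline\eta\hat\otimes\underline\eta)$ and $\hat\chi\cdot(\slashed\nabla\hat\otimes\underline\eta)$ contributions in~\eqref{ren2}--\eqref{ren3}. Once these cancellations are confirmed, the same three-category bookkeeping used in the proof of Proposition~\ref{betarhosigenergyest} closes the argument and yields $\mathfrak A\lesssim \epsilon^{1-\delta}$.
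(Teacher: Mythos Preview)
Your approach has a genuine gap at top order. The renormalized equations \eqref{ren1}--\eqref{ren6} contain right-hand-side terms of the schematic form $\psi\cdot\slashed\nabla\psi$: specifically $\tfrac12\hat\chi\wedge(\slashed\nabla\hat\otimes\underline\eta)$ in \eqref{ren2}--\eqref{ren3}, $\tfrac12\hat{\underline\chi}\wedge(\slashed\nabla\hat\otimes\eta)$ in \eqref{ren4}--\eqref{ren5}, and $\tfrac12\slashed\nabla(\hat\chi\cdot\hat{\underline\chi})$ in \eqref{ren1} and \eqref{ren6}. After commuting with $\slashed\nabla^2$ (the top order in $\mathscr A$), these produce contributions requiring $\slashed\nabla^3$ of a Ricci coefficient, e.g.\ $\hat\chi\wedge\slashed\nabla^2(\slashed\nabla\hat\otimes\underline\eta)$. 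But the bootstrap norm $\mathscr B$ in Definition~\ref{lowordernormricci} only controls $\slashed\nabla^i\widetilde\psi$ for $i\le 2$, so you cannot close. The subtraction of background values you describe does not help: after splitting, you still have terms like $\overset\triangleright{\Omega^{-1}\hat\chi}\cdot\slashed\nabla^3\widetilde{\underline\eta}$, and angular integration by parts only trades this for $\slashed\nabla^3$ of a curvature component, which is equally unavailable. (Note that in Region~III this issue disappears because $\mathscr L$ there carries one more angular derivative than $\mathscr I$; in Region~I the norms $\mathscr A$ and $\mathscr B$ are at the same level.)

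The paper avoids this by working not with \eqref{ren1}--\eqref{ren6} but with the \emph{unrenormalized} Bianchi equations for $\rho,\sigma,\underline\beta,\underline\alpha$, multiplied by suitable lapse powers to remove $\omega$; see the system \eqref{3beta2}--\eqref{4undalpha2}. In that form every non-principal nonlinear term is of Ricci$\times$curvature type, never $\psi\cdot\slashed\nabla\psi$, so no derivative is lost. The price is that the $\alpha$ coupling reappears in the $\nabla_4$ equations for $\rho,\sigma$, but this is handled exactly as in Proposition~\ref{betarhosigenergyest}: split $\alpha=\Omega^2\widetilde\alpha+(\tfrac{v}{-u})^{-2\kappa}\widetilde{\Omega^2}\,\overset\triangleright{\Omega^{-2}\alpha}+\ldots$, put the $\widetilde\alpha$ piece among the controlled $\psi\cdot\widetilde\Psi$ terms, and treat the $\overset\triangleright{\Omega^{-2}\alpha}$ piece as a data error via Lemma~\ref{boundfromdatasub}. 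With this choice the three-category bookkeeping you outlined goes through without modification.
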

\begin{proof}We may re-write the Bianchi equations~\eqref{3beta}-\eqref{4undalpha} as follows:
\begin{align} \label{3beta2} \Omega\nabla_3\left(\Omega^{-1}\beta\right)_A + \left(\Omega{\rm tr}\underline{\chi}\right)\Omega^{-1}\beta_A &= \slashed{\nabla}_A\widetilde{\rho} + 4\left(\Omega\underline{\omega}\right)\Omega^{-1}\beta_A + {}^*\slashed{\nabla}_A\widetilde{\sigma} + 2\left(\Omega^{-1}\hat{\chi}\right)\cdot\left(\Omega\underline{\beta}\right)_A
\\ \nonumber &\qquad + 3\left(\eta_A\rho + {}^*\eta_A\sigma\right) + \slashed{\nabla}_A\left(\rho - \widetilde{\rho}\right) + {}^*\slashed{\nabla}_A\left(\sigma-\widetilde{\sigma}\right),
\\ \label{4sigma2} \Omega\nabla_4\widetilde{\sigma} + \frac{3}{2}\Omega^2\left(\Omega^{-1}{\rm tr}\chi\right) \sigma &=\Omega^2\left[ -\slashed{\rm div}{}^*\left(\Omega^{-1}\beta\right)  -\left(\eta+2\underline{\eta}\right)\wedge \left(\Omega^{-1}\beta\right) + \frac{1}{2}\Omega\hat{\underline{\chi}}\wedge \widetilde{\alpha}\right]
\\ \nonumber &\qquad + \frac{1}{2}\left(\frac{v}{-u}\right)^{-2\kappa}\widetilde{\left(\left(\frac{v}{-u}\right)^{\kappa}\Omega\right)^2\left(\Omega\hat{\underline{\chi}}\right)}\wedge \overset{\triangleright}{\Omega^{-2}\alpha},
\\ \label{3sigma2} \Omega\nabla_3\sigma + \frac{3}{2}\left(\Omega{\rm tr}\underline{\chi}\right) \sigma &= -\slashed{\rm div}{}^*\widetilde{\underline{\beta}} - \frac{1}{2}\left(\Omega^{-1}\hat{\chi}\right)\wedge \left(\Omega^2\underline{\alpha}\right) -\eta\wedge \left(\Omega\underline{\beta}\right)-\slashed{\rm div}{}^*\left(\overline{\Omega\underline{\beta}}\right),
\\ \label{4rho2} \Omega\nabla_4\widetilde{\rho} + \frac{3}{2}\Omega^2\left(\Omega^{-1}{\rm tr}\chi\right) \rho &= \Omega^2\left[\slashed{\rm div}\left(\Omega^{-1}\beta\right) -\frac{1}{2}\left(\Omega\hat{\underline{\chi}}\right)\cdot\widetilde{\alpha} + \left(\eta+2\underline{\eta}\right)\cdot\left(\Omega^{-1}\beta\right)\right]
\\ \nonumber &\qquad - \frac{1}{2}\left(\frac{v}{-u}\right)^{-2\kappa}\widetilde{\left(\left(\frac{v}{-u}\right)^{\kappa}\Omega\right)^2\left(\Omega\hat{\underline{\chi}}\right)}\cdot\overset{\triangleright}{\Omega^{-2}\alpha},
\\ \label{3rho2} \Omega\nabla_3\rho + \frac{3}{2}\left(\Omega{\rm tr}\underline{\chi} \right)\rho &=  -\slashed{\rm div}\widetilde{\underline{\beta}} -\frac{1}{2}\left(\Omega^{-1}\hat{\chi}\right)\cdot\left(\Omega^2\underline{\alpha}\right) - \eta\cdot\left(\Omega\underline{\beta}\right)-\slashed{\rm div}\left(\overline{\Omega\underline{\beta}}\right),
\\ \label{4undbeta2} \Omega\nabla_4\widetilde{\underline\beta}_A + \Omega^2\left(\Omega^{-1}{\rm tr}\chi\right)\left(\Omega\underline{\beta}\right)_A &= \Omega^2\left[-\Omega\chi\overline{\underline{\beta}}-\slashed{\nabla}\rho + {}^*\slashed{\nabla}\sigma + 2\left(\Omega\hat{\underline{\chi}}\right)\cdot\left(\Omega^{-1}\beta\right)+ 3\left(-\underline{\eta}\rho + {}^*\underline{\eta}\sigma\right)\right]_A,
\\ \label{3undbeta2} \Omega\nabla_3\left(\Omega\underline{\beta}\right)_A + 2\left(\Omega{\rm tr}\underline{\chi}\right)\left(\Omega \underline{\beta}\right)_A &=- \slashed{\rm div}\widetilde{\underline{\alpha}}_A - 4\left(\Omega\underline{\omega}\right)\Omega\underline{\beta}_A + 2 \eta\cdot\left(\Omega^2\underline{\alpha}\right)_A - \slashed{\rm div}\left(\overline{\Omega^2\underline{\alpha}}\right)_A,
\\ \label{4undalpha2} \Omega\nabla_4\widetilde{\underline{\alpha}}_{AB} +\frac{1}{2}\Omega^2\left(\Omega^{-1}{\rm tr}\chi\right) \Omega^2\underline{\alpha}_{AB} &= \Omega^2\Big[-2\Omega\chi\cdot\overline{\underline{\alpha}}-\slashed{\nabla}\hat{\otimes}\left(\Omega\underline{\beta}\right)
\\ \nonumber &\qquad  -3\left(\left(\Omega\hat{\underline{\chi}}\right)\rho - {}^*\left(\Omega\hat{\underline{\chi}}\right)\sigma\right) + \left(\eta- 4\underline{\eta}\right)\hat{\otimes}\left(\Omega\underline{\beta}\right)\Big]_{AB}.
\end{align}
(It may be useful for the reader to draw an analogy with this form of the Bianchi equations and the renormalized Bianchi equations~\eqref{ren1}-\eqref{ren6}.)

Now we treat each of the Bianchi pairs $\left(\eqref{3beta2},\eqref{4sigma2},\eqref{4rho2}\right)$, $\left(\eqref{3sigma2},\eqref{3rho2},\eqref{4undbeta2}\right)$, and $\left(\eqref{3undbeta2},\eqref{4undalpha2}\right)$ just as we treated the $\left(\alpha_{AB},\beta_A\right)$ pair in the proof of Proposition~\ref{betarhosigenergyest}. That is, for each $i \in \{0,1,2\}$, we commute each $\nabla_4$ equation with $\slashed{\nabla}^i_{A_1\cdots A_i}$, commute each $\nabla_3$ equation with $\Omega\slashed{\nabla}^i_{A_1\cdots A_i}$, conjugate each equation with the weight $\frac{(-u)^{2-\frac{q}{200}+i}}{v^{1/2-\frac{q}{200}}}$ (note that the weight has changed from Proposition~\ref{betarhosigenergyest}), and finally carry out the energy estimate. Note the key point that other than $\alpha_{AB}$, every null curvature component satisfies a $\nabla_4$ equation, and thus, in analogy to $\beta_A$ in the proof of Proposition~\ref{betarhosigenergyest}, there will be a good spacetime term for $\rho$, $\sigma$, $\underline{\beta}_A$, and $\underline{\alpha}_{AB}$ with a $v$-weight $v^{-2+\frac{q}{100}}$. Thus, (using that we already have estimated a spacetime term for $\beta_A$ in Proposition~\ref{betarhosigenergyest}) we end up with 
\begin{equation}\label{kdkwokdow}
\mathfrak{A}^2 \lesssim \epsilon^{2-2\delta} +\sum_{i=0}^2 \int_{-1}^{\tilde u}\int_0^{\tilde v}\int_{\mathbb{S}^2}\frac{(-\dot{u})^{4-\frac{q}{100}+2i}}{\dot{v}^{-\frac{q}{100}}} \left| \mathcal{N}_i\right|\mathring{\rm dVol}\, d\dot{u}\, d\dot{v},
\end{equation}
where, just as in the proof of Proposition~\ref{betarhosigenergyest}, the terms making up $\mathcal{N}_i$ may be sorted into three categories:
\begin{enumerate}
	\item We have ``linear/data'' terms involving $\overset{\triangleright}{\Omega^{-2}\alpha}_{AB}$, $\rho-\widetilde{\rho}$, $\overline{\Omega\underline{\beta}}_A$, and $\overline{\Omega^2\underline{\alpha}}_{AB}$. We collect all of these terms below:
	\[\left|\Omega\slashed{\nabla}^i\left(\slashed{\nabla}\left(\rho - \widetilde{\rho}\right)\right)\right|^2,\qquad  \left|\Omega\slashed{\nabla}^i\left({}^*\slashed{\nabla}\left(\sigma-\widetilde{\sigma}\right)\right)\right|^2,\qquad \left|\slashed{\nabla}^i\left(\left(\frac{v}{-u}\right)^{-2\kappa}\widetilde{\left(\left(\frac{v}{-u}\right)^{\kappa}\Omega\right)^2\left(\Omega\hat{\underline{\chi}}\right)}\wedge \overset{\triangleright}{\Omega^{-2}\alpha}\right)\right|^2,\]
	\[\left|\Omega\slashed{\nabla}^i\left(\slashed{\rm div}{}^*\left(\overline{\Omega\underline{\beta}}\right)\right)\right|^2,\qquad \left|\slashed{\nabla}^i\left(\left(\frac{v}{-u}\right)^{-2\kappa}\widetilde{\left(\left(\frac{v}{-u}\right)^{\kappa}\Omega\right)^2\left(\Omega\hat{\underline{\chi}}\right)}\cdot\overset{\triangleright}{\Omega^{-2}\alpha}\right)\right|^2,\qquad \left|\Omega\slashed{\nabla}^i\left(\slashed{\rm div}\left(\overline{\Omega\underline{\beta}}\right)\right)\right|^2,\]
	\[ \left|\Omega\slashed{\nabla}^i\left(\slashed{\rm div}\left(\overline{\Omega^2\underline{\alpha}}\right)\right)\right|^2.\]
	Let $\mathcal{F}_i$ denote the sum of all of these terms. Using the bootstrap hypothesis, Sobolev inequalities on $\mathbb{S}^2$, and Lemma~\ref{boundfromdatasub} leads to the bound (where we suppress the volume form for typographical reasons)
	\begin{align}\label{aaaaoskowm333}
	\int_{-1}^{\tilde u}\int_0^{\tilde v}\int_{\mathbb{S}^2}\frac{(-\dot{u})^{4-\frac{q}{100}+2i}}{\dot{v}^{-\frac{q}{100}}}\left|\mathcal{F}_i\right|^2 &\lesssim \epsilon^{2-2\delta}\int_{-1}^{\tilde u}\int_0^{\tilde v}(-\dot{u})^{-1-\frac{q}{1000}}\dot{v}^{-1+\frac{q}{1000}}\log^{24}\left(\frac{\dot{v}}{-\dot{u}}\right)
	\\ \nonumber &\lesssim \epsilon^{2-2\delta}.
	\end{align}

	\item Next, we have all of the remaining terms except those generated by commutators of angular operators with $\slashed{\nabla}^i_{A_1\cdots A_i}$. These terms are all of the following schematic form:
	\[\left(\Omega^2,1\right)\left|\slashed{\nabla}^i\left(\Omega^{s_1}\psi_1\cdot\Omega^{s_2}\Psi_{s_2}\right)\right|^2,\qquad \left(\Omega^2,1\right)\left|\slashed{\nabla}^i\left(\Omega^{s_1}\psi_1\cdot\left(\Omega^{s_2}\Psi_2,\widetilde{\Psi}\right)\right)\right|^2\]
	\[\left(\Omega^2,1\right)\left|\slashed{\nabla}^{i-1}\left(\Omega^{s_1}\psi_1\cdot\Omega^{s_2}\psi_2\cdot\left(\Omega^{s_3}\Psi_3,\widetilde{\Psi}\right)\right)\right|^2,\]
	where 
	\begin{enumerate}
	\item Each $\psi_i$ denotes a Ricci coefficient of signature $s_i$ which is not equal to $\omega$.	
	\item Each $\Psi_i$  denotes an arbitrary null curvature component of signature $s_i$ which is not equal to $\alpha_{AB}$. 
	\item Each $\widetilde{\Psi}$ denotes an arbitrary null curvature component. 
	\end{enumerate}
	Using the bootstrap hypothesis, Sobolev inequalities on $\mathbb{S}^2$, and Lemma~\ref{boundfromdatasub} leads to the bound (where we suppress the volume form for typographical reasons)
	\begin{align}
	&\int_{-1}^{\tilde u}\int_0^{\tilde v}\int_{\mathbb{S}^2}\frac{(-\dot{u})^{4-\frac{q}{100}+2i}}{\dot{v}^{-\frac{q}{100}}}\Bigg[\left(\Omega^2,1\right)\left|\slashed{\nabla}^i\left(\Omega^{s_1}\psi_1\cdot\Omega^{s_2}\Psi_{s_2}\right)\right|^2+\left(\Omega^2,1\right)\left|\slashed{\nabla}^i\left(\Omega^{s_1}\psi_1\cdot\left(\Omega^{s_2}\Psi_2,\widetilde{\Psi}\right)\right)\right|^2
	\\ \nonumber &\qquad \qquad \qquad \qquad +\left(\Omega^2,1\right)\left|\slashed{\nabla}^{i-1}\left(\Omega^{s_1}\psi_1\cdot\Omega^{s_2}\psi_2\cdot\left(\Omega^{s_3}\Psi_3,\widetilde{\Psi}\right)\right)\right|^2\Bigg]\lesssim  \epsilon^{2-2\delta}.
	\end{align}

	\item Finally, we have the terms generated by the commutator of $\slashed{\nabla}^i_{A_1\cdots A_i}$ with angular operators. These are all of the following schematic form:
	\[\left(\Omega^2,1\right)\left|\slashed{\nabla}^{i-1}\left(\left(K,1\right)\cdot\left(\Omega^{s_1}\Psi_1,\widetilde{\Psi}\right)\right)\right|^2,\]
	where $\Psi_1$ denotes a null curvature component of signature $s_1$ not equal to $\alpha_{AB}$, $\widetilde{\Psi}$ also denotes a nulll curvature component not equal to $\alpha_{AB}$. Arguing as we did in the proof of Proposition~\ref{betarhosigenergyest} leads to the bound
	\[\int_{-1}^{\tilde u}\int_0^{\tilde v}\int_{\mathbb{S}^2}\frac{(-\dot{u})^{4-\frac{q}{100}+2i}}{\dot{v}^{-\frac{q}{100}}}\left(\Omega^2,1\right)\left|\slashed{\nabla}^{i-1}\left(\left(K,1\right)\cdot\left(\Omega^{s_1}\Psi_1,\widetilde{\Psi}\right)\right)\right|^2 \lesssim \epsilon^{2-2\delta}.\]
\end{enumerate}
We have thus show that all of the terms on the right hand side of~\eqref{kdkwokdow} are bounded by $\epsilon^{2-2\delta}$ and this completes the proof. 

\end{proof}
\subsubsection{Estimates for Ricci coefficients other than ${\rm tr}\underline{\chi}$ and $\hat{\underline{\chi}}$}

In this section we will carry out the estimates for all of the Ricci coefficients other than ${\rm tr}\underline{\chi}$ and $\hat{\underline{\chi}}_{AB}$. We start with $\underline{\omega}$.
\begin{lemma}\label{Wedididwofwkfowkfomega}Let $\left(\mathcal{M},g_{\mu\nu}\right)$ satisfy the hypothesis of Proposition~\ref{letsbootit}. Then
\begin{equation}\label{dawdecvrbvew}
\left\vert\left\vert \underline{\omega}\right\vert\right\vert_{\mathscr{B}} \lesssim \epsilon^{2-2\delta}.
\end{equation}

Furthermore, if we define
\begin{align*}
\widetilde{\underline{\omega}}^{(0)}\left(u,v,\theta\right) &\doteq \left(1-2\kappa\right)^{-1}\left(\frac{v}{-u}\right)^{1-2\kappa}\left(\overline{\left(\frac{v}{-u}\right)^{\kappa}\Omega}\right)^2\left[\overline{\frac{1}{2}\rho - \frac{1}{4}\left(\Omega\hat{\underline{\chi}}\right)\cdot\left(\Omega^{-1}\hat{\chi}\right) + \frac{1}{2}\left|\eta\right|^2 - \eta\cdot\underline{\eta}} \right]
\\ \nonumber &\qquad + \frac{1}{4}\left(\overline{\left(\frac{v}{-u}\right)^{\kappa}\Omega}\right)^2\overline{\Omega\hat{\underline{\chi}}_{AB}}\slashed{g}^{AB}\int_0^v\left(\frac{\dot{v}}{-u}\right)^{-2\kappa}\overset{\triangleright}{\Omega^{-1}\hat{\chi}}_{AB}\, d\dot{v},
\end{align*}
\[\widetilde{\underline{\omega}}^{(1)} \doteq \underline{\widetilde{\omega}} - \underline{\widetilde{\omega}}^{(0)},\]
Then
\begin{align}\label{betterwhen33}
\sup_{(u,v)\in\mathcal{R}_{\tilde u,\tilde v}}\sum_{i=0}^2\int_{-1}^u\int_{\mathbb{S}^2}\left|\slashed{\nabla}^i\widetilde{\underline{\omega}}^{(1)}\right|^2\frac{(-\dot{u})^{5+2i-\frac{q}{100}}}{v^{3-\frac{q}{100}}}\, d\dot{u}\, \mathring{\rm dVol} \lesssim \epsilon^{2-2\delta}.
\end{align}

\end{lemma}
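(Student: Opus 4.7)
The proof will begin by exploiting the fact that, despite the appearance of $\omega$ in equation~\eqref{4uomega}, the particular combination $\Omega\underline{\omega}$ satisfies an equation free of $\omega$. Using $\partial_v\Omega = -2\omega\Omega^2$ (from $\omega=-\tfrac{1}{2}\nabla_4\log\Omega$) together with~\eqref{4uomega}, a direct computation gives
\begin{equation}\label{ploke}
\Omega\nabla_4\bigl(\Omega\underline{\omega}\bigr) = \Omega^2\Bigl[\tfrac{1}{2}\rho + \tfrac{1}{2}|\eta|^2 - \eta\cdot\underline{\eta}\Bigr],
\end{equation}
since the $2\omega\underline{\omega}$ term cancels against the derivative hitting the weight $\Omega$. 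Since $\Omega\nabla_4=\partial_v$ in these coordinates, integrating from $v=0$ (where $\Omega\underline{\omega}\to\overline{\Omega\underline{\omega}}$) yields the closed formula
\begin{equation}\label{omegarepresentationplan}
\widetilde{\underline{\omega}}(u,v,\theta) = \int_0^v \Omega^2\bigl[\tfrac{1}{2}\rho + \tfrac{1}{2}|\eta|^2 - \eta\cdot\underline{\eta}\bigr](u,\hat v,\theta)\, d\hat v,
\end{equation}
which is the starting point for both estimates.

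For the first bound~\eqref{dawdecvrbvew}, I will estimate~\eqref{omegarepresentationplan} pointwise. By the bootstrap assumptions and Lemma~\ref{boundfromdatasub}, the bracketed integrand is bounded (in $\tilde H^2(\mathbb{S}^2_{u,\hat v})$) by $\epsilon^{2-2\delta}(-u)^{-2}$, while $\Omega^2\lesssim (\hat v/(-u))^{-2\kappa}$. Integration in $\hat v\in[0,v]$ yields $|\widetilde{\underline{\omega}}|\lesssim \epsilon^{2-2\delta}(-u)^{-2+2\kappa}v^{1-2\kappa}$ on each sphere. Since $200\kappa\ll q$, this beats the weight $v^{1-q/200}(-u)^{-2+q/200}$ demanded by the $\mathscr{B}$-norm~\eqref{riccivanish}. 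Higher derivatives are treated by commuting $\slashed{\nabla}^i$ (for $i\le 2$) through~\eqref{omegarepresentationplan}, producing harmless commutator terms of the schematic form $[\slashed{\nabla}^i,\partial_v]=\sum\chi\cdot\slashed{\nabla}^{\le i}$ via~\eqref{com1}, which are controlled by the bootstrap on $\hat{\chi}$ and ${\rm tr}\chi$ together with Lemma~\ref{sosososob}.

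For the improved estimate~\eqref{betterwhen33}, I subtract the definition of $\widetilde{\omega}^{(0)}$ from~\eqref{omegarepresentationplan}. Writing $\Omega^2 = (\hat v/(-u))^{-2\kappa}\bigl(\Psi_0+\widetilde{\Omega}\bigr)^2$ with $\Psi_0\doteq \overline{(v/(-u))^{\kappa}\Omega}$, using the identity $\rho = \widetilde{\rho}+\overset{\triangleright}{\rho}$ from Definitions~\ref{leadingself}--\ref{initsuboffdefricci2}, and observing that by direct computation the ``frozen'' piece of the rho-like contribution in~\eqref{omegarepresentationplan} is exactly $\tfrac{1}{2}\overset{\triangleright}{\rho}$, the difference reduces to
\begin{align}\label{keydecompplan}
\widetilde{\underline{\omega}}^{(1)}(u,v,\theta) &= \int_0^v\!\!(\hat v/(-u))^{-2\kappa}\Psi_0^2\bigl[\tfrac{1}{2}\widetilde{\rho} + \tfrac{1}{2}(|\eta|^2-\overline{|\eta|^2}) - (\eta\cdot\underline{\eta}-\overline{\eta\cdot\underline{\eta}})\bigr]\,d\hat v \nonumber \\
&\quad + \int_0^v\!\!(\hat v/(-u))^{-2\kappa}(2\Psi_0\widetilde{\Omega}+\widetilde{\Omega}^2)\bigl[\tfrac{1}{2}\rho + \tfrac{1}{2}|\eta|^2 - \eta\cdot\underline{\eta}\bigr]\,d\hat v.
\end{align}
Every summand now contains a factor that is genuinely small in $v/(-u)$: $\widetilde{\rho}$ is controlled by $\mathfrak{A}$ (with $L^2_v$ gain), $|\eta|^2-\overline{|\eta|^2}=(\eta+\overline{\eta})\cdot\widetilde{\eta}$ and $\eta\cdot\underline{\eta}-\overline{\eta\cdot\underline{\eta}}$ are handled via $\mathfrak{B}$, and the $\widetilde{\Omega}$ factor is controlled by $\mathfrak{D}$.

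In the final step, I will square~\eqref{keydecompplan}, apply Cauchy--Schwarz in $\hat v$ with the weights already baked into the $\mathscr{A}$, $\mathscr{B}$, and $\mathscr{D}$ norms (trading a factor of the form $\hat v^{1-q/100}$ against $\hat v^{-1+q/100}|\widetilde{\rho}|^2$ etc.), and then integrate in $\hat u\in[-1,u]$ against the weight $(-\hat u)^{5+2i-q/100}/v^{3-q/100}$; the $\hat u$-integrability is precisely what is gained by having subtracted $\widetilde{\omega}^{(0)}$, since the leading self-similar blow-up $\sim (-u)^{-2+2\kappa}$ in $\widetilde{\underline{\omega}}$ has been removed. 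Higher-order derivatives are again handled by commuting $\slashed{\nabla}^i$ through~\eqref{keydecompplan} using~\eqref{com1} and Lemmas~\ref{sosimilararethepscaes}--\ref{sosososob}. The main technical obstacle I anticipate is bookkeeping: one must ensure that the $(\hat v/(-u))^{-2\kappa}$ singularity is always absorbed by the smallness of $\kappa\ll q$, and that the Cauchy--Schwarz trade in $\hat v$ yields exactly the weight $v^{3-q/100}$ appearing in~\eqref{betterwhen33} rather than a logarithmically worse power; this is the reason for the specific choice of exponents in the definition of $\widetilde{\omega}^{(0)}$.
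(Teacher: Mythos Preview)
Your approach is essentially the same as the paper's: both multiply~\eqref{4uomega} by $\Omega$ to eliminate the $\omega\underline{\omega}$ term, integrate the resulting transport equation in $v$, and for~\eqref{betterwhen33} subtract off the explicit leading piece $\widetilde{\omega}^{(0)}$ so that every surviving term in the integrand carries an extra factor of $\widetilde{\rho}$, $\widetilde{\eta}$, $\widetilde{\underline{\eta}}$, or $\widetilde{\Omega}$. Your decomposition~\eqref{keydecompplan} is exactly the content of the paper's equation~\eqref{newundomega4pol}.

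There is one quantitative slip in your treatment of~\eqref{dawdecvrbvew}. You assert that the bracketed integrand $\tfrac{1}{2}\rho + \tfrac{1}{2}|\eta|^2 - \eta\cdot\underline{\eta}$ is bounded in $\tilde H^2$ by $\epsilon^{2-2\delta}(-u)^{-2}$. The quadratic pieces $|\eta|^2$ and $\eta\cdot\underline{\eta}$ do satisfy this, but $\rho$ itself does not: decomposing $\rho = \widetilde{\rho} + \overset{\triangleright}{\rho}$, the summand $\overline{\rho - \tfrac{1}{2}\hat{\underline{\chi}}\cdot\hat{\chi}}$ inside $\overset{\triangleright}{\rho}$ is only $O(\epsilon^{1-\delta}(-u)^{-2})$ by Lemma~\ref{boundfromdatasub}, equation~\eqref{boundsfortimesgmie2}. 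Consequently your argument yields $\|\widetilde{\underline{\omega}}\|_{\tilde H^2} \lesssim \epsilon^{1-\delta}(v/(-u))^{1-2\kappa}(-u)^{-1}$, hence $\|\underline{\omega}\|_{\mathscr{B}} \lesssim \epsilon^{1-\delta}$ rather than $\epsilon^{2-2\delta}$. This is precisely what the paper's own proof obtains (see~\eqref{thishihihssfdsfs}--\eqref{thishihihssfdsfs2}); the stated bound~\eqref{dawdecvrbvew} appears to be either a typo for $\epsilon^{1-\delta}$ or is meant for the squared norm $\|\underline{\omega}\|_{\mathscr{B}}^2$. Either way, $\epsilon^{1-\delta}$ is all that is used downstream, so your argument survives once you weaken the claimed power.

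For~\eqref{betterwhen33} your plan is correct. One point worth making explicit (the paper does so) is \emph{why} the $u$-integration is essential here rather than merely convenient: at the top order $i=2$, the term $\slashed{\nabla}^2\widetilde{\rho}$ is controlled by $\mathfrak{A}$ only through the spacetime integral $\int_{-1}^u\!\int_0^v (-\hat u)^{4-q/100}\hat v^{-2+q/100}|\slashed{\nabla}^2\widetilde{\rho}|^2$, not pointwise in $u$. This is the mechanism that upgrades the $v$-weight from $v^{2-q/100}$ (as in $\mathscr{B}$) to $v^{3-q/100}$; your phrase ``the $\hat u$-integrability is precisely what is gained by having subtracted $\widetilde{\omega}^{(0)}$'' is slightly off---the gain is in the $v$-weight, and the $u$-integration is what lets you access the sharper $\mathscr{A}$-spacetime control on $\widetilde{\rho}$.
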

\begin{proof}We start by multiplying~\eqref{4uomega} through by $\Omega^2$ so as to remove the $\omega\underline{\omega}$ term:
\begin{equation}\label{newundomega4}
\Omega\nabla_4\widetilde{\underline{\omega}} = \Omega^2\left(\frac{1}{2}\rho +\frac{1}{2}\left|\eta\right|^2 - \eta\cdot\underline{\eta}\right) \Rightarrow 
\end{equation}
\begin{align}\label{newundomega4pol}
\Omega\nabla_4\widetilde{\underline{\omega}}^{(1)} &= \left(\frac{v}{-u}\right)^{-2\kappa}\widetilde{\Omega^2}\left(\frac{1}{2}\rho +\frac{1}{2}\left|\eta\right|^2 - \eta\cdot\underline{\eta}\right)
\\ \nonumber &\qquad +\left(\frac{v}{-u}\right)^{-2\kappa}\left(\overline{\left(\frac{v}{-u}\right)^{\kappa}\Omega}\right)^2\left(\frac{1}{2}\widetilde{\rho} +\frac{1}{2}\left|\eta\right|^2 - \eta\cdot\underline{\eta} - \left(\overline{\frac{1}{2}\left|\eta\right|^2 - \eta\cdot\underline{\eta}}\right)\right)
\\ \nonumber &\qquad + \frac{1}{4}\left(\overline{\left(\frac{v}{-u}\right)^{\kappa}\Omega}\right)^2\left(\left(\frac{v}{-u}\right)^{-2\kappa}\left(\Omega\hat{\underline{\chi}}\right)\cdot \left(\Omega^{-1}\hat{\chi}\right) - \overline{\Omega\hat{\underline{\chi}}}_{AB}\mathcal{L}_{\partial_v}\left(\slashed{g}^{AB}\int_0^v\left(\frac{\dot{v}}{-u}\right)^{-2\kappa}\overset{\triangleright}{\Omega^{-1}\hat{\chi}}_{AB}\, d\dot{v}\right)\right)
\\ \nonumber &\doteq \mathcal{F}.
\end{align}
\begin{align*}
\Omega\nabla_4\widetilde{\underline{\omega}}^{(1)} &= \left(\frac{v}{-u}\right)^{-2\kappa}\widetilde{\Omega^2}\left(\frac{1}{2}\widetilde{\rho} +\frac{1}{2}\left(\overline{\rho -\frac{1}{2}\left(\Omega\hat{\underline{\chi}}\right)\cdot\left(\Omega^{-1}\hat{\chi}\right)}\right) + \frac{1}{2}\overline{\Omega\hat{\underline{\chi}}}\cdot\left(\overset{\triangleright}{\Omega^{-1}\hat{\chi}}\right)+\frac{1}{2}\left|\eta\right|^2 - \eta\cdot\underline{\eta}\right) 
\\ \nonumber &\qquad +\left(\frac{v}{-u}\right)^{-2\kappa}\left(\overline{\left(\frac{v}{-u}\right)^{\kappa}\Omega}\right)^2\left(\frac{1}{2}\widetilde{\rho} +\frac{1}{2}\left|\eta\right|^2 - \eta\cdot\underline{\eta} - \left(\overline{\frac{1}{2}\left|\eta\right|^2 - \eta\cdot\underline{\eta}}\right)\right)
\\ \nonumber &\doteq \mathcal{F}.
\end{align*}
Now, for $i \in \{0,1,2\}$ we can commute~\eqref{newundomega4pol} with $\slashed{\nabla}^i_{A_1\cdots A_i}$ and use~\eqref{com1} to obtain
\begin{equation}\label{newundomega42}
\Omega\nabla_4\left(\slashed{\nabla}_{A_1\cdots A_i}^i\widetilde{\underline{\omega}}^{(1)}\right) = \slashed{\nabla}^i\mathcal{F} + \mathcal{E}^{(i)},
\end{equation}
where $\mathcal{E}^{(i)}$ is controlled by terms of the schematic form
\[\slashed{\nabla}^i\left(\Omega^2\left(\Omega^{s_1}\psi_1\right)\left(\Omega\underline{\omega}\right)\right),\qquad \slashed{\nabla}^{i-1}\left(\Omega^2\left(\Omega^{s_1}\psi_1\right)\left(\Omega^{s_2}\psi_{s_2}\right)\left(\Omega\underline{\omega}\right)\right),\]
where $\psi_i$ denotes a Ricci coefficient not equal to $\omega$ of signature $s_i$. 

Now we use Lemma~\ref{boundfromdatasub}, Proposition~\ref{mathfrakaisdone}, the bootstrap assumptions~\eqref{boot1} and~\eqref{boot2}, smallness of $\epsilon$ and $\underline{v}$, Sobolev inequalities, Cauchy-Schwarz, and Gr\"{o}nwall's inequality to integrate~\eqref{newundomega42} and obtain
\begin{align}\label{thishihihssfdsfs}
\left\vert\left\vert \widetilde{\underline{\omega}}^{(1)}\right\vert\right\vert_{\tilde{H}^2\left(\mathbb{S}^2_{u,v}\right) }&\lesssim \int_0^v\left\vert\left\vert \mathcal{F}\right\vert\right\vert_{\tilde{H}^2\left(\mathbb{S}^2_{u,\dot{v}}\right)}\, d\dot{v} \lesssim \epsilon^{1-\delta}\left(\frac{v}{-u}\right)^{1-\frac{q}{200}}(-u)^{-1}.
\end{align}
Since  Lemma~\ref{boundfromdatasub} is easily seen to imply that 
\begin{align}\label{thishihihssfdsfs2}
\left\vert\left\vert \widetilde{\underline{\omega}}^{(0)}\right\vert\right\vert_{\tilde{H}^2\left(\mathbb{S}^2_{u,v}\right) } \lesssim \epsilon^{1-\delta}\left(\frac{v}{-u}\right)^{1-\frac{q}{200}}(-u)^{-1},
\end{align}
we have proven~\eqref{dawdecvrbvew}.

In order to establish~\eqref{betterwhen33} we will need to obtain a better $v$-weight than we saw in the estimate~\eqref{thishihihssfdsfs}. The reason that in~\eqref{thishihihssfdsfs} we are only able to obtain a maximal $v$-weight of $v^{1-\frac{q}{100}}$ is because of the need to control $\slashed{\nabla}^2_{AB}\widetilde{\rho}$ in the $\mathscr{A}$-norm. However, if we also integrate in $u$, then the $\mathscr{A}$-norm for $\widetilde{\rho}$ comes with a more negative $v$-weight. Thus, from~\eqref{newundomega42} we derive
\begin{align}
&\frac{1}{2}\Omega\nabla_4\left(\int_{-1}^u\int_{\mathbb{S}^2}\left|\slashed{\nabla}^i\widetilde{\underline{\omega}}^{(1)}\right|^2\frac{(-\dot{u})^{4+2i-\frac{q}{100}}}{v^{3-\frac{q}{100}}}\, d\dot{u}\, \mathring{\rm dVol} \right)
 + \frac{3/2-\frac{q}{100}}{v}\int_{-1}^u\int_{\mathbb{S}^2}\left|\slashed{\nabla}^i\widetilde{\underline{\omega}}^{(1)}\right|^2\frac{(-\dot{u})^{4+2i-\frac{q}{100}}}{v^{3-\frac{q}{100}}}\, d\dot{u}\, \mathring{\rm dVol} = 
\\ \nonumber &\qquad \int_{-1}^u\int_{\mathbb{S}^2}\left(\slashed{\nabla}^i\mathcal{F} + \mathcal{E}^{(i)} \right)\cdot\slashed{\nabla}^i\widetilde{\underline{\omega}}^{(1)}\frac{(-\dot{u})^{4+2i-\frac{q}{100}}}{v^{3-\frac{q}{100}}}\, d\dot{u}\, \mathring{\rm dVol} \Rightarrow
\end{align}
\begin{align}
\sup_{(u,v)\in\mathcal{R}_{\tilde u,\tilde v}}\int_{-1}^u\int_{\mathbb{S}^2}\left|\slashed{\nabla}^i\widetilde{\underline{\omega}}^{(1)}\right|^2\frac{(-\dot{u})^{4+2i-\frac{q}{100}}}{v^{3-\frac{q}{100}}}\, d\dot{u}\, \mathring{\rm dVol}  \lesssim \int_{-1}^u\int_0^v\left|\slashed{\nabla}^i\mathcal{F}\right|^2\frac{(-\dot{u})^{4+2i-\frac{q}{100}}}{v^{2-\frac{q}{100}}}\, d\dot{u}\, \mathring{\rm dVol} \lesssim \epsilon^{2-2\delta}.
\end{align}

\end{proof}

Next we provide the estimates for $\eta_A$.
\begin{lemma}\label{estimamifwcetateta}Let $\left(\mathcal{M},g_{\mu\nu}\right)$ satisfy the hypothesis of Proposition~\ref{letsbootit}. Then we have that
\[\left\vert\left\vert \eta \right\vert\right\vert_{\mathscr{B}} \lesssim \epsilon^{1-\delta}.\]
\end{lemma}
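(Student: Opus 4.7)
The plan is to integrate the $\nabla_4\eta$-equation \eqref{4eta} from $\{v=0\}$, where $\widetilde\eta=\eta-\overline\eta$ vanishes identically thanks to the initial-data prescription $\eta|_{(u,v)=(-1,0)}=\overset{\triangle}{\eta}$ from Lemma~\ref{uminus1data}, propagated self-similarly by the construction in Proposition~\ref{itexistsbutforalittlewhile}. Multiplying~\eqref{4eta} by $\Omega$ and rewriting using signature-zero weights yields
\[\Omega\nabla_4\eta_A=-\Omega^{2}(\Omega^{-1}\chi)_A^{\ B}(\eta-\underline\eta)_B-\Omega^{2}(\Omega^{-1}\beta)_A.\]
Since $\overline\eta$ is $v$-independent in a Lie-propagated frame, subtracting the limiting equation gives a transport equation for $\widetilde\eta$ whose right-hand side separates into (i) the curvature source $-\Omega^{2}(\Omega^{-1}\beta)$, decomposed via Definition~\ref{initsuboffdefricci2} as $\Omega^{-1}\beta=\widetilde\beta+\overset{\triangleright}{\Omega^{-1}\beta}+\tfrac12\overline{\slashed\nabla(\Omega^{-1}{\rm tr}\chi)}+\tfrac12\overline{\eta(\Omega^{-1}{\rm tr}\chi)}$, and (ii) quadratic Ricci errors $\Omega^{2}(\Omega^{-1}\chi)\cdot(\widetilde\eta,\widetilde{\underline\eta},\overline\eta,\overline{\underline\eta})$ together with the commutators $[\Omega\nabla_4,\slashed\nabla^{i}]$ of~\eqref{com1} after commuting with $\slashed\nabla^{i}$ for $i\in\{0,1,2\}$.

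For the curvature source, Cauchy--Schwarz against the weight built into $\|\beta\|_{\mathscr{A}_q}$ together with Proposition~\ref{mathfrakaisdone} gives
\[\int_{0}^{v}\Omega^{2}\,\|\slashed\nabla^{i}\widetilde\beta\|_{L^{2}(\mathbb{S}^{2}_{u,\hat v})}\,d\hat v\lesssim\Bigl(\int_{0}^{v}\hat v^{1-2q}\,d\hat v\Bigr)^{1/2}\,\|\widetilde\beta\|_{\mathscr{A}_q}^{1/2\,\cdot 2}\lesssim\epsilon^{1-\delta}\,v^{1-q}\,(-u)^{-(2+i)},\]
which already exceeds the $v^{1-q/200}$ profile demanded by $\mathscr{B}_q$. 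The renormalized piece $\Omega^{2}\overset{\triangleright}{\Omega^{-1}\beta}$ and the trace-$\chi$ pieces are estimated by Lemma~\ref{boundfromdatasub}; their $\hat v$-integrals, with the $\Omega^{2}\sim(v/(-u))^{-2\kappa}$ weight and $\kappa\sim\epsilon^{2}$, are bounded by $\epsilon^{1-\delta}\,v^{1-q/200}(-u)^{-(2+i)}$, with room to spare from the logarithmic factor $\log^{R/2}(v/(-u))$ and the smallness of $\kappa$. The nonlinear terms in (ii) are absorbed using the bootstrap~\eqref{boot1}: by Lemma~\ref{sosososob} and the vanishing rates built into $\mathscr{B}_q$, each product $\Omega^{2}(\Omega^{-1}\chi)\cdot\mathcal P$ is controlled in $\tilde H^{i}(\mathbb{S}^{2}_{u,\hat v})$ by $A\epsilon^{2-2\delta}\hat v^{1-q/200}(-u)^{-(3+i)}$ for $\mathcal P\in\{\widetilde\eta,\widetilde{\underline\eta}\}$ and by $\epsilon^{1-\delta}(-u)^{-(3+i)}$ for $\mathcal P\in\{\overline\eta,\overline{\underline\eta}\}$ coming from Lemma~\ref{uminus1data}, and integration in $\hat v$ supplies the extra factor $v^{1-q/200}$ or $v$ respectively. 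The terms involving $\widetilde\eta$ on the right-hand side are then absorbed by a short Gr\"onwall step in $v$ using the smallness of $A\epsilon^{1-\delta}$.

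Assembling these contributions and taking a supremum produces
\[\|\widetilde\eta\|_{\tilde H^{i}(\mathbb{S}^{2}_{u,v})}^{2}\lesssim\epsilon^{2-2\delta}\,\frac{v^{2-q/100}}{(-u)^{4-q/100+2i}},\]
which is exactly $\|\eta\|_{\mathscr{B}}\lesssim\epsilon^{1-\delta}$. The main subtlety, and the reason the renormalization of $\beta$ in Definition~\ref{initsuboffdefricci2} is indispensable here, is the $\epsilon^{-1}$ amplification of $\overset{\triangleright}{\Omega^{-1}\hat\chi}$ established in Lemma~\ref{uminus3data}: the unsubtracted part of $\beta$, being essentially $-\slashed{\rm div}\,\overset{\triangleright}{\Omega^{-1}\hat\chi}$ by the Codazzi equation~\eqref{tcod1}, carries this large factor and would by itself overwhelm the target $\epsilon^{1-\delta}$ if integrated naively against $\eta$. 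Subtracting both $\overset{\triangleright}{\Omega^{-1}\beta}$ and the $\overline{\slashed\nabla(\Omega^{-1}{\rm tr}\chi)}$, $\overline{\eta(\Omega^{-1}{\rm tr}\chi)}$ pieces converts the remaining flux $\widetilde\beta$ into a quantity of size $\epsilon^{1-\delta}$ in the $\mathscr{A}_q$-norm, which is precisely what makes the above estimate close.
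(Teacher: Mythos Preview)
Your proof is correct and follows essentially the same approach as the paper's: integrate the $\Omega\nabla_4$-equation for $\widetilde{\eta}$ from $v=0$, decompose the $\Omega^{-1}\beta$ source via the renormalization in Definition~\ref{initsuboffdefricci2}, control the $\widetilde{\beta}$-piece by the curvature flux from Proposition~\ref{mathfrakaisdone} and the data pieces by Lemma~\ref{boundfromdatasub}, and close with the bootstrap and Gr\"onwall. The paper in fact omits all of these details and merely records equation~\eqref{akokdowdketaeqn} before declaring the argument analogous to Lemma~\ref{Wedididwofwkfowkfomega}. One cosmetic point: the $v$-flux for $\widetilde{\beta}$ in the $\mathscr{A}$-norm carries the weight $\hat{v}^{-1+q/100}$, not $\hat{v}^{-1+2q}$, so your Cauchy--Schwarz should produce $v^{1-q/200}$ rather than $v^{1-q}$; this only improves the estimate.
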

\begin{proof}Given that we have the following consequence of~\eqref{4eta}:
\begin{equation}\label{akokdowdketaeqn}
\Omega\nabla_4\widetilde{\eta}_A = -\Omega^2\left[\left(\Omega^{-1}\chi\right)\cdot\left(\eta-\underline{\eta}\right) - \Omega^{-1}\beta\right]_A,
\end{equation}
the proof of this lemma is carried out in an analogous manner to the proof of Lemma~\ref{Wedididwofwkfowkfomega} and we thus omit the details. (Of course it is strictly easier since we do not need to establish an analogue of~\eqref{betterwhen33}.)
\end{proof}

Next we will treat $\underline{\eta}_A$.
\begin{lemma}\label{fkowfomwfometa}Let $\left(\mathcal{M},g_{\mu\nu}\right)$ satisfy the hypothesis of Proposition~\ref{letsbootit}. Then we have that
\[\left\vert\left\vert \underline{\eta}\right\vert\right\vert_{\mathscr{B}} \lesssim \epsilon^{1-\delta}.\]
\end{lemma}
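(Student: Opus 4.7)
The difficulty compared to Lemma~\ref{estimamifwcetateta} is that $\underline{\eta}$ naturally satisfies a $\nabla_3$ equation~\eqref{3ueta}, which propagates in the $u$-direction and therefore cannot directly exploit the fact that $\widetilde{\underline{\eta}}|_{v=0}=0$. The plan is to manufacture a $\nabla_4$ equation for $\underline{\eta}$ by using the algebraic identity $\underline{\eta}_A = 2\slashed{\nabla}_A\log\Omega - \eta_A$ and then mimic the integration scheme of Lemma~\ref{Wedididwofwkfowkfomega}.

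Applying $\Omega\nabla_4$ to both sides of this identity and invoking $\Omega\nabla_4\log\Omega = -2\Omega\underline{\omega}$, the commutator formula~\eqref{com1} applied to the scalar $\log\Omega$ (which gives only the single term $-\Omega\chi_A^{\ C}\slashed{\nabla}_C\log\Omega$), the relation $\slashed{\nabla}\log\Omega = \tfrac{1}{2}(\eta+\underline{\eta})$, and the equation~\eqref{4eta} for $\Omega\nabla_4\eta$, a short computation shows that \emph{all terms involving products of $\Omega\omega$ with $\eta$ or $\underline{\eta}$ cancel algebraically}, leaving the clean transport equation
\begin{equation}\label{theomegaeta}
\Omega\nabla_4\underline{\eta}_A + 2(\Omega\chi)_A^{\ C}\underline{\eta}_C = \Omega\beta_A - 4\slashed{\nabla}_A(\Omega\omega).
\end{equation}
The source term $\slashed{\nabla}(\Omega\omega)$ looks suspicious because $\Omega\omega$ itself behaves like $\kappa/(2v)$ as $v\to 0$, but since this leading self-similar contribution is $\theta$-independent it is annihilated by $\slashed{\nabla}$. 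More precisely, writing $\Omega\omega = \kappa/(2v) - \tfrac{1}{2}\partial_v\log\tilde{\Omega}$ with $\tilde{\Omega}=(v/(-u))^{\kappa}\Omega$ and passing to the regular coordinate $\hat{v}=v^{1-2\kappa}/(1-2\kappa)$, one has $\partial_v = v^{-2\kappa}\partial_{\hat{v}}$ with $\tilde{\Omega}$ of class $C^1$ down to $\{\hat{v}=0\}$ by Proposition~\ref{itexistsbutforalittlewhile}, so $\slashed{\nabla}(\Omega\omega) = O(v^{-2\kappa})$, with higher angular derivatives controlled by the same mechanism together with the $\mathscr{D}$-norm bounds on $\log\Omega$ from the bootstrap.

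With these preparations, the bootstrap argument proceeds in exact parallel to Lemma~\ref{Wedididwofwkfowkfomega}. One subtracts the $v\to 0$ limit of~\eqref{theomegaeta} to obtain an equation for $\widetilde{\underline{\eta}}$ whose initial data vanishes on $\{v=0\}$; commutes with $\slashed{\nabla}^i$ for $i\in\{0,1,2\}$ using~\eqref{com1} to generate controllable error terms; applies Cauchy--Schwarz and the Sobolev inequalities of Lemma~\ref{sosososob}; invokes Proposition~\ref{mathfrakaisdone} for the $\Omega\beta$ term, Lemmas~\ref{estimamifwcetateta}, \ref{Wedididwofwkfowkfomega} and the bootstrap bounds for the nonlinear coefficients $\Omega\chi,\eta,\log\Omega$, and Lemma~\ref{boundfromdatasub} for the data contributions; and finally integrates in $v$ from $\{v=0\}$ and applies Gr\"onwall. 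The main technical point to monitor is that every $v$-weight in the resulting integrals matches the $\mathscr{B}$-norm weight in~\eqref{riccivanish}; the worst case $O(v^{-2\kappa})$ from the $\slashed{\nabla}(\Omega\omega)$ term combined with $\kappa\sim\epsilon^2 \ll 1$ and the smallness of $\underline{v}$ provides the desired room, yielding $\|\underline{\eta}\|_{\mathscr{B}}\lesssim \epsilon^{1-\delta}$.
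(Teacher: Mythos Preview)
Your derived equation $\Omega\nabla_4\underline{\eta}_A + 2(\Omega\chi)_A^{\ C}\underline{\eta}_C = \Omega\beta_A - 4\slashed{\nabla}_A(\Omega\omega)$ is algebraically correct, but the argument breaks at the crucial step of controlling the source $\slashed{\nabla}(\Omega\omega)$. Neither of the two justifications you offer actually works during the bootstrap. Proposition~\ref{itexistsbutforalittlewhile} gives only \emph{local, qualitative} $C^1$ regularity of $\tilde\Omega$ in $\hat v$; it supplies no bound that is uniform as $u\to 0$, which is precisely what the bootstrap is designed to establish. The $\mathscr{D}$-norm on $\log\Omega$ controls $\slashed{\nabla}^i\widetilde{\Omega}$ pointwise in $v$ for $i\le 2$, but says nothing about $\partial_v$ of these quantities, and $\slashed{\nabla}(\Omega\omega)=-\tfrac12\partial_v\slashed{\nabla}\log\Omega$ is exactly such a $v$-derivative. (Recall also that the bootstrap norm deliberately carries \emph{no} estimate on $\omega$; see the discussion in Section~3.3.)

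There is in fact a deeper obstruction: since $\slashed{\nabla}\log\Omega=\tfrac12(\eta+\underline\eta)$, one has the identity $-4\slashed{\nabla}_A(\Omega\omega)=\mathcal{L}_{\partial_v}(\eta_A+\underline\eta_A)=\Omega\nabla_4(\eta_A+\underline\eta_A)+\Omega\chi_A^{\ B}(\eta_B+\underline\eta_B)$. Substituting this into your transport equation and cancelling $\Omega\nabla_4\underline\eta$ from both sides collapses it exactly to~\eqref{4eta}. In other words, your ``$\nabla_4$ equation for $\underline\eta$'' is a tautological rewriting of the $\nabla_4$ equation for $\eta$ and carries no independent information about $\underline\eta$; integrating it from $\{v=0\}$ simply reproduces the algebraic identity $\underline\eta=2\slashed\nabla\log\Omega-\eta$. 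To control $\slashed{\nabla}(\Omega\omega)$ independently one would have to integrate the $\nabla_3$ equation~\eqref{3omega} for $\omega$ in $u$, which is the same mechanism the paper uses---but applied directly to $\underline\eta$ via~\eqref{3ueta}. The paper subtracts the $v\to0$ limit of~\eqref{3ueta} to obtain a $\nabla_3$ equation for $\widetilde{\underline\eta}$, conjugates by $w=(-u)^{i+2-q/200}v^{-1+q/200}$ so that the effective zeroth-order coefficient $\tfrac{2-q/200+i}{-u}+\tfrac{1+i}{2}\Omega{\rm tr}\underline\chi$ is $\gtrsim(-u)^{-1}$, and integrates in $u$; the initial data term on $\{u=-1\}$ is handled as in Lemma~\ref{initenergyisok}.
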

\begin{proof}We start with the following consequence of~\eqref{3ueta}:
\begin{equation}\label{etagravervseacW}
\Omega\nabla_3\underline{\eta}_A + \left(\frac{1}{2}\Omega{\rm tr}\underline{\chi} + \Omega\hat{\underline{\chi}}\right)\underline{\eta}_A = \left(\frac{1}{2}\Omega{\rm tr}\underline{\chi} + \Omega\hat{\underline{\chi}}\right)\eta_A + \Omega\underline{\beta}_A.
\end{equation}
Restricting to $\{v = 0\}$ yields 
\begin{equation}\label{etagravervseacW2}
\overline{\left(\Omega\nabla_3\underline{\eta} + \left(\frac{1}{2}\Omega{\rm tr}\underline{\chi} + \Omega\hat{\underline{\chi}}\right)\underline{\eta}\right)_A} = \overline{\left(\left(\frac{1}{2}\Omega{\rm tr}\underline{\chi} + \Omega\hat{\underline{\chi}}\right)\eta + \Omega\underline{\beta}\right)_A}.
\end{equation}
Taking the differences of~\eqref{etagravervseacW} and~\eqref{etagravervseacW2} leads to
\begin{align}\label{etagravervseacW3}
\Omega\nabla_3\widetilde{\underline{\eta}}_A + \left(\frac{1}{2}\Omega{\rm tr}\underline{\chi} + \Omega\hat{\underline{\chi}}\right)\widetilde{\underline{\eta}}_A &= \widetilde{\Omega\nabla_3}\overline{\underline{\eta}}_A + \left(\frac{1}{2}\widetilde{{\rm tr}\underline{\chi}} + \widetilde{\hat{\underline{\chi}}\cdot}\right)\overline{\underline{\eta}}_A  
\\ \nonumber &\qquad +\left(\frac{1}{2}\Omega{\rm tr}\underline{\chi} + \Omega\hat{\underline{\chi}}\right)\eta_A + \Omega\underline{\beta}_A - \overline{\left(\left(\frac{1}{2}\Omega{\rm tr}\underline{\chi} + \Omega\hat{\underline{\chi}}\right)\eta + \Omega\underline{\beta}\right)_A} 
\\ \nonumber &\doteq \mathcal{E} + \mathcal{F}.
\end{align}
Next, for $i \in \{0,1,2\}$, we commute with $\slashed{\nabla}^i$ (suppressing indices on $\slashed{\nabla}$ in the rest of the proof for typographical reasons) and then conjugate by the weight $\frac{(-u)^{i+2-\frac{q}{200}}}{v^{1-\frac{q}{200}}}$ to obtain
\begin{align}\label{etagravervseacW4}
&\Omega\nabla_3\left(\frac{(-u)^{i+2-\frac{q}{200}}}{v^{1-\frac{q}{200}}}\slashed{\nabla}^i\widetilde{\underline{\eta}}\right)_A + \left(\frac{2-\frac{q}{200}+i}{-u}+\frac{1+i}{2}\Omega{\rm tr}\underline{\chi} + \Omega\hat{\underline{\chi}}\right)\left(\frac{(-u)^{i+2-\frac{q}{200}}}{v^{1-\frac{q}{200}}}\slashed{\nabla}^i\widetilde{\underline{\eta}}\right)_A = 
\\ \nonumber &\qquad + \frac{(-u)^{i+2-\frac{q}{200}}}{v^{1-\frac{q}{200}}}\underbrace{\left[\left[\Omega\nabla_3,\slashed{\nabla}^i\right]\widetilde{\underline{\eta}} +\frac{i}{2}\Omega{\rm tr}\underline{\chi}\slashed{\nabla}^i\widetilde{\underline{\eta}}+\sum_{\overset{i_1+i_2 = i}{i_1\neq 0}}\slashed{\nabla}^{i_1}\left(\frac{1}{2}\Omega{\rm tr}\underline{\chi} + \Omega\hat{\underline{\chi}}\right)\slashed{\nabla}^{i_2}\overline{\underline{\eta}} \right]_A}_{\mathcal{G}_i}
\\ \nonumber &\qquad + \frac{(-u)^{i+2-\frac{q}{200}}}{v^{1-\frac{q}{100}}}\slashed{\nabla}^i\left(\mathcal{E} + \mathcal{F}\right).
\end{align}
Note that it is a consequence of the bootstrap assumption that 
\[\frac{2-\frac{q}{100}+i}{-u}+\frac{1+i}{2}\Omega{\rm tr}\underline{\chi} - \left|\Omega\hat{\underline{\chi}}\right| \gtrsim (-u)^{-1}.\]
We also have that $\mathcal{G}_i$ is a sum of terms which are schematically of the form
\[\slashed{\nabla}^i\left(\left(\Omega^{s_1}\psi_1\right)\cdot\widetilde{\underline{\eta}}\right),\qquad \slashed{\nabla}^{i-1}\left(\left(\Omega^{s_1}\psi_1\right)\cdot\left(\Omega^{s_2}\psi_2\right)\cdot\widetilde{\underline{\eta}}\right),\]
where 
\begin{enumerate}
\item $\psi_i$ denotes a Ricci coefficient of signature $s$ which is one of $\eta_A$, $\underline{\omega}$, $\hat{\underline{\chi}}_{AB}$, or ${\rm tr}\underline{\chi}$.
\item If $\psi_i = {\rm tr}\underline{\chi}$ in the first term, then there must be at least one angular derivative applied to it.
\item We cannot have that both $\psi_i$'s in the second term are equal to ${\rm tr}\underline{\chi}$. 
\end{enumerate}
Thus, if we contract~\eqref{etagravervseacW4} with $\frac{(-u)^{i+2-\frac{q}{200}}}{v^{1-\frac{q}{200}}}\slashed{\nabla}^i\widetilde{\underline{\eta}}_A$ and use Lemma~\ref{estimamifwcetateta}, Lemma~\ref{boundfromdatasub}, Proposition~\ref{mathfrakaisdone}, the bootstrap assumptions~\eqref{boot1} and~\eqref{boot2}, smallness of $\epsilon$ and $\underline{v}$, and Sobolev inequalities we end up with
\begin{align}
\left\vert\left\vert \underline{\eta}\right\vert\right\vert^2_{\mathscr{B}} &\lesssim \sup_{(u,v) \in \mathcal{R}_{\tilde u,\tilde v}}\int_{-1}^u\int_{\mathbb{S}^2}\frac{(-\dot{u})^{5+2i-\frac{q}{100}}}{v^{2-\frac{q}{100}}}\left[\left|\slashed{\nabla}^i\mathcal{E}\right|^2 + \left|\slashed{\nabla}^i\mathcal{F}\right|^2\right]\, d\dot{u}\, \mathring{\rm dVol} 
\\ \nonumber &\qquad + \sup_{(u,v) \in -1\times [0,\underline{v}]}\int_{\mathbb{S}^2}v^{-2+\frac{q}{100}}\left|\slashed{\nabla}^i\widetilde{\underline{\eta}}\right|^2\mathring{\rm dVol}
\\ \nonumber &\lesssim \epsilon^{2-2\delta}.
\end{align}
\end{proof}

Next we provide the estimates for ${\rm tr}\chi$ and $\hat{\chi}_{AB}$.
\begin{lemma}Let $\left(\mathcal{M},g_{\mu\nu}\right)$ satisfy the hypothesis of Proposition~\ref{letsbootit}. Then we have that
\[\left\vert\left\vert {\rm tr}\chi\right\vert\right\vert_{\mathscr{C}} \lesssim \underline{v}^{\frac{p}{10}},\qquad \left\vert\left\vert \left(\hat{\chi},{\rm tr}\chi\right)\right\vert\right\vert_{\mathscr{B}} \lesssim \epsilon^{1-\delta}.\]
\end{lemma}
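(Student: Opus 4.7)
The plan is to carry out $\nabla_3$ transport estimates for $\widetilde{\hat\chi}$ and $\widetilde{{\rm tr}\chi}$ in the spirit of Lemma~\ref{fkowfomwfometa}. Multiplying~\eqref{3hatchi} and~\eqref{3trchi} by $\Omega$ to cast them in terms of the signature-adjusted quantities $\Omega^{-1}\hat\chi$ and $\Omega^{-1}{\rm tr}\chi$ yields
\[\Omega\nabla_3(\Omega^{-1}\hat\chi)+\tfrac12\Omega{\rm tr}\underline\chi\,(\Omega^{-1}\hat\chi)-4\Omega\underline\omega\,(\Omega^{-1}\hat\chi)=\slashed\nabla\hat\otimes\eta+\eta\hat\otimes\eta-\tfrac12(\Omega^{-1}{\rm tr}\chi)(\Omega\hat{\underline\chi}),\]
\[\Omega\nabla_3(\Omega^{-1}{\rm tr}\chi)+\tfrac12\Omega{\rm tr}\underline\chi\,(\Omega^{-1}{\rm tr}\chi)-4\Omega\underline\omega\,(\Omega^{-1}{\rm tr}\chi)=2\rho+2\slashed{\rm div}\eta+2|\eta|^2-\hat\chi\cdot\hat{\underline\chi},\]
and subtracting the $v\to 0$ limiting equations~\eqref{3hatjcjow} and~\eqref{trchitriangle} produces transport equations for $\widetilde{\hat\chi}$ and $\widetilde{{\rm tr}\chi}$ with right-hand sides of the same $\mathcal{E}+\mathcal{F}$ form treated in Lemma~\ref{fkowfomwfometa}. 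For $i\in\{0,1\}$ we then commute with $\slashed\nabla^i$ and conjugate by the weight $\frac{(-u)^{2+i-q/200}}{v^{1-q/200}}$; the combination $\frac{2+i-q/200}{-u}+\frac{1+i}{2}\Omega{\rm tr}\underline\chi$ is bounded below by $\frac{c}{-u}$ (using $\Omega{\rm tr}\underline\chi\approx\tfrac{2}{u}$ from the bootstrap and~\eqref{0skm}), providing the Gr\"onwall sign. By construction $\widetilde{\hat\chi}|_{u=-1}\equiv 0$ (Proposition~\ref{itexistsbutforalittlewhile}), while for $\widetilde{{\rm tr}\chi}$ the datum $\widetilde{{\rm tr}\chi}|_{(u,v)=(-1,0)}$ vanishes and its subsequent $v$-growth along $\{u=-1\}$ is computed by integrating the outgoing Raychaudhuri equation~\eqref{ray2cons} and controlled using~\eqref{yayayepsislsons}; the nonlinear errors mirror those of Lemma~\ref{fkowfomwfometa} and are handled by the bootstrap~\eqref{boot1}--\eqref{boot2}, the Sobolev inequalities of Lemma~\ref{sosososob}, Lemma~\ref{boundfromdatasub}, Proposition~\ref{mathfrakaisdone}, and Lemmas~\ref{Wedididwofwkfowkfomega}--\ref{fkowfomwfometa}.

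The main obstacle is the top-order estimate at $i=2$: commuting the $\nabla_3$-equation for $\widetilde{\hat\chi}$ with $\slashed\nabla^2$ demands three angular derivatives of $\eta$ via the source $\slashed\nabla\hat\otimes\eta$, which exceeds the two derivatives available in the $\mathscr{B}$-norm of Lemma~\ref{estimamifwcetateta}; an analogous loss occurs for $\widetilde{{\rm tr}\chi}$ via $\slashed\nabla^2\slashed{\rm div}\eta$. To circumvent this, at $i=2$ we replace the direct transport estimate by the Codazzi identity~\eqref{tcod1} combined with the elliptic estimates~\eqref{pokdwimnfe}--\eqref{pokdwimnfe2}: taking differences with the $v\to 0$ limit of~\eqref{tcod1} and applying~\eqref{pokdwimnfe2} yields
\[\|\widetilde{\hat\chi}\|_{\tilde H^2(\mathbb S^2_{u,v})}\lesssim (-u)\left(\|\slashed\nabla\widetilde{{\rm tr}\chi}\|_{\tilde H^1}+\|\widetilde\beta\|_{\tilde H^1}+\text{nonlinear lower-order}\right),\]
where $\|\widetilde\beta\|_{\tilde H^1}\lesssim\epsilon^{1-\delta}$ is available from Proposition~\ref{mathfrakaisdone} and the nonlinear terms are controlled by the bootstrap together with the $i\leq 1$ transport estimates established above. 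Combined with the Hodge-type elliptic control~\eqref{pokdwimnfe} applied to $\slashed\nabla\widetilde{{\rm tr}\chi}$, one obtains a closed coupled system for the pair $(\widetilde{\hat\chi},\widetilde{{\rm tr}\chi})$ in the top-order $\mathscr{B}$-norm, which (for $\epsilon$ sufficiently small so that the cross-coupling nonlinear coefficients can be absorbed) yields the claimed $\epsilon^{1-\delta}$ bound.

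Finally, for the $\mathscr{C}$-norm of ${\rm tr}\chi$ (the $i=0$ $L^2$-estimate) no $\epsilon$-smallness is available: the initial datum $\widetilde{{\rm tr}\chi}|_{u=-1}(v)$ is obtained by integrating the outgoing Raychaudhuri equation~\eqref{ray2cons}, which picks up a contribution from $|\overset{\triangleright}{\Omega^{-1}\hat\chi}|^2$ along $[0,v]$ whose $L^2(\mathbb S^2)$ size, controlled by Lemma~\ref{uminus3data}, is not $\epsilon$-small. Nevertheless the $\mathscr{C}$-norm carries the weight $\frac{(-u)^{4-q/100}}{v^{2-q/100}}$ and we work inside $\{v\leq\underline v\}$, so absorbing the $v$-accumulated initial datum into this weight (together with choosing $p\ll 1$) produces the stated $\underline{v}^{p/10}$ smallness. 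This initial bound is then propagated by the $i=0$ $\nabla_3$-transport equation for $\widetilde{{\rm tr}\chi}$, which only requires $\slashed{\rm div}\widetilde\eta$ and $\widetilde\rho$ at zeroth derivative order---well within the already-established $\mathscr{B}$- and $\mathscr{A}$-norms and hence not subject to the top-order obstruction above---closing the $\mathscr{C}$-estimate and completing the lemma.
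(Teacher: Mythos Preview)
Your approach diverges substantially from the paper's, and in doing so runs into a genuine obstruction at the top order that your Codazzi maneuver does not resolve.

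The paper estimates both $\widetilde{{\rm tr}\chi}$ and $\widetilde{\hat\chi}$ via their $\nabla_4$ transport equations, not the $\nabla_3$ ones. From~\eqref{4trchi} one gets
\[\Omega\nabla_4\bigl(\slashed\nabla^i\widetilde{{\rm tr}\chi}\bigr)=-\slashed\nabla^i\Bigl(\Omega^2\bigl(\tfrac12(\Omega^{-1}{\rm tr}\chi)^2+|\Omega^{-1}\hat\chi|^2\bigr)\Bigr)+[\slashed\nabla^i,\Omega\nabla_4]\widetilde{{\rm tr}\chi},\]
whose right-hand side contains no $\eta$ whatsoever, so commuting with $\slashed\nabla^2$ is harmless. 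Likewise, from~\eqref{4hatchi} one derives~\eqref{Aafawedaehatchi2}, namely $\Omega\nabla_4\widetilde{\hat\chi}=\mathcal{E}$ with $\mathcal{E}$ built from $\widetilde\alpha$, $\overset{\triangleright}{\Omega^{-2}\alpha}$, and products of $\chi$-type quantities, again with no $\eta$ derivatives. Both are then integrated in $v$ exactly as in the $\underline\omega$ lemma. This sidesteps the derivative loss you identify entirely.

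Your proposed cure at $i=2$ does not close. The elliptic estimate~\eqref{pokdwimnfe2} applied to the tilded Codazzi equation yields $\|\widetilde{\hat\chi}\|_{\tilde H^2}\lesssim(-u)\|\slashed{\rm div}\widetilde{\hat\chi}\|_{\tilde H^1}$, and the right-hand side then requires $\|\slashed\nabla\widetilde{{\rm tr}\chi}\|_{\tilde H^1}$, i.e.\ two angular derivatives of $\widetilde{{\rm tr}\chi}$. But the only equation you have for $\widetilde{{\rm tr}\chi}$ is its $\nabla_3$ transport, which after commuting with $\slashed\nabla^2$ needs $\slashed\nabla^2\slashed{\rm div}\eta$ --- three derivatives of $\eta$, the very loss you were trying to avoid. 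Invoking~\eqref{pokdwimnfe} on the gradient $\slashed\nabla\widetilde{{\rm tr}\chi}$ is vacuous: since $\slashed{\rm curl}\slashed\nabla=0$ and $\slashed{\rm div}\slashed\nabla=\slashed\Delta$, the estimate just returns $\|\slashed\nabla\widetilde{{\rm tr}\chi}\|_{\tilde H^1}\lesssim(-u)\|\slashed\Delta\widetilde{{\rm tr}\chi}\|_{L^2}$, which is the same number of derivatives. There is no independent elliptic equation for $\widetilde{{\rm tr}\chi}$ that would close the system, so your ``closed coupled system'' is illusory. Switching to the $\nabla_4$ equations, as the paper does, eliminates the problem at its source.
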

\begin{proof}We start with the estimate for ${\rm tr}\chi$. From~\eqref{4trchi} we may easily derive, for any $i \in \{0,1,2\}$ (suppressing indices on $\slashed{\nabla}^i$):
\begin{equation}\label{normmdwo4trchi}
\Omega\nabla_4\left(\slashed{\nabla}^i\widetilde{{\rm tr}\chi}\right) = -\slashed{\nabla}^i\left(\Omega^2\left(\frac{1}{2}\left(\Omega^{-1}{\rm tr}\chi\right)^2 + \left|\Omega^{-1}\hat{\chi}\right|^2\right)\right) + \left[\slashed{\nabla}^i,\Omega\nabla_4\right]\widetilde{{\rm tr}\chi}.
\end{equation}
Now it is straightforward to integrate in the $v$-direction, use Lemma~\ref{boundfromdatasub}, the bootstrap assumptions~\eqref{boot1} and~\eqref{boot2}, smallness of $\epsilon$ and $\underline{v}$, Sobolev inequalities, Cauchy-Schwarz, and Gr\"{o}nwall's inequality to integrate~\eqref{newundomega42} and obtain in a similar fashion to~\eqref{thishihihssfdsfs} that
\begin{equation}\label{wedidforoateatrchi}
\left\vert\left\vert {\rm tr}\chi\right\vert\right\vert_{\mathscr{B}}  \lesssim \epsilon^{1-\delta}, \qquad \left\vert\left\vert {\rm tr}\chi\right\vert\right\vert_{\mathscr{C}}\lesssim \underline{v}^{\frac{p}{10}}.
\end{equation}

Now we come to $\hat{\chi}_{AB}$. From~\eqref{4hatchi} we may derive the following 
\begin{align}\label{Aafawedaehatchi}
\Omega\nabla_4\left(\Omega^{-1}\hat{\chi}\right)_{AB} + \Omega^2\left(\Omega^{-1}{\rm tr}\chi\right)\left(\Omega^{-1}\hat{\chi}\right)_{AB} &= -\Omega^2 \alpha_{AB}
\\ \nonumber &= -\Omega^2\widetilde{\alpha}_{AB} - \left(\frac{v}{-u}\right)^{-2\kappa}\widetilde{\Omega^2}\left(\overset{\triangleright}{\Omega^{-2}\alpha}\right)_{AB} + \mathcal{L}_v\left(\overset{\triangleright}{\Omega^{-1}\hat{\chi}}\right)_{AB}.
\end{align}
We thus obtain 
\begin{align}\label{Aafawedaehatchi2}
&\Omega\nabla_4\widetilde{\hat{\chi}}_{AB} 
\\ \nonumber &\qquad = \Omega^2\left[- \Omega^2\left(\Omega^{-1}{\rm tr}\chi\right)\left(\Omega^{-1}\hat{\chi}\right) - 2 \left(\Omega^{-1}\chi\right)^C_{\ \ (A}\left(\overset{\triangleright}{\Omega^{-1}\hat{\chi}}\right)_{B)C} -\widetilde{\alpha} - \left(\frac{v}{-u}\right)^{-2\kappa}\widetilde{\Omega^2}\left(\overset{\triangleright}{\Omega^{-2}\alpha}\right)  \right]_{AB} 
\\ \nonumber &\qquad \doteq \mathcal{E}.
\end{align}
Commuting with $\slashed{\nabla}^i$ for $i \in \{0,1,2\}$ leads to (suppressing the indices on $\slashed{\nabla}^i$)
\[\Omega\nabla_4\slashed{\nabla}^i\widetilde{\hat{\chi}}_{AB} = \slashed{\nabla}^i\mathcal{E} + \left[\slashed{\nabla}^i,\Omega\nabla_4\right]\widetilde{\hat{\underline{\chi}}}_{AB}. \]
Now we can treat this equation like~\eqref{normmdwo4trchi}, and we end up with
\[\left\vert\left\vert \hat{\chi}\right\vert\right\vert_{\mathscr{B}} \lesssim \epsilon^{1-\delta}.\]
\end{proof}

\subsubsection{Estimating ${\rm tr}\underline{\chi}$ and $\hat{\underline{\chi}}_{AB}$}
The final Ricci coefficients that we need to estimate are ${\rm tr}\underline{\chi}$ and $\hat{\underline{\chi}}_{AB}$. We will need a preliminary definition and lemmas.
\begin{definition}\label{Defofharmw}Let $\left(\mathcal{M},g_{\mu\nu}\right)$ satisfy the hypothesis of Proposition~\ref{letsbootit}. Then we define,\\ \underline{in a frame which is Lie-propagated from $\{v=0\}$}:
\begin{align}\label{hatchirenroma}
\widetilde{\hat{\underline{\chi}}}^{(0)}_{AB} &\doteq \left(1-2\kappa\right)^{-1}v\left(\frac{v}{-u}\right)^{-2\kappa}\left(\overline{\left(\frac{v}{-u}\right)^{\kappa}\Omega}\right)^2\left(\overline{\frac{1}{2}\left(\Omega^{-1}{\rm tr}\chi\right)\left(\Omega\hat{\underline{\chi}}_{AB}\right) + \left(\slashed{\nabla}\hat{\otimes}\underline{\eta}\right)_{AB} + \left(\underline{\eta}\hat{\otimes}\underline{\eta}\right)_{AB}}\right) 
\\ \nonumber &\qquad -\frac{1}{2} \left(\overline{\left(\frac{v}{-u}\right)^{\kappa}\Omega}\right)^2\overline{\Omega{\rm tr}\underline{\chi}}\int_0^v\left(\frac{\dot{v}}{-u}\right)^{-2\kappa}\overset{\triangleright}{\Omega^{-1}\hat{\chi}}_{AB}\, d\dot{v} +
\\ \nonumber &\qquad + 2 \left(\overline{\left(\frac{v}{-u}\right)^{\kappa}\Omega}\right)^2\overline{\hat{\underline{\chi}}}^{\ \ C}_{(A}\int_0^v\left(\frac{\dot{v}}{-u}\right)^{-2\kappa}\overset{\triangleright}{\Omega^{-1}\hat{\chi}}_{B)C}\, d\dot{v},
\end{align}
\begin{align}\label{betamowfa}
&\widetilde{\underline{\beta}}^{(0)}_A \doteq 
\\ \nonumber &\left(\overline{\left(\frac{v}{-u}\right)^{\kappa}\Omega}\right)^2\int_0^v\left(\frac{\dot{v}}{-u}\right)^{-2\kappa}\left[-\frac{1}{2}\overline{\left(\Omega^{-1}{\rm tr}\chi\right)\left(\Omega
\underline{\beta}\right)} - \overline{\slashed{\nabla}}\overset{\triangleright}{\rho} + \overline{{}^*\slashed{\nabla}}\overset{\triangleright}{\sigma} + 2\left(\overline{\Omega\hat{\underline{\chi}}}\right)\overset{\triangleright}{\beta} + 3\left(\overline{-\underline{\eta}}\overset{\triangleright}{\rho} + \overline{{}^*\underline{\eta}}\overset{\triangleright}{\sigma}\right)\right]_A\, d\dot{v}
\\ \nonumber &\qquad + \left(\overline{\left(\frac{v}{-u}\right)^{\kappa}\Omega}\right)^2\int_0^v\left(\frac{\dot{v}}{-u}\right)^{-2\kappa}\overset{\triangleright}{\Omega^{-1}\hat{\chi}}_{AB}\underline{\beta}^B\, d\dot{v}.
\end{align}
\begin{align}\label{trchiundaplwd}
&\widetilde{{\rm tr}\underline{\chi}}^{(0)} \doteq 
\\ \nonumber &\left(1-2\kappa\right)^{-1}v\left(\frac{v}{-u}\right)^{-2\kappa}\left(\overline{\left(\frac{v}{-u}\right)^{\kappa}\Omega}\right)^2\left(\overline{-\frac{1}{2}\left(\Omega^{-1}{\rm tr}\chi\right)\left(\Omega{\rm tr}\underline{\chi}\right) + 2\left(\rho - \frac{1}{2}\left(\Omega^{-1}\hat{\chi}\right)\cdot\left(\Omega\hat{\underline{\chi}}\right)\right) + 2\slashed{\rm div}\underline{\eta} + \left|\underline{\eta}\right|^2}\right),
\end{align}
\begin{align}\label{etaffwffww}
&\widetilde{\eta}^{(0)}_A \doteq \left(1-2\kappa\right)^{-1}v\left(\frac{v}{-u}\right)^{-2\kappa}\left(\overline{\left(\frac{v}{-u}\right)^{\kappa}\Omega}\right)^2\left(\overline{-\frac{1}{2}\left(\Omega^{-1}{\rm tr}\chi\right)\left(\eta-\underline{\eta}\right)} + \frac{1}{2}\Omega^{-1}{\rm tr}\chi \eta \right)_A +
\\ \nonumber &\qquad \left(\overline{\left(\frac{v}{-u}\right)^{\kappa}\Omega}\right)^2\int_0^v\left(\frac{\dot{v}}{-u}\right)^{-2\kappa}\left(-\overset{\triangleright}{\Omega^{-1}\hat{\chi}}\cdot\left(\overline{\eta-\underline{\eta}}\right) - \overset{\triangleright}{\Omega^{-1}\beta} + \overset{\triangleright}{\Omega^{-1}\hat{\chi}}\cdot\eta \right)_A\, d\dot{v}.
\end{align}
\begin{align*}
\left(\widetilde{\slashed{g}^{-1}}^{AB}\right)^{(0)} &\doteq \left(1-2\kappa\right)^{-1}\left(\overline{\left(\frac{v}{-u}\right)^{\kappa}\Omega}\right)^2\overline{\Omega^{-1}{\rm tr}\chi} \overline{\slashed{g}^{AB}} \left(\frac{v}{-u}\right)^{1-2\kappa}
\\ \nonumber &\qquad \qquad \qquad -2 \left(\overline{\left(\frac{v}{-u}\right)^{\kappa}\Omega}\right)^2\slashed{g}^{AC}\slashed{g}^{BD}\int_0^v\left(\frac{\dot{v}}{-u}\right)^{-2\kappa}\overset{\triangleright}{\Omega^{-1}\hat{\chi}}_{CD}\, d\dot{v},
\end{align*}
\[\widetilde{b^A}^{(0)} \doteq  -\frac{1}{8}\left(1-2\kappa\right)^{-1}\left(\overline{\eta^A-\underline{\eta}^A}\right)\left(\left(\frac{v}{-u}\right)^{\kappa}\Omega\right)^2\left(\frac{v}{-u}\right)^{1-2\kappa}\]
\[\widetilde{\hat{\underline{\chi}}}^{(1)}_{AB} \doteq \widetilde{\hat{\underline{\chi}}}_{AB} -  \widetilde{\hat{\underline{\chi}}}^{(0)}_{AB},\qquad \widetilde{\underline{\beta}}^{(1)}_A \doteq \widetilde{\underline{\beta}}_A -  \widetilde{\underline{\beta}}^{(0)}_A,\qquad \widetilde{{\rm tr}\underline{\chi}}^{(1)} \doteq \widetilde{{\rm tr}\underline{\chi}} -  \widetilde{{\rm tr}\underline{\chi}}^{(0)},\qquad \widetilde{\eta}^{(1)}_A \doteq \widetilde{\eta}_A -  \widetilde{\eta}^{(0)}_A,\]
\[\left(\widetilde{\slashed{g}^{-1}}^{AB}\right)^{(1)} \doteq \widetilde{\slashed{g}^{-1}}^{AB} - \left(\widetilde{\slashed{g}^{-1}}^{AB}\right)^{(0)},\qquad \widetilde{b^A}^{(1)} \doteq \widetilde{b^A} - \widetilde{b^A}^{(0)}.\]
All contractions here are with respect to $\slashed{g}_{AB}$. 

Finally, we will let $\mathcal{H}^{(j)}$ denote an expression for which we have
\[\left\vert\left\vert \mathcal{H}^{(j)}\right\vert\right\vert_{\tilde H^1\left(\mathbb{S}^2_{u,v}\right)} \lesssim \epsilon^{1-\delta}(-u)^{-j}\left(\frac{v}{-u}\right)^{3/2}.\]

\end{definition}

The quantities $\widetilde{\hat{\underline{\chi}}}_{AB}^{(0)}$ and $\widetilde{\hat{\underline{\beta}}}_A^{(0)}$ represents the leading order (in $\frac{v}{-u}$) parts of $\widetilde{\hat{\underline{\chi}}}_{AB}$ and $\widetilde{\underline{\beta}}_A$. In the next lemma we show that $\widetilde{\beta}^{(1)}_A$ does indeed satisfy an estimate with a larger power of $\frac{v}{-u}$ than $\widetilde{\beta}^{(0)}_A$ does.
\begin{lemma}\label{kpdwdddd}Let $\left(\mathcal{M},g_{\mu\nu}\right)$ satisfy the hypothesis of Proposition~\ref{letsbootit}. Then we have that $\widetilde{\underline{\beta}}^{(1)}_A \in \mathcal{H}^{(2)}$ and $\widetilde{\eta}^{(1)}_A \in \mathcal{H}^{(1)}$.
\end{lemma}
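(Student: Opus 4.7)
The plan is to integrate the $\Omega\nabla_4$ equations~\eqref{4undbeta2} and~\eqref{akokdowdketaeqn} for $\widetilde{\underline{\beta}}$ and $\widetilde{\eta}$ from $\{v=0\}$, where both quantities vanish, to recognise the resulting leading-order integrals as $\widetilde{\underline{\beta}}^{(0)}$ and $\widetilde{\eta}^{(0)}$ by matching against the definitions~\eqref{betamowfa} and~\eqref{etaffwffww}, and then to estimate the remainders using the $\mathscr{A}$, $\mathscr{B}$, and $\mathscr{D}$ bounds already established in Propositions~\ref{betarhosigenergyest} and~\ref{mathfrakaisdone} and in Lemmas~\ref{estimamifwcetateta} and~\ref{fkowfomwfometa}.

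Concretely, working in the frame Lie-propagated from $\{v=0\}$ so that $\Omega\nabla_4$ acts on $S$-tangent tensors as $\partial_v$ up to harmless Christoffel-type corrections involving $\chi$, I would obtain integral representations $\widetilde{\underline{\beta}}(u,v,\theta) = \int_0^v(\cdots)\,d\hat{v}$ and $\widetilde{\eta}(u,v,\theta) = \int_0^v(\cdots)\,d\hat{v}$. I would then expand each integrand using the systematic splittings $\rho = \overset{\triangleright}{\rho}+\widetilde{\rho}$, $\sigma = \overset{\triangleright}{\sigma}+\widetilde{\sigma}$, $\Omega^{-1}\beta = \overset{\triangleright}{\Omega^{-1}\beta}+\widetilde{\beta}+\tfrac{1}{2}\overline{\slashed{\nabla}(\Omega^{-1}{\rm tr}\chi)}+\tfrac{1}{2}\overline{\eta(\Omega^{-1}{\rm tr}\chi)}$, $\psi = \overline{\psi}+\widetilde{\psi}$ for each Ricci coefficient, and the lapse splitting $\Omega^2 = (v/(-u))^{-2\kappa}\bigl[(\overline{(v/(-u))^\kappa\Omega})^2 + \widetilde{((v/(-u))^\kappa\Omega)^2}\bigr]$. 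The sub-integrals in which every factor is an overlined or $\overset{\triangleright}{\cdots}$ leading-order quantity assemble, by inspection, exactly into~\eqref{betamowfa} and~\eqref{etaffwffww}; the remaining sub-integrals, each containing at least one tilde factor, constitute $\widetilde{\underline{\beta}}^{(1)}$ and $\widetilde{\eta}^{(1)}$.

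To bound these remainders, the terms whose tilde factor is a Ricci coefficient enjoy the pointwise-in-$v$ control $\|\widetilde{\psi}\|_{\tilde H^2}\lesssim\epsilon^{1-\delta}(-u)^{-j'}(\hat{v}/(-u))^{1-q/200}$ from $\mathscr{B}$, which after integration in $\hat{v}$ yields a contribution of order $(v/(-u))^{2-q/200}$, more than sufficient. The terms whose tilde factor is a curvature component (the $\slashed{\nabla}\widetilde{\rho}$, $\slashed{\nabla}\widetilde{\sigma}$, $\widetilde{\beta}$, and $\widetilde{\alpha}$ contributions for $\widetilde{\underline{\beta}}^{(1)}$, and only $\widetilde{\beta}$ for $\widetilde{\eta}^{(1)}$) are instead controlled via Cauchy--Schwarz in $\hat{v}$ against the $\mathscr{A}$-weight $\Omega^2\hat{v}^{-1+q/100}$; the small exponent deficit between the resulting bound and $(v/(-u))^{3/2}$ is absorbed using $v/(-u)\leq\underline{v}\ll 1$. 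Combining the two classes of contributions gives $\|\widetilde{\underline{\beta}}^{(1)}\|_{\tilde H^1}\lesssim\epsilon^{1-\delta}(-u)^{-2}(v/(-u))^{3/2}$ and $\|\widetilde{\eta}^{(1)}\|_{\tilde H^1}\lesssim\epsilon^{1-\delta}(-u)^{-1}(v/(-u))^{3/2}$, which is the claim. The $\widetilde{\eta}^{(1)}$ case is strictly easier than $\widetilde{\underline{\beta}}^{(1)}$ because no angular derivatives of curvature appear in~\eqref{akokdowdketaeqn}.

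The main obstacle lies in the bookkeeping required in the second step, namely verifying that the collection of leading-order products in the integrated $\nabla_4$ equations exactly reproduces the explicit integrals~\eqref{betamowfa} and~\eqref{etaffwffww}. One must carefully track the interplay between the $(v/(-u))^{-2\kappa}$ lapse factor, the self-similar piece $\overset{\triangleright}{\Omega^{-1}\hat{\chi}}$ entering $\overset{\triangleright}{\rho}$ and $\overset{\triangleright}{\sigma}$ via Definition~\ref{leadingself}, and the connection corrections produced when $\Omega\nabla_4$ is rewritten as $\partial_v$ on tensorial components. Once this identification is complete, the remainder estimates are routine applications of the norms already at hand.
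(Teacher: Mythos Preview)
Your overall strategy---integrate the $\Omega\nabla_4$ equations for $\widetilde{\underline{\beta}}$ and $\widetilde{\eta}$, identify the leading integrals as $\widetilde{\underline{\beta}}^{(0)}$ and $\widetilde{\eta}^{(0)}$, and estimate the remainders---matches the paper's approach exactly. The bookkeeping you flag as the main obstacle is indeed routine. However, there is a genuine gap in your treatment of the curvature remainder terms.

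Cauchy--Schwarz in $\hat v$ against the $\mathscr{A}$-flux with weight $\Omega^2\hat v^{-1+q/100}$ yields only
\[
\Bigl\Vert\int_0^v\Omega^2\,\widetilde{\beta}\,d\hat v\Bigr\Vert_{L^2}\;\lesssim\;\epsilon^{1-\delta}(-u)^{-2}\Bigl(\tfrac{v}{-u}\Bigr)^{1-\kappa-q/200},
\]
and similarly for $\slashed\nabla\widetilde\rho$, $\slashed\nabla\widetilde\sigma$. The exponent deficit from $3/2$ is therefore roughly $1/2$, not ``small''. Moreover, since $v/(-u)\le\underline v<1$, a bound $(v/(-u))^r$ with $r<3/2$ is \emph{weaker} than $(v/(-u))^{3/2}$; smallness of $\underline v$ cannot absorb a deficit (it goes the wrong way). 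Your sentence about absorption is incorrect on both counts, and the claim that the $\widetilde{\eta}^{(1)}$ case is ``strictly easier'' is wrong.

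The paper fills this gap for $\widetilde{\eta}^{(1)}$ with an intermediate step you are missing: it first integrates the $\nabla_4$ equation~\eqref{finallyforbetawenergy} for $\widetilde{\beta}$ itself (for $i=0,1$, using one extra angular derivative of $\widetilde{\alpha}$ through $\slashed{\rm div}\widetilde{\alpha}$ and the $\mathscr{A}$-flux for $\alpha$) to obtain the pointwise-in-$v$ bound~\eqref{dwdwdwdwdwdwqqqads}, namely $\Vert\widetilde{\beta}\Vert_{\tilde H^1}\lesssim (v/(-u))^{4/5}(-u)^{-2}$. This pointwise bound, once fed into the $\widetilde{\eta}^{(1)}$ integral, produces an exponent close to $9/5>3/2$. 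The same mechanism (pointwise bounds for $\widetilde\rho$, $\widetilde\sigma$ obtained from their own $\nabla_4$ equations) is what makes the $\slashed\nabla\widetilde\rho$, $\slashed\nabla\widetilde\sigma$ terms in~\eqref{4undbeta3} controllable for $\widetilde{\underline{\beta}}^{(1)}$. You need to insert this step.
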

\begin{proof}It follows from~\eqref{4undbeta2} and the definition of $\widetilde{\beta}_A^{(0)}$ that we have
\begin{align} \label{4undbeta3} 
&\mathcal{L}_{\partial_v}\widetilde{\underline\beta}^{(1)}_A = 
\\ \nonumber &\qquad + \left(\frac{v}{-u}\right)^{-2\kappa}\widetilde{\Omega^2}\left[-\frac{1}{2}\left(\Omega^{-1}{\rm tr}\chi\right)\left(\Omega\underline{\beta}\right)-\slashed{\nabla}\rho + {}^*\slashed{\nabla}\sigma + 2\left(\Omega\hat{\underline{\chi}}\right)\cdot\left(\Omega^{-1}\beta\right)+ 3\left(-\underline{\eta}\rho + {}^*\underline{\eta}\sigma\right)+ \Omega^{-1}\hat{\chi}\cdot\left(\Omega\underline{\beta}\right)\right]_A
\\ \nonumber &\qquad +\left(\frac{v}{-u}\right)^{-2\kappa}\left(\overline{\left(\frac{v}{-u}\right)^{\kappa}\Omega}\right)^2\left[-\frac{1}{2}\left(\Omega^{-1}{\rm tr}\chi\right)\widetilde{\underline{\beta}}-\slashed{\nabla}\widetilde{\rho} + {}^*\slashed{\nabla}\widetilde{\sigma} + 2\left(\Omega\hat{\underline{\chi}}\right)\cdot\widetilde{\beta}+ 3\left(-\underline{\eta}\widetilde{\rho} + {}^*\underline{\eta}\widetilde{\sigma}\right) + \widetilde{\hat{\chi}}\cdot \Omega\underline{\beta}\right]_A
\\ \nonumber &\qquad +\left(\frac{v}{-u}\right)^{-2\kappa}\left(\overline{\left(\frac{v}{-u}\right)^{\kappa}\Omega}\right)^2\left[-\frac{1}{2}\widetilde{{\rm tr}\chi}\overline{\Omega\underline{\beta}}-\widetilde{\slashed{\nabla}}\overset{\triangleright}{\rho} + \widetilde{{}^*\slashed{\nabla}}\overset{\triangleright}{\sigma} + 2\widetilde{\hat{\underline{\chi}}}\cdot\overset{\triangleright}{\Omega^{-1}\beta}+ 3\left(-\widetilde{\underline{\eta}}\overset{\triangleright}{\rho} + \widetilde{{}^*\underline{\eta}}\overset{\triangleright}{\sigma}\right) + \overset{\triangleright}{\Omega^{-1}\hat{\chi}}\widetilde{\underline{\beta}}\right]_A.
\end{align}
Commuting with $\slashed{\nabla}$, integrating this in $v$ direction, using the bootstrap assumptions, Sobolev inequalities, as well as Proposition~\ref{mathfrakaisdone} yield that $\widetilde{\underline{\beta}}_A^{(1)} \in \mathcal{H}^{(2)}$. 

Next, we observe that it follows immediately from Proposition~\ref{mathfrakaisdone}, the bootstrap assumptions, and Sobolev inequalities, that if we integrate~\eqref{finallyforbetawenergy} in the $v$-direction for $i \in \{0,1\}$ we obtain that
\begin{equation}\label{dwdwdwdwdwdwqqqads}
\left\vert\left\vert \widetilde{\beta}\right\vert\right\vert_{\tilde H^1\left(\mathbb{S}^2_{u,v}\right)} \lesssim \left(\frac{v}{-u}\right)^{4/5}\left(-u\right)^{-2}.
\end{equation}

For $\eta$ we may derive the following equation:
\begin{align}\label{Adaaaeatetata}
&\mathcal{L}_{\partial_v}\widetilde{\eta}^{(1)}_A =
\\ \nonumber &\qquad  \left(\frac{v}{-u}\right)^{-2\kappa}\widetilde{\Omega^2}\left[-\left(\Omega^{-1}\chi\right)\left(\eta-\underline{\eta}\right) - \Omega^{-1}\beta+\Omega^{-1}\chi\cdot\eta\right]_A - \left(\frac{v}{-u}\right)^{-2\kappa}\left(\overline{\frac{v}{-u}^{\kappa}\Omega}\right)^2\left(\widetilde{\beta} + \Omega^{-1}\chi\widetilde{\eta}\right)_A
\\ \nonumber &\qquad + \left(\frac{v}{-u}\right)^{-2\kappa}\left(\overline{\frac{v}{-u}^{\kappa}\Omega}\right)^2\left[-\frac{1}{2}\widetilde{{\rm tr}\chi}\left(\eta-\underline{\eta}\right)-\frac{1}{2}\left(\overline{\Omega^{-1}{\rm tr}\chi}\right)\widetilde{\left(\eta-\underline{\eta}\right)}- \widetilde{\hat{\chi}}\cdot\left(\eta-\underline{\eta}\right) - \overset{\triangleright}{\Omega\hat{\chi}}\cdot\widetilde{\left(\eta-\underline{\eta}\right)}\right]_A
\\ \nonumber &\qquad + \left(\frac{v}{-u}\right)^{-2\kappa}\left(\overline{\frac{v}{-u}^{\kappa}\Omega}\right)^2\left(\widetilde{\chi}\cdot\overline{\eta}\right)_A.
\end{align}
Then, using~\eqref{dwdwdwdwdwdwqqqads}, we may argue as we did for $\underline{\beta}_A$ to obtain that $\widetilde{\eta}_A^{(1)} \in \mathcal{H}^{(1)}$. 
\end{proof}

The next lemma expresses the Codazzi equation in terms of $\widetilde{\hat{\underline{\chi}}}_{AB}^{(1)}$ and $\widetilde{\underline{\beta}}_A^{(1)}$ and uses Lemma~\ref{kpdwdddd}.
\begin{lemma}\label{codisnice}Let $\left(\mathcal{M},g_{\mu\nu}\right)$ satisfy the hypothesis of Proposition~\ref{letsbootit}. Then we have that
\[\slashed{\rm div}\widetilde{\hat{\underline{\chi}}}_A^{(1)} - \frac{1}{2}\slashed{\nabla}_A\widetilde{{\rm tr}\underline{\chi}}^{(1)} = \mathcal{H}^{(2)}.\]
\end{lemma}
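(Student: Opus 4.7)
My plan is to derive the claim from the Codazzi equation \eqref{tcod2} by a careful subtraction/decomposition, combined with Lemma~\ref{kpdwdddd}. First, I would apply \eqref{tcod2} both at a generic $(u,v)$ and in the limit $v\to 0$ (which makes sense after the appropriate $\Omega$-weighting, by Lemma~\ref{dkwowsss322}), and subtract the two. This produces an identity of the schematic form
\begin{equation*}
\slashed{\rm div}\,\widetilde{\hat{\underline\chi}} - \tfrac{1}{2}\slashed{\nabla}\,\widetilde{{\rm tr}\,\underline\chi} \;=\; \widetilde{\underline\beta} \;+\; \mathcal{E},
\end{equation*}
where $\mathcal{E}$ collects (i) the genuinely nonlinear differences $\widetilde{({\rm tr}\underline\chi)\zeta}$, $\widetilde{\zeta\cdot\hat{\underline\chi}}$, and (ii) the commutator $(\slashed{\nabla}-\overline{\slashed{\nabla}})$ acting on $\overline{\hat{\underline\chi}}$ and $\overline{{\rm tr}\underline\chi}$, expressible in terms of $\widetilde{\slashed g}$. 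Each factor in these terms either vanishes at $v=0$ or is already small in a $\widetilde{\cdot}$ sense; using the bootstrap bounds on $\mathfrak{B},\mathfrak{D}$, Sobolev inequalities on $\mathbb S^2$, and the estimates already established for $\widetilde{\eta},\widetilde{\underline\eta}$ and $\widetilde{\slashed g}$, one checks that $\mathcal{E}\in\mathcal H^{(2)}$ (indeed with a better weight, roughly $(v/(-u))^{2-}$).

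Next I would decompose $\widetilde{\hat{\underline\chi}} = \widetilde{\hat{\underline\chi}}^{(0)}+\widetilde{\hat{\underline\chi}}^{(1)}$, $\widetilde{{\rm tr}\underline\chi}=\widetilde{{\rm tr}\underline\chi}^{(0)}+\widetilde{{\rm tr}\underline\chi}^{(1)}$, and $\widetilde{\underline\beta}=\widetilde{\underline\beta}^{(0)}+\widetilde{\underline\beta}^{(1)}$ as in Definition~\ref{Defofharmw}, and use Lemma~\ref{kpdwdddd} to discard $\widetilde{\underline\beta}^{(1)}$ into $\mathcal H^{(2)}$. Thus the whole claim reduces to the algebraic/differential identity
\begin{equation*}
\slashed{\rm div}\,\widetilde{\hat{\underline\chi}}^{(0)} - \tfrac{1}{2}\slashed{\nabla}\,\widetilde{{\rm tr}\underline\chi}^{(0)} - \widetilde{\underline\beta}^{(0)} \;\in\; \mathcal H^{(2)}.
\end{equation*}
This is precisely a consistency check between the constraint \eqref{tcod2} and the evolution equations \eqref{3truchi} (for ${\rm tr}\underline\chi$), \eqref{3hatuchi}/\eqref{4hatuchi} (for $\hat{\underline\chi}$), and \eqref{4undbeta2} (for $\underline\beta$), all restricted to $\{v=0\}$ and integrated once in $v$. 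Indeed each $\widetilde{\cdot}^{(0)}$ is a first-order Taylor remainder: $\widetilde{{\rm tr}\underline\chi}^{(0)}$ and $\widetilde{\hat{\underline\chi}}^{(0)}$ are $v$ times the leading $\nabla_4$-derivative of $\Omega{\rm tr}\underline\chi$ and $\Omega\hat{\underline\chi}$ respectively, while $\widetilde{\underline\beta}^{(0)}$ is $\int_0^v$ of the leading $\nabla_4$-derivative of $\Omega\underline\beta$. Applying $\slashed{\rm div}$ and $\slashed{\nabla}$ to the former two (the $\slashed{\nabla}$ acts on the explicitly $\theta$-dependent coefficients since $v,u$ are frozen) and comparing with the integrand of the latter, the cancellation is exactly the $v=0$ trace of the identity $\slashed{\rm div}\,\nabla_4\hat{\underline\chi} - \tfrac{1}{2}\slashed{\nabla}\,\nabla_4{\rm tr}\underline\chi = \nabla_4\underline\beta + (\text{Ricci}\cdot\text{curvature})$, which is itself a consequence of differentiating \eqref{tcod2} by $\nabla_4$ and using the commutator formula \eqref{com1}. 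All mismatches in this comparison are either products of two small quantities or commutator terms of the form $[\slashed{\nabla},\mathcal L_{\partial_v}]$, and each is easily placed in $\mathcal H^{(2)}$ using the $\mathscr{A},\mathscr{B},\mathscr{D}$ bounds.

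The main obstacle is the bookkeeping of the leading-order cancellation in the last paragraph: the $\widetilde{\cdot}^{(0)}$ terms are each individually only of size $\epsilon^{1-\delta}(-u)^{-2}(v/(-u))^{1-}$ (that is, one power of $v/(-u)$ worse than $\mathcal H^{(2)}$), so it is essential that the combination $\slashed{\rm div}\,\widetilde{\hat{\underline\chi}}^{(0)} - \tfrac{1}{2}\slashed{\nabla}\,\widetilde{{\rm tr}\underline\chi}^{(0)}-\widetilde{\underline\beta}^{(0)}$ exhibits an exact cancellation at order $v$, and only the next-order error (which generically carries an extra $v/(-u)$ factor, or is quadratic in small quantities) survives. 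A crucial point in this verification is that when one expands $\widetilde{\hat{\underline\chi}}^{(0)}$ and $\widetilde{{\rm tr}\underline\chi}^{(0)}$, the pieces involving the integral $\int_0^v (\hat v/(-u))^{-2\kappa}\overset{\triangleright}{\Omega^{-1}\hat\chi}\, d\hat v$ (which encode the coupling to the large shear $\overset{\triangleright}{\Omega^{-1}\hat\chi}$) must cancel against the corresponding integral piece in $\widetilde{\underline\beta}^{(0)}$; once this is done, the remaining discrepancies consist only of commutators $[\slashed{\nabla},\overline{\cdot}]$ and nonlinearities like $\widetilde{\hat{\underline\chi}}\cdot\overline{\eta}$, which Sobolev inequalities and the bootstrap assumptions comfortably absorb into $\mathcal H^{(2)}$.
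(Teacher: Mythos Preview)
Your proposal is correct and follows essentially the same route as the paper: subtract the Codazzi equation at $v$ from its $v\to 0$ limit, decompose into $(0)$ and $(1)$ parts, invoke Lemma~\ref{kpdwdddd} for $\widetilde{\underline\beta}^{(1)}$, and then verify that the leading-order combination $\slashed{\rm div}\,\widetilde{\hat{\underline\chi}}^{(0)} - \tfrac{1}{2}\slashed{\nabla}\,\widetilde{{\rm tr}\underline\chi}^{(0)} - \widetilde{\underline\beta}^{(0)}$ cancels up to $\mathcal H^{(2)}$. The paper carries out this last step by explicitly integrating the $\Omega\nabla_4$-equations for $\slashed{\rm div}(\Omega\hat{\underline\chi})$ and $\slashed{\nabla}(\Omega{\rm tr}\underline\chi)$ (obtained via the commutator~\eqref{com1}) and substituting into the form~\eqref{plwo3} of Codazzi from Lemma~\ref{bettereta}; the exact cancellation of the integral terms carrying $\overset{\triangleright}{\Omega^{-1}\hat\chi}$ that you flag as the crucial point is effected by the pointwise tensorial identities of Lemmas~\ref{identitytwotrfere} and~\ref{iguessthisisusefulwww}, which you should expect to need when you actually write the computation out.
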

\begin{proof}The proof is given in Appendix~\ref{dwlpdwlp}
\end{proof}

\begin{lemma}\label{Wearekofkofwtrhica}Let $\left(\mathcal{M},g_{\mu\nu}\right)$ satisfy the hypothesis of Proposition~\ref{letsbootit}. Then we have that
\[ \left\vert\left\vert \left({\rm tr}\underline{\chi},\hat{\underline{\chi}}\right)\right\vert\right\vert_{\mathscr{B}}  \lesssim \epsilon^{1-\delta},\qquad \left\vert\left\vert {\rm tr}\underline{\chi}\right\vert\right\vert_{\mathscr{C}} \lesssim \underline{v}^{\frac{p}{10}}.\]
\end{lemma}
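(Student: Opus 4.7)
The plan is to exploit the decompositions $\widetilde{\hat{\underline{\chi}}} = \widetilde{\hat{\underline{\chi}}}^{(0)} + \widetilde{\hat{\underline{\chi}}}^{(1)}$ and $\widetilde{{\rm tr}\underline{\chi}} = \widetilde{{\rm tr}\underline{\chi}}^{(0)} + \widetilde{{\rm tr}\underline{\chi}}^{(1)}$ from Definition~\ref{Defofharmw}: I would control the explicit leading parts by direct inspection, and close a Hodge-transport coupled argument for the remainders. The leading parts are designed to absorb the unit-sized Raychaudhuri contribution coming from $\overline{(\Omega^{-1}{\rm tr}\chi)(\Omega{\rm tr}\underline{\chi})}$ (which prevents an $\epsilon$-gain in the $\mathscr{C}$ estimate), while the remainders vanish to higher order in $v$ and are governed by Lemma~\ref{codisnice}.

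First, I would estimate the leading parts $\widetilde{\hat{\underline{\chi}}}^{(0)},\,\widetilde{{\rm tr}\underline{\chi}}^{(0)}$ directly from the explicit formulas~\eqref{hatchirenroma} and~\eqref{trchiundaplwd}. Each formula carries an overall factor of $v$ or $\int_0^v d\hat v$ multiplying products of quantities already controlled by Lemma~\ref{boundfromdatasub} (the $v=0$ data), Lemma~\ref{uminus3data} (the self-similarly extended shear), Lemmas~\ref{Wedididwofwkfowkfomega}--\ref{fkowfomwfometa} (Ricci coefficients), and Proposition~\ref{mathfrakaisdone} (curvature). Using the Sobolev and product estimates of Lemma~\ref{sosososob}, this should yield
\[\sum_{i=0}^{2}(-u)^{2+i}\|\slashed{\nabla}^i\widetilde{\hat{\underline{\chi}}}^{(0)}\|_{L^2(\mathbb{S}^2_{u,v})} \lesssim \epsilon^{1-\delta}\,v(-u)^{-2},\]
and the same estimate for $\widetilde{{\rm tr}\underline{\chi}}^{(0)}$ with $i\geq 1$; for $i=0$ there is no $\epsilon$-gain and I only recover $\|\widetilde{{\rm tr}\underline{\chi}}^{(0)}\|_{L^2}\lesssim v(-u)^{-2}$. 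Inserting into the norms $\mathscr{B}$ and $\mathscr{C}$ produces factors $\epsilon^{1-\delta}(v/|u|)^{q/200}$ and $(v/|u|)^{q/200}$ respectively, which on the bootstrap region $v/|u|\leq\underline{v}$ are $\lesssim\epsilon^{1-\delta}$ and $\lesssim\underline{v}^{p/10}$, provided that $p$ is chosen sufficiently small compared to $q$ (or $\underline{v}$ is taken small depending on $p,q$).

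For the remainders I would proceed in two substeps. First, differentiating the explicit formula~\eqref{trchiundaplwd} in the $e_4$ direction and subtracting from the $\Omega$-weighted Raychaudhuri equation~\eqref{4truchi} yields a $\nabla_4$-equation for $\widetilde{{\rm tr}\underline{\chi}}^{(1)}$ whose right-hand side vanishes at $v=0$ by the very construction of $\widetilde{{\rm tr}\underline{\chi}}^{(0)}$. Commuting with $\slashed{\nabla}^i$, integrating from $\{v=0\}$ (where $\widetilde{{\rm tr}\underline{\chi}}^{(1)}$ itself vanishes), and invoking the bootstrap and previous bounds should deliver
\[\|\slashed{\nabla}^i\widetilde{{\rm tr}\underline{\chi}}^{(1)}\|_{L^2(\mathbb{S}^2_{u,v})} \lesssim \epsilon^{1-\delta}(-u)^{-2-i}(v/|u|)^{1+c},\quad i=0,1,2,\]
for some small $c>0$. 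Second, rewriting Lemma~\ref{codisnice} as $\slashed{\rm div}\widetilde{\hat{\underline{\chi}}}^{(1)} = \tfrac12\slashed{\nabla}\widetilde{{\rm tr}\underline{\chi}}^{(1)} + \mathcal{H}^{(2)}$ and applying the elliptic Hodge estimate~\eqref{pokdwimnfe2} of Lemma~\ref{someelleststs} gives
\[\|\widetilde{\hat{\underline{\chi}}}^{(1)}\|_{\tilde H^i(\mathbb{S}^2_{u,v})} \lesssim (-u)\bigl(\|\slashed{\nabla}\widetilde{{\rm tr}\underline{\chi}}^{(1)}\|_{\tilde H^{i-1}} + \|\mathcal{H}^{(2)}\|_{\tilde H^{i-1}}\bigr),\quad i=1,2,\]
and the $i=1$ case also supplies the $L^2$-bound required for the $i=0$ contribution to $\mathscr{B}$. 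Feeding in the bound on $\widetilde{{\rm tr}\underline{\chi}}^{(1)}$ and the definition of $\mathcal{H}^{(2)}$ then closes the $\mathscr{B}$-bound for $\widetilde{\hat{\underline{\chi}}}^{(1)}$.

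The main obstacle will be the algebraic verification in Step 2 that the $\nabla_4$-equation for $\widetilde{{\rm tr}\underline{\chi}}^{(1)}$ does have right-hand side vanishing at $v=0$. This is a computation parallel to (but technically simpler than) the derivation of Lemma~\ref{codisnice}: it amounts to checking that the explicit formula~\eqref{trchiundaplwd} has been arranged so that $\partial_v\widetilde{{\rm tr}\underline{\chi}}^{(0)}|_{v=0}$ precisely reproduces the $v=0$ value of $\partial_v\widetilde{{\rm tr}\underline{\chi}}$ dictated by~\eqref{4truchi}, so that the difference is genuinely subleading.
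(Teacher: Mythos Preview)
Your overall architecture---splitting into $(0)$- and $(1)$-parts, estimating the explicit pieces directly, and closing via Codazzi and the elliptic estimate of Lemma~\ref{someelleststs}---matches the paper. But your Step~2 contains a genuine gap: the $\nabla_4$ equation for $\widetilde{{\rm tr}\underline{\chi}}^{(1)}$ cannot be used at the top angular order. The $\nabla_4$ equation~\eqref{4truchi} for ${\rm tr}\underline{\chi}$ contains $\slashed{\rm div}\underline{\eta}$ on the right-hand side; after subtracting the $(0)$-part this becomes (up to metric corrections) $\slashed{\rm div}\widetilde{\underline{\eta}}$, and commuting with $\slashed{\nabla}^2$ forces you to control $\slashed{\nabla}^3\underline{\eta}$. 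The $\mathscr{B}$-norm only controls two angular derivatives of Ricci coefficients, so this term is unavailable. The vanishing at $v=0$ that you highlight helps with $v$-weights but does nothing for the angular derivative count.

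The paper explicitly flags this obstruction and instead uses the $\nabla_3$ Raychaudhuri equation~\eqref{3truchi}, whose right-hand side involves only $\hat{\underline{\chi}}$ and $\underline{\omega}$ (no $\underline{\eta}$). This makes the equation for $\widetilde{{\rm tr}\underline{\chi}}^{(1)}$ couple to $\widetilde{\hat{\underline{\chi}}}^{(1)}$ on the right-hand side, but with a coefficient $\overline{\Omega\hat{\underline{\chi}}}\sim\epsilon$; Lemma~\ref{codisnice} and the elliptic estimate then bound $\widetilde{\hat{\underline{\chi}}}^{(1)}$ back in terms of $\slashed{\nabla}\widetilde{{\rm tr}\underline{\chi}}^{(1)}$, and the $\epsilon$-smallness lets the coupled system close. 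The paper also needs the improved integrated estimate~\eqref{betterwhen33} for $\widetilde{\underline{\omega}}^{(1)}$ from Lemma~\ref{Wedididwofwkfowkfomega} to handle the $\underline{\omega}$-term with the stronger $v^{-5/2}$ weight that the $\nabla_3$ argument requires. Your identification of the ``algebraic verification'' as the main obstacle is therefore off target: the real issue is the derivative loss, and the cure is to switch propagation direction.
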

\begin{proof}We start with the lower order estimates for ${\rm tr}\underline{\chi}$. From~\eqref{4truchi} one may derives 
\begin{equation}\label{kfk9wfk9fk0}
\Omega\nabla_4\widetilde{{\rm tr}\underline{\chi}} = 2\Omega^2\left(-\frac{1}{4}\left(\Omega^{-1}{\rm tr}\chi\right)\left(\Omega{\rm tr}\underline{\chi}\right) + \rho-\frac{1}{2}\left(\Omega\hat{\underline{\chi}}\right)\cdot\left(\Omega^{-1}\hat{\chi}\right) +\slashed{\rm div}\underline{\eta} +\left|\underline{\eta}\right|^2\right).
\end{equation}
Integrating this in the $v$-direction and using the bootstrap assumptions, Proposition~\ref{mathfrakaisdone}, Lemma~\ref{fkowfomwfometa}, and Sobolev inequalities immediately leads to
\begin{equation}\label{kfk9wfk9fk04}
\left\vert\left\vert {\rm tr}\underline{\chi}\right\vert\right\vert_{\mathscr{C}} \lesssim \underline{v}^{\frac{p}{10}}.
\end{equation}

For the $\mathscr{B}$-norm estimate of ${\rm tr}\underline{\chi}$ we cannot use its $\nabla_4$ equation because we do not have any estimates for $\slashed{\nabla}^3\underline{\eta}_A$.  Instead we will use the $\nabla_3$ Raychaudhuri equation. From~\eqref{3truchi} we may derive the following equation:
\begin{equation}\label{dkokwdqqqqscmcow}
\Omega\nabla_3\left(\Omega{\rm tr}\underline{\chi}\right) + \frac{1}{2}\left(\Omega{\rm tr}\underline{\chi}\right)^2 + 4\left(\Omega\underline{\omega}\right)\left(\Omega{\rm tr}\underline{\chi}\right) +\left|\Omega\hat{\underline{\chi}}\right|^2 = 0.
\end{equation}
Restricting this to $\{v = 0\}$ yields 
\begin{equation}\label{dkokwdqqqqscmcow2}
\overline{\Omega\nabla_3\left(\Omega{\rm tr}\underline{\chi}\right) + \frac{1}{2}\left(\Omega{\rm tr}\underline{\chi}\right)^2 + 4\left(\Omega\underline{\omega}\right)\left(\Omega{\rm tr}\underline{\chi}\right) +\left|\Omega\hat{\underline{\chi}}\right|^2} = 0.
\end{equation}
Taking the difference of~\eqref{dkokwdqqqqscmcow} with~\eqref{dkokwdqqqqscmcow2} leads to
\begin{align}\label{dkokwdqqqqscmcow3}
&\left(\overline{\Omega\nabla_3}+ \widetilde{b^A}\slashed{\nabla}_A\right)\left(\widetilde{{\rm tr}\underline{\chi}}\right) + \left(\overline{\Omega{\rm tr}\underline{\chi}}\right)\widetilde{{\rm tr}\underline{\chi}} + 4\left(\overline{\Omega\underline{\omega}}\right)\widetilde{{\rm tr}\underline{\chi}} + \widetilde{b^A}\slashed{\nabla}_A\overline{\Omega{\rm tr}\underline{\chi}} = \\ \nonumber &\qquad -4\widetilde{\underline{\omega}}\overline{\Omega{\rm tr}\underline{\chi}} -4\widetilde{\underline{\omega}}\widetilde{{\rm tr}\underline{\chi}}- 2\overline{\Omega\hat{\underline{\chi}}}\cdot\widetilde{\hat{\underline{\chi}}} - \widetilde{\hat{\underline{\chi}}}\cdot\widetilde{\hat{\underline{\chi}}} - \frac{1}{2}\left(\widetilde{{\rm tr}\underline{\chi}}\right)^2
\\ \nonumber &\qquad  - 2\widetilde{\left(\slashed{g}^{-1}\right)}^{AC}\slashed{g}^{BD}\overline{\Omega\hat{\underline{\chi}}}_{AB}\overline{\Omega\hat{\underline{\chi}}}_{CD} + \widetilde{\left(\slashed{g}^{-1}\right)}^{AC}\widetilde{\left(\slashed{g}^{-1}\right)}^{BD}\overline{\Omega\hat{\underline{\chi}}}_{AB}\overline{\Omega\hat{\underline{\chi}}}_{CD}.
\end{align}

Now we define
\[\widetilde{\underline{\omega}}^{\dagger} \doteq \widetilde{\underline{\omega}} - \frac{1}{4}\left(\overline{\left(\frac{v}{-u}\right)^{\kappa}\Omega}\right)^2\overline{\Omega\hat{\underline{\chi}}}\int_0^v\left(\frac{\dot{v}}{-u}\right)^{-2\kappa}\overset{\triangleright}{\Omega^{-1}\hat{\chi}}\, d\dot{v},\]
\begin{align*}
\widetilde{\hat{\underline{\chi}}}^{\dagger}_{AB} &\doteq \widetilde{\hat{\underline{\chi}}}_{AB} + \frac{1}{2}\left(\overline{\left(\frac{v}{-u}\right)^{\kappa}\Omega}\right)^2\overline{\Omega{\rm tr}\underline{\chi}}\int_0^v\left(\frac{\dot{v}}{-u}\right)^{-2\kappa}\overset{\triangleright}{\Omega^{-1}\hat{\chi}}_{AB}\, d\dot{v}
\\ \nonumber &\qquad -2 \left(\overline{\left(\frac{v}{-u}\right)^{\kappa}\Omega}\right)^2\overline{\hat{\underline{\chi}}}_{C(A}\slashed{g}^{CD}\int_0^v\left(\frac{\dot{v}}{-u}\right)^{-2\kappa}\overset{\triangleright}{\Omega^{-1}\hat{\chi}}_{B)D}\, d\dot{v},
\end{align*}
\begin{align*}
\left(\widetilde{\slashed{g}^{-1}}^{\dagger}\right)^{AB} \doteq \widetilde{\slashed{g}^{-1}}^{AB} +2 \left(\overline{\left(\frac{v}{-u}\right)^{\kappa}\Omega}\right)^2\slashed{g}^{AC}\slashed{g}^{BD}\int_0^v\left(\frac{\dot{v}}{-u}\right)^{-2\kappa}\overset{\triangleright}{\Omega^{-1}\hat{\chi}}_{CD}\, d\dot{v}. 
\end{align*}
This then allows us to write 
\begin{align}\label{dkokwdqqqqscmcow4}
&\left(\overline{\Omega\nabla_3}+ \widetilde{b^A}\slashed{\nabla}_A\right)\left(\widetilde{{\rm tr}\underline{\chi}}\right) + \left(\overline{\Omega{\rm tr}\underline{\chi}}\right)\widetilde{{\rm tr}\underline{\chi}} + 4\left(\overline{\Omega\underline{\omega}}\right)\widetilde{{\rm tr}\underline{\chi}} + \widetilde{b^A}\slashed{\nabla}_A\overline{\Omega{\rm tr}\underline{\chi}} = \\ \nonumber &\qquad -4\widetilde{\underline{\omega}}^{\dagger}\overline{\Omega{\rm tr}\underline{\chi}} -4\widetilde{\underline{\omega}}\widetilde{{\rm tr}\underline{\chi}}- 2\overline{\Omega\hat{\underline{\chi}}}\cdot\widetilde{\hat{\underline{\chi}}}^{\dagger} - \widetilde{\hat{\underline{\chi}}}\cdot\widetilde{\hat{\underline{\chi}}} - \frac{1}{2}\left(\widetilde{{\rm tr}\underline{\chi}}\right)^2
\\ \nonumber &\qquad  - 2\left(\widetilde{\slashed{g}^{-1}}^{\dagger}\right)^{AC}\slashed{g}^{BD}\overline{\Omega\hat{\underline{\chi}}}_{AB}\overline{\Omega\hat{\underline{\chi}}}_{CD} + \widetilde{\left(\slashed{g}^{-1}\right)}^{AC}\widetilde{\left(\slashed{g}^{-1}\right)}^{BD}\overline{\Omega\hat{\underline{\chi}}}_{AB}\overline{\Omega\hat{\underline{\chi}}}_{CD}.
\end{align}
Next, we note that it follows immediately from their respective definitions that each of the quantities $v^{-1+2\kappa}\widetilde{{\rm tr}\underline{\chi}}^{(0)}$, $v^{-1+2\kappa}\widetilde{\underline{\omega}}^{\dagger}$, $v^{-1+2\kappa}
\left(\widetilde{\slashed{g}^{-1}}^{\dagger}\right)^{AB}$, $v^{-1+2\kappa}\widetilde{b^A}^{(0)}$, and $v^{-1+2\kappa}\widetilde{\hat{\underline{\chi}}}^{\dagger}_{AB}$ all depend only on $u$ and $\theta^A$. Since it is also follows from the bootstrap assumptions and their repsective $\nabla_4$ equations that
\[\lim_{v\to 0}v^{-1+2\kappa}\left( \widetilde{b^A}^{(1)},\widetilde{{\rm tr}\underline{\chi}}^{(1)},\left(\widetilde{\slashed{g}^{-1}}^{AB}\right)^{(1)},\widetilde{\hat{\underline{\chi}}}_{AB}^{(1)},\widetilde{b^A\cdot\slashed{\nabla}_A}\widetilde{{\rm tr}\underline{\chi}},\widetilde{\underline{\omega}}\widetilde{{\rm tr}\underline{\chi}}, \widetilde{\hat{\underline{\chi}}}\cdot\widetilde{\hat{\underline{\chi}}},\widetilde{\left(\slashed{g}^{-1}\right)}^{AC}\widetilde{\left(\slashed{g}^{-1}\right)}^{BD}\right) = 0,\]
we may multiply~\eqref{dkokwdqqqqscmcow4} with $v^{-1+2\kappa}$, take the limit as $v\to 0$, extend the resulting equation to be independent of $v$, and finally multiply by $v^{1-2\kappa}$ and subtract the result from~\eqref{dkokwdqqqqscmcow4} to conclude that
\begin{align}\label{dkokwdqqqqscmcow5}
&\left(\overline{\Omega\nabla_3}+ \widetilde{b^A}\slashed{\nabla}_A\right)\left(\widetilde{{\rm tr}\underline{\chi}}^{(1)}\right)+ \left(\overline{\Omega{\rm tr}\underline{\chi}}\right)\widetilde{{\rm tr}\underline{\chi}}^{(1)} + 4\left(\overline{\Omega\underline{\omega}}\right)\widetilde{{\rm tr}\underline{\chi}}^{(1)}  = 
\\ \nonumber &\qquad \qquad \underbrace{-4\widetilde{\underline{\omega}}^{(1)}\overline{\Omega{\rm tr}\underline{\chi}} - 2\overline{\Omega\hat{\underline{\chi}}}_{AB}\widetilde{\hat{\underline{\chi}}}_{CD}^{(1)}\overline{\slashed{g}^{AC}\slashed{g}^{BD}}- 2\left(\widetilde{\slashed{g}^{-1}}^{AC}\right)^{(1)}\overline{\slashed{g}^{BD}}\overline{\Omega\hat{\underline{\chi}}}_{AB}\overline{\Omega\hat{\underline{\chi}}}_{CD}-\widetilde{b^A}^{(1)}\slashed{\nabla}_A\overline{\Omega{\rm tr}\underline{\chi}}}_{\mathcal{E}}
\\ \nonumber &-\underbrace{4\widetilde{\underline{\omega}}\widetilde{{\rm tr}\underline{\chi}} - \widetilde{\hat{\underline{\chi}}}\cdot\widetilde{\hat{\underline{\chi}}}-\widetilde{b\cdot\slashed{\nabla}}\widetilde{{\rm tr}\underline{\chi}}^{(0)}- 2\overline{\Omega\hat{\underline{\chi}}}_{AB}\widetilde{\hat{\underline{\chi}}}_{CD}^{\dagger}\left(\overline{\slashed{g}^{AC}\slashed{g}^{BD}}-\slashed{g}^{AC}\slashed{g}^{BD}\right)- 2\left(\widetilde{\slashed{g}^{-1}}^{\dagger}\right)^{AC}\widetilde{\slashed{g}^{BD}}\overline{\Omega\hat{\underline{\chi}}}_{AB}\overline{\Omega\hat{\underline{\chi}}}_{CD}- \frac{1}{2}\left(\widetilde{{\rm tr}\underline{\chi}}\right)^2}_{\mathcal{G}}.
\end{align}
Now for $i \in \{1,2\}$, we commute~\eqref{dkokwdqqqqscmcow5} with $\slashed{\nabla}^i$ (suppressing the indices for typographical reasons) and conjugate with the weight $w \doteq \frac{(-u)^{i +9/4} }{v^{5/4}}$ obtain
\begin{align}\label{dkokwdqqqqscmcow6}
&\overline{\Omega\nabla}_3\left(w\slashed{\nabla}^i\widetilde{{\rm tr}\underline{\chi}}^{(1)}\right) + \left(\frac{9/4+i}{-u}+\overline{\Omega{\rm tr}\underline{\chi}} + \frac{i}{2}\Omega{\rm tr}\underline{\chi}+4\overline{\Omega\underline{\omega}}\right)w\slashed{\nabla}^i\widetilde{{\rm tr}\underline{\chi}}^{(1)}  = 
\\ \nonumber &\qquad w\slashed{\nabla}^i\left(\mathcal{E}-\mathcal{G}\right) + w\left[\left[\overline{\Omega\nabla}_3,\slashed{\nabla}^i\right]\widetilde{{\rm tr}\underline{\chi}} + \frac{i}{2}\Omega{\rm tr}\underline{\chi}\slashed{\nabla}^i\widetilde{{\rm tr}\underline{\chi}}\right] + w\sum_{\overset{i_1+i_2 = i}{i_1 \neq 0}}\slashed{\nabla}^{i_1}\left(\overline{\Omega{\rm tr}\underline{\chi}}+4\overline{\Omega\underline{\omega}}\right)\slashed{\nabla}^{i_2}\widetilde{{\rm tr}\underline{\chi}}.
\end{align}
It follows easily from the bootstrap assumptions that 
\[\frac{9/4+i}{-u}+\overline{\Omega{\rm tr}\underline{\chi}} + \frac{i}{2}\Omega{\rm tr}\underline{\chi}+4\overline{\Omega\underline{\omega}} \gtrsim (-u)^{-1}.\]
Thus, we may contract~\eqref{dkokwdqqqqscmcow6} with $w\slashed{\nabla}^i\widetilde{{\rm tr}\underline{\chi}}^{(1)}$, integrate in $u$, apply the bootstrap assumption and Sobolev inequalities and eventually obtain, for any $(u,v) \in \mathcal{R}_{\tilde u,\tilde v}$ and $i \in \{1,2\}$:
\begin{align}\label{theestimatmieqqqqq}
&\int_{\mathbb{S}^2_{u,v}}\left|\slashed{\nabla}^i\widetilde{{\rm tr}\underline{\chi}}^{(1)}\right|^2\frac{(-u)^{2i+9/2}}{v^{5/2}}\mathring{\rm dVol} + \int_{-1}^u\int_{\mathbb{S}^2_{\dot{u},v}}\left|\slashed{\nabla}^i\widetilde{{\rm tr}\underline{\chi}}^{(1)}\right|^2\frac{(-\dot{u})^{2i+7/2}}{v^{5/2}}\mathring{\rm dVol}\, d\dot{u} \lesssim
\\ \nonumber &\int_{-1}^u\int_{\mathbb{S}^2}\frac{(-\dot{u})^{2i+11/2}}{v^{5/2}}\left[\left|\slashed{\nabla}^i\mathcal{E}\right|^2 + \left|\slashed{\nabla}^i\mathcal{G}\right|^2\right]\, \mathring{\rm dVol}\, d\dot{u} + \int_{\mathbb{S}^2_{-1,v}}\left|\slashed{\nabla}^i\widetilde{{\rm tr}\underline{\chi}}^{(1)}\right|^2|_{u = -1}v^{-5/2}\mathring{\rm dVol}.
\end{align}
Since every term in $\mathcal{G}$ involves (implicitly) at least two ``tilded'' quantities, the $v$-weight of $v^{-5/2}$ is not a problem and it is immediate from the bootstrap assumptions that
\begin{equation}\label{ploplop1}
\int_{-1}^u\int_{\mathbb{S}^2}\frac{(-\dot{u})^{2i+11/2}}{v^{5/2}}\left|\slashed{\nabla}^i\mathcal{G}\right|^2\, \mathring{\rm dVol}\, d\dot{u}  \lesssim \epsilon^{2-2\delta}.
\end{equation}
Next, we note that it follows from the definition of $\widetilde{{\rm tr}\underline{\chi}}^{(1)}$, the $\nabla_4$ equation for ${\rm tr}\underline{\chi}$, and a straightforward argument using Proposition~\ref{itexistsbutforalittlewhile} that 
\begin{equation}\label{ploplop2}
\int_{\mathbb{S}^2_{-1,v}}\left|\slashed{\nabla}^i\widetilde{{\rm tr}\underline{\chi}}^{(1)}\right|^2|_{u = -1}v^{-5/2}\mathring{\rm dVol} \lesssim \epsilon^{2-2\delta}.
\end{equation}
This leaves us with the $\mathcal{E}$ term. Using Lemma~\ref{Wedididwofwkfowkfomega} and the bootstrap assumptions we have that
\begin{equation}\label{ploplop3}
\int_{-1}^u\int_{\mathbb{S}^2}\frac{(-\dot{u})^{2i+11/2}}{v^{5/2}}\left|\slashed{\nabla}^i\mathcal{G}\right|^2\, \mathring{\rm dVol}\, d\dot{u}  \lesssim \epsilon^{2-2\delta} + \epsilon \int_{-1}^u\int_{\mathbb{S}^2}\frac{(-\dot{u})^{2i+7/2}}{v^{5/2}}\left|\slashed{\nabla}^i\widetilde{\hat{\underline{\chi}}}^{(1)}\right|^2\, \mathring{\rm dVol}\, d\dot{u} .
\end{equation}
Now we appeal to Lemma~\ref{codisnice} and Lemma~\ref{someelleststs} to obtain that
\begin{equation}\label{ploplop4}
\sum_{i=0}^2\int_{-1}^u\int_{\mathbb{S}^2}\frac{(-\dot{u})^{2i+7/2}}{v^{5/2}}\left|\slashed{\nabla}^i\widetilde{\hat{\underline{\chi}}}^{(1)}\right|^2\, \mathring{\rm dVol}\, d\dot{u} \lesssim \epsilon^{2-2\delta} + \sum_{i=1}^2\int_{-1}^u\int_{\mathbb{S}^2}\frac{(-\dot{u})^{2i+7/2}}{v^{5/2}}\left|\slashed{\nabla}^i\widetilde{{\rm tr}\underline{\chi}}^{(1)}\right|^2\, \mathring{\rm dVol}\, d\dot{u}. 
\end{equation}
Combining~\eqref{theestimatmieqqqqq} and \eqref{ploplop1}-\eqref{ploplop4}, and another application of Lemma~\ref{codisnice} and Lemma~\ref{someelleststs} thus leads to
\begin{equation}\label{ploplop5}
\sum_{i=1}^2\int_{\mathbb{S}^2_{u,v}}\left|\slashed{\nabla}^i\widetilde{{\rm tr}\underline{\chi}}^{(1)}\right|^2\frac{(-u)^{2i+9/2}}{v^{5/2}}\mathring{\rm dVol}  \lesssim \epsilon^{2-2\delta},
\end{equation}
\begin{equation}\label{ploplop6}
\sum_{i=1}^2\int_{\mathbb{S}^2_{u,v}}\left|\slashed{\nabla}^i\widetilde{\hat{\underline{\chi}}}^{(1)}\right|^2\frac{(-u)^{2i+9/2}}{v^{5/2}}\mathring{\rm dVol}  \lesssim \epsilon^{2-2\delta}.
\end{equation}
Recalling that $\widetilde{{\rm tr}\underline{\chi}} = \widetilde{{\rm tr}\underline{\chi}}^{(0)}+\widetilde{{\rm tr}\underline{\chi}}^{(1)}$ and $\widetilde{\hat{\underline{\chi}}}_{AB} = \widetilde{\hat{\underline{\chi}}}_{AB}^{(0)}+\widetilde{\hat{\underline{\chi}}}_{AB}^{(1)}$, we see that the proof is finished.

\end{proof}

\subsubsection{Estimates for the metric coefficient norm $\mathscr{D}$}
Finally we come to the estimates for the metric coefficients. 
\begin{lemma}Let $\left(\mathcal{M},g_{\mu\nu}\right)$ satisfy the hypothesis of Proposition~\ref{letsbootit}. Then we have 
\[\mathfrak{D} \lesssim \epsilon^{1-\delta},\qquad \left\vert\left\vert \slashed{g}\right\vert\right\vert_{\mathscr{D}} \lesssim \underline{v}^{\frac{p}{10}}.\]
\end{lemma}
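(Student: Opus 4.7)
The strategy is to integrate the three $\nabla_4$ transport equations satisfied by $b$, $\Omega$, and $\slashed{g}$ outward from the initial hypersurface $\{v=0\}$, exploiting the control of the right-hand sides furnished by the Ricci coefficients $\eta$, $\underline{\eta}$, $\underline{\omega}$, $\omega$, and $\chi$ already estimated in the preceding subsections. The qualitative point is that the sources for $b$ and $\log\Omega$ involve only Ricci quantities whose $\widetilde{\cdot}$ parts vanish at $\{v=0\}$, so those transport equations produce the sharp $\epsilon^{1-\delta}$ bound; by contrast, the source in the equation for $\slashed{g}$ contains a nonvanishing trace term $\Omega^{-1}{\rm tr}\chi|_{v=0}$ whose size is governed only by the $\mathscr{C}$-norm, producing the weaker $\underline{v}^{p/10}$ bound.

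First, for the shift I would begin from $\mathcal{L}_{\partial_v}b^A = -4\Omega^2\zeta^A = -2\Omega^2(\eta^A - \underline{\eta}^A)$ of~\eqref{osjf}, rewritten as a transport equation for $\widetilde{b}^A = b^A - \overline{b}^A$. Integration in $v$ from $0$, combined with $\Omega^2\sim (v/-u)^{-2\kappa}$ and the vanishing of $\widetilde{\eta}$, $\widetilde{\underline{\eta}}$ at $v=0$ encoded in Lemmas~\ref{estimamifwcetateta} and~\ref{fkowfomwfometa}, gives an $L^2$ bound on $\widetilde{b}$ of size $\epsilon^{1-\delta}(-u)^{-1}v^{2-q/200-2\kappa}$, which matches~\eqref{444}. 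Commuting the transport equation with $\slashed{\nabla}^i$ for $i=1,2$ via~\eqref{com1} and using the $\mathscr{B}$-norm bounds on $\hat{\chi}$ and ${\rm tr}\chi$ to dispose of commutator terms yields the higher-order pieces of $\|b\|_{\mathscr{D}}$.

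Second, for the lapse, after peeling off the $\kappa\log(v/(-u))$ singularity by setting $\hat{\Omega}\doteq (v/-u)^{\kappa}\Omega$, the identities $\omega = -\tfrac{1}{2}\nabla_4\log\Omega$ and $\underline{\omega}= -\tfrac{1}{2}\nabla_3\log\Omega$ in~\eqref{osjf} become regular transport equations for $\log\hat{\Omega}$. Using the decomposition $\widetilde{\underline{\omega}} = \widetilde{\underline{\omega}}^{(0)} + \widetilde{\underline{\omega}}^{(1)}$ from Lemma~\ref{Wedididwofwkfowkfomega}, the explicit piece $\widetilde{\underline{\omega}}^{(0)}$ captures the $v^{-1}$ singular contribution exactly (so that the $\kappa/v$ from the $(v/-u)^{\kappa}$ renormalization cancels against the leading behavior of the integral), while the improved spacetime bound~\eqref{betterwhen33} on $\widetilde{\underline{\omega}}^{(1)}$ controls the remainder. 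Integration in $v$ and commutation with $\slashed{\nabla}^i$ yields $\|\log\Omega\|_{\mathscr{D}} \lesssim \epsilon^{1-\delta}$, completing the bound $\mathfrak{D} \lesssim \epsilon^{1-\delta}$.

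Third, for the induced metric I integrate $\mathcal{L}_{\partial_v}\slashed{g}_{AB} = 2\Omega\chi_{AB} = 2\Omega^2\bigl(\Omega^{-1}\hat{\chi}_{AB} + \tfrac{1}{2}(\Omega^{-1}{\rm tr}\chi)\slashed{g}_{AB}\bigr)$ from~\eqref{okmdwqejodq12ed} against $\widetilde{\slashed{g}}_{AB} = \slashed{g}_{AB} - \overline{\slashed{g}}_{AB}$. Here the source does not vanish at $v=0$: $\Omega^{-1}{\rm tr}\chi|_{v=0} = \overset{\triangle}{\Omega^{-1}{\rm tr}\chi}$ is of order unity, so the time integral produces a leading linear-in-$v$ contribution to $\widetilde{\slashed{g}}$. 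Splitting the integrand into its $v=0$ value plus the $\widetilde{\cdot}$ part, the first contribution is estimated at $L^2$ level by $\|{\rm tr}\chi\|_{\mathscr{C}} \lesssim \underline{v}^{p/10}$ (established in the ${\rm tr}\chi$ lemma above), while the second is $\epsilon^{1-\delta}$ and thus absorbed into $\underline{v}^{p/10}$ by the smallness hierarchy $\epsilon \ll \underline{v}$. For the higher-order pieces $i=1,2$, angular commutation is harmless: $\slashed{\nabla}^i\widetilde{\slashed{g}}$ is controlled by $\slashed{\nabla}^i$ applied to the same integrand, whose leading part is $\slashed{\nabla}^i(\Omega^{-1}{\rm tr}\chi)$ — a quantity lying in $\mathscr{B}$ at the sharper rate $\epsilon^{1-\delta}$ (since $i\ge 1$ derivatives of ${\rm tr}\chi$ are in $\mathscr{B}$ by~\eqref{riccivanish2}) — plus $\slashed{\nabla}^i$ of $\hat{\chi}$ controlled by $\|\hat{\chi}\|_{\mathscr{B}}$.

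The main potential obstacle is the asymmetric treatment of the zeroth-order versus higher-order estimates for $\slashed{g}$: for $i=0$ one must accept the weaker $\underline{v}^{p/10}$ loss from ${\rm tr}\chi \in \mathscr{C}$, whereas for $i\ge 1$ the $\mathscr{B}$-norm of $\slashed{\nabla}{\rm tr}\chi$ gives a sharper $\epsilon^{1-\delta}$ bound; both are summed into the single $\mathscr{D}$-norm and majorized by $\underline{v}^{p/10}$ via $\epsilon \ll \underline{v}$. Otherwise the transport structure, combined with the renormalizations already in place, makes this lemma essentially a bookkeeping exercise.
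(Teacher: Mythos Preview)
Your treatment of $b$ and $\slashed{g}$ matches the paper's proof exactly: integrate $\mathcal{L}_{\partial_v}b^A=-4\Omega^2\zeta^A$ and $\mathcal{L}_{\partial_v}\slashed{g}_{AB}=2\Omega^2(\Omega^{-1}\chi)_{AB}$ from $v=0$ and feed in the already-established $\mathscr{B}$- and $\mathscr{C}$-bounds.

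Your lapse argument, however, is tangled. You invoke the decomposition $\widetilde{\underline{\omega}}=\widetilde{\underline{\omega}}^{(0)}+\widetilde{\underline{\omega}}^{(1)}$ (an $e_3$-direction quantity) while speaking of a ``$\kappa/v$'' singularity and ``integration in $v$'' (the $e_4$ direction). These do not fit together: $\widetilde{\underline{\omega}}^{(0)}$ has no $v^{-1}$ singularity---it is an integral in $v$, of size $\sim v^{1-2\kappa}$---so there is nothing there to cancel against the $\kappa/v$ coming from the renormalization. And the $\nabla_4$ route is blocked anyway, since $\omega$ is deliberately excluded from the bootstrap norm $\mathfrak{B}$.

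The paper's route is simpler and avoids both issues: write
\[
(\partial_u+b\cdot\slashed{\nabla})\log\widetilde{\Omega}=-2\widetilde{\underline{\omega}},
\]
and integrate in $u$ from $u=-1$, where $\widetilde{\Omega}|_{u=-1}=0$ by construction (Proposition~\ref{itexistsbutforalittlewhile} gives $\mathcal{L}_{\partial_v}(v^\kappa\Omega)|_{u=-1}=0$). The right-hand side $\widetilde{\underline{\omega}}$ is already controlled in $\mathscr{B}$ by Lemma~\ref{Wedididwofwkfowkfomega}, so no decomposition is needed; the estimate $\|\log\Omega\|_{\mathscr{D}}\lesssim\epsilon^{1-\delta}$ follows by direct integration and commutation with $\slashed{\nabla}^i$.
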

\begin{proof}These estimates are all straightforward consequences of the previously established estimates and integrating the following equations from $v = 0$ or $u= -1$:
\[\mathcal{L}_{\partial_v}b^A = -4\Omega^2\zeta^A,\qquad \mathcal{L}_{\partial_v}\slashed{g}_{AB} = 2\Omega^2\left(\Omega^{-1}\chi\right)_{AB},\qquad \left(\frac{\partial}{\partial u} + b\cdot\slashed{\nabla}\right)\log\widetilde{\Omega} = -2\widetilde{\underline{\omega}}.\]

\end{proof}

This concludes the proof of Proposition~\ref{letsbootit}, and hence also Theorem~\ref{itisreg1}.

\section{The Bootstrap Argument for Region II}\label{ohmyregiontwo}
The main result of this section will be the following:
\begin{theorem}\label{itisreg2}Let $(\mathcal{M},g_{\mu\nu})$ be a spacetime produced by Proposition~\ref{itexistsbutforalittlewhile}, and let $\underline{v} > 0$ be arbitrarily small. Then we can pick $\epsilon$ sufficiently small so that $\left(\mathcal{M},g_{\mu\nu}\right)$ exists in the region $\left\{(u,v) : -1 \leq u < 0 \text{ and } 0 \leq \frac{v}{-u} \leq \underline{v}^{-1} \right\}$, and in this region the spacetime satisfies the regularity bounds~\eqref{02020202020922332}-\eqref{11212332123123123} and the estimates~\eqref{dkowdovoboto}.
\end{theorem}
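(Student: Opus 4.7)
The plan is to carry out a bootstrap argument very much in the spirit of Proposition~\ref{letsbootit}, but in the region $\mathcal{R}'_{\tilde u,\tilde v} \doteq \{-1 \leq u \leq \tilde u\} \cap \{\underline{v} \leq \tfrac{v}{-u} \leq \tilde v\} \cap \{v\leq \underline v\}$ with $\underline{v} \leq \tilde v \leq \underline{v}^{-1}$, using the output of Theorem~\ref{itisreg1} restricted to the hypersurface $\{\tfrac{v}{-u}=\underline{v}\}$ as initial data. The qualitative local-existence statement follows from Theorem~\ref{localexistencecharbetter} once quantitative bounds are in hand, so the whole task is to set up norms and propagate them. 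In this region, $-u$ and $v$ are comparable up to a factor of $\underline{v}^{\pm 1}$, so weights such as $(-u)$ and $v$ are interchangeable up to constants; this effectively eliminates the delicate $v$-weight bookkeeping that dominated Section~\ref{bootforregone}. Moreover, from the improved bound~\eqref{betterboot1} on $\left\vert\left\vert \hat{\chi}\right\vert\right\vert_{\mathscr{B}}$ at $\tfrac{v}{-u}=\underline{v}$, we have $|\Omega^{-1}\hat{\chi}|\lesssim \epsilon^{1-\delta}(-u)^{-1}$ and analogous bounds for all other renormalized unknowns, so the obstruction of a large $\hat\chi$ that was present near $\{\hat v=0\}$ has dissipated by the time we enter region II.

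First, I would define untruncated (i.e.\ not subtracting off $v\to 0$ limits) norms of the schematic form
\begin{align*}
\mathfrak{A}'_{\tilde u,\tilde v} &\doteq \sum_{\Psi\neq\alpha}\sum_{i\le 2}\sup \Bigl[\int \Omega^2(-u)^{2i+3}|\slashed\nabla^i \Psi|^2 + \int \Omega^2(-u)^{2i+3}|\slashed\nabla^i \Psi|^2 \, du\Bigr] \\
 &\quad + (\text{analogous term for }\alpha),
\end{align*}
with lapse weights $\Omega^s$ as in Lemma~\ref{dkwowsss322} and Remark~\ref{siglapweight}, together with $L^\infty_{u,v}\tilde H^2\left(\mathbb{S}^2_{u,v}\right)$ norms $\mathfrak{B}'$, $\mathfrak{C}'$, $\mathfrak{D}'$ for the Ricci coefficients and metric quantities (again with $\Omega^s$-weights and $(-u)^{-k}$ spatial weights appropriate to the tensorial degree). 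The bootstrap assumption is that all these are $\le 2A\epsilon^{1-\delta}$, and the data at $\tfrac{v}{-u}=\underline v$ control them by $\epsilon^{1-\delta}$ via Theorem~\ref{itisreg1}.

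The heart of the argument is the energy estimate for curvature. For each Bianchi pair I conjugate the equations by $w_D \doteq (-u)^{N}\exp(Dv/u)$ with $1\ll D\ll \epsilon^{-1}$, and contract with the appropriate renormalized curvature component (using the Bianchi equations~\eqref{3alpha}--\eqref{4undalpha}, organized as in~\eqref{3beta2}--\eqref{4undalpha2}). Applying $\Omega\nabla_4$ or $\Omega\nabla_3$ to $w_D$ produces a lower-order term of the form $-D\frac{v}{u^2}w_D$ in each case; in the region $\underline v\le v/(-u)\le \underline v^{-1}$ this term dominates the bootstrap-size coefficients coming from $\Omega\mathrm{tr}\underline\chi$, $\Omega\mathrm{tr}\chi$, $\Omega\underline\omega$, and the nonlinear terms $\chi\cdot\Psi$, $\eta\cdot\Psi$, etc., as long as $D\gg 1$. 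This gives a positive definite bulk term controlling $\int D(-u)^{-1}|\Psi|^2$, which absorbs every nonlinearity (the worst being $(\Omega^{-1}\hat\chi)\cdot(\Omega\underline\beta)$ and $(\Omega\hat{\underline\chi})\cdot\widetilde\alpha$), provided $D\cdot\epsilon^{1-\delta}\ll 1$, i.e.\ $D\ll\epsilon^{-(1-\delta)}$. After commuting with $\slashed\nabla^i$ for $i\le 2$ using Lemma~\ref{commlemm} and Sobolev inequalities on $\mathbb{S}^2_{u,v}$ (which continue to hold uniformly thanks to $\mathfrak{D}'\ll 1$), this closes the improved bound $\mathfrak{A}'\lesssim \epsilon^{1-\delta}$. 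Transport equations for the Ricci coefficients (both $\nabla_3$ and $\nabla_4$ versions, weighted by $\Omega^s$ so that $\omega$ never appears) are then integrated along the respective null directions starting from $\tfrac{v}{-u}=\underline v$; the same $\exp(Dv/u)$ device kills the linear coefficients of bootstrap size, and the finite-in-``time'' nature of the region ($\log\underline{v}^{-2}$ of proper time in the $s=-\log(-u/v)$ variable) means Gr\"onwall constants are harmless. The metric quantities $b$, $\slashed g$, $\log\Omega$ are then controlled by integrating their first-order equations~\eqref{osjf},~\eqref{okmdwqejodq12ed} from the $\tfrac{v}{-u}=\underline{v}$ hypersurface.

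The main obstacle I anticipate is ensuring that the $\exp(Dv/u)$ conjugation trick works simultaneously for all Bianchi pairs with a single choice of $D$, since the signs and coefficients of the ``diagonal'' linear terms $(\Omega\mathrm{tr}\chi)\Omega^{-1}\beta$, $\tfrac12(\Omega\mathrm{tr}\underline\chi)\Omega^{-2}\alpha$, etc., differ among the equations and interact nontrivially with the $\Omega^s$ weights coming from the bootstrap norms. A clean way around this is to conjugate each pair by its own weight $w^{(\Psi)}_D \doteq (-u)^{N(\Psi)}\exp(D v/u)$ with $N(\Psi)$ tuned to the signature (compare the treatment of $\beta$ versus the other pairs in the proofs of Propositions~\ref{betarhosigenergyest} and~\ref{mathfrakaisdone}), and then to perform the energy identity for each pair separately before summing. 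Because the nonlinear coupling between distinct pairs produces only lower-order (in $\epsilon$) cross-terms, the couplings can be absorbed by the bulk $D$-term of the dominant pair after choosing $\epsilon\ll D^{-1}$. Once the improved curvature bound $\mathfrak{A}'\lesssim\epsilon^{1-\delta}$ is in hand, the improvements for $\mathfrak{B}'$, $\mathfrak{C}'$, $\mathfrak{D}'$ follow by the same transport-equation arguments used in Section~\ref{bootforregone}, and a standard continuity argument then extends the solution all the way up to $\tfrac{v}{-u}=\underline v^{-1}$.
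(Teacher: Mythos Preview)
Your proposal is correct in strategy and matches the paper's approach: a bootstrap argument in which conjugation by $\exp(Dv/u)$ with $1\ll D\ll\epsilon^{-1}$ produces dominant lower-order terms of good sign in every Bianchi and transport equation, and the comparability $v\sim -u$ makes the weight bookkeeping trivial. Two remarks. First, the paper uses the single weight $w=(v-u)^{3/2}\exp(Dv/u)$ for all Bianchi pairs simultaneously, so the obstacle you anticipate (needing pair-dependent exponents $N(\Psi)$) does not arise. Second, and more substantively, since $\Omega\approx 1$ in region II the paper drops the $\Omega^s$-weighting scheme entirely and instead subtracts Minkowski values, working with $\psi^*=\psi-\psi_{\mathrm{Mink}}$ and $\phi^*=\phi-\phi_{\mathrm{Mink}}$; some such subtraction is necessary, since with your ``untruncated'' norms the quantities $(-u)\Omega^{-1}\mathrm{tr}\chi$, $(-u)\Omega\,\mathrm{tr}\underline\chi$, and $(-u)^{-2}\slashed g$ are $O(1)$ rather than $O(\epsilon)$, and your bootstrap hypothesis $\le 2A\epsilon^{1-\delta}$ cannot hold for them as stated.
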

We will prove this theorem with a bootstrap argument. In Section~\ref{normsregion2} we will define the relevant norms. In Section~\ref{frombefore}, we establish various useful estimates which follow from Theorem~\ref{itisreg1}. Then in Section~\ref{estimatesregion2} we will carry out the bootstrap argument. 
\subsection{Norms}\label{normsregion2}
In this section we will present the norms around which we will base our estimates. Let $0 < \underline{v} \ll 1$ be a small constant.

 Let's set 
\[\mathcal{Q} \doteq \left\{u \in (0,-1)\right\} \cap \left\{v \in [0,1]\right\} \cap \left\{\underline{v} \leq \frac{v}{|u|} \leq \underline{v}^{-1}\right\},\qquad \mathcal{Q}_{\tilde u,\tilde v} \doteq \mathcal{Q} \cap \{u \leq \tilde u\} \cap \{v \leq \tilde v\}.\]
Finally, we introduce a constant $D \gg 1$ and then assume that $\epsilon$ is picked small enough so that
\[\epsilon\exp\left(\left(\frac{D}{\underline{v}}\right)^{100}\right) \ll 1.\]

\begin{convention}Throughout this section, unless said otherwise, all norms of tensorial quantities are computed with respect to $\slashed{g}_{AB}$, and we will always use the round metric induced volume form $\mathring{dVol}$ on each $\mathbb{S}^2_{u,v}$. 
\end{convention}

\begin{definition} Let $\Psi$ be a null curvature component not equal to $\alpha_{AB}$ or $\underline{\alpha}_{AB}$, and $(\tilde u,\tilde v)$ satisfy $\underline{v} \leq \frac{\tilde v}{|\tilde u|} \leq \underline{v}^{-1}$. Then the energy norm $\mathscr{E}_{\tilde u,\tilde v}$ is defined by
\begin{align*}
\left\vert\left\vert \Psi\right\vert\right\vert_{\mathscr{E}_{\tilde u,\tilde v}}^2 \doteq \sup_{0 \leq j \leq 2}\sup_{(u_0,v_0) \in \mathcal{Q}_{\tilde u,\tilde v}}\Bigg[&\int_{-\underline{v} u_0}^{v_0}\int_{\mathbb{S}^2}\left|\exp\left(D\frac{v}{u}\right)\slashed{\nabla}^j\Psi\right|^2\left(v - u_0\right)^{3+2j}\, dv\, \mathring{\rm dVol},
\\ \nonumber &+\int_{{\rm max}\left(-\underline{v}^{-1}v_0,-1\right)}^{u_0}\int_{\mathbb{S}^2}\left|\exp\left(D\frac{v}{u}\right)\slashed{\nabla}^j\Psi\right|^2\left(v_0 - u\right)^{3+2j}\, du\, \mathring{\rm dVol}
\\ \nonumber &+\int_{-\underline{v}u_0}^{v_0}\int_{{\rm max}\left(-\underline{v}^{-1}v_0,-1\right)}^{u_0}\int_{\mathbb{S}^2}\left|\exp\left(D\frac{v}{u}\right)\slashed{\nabla}^j\Psi\right|^2\left(v - u\right)^{2+2j}\, dv\, du\, \mathring{\rm dVol}\Bigg].
\end{align*}

As usual, for $\alpha_{AB}$ and $\underline{\alpha}_{AB}$ we drop the $u$-flux and $v$-flux respectively:
\begin{align*}
\left\vert\left\vert \alpha\right\vert\right\vert_{\mathscr{E}_{\tilde u,\tilde v}}^2 \doteq \sup_{0 \leq j \leq 2}\sup_{(u_0,v_0) \in \mathcal{Q}_{\tilde u,\tilde v}}\Bigg[&\int_{-\underline{v} u_0}^{v_0}\int_{\mathbb{S}^2}\left|\exp\left(D\frac{v}{u}\right)\slashed{\nabla}^j\alpha\right|^2\left(v - u_0\right)^{3+2j}\, dv\, \mathring{\rm dVol}
\\ \nonumber &+\int_{-\underline{v}u_0}^{v_0}\int_{{\rm max}\left(-\underline{v}^{-1}v_0,-1\right)}^{u_0}\int_{\mathbb{S}^2}\left|\exp\left(D\frac{v}{u}\right)\slashed{\nabla}^j\alpha\right|^2\left(v_0 - u\right)^{2+2j}\, dv\, du\, \mathring{\rm dVol}\Bigg],
\end{align*}

\begin{align*}
\left\vert\left\vert \underline{\alpha}\right\vert\right\vert_{\mathscr{E}_{\tilde u,\tilde v}}^2 \doteq \sup_{0 \leq j \leq 2}\sup_{(u_0,v_0) \in \mathcal{Q}_{\tilde u,\tilde v}}\Bigg[&\int_{{\rm min}\left(-\underline{v}^{-1}v_0,-1\right)}^{u_0}\int_{\mathbb{S}^2}\left|\exp\left(D\frac{v}{u}\right)\slashed{\nabla}^j\underline{\alpha}\right|^2\left(v_0 - u\right)^{3+2j}\, du\, \mathring{\rm dVol}
\\ \nonumber &+\int_{-\underline{v} u_0}^{v_0}\int_{{\rm max}\left(-\underline{v}^{-1}v_0,-1\right)}^{u_0}\int_{\mathbb{S}^2}\left|\exp\left(D\frac{v}{u}\right)\slashed{\nabla}^j\underline{\alpha}\right|^2\left(v_0 - u\right)^{2+2j}\, dv\, du\, \mathring{\rm dVol}\Bigg].
\end{align*}
We also introduce the notation 
\[\mathfrak{E}_{\tilde u,\tilde v} \doteq \sum_{\Psi}\left\vert\left\vert \Psi\right\vert\right\vert_{\mathscr{E}_{\tilde u,\tilde v}}.\]
Finally, when it will not cause confusion, we will often suppress a subset of the $\left(\tilde u,\tilde v\right)$ indices from the $\mathscr{E}$ or $\mathfrak{E}$ subscript. 
\end{definition}

Then we have the corresponding norms for the Ricci coefficients.

\begin{definition}\label{3i8193defpsistartstar}For any Ricci coefficient $\psi$, we let $\psi^*$ denote the difference of $\psi$ and its Minkowski value.\footnote{Equivalently, for $\psi \not\in \{{\rm tr}\chi,{\rm tr}\underline{\chi}\}$ we have $\psi^* = \psi$, and otherwise we have ${\rm tr}\chi^* = {\rm tr}\chi - \frac{2}{v-u}$ and ${\rm tr}\underline{\chi}^* = {\rm tr}\underline{\chi} + \frac{2}{v-u}$.} Let $(\tilde u,\tilde v)$ satisfy $\underline{v} \leq \frac{\tilde v}{|\tilde u|} \leq \underline{v}^{-1}$. Then, for any Ricci coefficient $\psi$, the Ricci coefficient norm $\mathscr{F}$ is defined by
\[\left\vert\left\vert \psi\right\vert\right\vert_{\mathscr{F}_{\tilde u,\tilde v}}^2 \doteq \sup_{0 \leq j \leq 2}\sup_{(u_0,v_0) \in \mathcal{Q}_{\tilde u,\tilde v}}\int_{\mathbb{S}^2}\left|\exp\left(D\frac{v}{u}\right)\slashed{\nabla}^j\psi^*\right|^2\left(v_0-u_0\right)^{2+2j}\mathring{\rm dVol}.\]
We also introduce the notation 
\[\mathfrak{F}_{\tilde u,\tilde v} \doteq \sum_{\psi}\left\vert\left\vert \psi\right\vert\right\vert_{\mathscr{F}_{\tilde u,\tilde v}}.\]
Finally, when it will not cause confusion, we will often suppress a subset of the $\left(\tilde u,\tilde v\right)$ indices from the $\mathscr{F}$ or $\mathfrak{F}$ subscript. 
\end{definition}

Finally, we have the norm for the metric coefficients.
\begin{definition}For any metric coefficient $\phi$, we let $\phi^*$ denote the difference of $\phi$ and its Minkowski value.\footnote{Equivalently, we have $\Omega^* = \Omega - 1$, $(b^A)^* = b^A$, and $\slashed{g}_{AB}^* = \slashed{g}_{AB} - (v-u)^2\mathring{\slashed{g}}_{AB}$.} Let $(\tilde u,\tilde v)$ satisfy $\underline{v} \leq \frac{\tilde v}{|\tilde u|} \leq \underline{v}^{-1}$. Then, for any metric coefficient $\phi$ not equal to $\slashed{g}_{AB}$, the metric coefficient norm $\mathscr{G}$ is defined by
\[\left\vert\left\vert \phi\right\vert\right\vert_{\mathscr{G}_{\tilde u,\tilde v}}^2 \doteq \sup_{0 \leq j \leq 2}\sup_{(u_0,v_0) \in \mathcal{Q}_{\tilde u,\tilde v}}\int_{\mathbb{S}^2}\left|\exp\left(D\frac{v}{u}\right)\slashed{\nabla}^j\phi^*\right|^2\left(v_0-u_0\right)^{2j}\mathring{\rm dVol}.\]
We also introduce the notation 
\[\mathfrak{G}_{\tilde u,\tilde v} \doteq \sum_{\phi}\left\vert\left\vert \phi\right\vert\right\vert_{\mathscr{G}_{\tilde u,\tilde v}}.\]
Finally, when it will not cause confusion, we will often suppress a subset of the $\left(\tilde u,\tilde v\right)$ indices from the $\mathscr{G}$ or $\mathfrak{G}$ subscript. 
\end{definition}

Lastly, we define an ``initial data'' norm.
\begin{definition}We set
\begin{align*}
\mathfrak{H} \doteq &\sup_{\Psi}\sup_{0 \leq j \leq 2}\left[\sup_{s\in [0,1]}\int_{-s}^{-\underline{v}s}\int_{\mathbb{S}^2}\left|\slashed{\nabla}^j\Psi\right|^2|_{\{(u,v) = (s,-s\underline{v})\}}s^{3+2j}\, ds + \int_{\underline{v}}^1\int_{\mathbb{S}^2}\left|\slashed{\nabla}^j\Psi\right|^2|_{\{(u,v) = (-1,s)\}}\, ds\right]
\\ \nonumber &+\sup_{\psi}\sup_{0 \leq j \leq 2}\left[\sup_{s\in [-1,0]}\int_{\mathbb{S}^2_{s,-s\underline{v}}}\left|\slashed{\nabla}^j\psi^*\right|^2s^{2+2j-2\kappa}\, ds + \sup_{s\in [\delta_1,1]}\int_{\mathbb{S}^2_{-1,s}}\left|\slashed{\nabla}^j \psi^*\right|^2\, ds\right]
\\ \nonumber &+\sup_{0\leq j \leq 2}\sup_{s\in [-1,-0]}\int_{\mathbb{S}^2_{s,-s\underline{v}}}|s|^{2j}\left[\left|\slashed{\nabla}^j\left(\Omega^{-1}-1\right)\right|^2 + \left|\slashed{\nabla}^j\slashed{g}^*\right|^2 + \left|\slashed{\nabla}^jb\right|^2\right]
\\ \nonumber &+\sup_{0\leq j \leq 2}\sup_{s\in [\underline{v},1]}\int_{\mathbb{S}^2_{-1,s}}\left[\left|\slashed{\nabla}^j\left(\Omega^{-1}-1\right)\right|^2 + \left|\slashed{\nabla}^j\slashed{g}^*\right|^2+ \left|\slashed{\nabla}^jb\right|^2 \right].
\end{align*} 
\end{definition}
\subsection{Preliminaries Consequences of Theorem~\ref{itisreg1}}\label{frombefore}
\begin{proposition}\label{itstartedoutok}Let $\left(\mathcal{M},g_{\mu\nu}\right)$ satisfy the hypothesis of Proposition~\ref{letsbootit}.  Then we have that 
\[\mathfrak{H} \lesssim \epsilon^{1-\delta}.\]
\end{proposition}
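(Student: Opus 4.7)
The plan is to deduce the $\mathfrak{H}$-bound by restricting the Region I estimates of Theorem~\ref{itisreg1} to the two past-boundary hypersurfaces that constitute the characteristic initial data for Region II, namely the oblique cone $\{v=-u\underline v\}$ and the outgoing segment on $\{u=-1,\ v\in[\underline v,1]\}$. First I would observe that the oblique cone, parametrized by $s=-u\in[0,1]$ so that $v=s\underline v$, lies entirely inside the Region I rectangle controlled by Theorem~\ref{itisreg1}, and that on this cone the two null coordinates are comparable: $v\sim_{\underline v}-u$. Consequently, the mixed weights $(-u)^a v^{-b}$ appearing in $\mathfrak A,\mathfrak B,\mathfrak C,\mathfrak D$ collapse to pure powers of $s$, up to factors of $\underline v^{\pm}$ which are harmless because $\underline v$ is treated as fixed throughout Section~\ref{bootforregone} and the loss is absorbed into the constants.

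For each curvature component $\Psi$, the $\mathscr A$-norm from Definition~\ref{energynormcurv} already contains a $u$-flux of the form $\int_{-1}^u\int_{\mathbb S^2}\frac{(-\hat u)^{4-2q+2i}}{v^{1-2q}}|\slashed\nabla^i\widetilde\Psi|^2\, d\hat u\,\mathring{\rm dVol}$ taken with the supremum over $(u,v)\in\mathcal R_{\tilde u,\tilde v}$, together with the analogous $v$-flux. Evaluating these at $(u,v)=(s,s\underline v)$ and recognizing $v\sim_{\underline v}(-u)$ converts each into a bound of the form $s^{3+2j}\int|\slashed\nabla^j\widetilde\Psi|^2\lesssim\epsilon^{2-2\delta}$ on the corresponding slice of the oblique cone, which is exactly the structure of the curvature terms in $\mathfrak H$. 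To replace $\widetilde\Psi$ by $\Psi$ itself I would invoke Lemma~\ref{boundfromdatasub}, which controls each of the data pieces $\overset{\triangleright}{\Omega^{-2}\alpha}$, $\overset{\triangleright}{\Omega^{-1}\beta}$, $\overset{\triangleright}\rho$, $\overset{\triangleright}\sigma$, $\overline{\Omega^s\Psi}$ by $\epsilon^{1-\delta}$ with the appropriate $(-u)$-decay.

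A parallel argument for the Ricci coefficients uses the identity
\[
\psi^* \;=\; \Omega^{-s(\psi)}\bigl(\widetilde\psi+\overline{\Omega^{s(\psi)}\psi}\bigr)-\psi_{\mathrm{Mink}},
\]
where the main part $\widetilde\psi$ is controlled by $\mathfrak B\cup\mathfrak C$ through Theorem~\ref{itisreg1}, while the two remaining pieces $\overline{\Omega^{s(\psi)}\psi}$ and $\psi_{\mathrm{Mink}}$ are handled explicitly, the first via Lemma~\ref{boundfromdatasub} (their difference from the Minkowski value is $O(\epsilon^{1-\delta})$ by the construction in Section~\ref{setitupayya}) and the second by inspection. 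The same identity, applied with $\phi^*=\phi-\phi_{\mathrm{Mink}}$ for the metric coefficients $\Omega^{-1},b,\slashed g$, passes from the $\mathfrak D$-bound to the required $\mathfrak G$-like bound on the oblique cone.

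Finally, the outgoing piece $\{u=-1,\ v\in[\underline v,1]\}$ lies in the region of existence of Proposition~\ref{itexistsbutforalittlewhile}, where the characteristic data was \emph{prescribed explicitly} via Lemmas~\ref{uminus1data}--\ref{uminus3data}: the quantities $\overset\triangle\eta$, $\overset\triangle{\Omega^{-1}{\rm tr}\chi}$, $\overset\triangleright{\Omega^{-1}\hat\chi}$ satisfy high-regularity bounds of order $\epsilon^{1-\delta}$ (with the sharper logarithmic factor from~\eqref{yayayepsislsons} for $\overset\triangleright{\Omega^{-1}\hat\chi}$). Propagating these through the $\nabla_4$ Raychaudhuri and null-structure equations along $\{u=-1\}$, exactly as in the proof of Lemma~\ref{initenergyisok}, yields the remaining $\epsilon^{1-\delta}$-bounds for the other components. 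The main technical obstacle I anticipate is the bookkeeping of weights -- in particular, matching the $v^{-2\kappa}$ factors coming from the $\Omega$-weighting conventions and the oblique-cone identification $v=s\underline v$ to the sum-over-$j$, sup-over-$(u_0,v_0)$ structure of $\mathfrak H$ against the integrated bulk-plus-flux structure of $\mathfrak A,\mathfrak B,\mathfrak C,\mathfrak D$ -- but since $\underline v$ is a fixed parameter and the same power of $\epsilon^{1-\delta}$ propagates, this reduces to a routine but tedious computation.
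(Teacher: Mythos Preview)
Your high-level plan—restrict the Region~I bounds to the boundary hypersurfaces of Region~II and convert from the $\widetilde{\ }$--renormalization to the $*$--renormalization—captures the right geometry, but it breaks for precisely the quantities ${\rm tr}\chi$, ${\rm tr}\underline{\chi}$, $\slashed g$, and $\omega$, and these require the additional ODE arguments that the paper supplies.

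For ${\rm tr}\chi$ (and analogously ${\rm tr}\underline{\chi}$, $\slashed g$), your decomposition $\psi^* = \Omega^{-s}(\widetilde\psi + \overline{\Omega^s\psi}) - \psi_{\mathrm{Mink}}$ fails on both pieces. First, the Region~I low-order norm $\mathfrak C$ only gives $\|\widetilde{{\rm tr}\chi}\|\lesssim \underline v^{p/10}$, not $\epsilon^{1-\delta}$ (see~\eqref{betterboot2}); likewise $\|\slashed g\|_{\mathscr D}\lesssim \underline v^{p/10}$. Second, and more fundamentally, Lemma~\ref{boundfromdatasub} gives $\overline{\Omega^{-1}{\rm tr}\chi}=2/(-u)+O(\epsilon^{1-\delta})$, but on the oblique cone $v=-u\underline v$ the Minkowski value is $2/(v-u)=2/(-u(1+\underline v))$, so the mismatch is $2\underline v/(-u(1+\underline v))$, which is of size $\underline v\gg\epsilon^{1-\delta}$ after multiplying by the weight $s$. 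No amount of $\underline v$-dependent constants in $\lesssim$ rescues this, since $\mathfrak H$ must be bounded by a genuine power of $\epsilon$. The paper avoids this by never passing through $\widetilde{{\rm tr}\chi}$: it rewrites the $\nabla_4$-Raychaudhuri equation directly as a transport equation for $\Omega^{-1}{\rm tr}\chi-2/(v-u)$ (equation~\eqref{newtrchi44}), whose source is quadratic in $\epsilon$-small quantities, and integrates from $v=0$ using the initial-data fact $\overline{\Omega^{-1}{\rm tr}\chi}-2/(-u)=O(\epsilon^{1-\delta})$. The same mechanism handles ${\rm tr}\underline{\chi}$ via~\eqref{newtrchi33} and $\slashed g^*$ via $\mathcal L_{\partial_v}\slashed g^*=2(\Omega\chi)^*$.

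For $\omega$, the Region~I bootstrap simply does not estimate it: $\omega$ is explicitly excluded from $\mathfrak B$ (see Definition~\ref{lowordernormricci}), because the Region~I scheme multiplies by powers of $\Omega$ precisely to eliminate $\omega$ from every equation. So there is nothing to restrict. The paper instead integrates the $\nabla_3$ equation~\eqref{kodkodwkodwko} for the combination $\Omega^{-1}\omega-\Omega^{-2}\kappa/(2v)$ from $u=-1$, where it vanishes, and this yields the required $\epsilon^{1-\delta}$ bound. Your proposal needs to add these transport-equation arguments for ${\rm tr}\chi^*$, ${\rm tr}\underline{\chi}^*$, $\slashed g^*$, and $\omega$; once you do, the remaining parts of your outline (curvature via the strengthened pointwise bound~\eqref{strongest} plus Lemma~\ref{boundfromdatasub}, and the remaining Ricci coefficients directly from $\mathfrak B$) are essentially what the paper does.
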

\begin{proof}We start with the curvature components $\Psi$. First of all, given that we have closed the bootstrap argument which proves Proposition~\ref{letsbootit}, by a standard preservation of regularity argument (see the proof of Proposition 7.1 from~\cite{scaleinvariant}), and at the cost of an additional angular derivative of initial data, we have the following estimate 
\begin{equation}\label{strongest}
\sup_{0 \leq j \leq 2}\sup_{(u,v)}\int_{\mathbb{S}^2_{u,v}}\left|\slashed{\nabla}^j\Psi\right|^2u^{4+2j} \leq \epsilon^{2-2\delta},\qquad \sup_{0 \leq j \leq 3}\sup_{(u,v)}\int_{\mathbb{S}^2_{u,v}}\left|\slashed{\nabla}^j\left(\eta,\underline{\eta}\right)\right|^2u^{2+2j} \leq \epsilon^{2-2\delta},
\end{equation}
Integrating~\eqref{strongest} immediately yields 
\[\sup_{\Psi}\sup_{0 \leq j \leq 2}\left[\sup_{s\in [0,1]}\int_{-s}^{-\underline{v}s}\int_{\mathbb{S}^2}\left|\slashed{\nabla}^j\Psi\right|^2|_{\{(u,v) = (s,-s\underline{v})\}}s^{3+2j}\, ds + \int_{\underline{v}}^1\int_{\mathbb{S}^2}\left|\slashed{\nabla}^j\Psi\right|^2|_{\{(u,v) = (-1,s)\}}\, ds\right] \lesssim \epsilon^{2-2\delta}.\]

Next we come to the Ricci coefficients. We first note that for $\hat{\chi}_{AB}$, $\hat{\underline{\chi}}_{AB}$, $\eta_A$, $\underline{\eta}_A$, and $\underline{\omega}$,  the desired bounds manifestly follow from Proposition~\ref{letsbootit}. However, we will need to improve the estimates for ${\rm tr}\chi$ and ${\rm tr}\underline{\chi}$ and produce an estimate for $\omega$. Let's start with ${\rm tr}\chi$. We can write the $\nabla_4$ equation for ${\rm tr}\chi$ in the following form:
\begin{align}\label{newtrchi44}
&\frac{\partial}{\partial v}\left(\Omega^{-1}{\rm tr}\chi - \frac{2}{v-u}\right) + \frac{2\Omega^2}{v-u}\left(\Omega^{-1}{\rm tr}\chi - \frac{2}{v-u}\right) =
\\ \nonumber &\qquad  -\frac{\Omega^2}{2}\left(\Omega^{-1}{\rm tr}\chi - \frac{2}{v-u}\right)^2 - \Omega^2\left|\Omega^{-1}\hat{\chi}\right|^2 + \left(1-\Omega^2\right)\frac{2}{(v-u)^2}.
\end{align}
Note that it also follows from Lemma~\ref{uminus2data} that 
\[\left\vert\left\vert \Omega^{-1}{\rm tr}\chi - \frac{2}{-u}\right\vert\right\vert_{\tilde H^2\left(\mathbb{S}^2_{u,0}\right)} \lesssim \epsilon^{1-\delta}.\]
Thus, it is straightforward to commute to~\eqref{newtrchi44} with $\slashed{\nabla}_{A_1\cdots A_i}^i$ for $i \in \{0,1,2\}$ and use the already established bounds to obtain that 
\begin{equation}\label{okokdw}
\sup_{(u,v)}\left\vert\left\vert \Omega^{-1}{\rm tr}\chi - \frac{2}{v-u}\right\vert\right\vert_{\tilde H^2\left(\mathbb{S}^2_{u,v}\right)} \lesssim \epsilon^{1-\delta}.
\end{equation}
Similarly, from the $\nabla_4$ equation for ${\rm tr}\underline{\chi}$, one may derive
\begin{align}\label{newtrchi33}
&\frac{\partial}{\partial v}\left(\Omega{\rm tr}\underline{\chi} + \frac{2}{v-u}\right) + \frac{1}{2}\Omega^2\left(\Omega^{-1}{\rm tr}\chi\right)\left(\Omega{\rm tr}\underline{\chi} + \frac{2}{v-u}\right) = \\ \nonumber &\qquad \Omega^2\left[2\rho - \left(\Omega^{-1}\hat{\chi}\right)\cdot\left(\Omega\hat{\underline{\chi}}\right) + 2\slashed{\rm div}\underline{\eta} + 2\left|\underline{\eta}\right|^2\right] + \left(\frac{1}{v-u} - \frac{1}{2}\Omega^2\left(\Omega^{-1}{\rm tr}\chi\right)\right)\frac{2}{v-u}.
\end{align}
Using~\eqref{okokdw} and~\eqref{strongest} allows to straightforwardly commute to~\eqref{newtrchi44} with $\slashed{\nabla}_{A_1\cdots A_i}^i$ for $i \in \{0,1,2\}$ and use the already established bounds to obtain that 
\begin{equation}\label{okokdw2}
\sup_{(u,v)}\left\vert\left\vert \Omega{\rm tr}\underline{\chi} + \frac{2}{v-u}\right\vert\right\vert_{\tilde H^2\left(\mathbb{S}^2_{u,v}\right)} \lesssim \epsilon^{1-\delta}.
\end{equation}
From~\eqref{okokdw} and~\eqref{okokdw2} the desired bounds for ${\rm tr}\chi$ and ${\rm tr}\underline{\chi}$ easily follow.

Next we turn to estimating $\omega$. From the $\nabla_3$ equation for $\omega$ we may derive the following:
\begin{equation}\label{kodkodwkodwko}
\Omega\nabla_3\left(\Omega^{-1}\omega-\Omega^{-2}\frac{\kappa}{2v}\right) - 4\left(\Omega\underline{\omega}\right)\left(\Omega^{-1}\omega-\Omega^{-2}\frac{\kappa}{2v}\right) = \frac{1}{2}\rho + \frac{1}{2}\left|\underline{\eta}\right|^2 -\eta\cdot\underline{\eta}.
\end{equation}
Using that along $\{u = -1\}$ we have that $\Omega^{-1}\omega-\Omega^{-2}\frac{\kappa}{2v} = 0$, we may use~\eqref{kodkodwkodwko} in an analogous fashion to the proof of Lemma~\ref{fkowfomwfometa} to establish that
\begin{equation}\label{okokdw3}
\sup_{(u,v)}\left\vert\left\vert \Omega^{-1}\omega-\Omega^{-2}\frac{\kappa}{2v}\right\vert\right\vert_{\tilde H^2\left(\mathbb{S}^2_{u,v}\right)} \lesssim \epsilon^{1-\delta}.
\end{equation}
In turn, this easily implies the desired bound on $\omega$. 

Lastly, we just need to improve the bound on $\slashed{g}_{AB}$, since the desired bounds for the other metric coefficients already follow from Proposition~\ref{letsbootit}. For this we simply note that 
\[\mathcal{L}_{\partial_v}\slashed{g}_{AB}^* = 2\left(\Omega\chi\right)_{AB}^*,\]
and argue as we did for ${\rm tr}\chi$ and ${\rm tr}\underline{\chi}$. 
\end{proof}
\subsection{The Estimates}\label{estimatesregion2}
A standard argument using Proposition~\ref{itexistsbutforalittlewhile} shows that Theorem~\ref{itisreg2} will follow from the following proposition.
\begin{proposition}\label{gronwallikeest}Let $\underline{v} > 0$ and $(\mathcal{M},g_{\mu\nu})$ be a spacetime produced by Proposition~\ref{itexistsbutforalittlewhile} which exists in the region rectangle $\mathcal{R}_{\tilde u,\tilde v}$ for some $\tilde u \in (-1,0)$ and $\tilde v \in (0,1]$ satisfying 
\[ 0 < \frac{\tilde v}{-\tilde u} \leq \underline{v}^{-1},\]
and which satisfies the ``bootstrap assumption''

\begin{equation}\label{bootstrapgronwallikeest}
\mathfrak{E}_{\tilde u,\tilde v} + \mathfrak{F}_{\tilde u,\tilde v}+\mathfrak{G}_{\tilde u,\tilde v} \leq 2A\epsilon^{1-\delta}.
\end{equation}

We then claim that~\eqref{bootstrapgronwallikeest} implies
\begin{equation}\label{dkowdovoboto}
\mathfrak{E}_{\tilde u,\tilde v} + \mathfrak{F}_{\tilde u,\tilde v}+\mathfrak{G}_{\tilde u,\tilde v}\leq A\epsilon^{1-\delta}.
\end{equation}
\end{proposition}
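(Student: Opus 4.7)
The plan is to run a coupled energy/transport estimate on $\mathcal{Q}_{\tilde u,\tilde v}$, using the exponential weight $e^{Dv/u}$ as the central tool. The key observation is that $D \gg 1$ is at our disposal (while $D \ll \epsilon^{-1}$), and upon differentiation the weight produces sign--definite lower order terms: $\Omega\nabla_4 e^{Dv/u} = (D/u) e^{Dv/u}$ and $\Omega\nabla_3 e^{Dv/u} = -(Dv/u^2) e^{Dv/u}$, both of which are bounded above by $-cD/|u|$ (respectively $-cDv/u^2$) in $\mathcal{Q}_{\tilde u,\tilde v}$ because $u < 0$ and $v/|u|$ is comparable to $1$. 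These terms, of size $D$, will be used to absorb all the nonlinear interactions coming from the bootstrap assumption~\eqref{bootstrapgronwallikeest}, which are of size $A\epsilon^{1-\delta}$; the smallness $\epsilon \ll D^{-1}$ guarantees the absorption.

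First I would treat the curvature norm $\mathfrak{E}$. For each Bianchi pair among $(\alpha,\beta)$, $(\beta,\rho,\sigma)$, $(\rho,\sigma,\underline\beta)$, $(\underline\beta,\underline\alpha)$, I commute with $\slashed\nabla^j$ for $j\in\{0,1,2\}$, conjugate each equation by the weight $w_j \doteq e^{Dv/u}(v-u)^{3/2+j}$, pair the $\nabla_3$ equation with the outgoing member and the $\nabla_4$ equation with the incoming member, and integrate by parts over the subregion $\{-\underline v\, u \leq v \leq v_0\}\cap\{\max(-\underline v^{-1}v_0,-1)\leq u \leq u_0\}$ of $\mathcal{Q}_{\tilde u,\tilde v}$. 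The weight $(v-u)^{3/2+j}$ is chosen so that the natural $\Omega{\rm tr}\chi$ and $\Omega{\rm tr}\underline\chi$ coefficients in the conjugated Bianchi equations become of size $O(A\epsilon^{1-\delta}(v-u)^{-1})$ after we subtract their Minkowski values, which in $\mathcal{Q}_{\tilde u,\tilde v}$ are $O(A\epsilon^{1-\delta})$. The weight $e^{Dv/u}$ then yields spacetime bulk terms $\int (D/|u| + Dv/u^2)|w_j\slashed\nabla^j\Psi|^2$ of good sign and size $D$ dominating everything else. All nonlinear error terms are of schematic form $\psi\cdot\Psi$ or $\psi\cdot\psi\cdot\Psi$; using Sobolev embeddings on each $\mathbb{S}^2_{u,v}$ (where the sphere radius is $\sim (v-u)\sim 1$) together with the bootstrap bounds, these are absorbed by the good $D$-terms provided $DA\epsilon^{1-\delta} \ll 1$. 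The boundary contributions on $\{u=-1\}$ and on $\{v/|u| = \underline v\}$ are controlled by $\mathfrak{H}\lesssim \epsilon^{1-\delta}$ via Proposition~\ref{itstartedoutok}, and the constants in the energy inequality only degrade by factors of $e^{D\underline v^{-1}}$, which is permissible since we fix $D$ first and then take $\epsilon$ small.

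Next I would estimate $\mathfrak{F}$ by integrating the null structure equations~\eqref{4trchi}--\eqref{3omega} along the corresponding null direction, again conjugating by $e^{Dv/u}$ to produce a coercive zeroth order term of size $D/|u|$ or $Dv/u^2$ that absorbs the self-interactions of Ricci coefficients. For each $\psi^*$ I would pick whichever of the $\nabla_3$ or $\nabla_4$ equation makes the right-hand side controllable in terms of already-bounded quantities: curvature via $\mathfrak{E}$, other Ricci coefficients via $\mathfrak{F}$ (absorbed by the $D$-term), and angular derivatives handled by commuting $\slashed\nabla^j$ through using~\eqref{com1}--\eqref{com2}. The commutator terms, which involve $\chi,\underline\chi,\beta,\underline\beta$, are again quadratic in bootstrap quantities and absorbed by the $D$-term. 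Finally $\mathfrak{G}$ is immediate: integrate $\mathcal{L}_{\partial_v}b^A = -4\Omega^2\zeta^A$, $\mathcal{L}_{\partial_v}\slashed g^* = 2(\Omega\chi)^*$, and $(\partial_u + b\cdot\slashed\nabla)\log\Omega^* = -2\underline\omega$ from the appropriate initial hypersurface, with the same exponential conjugation, and control the right-hand sides by $\mathfrak{F}$ and the data bound $\mathfrak{H}$.

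The main obstacle I expect is the bookkeeping of the \emph{geometry of the integration region}: the boundary of $\mathcal Q_{\tilde u,\tilde v}$ consists of four pieces (two ``initial'' hypersurfaces coming from Region I and from $\{u=-1\}$, and two ``current'' ones), and one must check that integration by parts on each Bianchi pair produces well-signed boundary terms on the current hypersurfaces (giving the $\mathscr E$ fluxes) and boundary terms bounded by $\mathfrak H$ on the initial hypersurfaces. A secondary subtlety is that for $\alpha$ (respectively $\underline\alpha$) one loses the $u$-flux (resp.\ $v$-flux), so the $(\alpha,\beta)$ and $(\underline\beta,\underline\alpha)$ pairs must be estimated together rather than separately. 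Once the geometry and the $D$-hierarchy are set up, the nonlinear analysis is essentially identical to, but strictly simpler than, that carried out in Section~\ref{bootforregone}, because in Region II all double--null quantities are uniformly regular and $v\sim -u\sim 1$.
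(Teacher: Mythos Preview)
Your proposal is correct and matches the paper's proof essentially line for line: the paper also conjugates each Bianchi pair and each null--structure equation by the weight $e^{Dv/u}$ times a power of $(v-u)$, uses the resulting coercive bulk terms of size $\sim D/|u|$ (resp.\ $\sim Dv/u^2$) to absorb all the $\psi\cdot\Psi$ and commutator errors, and organizes the argument into the same three steps $\mathfrak{E}\to\mathfrak{F}\to\mathfrak{G}$ with the initial fluxes supplied by Proposition~\ref{itstartedoutok}. The only cosmetic difference is that for the lapse the paper integrates $\partial_v(\Omega^{-1})=2\omega$ rather than the $\nabla_3$ equation for $\log\Omega$ that you wrote, but either direction works in Region~II.
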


As usual, the proof will be broken up into a few separate estimates. We start with estimates for the curvature components, then prove estimates for the Ricci coefficients, and finish with the estimates for the metric coefficients.

Throughout the proofs in this section we will use without comment that for any point in $\mathcal{Q}$, we have
\[\underline{v} |u| \leq v \leq \underline{v}^{-1}|u|,\qquad \underline{v}v \leq |u| \leq \underline{v}^{-1}v.\]

Unless said otherwise, we will also allow all of our constants to depend on $\underline{v}$ and $\underline{v}^{-1}$.  We start by observing the Sobolev spaces generated by $\slashed{g}$ and $\mathring{\slashed{g}}$ are comparable.
\begin{lemma}\label{sosimilararethepscaes2}Let $\left(\mathcal{M},g_{\mu\nu}\right)$ satisfy the hypothesis of Proposition~\ref{gronwallikeest}. Then we have that
\[\left\vert\left\vert w\right\vert\right\vert_{\tilde H^i\left(\mathbb{S}^2_{u,v}\right)} \sim_k (v-u)^{-k}\left\vert\left\vert w\right\vert\right\vert_{\mathring{H}^i\left(\mathbb{S}^2_{u,v}\right)}\text{ for }i \in \{0,1,2\},\]
\[\left\vert\left\vert w\right\vert\right\vert_{\tilde L^p\left(\mathbb{S}^2_{u,v}\right)} \sim_k (v-u)^{-k}\left\vert\left\vert w\right\vert\right\vert_{\mathring{L}^p\left(\mathbb{S}^2_{u,v}\right)},\]
where we recall that $\mathring{H}^i$ and $\mathring{L}^p$ denote the Sobolev and $L^p$ spaces generated by the round metric $\mathring{\slashed{g}}$, and the spaces $\tilde{H}^j$ are defined as in Definition~\ref{idkdtildedefdef}.
\end{lemma}
\begin{proof}This is an immediate consequence of Lemma~\ref{comparethespaces}, the bootstrap hypothesis, and the smallness of $\epsilon$.
\end{proof}

Now we observe that the standard Sobolev inequalities hold for the spaces $H^i$.
\begin{lemma}\label{sosososob2}Let $\left(\mathcal{M},g_{\mu\nu}\right)$ satisfy the hypothesis of Proposition~\ref{gronwallikeest}. Then, for any $(0,k)$-tensor $w_{A_1\cdots A_k}$, we have that
\[\left\vert\left\vert w\right\vert\right\vert_{\tilde{L}^p\left(\mathbb{S}^2_{u,v}\right)} \lesssim_{p,k} \left\vert\left\vert w\right\vert\right\vert_{\tilde H^1\left(\mathbb{S}^2_{u,v}\right)},\qquad \left\vert\left\vert w\right\vert\right\vert_{\tilde{L}^{\infty}\left(\mathbb{S}^2_{u,v}\right)}\lesssim_k \left\vert\left\vert w\right\vert\right\vert_{\tilde{H}^2\left(\mathbb{S}^2_{u,v}\right)},\]
and for any $(0,k)$-tensor $w_{A_1\cdots A_k}$ and $(0,k')$-tensor $v_{A_1\cdots A_k'}$ we have
\[\left\vert\left\vert w\cdot v\right\vert\right\vert_{\tilde H^2\left(\mathbb{S}^2_{u,v}\right)} \lesssim_{k,k'} \left\vert\left\vert w\right\vert\right\vert_{\tilde H^2\left(\mathbb{S}^2_{u,v}\right)} \left\vert\left\vert v\right\vert\right\vert_{\tilde H^2\left(\mathbb{S}^2_{u,v}\right)}.\]

\end{lemma}
\begin{proof}This is an immediate consequence of Lemma~\ref{sosimilararethepscaes2} and Lemma~\ref{soblemm}.
\end{proof}
These Sobolev inequalities will be used repeatedly in our estimates of nonlinear terms and we will often do so without explicit comment. 
\subsubsection{Energy Estimates for Curvature} 
We start with the energy estimates for curvature.
\begin{proposition}\label{thisisenegege}Let $\left(\mathcal{M},g\right)$ satisfy the hypothesis of  Proposition~\ref{gronwallikeest}. Then  we have
\[\mathfrak{E} \lesssim \epsilon^{1-\delta}.\]
\end{proposition}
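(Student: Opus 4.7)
The proof is a weighted $L^2$ energy estimate applied separately to each of the four Bianchi pairs $(\alpha,\beta)$, $(\beta,(\rho,\sigma))$, $((\rho,\sigma),\underline{\beta})$, and $(\underline{\beta},\underline{\alpha})$, with the key ingredient being the conjugation of each Bianchi equation by $\exp(Dv/u)$ for the large constant $D\gg 1$. For each pair, we commute the Bianchi equations \eqref{3alpha}--\eqref{4undalpha} with $\slashed{\nabla}^j$ for $j=0,1,2$, multiply by weights of the form $\exp(Dv/u)(v_0-u)^{(3+2j)/2}(v-u_0)^{(3+2j)/2}$ adapted to the outgoing/incoming null fluxes through the corner $(u_0,v_0)\in\mathcal{Q}_{\tilde u,\tilde v}$, contract the $\nabla_4$ equation with the weighted outgoing curvature component and the $\nabla_3$ equation with the weighted incoming one, and integrate over the sub-rectangle of $\mathcal{Q}_{\tilde u,\tilde v}$ bounded by $\{u=\max(-\underline{v}^{-1}v_0,-1)\}$, $\{v=-\underline{v} u_0\}$, $\{u=u_0\}$, and $\{v=v_0\}$. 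The Hodge operator angular derivative terms coupling each Bianchi pair cancel upon this integration, up to $\slashed{\nabla}^j$ commutator errors involving the Gauss curvature $K$, which are absorbed using Lemma~\ref{sosososob2} and the bootstrap bound on $\mathfrak{G}$.

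The role of the conjugation by $\exp(Dv/u)$ is to generate damping spacetime bulk terms with signs that dominate every linear lower order coefficient. Since $u<0$ and $v>0$ throughout $\mathcal{Q}$, we have $\nabla_4\exp(Dv/u)=\Omega^{-1}(D/u)\exp(Dv/u)$ and $\nabla_3\exp(Dv/u)=\Omega^{-1}(-Dv/u^2)\exp(Dv/u)$, both with negative coefficients. The standard identity
\begin{equation*}
\int w\,\Psi\,\nabla_4\Psi\, dv = \tfrac{1}{2}[w\Psi^2]_\partial - \tfrac{1}{2}\int(\nabla_4 w)\,\Psi^2\, dv,
\end{equation*}
together with its $\nabla_3$ analogue, therefore produces bulk terms of magnitude $\Omega^{-1}|D/u|\,w|\Psi|^2$ and $\Omega^{-1}(Dv/u^2)\,w|\Psi|^2$ of definite positive sign on the same side as the linear coefficients $(\Omega^2\,{\rm tr}\chi)|\Psi|^2\sim 2(v-u)^{-1}|\Psi|^2$ coming from the Bianchi equations themselves. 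Since $v\sim|u|\sim v-u$ up to factors of $\underline{v}^{\pm 1}$ in $\mathcal{Q}$, these damping coefficients are of size $D/|u|$, and so taking $D$ sufficiently large (independent of $\epsilon$) makes the total coefficient of $w|\Psi|^2$ on the left hand side positive and large enough to control both the linear lower order terms and the spacetime components of $\mathscr{E}$. The constraint $\epsilon\exp((D/\underline{v})^{100})\ll 1$ ensures that $\exp(Dv/u)$ remains uniformly bounded away from $0$ on $\mathcal{Q}$, so the weighted norms are comparable to the unweighted ones up to a harmless $\underline{v}$-dependent constant.

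The initial data contributions, appearing as boundary fluxes on $\{u=-1\}\cap\mathcal{Q}$ and $\{v=-\underline{v} u\}\cap\mathcal{Q}$, are bounded by $\mathfrak{H}\lesssim\epsilon^{2-2\delta}$ via Proposition~\ref{itstartedoutok}. The nonlinear error terms are all of schematic form $\slashed{\nabla}^{j_1}\psi\cdot\slashed{\nabla}^{j_2}\Psi$ or $\slashed{\nabla}^{j_1}\psi\cdot\slashed{\nabla}^{j_2}\psi'\cdot\slashed{\nabla}^{j_3}\Psi$ with $j_1+j_2+j_3\leq 2$, Ricci coefficients $\psi,\psi'$ and null curvature component $\Psi$; these are handled via the Sobolev inequalities of Lemma~\ref{sosososob2} and the bootstrap bounds $\mathfrak{F},\mathfrak{G}\leq 2A\epsilon^{1-\delta}$, each such quadratic contribution yielding $O(\epsilon^{2-2\delta})$. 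The main obstacle I anticipate is handling the pairs $(\alpha,\beta)$ and $(\underline{\beta},\underline{\alpha})$, for which only one direction of flux contributes to $\mathscr{E}$: $\alpha$ lacks a $\nabla_3$-flux but has a $\nabla_3$-spacetime bulk, and $\beta$ carries the large $2(\Omega^2\,{\rm tr}\chi)|\beta|^2$ linear term that must be absorbed by the $\Omega^{-1}|D/u|\,w|\beta|^2$ bulk produced by conjugation in $\nabla_4$, and symmetrically for the $(\underline{\beta},\underline{\alpha})$ pair. Summing over all four pairs and over $j\in\{0,1,2\}$ produces an inequality of the form $\mathfrak{E}^2\lesssim\epsilon^{2-2\delta}+\epsilon^{1-\delta}\mathfrak{E}$, and for $\epsilon$ small this yields the claimed improvement $\mathfrak{E}\lesssim\epsilon^{1-\delta}$.
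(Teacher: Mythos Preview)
Your approach is essentially that of the paper: conjugate each commuted Bianchi pair by $\exp(Dv/u)$ times a power of $(v-u)$, so that the induced lower-order coefficients $\Omega^{-1}D/u$ and $-\Omega^{-1}Dv/u^2$ dominate the ${\rm tr}\chi$, ${\rm tr}\underline\chi$ linear terms and produce a coercive spacetime bulk into which the nonlinear errors are absorbed. The paper phrases the absorption via the $D^{-1}$ prefactor on the right-hand side rather than via your final quadratic inequality $\mathfrak{E}^2\lesssim\epsilon^{2-2\delta}+\epsilon^{1-\delta}\mathfrak{E}$, but either closes. The paper also uses the single weight $(v-u)^{3/2}\exp(Dv/u)$ rather than your product $(v_0-u)^{(3+2j)/2}(v-u_0)^{(3+2j)/2}$; since $|u|\sim v\sim v-u$ throughout $\mathcal{Q}$ (with $\underline v$-dependent constants) these choices are equivalent.

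One point needs more care. You propose to absorb the Gauss-curvature commutator errors $\slashed{\nabla}^{j-1}(K\Psi)$ via the bootstrap on $\mathfrak{G}$, but $\mathfrak{G}$ controls only two angular derivatives of $\slashed{g}^*$, so it yields $K-(v-u)^{-2}$ only in $L^2(\mathbb{S}^2)$. At top order $j=2$ the term $(\slashed{\nabla}K)\Psi$ requires $K$ in $\tilde H^1$, which does not follow from $\mathfrak{G}$. The paper handles this by first integrating the $\nabla_4$ equation for $\rho$ (using the bootstrap on $\mathfrak{E}$, not $\mathfrak{G}$) to obtain a pointwise bound $\|\rho\|_{\tilde H^1}(v-u)^2\lesssim\epsilon^{1-\delta}+\mathfrak{E}$, then reading off $\|K-(v-u)^{-2}\|_{\tilde H^1}$ from the Gauss equation~\eqref{genGauss}, and closing via a short induction on $j$.
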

\begin{proof}We may write each Bianchi pair $\left(\left(\alpha_{AB},\beta_A\right),\left(\beta_A,\left(\rho,\sigma\right)\right), \left(\left(\rho,\sigma\right),\underline{\beta}_A\right), \left(\underline{\beta}_A,\underline{\alpha}_{AB}\right)\right)$ in the schematic form
\begin{equation}\label{gronbianch}
\nabla_4\Psi^{(1)}  = \mathcal{D}\Psi^{(2)} + \psi\cdot\Psi,\qquad \nabla_3\Psi^{(2)}  = -\mathcal{D}^*\Psi^{(1)} + \psi\cdot \Psi,
\end{equation}
where $\mathcal{D}$ represents an angular derivative operator defined with respect to $\slashed{g}_{AB}$ and $\mathcal{D}^*$ denotes the $L^2\left(\slashed{g}\right)$-adjoint on $\mathbb{S}^2$. As we have written the equations, we note that there are ``linear terms'' hiding in the right hand sides  due to the presence of ${\rm tr}\chi$ and ${\rm tr}\underline{\chi}$. For each $i \in \{0,1,2\}$ we may then commute with $\slashed{\nabla}_{A_1\cdots A_i}^i$ to and use Lemma~\ref{commlemm} to obtain an equation of the schematic form (with indices on $\slashed{\nabla}^i$ suppressed)
\begin{align}\label{gronbianch2}
\nabla_4\slashed{\nabla}^i\Psi^{(1)}  &= \mathcal{D}\slashed{\nabla}^i\Psi^{(2)} + \underbrace{\slashed{\nabla}^i\left(\psi\cdot\Psi\right) +\slashed{\nabla}^{i-1}\left(\psi_1\cdot\psi_2\cdot\Psi\right) + \slashed{\nabla}^{i-1}\left(K\Psi\right)}_{\mathcal{E}_1},
\\ \nonumber \nabla_3\slashed{\nabla}^i\Psi^{(2)} &= -\mathcal{D}^*\slashed{\nabla}^i\Psi^{(1)} + \underbrace{\slashed{\nabla}^i\left(\psi\cdot\Psi\right) + \slashed{\nabla}^{i-1}\left(\psi_1\cdot\psi_2\cdot\Psi\right) +\slashed{\nabla}^{i-1}\left(K\Psi\right)}_{\mathcal{E}_2}.
\end{align}

Before carrying out our energy estimate, we conjugate the equations~\eqref{gronbianch2} by $w\left(u,v\right) \doteq \left(v-u\right)^{3/2}\exp\left(D\frac{v}{u}\right)$ where $D$ is a suitable large positive constant to be determined later, depending only on $\underline{v}$:

\begin{equation}\label{gronbianchconj4}
\nabla_4\left(w\slashed{\nabla}^i\Psi^{(1)}\right) - \left[\frac{(3/2)\Omega^{-1}}{v-u}+\Omega^{-1}\frac{D}{u}\right]\left(w\slashed{\nabla}^i\Psi^{(1)}\right) = \mathcal{D}\left(w\slashed{\nabla}^i\Psi^{(2)}\right) + w\mathcal{E}_1,
\end{equation}
\begin{equation}\label{gronbianchconj3}
\nabla_3\left(w\slashed{\nabla}^i\Psi^{(2)}\right) + \left[\frac{(3/2)\Omega^{-1}}{v-u}+\Omega^{-1}\frac{Dv}{u^2}\right]\left(w\slashed{\nabla}^i\Psi^{(2)}\right) = -\mathcal{D}^*\left(w\slashed{\nabla}^i\Psi^{(1)}\right) +w\mathcal{E}_2.
\end{equation}

The point of conjugating with the exponential weight is that for $D$ sufficiently large, the coefficient of the linear term on the left hand sides of~\eqref{gronbianchconj4} and~\eqref{gronbianchconj3} will be positive and thus generate a good spacetime term in the energy estimate. 

Let $(u,v) \in \mathcal{Q}_{\tilde u,\tilde v}$. Multiplying~\eqref{gronbianchconj4} by $w\slashed{\nabla}^i\Psi^{(1)}$ and~\eqref{gronbianchconj3} by $w\slashed{\nabla}^i\Psi^{(2)}$, carrying out the usual integration by parts, using the bootstrap assumption~\eqref{bootstrapgronwallikeest}, and appealing to Proposition~\ref{itstartedoutok} leads to 
\begin{align}\label{okdwplvijnie}
&\sup_{\Psi}\Bigg[\int_{{\rm max}\left(-1,-\underline{v}^{-1}v\right)}^u\int_{\mathbb{S}^2}w^2\left|\slashed{\nabla}^i\Psi^{(1)}\right|^2|_{(s,v)}\, ds\, \mathring{\rm dVol} + \int_{-\underline{v}u}^v\int_{\mathbb{S}^2}w^2\left|\slashed{\nabla}^i\Psi^{(2)}\right|^2|_{(u,s)}\, ds\, \mathring{\rm dVol}
\\ \nonumber &\qquad + D\int_{\mathcal{Q}_{u,v}}\int_{\mathbb{S}^2}w^2\left(\dot{v}-\dot{u}\right)^{-1}\left[\left|\slashed{\nabla}^i\Psi^{(1)}\right|^2 + \left|\slashed{\nabla}^i\Psi^{(2)}\right|^2\right]\, d\dot{u}\  d\dot{v}\, \mathring{\rm dVol}\Bigg]
\\ \nonumber &\qquad \lesssim D^{-1}\int_{\mathcal{Q}_{u,v}}\int_{\mathbb{S}^2}w^2(-\dot{u})\left[\left|\mathcal{E}_1\right|^2 + \left|\mathcal{E}_2\right|^2\right]\, d\dot{u}\, d\dot{v}\, \mathring{\rm dVol} + \epsilon^{2-2\delta}.
\end{align}
Here we have used that  in the region under consideration, $v$ and $|u|$ are comparable and thus we have $w^2 \sim \left(v-u\right)^3$. Next, it is immediate from the bootstrap assumptions, Sobolev inequalities, and the largeness of $D$ that we have (suppressing the volume forms)
\begin{align}\label{kodwdw222sssxcsa}
&D^{-1}\int_{\mathcal{Q}_{u,v}}\int_{\mathbb{S}^2}w^2(-\dot{u})\left[\left|\mathcal{E}_1\right|^2 + \left|\mathcal{E}_2\right|^2\right] 
\\ \nonumber &\qquad \lesssim D^{-1}\sup_{\Psi}\int_{\mathcal{Q}_{u,v}}\int_{\mathbb{S}^2}w^2\left(\dot{v}-\dot{u}\right)^{-1}\left[\left|\slashed{\nabla}^i\Psi\right|^2+\sum_{i_1+i_2 = i-1}w^2(-\dot{u})\left|\left(\slashed{\nabla}^{i_2}K\right)\left(\slashed{\nabla}^{i_2}\Psi\right)\right|^2\right].
\end{align}
The first term on the right hand side of~\eqref{kodwdw222sssxcsa} may be clearly be absorbed into the left hand side of~\eqref{okdwplvijnie}. Next, we note that it is a straightforward consequence of integrating the $\nabla_4$ equation for $\rho$ and the bootstrap assumption that we have
\[\sup_{(\tilde u,\tilde v) \in \mathcal{Q}_{u,v}}\left\vert\left\vert \exp\left(D\frac{v}{u}\right)\rho\right\vert\right\vert_{\tilde H^1\left(\mathbb{S}^2_{\tilde u,\tilde v}\right)}(v-u)^2 \lesssim \epsilon^{1-\delta} + \mathfrak{E}.\]
It then follows easily from the Gauss curvature equation and the bootstrap assumption that
\begin{equation}\label{agaussmboundbound}
\sup_{(\tilde u,\tilde v) \in \mathcal{Q}_{u,v}}\left\vert\left\vert  \exp\left(D\frac{v}{u}\right)\left(K-1\right)\right\vert\right\vert_{\tilde H^1\left(\mathbb{S}^2_{\tilde u,\tilde v}\right)}(v-u)^2 \lesssim \epsilon^{1-\delta} + \mathfrak{E}.
\end{equation}
Finally, a straightforward induction argument, Proposition~\ref{thisisenegege},~\eqref{okdwplvijnie},~\eqref{kodwdw222sssxcsa}, and~\eqref{agaussmboundbound} leads to 
\begin{align}\label{okdwplvijnie}
&\sup_{0\leq i \leq 2}\sup_{\Psi}\Bigg[\int_{{\rm max}\left(-1,-\underline{v}^{-1}v\right)}^u\int_{\mathbb{S}^2}w^2\left|\slashed{\nabla}^i\Psi^{(1)}\right|^2|_{(s,v)}\, ds\, \mathring{\rm dVol} + \int_{-\underline{v}u}^v\int_{\mathbb{S}^2}w^2\left|\slashed{\nabla}^i\Psi^{(2)}\right|^2|_{(u,s)}\, ds\, \mathring{\rm dVol}
\\ \nonumber &\qquad + D\int_{\mathcal{Q}_{u,v}}\int_{\mathbb{S}^2}w^2\left(\dot{v}-\dot{u}\right)^{-1}\left[\left|\slashed{\nabla}^i\Psi^{(1)}\right|^2 + \left|\slashed{\nabla}^i\Psi^{(2)}\right|^2\right]\, d\dot{u}\  d\dot{v}\, \mathring{\rm dVol}\Bigg]
\\ \nonumber &\qquad \lesssim \mathfrak{E}^2+ \epsilon^{2-2\delta}.
\end{align}
Invoking the bootstrap assumption again finishes the proof.

\end{proof}

\subsubsection{Estimates for the Ricci Coefficients}

Next we turn to the estimates for the Ricci coefficients.
\begin{proposition}\label{gronricciest}Let $\left(\mathcal{M},g\right)$ satisfy the hypothesis of  Proposition~\ref{gronwallikeest}. Then  we have
\[\mathfrak{R} \lesssim \epsilon^{1-\delta}.\]
\end{proposition}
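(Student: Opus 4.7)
The plan is to prove the Ricci coefficient bound by integrating the transport equations of the null-structure system with the conjugation weight $\exp(Dv/u)(v-u)^{1+j}$, exploiting both the coercivity generated by the exponential weight (for large $D$, as already used in Proposition~\ref{thisisenegege}) and the curvature energy bound $\mathfrak{E} \lesssim \epsilon^{1-\delta}$ already established. The initial data on the two boundary components $\{u=-1\}$ and $\{v/|u|=\underline{v}\}$ of $\mathcal{Q}$ is controlled by $\mathfrak{H} \lesssim \epsilon^{1-\delta}$ via Proposition~\ref{itstartedoutok}. Throughout, I would allow constants to depend on $\underline{v}, \underline{v}^{-1}, D$, and use Sobolev inequalities on $\mathbb{S}^2_{u,v}$ via Lemma~\ref{sosososob2}.

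I would split the Ricci coefficients into two groups according to the direction of the available transport equation. The outgoing group $\hat\chi, {\rm tr}\chi, \omega$ is estimated by integrating their $\nabla_4$-equations \eqref{4trchi}, \eqref{4hatchi}, \eqref{3omega} in the $v$-direction from $\{u=-1\}$; here I first rewrite the equations in terms of $({\rm tr}\chi)^* = {\rm tr}\chi - 2/(v-u)$, $\omega^* = \omega$, etc., producing an inhomogeneity like $(1-\Omega^2)\tfrac{2}{(v-u)^2}$ plus the genuinely nonlinear/curvature source (see \eqref{newtrchi44}). Commute with $\slashed{\nabla}^j$, $j \leq 2$, then contract with the conjugated quantity and integrate; the exponential weight $\exp(Dv/u)$ provides a coercive term of size $D|u|^{-1} w\slashed{\nabla}^j\psi^*$ that absorbs the linear-in-$\psi^*$ term on the right. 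The ingoing group $\hat{\underline\chi}, {\rm tr}\underline\chi, \underline\omega$ is treated analogously using the $\nabla_3$-equations \eqref{3truchi}, \eqref{3hatuchi}, \eqref{4uomega}, integrating in $u$ from $\{v/|u|=\underline{v}\}$, where the weight $\exp(Dv/u)$ differentiated in the $e_3$-direction yields the coercive term $Dv/u^2$.

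The intermediate Ricci coefficients $\eta, \underline\eta$ are estimated using \eqref{4eta} and \eqref{3ueta}, whose sources $\beta$ and $\underline\beta$ are controlled pointwise in $L^{\infty}_uL^2_v\mathring{H}^{j+1}$ and $L^{\infty}_vL^2_u\mathring{H}^{j+1}$ respectively by Proposition~\ref{thisisenegege}; Cauchy--Schwarz in the integrated direction converts these into the $\mathscr{F}$-type norm. All nonlinear terms arising from $\slashed\nabla^j(\psi \cdot \psi')$ and the commutators $[\slashed\nabla^j,\nabla_{3,4}]$ (controlled by Lemma~\ref{commlemm} which produces extra $\chi,\underline\chi,\beta,\underline\beta,\underline\eta,\eta$ factors) are schematically of the form $\psi\cdot\psi'$ or $\psi\cdot\Psi$; by the bootstrap \eqref{bootstrapgronwallikeest}, each $\psi^*$ is bounded in $L^\infty$ (after Sobolev on $\mathbb{S}^2_{u,v}$) by $A\epsilon^{1-\delta}(v-u)^{-1}$, so these terms contribute at worst $A^2\epsilon^{2-2\delta}\int w^2(v-u)^{-1}$, which is absorbed either by the coercive spacetime term from curvature or by closing a Grönwall argument in the transport direction.

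The main obstacle is the coupling between $({\rm tr}\chi, \hat{\underline\chi})$ through the right-hand side of \eqref{4truchi} and \eqref{3trchi}, and more subtly the fact that $K$ appears in the commutators; we must first extract $L^\infty$-control of $K$ via the Gauss equation \eqref{genGauss}, in the spirit of \eqref{agaussmboundbound}. Once $K - (v-u)^{-2}$ is bounded in $\tilde H^1$ by $\epsilon^{1-\delta} + \mathfrak{E} + \mathfrak{F}$, the elliptic theory of Lemma~\ref{someelleststs} may be used for those Ricci coefficients (notably $\hat{\underline\chi}$) where the transport estimate alone leaves a top-order derivative uncontrolled, recovering angular regularity from the Codazzi equation \eqref{tcod2} as in the derivation of Lemma~\ref{codisnice}. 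Combining all estimates, the bootstrap contribution $2A\epsilon^{1-\delta}$ enters only in quadratic form on the right, so choosing $D$ sufficiently large and then $\epsilon$ sufficiently small depending on $(A, \underline{v}, D)$ closes the bound $\mathfrak{F} \lesssim \epsilon^{1-\delta}$ with an implicit constant independent of $A$.
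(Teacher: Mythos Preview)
Your overall approach matches the paper's: conjugate each null-structure transport equation by $w=(v-u)\exp(Dv/u)$, use the coercive lower-order term generated by the exponential weight to absorb the right-hand side into the resulting spacetime integral, control curvature sources via Proposition~\ref{thisisenegege}, and control initial data via Proposition~\ref{itstartedoutok}.

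However, you have the $\omega,\underline\omega$ assignments swapped. The coefficient $\omega$ only has a $\nabla_3$-equation, namely \eqref{3omega}, and must be integrated in the $u$-direction; $\underline\omega$ only has a $\nabla_4$-equation, namely \eqref{4uomega}, and must be integrated in the $v$-direction. You also have the initial-data hypersurfaces reversed: integrating in $v$ at fixed $u$ starts from $\{v/|u|=\underline v\}$, not from $\{u=-1\}$. After these two swaps your schematic becomes exactly the paper's: each $\psi^*$ satisfies either a $\nabla_4$- or a $\nabla_3$-equation with right-hand side of the form $\psi\cdot\psi^*+\Psi$ (plus the $(\Omega^{-1}-1)(v-u)^{-2}$ term for the Raychaudhuri equations), subject to the structural constraint that $\alpha$ appears only in $\nabla_4$-equations and $\underline\alpha$ only in $\nabla_3$-equations, so that each curvature source is controlled by the flux along the direction of integration.

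The elliptic/Codazzi step you invoke for $\hat{\underline\chi}$ is unnecessary in this region. The paper uses the $\nabla_3$-equation \eqref{3hatuchi} directly: its source $\underline\alpha$ is controlled in $L^2_u\tilde H^2$ by the $\mathscr{E}$-norm, so commuting with $\slashed\nabla^j$ for $j\le 2$ and integrating in $u$ closes with no top-order loss. The elliptic recovery you describe is needed only in Region~I, where the singular $v$-weights prevent a direct transport estimate for $\hat{\underline\chi}$; compare Lemma~\ref{Wearekofkofwtrhica}.
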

\begin{proof}Every Ricci coefficient $\psi \in \{\hat{\underline{\chi}}_{AB},\hat{\chi}_{AB},\omega,\underline{\omega},\eta_A,\underline{\eta}_A\}$ satisfies $\psi = \psi^*$ (recall that $\psi^*$ is defined in Definition~\ref{3i8193defpsistartstar}) and also satisfies an equation of one of the following schematic forms:
\begin{equation}
\nabla_4\psi^* = \left(\psi_1,\psi_1^*\right)\cdot\psi_2^* + \Psi,
\end{equation}
\begin{equation}
\nabla_3\psi^* = \left(\psi_1,\psi_1^*\right)\cdot\psi_2^* + \Psi,
\end{equation}
where $\psi_i$ stands for a Ricci coefficient and $\Psi$ for a null curvature component. For ${\rm tr}\chi$ and ${\rm tr}\underline{\chi}$ we use the corresponding Raychaudhuri equations which may be written in the following form: 
\begin{align}\label{omdqmoqdq}
\nabla_4{\rm tr}\chi^* &= \left(1-\Omega^{-1}\right)\frac{2}{(v-u)^2} - \frac{2}{v-u}{\rm tr}\chi^* - \frac{4\omega}{v-u}  - 2\omega{\rm tr}\chi^* - \frac{1}{2}\left({\rm tr}\chi^*\right)^2 -\left|\hat{\chi}\right|^2,
\end{align}
\begin{align}\label{omdqmoqdq2}
\nabla_3{\rm tr}\underline{\chi}^* &= \left(\Omega^{-1}-1\right)\frac{2}{(v-u)^2} + \frac{2}{v-u}{\rm tr}\underline{\chi}^* +\frac{4\underline{\omega}}{v-u}  - 2\underline{\omega}{\rm tr}\chi^* - \frac{1}{2}\left({\rm tr}\underline{\chi}^*\right)^2 -\left|\hat{\underline{\chi}}\right|^2.
\end{align}
Thus, all together every Ricci coefficient $\psi$ satisfies an equation of one of the following forms
\[\nabla_4\psi^* = \mathcal{E},\qquad \nabla_3\psi^* = \mathcal{E},\]
where $\mathcal{E}$ is controlled by a sum of terms  of the following possible schematic forms:
\[\psi_1\cdot\psi_2^*,\qquad \psi_1^*\cdot\psi_2^*,\qquad \Psi,\qquad \left(\Omega^{-1}\right)^*(v-u)^{-2},\qquad \psi_1^*\left(v-u\right)^{-1}, \]
with the constraint that $\alpha_{AB}$ can only show up in a $\nabla_4$ equations and that $\underline{\alpha}_{AB}$ can only show up in a $\nabla_3$ equation.

Commuting with $\slashed{\nabla}_{A_1\cdots A_i}^i$ leads to equations of the schematic form (with indices on $\slashed{\nabla}^i$ suppressed):
\[\nabla_4\slashed{\nabla}^i\psi^* = \slashed{\nabla}^i\mathcal{E} + \mathcal{F}_i,\qquad \nabla_3\slashed{\nabla}^i\psi^* = \slashed{\nabla}^i\mathcal{E} + \mathcal{F}_i,\]
where $\mathcal{F}_i$ is controlled by a sum of terms of the schematic form:
\[\slashed{\nabla}^i\left(\psi_1\cdot\psi^*\right),\qquad \slashed{\nabla}^{i-1}\left(\psi_1\cdot\psi_2\cdot\psi^*\right).\]
Now we conjugate by $w \doteq \left(v-u\right)\exp\left(D\frac{v}{u}\right)$ to obtain 
\[\nabla_4\left(w\slashed{\nabla}^i\psi^*\right) + \frac{D}{u}w\slashed{\nabla}^i\psi^* = q\slashed{\nabla}^i\mathcal{E} + q\mathcal{F}_i,\qquad \nabla_3\left(w\slashed{\nabla}^i\psi^*\right) + D\frac{v}{u^2}w\slashed{\nabla}^i\psi^* = w\slashed{\nabla}^i\mathcal{E} + w\mathcal{F}_i.\]
Now we contract with $w\slashed{\nabla}^i\psi^*$ and integrate to obtain for every $(u,v) \in \mathcal{Q}_{\tilde u,\tilde v}$, either
\begin{align}\label{theifristsffwfw}
&\int_{\mathbb{S}^2_{u,v}}w^2\left|\slashed{\nabla}^i\psi^*\right|^2\mathring{\rm dVol} + \int_{-u\underline{v}}^v\int_{\mathbb{S}^2_{u,\dot{v}}}\frac{D}{u}w^2\left|\slashed{\nabla}^i\psi^*\right|^2\, d\dot{v}\, \mathring{\rm dVol} \lesssim
\\ \nonumber &\qquad  D^{-1}\int_{-u\underline{v}}^v\int_{\mathbb{S}^2_{u,\dot{v}}}(\dot{v}-u)w^2\left[\left|\slashed{\nabla}^i\mathcal{E}\right|^2 + \left|\mathcal{F}_i\right|^2\right]\, d\dot{v}\, \mathring{\rm dVol},
\end{align}
or
\begin{align}\label{theifristsffwfw2}
&\int_{\mathbb{S}^2_{u,v}}w^2\left|\slashed{\nabla}^i\psi^*\right|^2\mathring{\rm dVol} + \int_{{\rm max}\left(-v\underline{v}^{-1},1\right)}^u\int_{\mathbb{S}^2_{u,\dot{v}}}\frac{Dv}{\dot{u}^2}w^2\left|\slashed{\nabla}^i\psi^*\right|^2\, d\dot{u}\, \mathring{\rm dVol} \lesssim
\\ \nonumber &\qquad  D^{-1}\int_{{\rm max}\left(-v\underline{v}^{-1},1\right)}^u\int_{\mathbb{S}^2_{u,\dot{v}}}(v-\dot{u})w^2\left[\left|\slashed{\nabla}^i\mathcal{E}\right|^2 + \left|\mathcal{F}_i\right|^2\right]\, d\dot{u}\, \mathring{\rm dVol}.
\end{align}
The proof then concludes from the bootstrap assumptions, the largeness of $D$, and absorbing the terms from the right hand side into the spacetimes terms on the left hand sides of~\eqref{theifristsffwfw} and~\eqref{theifristsffwfw2}.

\end{proof}

\subsubsection{Estimates for the Metric Coefficients}

Lastly, we come to the estimates for the metric coefficients. 
\begin{proposition}\label{whatwewillproveformetricgron}Let $\left(\mathcal{M},g_{\mu\nu}\right)$ satisfy the hypothesis of  Proposition~\ref{gronwallikeest}. Then  we have
\begin{equation*}
\mathfrak{G} \lesssim \epsilon^{1-\delta}.
\end{equation*}
\end{proposition}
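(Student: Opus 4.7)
The plan is to run the same exponential-weight transport scheme used in Propositions~\ref{thisisenegege} and~\ref{gronricciest}, now for the metric coefficients $\Omega^*=\log\Omega$ (equivalently $\Omega-1$) and $b^*=b$, with the extra input that the curvature and Ricci estimates are already available with the $\epsilon^{1-\delta}$ bound. Since $\mathfrak{G}$ only records angular Sobolev norms on each fixed sphere (no flux or spacetime integral), the argument reduces to integrating a first-order ODE along a null direction, conjugated by $w=(v-u)^{j}\exp(Dv/u)$ and started from the two initial hypersurfaces $\{u=-1\}$ and $\{v=-u\underline{v}\}$ on which Proposition~\ref{itstartedoutok} and Theorem~\ref{itisreg1} supply data of size $\epsilon^{1-\delta}$.

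For $\Omega^*$ I would use the second identity in~\eqref{osjf}, which becomes, in the coordinates of~\eqref{metricform},
\begin{equation*}
\bigl(\partial_u + b^A\partial_{\theta^A}\bigr)\log\Omega = -2\Omega\underline{\omega}.
\end{equation*}
Commuting with $\slashed{\nabla}^i$ for $i\in\{0,1,2\}$ and conjugating with $w$ produces
\begin{equation*}
\Omega\nabla_3\bigl(w\slashed{\nabla}^i\log\Omega\bigr)+\frac{D v}{u^{2}}\bigl(w\slashed{\nabla}^i\log\Omega\bigr) = -2w\slashed{\nabla}^i(\Omega\underline{\omega})+w\bigl[\Omega\nabla_3,\slashed{\nabla}^i\bigr]\log\Omega,
\end{equation*}
where the left-hand-side lower-order coefficient is positive. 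Contracting with $w\slashed{\nabla}^i\log\Omega$, integrating in $u$ from the appropriate initial hypersurface, and applying Cauchy--Schwarz yields an estimate in terms of the data, of the spacetime integral of $|\slashed{\nabla}^i(\Omega\underline{\omega})|^{2}$ (controlled by $\mathfrak{F}^{2}$ through Proposition~\ref{gronricciest}), and of nonlinear commutator contributions. The latter are sums of terms of the schematic form $\slashed{\nabla}^{i_1}\underline{\chi}\,\slashed{\nabla}^{i_2}\log\Omega$ (from $[\slashed{\nabla}^i,\nabla_3]$, via Lemma~\ref{commlemm}) and $\slashed{\nabla}^{i_1}b\,\slashed{\nabla}^{i_2}\log\Omega$, which, using Lemmas~\ref{sosimilararethepscaes2} and~\ref{sosososob2} together with the bootstrap assumption~\eqref{bootstrapgronwallikeest}, are bounded by $A\epsilon^{1-\delta}$ times $\|\Omega^*\|_{\mathscr{G}}$ and hence absorbed by choosing $D$ large and $\epsilon$ small.

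For the shift $b$ I would use the third identity in~\eqref{osjf}, $\mathcal{L}_{\partial_v}b^A=-4\Omega^{2}\zeta^{A}=-2\Omega^{2}(\eta^{A}-\underline{\eta}^{A})$. Commuting with $\slashed{\nabla}^i$, conjugating with $w=(v-u)^{j}\exp(Dv/u)$, and noting that $D/u$ is the lower-order coefficient generated by the weight (negative in $u<0$ but with the correct sign once we integrate in $v$ on a fixed sphere, so the weight simply provides uniform bounds and absorbs nonlinearities), we integrate in $v$ from the data hypersurface. The right-hand side is controlled by $\slashed{\nabla}^i(\Omega^{2}(\eta-\underline{\eta}))$ which, expanded by Leibniz, is bounded by $\mathfrak{F}+\mathfrak{F}\cdot\mathfrak{G}$, and by commutator terms from $[\slashed{\nabla}^i,\mathcal{L}_{\partial_v}]$ which by~\eqref{com1} produce $\chi$ contracted with lower-order derivatives of $b$; the bootstrap assumption and the already-proven $\mathfrak{F}\lesssim\epsilon^{1-\delta}$ bound these by $\epsilon^{1-\delta}$ plus a small multiple of $\|b\|_{\mathscr{G}}$.

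There is no genuine obstacle here: the only mildly subtle point is that two separate initial hypersurfaces border $\mathcal{Q}$ (namely $\{u=-1\}$ and the ``inner'' cone $\{v=-u\underline{v}\}$ where Region I is patched in) and one must verify that the data norm $\mathfrak{H}$ from Proposition~\ref{itstartedoutok} controls both sets of initial values; this is done exactly as in the proof of Proposition~\ref{gronricciest}. Combining the $\Omega$ and $b$ estimates and taking $\epsilon$ small enough to absorb the bootstrap contribution then yields $\mathfrak{G}\lesssim\epsilon^{1-\delta}$, which together with Propositions~\ref{thisisenegege} and~\ref{gronricciest} completes the proof of Proposition~\ref{gronwallikeest} and hence of Theorem~\ref{itisreg2}.
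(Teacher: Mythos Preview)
Your approach is essentially the same as the paper's: integrate first-order transport equations for the metric coefficients with the exponential-weight scheme already used in Proposition~\ref{gronricciest}, controlling the right-hand sides by the already-established bound $\mathfrak{F}\lesssim\epsilon^{1-\delta}$. The paper is more terse and uses only the $\partial_v$ equations---in particular $\partial_v(\Omega^{-1})=2\omega$ for the lapse rather than your $\nabla_3$ equation $(\partial_u+b)\log\Omega=-2\Omega\underline{\omega}$---and it also records $\mathcal{L}_v\slashed{g}_{AB}=2\Omega\chi_{AB}$; both choices work since $u$ and $v$ are comparable in Region~II.
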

\begin{proof}

This follows by  simply by integrating the following transport equations for the metric coefficients,
\begin{equation}\label{metriceqnsforintegrating}
\partial_v\left(\Omega^{-1}\right) = 2\omega, \qquad \mathcal{L}_v\slashed{g}_{AB} = 2\Omega\chi_{AB},\qquad \mathcal{L}_vb^A = -4\Omega^2\zeta^A,
\end{equation}
and controlling the Ricci coefficients on the right hand side with Proposition~\ref{gronricciest}. (See the proof of Proposition~\ref{gronricciest}.)
\end{proof}

This concludes the proof of Proposition~\ref{gronwallikeest}, and hence also Theorem~\ref{itisreg2}.

\section{Shifting the Shift  and Gluing in an Asymptotically Flat Cone}\label{secshift}
It will be convenient to introduce the notation:
\[\mathcal{W} \doteq \left\{(u,v) : -1 \leq u < 0 ,\qquad 1 \leq \frac{v}{-u} < \underline{v}^{-1},\qquad 0 \leq v \leq \underline{v} \right\}.\]

We begin by noting the following consequence of a preservation of regularity argument:
\begin{proposition}\label{itstartedoutok2}Let $0 < \underline{v} \ll 1$, let $\epsilon >0$ be sufficiently small, and let $(\mathcal{M},g_{\mu\nu})$ be a spacetime produced by  Theorem~\ref{itisreg2} so that $\left(\mathcal{M},g_{\mu\nu}\right)$ exists in the region $\mathcal{W}$, and in this region the spacetime satisfies the regularity bounds~\eqref{02020202020922332}-\eqref{11212332123123123} and the estimates~\eqref{dkowdovoboto}. 

Then, for any $1 \ll \tilde N \ll N_0$, we have that,
\begin{equation*}
\sup_{0 \leq j \leq \tilde N}\sup_{(u,v) \in \mathcal{W}}\int_{\mathbb{S}^2_{u,v}}\left|\slashed{\nabla}^j\Psi\right|^2v^{4+2j} \lesssim \epsilon^{2-2\delta},\qquad \sup_{0 \leq j \leq \tilde N}\sup_{(u,v) \in \mathcal{W}}\int_{\mathbb{S}^2_{u,v}}\left|\slashed{\nabla}^j\psi^*\right|^2v^{2+2j} \lesssim \epsilon^{2-2\delta},
\end{equation*}
\[\sup_{0 \leq j \leq \tilde N}\sup_{(u,v) \in \mathcal{W}}\int_{\mathbb{S}^2_{u,v}}\left|\slashed{\nabla}^j\phi^*\right|^2v^{2j} \lesssim \epsilon^{2-2\delta}.\]
\end{proposition}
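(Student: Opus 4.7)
The plan is to prove Proposition \ref{itstartedoutok2} by a finite-order preservation-of-regularity argument, essentially iterating the estimates of Propositions \ref{thisisenegege}, \ref{gronricciest}, and \ref{whatwewillproveformetricgron} to successively higher orders. The induction runs on $j = 2, 3, \ldots, \tilde N$, with base case $j = 2$ furnished by Theorem \ref{itisreg2}. For the inductive step, I would commute each Bianchi pair, schematically written in (\ref{gronbianch}), with $\slashed{\nabla}^j$, conjugate by the weight $w_j \doteq (v-u)^{(3+2j)/2}\exp(Dv/u)$, and repeat the energy estimate of Proposition \ref{thisisenegege} verbatim. The exponential factor $\exp(Dv/u)$ produces, as before, a coercive spacetime bulk term that absorbs all top-order linear contributions provided $D$ is taken large enough (depending on $\underline{v}$ and $\tilde N$ but not on $\epsilon$).

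The nonlinear commutator errors produced at order $j$ are schematically of the form $\slashed{\nabla}^{j_1}\psi \cdot \slashed{\nabla}^{j_2}\Psi$ with $j_1 + j_2 \leq j$ together with cubic analogues, plus contributions from the commutator $[\slashed{\nabla}^j, \mathcal{D}]\Psi \sim \slashed{\nabla}^{j-1}(K\Psi)$. Using the Sobolev inequalities of Lemma \ref{sosososob2}, I would place the highest-order factor in $L^2(\mathbb{S}^2_{u,v})$ and the remaining factor(s) in $L^\infty$, estimating the $L^\infty$-factor at order at most $2$ via the base case and the $L^2$-factor either by the inductive hypothesis (when the maximum derivative count is strictly less than $j$) or by the coercive spacetime term times $D^{-1}$ (when exactly one factor carries $j$ derivatives, yielding a linear-in-top-order contribution). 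The Gauss curvature $K$ is propagated inductively at each order by integrating the $\nabla_4\rho$-equation and applying (\ref{genGauss}), exactly as in (\ref{agaussmboundbound}). With the higher-order curvature bounds in hand, for each Ricci coefficient I would commute its transport equation with $\slashed{\nabla}^j$ and run the scheme of Proposition \ref{gronricciest}; the metric coefficients then follow at one derivative lower by integrating (\ref{metriceqnsforintegrating}).

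The only conceptual obstacle is supplying the initial data for this induction, namely $\mathring H^j$-control on the incoming boundary of $\mathcal{W}$, which is inherited from region I and from the outgoing cone $\{u = -1,\ v \leq \underline{v}\}$. This data is not supplied by Theorem \ref{itisreg1} itself, which closes only at two derivatives, and must instead be obtained by propagating higher regularity through region I via a completely analogous induction on $j$, ultimately tracing back to the $\mathring H^{N_0}$-smoothness of the characteristic data furnished by Definition \ref{Mreg} and Proposition \ref{itexistsbutforalittlewhile}, together with the $\mathring H^{\lfloor N_1/100 \rfloor}$-bound on $\overset{\triangleright}{\Omega^{-1}\hat\chi}$ provided by Lemma \ref{uminus3data}. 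Each level of the induction costs a bounded (depending on $\tilde N$) number of derivatives from the data, so the hypothesis $\tilde N \ll N_0, N_1$ guarantees that the induction can be closed for any prescribed $\tilde N$. No new geometric ideas are required beyond those already developed in Sections \ref{bootforregone} and \ref{ohmyregiontwo}: the scheme was designed to be linear at the top order and therefore admits a routine extension to arbitrary finite regularity.
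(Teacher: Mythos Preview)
Your proposal is correct and is precisely the approach the paper takes: the authors' own proof consists of a single sentence invoking \emph{mutatis mutandis} the preservation-of-regularity argument of Proposition~\ref{itstartedoutok}, which in turn cites a standard propagation-of-higher-regularity scheme (Proposition~7.1 of \cite{scaleinvariant}) that costs finitely many angular derivatives of data. You have simply written out what that \emph{mutatis mutandis} actually entails, including the crucial observation that the higher-order data on the incoming boundary of $\mathcal{W}$ must itself be obtained by first running the analogous induction through region~I.
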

\begin{proof}The proof follows \emph{mutatis mutandis} as in Proposition~\ref{itstartedoutok}.
\end{proof}

\subsection{Shifting the Shift}

In this section we construct a new coordinate system so that the shift vector is in the $e_4$-direction. (See Remark~\ref{letusshifthteshift}.)

\begin{lemma}\label{shifttheshift}Let $0 < \underline{v} \ll 1$, let $\epsilon >0$ be sufficiently small, and let $(\mathcal{M},g_{\mu\nu})$ be a spacetime produced by  Theorem~\ref{itisreg2} so that $\left(\mathcal{M},g_{\mu\nu}\right)$ exists in the region $\mathcal{W}$, and in this region the spacetime satisfies the regularity bounds~\eqref{02020202020922332}-\eqref{11212332123123123} and the estimates~\eqref{dkowdovoboto}. 

Consider the sphere $\mathbb{S}^2_{-\frac{1}{2}\underline{v},\frac{1}{2}\underline{v}}$ at the intersection of the null hypersurfaces $\{u = -\frac{1}{2}\underline{v}\}$ and $\{v = \frac{1}{2}\underline{v}\}$, and then consider an arbitrary cover of $\mathbb{S}_{-\frac{1}{2}\underline{v},\frac{1}{2}\underline{v}}$ by a set of coordinate charts $U_1,\cdots, U_k$ with corresponding coordinate functions $\{\theta^A_{(i)}\}$ for $i = 1,\cdots, k$. The functions $\{\theta^A_{(i)}\}$, originally defined on $U_i$ may then be extended to $\mathcal{W}\times U_i$ by requiring that $\partial_u\theta^A_{(i)} = \partial_v\theta^A_{(i)} = 0$. (This is possible because $[\partial_u,\partial_v] = 0$.) These coordinates $\left(u,v,\theta^A_{(i)}\right)$ are, of course,  the coordinates which may be used in the double-null expression~\eqref{metricform}.

Given any choice of coordinates $\theta^A_{(i)}$ on $\mathbb{S}^2_{-\frac{1}{2}\underline{v},\frac{1}{2}\underline{v}}$, we will now define a new set of functions $\{\mathring{\theta}^A_{(i)}\}$ on $\mathcal{W}\times\mathbb{S}^2$ by requiring that $\mathring{\theta}^A_{(i)}\left(u,v,\theta^B\right):\mathcal{W} \times U_i \to \mathbb{R}$ satisfies 
\begin{equation}\label{thenewcodw}
\mathring{\theta}^A\left(-v,v,\theta^B\right) \doteq \theta^A\left(-\frac{1}{2}\underline{v},\frac{1}{2}\underline{v},\theta^B\right)\qquad \forall v \in [0,\underline{v}],\qquad \qquad e_3\left(\mathring{\theta}^A\right) = 0.
\end{equation}
Then we claim that $\left(u,v,\mathring{\theta}^A_{(i)}\right)$ form regular coordinates on $\mathcal{W}\times\mathbb{S}^2$. Furthermore, the metric $g_{\mu\nu}$ now takes the following form:
\begin{equation}\label{metricform23}
g = -2\Omega^2\left(du\otimes dv + dv\otimes du\right) + \slashed{g}_{AB}\left(d\mathring{\theta}^A - \mathring{b}^Adv\right)\otimes\left(d\mathring{\theta}^B - \mathring{b}^Bdv\right),
\end{equation}
for a shift vector $\mathring{b}^B$ which is uniquely determined by 
\begin{equation}\label{dqqqq}
\mathcal{L}_{\partial_u}\mathring{b}^A = 4\Omega^2\zeta^A,\qquad \mathring{b}^A|_{\frac{v}{-u} = 1} = -b^A.
\end{equation}

Finally, in this new double-null gauge, for any $1 \ll \tilde N \ll N_0$, we have that,
\begin{equation}\label{dkpwqqqedkcow}
\sup_{0 \leq j \leq \tilde N}\sup_{(u,v) \in \mathcal{W}}\int_{\mathbb{S}^2_{u,v}}\left|\slashed{\nabla}^j\Psi\right|^2v^{4+2j} \lesssim \epsilon^{2-2\delta},\qquad \sup_{0 \leq j \leq \tilde N}\sup_{(u,v) \in \mathcal{W}}\int_{\mathbb{S}^2_{u,v}}\left|\slashed{\nabla}^j\psi^*\right|^2v^{2+2j} \lesssim \epsilon^{2-2\delta},
\end{equation}
\begin{equation}\label{odkowmobwea}
\sup_{0 \leq j \leq \tilde N}\sup_{(u,v) \in \mathcal{W}}\int_{\mathbb{S}^2_{u,v}}\left|\slashed{\nabla}^j\phi^*\right|^2v^{2j} \lesssim \epsilon^{2-2\delta}.
\end{equation}
\end{lemma}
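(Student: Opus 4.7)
The plan is to construct the new angular coordinates $\mathring{\theta}^A$ as the unique solution to the transport equation $(\partial_u + b^B\partial_{\theta^B})\mathring{\theta}^A = 0$ along the null generators of the $\{v=\text{const}\}$ hypersurfaces, with initial data prescribed on $\{u+v=0\}$ by~\eqref{thenewcodw}. Since for each fixed $v\in[0,\underline{v}]$ the region $\mathcal{W}$ lives in $u\in[-v,-\underline{v}\,v)$, this is a linear first-order ODE in $u$ over an interval of length $\lesssim v$ whose coefficients ($b^B$ and its $\theta$-derivatives) are controlled in $L^\infty$ by Proposition~\ref{itstartedoutok2}. Standard ODE theory yields existence and smoothness of $\mathring{\theta}^A$; the Jacobian matrix $M^A_{\ B}\doteq \partial\mathring{\theta}^A/\partial\theta^B$ satisfies a matrix ODE with identity initial data whose right-hand side is bounded by $|\slashed{\nabla}b|\lesssim \epsilon^{1-\delta}v^{-1}$, so integrating over the $u$-interval of length $\lesssim v$ gives $M = I + O(\epsilon^{1-\delta})$, which is invertible. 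Hence $(u,v,\mathring{\theta}^A)$ is a bona-fide coordinate chart on $\mathcal{W}$.

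Next I would verify the metric form~\eqref{metricform23}. Since the null vector fields $e_3,e_4$ and the sphere foliation $\{\mathbb{S}^2_{u,v}\}$ are geometric, independent of the angular chart, by construction $e_3(u)=\Omega^{-1}$, $e_3(v)=0$ and $e_3(\mathring{\theta}^A)=0$, so $e_3 = \Omega^{-1}\partial_u$ in the new chart; i.e.\ the shift has been rotated out of the $e_3$-direction. Writing correspondingly $e_4 = \Omega^{-1}(\partial_v + \mathring{b}^A\partial_{\mathring{\theta}^A})$ forces $\mathring{b}^A = \partial_v\mathring{\theta}^A$ (derivative taken at fixed old coordinates $u,\theta^B$), and a dual-frame computation using $g(e_3,e_4)=-2$, $g(e_3,e_3)=g(e_4,e_4)=0$, and $\slashed{g}_{AB}=g(\partial_{\mathring{\theta}^A},\partial_{\mathring{\theta}^B})$ produces the announced expression for $g$. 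The boundary condition $\mathring{b}^A|_{v/{-u}=1}=-b^A$ follows by differentiating the initial prescription $\mathring{\theta}^A(-v,v,\theta^B)=\theta^B$ in $v$ and using $\partial_u\mathring{\theta}^A=-b^B\partial_{\theta^B}\mathring{\theta}^A = -b^A$ at $\{u+v=0\}$ (where $M=I$). The propagation equation $\mathcal{L}_{\partial_u}\mathring{b}^A = 4\Omega^2\zeta^A$ is the exact analogue of~\eqref{osjf} appropriate to the shift in the $e_4$-direction, as noted in Remark~\ref{letusshifthteshift}, and may be checked directly from the definition~\eqref{okokokthisistrosion} of $\zeta$ and the commutator $[e_3,e_4]$ in the new coordinates.

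For the estimates~\eqref{dkpwqqqedkcow}--\eqref{odkowmobwea}, the Ricci coefficients $\psi$, curvature components $\Psi$, the induced spherical metric $\slashed{g}$, and the covariant derivative $\slashed{\nabla}$ are all defined via the null frame and are \emph{geometric} -- unchanged as tensor fields. Consequently the integrands $|\slashed{\nabla}^j\Psi|_{\slashed{g}}^2$ and $|\slashed{\nabla}^j\psi^*|_{\slashed{g}}^2$ are scalars whose pointwise values do not depend on the chart; the only chart-dependence is through the reference round volume form on each $\mathbb{S}^2_{u,v}$. Since by the first step the change-of-chart Jacobian is $I + O(\epsilon^{1-\delta})$, this reference volume form transports with a factor $1+O(\epsilon^{1-\delta})$, and the bounds of Proposition~\ref{itstartedoutok2} transfer with only a harmless constant distortion, yielding~\eqref{dkpwqqqedkcow} together with the portions of~\eqref{odkowmobwea} concerning $\Omega$ and $\slashed{g}$. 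For the new shift, the boundary datum $\mathring{b}^A|_{v/{-u}=1}=-b^A$ inherits the required Sobolev bounds from $b$ via Proposition~\ref{itstartedoutok2}, and propagating~\eqref{dqqqq} forward in $u$ using the $L^\infty$ and Sobolev control of $\Omega$ and $\zeta$ over the $u$-interval of length $\lesssim v$ produces the bound on $\mathring{b}$ in~\eqref{odkowmobwea}.

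The principal source of care is bookkeeping: one must rigorously track which partial derivatives are taken at fixed $(u,\theta^B)$ versus fixed $(u,\mathring{\theta}^B)$ when relating $\mathring{b}$ to $b$ and $\zeta$, and one must confirm that the round reference metric used to define the Sobolev norms is compatibly transported between the two charts. No new analytical difficulty arises beyond what was already established in Theorem~\ref{itisreg2} and Proposition~\ref{itstartedoutok2}; the whole construction reduces to linear ODE theory on a short $u$-interval with coefficients of size $\epsilon^{1-\delta}$.
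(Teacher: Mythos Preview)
Your proposal is correct and follows essentially the same route as the paper. The paper differs only cosmetically: rather than tracking the Jacobian $M^A_{\ B}=\partial\mathring{\theta}^A/\partial\theta^B$, it writes the transport equation for the difference $\mathring{\vartheta}^A\doteq\mathring{\theta}^A-\theta^A$, namely $\partial_u\mathring{\vartheta}^A+(b\cdot\slashed{\nabla})\mathring{\vartheta}^A=-b^A$, and estimates $\mathring{\vartheta}^A$ directly in weighted Sobolev norms; it then reads off the metric form~\eqref{metricform23} by computing $g(\partial_v,\partial_{\mathring{\theta}^A})$ and $g(\partial_v,\partial_v)$, and obtains~\eqref{dqqqq} exactly as you do, from $[e_3,e_4]$ and the definition of $\zeta$. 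Your Jacobian argument and the paper's difference argument are equivalent ways of showing the new chart is a $C^{\tilde N}$-close perturbation of the old one, and both reduce the final estimates to Proposition~\ref{itstartedoutok2} plus integration of~\eqref{dqqqq}.
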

\begin{proof}We can re-write the transport equation~\eqref{thenewcodw} defining the new functions $\mathring{\theta}^A_{(i)}$ as the following equation for $\mathring{\vartheta}^A \doteq \mathring{\theta}^A - \theta^A$:
\begin{equation}\label{newtrans}
\frac{\partial}{\partial u}\mathring{\vartheta}^A + \left(b\cdot\slashed{\nabla}\right)\mathring{\vartheta}^A = -b^A.
\end{equation}
Using Proposition~\ref{itstartedoutok2}, it follows easily from~\eqref{newtrans} that for any $1 \ll \check{N} \ll N_0$ we have that 
\begin{equation}\label{estimadwdwfrovartheat}
\sup_{0 \leq j \leq \check{N}}\sup_{(u,v) \in \mathcal{W}}\int_{\mathbb{S}^2_{u,v}}\left|\slashed{\nabla}^{2j}\mathring{\vartheta}^A\right|^2v^{2j}\mathring{\rm dVol} \lesssim \epsilon^{2-2\delta},
\end{equation}
In particular, it is immediate that for each $(u,v) \in \mathcal{W}$, the functions $\{\mathring{\theta}^A_{(i)}\}$ form a regular set of coordinate functions on $\mathbb{S}^2_{u,v}$ lying in the Sobolev space $H^{\check{N}}$ for any $1 \ll \check{N} \ll N_0$.

Next we argue that the metric takes the desired form~\eqref{metricform23}. First of all, the change of variables formula implies each $\frac{\partial}{\partial \mathring{\theta}^A}$ is tangent to $\mathbb{S}^2_{u,v}$. Thus, we have
\[g\left(e_3,\partial_{\mathring{\theta}^A}\right) = g\left(e_4,\partial_{\mathring{\theta}^A}\right) = 0.\]
Furthermore, the change of variables formula implies that there exists $\tilde b^A$ and $\mathring{b}^A$ so that
\[\Omega e_3 = \frac{\partial}{\partial u} + \tilde b^A\frac{\partial}{\partial \mathring{\theta}^A},\qquad \Omega e_4 = \frac{\partial}{\partial v} + \mathring{b}^A\frac{\partial}{\partial \mathring{\theta}^A}.\]
However,~\eqref{thenewcodw} immediately implies that $\tilde b = 0$, and we furthermore have
\[g\left(\partial_v,\partial_{\mathring{\theta}^A}\right) = g\left(\Omega e_4,\partial_{\mathring{\theta}^A}\right) - g\left(\mathring{b}^B\partial_{\mathring{\theta}^B},\partial_{\mathring{\theta}^A}\right) = \mathring{b}_A,\]
\[g\left(\partial_v,\partial_v\right) = \Omega^2g\left(e_4,e_4\right) - g\left(\mathring{b}^A\partial_A,\partial_v\right) = -\left|\mathring{b}\right|^2.\]
It now follows that the metric takes the form~\eqref{metricform23} for some $\mathring{b}^A$. To see that~\eqref{dqqqq} holds, we first note that
\[\left[e_3,e_4\right] = \frac{\partial \mathring{b}^A}{\partial u}\frac{\partial}{\partial \mathring{\theta}^A}.\]
Then the desired propagation equation~\eqref{dqqqq} follows from the definition of torsion~\eqref{okokokthisistrosion}:
\[\zeta_A = \frac{1}{2}g\left(D_Ae_4,e_3\right) \Rightarrow \zeta_A = \frac{1}{4}g\left([e_3,e_4],e_A\right).\]

Finally, the estimates~\eqref{dkpwqqqedkcow} and~\eqref{odkowmobwea} follow easily from~\eqref{estimadwdwfrovartheat}, the new propagation equation for $\mathring{b}$, and Proposition~\ref{itstartedoutok2}. 

\end{proof}

\subsection{Gluing on an Asymptotically Flat Cone}
Next we give a definition which is similar to Definition~\ref{indatasets}.

\begin{definition}\label{indatasets2}Let $\left(\mathcal{M},g_{\mu\nu}\right)$ be a spacetime produced by Lemma~\ref{shifttheshift}. We then say that a $1$-parameter family $\left(\Omega^{({\rm out})}\left(v,\theta^A\right),\left(b^A\right)^{(\rm out)}\left(v,\theta^B\right),\slashed{g}_{AB}^{(\rm out)}\left(v,\theta^C\right)\right)_{v \geq \frac{1}{2}\underline{v}}$ consisting of a non-zero $C^1$ function $\Omega^{(\rm out)}$, a continuous vector field $\left(b^A\right)^{(\rm out)}$, and a $C^1$ Riemannian metric  $\slashed{g}_{AB}^{(\rm out)}$ on $\mathbb{S}^2$ form ``compatible outgoing gluing data'' if the following hold
\begin{enumerate}
	\item $v \in [\frac{1}{2}\underline{v},\underline{v})$ implies that
	\[\Omega^{({\rm out})}\left(v,\theta\right) = \Omega\left(-\underline{v}^2,v,\theta\right),\qquad \left(b^A\right)^{(\rm out)}\left(v,\theta\right) = b^A\left(-\underline{v}^2,v,\theta\right),\qquad \slashed{g}_{AB}^{(\rm out)}\left(v,\theta^C\right) = \slashed{g}_{AB}\left(-\underline{v}^2,v,\theta\right),\]
	where $\Omega$, $b^A$, and $\slashed{g}_{AB}$ are the metric components of the spacetime $\left(\mathcal{M},g_{\mu\nu}\right)$.

	\item After defining ${\rm tr}\chi$, $\hat{\chi}_{AB}$, and $\omega$ for $v \in [\frac{1}{2}\underline{v},\infty)$ by
	\[\left(\Omega^{(\rm out)}\right)^{-1}\mathcal{L}_{\partial_v+b}\slashed{g}^{(\rm out)}_{AB} \doteq {\rm tr}\chi\slashed{g}^{(\rm out)}_{AB} + 2\hat{\chi}_{AB},\qquad \omega \doteq \left(\Omega^{(\rm out)}\right)^{-1}\left(\partial_v+b^{(\rm out)}\cdot\slashed{\nabla}\right)\log\Omega^{(\rm out)},\]
	for a trace-free $\hat{\chi}_{AB}$,
	we have that the following equation is satisfied:
	\begin{equation}\label{ray2cons2}
	\left(\Omega^{(\rm out)}\right)^{-1}\left(\partial_v+b^{(\rm out)}\cdot\slashed{\nabla}\right){\rm tr}\chi + \frac{1}{2}\left({\rm tr}\chi\right)^2 = -2\omega{\rm tr}\chi - \left|\hat{\chi}\right|^2.
	\end{equation}
	
\end{enumerate}
\end{definition}

Now we have
\begin{proposition}\label{mixitloval}Let  $\left(\Omega^{({\rm out})}\left(v,\theta^A\right),\left(b^A\right)^{(\rm out)}\left(v,\theta^B\right),\slashed{g}_{AB}^{(\rm out)}\left(v,\theta^C\right)\right)_{v \geq \frac{1}{2}\underline{v}}$ form ``compatible outgoing gluing data'' such that for suitable $\tilde N \gg 1$ we have
\[\sup_{v > \frac{1}{2}\underline{v},\ i+j \leq \tilde N} \left[\left\vert\left\vert \mathcal{L}_{\partial_v}^i\mathring{\nabla}^j\left(\Omega^{(\rm out)},\left(b^A\right)^{(\rm out)},\slashed{g}_{AB}^{(\rm out)}\right)\right\vert\right\vert_{L^2\left(\mathbb{S}^2_v\right)}\right] < \infty,\]
where $\mathring{\nabla}_A$ is the covariant derivative relative to a reference, $v$-independent, round metric.

Let 
\[\mathcal{H} \doteq \left\{\left(\left\{\frac{v}{-u} = \frac{1}{2}\underline{v}^{-1}\right\} \cap \left\{v \in (0,\frac{1}{2}\underline{v}]\right\}\right) \cup \left(\left\{v \geq \frac{1}{2}\underline{v}\right\} \cap \left\{u = -\underline{v}^2\right\}\right)\right\},\]
and, for a given curve $\tau(v) : (\frac{1}{2}\underline{v},\infty) \to (-\underline{v}^2,0)$,
\[\mathcal{H}_{\tau(v)} \doteq \left\{\left(\left\{\frac{1}{2}\underline{v}^{-1} \leq \frac{v}{-u} \leq \underline{v}^{-1}\right\} \cap \left\{v \in (0,\frac{1}{2}\underline{v}]\right\}\right) \cup \left(\left\{v \geq \frac{1}{2}\underline{v}\right\} \cap \left\{u \in (-\underline{v}^2,\tau(v))\right\}\right)\right\},\]
Then there exists $\tau(v)$  so that there exists a spacetime $\left(\tilde{\mathcal{M}},g_{\mu\nu}\right)$ defined in a region $(u,v,\theta^A) \in \mathcal{H}_{\tau(v)}\times \mathbb{S}^2$ in the double-null foliation form~\eqref{metricform23} such that
\begin{enumerate}
	\item The regularity bounds~\eqref{bound1121}-\eqref{112123} hold.
	\item $(\tilde{\mathcal{M}},g_{\mu\nu})$ agrees with the solution $(\mathcal{M},g_{\mu\nu})$ in the region $\{\mathcal{H}_{\tau(v)} \cap \{v \leq \frac{1}{2}\underline{v}\}\}$.
	\item $\left(\Omega,b^A,\slashed{g}_{AB}\right)|_{\mathcal{H} \cap \{v \geq \frac{1}{2}\underline{v}\}} = \left(\Omega^{(\rm out)},\left(b^A\right)^{(\rm out)},\slashed{g}_{AB}^{(\rm out)}\right)$.
\end{enumerate}
\end{proposition}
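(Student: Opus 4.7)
The region $\mathcal{H}_{\tau(v)}$ naturally splits into two pieces along $v=\tfrac{1}{2}\underline{v}$. For $v\in(0,\tfrac{1}{2}\underline{v}]$, the condition $\tfrac{1}{2}\underline{v}^{-1}\le \tfrac{v}{-u}\le\underline{v}^{-1}$ defines a wedge which, as one verifies directly, lies inside the region $\mathcal{W}$ on which the spacetime $(\mathcal{M},g)$ produced by Lemma~\ref{shifttheshift} is already defined, and so the construction of $(\tilde{\mathcal{M}},\tilde g)$ on this part is trivial. The content of the proposition concerns the second part $\{v\ge\tfrac{1}{2}\underline{v}\}\cap\{u\in(-\underline{v}^2,\tau(v))\}$, which is a ``forward'' extension in $u$ past the cone $\{u=-\underline{v}^2\}$. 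The plan is to set up a characteristic initial value problem with outgoing cone $\mathcal{C}_{\rm out}=\{u=-\underline{v}^2\}\cap\{v\ge \tfrac{1}{2}\underline{v}\}$ carrying the given compatible outgoing gluing data, and ingoing cone $\mathcal{C}_{\rm in}=\{v=\tfrac{1}{2}\underline{v}\}\cap\{u\in[-\underline{v},-\underline{v}^2]\}$ carrying the restriction of $(\mathcal{M},g)$, and then to invoke the analog of Theorem~\ref{localexistencecharbetter} in the double-null gauge of Remark~\ref{letusshifthteshift} (with the shift in the $e_4$-direction, obtained after Lemma~\ref{shifttheshift}).

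The compatibility conditions of Definition~\ref{indatasets2} are already encoded: the first item of that definition forces $\mathcal{C}_{\rm in}$ and $\mathcal{C}_{\rm out}$ to agree at the corner sphere $\mathbb{S}^2_{-\underline{v}^2,\frac{1}{2}\underline{v}}$ (and in fact on an overlap $v\in[\tfrac12\underline{v},\underline{v}]$, where $u=-\underline v^2$ still lies in $\mathcal W$), while the Raychaudhuri constraint~\eqref{ray2cons2} is automatic on the $\mathcal{C}_{\rm out}$ portion coming from $(\mathcal{M},g)$ and is imposed by hypothesis on the remainder. The required regularity of the outgoing data is part of the hypotheses, and the regularity of the ingoing data on a small incoming piece follows from Proposition~\ref{itstartedoutok2}. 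The value of the torsion $\zeta$ at the corner sphere can be read off from either dataset, which agree there by Proposition~\ref{itstartedoutok2} and the compatibility condition.

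I would then apply the local existence result to obtain a solution in some corner neighborhood of the initial data. Because the outgoing data is prescribed on the non-compact interval $v\in[\tfrac{1}{2}\underline{v},\infty)$, rather than applying the theorem once, I would apply it to the truncated data on $v\in[\tfrac{1}{2}\underline{v},V]$ for each finite $V$, obtaining a local existence region $\mathcal{R}_V\subset\{(u,v):v\in[\tfrac{1}{2}\underline{v},V],\ u\in(-\underline{v}^2,\tau_V(v))\}$ for some $\tau_V(v)>-\underline{v}^2$. Uniqueness of the characteristic IVP in the chosen coordinate gauge ensures that the solutions for different $V$ patch consistently and that they agree with $(\mathcal{M},g)$ on the overlap $v\le\tfrac{1}{2}\underline{v}$ (where both satisfy the same equations with the same data along $\mathcal{C}_{\rm in}\cup\mathcal{C}_{\rm out}$). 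Setting $\tau(v):=\sup_V \tau_V(v)$ (or, equivalently, taking the union of the $\mathcal{R}_V$) produces the desired function $\tau(v)\in(-\underline{v}^2,0)$ and the spacetime $(\tilde{\mathcal{M}},\tilde g)$ on $\mathcal{H}_{\tau(v)}\times\mathbb{S}^2$, with the regularity bounds~\eqref{bound1121}--\eqref{112123} inherited from the theorem.

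The main obstacle is the non-compactness of the outgoing data as $v\to\infty$: there is no claim that the existence region has a uniform ``thickness'' in $u$, and indeed one expects $\tau(v)\to -\underline{v}^2$ as $v\to\infty$. The statement accommodates this by allowing $\tau(v)$ to be any curve into $(-\underline{v}^2,0)$, so no quantitative lower bound on $\tau(v)+\underline{v}^2$ is required here; the actual quantitative control is deferred to the bootstrap argument of Section~\ref{bottIIIIIIII}. Beyond this, the only subtlety is a routine transcription of Theorem~\ref{localexistencecharbetter} from the $e_3$-shift gauge~\eqref{metricform} to the $e_4$-shift gauge~\eqref{metricformotherway}, which amounts to swapping the roles of $u$ and $v$ and of incoming and outgoing data throughout the statement of the theorem, and which does not alter any of the estimates or the proof.
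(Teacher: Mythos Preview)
Your proposal is correct and follows the same strategy as the paper, which disposes of the proposition in one line: ``This may be easily deduced via Theorem~\ref{localexistencechar} and a domain of dependence argument.'' Your write-up is simply a fuller spelling-out of that sentence --- split the region at $v=\tfrac12\underline v$, take the first piece directly from $(\mathcal M,g)$, and solve a characteristic initial value problem to the future of the corner $(-\underline v^2,\tfrac12\underline v)$ for the second piece, invoking uniqueness/domain of dependence to patch. Two minor points: (i) the paper cites the \emph{smooth} local existence theorem (Theorem~\ref{localexistencechar}) rather than Theorem~\ref{localexistencecharbetter}; this is legitimate since the data here lives away from $\{v=0\}$ where the solution is smooth, so either theorem applies. (ii) Your specification of the incoming cone $\mathcal C_{\rm in}=\{v=\tfrac12\underline v\}\cap\{u\in[-\underline v,-\underline v^2]\}$ is on the wrong side of the corner: to solve to the \emph{future} of $(-\underline v^2,\tfrac12\underline v)$ one needs incoming data on $u\in[-\underline v^2,-\tfrac12\underline v^2]$ (which is still covered by $\mathcal W$), not on $u\le -\underline v^2$. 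This is a slip rather than a gap --- your text elsewhere makes clear you understand the extension is toward $u\to 0$.
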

\begin{proof}This may be easily deduced via Theorem~\ref{localexistencechar} and a domain of dependence argument.
\end{proof}

In the next proposition we construct ``compatible outgoing gluing data''  which we will use to construct an asymptotically flat null cone.
\begin{proposition}\label{kdowvappend}Let $\left(\mathcal{M},g_{\mu\nu}\right)$ be a spacetime produced by Lemma~\ref{shifttheshift} and $\mathring{\slashed{g}}_{AB}$ denote the reference metric which is used to define the norms in Proposition~\ref{gronwallikeest} . Then let $\left\{\hat{\slashed{g}}_{AB}^{(\rm out)}(v),\Omega^{(\rm out)}(v),(b^A)^{(\rm out)}\right\}_{v > \frac{1}{2}\underline{v}}$ be any $1$-parameter family of metrics, functions, and vector fields on $\mathbb{S}^2$ which, for some $\tilde N \gg 1$, satisfy the following constraints:
\begin{enumerate}
	\item $\left(\hat{\slashed{g}}^{(\rm out)}_{AB}(v),\Omega^{(\rm out)}(v),(b^A)^{(\rm out)}(v)\right) = \left(\slashed{g}_{AB},\Omega,b^A\right)|_{\left(-\underline{v}^2,v\right)}$ for $v \in [\frac{1}{2}\underline{v},\underline{v}]$, where $\left(\slashed{g}_{AB},\Omega,b^A\right)$ are the values of the metric components for the spacetime $\left(\mathcal{M},g_{\mu\nu}\right)$. 
	\item $\left(\hat{\slashed{g}}^{(\rm out)}_{AB}(v),\Omega^{(\rm out)}(v),(b^A)^{(\rm out)}(v)\right) = \left(\mathring{\slashed{g}}_{AB},1,0\right)$ for $v \in [2,\infty)$.
	\item $\sup_v\sup_{1 \leq j \leq \tilde N}\int_{\mathbb{S}^2}\left|\mathring{\nabla}^j\left(\left(\hat{\slashed{g}}^{(\rm out)}_{AB}(v),\Omega^{(\rm out)}(v),(b^A)^{(\rm out)}(v)\right) - \left(\mathring{\slashed{g}}_{AB},1,0\right)\right)\right|^2\mathring{\rm dVol} \lesssim \epsilon^{2-2\delta}$.
\end{enumerate}
Then there exists $\varphi^{(\rm out)}\left(v,\theta\right) : (\frac{1}{2}\underline{v},\infty)\times \mathbb{S}^2\to \mathbb{R}$ such that \[\left(\Omega^{({\rm out})}\left(v,\theta^A\right),\left(b^A\right)^{(\rm out)}\left(v,\theta^B\right),\left(\varphi^{(\rm out)}\right)^2\hat{\slashed{g}}_{AB}^{(\rm out)}\left(v,\theta^C\right)\right)_{v \geq \frac{1}{2}\underline{v}},\]
 form ``compatible outgoing gluing data.'' Furthermore, we have the following estimates for $\varphi$, ${\rm tr}\chi$, and $\hat{\chi}$: 
\[\sup_{1 \leq j \leq \tilde N} \int_{\mathbb{S}^2}\left|\mathring{\nabla}^j\left(\log\varphi - 2\log(v+\underline{v}),v{\rm tr}\chi - \frac{2v}{v+\underline{v}},v\hat{\chi},v^2\alpha\right)\right|^2v^{2j}\mathring{\rm dVol} \lesssim \epsilon^{2-2\delta}.\]

\end{proposition}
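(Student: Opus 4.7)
\medskip

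\noindent\textbf{Proof Proposal.} The strategy is to turn the Raychaudhuri constraint \eqref{ray2cons2} into a transport equation for the conformal factor $\varphi^{(\rm out)}$ along the integral curves of the vector field $L \doteq \partial_v + (b^A)^{(\rm out)}\slashed{\nabla}_A$, following exactly the pattern used in Proposition~\ref{itexistsbutforalittlewhile}. Setting $\slashed{g}^{(\rm out)} = (\varphi^{(\rm out)})^2 \hat{\slashed{g}}^{(\rm out)}$ and introducing the $\hat{\slashed{g}}^{(\rm out)}$-quantities
\[
\hat{H} \doteq \tfrac{1}{2}\bigl(\hat{\slashed{g}}^{(\rm out)}\bigr)^{AB}\mathcal{L}_L\hat{\slashed{g}}^{(\rm out)}_{AB},\qquad \hat{\hat{\chi}}_{AB} \doteq \tfrac{1}{2}\mathcal{L}_L\hat{\slashed{g}}^{(\rm out)}_{AB} - \tfrac{\hat H}{2}\hat{\slashed{g}}^{(\rm out)}_{AB},
\]
one computes $\Omega^{(\rm out)}\mathrm{tr}\chi = 2L\log\varphi^{(\rm out)} + \hat H$ and $|\Omega^{(\rm out)}\hat{\chi}|_{\slashed{g}}^{2} = |\hat{\hat{\chi}}|^2_{\hat{\slashed{g}}}$. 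Plugging into \eqref{ray2cons2} and applying the standard Riccati-to-linear substitution $\psi \doteq \varphi^{(\rm out)}\exp\!\bigl(\tfrac{1}{2}\!\int \hat H\,dv\bigr)$ along each integral curve of $L$, one obtains the \emph{linear} second-order transport equation
\[
L^{2}\psi \;=\; 2(L\log\Omega^{(\rm out)})\,L\psi \;-\; \tfrac{1}{2}|\hat{\hat{\chi}}|_{\hat{\slashed{g}}}^{2}\,\psi.
\]

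\noindent The initial data are dictated by matching to the spacetime on $[\underline{v}/2,\underline{v}]$: since $\hat{\slashed{g}}^{(\rm out)} = \slashed{g}|_{u=-\underline{v}^2}$ there, we take $\varphi^{(\rm out)}\equiv 1$ on $[\underline{v}/2,\underline{v}]$, and the above ODE is automatically solved on this interval because the spacetime $\slashed{g}$ satisfies Raychaudhuri. At $v = \underline{v}$ this fixes the initial conditions $\psi(\underline v)$ and $L\psi(\underline v)$ for integration into $v>\underline v$. Since the ODE is linear with coefficients controlled by $\hat{\slashed{g}}^{(\rm out)}$, $\Omega^{(\rm out)}$, $b^{(\rm out)}$, one obtains a global solution $\psi\in C^{1}$ (in $v$) on $[\underline v/2,\infty)$. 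In the region $v\geq 2$ the data reduces to $(\mathring{\slashed g},1,0)$ so $\hat H = 0$, $\hat{\hat{\chi}}=0$, $L\log\Omega^{(\rm out)}=0$, and the ODE degenerates to $\partial_{v}^{2}\psi = 0$, yielding $\psi(v,\theta) = A(\theta) + B(\theta)(v-2)$ with $A,B$ determined by matching at $v=2$; one finds $B(\theta)$ strictly positive and close to $1$ by a smallness argument, so $\varphi^{(\rm out)}$ grows linearly in $v$ and the cone $\mathcal{H}$ is asymptotically flat.

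\noindent For the angular regularity and the quantitative estimates, I would commute the linear transport equation with $\mathring{\nabla}^{j}$ for $1\leq j\leq \tilde{N}$. By hypothesis the coefficients $\hat H$, $|\hat{\hat\chi}|_{\hat{\slashed g}}^{2}$, $L\log\Omega^{(\rm out)}$ are $O(\epsilon^{2-2\delta})$ (with appropriate $v$-weights) in all angular Sobolev norms up to order $\tilde{N}$, since $(\hat{\slashed{g}}^{(\rm out)},\Omega^{(\rm out)},b^{(\rm out)})$ are $\epsilon^{1-\delta}$-close to $(\mathring{\slashed{g}},1,0)$. Writing $\log\varphi^{(\rm out)} = F(v) + \mathring{f}(v,\theta)$ where $F(v)$ absorbs the purely $v$-dependent, $\theta$-average part (which is within $O(1)$ of $\log(v+\underline v)$ by direct integration of the ODE for the Minkowski model $\partial_{v}^{2}\psi=0$), the angular error $\mathring{\nabla}^{j}\mathring f$ satisfies a linear transport equation whose right-hand side is schematically $\mathring{\nabla}^{j}(\text{coefficient})\cdot(\text{lower order})$. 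Standard Gr\"onwall-type estimates, performed along integral curves of $L$ and using the smallness of coefficients and of $b^{(\rm out)}$ (so that integral curves are well-behaved), yield
\[
\int_{\mathbb{S}^{2}}|\mathring{\nabla}^{j}\mathring f|^{2}\,v^{2j}\,\mathring{\rm dVol} \;\lesssim\;\epsilon^{2-2\delta}, \qquad 1\leq j\leq \tilde N.
\]
The corresponding bounds for $v\,\mathrm{tr}\chi - \frac{2v}{v+\underline v}$, $v\hat\chi$ and $v^{2}\alpha$ then follow from the definition of these quantities in terms of $\varphi^{(\rm out)}$ and its $L$-derivatives, together with the $\nabla_{4}$-equation \eqref{4hatchi} for $\alpha$.

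\noindent The main technical obstacle is not the existence of $\varphi^{(\rm out)}$ (which is immediate from linear ODE theory) but rather establishing the \emph{linear growth} $\varphi^{(\rm out)}\sim v+\underline v$ globally in $v$ and propagating the smallness of angular derivatives across the transition region $v\in[\underline v,2]$ where the data interpolates between the spacetime values and Minkowski. The potential issue is that a Riccati equation can, in principle, develop singularities, so one must verify that the linearized $\psi$ remains strictly positive — this uses precisely that the $L^{2}$ part of the ODE is a small perturbation of $\partial_{v}^{2}\psi=0$ together with the positivity of the initial data, and is quantitatively ensured by the smallness $\epsilon^{1-\delta}$ of the source terms $|\hat{\hat\chi}|^{2}$ and $L\log\Omega^{(\rm out)}$.
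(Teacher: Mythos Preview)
Your approach is correct and matches the paper's, which simply says ``This follows by using~\eqref{ray2cons2} and arguing as in the proof of Proposition~\ref{itexistsbutforalittlewhile}. We omit the details.'' You have supplied precisely those omitted details: reducing the Raychaudhuri constraint to a second-order ODE for the conformal factor along the null generators, linearizing via the Riccati substitution, matching to the spacetime data on $[\underline v/2,\underline v]$ (where $\varphi^{(\rm out)}\equiv 1$), and propagating angular regularity by commutation and Gr\"onwall --- exactly the template of Proposition~\ref{itexistsbutforalittlewhile}, adapted to the present setting where $\hat{\slashed g}^{(\rm out)}$ is prescribed rather than determined.
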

\begin{proof}This follows by using~\eqref{ray2cons2} and arguing as in the proof of Proposition~\ref{itexistsbutforalittlewhile}. We omit the details. 
\end{proof}

In the next proposition we analyze the behavior of all Ricci coefficients and curvature components for the initial data produced by Proposition~\ref{kdowvappend}.
\begin{proposition}\label{thisgivesthetildedmda}Let $\left(\mathcal{M},g_{\mu\nu}\right)$ be a spacetime produced by Lemma~\ref{shifttheshift} and $\mathring{\slashed{g}}_{AB}$ denote the reference metric which is used to define the norms in Proposition~\ref{gronwallikeest}, and 
\[\left(\Omega^{({\rm out})}\left(v,\theta^A\right),\left(b^A\right)^{(\rm out)}\left(v,\theta^B\right),\left(\varphi^{(\rm out)}\right)^2\hat{\slashed{g}}_{AB}^{(\rm out)}\left(v,\theta^C\right)\right)_{v > \frac{1}{2}\underline{v}},\]
be the corresponding ``compatible outgoing gluing data'' produced by Proposition~\ref{kdowvappend}. Let $\left(\tilde{\mathcal{M}},g_{\mu\nu}\right)$ denote the spacetime produced by Proposition~\ref{mixitloval}. Then for any $N$ satisfying $1 \ll N \ll N_0$, we have the following estimates for Ricci coefficients $\psi \neq \hat{\underline{\chi}}_{AB},\underline{\omega}$ and curvature components $\Psi \neq \left(\underline{\alpha}_{AB},\underline{\beta}_A\right)$ along $\mathcal{H}$:
\[\sup_{1 \leq j \leq N}\sup_{(u,v) \in \mathcal{H}}\int_{\mathbb{S}^2_{u,v}}\left|\slashed{\nabla}^j\psi^*\right|^2v^{2j+2}\mathring{\rm dVol} \lesssim \epsilon^{2-2\delta},\qquad \sup_{1 \leq j \leq N}\sup_{(u,v) \in \mathcal{H}}\int_{\mathbb{S}^2_{u,v}}\left|\slashed{\nabla}^j\Psi\right|^2v^{2j+4}\mathring{\rm dVol} \lesssim \epsilon^{2-2\delta}.\]
For any $s > 0$, we have that 
\[\sup_{1 \leq j \leq N}\sup_{(u,v) \in \mathcal{H}}\int_{\mathbb{S}^2_{u,v}}\left|\slashed{\nabla}^j\hat{\underline{\chi}}\right|^2v^{2j+2-2s}\mathring{\rm dVol} \lesssim_s \epsilon^{2-2\delta},\qquad \sup_{1 \leq j \leq N}\sup_{(u,v) \in \mathcal{H}}\int_{\mathbb{S}^2_{u,v}}\left|\slashed{\nabla}^j\underline{\beta}\right|^2v^{2j+4-2s}\mathring{\rm dVol} \lesssim_s \epsilon^{2-2\delta}.\]
\end{proposition}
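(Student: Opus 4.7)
The plan is to decompose $\mathcal{H} = \mathcal{H}_1 \cup \mathcal{H}_2$ with $\mathcal{H}_1 \doteq \mathcal{H} \cap \{v \leq \frac{1}{2}\underline{v}\}$ (the ``interior'' portion along $\{v/(-u) = \frac{1}{2}\underline{v}^{-1}\}$) and $\mathcal{H}_2 \doteq \mathcal{H} \cap \{v \geq \frac{1}{2}\underline{v}\}$ (the ``asymptotically flat'' portion along $\{u = -\underline{v}^2\}$). On $\mathcal{H}_1$, the bounds claimed in the proposition follow immediately from Proposition~\ref{itstartedoutok2} together with Lemma~\ref{shifttheshift}: in that range $v$ and $|u|$ are comparable up to factors of $\underline{v}$, so any $|u|$-weighted bound from those results transfers (with a $\underline{v}$-dependent, hence admissible, constant) to a $v$-weighted bound of the requested form, even the clean ones with no $s$-loss.

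On $\mathcal{H}_2$, Proposition~\ref{kdowvappend} directly provides the required bounds for $\slashed{g}$, $\Omega$, $b$, ${\rm tr}\chi$, $\hat{\chi}$, and, via the $\nabla_4\hat{\chi}$ relation expressing $\alpha$ in terms of angular and $v$-derivatives of the data, for $\alpha$; so it remains to propagate estimates for $\eta$, $\underline{\eta}$, ${\rm tr}\underline{\chi}$, $\hat{\underline{\chi}}$, $\beta$, $\rho$, $\sigma$, and $\underline{\beta}$. For each of these I would commute the corresponding $\nabla_4$ equation from~\eqref{4trchi}--\eqref{4undalpha} with $\slashed{\nabla}^j$ via Lemma~\ref{commlemm}, conjugate by a weight $v^{k}$ matched to the linear coefficient (e.g.\ $k=1$ for $\hat{\underline{\chi}}$ and $\eta$ since $\frac{1}{2}{\rm tr}\chi \sim v^{-1}$, and $k=2$ for $\beta$, $\rho$, $\sigma$, $\underline{\beta}$, ${\rm tr}\underline{\chi}$ to handle the $\frac{3}{2}{\rm tr}\chi$ or $2{\rm tr}\chi$ coefficients), and integrate forward from the junction sphere at $v = \frac{1}{2}\underline{v}$, where the initial data are controlled by the bounds on $\mathcal{H}_1$ just established. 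The nonlinear sources on the right hand sides involve only quantities either already controlled by Proposition~\ref{kdowvappend} or in an earlier step of a simultaneous Gr\"onwall bootstrap, and the smallness in $\epsilon$ closes everything in a standard way; note that $\underline{\eta}$ can be eliminated algebraically via $\underline{\eta} = -\eta + 2\slashed{\nabla}\log\Omega$ and $\omega$ is determined directly from $\Omega$, so no $\nabla_4$ equation is needed for those.

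The main obstacle is the precise power-of-$v$ loss for $\hat{\underline{\chi}}$ and $\underline{\beta}$. The $\nabla_4$ equation~\eqref{4hatuchi} reads
\[
\nabla_4\hat{\underline{\chi}} + \frac{1}{2}{\rm tr}\chi\,\hat{\underline{\chi}} = 2\omega\hat{\underline{\chi}} + \slashed{\nabla}\hat{\otimes}\underline{\eta} + \underline{\eta}\hat{\otimes}\underline{\eta} - \frac{1}{2}{\rm tr}\underline{\chi}\,\hat{\chi},
\]
where the linear coefficient $\frac{1}{2}{\rm tr}\chi \sim v^{-1}$ is exactly critical with respect to the $v^{-2}$ decay of the source $\slashed{\nabla}\hat{\otimes}\underline{\eta}$ (since $\underline{\eta}$ is only $O(\epsilon v^{-1})$), so direct Gr\"onwall integration yields only $|\hat{\underline{\chi}}|\lesssim \epsilon v^{-1}\log v$ and one cannot do better without additional input. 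This logarithm is absorbed into an arbitrary $s$-loss via $\log v \lesssim_s v^{s}$, giving $\int_{\mathbb{S}^2_{u,v}} |\slashed{\nabla}^j\hat{\underline{\chi}}|^2 v^{2j+2-2s} \lesssim_s \epsilon^{2-2\delta}$. The same logarithm propagates into $\underline{\beta}$ through the nonlinear term $2\hat{\underline{\chi}}\cdot\beta$ in~\eqref{4undbeta} (and through $\slashed{\nabla}\rho$ with the critical $v^{-3}$ decay of $\rho$), producing the analogous $s$-loss for $\underline{\beta}$; since the statement tolerates any $s > 0$, these are the only nontrivial modifications needed and the remaining Ricci coefficients and curvature components satisfy the clean bounds.
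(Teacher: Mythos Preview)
Your overall plan --- split $\mathcal{H}$, invoke Proposition~\ref{itstartedoutok2} on the bounded-$v$ portion, and integrate $\nabla_4$ transport equations forward on the asymptotically flat portion --- matches the paper, and your diagnosis of the critical coefficient $\tfrac{1}{2}{\rm tr}\chi$ in~\eqref{4hatuchi} with the resulting $s$-loss for $\hat{\underline{\chi}}$ is correct. Where you differ is in the curvature components: the paper does not integrate $\nabla_4$ equations for $\rho$, $\sigma$, $\beta$, $\underline{\beta}$ at all, but reads them off algebraically from the constraint equations~\eqref{genGauss},~\eqref{curleta},~\eqref{tcod1},~\eqref{tcod2} once the Ricci coefficients are known. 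This is shorter and makes the $s$-loss for $\underline{\beta}$ transparent (it is simply inherited from $\slashed{\rm div}\hat{\underline{\chi}}$ via~\eqref{tcod2}), whereas in your approach the loss has to be extracted from the borderline weight in~\eqref{4undbeta}.

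There is, however, a genuine gap in your treatment of ${\rm tr}\underline{\chi}$. You group it with $\beta$, $\rho$, $\sigma$, $\underline{\beta}$ as having linear coefficient ``$\tfrac{3}{2}{\rm tr}\chi$ or $2{\rm tr}\chi$'' and assign weight $k=2$, but~\eqref{4truchi} actually has coefficient only $\tfrac{1}{2}{\rm tr}\chi$ --- the \emph{same} borderline coefficient you correctly flag for $\hat{\underline{\chi}}$. With the decay $\underline{\eta} \sim \epsilon v^{-1}$ implicit in your choice $k=1$, the source $2\slashed{\rm div}\underline{\eta} \sim \epsilon v^{-2}$ is then exactly critical, and your scheme would produce a logarithmic loss for ${\rm tr}\underline{\chi}$, which the proposition does not allow. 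The paper cures this by substituting the Gauss equation~\eqref{genGauss} for $2\rho$ in~\eqref{4truchi}: the contribution $-\tfrac{1}{2}{\rm tr}\chi\,{\rm tr}\underline{\chi}$ coming from Gauss moves to the left-hand side and upgrades the coefficient to a full ${\rm tr}\chi$, after which conjugation by $v$ leaves strictly positive damping and the estimate closes cleanly. The same device --- substituting the Codazzi relation~\eqref{tcod1} for $\beta$ in~\eqref{4eta} --- is how the paper obtains coefficient $\tfrac{3}{2}{\rm tr}\chi$ in the $\eta$ equation; in that case your direct route would also work (the coefficient after setting $\underline{\eta}=-\eta$ is ${\rm tr}\chi$, not $\tfrac{1}{2}{\rm tr}\chi$ as you wrote), but the ${\rm tr}\underline{\chi}$ step genuinely requires the substitution.
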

\begin{proof}Due to Proposition~\ref{itstartedoutok2}, we only need to study the case of $v \gg 1$. Let us start with the Ricci coefficients. The desired bounds for $\omega$  follow immediately from the fact that $\Omega|_{\mathcal{H}}$ is identically $1$ for large $v$  and that $b^A|_{\mathcal{H}}$ vanishes for large $v$. Proposition~\ref{kdowvappend} also already provides the desired bounds for ${\rm tr}\underline{\chi}$ and $\hat{\underline{\chi}}_{AB}$. Another consequence $\Omega|_{\mathcal{H}}$ being identically $1$ for large $v$ is that $\eta_A|_v = -\underline{\eta}_A|_v$ when $v$ is large. Thus, for $v \gg 1$ we have may derive the following equation for $\eta_A = -\underline{\eta}_A$: 
\begin{equation}\label{fromedpqetatetata}
\nabla_v\eta_A + \frac{3}{2}{\rm tr}\chi \eta_A = \slashed{\rm div}\hat{\chi}_A - \frac{1}{2}\slashed{\nabla}_A{\rm tr}\chi - \left(\eta\cdot\hat{\chi}\right)_A.
\end{equation}
The key point is that
\[\frac{3}{2}{\rm tr}\chi \gtrsim \frac{3}{v} > \frac{1}{v}\text{ for }v\gg 1.\]
In particular, from~\eqref{fromedpqetatetata}, we have  
\begin{equation}\label{fromedpqetatetata}
\nabla_v\left(v\eta\right)_A +\left( \frac{3}{2}{\rm tr}\chi -\frac{1}{v}\right)v\eta_A = v\left[\slashed{\rm div}\hat{\chi} - \frac{1}{2}\slashed{\nabla}{\rm tr}\chi - \eta\hat{\chi}\right]_A.
\end{equation}
Contracting with $v\eta_A$ and using the previously established estimates leads to
\[\sup_{(u,v) \in \mathcal{H}}\int_{\mathbb{S}^2}\left|v\eta\right|^2 \lesssim \epsilon^{2-2\delta}.\]
It is straightforward to commute with $\slashed{\nabla}_{A_1\cdots A_j}^j$ and then obtain 
\[\sup_{(u,v) \in \mathcal{H}}\int_{\mathbb{S}^2}\left|v\slashed{\nabla}^j\eta\right|^2v^{2j} \lesssim \epsilon^{2-2\delta}.\]
For ${\rm tr}\underline{\chi}$ one may derive the following equation for $v \gg 1$:
\[\nabla_v\left({\rm tr}\underline{\chi}+\frac{2}{v+\underline{v}}\right) + {\rm tr}\chi\left({\rm tr}\underline{\chi} + \frac{2}{v+\underline{v}}\right) = -2\left(K-\frac{1}{\left(v+\underline{v}\right)^2}\right) +\left({\rm tr}\chi-\frac{2}{v+\underline{v}}\right)\frac{2}{v+\underline{v}} + 2\slashed{\rm div}\eta + 2\left|\eta\right|^2.\]
This may be treated just as $\eta_A$ to produce the desired estimate for ${\rm tr}\underline{\chi}$.

For $\hat{\underline{\chi}}_{AB}$, we have the following equation for $v \gg 1$: 
\[\nabla_v\hat{\underline{\chi}}_{AB} + \frac{1}{2}{\rm tr}\chi\hat{\underline{\chi}}_{AB} = \left(\slashed{\nabla}\hat{\otimes}\underline{\eta} - \frac{1}{2}{\rm tr}\underline{\chi}\hat{\chi} + \underline{\eta}\hat{\otimes}\underline{\eta}\right)_{AB}.\]
The key point is that for any $s > 0$ we will have that $v \gg_s 1$ implies that 
\[\frac{1}{2}{\rm tr}\chi - \frac{1-s}{v} \gtrsim_s v^{-1}.\]
In particular, one can conjugate by $v^{1-s}$ and proceed as we did for $\eta$. 

Finally the desired estimates for $\rho$, $\sigma$, $\beta_A$, and $\underline{\beta}_A$ follow immediately form the equations~\eqref{genGauss}, \eqref{curleta}, \eqref{tcod1}, and \eqref{tcod2}.

\end{proof}
\section{The Bootstrap Argument for Region III}\label{bottIIIIIIII}
The main result of this section will be the following:
\begin{theorem}\label{itisreg3}Let $(\mathcal{M},g_{\mu\nu})$ be a spacetime produced by Proposition~\ref{kdowvappend}. Then, possibly taking $\epsilon$ smaller, we then claim that $g_{\mu\nu}$ in this new coordinate system may be extended to the region $\left\{(u,v) : -\underline{v}^2 \leq u < 0 \text{ and } v > 0 \right\}$, and in this region the spacetime satisfies the regularity bounds~\eqref{02020202020922332}-\eqref{11212332123123123} and the estimates~\eqref{dkowdovoboto3}.

\end{theorem}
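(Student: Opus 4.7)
The plan is to establish Theorem~\ref{itisreg3} via a long-time bootstrap argument extending the solution from the initial slab $\mathcal{H}_{\tau(v)}$ (produced by Proposition~\ref{mixitloval}) up to $\{u=0\}$. First I would set up bootstrap assumptions along the lines of the discussion in Section~3.7: for all Ricci coefficients $\psi\not\in\{\underline{\omega},\eta,\hat{\underline{\chi}}\}$ and all curvature components $\Psi\not\in\{\underline{\alpha}\}$, one postulates $|\Omega^{-s}\psi^{*}|\lesssim A\epsilon v^{-1}$ and $|\Omega^{-s}\Psi|\lesssim A\epsilon v^{-2}$; for the ``borderline'' quantities one postulates the weaker bounds $|\log\Omega|\lesssim A\epsilon|\log(v/(-u))|$, $|\Omega^{-1}\hat{\underline{\chi}}|\lesssim A\epsilon(v/(-u))^{p}v^{-1}$, and $|\eta|\lesssim A\epsilon(v/(-u))^{p}v^{-1}$, with a small constant $0<p\ll 1$ and a large bootstrap constant $A$. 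The goal is to halve the constant $A$. Proposition~\ref{thisgivesthetildedmda} supplies the needed initial bounds on the gluing cone $\mathcal{H}$, and the qualitative existence of the region of extension is then reduced to closing these quantitative estimates.

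The central step consists of the curvature energy estimates, which I would run on the renormalized Bianchi system~\eqref{ren1}--\eqref{ren6} so that $\underline{\alpha}$ never appears (this is essential since $\underline{\alpha}$ is singular at $u=0$). For each renormalized pair $(\underline{\beta},\check{\rho},\check{\sigma})$, $(\check{\rho},\check{\sigma},\beta)$, $(\beta,\alpha)$ I would multiply through by the appropriate power of $\Omega$ to eliminate $\underline{\omega}$ (following the convention in~\eqref{itmfokwdkod}--\eqref{adkldwko}), conjugate by the weight $\tilde w \doteq (-u/v)^{p}\,v^{-3/2}(-u)^{-\delta}$ and then pair each $\nabla_{3}$-equation with its counterpart $\nabla_{4}$-equation, integrate by parts, and commute with angular derivatives using Lemma~\ref{commlemm}. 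The combination of the bootstrap bound $|\Omega\,\mathrm{tr}\chi - 2/v|\lesssim \underline{v}/v$ and the $p/(-u)$ lower order term produced by the $u$-power in $\tilde w$ yields favorable spacetime bulk terms: a weight $v^{-1}$ on $\underline{\beta}$ and a weight $(-u)^{-1}$ on all other components. Afterwards multiplication by $(-u)^{\delta}$ removes the auxiliary $\delta$-loss. Crucially, the angular commutators are arranged on $\nabla_{4}$-equations to avoid generating $\eta$ terms (see~\eqref{com1}), and the nonlinear products $\underline{\beta}\cdot\hat{\underline{\chi}}$ and $\eta$-terms in~\eqref{ren6} are absent, so the large $\eta,\hat{\underline{\chi}}$ bounds do not destroy the estimates.

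Having controlled the curvature, I would integrate the Ricci transport equations. For every $y\in\{\Omega^{-s}\psi\colon\psi\neq\Omega,\underline{\omega},\hat{\underline{\chi}},\eta,\underline{\beta}\}$, the signature-weighted $\nabla_{3}$-equation takes the form $\nabla_{u}(\Omega^{-s}y)=\Omega^{2}(\cdots)$; since $\Omega^{2}$ is only mildly divergent as $u\to 0$, integration from $\mathcal{H}$ produces bounds consistent with the scale-invariant profile. For the remaining Ricci coefficients and the lapse I integrate their $\nabla_{4}$-equations from the hypersurface $\{v/(-u)=\underline{v}^{-1}/2\}$ (where Propositions~\ref{itstartedoutok2}, \ref{thisgivesthetildedmda} furnish the seed data) using Gr\"onwall; the presence of the $(1+O(\epsilon))/v$ coefficient forces one to accept the logarithmic growth in $v/(-u)$ encoded in the bootstrap assumptions for $\log\Omega$, $\hat{\underline{\chi}}$, and $\eta$, which is precisely why the weaker assumptions above were made. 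Finally the metric quantities $\slashed{g},b,\Omega$ are reconstructed by integrating $\mathcal{L}_{\partial_{u}}b^{A}=4\Omega^{2}\zeta^{A}$ (with the shift now in the $e_{4}$-direction, see~\eqref{dqqqq}) and the analogous transport equations for $\slashed{g}$ and $\Omega$, and a standard continuity argument together with the blow-up criterion of Theorem~\ref{localexistencecharbetter} promotes the closed bootstrap estimates to global extension up to $\{u=0\}$.

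The main obstacle will be the simultaneous presence of three difficulties near $\{u=0\}$: the singular behavior of $\underline{\alpha}$, the lapse degeneration $\Omega^{s}$, and the genuinely borderline growth of $\eta$ and $\hat{\underline{\chi}}$ caused by the potential existence of $\kappa$-self-similar solutions with angle-dependent $\kappa$ (Remark~\ref{moregeneral}). The correct choice of $\tilde w$, the systematic $\Omega^{-s}$ weighting to suppress $\underline{\omega}$, and the use of the renormalized Bianchi system are what make these three difficulties compatible; verifying that the nonlinear products generated by the renormalization and by commutators with $\slashed{\nabla}$ remain integrable against $\tilde w$ is the most delicate technical point.
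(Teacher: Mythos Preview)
Your proposal is correct and follows essentially the same strategy as the paper: bootstrap on the renormalized Bianchi system with the weight $(-u/v)^{p}v^{-3/2}(-u)^{-q}$ and $\Omega^{-s}$-conjugation to remove $\underline{\omega}$, then recover Ricci coefficients by integrating $\nabla_{3}$-transport equations for most $\psi$ and $\nabla_{4}$-transport for $\log\Omega$, $\eta$, $\hat{\underline{\chi}}$. The only refinements you do not mention are that the paper controls $\hat{\underline{\chi}}$ via Codazzi plus an elliptic estimate (after first bounding $\Omega^{-1}\underline{\beta}$ from its $\nabla_{4}$-equation) rather than direct transport, and that the top-order Ricci estimates require the standard trick of pairing, e.g., $\slashed{\nabla}^{2}\slashed{\mathrm{div}}\,\eta$ with $\slashed{\nabla}^{2}\check{\rho}$ to avoid a derivative loss on curvature.
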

We will prove this theorem with a bootstrap argument. In Section~\ref{normsregion3} we will define the relevant norms. Then in Section~\ref{estimatesregion3} we will carry out the bootstrap argument. 
\subsection{Norms}\label{normsregion3}
In this section we will present the norms around which we will base our estimates. We will be interested in regions contained in $\{u \in (0,-\underline{v}^2)\} \cap \{v \in [0,\infty)\}$. We also introduce a reference Lie-propagated round metric to define a round volume form $\mathring{\rm dVol}$. 

 Let $\underline{v} > 0$ be sufficiently small, and set
\[\mathcal{P} \doteq \left\{u \in (-\underline{v}^2,0)\right\} \cap  \left\{\underline{v}^{-1} \leq \frac{v}{|u|} < \infty\right\},\qquad \mathcal{P}_{\tilde u,\tilde v} \doteq \mathcal{P} \cap \{u \leq \tilde u\} \cap \{v \leq \tilde v\},\]
where $(\tilde u,\tilde v) \in \mathcal{P}$.

It will be convenient to avoid working with $\underline{\alpha}_{AB}$ and instead only estimate the renormalized curvature components $\alpha_{AB}$, $\beta_A$, $\check{\rho}$, $\check{\sigma}$, and $\underline{\beta}_A$. We will use the notation $\check{\Psi}$ to refer to one of these renormalized curvature components.

As opposed to how we defined the norms for region $I$, it will be natural to weight Ricci coefficients and curvature components with $\Omega^{-s}$, where $s$ denotes the signature. This is because we will want to eliminate $\underline{\omega}$ in certain equations. In contrast, in region $I$  we weighted with $\Omega^s$ because we wanted to eliminate $\omega$ from various equations. 

\begin{convention}Throughout this section, unless said otherwise, all norms of tensorial quantities are computed with respect to $\slashed{g}_{AB}$, and we will always use the round metric induced volume form $\mathring{dVol}$ on each $\mathbb{S}^2_{u,v}$. 
\end{convention}

We now define the energy norm for the renormalized curvature components:
\begin{definition}\label{energy3}Let $0 < q \ll p \ll 1$. For $\underline{\beta}_A$ we define the energy norm by
\begin{align*}
\left\vert\left\vert \underline{\beta}\right\vert\right\vert^2_{\mathscr{I}_{p,\tilde u,\tilde v}} &\doteq \sup_{0 \leq j \leq 2}\sup_{(u,v) \in \mathcal{P}_{\tilde u,\tilde v}}\Bigg[(-u)^{2q}\int_{{\rm max}\left(-v\underline{v},-\underline{v}^2\right)}^u\int_{\mathbb{S}^2}\Omega^2\left|\slashed{\nabla}^j\left(\Omega^{-1}\underline{\beta}\right)\right|^2v^{3+2j}\left(\frac{-\dot{u}}{v}\right)^{2p}(-\dot{u})^{-2q}\, d\dot{u}\, \mathring{dVol}
\\ \nonumber &\qquad  \qquad + (-u)^{2q}\int_{{\rm max}\left(-v\underline{v},-\underline{v}^2\right)}^u\int_{-u\underline{v}^{-1}}^v\int_{\mathbb{S}^2}\left|\slashed{\nabla}^j\left(\Omega^{-1}\underline{\beta}\right)\right|^2\dot{v}^{2+2j}\left(\frac{-\dot{u}}{\dot{v}}\right)^{-2p}(-\dot{u})^{-2q}\, d\dot{v}\, d\dot{u}\, \mathring{dVol}\Bigg].
\end{align*}
For any renormalized curvature component $\check{\Psi}$ of signature $s$ not equal to $\underline{\beta}_A$, we have
\begin{align*}
\left\vert\left\vert \check{\Psi} \right\vert\right\vert^2_{\mathscr{I}_{p,\tilde u,\tilde v}} &\doteq \sup_{0 \leq j \leq 2}\sup_{(u,v) \in \mathcal{P}_{\tilde u,\tilde v}}\Bigg[(-u)^{2q}\int_{{\rm max}\left(-v\underline{v},-\underline{v}^2\right)}^u\int_{\mathbb{S}^2}\Omega^2\left|\slashed{\nabla}^j\left(\Omega^{-s}\check{\Psi}\right)\right|^2v^{3+2j}\left(\frac{-\dot{u}}{v}\right)^{2p}(-\dot{u})^{-2q}\, d\dot{u}\, \mathring{dVol}
\\ \nonumber &\qquad \qquad \int_{-u\underline{v}^{-1}}^v\int_{\mathbb{S}^2}\left|\slashed{\nabla}^j\left(\Omega^{-s}\check{\Psi}\right)\right|^2\dot{v}^{3+2j}\left(\frac{-u}{\dot{v}}\right)^{2p}\, d\dot{v}\, \mathring{dVol}
\\ \nonumber &\qquad  \qquad + (-u)^{2q}\int_{{\rm max}\left(-v\underline{v},-\underline{v}^2\right)}^u\int_{-u\underline{v}^{-1}}^v\int_{\mathbb{S}^2}\left|\slashed{\nabla}^j\left(\Omega^{-s}\check{\Psi}\right)\right|^2\dot{v}^{3+2j}\left(\frac{-\dot{u}}{\dot{v}}\right)^{2p}(-\dot{u})^{-1}(-\dot{u})^{-2q}\, d\dot{v}\, d\dot{u}\, \mathring{dVol}\Bigg].
\end{align*}
We also introduce the notation 
\[\mathfrak{I}_{p,\tilde u,\tilde v} \doteq \sum_{\check{\Psi}}\left\vert\left\vert \check{\Psi}\right\vert\right\vert_{\mathscr{I}_{p,\tilde u,\tilde v}}.\]
Finally, when it will not cause confusion, we will often suppress a subset of the $\left(p,\tilde u,\tilde v\right)$ indices from the $\mathscr{I}$ or $\mathfrak{I}$ subscript. 
\end{definition}

Next, we define the low-regularity norm for the Ricci Coefficients.
\begin{definition}Let $0 < p \ll 1$. For any Ricci coefficient $\psi \neq \eta_A,\underline{\omega},\underline{\chi}_{AB}$ of signature $s$ we define
\begin{align*}
\left\vert\left\vert \psi\right\vert\right\vert^2_{\mathscr{K}_{p,\tilde u,\tilde v}} &\doteq \sup_{0 \leq j \leq 2}\sup_{(u,v)\in\mathcal{P}_{\tilde u,\tilde v}}\int_{\mathbb{S}^2}\left|\slashed{\nabla}^j\left(\Omega^{-s}\psi\right)^*\right|^2v^{2j+2}\, \mathring{\rm dVol},
\end{align*}
where $\left(\Omega^{-s}\psi\right)^*$ denotes the different of $\Omega^{-s}\psi$ and its Minkowski value.

For $\psi \in \{\eta_A,\underline{\chi}_{AB}\}$ we define
\begin{align*}
\left\vert\left\vert \psi \right\vert\right\vert^2_{\mathscr{K}_{p,\tilde u,\tilde v}} &\doteq \sup_{0 \leq j \leq 2}\sup_{(u,v)\in\mathcal{P}_{\tilde u,\tilde v}}\int_{\mathbb{S}^2}\left|\slashed{\nabla}^j\left(\Omega^{-s}\psi\right)\right|^2v^{2j+2}\left(\frac{-u}{v}\right)^{2p}\, \mathring{\rm dVol},
\end{align*}
where $s$ denotes the signature of $\psi$. 

We also introduce the notation 
\[\mathfrak{K}_{p,\tilde u,\tilde v} \doteq \sum_{\psi\neq \underline{\omega}}\left\vert\left\vert \psi\right\vert\right\vert_{\mathscr{K}_{p,\tilde u,\tilde v}}.\]
Finally, when it will not cause confusion, we will often suppress a subset of the $\left(p,\tilde u,\tilde v\right)$ indices from the $\mathscr{K}$ or $\mathfrak{K}$ subscript. 
\end{definition}

Now we define the high-regularity norm for the Ricci Coefficients.
\begin{definition}Let $0 < q \ll p  \ll 1$. For any Ricci coefficient $\psi \neq \eta_A,\underline{\omega},\hat{\underline{\chi}}_{AB}$ of signature $s$ we define
\begin{align*}
&\left\vert\left\vert \psi\right\vert\right\vert^2_{\mathscr{L}_{p,\tilde u,\tilde v}} \doteq 
\\ \nonumber &\sup_{0 \leq j \leq 3}\sup_{(u,v)\in\mathcal{P}_{\tilde u,\tilde v}}(-u)^{2q}\int_{{\rm max}\left(-v\underline{v},-\underline{v}^2\right)}^u\int_{-u\underline{v}^{-1}}^v\int_{\mathbb{S}^2}\left|\slashed{\nabla}^j\left(\Omega^{-s}\psi\right)^*\right|^2\dot{v}^{1+2j}\left(\frac{-\dot{u}}{\dot{v}}\right)^{2p}(-\dot{u})^{-1}(-\dot{u})^{-2q}\, d\dot{u}\, d\dot{v}\, \mathring{\rm dVol},
\end{align*}
where $\left(\Omega^{-s}\psi\right)^*$ denotes the different of $\Omega^{-s}\psi$ and its Minkowski value.

For $\psi \in \{\eta_A,\hat{\underline{\chi}}_{AB}\}$ we define
\begin{align*}
\left\vert\left\vert \psi\right\vert\right\vert^2_{\mathscr{L}_{p,\tilde u,\tilde v}} &\doteq \sup_{0 \leq j \leq 3}\sup_{(u,v)\in\mathcal{P}_{\tilde u,\tilde v}}(-u)^{2q}\int_{{\rm max}\left(-v\underline{v},-\underline{v}^2\right)}^u\int_{-u\underline{v}^{-1}}^v\int_{\mathbb{S}^2}\left|\slashed{\nabla}^j\left(\Omega^{-s}\psi\right)\right|^2\dot{v}^{2j}\left(\frac{-\dot{u}}{\dot{v}}\right)^{2p}(-\dot{u})^{-2q}\, d\dot{u}\, d\dot{v}\, \mathring{\rm dVol},
\end{align*}
where $s$ denotes the signature of $\psi$.

We also introduce the notation 
\[\mathfrak{L}_{p,\tilde u,\tilde v} \doteq \sum_{\psi \neq \underline{\omega}}\left\vert\left\vert \psi\right\vert\right\vert_{\mathscr{L}_{p,\tilde u,\tilde v}}.\]
Finally, when it will not cause confusion, we will often suppress a subset of the $\left(p,\tilde u,\tilde v\right)$ indices from the $\mathscr{L}$ or $\mathfrak{L}$ subscript. 
\end{definition}

Finally, we define the norm for the metric coefficients. 
\begin{definition}Let $0 < p \ll 1$. For any metric coefficient $\phi \neq \Omega$ we define 
\[\left\vert\left\vert \phi\right\vert\right\vert_{\mathscr{M}_{p,\tilde u,\tilde v}} \doteq \sup_{0 \leq j \leq 3}\sup_{(u,v)\in\mathcal{P}_{\tilde u,\tilde v}}\int_{\mathbb{S}^2}\left|\slashed{\nabla}^j\left(\Omega^{-s}\psi\right)^*\right|^2v^{2j}\, \mathring{\rm dVol},\]
where $\left(\Omega^{-s}\phi\right)^*$ denotes the different of $\Omega^{-s}\phi$ and its Minkowski value.

For the lapse $\Omega$, we define 
\[\left\vert\left\vert \Omega \right\vert\right\vert_{\mathscr{M}_{p,\tilde u,\tilde v}} \doteq \sup_{1 \leq j \leq 3}\sup_{(u,v)\in\mathcal{P}_{\tilde u,\tilde v}}\int_{\mathbb{S}^2}\left|\slashed{\nabla}^j\Omega\right|^2v^{2j}\left(\frac{-u}{v}\right)^{2p}\, \mathring{\rm dVol} + \sup_{(u,v)\in\mathcal{P}_{\tilde u,\tilde v}}\left|\log\Omega\right|^2\left|\log^2\left(\frac{-u\underline{v}}{v}\right)\right|.\]
We also introduce the notation 
\[\mathfrak{M}_{p,\tilde u,\tilde v} \doteq \sum_{\phi}\left\vert\left\vert \phi\right\vert\right\vert_{\mathscr{M}_{p,\tilde u,\tilde v}}.\]
Finally, when it will not cause confusion, we will often suppress a subset of the $\left(p,\tilde u,\tilde v\right)$ indices from the $\mathscr{M}$ or $\mathfrak{M}$ subscript. 
\end{definition}

\subsection{Estimates}\label{estimatesregion3}
A standard argument using Proposition~\ref{itexistsbutforalittlewhile} shows that Theorem~\ref{itisreg3} will follow from the following proposition.
\begin{proposition}\label{tobootornottoboo3}Let $\underline{v} > 0$, and $(\mathcal{M},g_{\mu\nu})$ be a spacetime produced by Proposition~\ref{kdowvappend} which exists in the region rectangle $\mathcal{P}_{\tilde u,\tilde v}$ for some $(\tilde u,\tilde v) \in \mathcal{P}$ and which satisfies the ``bootstrap assumption''

\begin{equation}\label{bootstrapgronwallikeest3}
\mathfrak{I}_{\tilde u,\tilde v} + \mathfrak{K}_{\tilde u,\tilde v}+\mathfrak{L}_{\tilde u,\tilde v} +\mathfrak{M}_{\tilde u,\tilde v}  \leq 2A\epsilon^{1-\delta}.
\end{equation}

We then claim that~\eqref{bootstrapgronwallikeest3} implies
\begin{equation}\label{dkowdovoboto3}
\mathfrak{I}_{\tilde u,\tilde v} + \mathfrak{K}_{\tilde u,\tilde v}+\mathfrak{L}_{\tilde u,\tilde v} +\mathfrak{M}_{\tilde u,\tilde v}\leq A\epsilon^{1-1\delta}.
\end{equation}
\end{proposition}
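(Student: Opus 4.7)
The plan is to organize the bootstrap improvement into four layers, estimating, in order, (i) the renormalized curvature components to improve $\mathfrak{I}$, (ii) the high-regularity Ricci norm $\mathfrak{L}$, (iii) the low-regularity Ricci norm $\mathfrak{K}$, and (iv) the metric norm $\mathfrak{M}$, with initial data on the cone $\mathcal{H}$ controlled by Proposition~\ref{thisgivesthetildedmda}. Throughout, we work with the renormalized Bianchi system~\eqref{ren1}--\eqref{ren6} so that $\underline{\alpha}$ never enters, and for each double-null quantity of signature $s$ we multiply through by the appropriate power of $\Omega$ so that $\underline{\omega}$ is completely eliminated from every equation that appears. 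In this way $\underline{\omega}$ need not be estimated at all during the bootstrap — in direct parallel with the role of $\omega$ in Region~$I$ — and the upper weight $\left|\log(-u\underline{v}/v)\right|$ in the $\mathscr{M}$ norm for $\Omega$ is then recovered at the very end by integrating $\Omega\nabla_4\log\Omega$ from $\mathcal{H}$.

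For the curvature step, we conjugate each pair of equations~\eqref{ren1}--\eqref{ren6} by the weight $\tilde w \doteq (-u/v)^{p}v^{-3/2}(-u)^{-\delta}$ and commute with $\slashed{\nabla}^i$ for $i \in \{0,1,2\}$, taking care to use the $\Omega\nabla_4$ form of the commutator~\eqref{com1} for the outgoing Bianchi equations so that no $\eta$-term is created — this is essential since $\eta$ only satisfies a $\nabla_4$ equation and is therefore allowed to grow like $(v/(-u))^p v^{-1}$. Contracting each pair and integrating by parts then produces three kinds of positive lower-order bulk terms: from the weight in $v$, using the sharp behavior $\Omega{\rm tr}\chi = 2/v + O(\underline{v}/v)$ (ultimately a consequence of the $\mathscr{K},\mathscr{M}$ bootstrap assumptions and the Raychaudhuri equation~\eqref{4trchi}), we get a $(\underline{v}/v)$-improvement controlling $\underline{\beta}$, while the conjugation by $(-u)^{-p-\delta}$ yields a $(-u)^{-1}$ bulk weight that controls all of $\alpha,\beta,\check\rho,\check\sigma$ away from $u=0$. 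The right-hand sides contain no terms $\underline{\beta}\cdot\hat{\underline{\chi}}$ and no $\eta\cdot$(anything) in $\nabla_4\underline{\beta}$, so the potentially dangerous $\epsilon\cdot(-u)^{-p}$-losing factors never get coupled to the $\underline{\beta}$ flux; the remaining nonlinear contributions are absorbed by the bootstrap via Sobolev on $\mathbb{S}^2$ and smallness of $\underline{v}$. The final estimate is then multiplied by $(-u)^{2q}$ to cancel the $\delta$-regularization, giving the claimed $\mathfrak{I}$-bound.

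For the Ricci coefficients, each $\psi \notin \{\Omega,\underline{\omega},\hat{\underline{\chi}},\underline{\beta},\eta,\underline{\alpha}\}$ satisfies a $\nabla_3$ equation which, after multiplication by $\Omega^{-s}$, takes the schematic form $\partial_u(\Omega^{-s}\psi) = \Omega^2 \cdot (\text{regular})$; since $\Omega$ blows up only logarithmically as $u\to 0$, direct integration from $\mathcal{H}$ using the already-improved curvature flux recovers both the $\mathscr{K}$ and the spacetime $\mathscr{L}$ bounds (the spacetime weight $(-\hat u)^{-1-2q}$ is integrable in $u$). For the remaining variables $\eta$ and $\hat{\underline{\chi}}$ we use only $\nabla_4$ equations of the schematic form $\Omega\nabla_4 X + (1+O(\epsilon))v^{-1} X = O(\epsilon v^{-2})$; integration from the hypersurface $v/(-u) = \underline{v}^{-1}$ does not yield boundedness, but does propagate the weaker bound $|X| \lesssim \epsilon (v/(-u))^p v^{-1}$ and the corresponding spacetime norm, which is precisely why the $\mathscr{K}$ and $\mathscr{L}$ norms for these variables carry the factor $(-u/v)^{2p}$. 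The metric estimates follow in the usual way by integrating~\eqref{osjf},~\eqref{okmdwqejodq12ed}, and the analogue of~\eqref{thispropskfowshift2} for the shift (this is precisely why we shifted the double-null gauge in Section~\ref{secshift} — the $\partial_u$-propagation equation $\mathcal L_{\partial_u}b^A = 4\Omega^2\zeta^A$ is what makes the $\mathscr{M}$-norm for $b$ closable as $u\to 0$).

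The main obstacle, as anticipated in the outline of Section~\ref{bottIIIIIIII}, is controlling $\eta$ and $\hat{\underline{\chi}}$: the $\nabla_4$-only structure combined with the angular-dependent $\kappa$ scenario of Remark~\ref{moregeneral} prevents pointwise boundedness and forces the loss $(v/(-u))^{p}$. The delicacy is to make sure this loss never couples back destructively into the curvature energy — which is achieved by the commutator choice above, by the absence of $\underline\beta\cdot\hat{\underline{\chi}}$ in the Bianchi nonlinearities, and by the fact that the dangerous factor appears always together with a compensating power of $v^{-1}$ that is absorbed by the $(-u)^{-1}$ bulk weight generated from the $(-u)^{-p-\delta}$ conjugation. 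Closing the bootstrap then reduces to choosing $q \ll p \ll \underline{v} \ll 1$ and, finally, $\epsilon \ll 1$ depending on $A$.
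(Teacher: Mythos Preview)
Your proposal captures the architecture of the paper's argument faithfully --- the use of the renormalized Bianchi system to avoid $\underline{\alpha}$, the $\Omega^{-s}$ weighting to eliminate $\underline{\omega}$, the conjugation by $(-u/v)^p v^{-3/2}(-u)^{-q}$ to generate the good $(-u)^{-1}$ bulk, the commutation via~\eqref{com1} to keep $\eta$ out of the $\nabla_4$ nonlinearities, and the decision to accept $(v/(-u))^p$ growth for $\eta$ and $\hat{\underline{\chi}}$. All of this is exactly what the paper does.

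There is, however, a genuine gap at the top order of the Ricci estimates. You write that for $\psi\notin\{\Omega,\underline{\omega},\hat{\underline{\chi}},\underline{\beta},\eta,\underline{\alpha}\}$ ``direct integration from $\mathcal{H}$ using the already-improved curvature flux recovers both the $\mathscr{K}$ and the spacetime $\mathscr{L}$ bounds.'' This fails at the $\mathscr{L}$ level ($j=3$): the $\nabla_3$ equations for $\omega$, $\underline{\eta}$, ${\rm tr}\chi$ contain undifferentiated curvature ($\rho$, $\underline{\beta}$) on the right-hand side, so after commuting with $\slashed{\nabla}^3$ you need $\slashed{\nabla}^3\check\Psi$, one derivative beyond what $\mathfrak{I}$ controls. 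The paper closes this by the standard renormalization trick --- replacing the top-order curvature derivative via the Bianchi equations themselves, e.g.\ estimating $\slashed{\nabla}\slashed{\Delta}(\Omega\omega)-\tfrac{1}{2}\slashed{\nabla}\slashed{\rm div}(\Omega\beta)$ and $\slashed{\nabla}^2\slashed{\rm div}\underline{\eta}+\slashed{\nabla}^2\check{\rho}$, each of which satisfies a $\nabla_3$ equation with only \emph{second} derivatives of curvature on the right. Your outline makes no mention of this, and without it the $\mathscr{L}$ step does not close.

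A parallel issue affects your treatment of $\hat{\underline{\chi}}$ (and $\hat{\chi}$ at top order). You propose to integrate the $\nabla_4$ equation~\eqref{4hatuchi} directly, but its right-hand side contains $\slashed{\nabla}\hat{\otimes}\underline{\eta}$; at $j=2$ in $\mathscr{K}$ this requires $\slashed{\nabla}^3\underline{\eta}$ along a fixed outgoing cone, and the only available top-order control on $\underline{\eta}$ is the spacetime $L^2$ norm in $\mathscr{L}$ (coming from a $\nabla_3$ equation, hence giving fluxes on \emph{incoming} cones, not outgoing ones). The paper does not integrate~\eqref{4hatuchi}: it first obtains a pointwise $\tilde H^1$ bound for $\Omega^{-1}\underline{\beta}$ by integrating~\eqref{ren633} (whose top-order right-hand side $\slashed{\nabla}\check\rho$, $\slashed{\nabla}\check\sigma$ \emph{is} available as an outgoing flux from $\mathfrak{I}$), and then recovers $\hat{\underline{\chi}}$ at the same regularity via the Codazzi equation~\eqref{tcod2} and the elliptic estimate~\eqref{pokdwimnfe22}. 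The same Codazzi/elliptic route is used again for both shears in the $\mathscr{L}$ norm. This is the missing idea in your Ricci step.
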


As usual, the proof will be broken up into a few separate estimates. We start with estimates for the curvature components, then prove estimates for the Ricci coefficients, and finish with the estimates for the metric coefficients.

Throughout the proofs in this section we will use without comment that for any point in $\mathcal{P}$, we have
\[\frac{-u}{v} \leq \underline{v}.\]

We start by observing the Sobolev spaces generated by $\slashed{g}_{AB}$ and $\mathring{\slashed{g}}_{AB}$ are comparable.
\begin{lemma}\label{sosimilararethepscaes3}Let $\left(\mathcal{M},g_{\mu\nu}\right)$ satisfy the hypothesis of Proposition~\ref{tobootornottoboo3}. Then we have that
\[\left\vert\left\vert w\right\vert\right\vert_{\tilde H^i\left(\mathbb{S}^2_{u,v}\right)} \sim_k v^{-k}\left\vert\left\vert w\right\vert\right\vert_{\mathring{H}^i\left(\mathbb{S}^2_{u,v}\right)}\text{ for }i \in \{0,1,2, 3\},\]
\[\left\vert\left\vert w\right\vert\right\vert_{\tilde L^p\left(\mathbb{S}^2_{u,v}\right)} \sim_k v^{-k}\left\vert\left\vert w\right\vert\right\vert_{\mathring{L}^p\left(\mathbb{S}^2_{u,v}\right)},\]
where we recall that $\mathring{H}^i$ and $\mathring{L}^p$ denote the Sobolev and $L^p$ spaces generated by the round metric $\mathring{\slashed{g}}_{AB}$, and the spaces $\tilde{H}^j$ are defined as in Definition~\ref{idkdtildedefdef}.
\end{lemma}
\begin{proof}This is an immediate consequence of Lemma~\ref{comparethespaces}, the bootstrap hypothesis, and the smallness  of $\epsilon$.
\end{proof}

Next, we have an analogue of Lemma~\ref{someelleststs}.
\begin{lemma}\label{someelleststs3}Let $\left(\mathcal{M},g_{\mu\nu}\right)$ satisfy the hypothesis of Proposition~\ref{tobootornottoboo3}. Then for any function $f$, $1$-form $\theta_A$, and symmetric trace-free $2$-tensor $\nu_{AB}$ we have
\begin{equation}\label{pokdwimnfe32}
\left\vert\left\vert f\right\vert\right\vert_{\tilde{H}^{2+i}\left(\mathbb{S}^2_{u,v}\right)} \lesssim (-u)^2\left\vert\left\vert \slashed{\Delta}f\right\vert\right\vert_{\tilde{H}^i\left(\mathbb{S}^2_{u,v}\right)} + \left\vert\left\vert f\right\vert\right\vert_{\tilde{H}^i\left(\mathbb{S}^2_{u,v}\right)},\text{ for }i\in \{0,1\},
\end{equation}
\begin{equation}\label{pokdwimnfe232}
\left\vert\left\vert\theta \right\vert\right\vert_{\tilde H^i\left(\mathbb{S}^2_{u,v}\right)} \lesssim \left(-u\right)\left[\left\vert\left\vert \slashed{\rm div}\theta\right\vert\right\vert_{\tilde H^{i-1}\left(\mathbb{S}^2_{u,v}\right)} +\left\vert\left\vert \slashed{\rm curl}\theta\right\vert\right\vert_{\tilde H^{i-1}\left(\mathbb{S}^2_{u,v}\right)}\right],\text{ for }i\in \{1,2,3\},
\end{equation}
\begin{equation}\label{pokdwimnfe22}
\left\vert\left\vert \nu\right\vert\right\vert_{\tilde H^i\left(\mathbb{S}^2_{u,v}\right)} \lesssim (-u)\left\vert\left\vert \slashed{\rm div}\nu\right\vert\right\vert_{\tilde H^{i-1}\left(\mathbb{S}^2_{u,v}\right)},\text{ for }i \in \{1,2,3\}.
\end{equation}
\end{lemma}
\begin{proof}This is proven in the same fashion as Lemma~\ref{someelleststs}.
\end{proof}

Now we observe that the standard Sobolev inequalities hold for the spaces $H^i$.
\begin{lemma}\label{sosososob3}Let $\left(\mathcal{M},g_{\mu\nu}\right)$ satisfy the hypothesis of Proposition~\ref{tobootornottoboo3}. Then, for any $(0,k)$-tensor $w_{A_1\cdots A_k}$, we have that
\[\left\vert\left\vert w\right\vert\right\vert_{\tilde{L}^p\left(\mathbb{S}^2_{u,v}\right)} \lesssim_{p,k} \left\vert\left\vert w\right\vert\right\vert_{\tilde H^1\left(\mathbb{S}^2_{u,v}\right)},\qquad \left\vert\left\vert w\right\vert\right\vert_{\tilde{L}^{\infty}\left(\mathbb{S}^2_{u,v}\right)}\lesssim_k \left\vert\left\vert w\right\vert\right\vert_{\tilde{H}^2\left(\mathbb{S}^2_{u,v}\right)},\]
and for any $(0,k)$-tensor $w_{A_1\cdots A_k}$ and $(0,k')$-tensor $v_{A_1\cdots A_k'}$ we have
\[\left\vert\left\vert w\cdot v\right\vert\right\vert_{\tilde H^2\left(\mathbb{S}^2_{u,v}\right)} \lesssim_{k,k'} \left\vert\left\vert w\right\vert\right\vert_{\tilde H^2\left(\mathbb{S}^2_{u,v}\right)} \left\vert\left\vert v\right\vert\right\vert_{\tilde H^2\left(\mathbb{S}^2_{u,v}\right)}.\]

\end{lemma}
\begin{proof}This is an immediate consequence of Lemma~\ref{sosimilararethepscaes3} and Lemma~\ref{soblemm}.
\end{proof}
These Sobolev inequalities will be used repeatedly in our estimates of nonlinear terms and we will often do so without explicit comment. 

The following lemma will be frequently used to obtain $L^{\infty}_{u,v}$ estimates. 
\begin{lemma}\label{intlemma} Let $-1 \leq x_0 < x_1 < 0$ and let $f(x) : [x_0,x_1] \to \mathbb{R}$ satisfy
\[\sup_{\tilde x \in [x_0,x_1]}\left(-\tilde x\right)^{2\delta_1}\int_{x_0}^{\tilde x}\left|f(x)\right|^2\left(-x\right)^{2\delta_2}\, dx \doteq Z^2 < \infty,\]
where $\delta_1,\delta_2 > 0$ and $\delta_1+\delta_2 < 1/2$. Then we have  
\[\int_{x_0}^{x_1}\left|f(x)\right|\, dx \lesssim (-x_0)^{1/2-\delta_1-\delta_2}Z,\]
where the implied constant is independent of $f$, $x_0$,  $x_1$, and $Z$. 
\end{lemma}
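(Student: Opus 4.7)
The plan is to prove this by a dyadic decomposition in the variable $-x$ combined with Cauchy--Schwarz on each dyadic annulus. A direct Cauchy--Schwarz on $[x_0,x_1]$ fails because the hypothesis places an $L^2$ bound with weight $(-x)^{2\delta_2}$ (which degenerates as $x\to 0^-$) and has only a prefactor $(-\tilde x)^{-2\delta_1}$ that grows as $\tilde x\to 0^-$; these degenerations conspire so that the naive estimate loses a factor of $(-x_1)^{-\delta_1}$. Dyadic localization repairs this because, on a piece where $-x \sim 2^{-k}(-x_0)$, the bad factor $(-\tilde x)^{-2\delta_1}$ is controlled by the scale of the piece itself.

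Concretely, I would set $A_k \doteq [-2^{-k}(-x_0),\,-2^{-k-1}(-x_0)] \cap [x_0,x_1]$ for $k=0,1,2,\dots$. These annuli cover $[x_0,x_1]$ (only finitely many are nonempty, since $x_1<0$, but we will only need the decay of the bound as $k\to\infty$). On $A_k$, the length is at most $2^{-k-1}(-x_0)$ and $(-x)^{-2\delta_2} \lesssim (2^{-k}(-x_0))^{-2\delta_2}$. Pick $\tilde x_k \in A_k$ with $-\tilde x_k = 2^{-k-1}(-x_0)$; the hypothesis then yields
\[
\int_{A_k}|f(x)|^2(-x)^{2\delta_2}\,dx \leq \int_{x_0}^{\tilde x_k}|f(x)|^2(-x)^{2\delta_2}\,dx \leq Z^2(-\tilde x_k)^{-2\delta_1} \lesssim Z^2\bigl(2^{-k}(-x_0)\bigr)^{-2\delta_1}.
\]

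Applying Cauchy--Schwarz on each $A_k$ gives
\[
\int_{A_k}|f(x)|\,dx \leq \left(\int_{A_k}|f|^2(-x)^{2\delta_2}dx\right)^{1/2}\left(\int_{A_k}(-x)^{-2\delta_2}dx\right)^{1/2} \lesssim Z\bigl(2^{-k}(-x_0)\bigr)^{1/2-\delta_1-\delta_2}.
\]
Summing over $k\geq 0$ and using $\delta_1+\delta_2<1/2$, the geometric series $\sum_{k\geq 0} 2^{-k(1/2-\delta_1-\delta_2)}$ converges, yielding $\int_{x_0}^{x_1}|f|\,dx \lesssim (-x_0)^{1/2-\delta_1-\delta_2}Z$ with constant depending only on $\delta_1,\delta_2$.

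There is no real obstacle in this argument; the only subtlety worth flagging is the choice of the dyadic scaling toward $x=0$ rather than toward $x=x_0$, which is what allows the $(-\tilde x)^{-2\delta_1}$ factor from the hypothesis to be absorbed at the correct scale. The condition $\delta_1+\delta_2<1/2$ is used exactly once, to ensure the geometric summability; if equality held one would pick up a logarithm.
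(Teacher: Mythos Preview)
Your proof is correct and is essentially identical to the paper's own argument: a dyadic decomposition of $[x_0,x_1]$ toward $x=0$, Cauchy--Schwarz on each annulus, and a geometric summation using $\delta_1+\delta_2<1/2$. The only cosmetic difference is that the paper parametrizes the dyadic pieces absolutely as $[-2^{-j},-2^{-j-1}]$ whereas you scale relative to $-x_0$, which amounts to a shift of index.
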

\begin{proof}Let $j_0,j_1 \in \mathbb{Z}_{\geq 0}$ be defined by the requirement that $x_0 \in [-2^{-j_0},-2^{-j_0-1}]$ and $x_1 \in [-2^{-j_1},-2^{-j_1-1}]$. Then we have
\begin{align*}
 \int_{x_0}^{x_1}\left|f(x)\right|\, dx  &\leq \sum_{j=j_0}^{j_1}\int_{-2^{-j}}^{-2^{-j-1}}\left|f(x)\right|\, dx 
\\ \nonumber &\lesssim \sum_{j=j_0}^{j_1}2^{j\left(\delta_2-\frac{1}{2}\right)}\left(\int_{-2^{-j}}^{-2^{-j-1}}\left|f(x)\right|^2(-x)^{2\delta_2}\, dx\right)^{1/2} 
\\ \nonumber &\lesssim  Z\sum_{j=j_0}^{j_1}2^{j\left(\delta_1+\delta_2-1/2\right)} 
\\ \nonumber &\lesssim (-x_0)^{1/2-\delta_1-\delta_2}Z.
\end{align*}
\end{proof}

\subsubsection{Estimates for Curvature}
In this section we will prove the energy estimates for the null curvature components. We start by re-writing the Bianchi equations in a form which eliminates $\underline{\omega}$ from the equations. 

\begin{lemma}\label{Bianchiit}For any spacetime $\left(\mathcal{M},g_{\mu\nu}\right)$ in a double-null foliation, we have the following equations for the renormalized curvature components:
\begin{align}\label{3alphablahblah}
\Omega\nabla_3\left(\Omega^2\alpha\right)_{AB} +\frac{\Omega^2}{2}\left(\Omega^{-1}{\rm tr}\underline{\chi}\right)\Omega^2\alpha_{AB} &= \Omega^2\Big[\slashed{\nabla}\hat{\otimes}\left(\Omega\beta\right)  -3\left(\left(\Omega\hat{\chi}\right)\rho + {}^*\left(\Omega\hat{\chi}\right)\sigma\right) 
\\ \nonumber &\qquad \qquad \qquad \qquad \qquad+ \left(-\underline{\eta}+ 4\eta\right)\hat{\otimes}\left(\Omega\beta\right)\Big]_{AB},
\\ \label{4betablahblah} \Omega\nabla_4\left(\Omega\beta\right)_A + 2\left(\Omega{\rm tr}\chi\right)\left(\Omega \beta\right)_A &= \slashed{\rm div}\left(\Omega^2\alpha\right)_A - 6\left(\Omega\omega\right)\left(\Omega\beta\right)_A - \left(\underline{\eta}\cdot\left(\Omega^2\alpha\right)\right)_A,
\\ \label{ren133} \Omega\nabla_3\left(\Omega\beta\right)_A + \Omega^2\left(\Omega^{-1}{\rm tr}\underline{\chi}\right)\Omega\beta_A &= \Omega^2\Bigg[\slashed{\nabla}_A\check{\rho} + {}^*\slashed{\nabla}_A\check{\sigma} + 2\left(\hat{\chi}\cdot\underline{\beta}\right)_A + 3\left(\eta\check{\rho} + {}^*\eta\check{\sigma}\right)_A
\\ \nonumber &\qquad +\frac{1}{2}\left(\slashed{\nabla}\left(\hat{\chi}\cdot\hat{\underline{\chi}}\right) + {}^*\slashed{\nabla}\left(\hat{\chi}\wedge \hat{\underline{\chi}}\right)\right)_A + \frac{3}{2}\left(\eta \hat{\chi}\cdot\hat{\underline{\chi}} + {}^*\eta \hat{\chi}\wedge \hat{\underline{\chi}}\right)_A\Bigg],
\\ \label{ren233} \Omega\nabla_4\check{\sigma} + \frac{3}{2}\left(\Omega{\rm tr}\chi\right) \check{\sigma} &= -\slashed{\rm div}{}^*\left(\Omega\beta\right) - \underline{\eta} \wedge \left(\Omega\beta\right) - \frac{1}{2}\left(\Omega\hat{\chi}\right)\wedge \left(\slashed{\nabla}\hat{\otimes}\underline{\eta}\right) - \frac{1}{2}\left(\Omega\hat{\chi}\right)\wedge \left(\underline{\eta}\hat{\otimes}\underline{\eta}\right),
\end{align}
\begin{align}
 \label{ren333} \Omega\nabla_4\check{\rho} + \frac{3}{2}\left(\Omega{\rm tr}\chi\right) \check{\rho} &= \slashed{\rm div}\left(\Omega\beta\right) +\underline{\eta}\cdot\left(\Omega\beta\right)
\\ \nonumber&\qquad - \frac{1}{2}\left(\Omega\hat{\chi}\right)\cdot\left(\slashed{\nabla}\hat{\otimes}\underline{\eta}\right) -\frac{1}{2}\left(\Omega\hat{\chi}\right)\cdot\left(\underline{\eta}\hat{\otimes}\underline{\eta}\right) + \frac{1}{4}\left(\Omega{\rm tr}\underline{\chi}\right)\left|\hat{\chi}\right|^2,
\\ \label{ren433} \Omega\nabla_3\check{\sigma} + \Omega^2\frac{3}{2}\left(\Omega{\rm tr}\underline{\chi}\right) \check{\sigma} &= \Omega^2\Bigg[\slashed{\rm div}{}^*\left(\Omega^{-1}\underline{\beta}\right) -\left(\underline{\eta} + 2\eta \right)\wedge \left(\Omega^{-1}\underline{\beta}\right) 
\\ \nonumber &\qquad +\frac{1}{2}\left(\Omega^{-1}\hat{\underline{\chi}}\right)\wedge \left(\slashed{\nabla}\hat{\otimes}\eta\right) + \frac{1}{2}\left(\Omega^{-1}\hat{\underline{\chi}}\right)\wedge \left(\eta\hat{\otimes}\eta\right)\Bigg],
\\ \label{ren533} \Omega\nabla_3\check{\rho} + \Omega^2\frac{3}{2}\left(\Omega{\rm tr}\underline{\chi}\right) \check{\rho} &= \Omega^2\Bigg[-\slashed{\rm div}\left(\Omega^{-1}\underline{\beta}\right)  -\left(\underline{\eta}+ 2\eta\right)\cdot\left(\Omega^{-1}\underline{\beta}\right) 
\\ \nonumber &\qquad - \frac{1}{2}\left(\Omega^{-1}\hat{\underline{\chi}}\right)\cdot\left(\slashed{\nabla}\hat{\otimes}\eta\right) -\frac{1}{2}\left(\Omega^{-1}\hat{\underline{\chi}}\right)\cdot\left(\eta\hat{\otimes}\eta\right) + \frac{1}{4}\left(\Omega^{-1}{\rm tr}\chi\right)\left|\hat{\underline{\chi}}\right|^2\Bigg],
\\ \label{ren633} \Omega\nabla_4\left(\Omega^{-1}\underline{\beta}\right)_A + \left(\Omega{\rm tr}\chi\right)\left(\Omega^{-1}\underline{\beta}\right)_A &= -\slashed{\nabla}_A\check{\rho} + {}^*\slashed{\nabla}_A\check{\sigma} + 4\left(\Omega\omega\right)\left(\Omega^{-1}\underline{\beta}\right)_A + 2\left(\left(\Omega^{-1}\hat{\underline{\chi}}\right)\cdot\left(\Omega\beta\right)\right)_A
\\ \nonumber &\qquad - 3\left(\underline{\eta}\check{\rho} - {}^*\underline{\eta}\check{\sigma}\right)_A
 -\frac{1}{2}\left(\slashed{\nabla}_A\left(\left(\Omega^{-1}\hat{\underline{\chi}}\right)\cdot\left(\Omega\hat{\chi}\right)\right) + {}^*\slashed{\nabla}_A\left(\left(\Omega^{-1}\hat{\underline{\chi}}\right)\wedge \left(\Omega\hat{\chi}\right)\right)\right) 
\\ \nonumber &\qquad - \frac{3}{2}\left(\underline{\eta} \left(\Omega^{-1}\hat{\underline{\chi}}\right)\cdot\left(\Omega\hat{\chi}\right) + {}^*\underline{\eta} \left(\Omega^{-1}\hat{\underline{\chi}}\right)\wedge \left(\Omega\hat{\chi}\right)\right)_A.
\end{align}
We recall the $\check{\rho}$ and $\check{\sigma}$ are defined by~\eqref{reallyaquitenicerenormalization}.
\end{lemma}

Next, we carry out an estimate for the Gauss curvature $K$.
\begin{proposition}\label{thegaussisundercontrolyay}Let $\left(\mathcal{M},g_{\mu\nu}\right)$ satisfy the hypothesis of  Proposition~\ref{tobootornottoboo3}. Then we have
\[\sup_{i \in \{0,1\}}\sup_{(u,v) \in \mathcal{P}_{\tilde u,\tilde v}}\int_{\mathbb{S}^2_{u,v}}v^{4+2i}\left|\slashed{\nabla}^i\left(K-(v-u)^{-2}\right)\right|^2\mathring{\rm dVol} \lesssim A\epsilon^{2-2\delta}.\]
Recall that the constant $A$ is defined from the bootstrap assumption.
\end{proposition}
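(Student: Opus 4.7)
The starting point is the Gauss equation~\eqref{genGauss}, which using $\check\rho = \rho - \tfrac{1}{2}\hat\chi\cdot\hat{\underline\chi}$ and ${\rm tr}\chi\,{\rm tr}\underline\chi = (\Omega\,{\rm tr}\chi)(\Omega^{-1}{\rm tr}\underline\chi)$ can be rewritten as
\[
K - (v-u)^{-2} \;=\; -\check\rho \;-\; \tfrac{1}{4}\bigl[(\Omega\,{\rm tr}\chi)(\Omega^{-1}{\rm tr}\underline\chi) - 4(v-u)^{-2}\bigr].
\]
Since the Minkowski values of $\Omega\,{\rm tr}\chi$ and $\Omega^{-1}{\rm tr}\underline\chi$ are $\pm\tfrac{2}{v-u}$, expanding the bracketed factor produces a linear combination of $(v-u)^{-1}$ times the starred deviations of each factor, together with their product. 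The bootstrap assumption $\mathfrak{K}\leq 2A\epsilon^{1-\delta}$ controls $\|\slashed\nabla^j(\Omega^{-s}\psi)^\ast\|_{L^2(\mathbb{S}^2_{u,v})} \lesssim A\epsilon^{1-\delta}\,v^{-1-j}$ for $\psi\in\{{\rm tr}\chi,{\rm tr}\underline\chi\}$ and $j\in\{0,1,2\}$; combining this with Lemma~\ref{sosososob3} and the fact that $v\sim v-u$ on $\mathcal{P}$, the contribution of the Ricci-trace piece to $\int_{\mathbb{S}^2_{u,v}} v^{4+2i}|\slashed\nabla^i(K-(v-u)^{-2})|^2\,\mathring{\rm dVol}$ is immediately bounded by $A^2\epsilon^{2-2\delta}$ for $i\in\{0,1\}$.

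The remaining task is a pointwise-in-$(u,v)$ spherical $L^2$ bound on $\slashed\nabla^i\check\rho$, which is not directly furnished by $\mathfrak{I}$ (providing only $L^2_u L^2_{\mathbb{S}^2}$ and $L^2_v L^2_{\mathbb{S}^2}$ fluxes). I would propagate $\check\rho$ forward in $v$ via the renormalized Bianchi equation~\eqref{ren333}, starting from the past boundary $\{v = -u\underline{v}^{-1}\}$ of $\mathcal{P}$, where the Region~II analysis together with Lemma~\ref{shifttheshift} yields the initial bound $\|\slashed\nabla^i\check\rho\|_{L^2(\mathbb{S}^2)} \lesssim \epsilon^{1-\delta}\,v^{-2-i}$, while Proposition~\ref{thisgivesthetildedmda} provides the analogous bound along $\mathcal{H}$. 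After commuting~\eqref{ren333} with $\slashed\nabla^i$ for $i\in\{0,1\}$ and conjugating by a suitable power of $v$ so that the linear $\tfrac{3}{2}(\Omega\,{\rm tr}\chi)\check\rho$ term has the good sign---using $\Omega\,{\rm tr}\chi \geq v^{-1} + O(\epsilon)$, which follows from $\mathfrak{K}$ and Sobolev---the right-hand side of~\eqref{ren333} consists of $\slashed{\rm div}(\Omega\beta)$, a contribution involving $\underline\eta$, and quadratic Ricci/curvature products. The $\slashed{\rm div}(\Omega\beta)$ term is estimated by Cauchy--Schwarz in $v$ against the $v$-flux bound on $\slashed\nabla^{i+1}(\Omega\beta)$ furnished by $\mathfrak{I}$, and the nonlinear contributions are absorbed via Sobolev embeddings and the bootstrap norms.

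The main obstacle I anticipate is the bookkeeping of weights: the bootstrap norms $\mathfrak{I}$ and $\mathfrak{L}$ carry delicate factors of $(-u/v)^{2p}$ and $(-u)^{-2q}$ that degenerate as $u\to 0$, while the target weight $v^{4+2i}$ must emerge cleanly from the transport argument. I expect Lemma~\ref{intlemma}, with $\delta_1,\delta_2$ chosen compatibly with $p$ and $q$, to convert the $L^2_v$ flux bound on $\slashed{\rm div}(\Omega\beta)$ into an $L^1_v$ bound with the correct $v$-weights, and the conjugation weight to be chosen so that the good lower-order term from $\Omega{\rm tr}\chi$ dominates both the carry-over from the initial hypersurface and the nonlinear errors. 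Once the pointwise-in-$v$ spherical $L^2$ bound for $\slashed\nabla^i\check\rho$ has been established, combining it with the estimate above for the Ricci-trace piece yields the proposition.
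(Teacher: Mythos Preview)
Your reduction via the Gauss equation and the handling of the trace-part contribution are fine and match the paper. The gap is in how you estimate $\check\rho$ pointwise in $(u,v)$: you propose to integrate the $\nabla_4$ equation~\eqref{ren333} forward in $v$, whereas the paper integrates the $\nabla_3$ equation~\eqref{ren533} in $u$. This is not a cosmetic difference---the $v$-integration cannot produce the unweighted bound stated in the proposition.

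The reason is that in region III we have $-u/v \leq \underline v$, so the $u$-interval is short while the $v$-interval is long. The $v$-flux on $\slashed\nabla^{i+1}(\Omega\beta)$ supplied by $\mathfrak I$ carries the weight $(-u/\hat v)^{2p}$; when you try to convert it (by Cauchy--Schwarz or by Lemma~\ref{intlemma}) into the unweighted integral $\int \hat v^{5+2i}|\slashed\nabla^{i+1}(\Omega\beta)|^2\,d\hat v$ that your transport argument requires, you pick up a factor $(v/(-u))^{2p}$, which blows up as $u\to 0$ or $v\to\infty$. No choice of conjugation weight fixes this: if you insert the weight $(-u/v)^p$ into the conjugation to match the flux, the conclusion becomes the weighted estimate $(-u/v)^{2p}v^{4+2i}\|\slashed\nabla^i\check\rho\|_{L^2}^2 \lesssim A^2\epsilon^{2-2\delta}$ rather than the stated unweighted one. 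This is precisely the mechanism by which $\eta$ and $\hat{\underline\chi}$, which have only $\nabla_4$ equations, end up with the degenerated $(-u/v)^{2p}$ weight in the $\mathscr K$-norm.

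The paper instead writes $\Omega\nabla_3(v^{2+i}\slashed\nabla^i\check\rho) = v^{2+i}\mathcal E$ from~\eqref{ren533}, integrates in $\hat u$ from the past boundary (either $\{-\hat u\underline v^{-1}=v\}$ or $\mathcal H$), and applies Lemma~\ref{intlemma} to the $u$-integral. Because the integration range has length $\lesssim v\underline v$, the lemma produces a prefactor $\underline v^{1-Cp}$ and the residual weights $(-\hat u/v)^{Cp}$ are all bounded by $\underline v^{Cp}$, so the bootstrap control on $\mathfrak I$ and $\mathfrak L$ (via the $u$-flux of $\Omega^{-1}\underline\beta$ and spacetime terms) closes cleanly to $A^2\epsilon^{2-2\delta}$. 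Switch to the $\nabla_3$ equation and integrate in $u$; the rest of your outline then goes through.
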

\begin{proof}We first note that from~\eqref{ren533} and Lemma~\ref{commlemm}, for $i \in \{0,1\}$, we may derive the following equation $\slashed{\nabla}_{A_1\cdots A_i}^i\check{\rho}$:
\begin{equation}\label{ddkokwasfd}
\Omega\nabla_3\left(v^{2+i}\slashed{\nabla}_{A_1\cdots A_i}^i\check{\rho}\right) = v^{2+i}\mathcal{E},
\end{equation}
where $\mathcal{E}$ is a sum of expressions of the following schematic form:
\[\slashed{\nabla}^i\left[\Omega^2\left(\Omega^{-s_1}\psi_1\right)\cdot\left(\Omega^{-s_2}\check{\Psi}_2,\slashed{\nabla}\left(\Omega^{-s_3}\psi_3\right)\right)\right],\qquad \slashed{\nabla}^{i-1}\left(\left(\Omega^{-s_1}\psi_1\right)\cdot\left(\Omega^{-s_2}\psi_s\right)\cdot \check{\rho}\right),\qquad \slashed{\nabla}^i\left(\Omega^2\slashed{\nabla}\left(\Omega^{-s_1}\check{\Psi}_1\right)\right),\]
where $\psi_i$ is a Ricci coefficient of signature $s_i$ not equal to $\underline{\omega}$ and $\check{\Psi}_i$ denotes a renormalized curvature component of signature $s_i$. Contracting both sides of~\eqref{ddkokwasfd} with $v^{2+i}\slashed{\nabla}_{A_1\cdots A_i}^i\check{\rho}$, integrating by parts, using the bootstrap assumption, and using Proposition~\ref{itstartedoutok2} leads to
\[\sup_{(u,v) \in \mathcal{P}_{\tilde u,\tilde v}}\int_{\mathbb{S}^2_{u,v}}v^{4+2i}\left|\slashed{\nabla}^i\check{\rho}\right|^2\mathring{\rm dVol} \lesssim \left(\int_{{\rm max}\left(-v\underline{v},-\underline{v}^2\right)}^uv^{2+i}\left|\mathcal{E}\right||_{(u,v) = (\dot{u},v)}\, d\dot{u}\, \mathring{\rm dVol}\right)^2 + \epsilon^{2-2\delta}.\]
Lemma~\ref{intlemma} then yields
\begin{align*}
&\sup_{(u,v) \in \mathcal{P}_{\tilde u,\tilde v}}\int_{\mathbb{S}^2_{u,v}}v^{4+2i}\left|\slashed{\nabla}^i\check{\rho}\right|^2\mathring{\rm dVol} \lesssim 
\\ \nonumber &\qquad \underline{v}^{1-100p}\sup_{\mathring{u} \in [-v\underline{v},u]}\left(-\mathring{u}\right)^{100q}\int_{{\rm max}\left(-v\underline{v},-\underline{v}^2\right)}^{\mathring{u}}v^{5+2i}\left|\mathcal{E}\right|^2\left(\frac{-\dot{u}}{v}\right)^{100p}(-\dot{u})^{-100q}|_{(u,v) = (\dot{u},v)}\, d\dot{u}\, \mathring{\rm dVol} + \epsilon^{2-2\delta}.
\end{align*}
Then, using the bootstrap assumptions, Sobolev inequalities, and the smallness of $\underline{v}$, $p$, and $q$, we obtain 
\[\sup_{i \in \{0,1\}}\sup_{(u,v) \in \mathcal{P}_{\tilde u,\tilde v}}\int_{\mathbb{S}^2_{u,v}}v^{4+2i}\left|\slashed{\nabla}^i\check{\rho}\right|^2\mathring{\rm dVol} \lesssim \epsilon^{2-2\delta}.\]
To go from the control of $\check{\rho}$ to the control of $K-1$, we use the following consequence of the Gauss equation~\eqref{genGauss}:
\[K - (v-u)^{-2} = -\check{\rho} - \frac{1}{4}\left(\Omega{\rm tr}\chi\right)^*\Omega^{-1}{\rm tr}\underline{\chi}  -\frac{1}{2(v-u)}\left(\Omega^{-1}{\rm tr}\underline{\chi}\right)^*,\]
from which the proof is immediately concluded. 

\end{proof}

We now carry out the energy estimates for curvature.
\begin{proposition}\label{thisisenegege3}Let $\left(\mathcal{M},g_{\mu\nu}\right)$ satisfy the hypothesis of  Proposition~\ref{tobootornottoboo3}. Then  we have
\[\mathfrak{I} \lesssim \epsilon^{1-\delta}.\]
\end{proposition}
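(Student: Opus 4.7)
My plan is to adapt the scheme sketched in Section 3.7.2 to the renormalized Bianchi equations of Lemma~\ref{Bianchiit}. I will treat the three Bianchi pairs $(\Omega^2\alpha,\Omega\beta)$, $(\Omega\beta,\check\rho,\check\sigma)$, and $(\check\rho,\check\sigma,\Omega^{-1}\underline\beta)$ in turn. For each $j\in\{0,1,2\}$, I commute every equation in Lemma~\ref{Bianchiit} with $\slashed{\nabla}^j$ using Lemma~\ref{commlemm}; the key structural point is that the $\Omega\nabla_4$-commutator brings out $\chi$, $\beta$, $\underline{\eta}$ (and $K$) but \emph{not} $\eta$, so the commutator errors in the $\nabla_4$ equations are compatible with the weaker bound~\eqref{lolop3} for $\eta$. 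The Gauss-curvature pieces of these commutators are handled by Proposition~\ref{thegaussisundercontrolyay}.

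Next, I conjugate the commuted equations by a weight $w_j=(-u)^{-q}\bigl(\tfrac{-u}{v}\bigr)^{p}\,v^{3/2+j}$ (with a slight $(-u)^{-\delta}$ regularization that is removed at the end, cf.\ the introductory sketch). Writing a typical pair schematically as $\Omega\nabla_4 \Psi^{(1)}=\mathcal{D}\Psi^{(2)}+\cdots$ and $\Omega\nabla_3 \Psi^{(2)}=-\mathcal{D}^*\Psi^{(1)}+\cdots$, the conjugation produces the identities
\begin{align*}
\Omega\nabla_4(w_j\slashed{\nabla}^j\Psi^{(1)}) + \Bigl[\tfrac{(3/2+j)}{v}\Omega+\tfrac{p\Omega}{v}-\tfrac{p\Omega}{-u}\Bigr] w_j\slashed{\nabla}^j\Psi^{(1)} &= \mathcal{D}(w_j\slashed{\nabla}^j\Psi^{(2)})+w_j\mathcal{E}^{(1)}_j,\\
\Omega\nabla_3(w_j\slashed{\nabla}^j\Psi^{(2)}) + \Bigl[\tfrac{p+q}{-u}\Omega\Bigr]w_j\slashed{\nabla}^j\Psi^{(2)} &= -\mathcal{D}^*(w_j\slashed{\nabla}^j\Psi^{(1)})+w_j\mathcal{E}^{(2)}_j.
\end{align*}
The bootstrap assumption~\eqref{bootstrapgronwallikeest3} and the $\nabla_4$-equation for $\slashed{g}$ give $\Omega{\rm tr}\chi=\tfrac{2}{v}+O(\underline{v}/v)$, so when I balance the $\nabla_4$ coefficient against the ${\rm tr}\chi$-term intrinsic to each renormalized Bianchi equation (most importantly for $\Omega^{-1}\underline\beta$, where $\Omega\nabla_4(\Omega^{-1}\underline\beta)$ comes with $(\Omega{\rm tr}\chi)\Omega^{-1}\underline\beta$), all lower-order coefficients are \emph{strictly positive} for $0<q\ll p\ll 1$ and $\underline{v}$ small. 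Pairing the two equations, integrating by parts over $\mathcal{P}_{u,v}$, and using that $\mathcal{D},\mathcal{D}^*$ are $L^2(\slashed{g})$-adjoints on each $\mathbb{S}^2_{u,v}$ produces precisely the flux and spacetime terms that appear in Definition~\ref{energy3}, with the $\tfrac{p+q}{-u}$ weight generating the integrable $(-\hat u)^{-1}$ spacetime term for every component except $\Omega^{-1}\underline\beta$.

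The error terms $\mathcal{E}^{(i)}_j$ are of three types: (i) ``linear'' terms involving $\slashed{\nabla}\left(\Omega^{-s}\psi\right)\cdot\Omega^{-s'}\check\Psi$ with $\psi\ne\underline\omega$, (ii) genuinely nonlinear $(\Omega^{-s_1}\psi_1)(\Omega^{-s_2}\psi_2)(\Omega^{-s_3}\check\Psi_3)$ products, and (iii) commutator terms involving $(K,1)\cdot\check\Psi$. Using Lemma~\ref{sosososob3}, Proposition~\ref{thegaussisundercontrolyay}, and the bootstrap bounds on $\mathfrak{K}+\mathfrak{L}+\mathfrak{M}$, each of these integrates against $w_j^2(-\hat{u})$ to something that is either $\lesssim\epsilon^{2(1-\delta)}$ or $\lesssim\underline{v}^{2p}\mathfrak{I}^2$; the latter can be absorbed into the left-hand side by taking $\underline{v}$ small. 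The two structural facts highlighted in the introduction are crucial at this step: the absence of any $\underline\beta\cdot\hat{\underline\chi}$ product (which would not close because of the weakened $\hat{\underline\chi}$ bound~\eqref{lolop2}), and the absence of $\eta$ from the right-hand side of the $\Omega\nabla_4(\Omega^{-1}\underline\beta)$ equation in Lemma~\ref{Bianchiit} (which avoids the worse bound~\eqref{lolop3}). The ``initial'' flux on the glued cone $\mathcal{H}$ is controlled by Proposition~\ref{thisgivesthetildedmda}.

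The main obstacle is precisely the $\Omega^{-1}\underline\beta$ component: it does not have a $v$-flux in Definition~\ref{energy3}, and its spacetime weight is $\hat v^{2+2j}$ rather than $\hat v^{3+2j}$, because for its equation only the $\Omega{\rm tr}\chi$ coefficient---not the conjugation-generated $\tfrac{3/2+j}{v}$---provides a positive lower-order term. One must therefore track this pair with the sharper weight shown in Definition~\ref{energy3} (losing one power of $\hat v$ in the spacetime integral and dropping the $v$-flux), and verify that the commutator and nonlinear errors in $\underline\beta$'s equation---in particular those involving $\hat{\underline\chi}\cdot\hat\chi$ products, which on account of~\eqref{lolop2} only barely miss closing with a uniform bound---can be absorbed thanks to the additional $(-\hat u)^{-2q}$ factor and the extra $\underline v^{2p}$ smallness. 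Once this is in place, a standard induction in $j$ together with Gr\"onwall in $u$ along the spacetime term of each pair yields $\mathfrak{I}\lesssim\epsilon^{1-\delta}$ as claimed.
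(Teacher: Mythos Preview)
Your approach is essentially the paper's: commute the equations of Lemma~\ref{Bianchiit} with $\slashed{\nabla}^j$, conjugate by the weight $w_j=v^{3/2+j}\bigl(\tfrac{-u}{v}\bigr)^p(-u)^{-q}$, and run the paired energy estimate, exploiting precisely the structural facts you single out (no $\eta$ and no $\hat{\underline\chi}\cdot\underline\beta$ on the right of the $\Omega\nabla_4(\Omega^{-1}\underline\beta)$ equation), together with Proposition~\ref{thegaussisundercontrolyay} for the $K$-commutator terms.

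One correction, however: your displayed $\nabla_4$-coefficient contains a spurious $-\tfrac{p\Omega}{-u}$ term. After the shift change of Section~\ref{secshift}, $\Omega e_4=\partial_v+b^A\partial_{\theta^A}$, so conjugating the $\nabla_4$-equation by $w_j(u,v)$ produces only a $\partial_v\log w_j$ contribution and no $(-u)^{-1}$ piece. If that term were really present it would be large and \emph{negative} in region III (where $-u\ll v$), destroying the positivity you claim. With it removed, and after absorbing the intrinsic $(\Omega{\rm tr}\chi)$ term and the $\tfrac{j}{2}\Omega{\rm tr}\chi$ from the $\slashed{\nabla}^j$-commutator, the correct coefficient is $\tfrac{-(3/2+j)}{v}+\tfrac{2+j}{2}\Omega{\rm tr}\chi\gtrsim v^{-1}$, exactly the paper's inequality~\eqref{thismakesitwork}. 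Similarly, your $\nabla_3$-coefficient should read $\tfrac{p-q}{-u}$ rather than $\tfrac{p+q}{-u}$; this is harmless since $q\ll p$. With these fixes the argument goes through as you describe.
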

\begin{proof}We start with the equations~\eqref{ren433}-\eqref{ren633} corresponding to the Bianchi pair $\left(\underline{\beta}_A,\check{\rho},\check{\sigma}\right)$ which we write in the following form:
\begin{align}\label{newform}
\Omega\nabla_3\check{\sigma} &= \Omega^2\slashed{\rm div}{}^*\left(\Omega^{-1}\underline{\beta}\right) + \mathcal{E}_1,
\\ \label{newform2} \Omega\nabla_3\check{\rho} &=  -\Omega^2\slashed{\rm div}\left(\Omega^{-1}\underline{\beta}\right) + \mathcal{E}_2,
\\ \label{newform3} \Omega\nabla_4\left(\Omega^{-1}\underline{\beta}\right)_A + \left(\Omega{\rm tr}\chi\right)\left(\Omega^{-1}\underline{\beta}\right)_A &= -\slashed{\nabla}_A\check{\rho} + {}^*\slashed{\nabla}_A\check{\sigma}  + \mathcal{E}_3,
\end{align}
where $\mathcal{E}_1$, $\mathcal{E}_2$ contain terms which are of the following schematic form:
\[\Omega^2\left(\Omega^{-s_1}\psi_1\right)\cdot\left(\left(\Omega^{-s_2}\check{\Psi}_2\right),\slashed{\nabla}\left(\Omega^{-s_2}\psi_2\right),\left(\Omega^{-s_2}\psi_2\right)\cdot\left(\Omega^{-s_3}\psi_3\right)\right),\]
and $\mathcal{E}_3$ contain terms of the schematic form:
\[\left(\Omega^{-s_1}\psi_1\right)\cdot\left(\left(\Omega^{-s_2}\check{\Psi}_2\right),\slashed{\nabla}\left(\Omega^{-s_2}\psi_2\right),\left(\Omega^{-s_2}\psi_2\right)\cdot\left(\Omega^{-s_3}\psi_3\right)\right),\]
where $\check{\Psi}_i$ denotes a renormalized curvature component of signature $s_i$ and $\psi_i$ denotes a Ricci coefficient not equal to $\underline{\omega}$ of signature $s_i$, and in the cubic term at most of one of the $\psi$ can be equal to ${\rm tr}\underline{\chi}$ or ${\rm tr}\chi$. The terms in $\mathcal{E}_3$ have the additional constraint that $\psi_i \not\in \{{\rm tr}\chi,{\rm tr}\underline{\chi},\eta_A\}$. Now, for $i \in \{0,1,2\}$ we commute~\eqref{newform} and~\eqref{newform2} with $v^{3/2}\left(\frac{-u}{v}\right)^p(-u)^{-q}\slashed{\nabla}_{A_1\cdots A_i}^i$ and commute~\eqref{newform3} with $\Omega\slashed{\nabla}_{A_1\cdots A_i}^i$. Using Lemma~\ref{commlemm}, we then end up with (suppressing the indices on $\slashed{\nabla}^i$ for typographical reasons)
\begin{align}\label{newform}
&\Omega\nabla_3\left(v^{3/2+i}\left(\frac{-u}{v}\right)^p(-u)^{-q}\slashed{\nabla}^i\check{\sigma}\right) + \frac{p-q}{-u}\left(v^{3/2+i}\left(\frac{-u}{v}\right)^p(-u)^{-q}\slashed{\nabla}^i\check{\sigma}\right)  = 
\\ \nonumber &\qquad \qquad \Omega^2\slashed{\rm div}{}^*\left[v^{3/2+i}\left(\frac{-u}{v}\right)^p(-u)^{-q}\slashed{\nabla}^i\left(\Omega^{-1}\underline{\beta}\right)\right] + v^{3/2+i}\left(\frac{-u}{v}\right)^p(-u)^{-q}\mathcal{F}_1,
\\ \label{newform2} &\Omega\nabla_3\left(v^{3/2+i}\left(\frac{-u}{v}\right)^p(-u)^{-q}\slashed{\nabla}^i\check{\rho} \right) + \frac{p-q}{-u}\left(v^{3/2+i}\left(\frac{-u}{v}\right)^p(-u)^{-q}\slashed{\nabla}^i\check{\rho} \right)= 
\\ \nonumber &\qquad  \qquad -\Omega^2\slashed{\rm div}\left[v^{3/2+i}\left(\frac{-u}{v}\right)^p(-u)^{-q}\slashed{\nabla}^i\left(\Omega^{-1}\underline{\beta}\right)\right] + v^{3/2+i}\left(\frac{-u}{v}\right)^p(-u)^{-q}\mathcal{F}_2,
\\ \label{newform3} &\Omega\nabla_4\left(v^{3/2+i}\left(\frac{-u}{v}\right)^p(-u)^{-q}\Omega\slashed{\nabla}^i\left(\Omega^{-1}\underline{\beta}\right)\right)_A 
\\ \nonumber &\qquad + \left(\frac{-3/2-i}{v} + \frac{2+i}{2}\Omega{\rm tr}\chi\right)v^{3/2+i}\left(\frac{-u}{v}\right)^p(-u)^{-q}\Omega\slashed{\nabla}^i\left(\Omega^{-1}\underline{\beta}\right)_A = 
\\ \nonumber &\qquad \qquad -\Omega\left(\slashed{\nabla}\left[v^{3/2+i}\left(\frac{-u}{v}\right)^p(-u)^{-q}\slashed{\nabla}^i\check{\rho}\right] + {}^*\slashed{\nabla}\left[v^{3/2+i}\left(\frac{-u}{v}\right)^p(-u)^{-q}\slashed{\nabla}^i\check{\sigma} \right]\right)_A 
\\ \nonumber &\qquad \qquad + v^{3/2+i}\left(\frac{-u}{v}\right)^p(-u)^{-q}\mathcal{F}_3,
\end{align}
where $\mathcal{F}_1$ and $\mathcal{F}_2$ contain terms of the schematic form
\[\underbrace{\slashed{\nabla}^i\left[\Omega^2\left(\Omega^{-s_1}\psi_1\right)\cdot\left(\left(\Omega^{-s_2}\check{\Psi}_2\right),\slashed{\nabla}\left(\Omega^{-s_2}\psi_2\right),\left(\Omega^{-s_2}\psi_2\right)\cdot\left(\Omega^{-s_3}\psi_3\right)\right)\right]}_I,\]\[ \underbrace{\slashed{\nabla}^{i-1}\left(\Omega^2\left(\Omega^{-s_1}\psi_1\right)\cdot\left(\Omega^{-s_2}\psi_2\right)\cdot\left(\Omega^{-s_3}\check{\Psi}_3\right)\right)}_{II},\qquad \underbrace{\slashed{\nabla}^{i-1}\left(\Omega^2 K\cdot\Omega^{-s_1}\check{\Psi}_1\right)}_{III},\]
and $\mathcal{F}_3$ contains terms of the schematic form
\[\underbrace{\Omega\slashed{\nabla}^i\left[\left(\Omega^{-s_1}\psi_1\right)\cdot\left(\left(\Omega^{-s_2}\check{\Psi}_2\right),\slashed{\nabla}\left(\Omega^{-s_2}\psi_2\right),\left(\Omega^{-s_2}\psi_2\right)\cdot\left(\Omega^{-s_3}\psi_3\right)\right)\right]}_{IV},\]\[ \underbrace{\Omega\slashed{\nabla}^{i-1}\left(\left(\Omega^{-s_1}\psi_1\right)\cdot\left(\Omega^{-s_2}\psi_2\right)\cdot\left(\Omega^{-s_3}\check{\Psi}_3\right)\right)}_{V},\qquad \underbrace{\Omega\slashed{\nabla}^{i-1}\left(K\cdot\Omega^{-s_1}\check{\Psi}_1\right)}_{VI},\]
where the terms with the Gaussian curvature $K$ cannot occur for $i = 0$, $\check{\Psi}_i$ denotes a renormalized curvature component of signature $s_i$ and $\psi_i$ denotes a Ricci coefficient not equal to $\underline{\omega}$ of signature $s_i$, and in the cubic term at most of one of the $\psi$ can be equal to ${\rm tr}\underline{\chi}$ or ${\rm tr}\chi$. The terms in $\mathcal{F}_3$ have the following additional constraints:
\begin{enumerate}
 \item  We have $\psi_i \not\in \{{\rm tr}\underline{\chi},\eta_A\}$.
 \item The only place where ${\rm tr}\chi$ can appear, without an angular derivative acting on it, is as exactly one of the $\psi$'s in a cubic term.
 \item We can have at most one of $\{\underline{\beta}_A,\hat{\underline{\chi}}_{AB}\}$ in any of the nonlinear expressions.
 \end{enumerate}

Next, using that $3/2 < 2$, we observe the following consequence of the  bootstrap assumptions:
\begin{equation}\label{thismakesitwork}
\left(\frac{-3/2-i}{v} + \frac{2+i}{2}\Omega{\rm tr}\chi\right) \gtrsim v^{-1}.
\end{equation}
Keeping~\eqref{thismakesitwork} in mind, we contract~\eqref{newform} with $v^{3/2+i}\left(\frac{-u}{v}\right)^p(-u)^{-q}\slashed{\nabla}^i\check{\sigma}$,~\eqref{newform2} with $v^{3/2+i}\left(\frac{-u}{v}\right)^p(-u)^{-q}\slashed{\nabla}^i\check{\rho}  $,~\eqref{newform3} with $v^{3/2+i}\left(\frac{-u}{v}\right)^p(-u)^{-q}\Omega\slashed{\nabla}^i\left(\Omega^{-1}\underline{\beta}\right)_A$, add the resulting equations together, integrate by parts, and multiply the final result by $(-u)^{2q}$ to obtain:
\begin{align}\label{wellisupposewehaveachievedthis}
&\sup_{(u,v) \in \mathcal{P}_{\tilde u,\tilde v}}\Bigg[(-u)^{2q}\int_{{\rm max}\left(-v\underline{v},-\underline{v}^2\right)}^u\int_{\mathbb{S}^2}\Omega^2\left|\slashed{\nabla}^j\left(\Omega^{-1}\underline{\beta}\right)\right|^2v^{3+2j}\left(\frac{-\dot{u}}{v}\right)^{2p}(-\dot{u})^{-2q}\, d\dot{u}\, \mathring{dVol}
\\ \nonumber &\qquad  \qquad + (-u)^{2q}\int_{{\rm max}\left(-v\underline{v},-\underline{v}^2\right)}^u\int_{-u\underline{v}^{-1}}^v\int_{\mathbb{S}^2}\Omega^2\left|\slashed{\nabla}^j\left(\Omega^{-1}\underline{\beta}\right)\right|^2\dot{v}^{2+2j}\left(\frac{-\dot{u}}{\dot{v}}\right)^{-2p}(-\dot{u})^{-2q}\, d\dot{v}\, d\dot{u}\, \mathring{\rm dVol}\Bigg]
\\ \nonumber &\qquad + \sup_{(u,v) \in \mathcal{P}_{\tilde u,\tilde v}}\Bigg[\int_{-u\underline{v}^{-1}}^v\left|\slashed{\nabla}^j\left(\check{\rho},\check{\sigma}\right)\right|^2\dot{v}^{3+2j}\left(\frac{-u}{\dot{v}}\right)^{2p}\, d\dot{v}\, \mathring{dVol}
\\ \nonumber &\qquad  \qquad + (-u)^{2q}\int_{{\rm max}\left(-v\underline{v},-\underline{v}^2\right)}^u\int_{-u\underline{v}^{-1}}^v\int_{\mathbb{S}^2}\left|\slashed{\nabla}^j\left(\check{\rho},\check{\sigma}\right)\right|^2\dot{v}^{3+2j}\left(\frac{-\dot{u}}{\dot{v}}\right)^{-2p}(-\dot{u})^{-2q}(-\dot{u})^{-1}\, d\dot{v}\, d\dot{u}\, \mathring{\rm dVol}\Bigg] \lesssim 
\\ &\sup_{(u,v) \in \mathcal{P}_{\tilde u,\tilde v}} (-u)^{2q}\int_{{\rm max}\left(-v\underline{v},-\underline{v}^2\right)}^u\int_{-u\underline{v}^{-1}}^v\int_{\mathbb{S}^2}\Bigg[ v^{3+2j}\left(\frac{-\dot{u}}{\dot{v}}\right)^{2p}(-\dot{u})^{1-2q}\left|\left(\mathcal{F}_1,\mathcal{F}_2\right)\right|^2 
\\ \nonumber &\qquad \qquad \qquad \qquad \qquad \qquad \qquad \qquad + v^{4+2j}\left(\frac{-\dot{u}}{\dot{v}}\right)^{2p}(-\dot{u})^{-2q}\left|\mathcal{F}_3\right|^2\Bigg]\, d\dot{u}\, d\dot{v}\, \mathring{\rm dVol}
\\ \nonumber &\qquad +\sup_{u \in [-\underline{v}^2,0)}(-u)^{2q}\int_{-\underline{v}^2}^u\int_{\mathbb{S}^2}\left((-s)^{3+2j}(-s)^{-2q}\left|\slashed{\nabla}^j\left(\Omega^{-1}\underline{\beta},\check{\rho},\check{\sigma}\right)\right|^2\right)|_{(u,v) = (s,-\underline{v}^{-1}s)}\, d\hat{s}\, \mathring{\rm dVol}
\\ \nonumber &\qquad +\underline{v}^{-2p}\int_{\underline{v}}^{\infty}\int_{\mathbb{S}^2}\left(\dot{v}^{3+2j-2p}\left|\slashed{\nabla}^j\left(\Omega^{-1}\underline{\beta},\check{\rho},\check{\sigma}\right)\right|^2\right)|_{(u,v) = (-\underline{v}^2,\dot{v})}\, d\dot{v}\, \mathring{\rm dVol}.
\end{align}

Observing that the left hand side of~\eqref{wellisupposewehaveachievedthis} already controls a good spacetime term integral of $\check{\rho}$ and $\check{\sigma}$, one may inductively repeat this analysis for the remaining Bianchi pairs $\left(\check{\sigma},\check{\rho},\beta_A\right)$ and $\left(\beta_A,\alpha_{AB}\right)$, and arrive at 
\begin{align}\label{wellisupposewehaveachievedthis2}
&\mathfrak{I} \lesssim \sup_{(u,v) \in \mathcal{P}_{\tilde u,\tilde v}} (-u)^{2q}\int_{{\rm max}\left(-v\underline{v},-\underline{v}^2\right)}^u\int_{-u\underline{v}^{-1}}^v\int_{\mathbb{S}^2}\Bigg[ \dot{v}^{3+2j}\left(\frac{-\dot{u}}{\dot{v}}\right)^{2p}(-\dot{u})^{1-2q}\left|\tilde{\mathcal{F}}\right|^2 
\\ \nonumber &\qquad \qquad \qquad \qquad \qquad \qquad \qquad \qquad + \dot{v}^{4+2j}\left(\frac{-\dot{u}}{\dot{v}}\right)^{2p}(-\dot{u})^{-2q}\left|\mathcal{F}_3\right|^2\Bigg]\, d\dot{u}\, d\dot{v}\, \mathring{\rm dVol}
\\ \nonumber &\qquad +\sum_{\check{\Psi}}\sup_{u \in [-\underline{v},0)}(-u)^{2q}\int_{-\underline{v}^2}^u\int_{\mathbb{S}^2}\left((-\dot{s})^{3+2j}(-\dot{s})^{-2q}\left|\slashed{\nabla}^j\check{\Psi}\right|^2\right)|_{(u,v) = (\dot{s},-\underline{v}^{-1}\dot{s})}\, d\dot{s}\, \mathring{\rm dVol}
\\ \nonumber &\qquad +\underline{v}^{-2p}\int_{\underline{v}}^{\infty}\int_{\mathbb{S}^2}\left(\dot{v}^{3+2j-2p}\left|\slashed{\nabla}^j\check{\Psi}\right|^2\right)|_{(u,v) = (-\underline{v}^2,\dot{v})}\, d\dot{v}\, \mathring{\rm dVol}.
\end{align}
where $\tilde{\mathcal{F}}$ has the same schematic form as $\mathcal{F}_1$ and $\mathcal{F}_2$. We now turn to an analysis of the various terms on the right hand side of~\eqref{wellisupposewehaveachievedthis2}.

First of all, it follows from Proposition~\ref{itstartedoutok2} that 
\[\sum_{\check{\Psi}}\sup_{u \in [-\underline{v},0)}(-u)^{2q}\int_{-\underline{v}}^u\int_{\mathbb{S}^2}\left((-\dot{s})^{3+2j}(-\dot{s})^{-2q}\left|\slashed{\nabla}^j\check{\Psi}\right|^2\right)|_{(u,v) = (\dot{s},-\underline{v}\dot{s})}\, d\dot{s}\, \mathring{\rm dVol} \lesssim \epsilon^{2-2\delta},\]
\[\underline{v}^{-2p}\int_{\underline{v}}^{\infty}\int_{\mathbb{S}^2}\left(\dot{v}^{3+2j-2p}\left|\slashed{\nabla}^j\check{\Psi}\right|^2\right)|_{(u,v) = (-\underline{v}^2,\dot{v})}\, d\dot{v}\, \mathring{\rm dVol} \lesssim \epsilon^{2-2\delta}.\]
(After possibly slightly increasing $\delta$.)

Next we turn to the $\mathcal{F}_3$ term, and consider first expressions which do not contain the Gaussian curvature $K$. There are no $\eta_A$'s or $\underline{\omega}$'s and  each expression is genuinely quadratic in that, using the bootstrap assumption, there are at least two terms in each expression which are controlled, in a suitable norm, by $\epsilon^{1-\delta}$, and finally each expression can only contain at most one of $\{\hat{\underline{\chi}}_{AB},\underline{\beta}_A\}$.  Thus it follows from Sobolev inequalities and the bootstrap assumptions that 
\begin{align}
\sup_{(u,v) \in \mathcal{P}_{\tilde u,\tilde v}}(-u)^{2q}\int_{{\rm max}\left(-v\underline{v},-\underline{v}^2\right)}^u\int_{-u\underline{v}^{-1}}^v\int_{\mathbb{S}^2}\dot{v}^{4+2j}\left(\frac{-\dot{u}}{\dot{v}}\right)^{2p}(-\dot{u})^{-2q}\left|\left(IV,V\right)\right|^2\, d\dot{u}\, d\dot{v}\, \mathring{\rm dVol} \lesssim \epsilon^{2-2\delta}.
\end{align}
For the terms $I$ and $II$ in $\tilde{\mathcal{F}}$, we do not need to exploit the absence of $\eta_A$ or a limit on the appearance of $\hat{\underline{\chi}}_{AB}$ or $\underline{\beta}_A$ because the $u$-weight is more favorable. We  thus obtain
\begin{align}
\sup_{(u,v) \in \mathcal{P}_{\tilde u,\tilde v}}(-u)^{2q}\int_{{\rm max}\left(-v\underline{v},-\underline{v}^2\right)}^u\int_{-u\underline{v}^{-1}}^v\int_{\mathbb{S}^2}\dot{v}^{3+2j}\left(\frac{-\dot{u}}{\dot{v}}\right)^{2p}(-\dot{u})^{1-2q}\left|\left(I,II\right)\right|^2\, d\dot{u}\, d\dot{v}\, \mathring{\rm dVol} \lesssim \epsilon^{2-2\delta}.
\end{align}

Now we turn to the terms $III$ and $VI$. For these, we simply argue as in the derivation of~\eqref{okdkowdqqqpkbnwif} and use Proposition~\ref{thegaussisundercontrolyay} to obtain that 
\begin{align}
\sup_{(u,v) \in \mathcal{P}_{\tilde u,\tilde v}}(-u)^{2q}\int_{{\rm max}\left(-v\underline{v},-\underline{v}^2\right)}^u\int_{-u\underline{v}^{-1}}^v\int_{\mathbb{S}^2}\Bigg[&\dot{v}^{4+2j}\left(\frac{-\dot{u}}{v}\right)^{2p}(-\dot{u})^{-2q}\left|III\right|^2
\\ \nonumber &+\dot{v}^{3+2j}\left(\frac{-\dot{u}}{\dot{v}}\right)^{2p}(-\dot{u})^{1-2q}\left|VI\right|^2\Bigg]\, d\dot{u}\, d\dot{v}\, \mathring{\rm dVol} \lesssim \epsilon^{2-2\delta}.
\end{align}
This concludes the proof. 
\end{proof}
\subsubsection{Estimates for the Ricci Coefficients}
Now we turn to the estimates for the Ricci coefficients. We start with the low-regularity estimates.
\begin{proposition}Let $\left(\mathcal{M},g_{\mu\nu}\right)$ satisfy the hypothesis of  Proposition~\ref{tobootornottoboo3}. Then  we have
\[\mathfrak{K}_{\tilde u,\tilde v}  \lesssim \epsilon^{1-\delta}.\]
\end{proposition}
\begin{proof}If $\psi \neq \underline{\omega},\eta_A,\hat{\underline{\chi}}, {\rm tr}\underline{\chi},{\rm tr}\chi$, then after applying Lemma~\ref{commlemm}, one easily establishes that for $i \in \{0,1,2\}$, $\psi$ will satisfy an equation of the form
\begin{equation}\label{Dwdw2sxc}
\Omega\nabla_3\left(v^{1+i}\slashed{\nabla}_{A_1\cdots A_i}^i\left(\Omega^{-s}\psi\right)\right) = v^{1+i}\mathcal{E},
\end{equation}
where $s$ is the signature of $\psi$ and $\mathcal{E}$ is a sum of terms of the schematic form:
\[\slashed{\nabla}^i\left(\Omega^2\Omega^{-s_1}\check{\Psi}_1\right),\qquad \slashed{\nabla}^i\left(\Omega^2\slashed{\nabla}\left(\Omega^{-s_1}\psi_1\right)\right),\]
\[\slashed{\nabla}^i\left(\Omega^2 \left(\Omega^{-s_1}\psi_1\right)\cdot\left(\Omega^{-s_2}\psi_2\right)\right),\qquad \Omega^2\slashed{\nabla}^{i-1}\left(\left(\Omega^{-s_1}\psi_1\right)\cdot\left(\Omega^{-s_2}\psi_2\right)\cdot\left(\Omega^{-s_3}\psi_3\right)\right).\]
where $\psi_i$ is a Ricci coefficient of signature $s_i$ not equal to $\underline{\omega}$ and $\check{\Psi}_i$ is a renormalized null curvature component of signature $s_i$. 

Now we contract~\eqref{Dwdw2sxc} with $v^{1+i}\slashed{\nabla}^i\Omega^{-s}\psi$, integrate by parts, use Proposition~\ref{itstartedoutok2}, and use Lemma~\ref{intlemma} to obtain
\begin{align}
&\sup_{(u,v) \in \mathcal{P}_{\tilde u,\tilde v}}\int_{\mathbb{S}^2_{u,v}}v^{2+2i}\left|\slashed{\nabla}^i\left(\Omega^{-s}\psi\right)\right|^2\mathring{\rm dVol} 
\\ \nonumber &\lesssim \left(\int_{{\rm max}\left(-v\underline{v},-\underline{v}^2\right)}^uv^{1+i}\left|\mathcal{E}\right||_{(u,v) = (\dot{u},v)}\, d\dot{u}\, \mathring{\rm dVol}\right)^2 + \epsilon^{2-2\delta}
\\ \nonumber &\lesssim \underline{v}^{1-100p}\sup_{\mathring{u} \in [-v\underline{v},u]}\left(-\mathring{u}\right)^{100q}\int_{{\rm max}\left(-v\underline{v},-\underline{v}^2\right)}^{\mathring{u}}v^{3+2i}\left|\mathcal{E}\right|^2\left(\frac{-\dot{u}}{v}\right)^{100p}(-\dot{u})^{-100q}|_{(u,v) = (\dot{u},v)}\, d\dot{u}\, \mathring{\rm dVol} + \epsilon^{2-2\delta}.
\end{align}
Then, using the bootstrap assumptions, Sobolev inequalities, and the smallness of $\underline{v}$, $p$, and $q$, we obtain 
\[\sup_{i \in \{0,1,2\}}\sup_{(u,v) \in \mathcal{P}_{\tilde u,\tilde v}}\int_{\mathbb{S}^2_{u,v}}v^{2+2i}\left|\slashed{\nabla}^i\left(\Omega^{-s}\psi\right)\right|^2\mathring{\rm dVol}  \lesssim \epsilon^{2-2\delta}.\]

For $\psi \in \{{\rm tr}\underline{\chi},{\rm tr}\chi\}$ we have an equation of the form 
\begin{equation}\label{Dwdw2sxc234}
\Omega\nabla_3\left(v^{1+i}\slashed{\nabla}^i\left(\Omega^{-s}\psi\right)^*\right) = v^{1+i}\mathcal{W},
\end{equation}
where $\mathcal{W}$ contains terms of the schematic form 
\[\slashed{\nabla}^i\left(\Omega^2\slashed{\nabla}\left(\Omega^{-s_1}\psi_1\right)^*\right),\qquad \slashed{\nabla}^i\left(\Omega^2 \left(u-v\right)^{-1}\cdot\left(\Omega^{-s_2}\psi_2\right)^*\right),\]
\[\slashed{\nabla}^i\left(\Omega^2 \left(\Omega^{-s_1}\psi_1\right)\cdot\left(\Omega^{-s_2}\psi_2\right)^*\right),\qquad \Omega^2\slashed{\nabla}^{i-1}\left(\left(\Omega^{-s_1}\psi_1\right)\cdot\left(\Omega^{-s_2}\psi_2\right)\cdot\left(\Omega^{-s_3}\psi_3\right)^*\right).\]
where $\psi_i$ is a Ricci coefficient of signature $s_i$ not equal to $\underline{\omega}$ and $\check{\Psi}_i$ is a renormalized null curvature component of signature $s_i$. Then one my repeat the above analysis to obtain
\[\sup_{i \in \{0,1,2\}}\sup_{(u,v) \in \mathcal{P}_{\tilde u,\tilde v}}\int_{\mathbb{S}^2_{u,v}}v^{2+2i}\left|\slashed{\nabla}^i\left(\Omega^{-s}\psi\right)^*\right|^2\mathring{\rm dVol}  \lesssim \epsilon^{2-2\delta}.\]

It remains to estimate $\eta_A$. For this we must use the corresponding $\nabla_4$ equation. Using Lemma~\ref{commlemm} we obtain, for $i \in \{0,1,2\}$ (suppressing the indices on $\slashed{\nabla}^i$):
\begin{equation}\label{foretaetatea}
\Omega\nabla_4\slashed{\nabla}^i\eta_A + \frac{1+i}{2}\left(\Omega{\rm tr}\chi\right)\slashed{\nabla}^i\eta_A = \mathcal{F},
\end{equation}
where $\mathcal{F}$ is a sum of expressions of the schematic form:
\[\slashed{\nabla}^i\left(\Omega\beta\right)_A,\qquad \slashed{\nabla}^i\left(\left(\Omega^{s_1}\psi_1\right)\cdot\left(\Omega^{s_2}\psi_2\right)\right),\qquad \slashed{\nabla}^{i-1}\left(\left(\Omega^{s_1}\psi_1\right)\cdot\left(\Omega^{s_2}\psi_2\right)\cdot\left(\Omega^{s_3}\psi_3\right)\right),\]
where
\begin{enumerate}
	\item Each $\psi_i$ denotes a Ricci coefficient of signature $s_i$ which is not equal to $\underline{\omega}$, ${\rm tr}\underline{\chi}$. or $\hat{\underline{\chi}}_{AB}$.
	\item Each expression can have at most one $\psi_i$ which is equal to $\eta_A$.
	\item If $\eta_A$ shows up in the one of the terms making up $\mathcal{F}$, then at least one of the other terms which it is contracted with must be $\hat{\chi}_{AB}$ or $\underline{\eta}_A$, or  ${\rm tr}\chi$ with an angular derivative applied. 
\end{enumerate}
Now we conjugate~\eqref{foretaetatea} by $v^{1+i}\left(-\frac{u}{v}\right)^p$ to obtain
\begin{equation}\label{foretaetatea2}
\Omega\nabla_4\left(v^{1+i}\left(\frac{-u}{v}\right)^p\slashed{\nabla}^i\eta\right)_A +\left(\frac{p-1-i}{v}+ \frac{1+i}{2}\left(\Omega{\rm tr}\chi\right)\right)v^{1+i}\left(\frac{-u}{v}\right)^p\slashed{\nabla}^i\eta_A = v^{1+i}\left(\frac{-u}{v}\right)^p\mathcal{F}.
\end{equation}
We now note that the bootstrap assumptions imply that 
\[\left(\frac{p-1-i}{v}+ \frac{1+i}{2}\left(\Omega{\rm tr}\chi\right)\right) \gtrsim \frac{p}{v}.\]
Thus we can contract~\eqref{foretaetatea2} with $v^{1+i}\left(\frac{-u}{v}\right)^{-p}\slashed{\nabla}^i\eta_A$, integrate by parts, and use Proposition~\ref{itstartedoutok2} to obtain
\begin{align*}
\sup_{i \in \{0,1,2\}}\sup_{(u,v) \in \mathcal{P}_{\tilde u,\tilde v}}\int_{\mathbb{S}^2_{u,v}}v^{2+2i}\left(\frac{-u}{v}\right)^{2p}\left|\slashed{\nabla}^i\eta\right|^2\mathring{\rm dVol}  &\lesssim \sup_{i \in \{0,1,2\}}\sup_{(u,v) \in \mathcal{P}_{\tilde u,\tilde v}}\int_{-u\underline{v}^{-1}}^v\int_{\mathbb{S}^2}\dot{v}^{3+2i}\left(\frac{-u}{\dot{v}}\right)^{2p}\left|\mathcal{F}\right|^2 + \epsilon^{2-2\delta}
\\ \nonumber &\lesssim \epsilon^{2-2\delta}.
\end{align*}
Here the $\mathcal{F}$ term is controlled via Sobolev inequalities, the bootstrap inequalities, and Proposition~\ref{thisisenegege3}.

It remains to estimate $\hat{\underline{\chi}}_{AB}$. For this we first note that by using the $\nabla_4$ equation for $\underline{\beta}_A$ and arguing as we have just done for $\eta_A$ (one uses the weight $v^{2+i}\left(\frac{-u}{v}\right)^{-p}$), one may establish 
\begin{align*}
\sup_{i \in \{0,1\}}\sup_{(u,v) \in \mathcal{P}_{\tilde u,\tilde v}}\int_{\mathbb{S}^2_{u,v}}v^{4+2i}\left(\frac{-u}{v}\right)^{2p}\left|\slashed{\nabla}^i\underline{\beta}\right|^2\mathring{\rm dVol}   &\lesssim \epsilon^{2-2\delta}.
\end{align*}
Then the desired bound for $\hat{\underline{\chi}}_A$ follows from the Codazzi equation~\eqref{tcod1} as well as the elliptic estimate~\eqref{pokdwimnfe22}.

\end{proof}

Next, we turn to the high regularity estimate for the Ricci coefficients.
\begin{proposition}\label{dwlpdwlpdwspacetime}Let $\left(\mathcal{M},g_{\mu\nu}\right)$ satisfy the hypothesis of  Proposition~\ref{tobootornottoboo3}. Then  we have
\[\mathfrak{L}_{\tilde u,\tilde v}  \lesssim \epsilon^{1-\delta}.\]
\end{proposition}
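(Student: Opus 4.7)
The plan is to carry out spacetime energy estimates for the transport equations, commuted up to three angular derivatives, on the rectangle $R_{u,v}\doteq [\hat u_0(v),u]\times [-u\underline{v}^{-1},v]$, where $\hat u_0(v) \doteq \max(-v\underline{v},-\underline{v}^2)$, over which the $\mathfrak{L}$-norm integrates. For each Ricci coefficient $\psi \not\in \{\eta, \hat{\underline{\chi}}, \underline{\omega}\}$ of signature $s$, Lemma~\ref{commlemm} applied to the corresponding $\Omega\nabla_3$ equation gives, for $0 \leq j \leq 3$, an identity $\Omega\nabla_3\slashed{\nabla}^j(\Omega^{-s}\psi)^* = \mathcal{E}_j$, where $\mathcal{E}_j$ has a schematic form analogous to the low-regularity proof of $\mathfrak{K}$. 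Conjugating by $w = \hat v^{(1+2j)/2}(-\hat u/\hat v)^p(-\hat u)^{-q}$ (with $0 < q \ll p$) produces, upon commuting $\Omega\nabla_3$ through the weight, the favorable lower-order term $\frac{p-q}{-\hat u}w\slashed{\nabla}^j(\Omega^{-s}\psi)^*$. Contracting with $w\slashed{\nabla}^j(\Omega^{-s}\psi)^*$ and integrating by parts over $R_{u,v}\times\mathbb{S}^2$ via Fubini yields on the left-hand side, after multiplication by $(-u)^{2q}$, exactly the spacetime integral defining $\mathfrak{L}$. Past-boundary data on the portion $\{\hat u = -\underline{v}^2\}$ is bounded by Proposition~\ref{thisgivesthetildedmda}, while that on the portion $\{\hat v = -\hat u\underline{v}^{-1}\}$ (the interface with region~II) is bounded by Proposition~\ref{itstartedoutok2} combined with Lemma~\ref{shifttheshift}; each contributes at most $O(\epsilon^{1-\delta})$.

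For $\eta$ and $\hat{\underline{\chi}}$, where the useful transport direction is $e_4$, we work instead with the $\Omega\nabla_4$ equations. For $\hat{\underline{\chi}}$ it is cleaner to first bound $\Omega^{-1}\underline{\beta}$ via \eqref{ren633} and then recover three derivatives of $\hat{\underline{\chi}}$ through the Codazzi equation \eqref{tcod2} together with the elliptic estimate \eqref{pokdwimnfe22}. Conjugating by $w = \hat v^{j}(-\hat u/\hat v)^p(-\hat u)^{-q}$ and exploiting the positivity $\frac{p-j}{\hat v}+\frac{j+1}{2}\Omega{\rm tr}\chi\gtrsim p/\hat v$, a consequence of the bootstrap bound on $(\Omega{\rm tr}\chi)^*$, produces a good $\hat v^{-1}$-weighted spacetime term; subsequent integration in $\hat u$ against $d\hat u/(-\hat u)$ then reproduces the $\hat v^{2j}$-weighted $\mathfrak{L}$-norm.

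The nonlinear right-hand sides $\mathcal{E}_j$ have the same schematic structure as in the proof of $\mathfrak{K}$, but at one higher derivative order. At the top order $j=3$, at most one factor in each product carries all three derivatives; the remaining factors are controlled in $L^\infty\mathring{H}^2$ via Sobolev inequalities and the $\mathfrak{K}$-bootstrap, while the top-order factor is controlled in the appropriate spacetime $L^2$ via $\mathfrak{L}$ itself (for Ricci factors) or via $\mathfrak{I}$ from Proposition~\ref{thisisenegege3} (for curvature factors). Commutator terms from Lemma~\ref{commlemm} at order $j=3$ produce at most two angular derivatives of the Gauss curvature $K$, which are reduced via the contracted Gauss equation \eqref{genGauss} to controlled derivatives of $\check{\rho}$, $\underline{\chi}$, and $\hat{\chi}$, while low-order Gauss curvature factors are controlled by Proposition~\ref{thegaussisundercontrolyay}. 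Absorbing an $\epsilon^{1-\delta}$-small multiple of $\mathfrak{L}^2$ on the left-hand side then closes the estimate.

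The main obstacle is the treatment of those nonlinear terms in which a top-order factor such as $\slashed{\nabla}^3\eta$ or $\slashed{\nabla}^3\hat{\underline{\chi}}$, controlled only in the $\mathfrak{L}$ spacetime norm and not pointwise on each $\mathbb{S}^2_{u,v}$, is paired with another factor of comparable regularity; these must be bounded by Cauchy--Schwarz directly in the full spacetime integral, exploiting that $\mathfrak{I}$ and $\mathfrak{L}$ together provide the required dual estimates, and relying crucially on the extra positive $(-\hat u)^{-1}$ weight in the good spacetime term produced by the conjugation.
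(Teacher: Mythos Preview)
Your approach has a genuine gap at top order. The norm $\mathfrak{I}$ controls only $\slashed{\nabla}^j\check{\Psi}$ for $0\le j\le 2$, whereas $\mathfrak{L}$ requires $\slashed{\nabla}^3\psi$. Directly commuting $\slashed{\nabla}^3$ through the $\Omega\nabla_3$ equation for $\underline{\eta}$ produces $\slashed{\nabla}^3(\Omega^{-1}\underline{\beta})$ on the right, through the $\Omega\nabla_3$ equation for $\omega$ produces $\slashed{\nabla}^3\rho$, and through the $\Omega\nabla_4$ equation for $\eta$ produces $\slashed{\nabla}^3(\Omega\beta)$; none of these is controlled by $\mathfrak{I}$. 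Likewise the $\nabla_3$ equations for $\hat{\chi}$ and ${\rm tr}\chi$ carry $\slashed{\nabla}\hat{\otimes}\eta$ and $\slashed{\rm div}\eta$, so your scheme would demand $\slashed{\nabla}^4\eta$. Your sentence ``the top-order factor is controlled \ldots\ via $\mathfrak{I}$ \ldots\ (for curvature factors)'' is therefore false at $j=3$, and the bootstrap does not close.

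The paper resolves this by the standard renormalization that pairs top-order Ricci derivatives with curvature so that the offending third derivative cancels under $\nabla_3$ or $\nabla_4$. Concretely, one estimates the combinations
\[
\slashed{\nabla}^2\slashed{\rm div}\underline{\eta}+\slashed{\nabla}^2\check{\rho},\qquad
\slashed{\nabla}\slashed{\Delta}(\Omega\omega)-\tfrac{1}{2}\slashed{\nabla}\slashed{\rm div}(\Omega\beta),\qquad
\slashed{\nabla}^2\slashed{\rm div}\eta+\slashed{\nabla}^2\check{\rho},
\]
whose transport equations (derived from the Bianchi identities \eqref{ren233}--\eqref{ren533}) have right-hand sides involving at most two angular derivatives of curvature, hence controllable by $\mathfrak{I}$; one then recovers $\slashed{\nabla}^3\underline{\eta}$, $\slashed{\nabla}^3\omega$, $\slashed{\nabla}^3\eta$ via the elliptic estimates of Lemma~\ref{someelleststs3}. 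For ${\rm tr}\chi$ one switches to the $\nabla_4$ Raychaudhuri equation (which has no curvature term), and \emph{both} $\hat{\chi}$ and $\hat{\underline{\chi}}$ are handled through Codazzi plus elliptic estimates rather than transport. Your proposal correctly identifies the Codazzi route for $\hat{\underline{\chi}}$ and the weight/conjugation mechanism, but misses this essential top-order renormalization for $\omega$, $\underline{\eta}$, $\eta$, and the need to avoid the $\nabla_3$ equations for $\hat{\chi}$ and ${\rm tr}\chi$.
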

\begin{proof}As is well known, in order to obtain these highest order estimates for the Ricci coefficients we will need to re-write some of the null structure equations in a way which reduces their top-order dependence on curvature. In order to do this we first note the following consequences of the Bianchi equations from Lemma~\ref{Bianchiit} and the commutation formulas from Lemma~\ref{commlemm}:
\begin{align}\label{aqple1}
\slashed{\nabla}_{AB}^2\slashed{\rm div}\left(\Omega\beta\right) &= \Omega\nabla_4\left(\slashed{\nabla}_{AB}^2\check{\rho}\right) + \mathcal{E}_1,
\\ \label{aqple2} \Omega^2 \slashed{\nabla}_{AB}^2\slashed{\rm div}\left(\Omega^{-1}\underline{\beta}\right) &= -\Omega\nabla_3\left(\slashed{\nabla}_{AB}^2\check{\rho}\right) + \mathcal{F}_1,
\\ \label{aqple3} \Omega^2 \slashed{\nabla}_A\slashed{\Delta}\check{\rho} &= \Omega \nabla_3\left(\slashed{\nabla}_A\slashed{\rm div}\left(\Omega\beta\right)\right) + \mathcal{F}_2,
\end{align}
where $\mathcal{E}_1$ is a sum of terms of the following schematic form:
\[\slashed{\nabla}^2\left[\left(\Omega^{-s_1}\psi_1\right)\cdot\left(\left(\Omega^{-s_2}\check{\Psi}_2\right),\slashed{\nabla}\left(\Omega^{-s_2}\psi_2\right),\left(\Omega^{-s_2}\psi_2\right)\cdot\left(\Omega^{-s_3}\psi_3\right)\right)\right],\]\[ \slashed{\nabla}\left(\left(\Omega^{-s_1}\psi_1\right)\cdot\left(\Omega^{-s_2}\psi_2\right)\cdot\left(\Omega^{-s_3}\check{\Psi}_3\right)\right),\qquad \slashed{\nabla}\left(K\cdot\Omega^{-s_1}\check{\Psi}_1\right),\]
and $\mathcal{F}_1$ and $\mathcal{F}_2$ are a sum of terms of the followings schematic form:
\[\slashed{\nabla}^2\left[\Omega^2\left(\Omega^{-s_1}\psi_1\right)\cdot\left(\left(\Omega^{-s_2}\check{\Psi}_2\right),\slashed{\nabla}\left(\Omega^{-s_2}\psi_2\right),\left(\Omega^{-s_2}\psi_2\right)\cdot\left(\Omega^{-s_3}\psi_3\right)\right)\right],\]\[ \slashed{\nabla}\left(\Omega^2\left(\Omega^{-s_1}\psi_1\right)\cdot\left(\Omega^{-s_2}\psi_2\right)\cdot\left(\Omega^{-s_3}\check{\Psi}_3\right)\right),\qquad \slashed{\nabla}\left(\Omega^2 K\cdot\Omega^{-s_1}\check{\Psi}_1\right),\]
where $\check{\Psi}_i$ denotes a renormalized curvature component of signature $s_i$ and $\psi_i$ denotes a Ricci coefficient not equal to $\underline{\omega}$ of signature $s_i$, and in the any term at most of one of the $\psi$ can be equal to an undifferentiated ${\rm tr}\underline{\chi}$ or ${\rm tr}\chi$. The terms in $\mathcal{E}_1$ have the following additional constraints:
\begin{enumerate}
 \item  We have $\psi_i \not\in \{{\rm tr}\underline{\chi},\eta_A\}$.
 \item The only place where ${\rm tr}\chi$ can appear, without an angular derivative acting on it, is as exactly one of the $\psi$'s in a cubic term or multiplying a $\check{\Psi}$.
 \item We can have at most one of $\{\underline{\beta}_A,\hat{\underline{\chi}}_{AB}\}$ in any of the nonlinear expressions.
  \end{enumerate}
 
 Using~\eqref{aqple1}-\eqref{aqple3}, the null structure equations, and Lemma~\ref{commlemm} we may then derive the following:
 \begin{align}\label{forthehighersodrccicekw}
 \Omega\nabla_3\left(\slashed{\nabla}_{ABC}^3\left(\Omega^{-1}{\rm tr}\underline{\chi}\right)^*\right) &= \mathcal{H}_1,
\\ \label{forthehighersodrccicekw2} \Omega\nabla_3\left(\slashed{\nabla}_A\slashed{\Delta}\left(\Omega\omega\right) - \frac{1}{2}\slashed{\nabla}_A\slashed{\rm div}\left(\Omega\beta\right)\right) &= \mathcal{F}_3,
\\ \label{forthehighersodrccicekw3} \Omega\nabla_3\left[\slashed{\nabla}_{AB}^2\slashed{\rm div}\underline{\eta} + \slashed{\nabla}^2_{AB}\check{\rho}\right] &= \mathcal{F}_4,
\\ \label{forthehighersodrccicekw4} \Omega\nabla_4\left(\slashed{\nabla}_{ABC}^3\left(\Omega{\rm tr}\chi\right)^*\right) + \frac{5}{v-u}\left(\slashed{\nabla}_{ABC}^3\left(\Omega{\rm tr}\chi\right)^*\right) &= \mathcal{H}_2,
\\ \label{forthehighersodrccicekw5} \Omega\nabla_4\left(\slashed{\nabla}_{AB}^2\slashed{\rm div}\eta+\slashed{\nabla}_{AB}^2\check{\rho}\right) + 2\Omega{\rm tr}\chi\left(\slashed{\nabla}_{AB}^2\slashed{\rm div}\eta+\slashed{\nabla}_{AB}^2\check{\rho}\right) &= \mathcal{E}_2,
 \end{align}
 where each $\mathcal{F}_i$  contains terms of the same type as in $\mathcal{F}_1$ and $\mathcal{F}_2$. $\mathcal{E}_3$ contains terms of the same type as $\mathcal{E}_1$ except that we also have terms where ${\rm tr}\chi$ multiplies $\underline{\eta}_A$. In $\mathcal{H}_1$ we have terms of the following schematic type:
 \[\slashed{\nabla}^3\left[\Omega^2\left(\Omega^{-s_1}\psi_1\right)^*\cdot\left(\left(v-u\right)^{-2},\left(\Omega^{-s_2}\psi_2\right)^*\right)\right],\qquad \slashed{\nabla}^2\left[\Omega^2\left(\Omega^{-s_1}\psi_1\right)^*\cdot\left(\Omega^{-s_2}\psi_2\right)\cdot\left(\Omega^{-s_3}\psi_3\right)\right],\]
 where $\psi_i$ denotes a Ricci coefficient not equal to $\underline{\omega}$ of signature $s_i$. Finally, in $\mathcal{H}_2$ we have terms of the following schematic type:
  \[\slashed{\nabla}^3\left(\Omega\omega \left(v-u\right)^{-2}\right),\qquad \slashed{\nabla}^3\left[\Omega^2\left(\Omega^{-s_1}\psi_1\right)^*\cdot\left(\Omega^{-s_2}\psi_2\right)^*\right],\qquad \slashed{\nabla}^2\left[\Omega^2\left(\Omega^{-s_1}\psi_1\right)^*\cdot\left(\Omega^{-s_2}\psi_2\right)^*\cdot\left(\Omega^{-s_3}\psi_3\right)\right],\]
  where $\psi_i$ denotes a Ricci coefficient not equal to $\underline{\omega}$ of signature $s_i$.
    
Letting $X \in \{\slashed{\nabla}_{ABC}^3\left(\Omega^{-1}{\rm tr}\underline{\chi}\right)^*,\slashed{\nabla}_A\slashed{\Delta}\left(\Omega\omega\right) - \frac{1}{2}\slashed{\nabla}_A\slashed{\rm div}\left(\Omega\beta\right),\slashed{\nabla}_{AB}^2\slashed{\rm div}\underline{\eta} + \slashed{\nabla}_{AB}^2\check{\rho}\}$, we can write any of the $\nabla_3$ equations~\eqref{forthehighersodrccicekw},~\eqref{forthehighersodrccicekw2},~\eqref{forthehighersodrccicekw3} as
\[\Omega\nabla_3\left(v^{7/2}\left(\frac{-u}{v}\right)^p(-u)^{-2q}X\right) + \frac{p}{-u}v^{7/2-q}\left(\frac{-u}{v}\right)^p(-u)^{-2q}X = v^{7/2-q}\left(\frac{-u}{v}\right)^p(-u)^{-2q}\left(\mathcal{H}_1,\mathcal{F}_3,\mathcal{F}_4\right).\]
 Contracting with  $v^{7/2}\left(\frac{-u}{v}\right)^pX$, integrating by parts, using Proposition~\ref{itstartedoutok2}, using the smallness of $\underline{v}$, using Sobolev inequalities, and appealing to the bootstrap assumption leads to 
\begin{align*}
&\sup_{(u,v)\in\mathcal{P}_{\tilde u,\tilde v}}(-u)^{2q}\int_{{\rm max}\left(-v\underline{v},-\underline{v}^2\right)}^u\int_{-u\underline{v}^{-1}}^v\int_{\mathbb{S}^2}\left|X\right|^2\dot{v}^7\left(\frac{-\dot{u}}{\dot{v}}\right)^{2p}(-\dot{u})^{-1}(-\dot{u})^{-2q}\, d\dot{u}\, d\dot{v}\, \mathring{\rm dVol} \lesssim 
\\ \nonumber &\qquad \sup_{(u,v)\in\mathcal{P}_{\tilde u,\tilde v}}(-u)^{2q}\int_{{\rm max}\left(-v\underline{v},-\underline{v}^2\right)}^u\int_{-u\underline{v}^{-1}}^v\int_{\mathbb{S}^2}\left|\left(\mathcal{H}_1,\mathcal{F}_3,\mathcal{F}_4\right)\right|^2\dot{v}^7\left(\frac{-\dot{u}}{\dot{v}}\right)^{2p}(-\dot{u})(-\dot{u})^{-2q}\, d\dot{u}\, d\dot{v}\, \mathring{\rm dVol} + \epsilon^{2-2\delta}
\\ \nonumber &\qquad \lesssim \epsilon^{2-2\delta}.
\end{align*}
Using Proposition~\ref{thisisenegege3} and Lemma~\ref{someelleststs3} we thus obtain
\begin{align}\label{pldpwlko}
\sup_{(u,v)\in\mathcal{P}_{\tilde u,\tilde v}}(-u)^{2q}\int_{{\rm max}\left(-v\underline{v},-\underline{v}^2\right)}^u\int_{-u\underline{v}^{-1}}^v\int_{\mathbb{S}^2}\left|\slashed{\nabla}^3\left(\Omega{\rm tr}\chi,\Omega\omega,\underline{\eta}\right)\right|^2\dot{v}^7\left(\frac{-\dot{u}}{\dot{v}}\right)^{2p}(-\dot{u})^{-1}(-\dot{u})^{-2q}\, d\dot{u}\, d\dot{v}\, \mathring{\rm dVol} \lesssim \epsilon^{2-2\delta}.
\end{align}

Next, we re-write~\eqref{forthehighersodrccicekw4} and~\eqref{forthehighersodrccicekw5} as  
\begin{align}\label{dawed}
& \Omega\nabla_4\left(v^{7/2}\left(\frac{-u}{v}\right)^p\left(\frac{-u}{v}\right)^{-1/2}(-u)^{-2q}\slashed{\nabla}_{ABC}^3\left(\Omega{\rm tr}\chi\right)^*\right) 
\\ \nonumber &\qquad + \left(\frac{-4+p}{v} + \frac{5}{v-u}\right)\left(v^{7/2}\left(\frac{-u}{v}\right)^p\left(\frac{-u}{v}\right)^{-1/2}(-u)^{-2q}\slashed{\nabla}_{ABC}^3\left(\Omega{\rm tr}\chi\right)^*\right) 
\\ \nonumber &\qquad \qquad =v^{7/2}\left(\frac{-u}{v}\right)^p\left(\frac{-u}{v}\right)^{-1/2}(-u)^{-2q}\mathcal{H}_2,
\\ \label{dawed2}&\Omega\nabla_4\left(v^{7/2}\left(\frac{-u}{v}\right)^p(-u)^{-2q}\left(\slashed{\nabla}_{AB}^2\slashed{\rm div}\eta+\slashed{\nabla}_{AB}^2\check{\rho}\right)\right) 
\\ &\qquad + \left(\frac{-7/2+p}{v}+2\Omega{\rm tr}\chi\right)\left(v^{7/2}\left(\frac{-u}{v}\right)^p(-u)^{-2q}\left(\slashed{\nabla}_{AB}^2\slashed{\rm div}\eta+\slashed{\nabla}_{AB}^2\check{\rho}\right)\right) = 
\\ \nonumber &\qquad \qquad v^{7/2}\left(\frac{-u}{v}\right)^p(-u)^{-2q}\mathcal{E}_2.
\end{align}
The key point is that using the bootstrap assumptions, we have that 
\[\frac{-4+p}{v} + \frac{5}{v-u} \gtrsim v^{-1},\qquad \frac{-7/2+p}{v}+2\Omega{\rm tr}\chi\gtrsim v^{-1}.\]
Thus, contracting~\eqref{dawed} with  $v^{7/2}\left(\frac{-u}{v}\right)^p(-u)^{-2q}\left(\frac{-u}{v}\right)^{-1/2}\slashed{\nabla}_{ABC}^3\left(\Omega{\rm tr}\chi\right)^*$ and~\eqref{dawed2} with \\ $v^{7/2}\left(\frac{-u}{v}\right)^p(-u)^{-2q}\left(\slashed{\nabla}^2_{AB}\slashed{\rm div}\eta+\slashed{\nabla}^2_{AB}\check{\rho}\right)$, integrating by parts, using Proposition~\ref{itstartedoutok2}, using~\eqref{pldpwlko}, using Sobolev inequalities, and appealing to the bootstrap assumption leads to 
\begin{align}\label{pldpwlko}
\sup_{(u,v)\in\mathcal{P}_{\tilde u,\tilde v}}(-u)^{2q}\int_{{\rm max}\left(-v\underline{v},-\underline{v}^2\right)}^u\int_{-u\underline{v}^{-1}}^v\int_{\mathbb{S}^2}&\left[\left(\frac{-\dot{u}}{\dot{v}}\right)^{-1}\left|\slashed{\nabla}^3\Omega{\rm tr}\chi\right|^2 + \left|\slashed{\nabla}^2\slashed{\rm div}\eta+\slashed{\nabla}^2\check{\rho}\right|^2\right]
\\ \nonumber &\qquad \times\dot{v}^6\left(\frac{-\dot{u}}{\dot{v}}\right)^{2p}(-\dot{u})^{-2q}\, d\dot{u}\, d\dot{v}\, \mathring{\rm dVol} \lesssim \epsilon^{2-2\delta}.
\end{align}
Using Proposition~\ref{thisisenegege3} we thus obtain
\begin{align}\label{pldpwlko}
\sup_{(u,v)\in\mathcal{P}_{\tilde u,\tilde v}}(-u)^{2q}\int_{{\rm max}\left(-v\underline{v},-\underline{v}^2\right)}^u\int_{-u\underline{v}^{-1}}^v\int_{\mathbb{S}^2}&\left[\left(\frac{-\dot{u}}{\dot{v}}\right)^{-1}\left|\slashed{\nabla}^3\Omega{\rm tr}\chi\right|^2 + \left|\slashed{\nabla}^2\slashed{\rm div}\eta\right|^2\right]
\\ \nonumber &\qquad \times\dot{v}^6\left(\frac{-\dot{u}}{\dot{v}}\right)^{2p}(-\dot{u})^{-2q}\, d\dot{u}\, d\dot{v}\, \mathring{\rm dVol} \lesssim \epsilon^{2-2\delta}.
\end{align}

It remains to estimate $\hat{\chi}_{AB}$ and $\hat{\underline{\chi}}_{AB}$. However, the desired estimates for these follow from the already established Ricci coefficient estimates, the Codazzi equations~\eqref{tcod1} and~\eqref{tcod2}, Proposition~\ref{thisisenegege3}, and the elliptic estimates from Lemma~\ref{someelleststs3}.
\end{proof}

\subsubsection{Estimates for the metric coefficients}
Lastly, we come to the estimates for the metric coefficients.
\begin{proposition}Let $\left(\mathcal{M},g_{\mu\nu}\right)$ satisfy the hypothesis of  Proposition~\ref{tobootornottoboo3}. Then  we have
\[\mathfrak{M}_{\tilde u,\tilde v}  \lesssim \epsilon^{1-\delta}.\]
\end{proposition}
\begin{proof}We first observe that by a mild adaption of the proof of Proposition~\ref{dwlpdwlpdwspacetime} one may establish that
\begin{equation}\label{comesfromamildadaption}
\sup_{0 \leq j \leq 3}\sup_{(u,v)\in\mathcal{P}_{\tilde u,\tilde v}}(-u)^{2q}\int_{{\rm max}\left(-v\underline{v},-\underline{v}^2\right)}^u\int_{\mathbb{S}^2}\left|\slashed{\nabla}^j\left(\Omega^{-1}\chi^*,\zeta\right)\right|^2v^{1+2j}\left(\frac{-\dot{u}}{v}\right)^{2p}(-\dot{u})^{-2q}\, d\dot{u}\, \mathring{\rm dVol} \lesssim \epsilon^{2-2\delta}.
\end{equation}
Given~\eqref{comesfromamildadaption}, it is straightforward to use Lemma~\ref{intlemma}, integrate the equations 
\[\mathcal{L}_{e_3}\slashed{g}_{AB} = 2\Omega \underline{\chi}_{AB},\qquad \mathcal{L}_{\partial_u}\mathring{b}^A = 4\Omega^2\zeta^A\]
in the $e_3$ direction, and obtain the desired bounds for $\slashed{g}_{AB}$ and $\mathring{b}^A$. 

However, for the lapse $\Omega$ we must integrate in the $\nabla_4$ direction to obtain an estimate for $\Omega$ from $\omega$:
\[\partial_v\log\Omega + \left(b\cdot\slashed{\nabla}\right)\log\Omega = -2\Omega\omega.\]
The desired estimates follows immediately by integrating along integral curves of $\partial_v + b^A\slashed{\nabla}_A$.
\end{proof}

This concludes the proof of Proposition~\ref{tobootornottoboo3}, and hence also Theorem~\ref{itisreg3}.

\section{Incompleteness of Future Null Infinity, the $\left(u,\dot{v},\theta^A\right)$ Coordinates, and the Hawking Mass}\label{incompsec}
In this section we will use Theorem~\ref{itisreg3} to complete the proof of our main result Theorem~\ref{themainextresult}.

Let $(\mathcal{M},g_{\mu\nu})$ be the spacetime produced by Theorem~\ref{itisreg3}. We will start by showing the that the hypersurface $\{u = -\underline{v}^2\}$ is an asymptotically flat null hypersurface.
\begin{lemma}The hypersurface $\{u = -\underline{v}^2\}$ is asymptotically flat in the sense that there exists a function $\tilde v : \{u = -\underline{v}^2\} \to \mathbb{R}$ with the following properties:
\begin{enumerate}
	\item The hypersurface $\{u = -\underline{v}^2\}$ is diffeomorphic to $\{(\tilde v,\theta^A) \in [0,\infty) \times \mathbb{S}^2\}$.
	\item Let $\mathcal{S}_{\tilde v}$ denote a surface of constant $\tilde v$. Then, when $\tilde v$ is sufficiently large, we have that the induced metric on $\mathcal{S}_{\tilde v}$ is $\tilde v^2\mathring{\slashed{g}}_{AB}$, where $\mathring{\slashed{g}}_{AB}$ denotes a Lie-propagated round metric on $\mathbb{S}^2$. 
	
	\end{enumerate}
\end{lemma}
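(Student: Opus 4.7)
The plan is to work on the hypersurface $\{u = -\underline{v}^2\}$ using Theorem~\ref{itisreg3} together with the asymptotic structure provided by Proposition~\ref{kdowvappend}. The first property is essentially automatic: in the spacetime produced by Theorem~\ref{itisreg3}, $\{u = -\underline{v}^2\}$ is already parameterized by $(v,\theta^A) \in [0,\infty) \times \mathbb{S}^2$, and any smooth, strictly $v$-increasing function $\tilde v(v,\theta)$ with $\tilde v|_{v=0} = 0$ and $\tilde v \to \infty$ as $v \to \infty$ gives an equivalent global chart $(\tilde v,\theta^A)$.

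For the second and substantive property, I would work in the asymptotic region $\{u = -\underline{v}^2\} \cap \{v \geq 2\}$, where by the construction in Proposition~\ref{kdowvappend} the outgoing gluing data is Minkowskian: $\Omega = 1$, $b = 0$, and $\hat{\slashed{g}} = \mathring{\slashed{g}}$, so that $\slashed{g} = \varphi^2 \mathring{\slashed{g}}$. Since $\mathring{\slashed{g}}$ is $v$-independent by Lie-propagation, the variation is purely conformal, $\mathcal{L}_v \slashed{g} = (2\partial_v \varphi/\varphi)\slashed{g}$, so $\hat{\chi} \equiv 0$ and ${\rm tr}\chi = 2\partial_v\varphi/\varphi$. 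The Raychaudhuri equation~\eqref{ray2cons2} then reduces to $\partial_v{\rm tr}\chi + \frac{1}{2}({\rm tr}\chi)^2 = 0$, whose general solution integrates to
\[
{\rm tr}\chi = \frac{2}{v + c(\theta)}, \qquad \varphi(v,\theta) = A(\theta)\bigl(v + c(\theta)\bigr),
\]
for some smooth $c(\theta)$ and $A(\theta) > 0$ determined by matching at $v=2$.

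The main technical point is to show that $A(\theta)$ is in fact a constant. For this I would use the angular regularity estimate from Proposition~\ref{kdowvappend} at order $j = 1$, namely the $v$-uniform bound $\int_{\mathbb{S}^2} |\mathring{\nabla}\log\varphi|^2 v^2 \, \mathring{\rm dVol} \lesssim \epsilon^{2-2\delta}$. From the explicit formula one gets $v\mathring{\nabla}\log\varphi = v\mathring{\nabla}\log A + v\mathring{\nabla}c/(v+c) = v\mathring{\nabla}\log A + O(|\mathring{\nabla}c|)$, so if $\mathring{\nabla}\log A$ were nonzero on a positive-measure set, the integrand would diverge pointwise and Fatou's lemma would contradict the uniform $L^2$ bound. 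Hence $\mathring{\nabla}\log A \equiv 0$, so $A$ is constant. This identification of $A$ as a constant is the only non-routine step.

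Finally, I would set $\tilde v(v,\theta) = A(v + c(\theta))$ for $v \geq 3$ and extend smoothly to the whole cone with $\partial_v \tilde v > 0$ and $\tilde v|_{v=0} = 0$, giving the required diffeomorphism. For property (2), fix $\tilde v \geq 3A$; the level set $\mathcal{S}_{\tilde v} = \{v = \tilde v/A - c(\theta)\}$ is a graph over $\mathbb{S}^2$ with tangent vectors $X_A = \partial_{\theta^A} - (\partial_A c)\,\partial_v$. Since $b = 0$ in the asymptotic region and $\partial_v$ is null and $g$-orthogonal to $\partial_{\theta^A}$, a direct evaluation in the double-null form of the metric shows the $\partial_v$-contributions cancel in $g(X_A,X_B)$, yielding
\[
g(X_A, X_B) = \slashed{g}_{AB} = \varphi^2 \mathring{\slashed{g}}_{AB} = \tilde v^2 \mathring{\slashed{g}}_{AB},
\]
as required.
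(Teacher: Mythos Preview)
Your proof is correct and follows essentially the same route as the paper: both solve the reduced Raychaudhuri ODE on $\{u=-\underline v^2\}\cap\{v\gg 1\}$ to obtain $\varphi = A(\theta)\bigl(v+c(\theta)\bigr)$ (the paper writes $H=A^2$, $Q=c$), then set $\tilde v = A(v+c)$ and use that $\partial_v$ is null and $g$-orthogonal to the angular directions to conclude that the induced metric on $\mathcal S_{\tilde v}$ is exactly $\slashed g_{AB}=\tilde v^2\mathring{\slashed g}_{AB}$.

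The one difference is your detour proving $A$ is constant via the uniform bound on $v\,\mathring\nabla\log\varphi$. The paper does not do this, and in fact it is unnecessary: even with angle-dependent $A(\theta)$, defining $\tilde v(v,\theta)=A(\theta)\bigl(v+c(\theta)\bigr)$ already yields $\slashed g_{AB}=A^2(v+c)^2\mathring{\slashed g}_{AB}=\tilde v^2\mathring{\slashed g}_{AB}$ at every point of $\mathcal S_{\tilde v}$, and your own final computation of $g(X_A,X_B)$ shows the $\partial_v$-corrections drop out regardless of whether $A$ varies. So your Fatou argument is a valid observation but buys nothing for the lemma at hand; the paper's version is the more economical one.
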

\begin{proof}
 It is a consequence of Proposition~\ref{kdowvappend} that when $v \gg 1$ and $u = -\underline{v}^2$ we have that 
\[\Omega|_{\{v \gg 1\} \cap \{u=-\underline{v}^2\}} = 1,\qquad b^A|_{\{v \gg 1\} \cap \{u=-\underline{v}^2\}} = 0,\]
and  that $\slashed{g}_{AB}|_{\{v \gg 1\} \cap \{u=-\underline{v}^2\}} = \varphi^2\mathring{\slashed{g}}_{AB},$ where $\mathring{\slashed{g}}_{AB}$ is a Lie-propagated round metric and there exists a constant $C$, independent of $\epsilon$, so that
\[e^{-C\epsilon^{1-\delta}}\left(v+\underline{v}\right) \leq \varphi_{\{v \gg 1\} \cap \{u=-\underline{v}^2\}} \leq e^{C\epsilon^{1-\delta}}\left(v+\underline{v}\right).\]
For $v \gg 1$, the above facts imply that the $e_4$-Raychaudhuri equation becomes
\begin{equation}\label{ldwlpwdlpdwlp}
\partial_v{\rm tr}\chi + \frac{1}{2}\left({\rm tr}\chi\right)^2 = 0.
\end{equation}
Since Proposition~\ref{kdowvappend} also implied that
\[\left|{\rm tr}\chi - \frac{2}{v+\underline{v}}\right||_{\{v \gg 1\} \cap \{u=-\underline{v}^2\}} \lesssim \frac{\epsilon^{1-\delta}}{v},\]
one can easily solve~\eqref{ldwlpwdlpdwlp} to obtain that for large $v$, ${\rm tr}\chi = \frac{2}{v+Q(\theta)}$ where $\sup_{\theta}\left|Q(\theta)-\underline{v}\right| \lesssim \epsilon^{1-\delta}$. For $v \gg 1$, we have that $\partial_v\log\varphi = \frac{1}{2}{\rm tr}\chi$. Thus, for some  $H(\theta)$ with $\sup_{\theta}\left|H(\theta) - 1\right| \lesssim \epsilon^{1-\delta}$ we have that  
\[\slashed{g}_{AB} = \left(v+Q\right)^2H(\theta)\mathring{\slashed{g}}_{AB}.\]
Now it suffices to define the function $\tilde{v} :  \{u = -\underline{v}^2\} \cap \{v \gg 1\} \to \mathbb{R}$ by
\[\tilde v\left(v,\theta^A\right) \doteq \left(v+Q\right)\sqrt{H(\theta)}.\]
\end{proof}

Next, we truncate $\left(\mathcal{M},g_{\mu\nu}\right)$ to the region $\{\dot{v} \geq 0\} \cap \{-\underline{v}^2 \leq u < 0\}$ and thus obtain a globally hyperbolic spacetime. For convenience, we continue to refer to the truncated spacetime as $\left(\mathcal{M},g_{\mu\nu}\right)$. We now check that our spacetime contains a naked singularity.
\begin{lemma}The spacetime $\left(\mathcal{M},g_{\mu\nu}\right)$ contains a naked singularity in the sense of Definition~\ref{nakeddef}.
\end{lemma}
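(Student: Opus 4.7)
The plan is to verify the incompleteness of future null infinity $\mathcal{I}^+$ required by Definition~\ref{nakeddef}; the additional ``complete Cauchy hypersurface of a maximal globally hyperbolic development'' clause is outside the scope of the exterior construction and is supplied by the companion interior extension indicated in Section~\ref{sectionpast}. Parametrize $\mathcal{H}=\{u=-\underline{v}^2\}$ by the affine parameter $\tilde v$ introduced in the preceding asymptotic-flatness lemma. In the asymptotic region, where $\Omega\equiv 1$ and the shift vanishes, one has $L'=e_4=\partial_v$ (matched to $\partial_{\tilde v}$) and the unique transversal null field with $g(\underline L',L')=-2$ is $\underline L'=e_3=\partial_u$. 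I would then choose a sequence $\tilde v_i\to\infty$, a fixed $\theta_0\in\mathbb{S}^2$, take $p_i=(-\underline v^2,\tilde v_i,\theta_0)\in\mathcal S_{\tilde v_i}$, and let $\gamma_i(s)$ be the future-directed maximal null geodesic with $\gamma_i(0)=p_i$ and $\gamma_i'(0)=\partial_u$.

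The key observation is that $\gamma_i$ remains inside the region $\mathcal P$ of Section~\ref{bottIIIIIIII} throughout its lifetime. Indeed, the $v$-coordinate along $\gamma_i$ stays close to $\tilde v_i$ (as the perturbation estimate below confirms), while $u\in[-\underline v^2,0]$, so the ratio $v/|u|$ stays $\geq \tfrac12 \tilde v_i/\underline v^2$, which exceeds $\underline v^{-1}$ once $\tilde v_i$ is sufficiently large. Inside $\mathcal P$, the bounds $\mathfrak I+\mathfrak K+\mathfrak L+\mathfrak M\lesssim\epsilon^{1-\delta}$ from Theorem~\ref{itisreg3}, together with Sobolev embedding and the $v^{-1}$/$v^{-2}$ scale-invariant weights in the norms, yield that all metric coefficients in the gauge~\eqref{metricform23} and their first angular/transverse derivatives are within $\tilde v_i^{-1}\epsilon^{1-\delta}$ of their Minkowski values in $L^\infty$ along $\gamma_i$. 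Consequently the Christoffel symbols in the coordinates $(u,v,\theta^A)$ are uniformly small perturbations of those of the Minkowski metric $-2\,du\,dv+(v-u)^2\mathring{\slashed g}$.

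Writing the geodesic equation $\ddot x^\alpha+\Gamma^\alpha_{\mu\nu}\dot x^\mu\dot x^\nu=0$ as a first-order ODE system in $(u,v,\theta^A,\dot u,\dot v,\dot\theta^A)$ with initial data $(\dot u,\dot v,\dot\theta^A)(0)=(1,0,0)$, a standard continuation/Gr\"onwall argument on the finite $u$-interval $[-\underline v^2,0]$ gives
\[
|\dot u(s)-1|+|\dot v(s)|+(v-u)|\dot\theta(s)|\lesssim \tilde v_i^{-1}\epsilon^{1-\delta}
\]
for all $s$ for which $\gamma_i$ has remained in $\mathcal P$. In particular $\dot u\geq 1/2$ and the $v$-deviation is at most $\underline v^2\tilde v_i^{-1}\epsilon^{1-\delta}$, which closes the bootstrap that places $\gamma_i$ inside $\mathcal P$. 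Hence $u(s)=0$ is attained for some $s_\ast\leq 2\underline v^2$, and since the metric does not extend smoothly across $\{u=0\}$, the geodesic is maximal at $s_\ast$. Setting $A\doteq 2\underline v^2$, independent of $i$, verifies Definition~\ref{nakeddef}. The only delicate point is the bootstrap that $\gamma_i$ does not escape $\mathcal P$ (into regions where the sharper estimates of Sections~\ref{bootforregone}--\ref{ohmyregiontwo} would be required); this is the main obstacle but is routine given how weak the control needed is and the freedom to take $\tilde v_i$ large.
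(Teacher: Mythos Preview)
Your approach is far more elaborate than the paper's, and unnecessarily so. The paper's proof is a single observation: along $\mathcal H=\{u=-\underline v^2\}$ in the asymptotic region one has $\Omega=1$ and $b=0$, so $L'=\partial_v$ and $\underline L'=\partial_u$; the null geodesic through $p_i$ with initial tangent $\partial_u$ is then \emph{exactly} the null generator of $\{v=v_i\}$ (since $\partial_u+b=\Omega e_3$ is pregeodesic), and this curve exits the spacetime at $u=0$. There is no need to track perturbations $(\dot v,\dot\theta)$ or run a Gr\"onwall bootstrap, because the trajectory of $\gamma_i$ is pinned down by the double-null foliation itself, not merely approximated by a Minkowski geodesic.

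Your argument also contains a concrete gap. You assert that ``all metric coefficients \dots and their first angular/transverse derivatives are within $\tilde v_i^{-1}\epsilon^{1-\delta}$ of their Minkowski values,'' but this fails for the lapse. The $\mathscr M$-norm of Section~\ref{normsregion3} (cf.\ the discussion around~\eqref{lolop1}) gives only $|\log\Omega|\lesssim\epsilon^{1-\delta}\,\bigl|\log\frac{v}{-u}\bigr|$, which permits $\Omega\sim(v/(-u))^{C\epsilon}$ as $u\to 0$ at fixed $v=v_i$. Worse, the transverse derivative $\partial_u\log\Omega=-2\Omega\underline\omega$ is not controlled at all in region~III, since $\underline\omega$ is deliberately excluded from the bootstrap norms there. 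Hence the Christoffel component $\Gamma^u_{uu}=2\partial_u\log\Omega$ driving $\dot u$ is not a small perturbation of zero, and your Gr\"onwall step does not close. Along the exact generator one finds $\dot u\,\Omega^2\equiv 1$, so the affine length is $\int_{-\underline v^2}^{0}\Omega^2\,du$; your perturbative framework neither computes nor bounds this integral.
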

\begin{proof}We take $\{u = -\underline{v}^2\}$ as our asymptotically flat hypersurface and $\partial_v$ as our geodesic normal $L'$. Then $\underline{L'} = \partial_u$ and, since $\partial_u + b^A\slashed{\nabla}_A$ is geodesic, we immediately see that every geodesic starting on $\{u = -\underline{v}^2\}$ with initial tangent vector $\partial_u$ leaves the spaces after affine time $\underline{v}^2$. 
\end{proof}

Next, we observe that given Theorem~\ref{itisreg3}, a straightforward argument, in the spirit of Lemma~\ref{shifttheshift},  allows us to define global coordinates $\left(u,v,\theta\right)$ in the region $\{v > 0\} \cap \{u < 0\}$ with the shift in the $e_3$ direction. We then define global $\left(u,\hat{v},\theta\right)$ coordinates by setting $\hat{v} = v^{1-2\kappa}$ (keep in mind Definition~\ref{kapselfseim}). The regularity statements for $g_{\mu\nu}$ in Theorem~\ref{themainextresult} now follow easily by using the established estimates and arguing as in the proof of Lemma~\ref{dkwowsss322}, we omit the details. Lastly, we need to compute the Hawking mass of the spheres $\mathbb{S}^2_{u,0}$ along $\{\hat{v} = 0\}$.

\begin{lemma}\label{hawkcalccalc}Recall that the Hawking mass of a sphere $\mathbb{S}^2_{u,\dot{v}}$ is defined by
\[m\left(\mathbb{S}^2_{u,\dot{v}}\right) \doteq \frac{r}{8\pi}\int_{\mathbb{S}^2_{u,\dot{v}}}\left(-\check{\rho}\right)\, \slashed{dVol},\]
where
\[r \doteq \sqrt{\frac{{\rm Area}\left(\mathbb{S}^2_{u,\dot{v}}\right)}{4\pi}},\qquad \check{\rho} \doteq \rho - \frac{1}{2}\hat{\chi}\cdot\underline{\hat{\chi}}.\]
It will be useful to keep in mind the fact, which  follows easily from~\eqref{genGauss}, that $\check{\rho}$ is invariant under the change of coordinates $\left(u,v,\theta^A\right) \mapsto \left(u,\hat{v},\theta^A\right)$.

We have that
\begin{enumerate}
	\item For any self-similar solution with $\kappa = 0$, then $m\left(\mathbb{S}^2_{u,0}\right) = 0$.
	\item For the spacetime $\left(\mathcal{M},g_{\mu\nu}\right)$ produced by Theorem~\ref{itisreg3} we will have $m\left(\mathbb{S}^2_{u,0}\right) \sim \epsilon^2|u| > 0$.
\end{enumerate}
\end{lemma}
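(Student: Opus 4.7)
Begin by combining the Gauss equation~\eqref{genGauss} with the Gauss--Bonnet theorem $\int K\,\slashed{dVol}=4\pi$. Writing $-\check\rho = K + \tfrac{1}{4}{\rm tr}\chi\,{\rm tr}\underline\chi$, one obtains
\begin{equation*}
m(\mathbb{S}^2_{u,0}) \;=\; \frac{r(u)}{2}\left(1 + \frac{1}{16\pi}\int_{\mathbb{S}^2_{u,0}}{\rm tr}\chi\,{\rm tr}\underline\chi\,\slashed{dVol}\right),
\end{equation*}
reducing the task to evaluating this last integral to accuracy $o(\epsilon^2)$. The case $\kappa = 0$ is then immediate from Proposition~\ref{formalcalculationsscaleinv}: the identities~\eqref{firstsetofstuffonv0}--\eqref{firstsetofstuffonv02} there give both $\rho|_{\hat v = 0} = 0$ and $\hat{\underline\chi}|_{\hat v = 0} = 0$, so $\check\rho \equiv 0$ on $\{\hat v = 0\}$ and $m(u) = 0$.

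For $\kappa > 0$, split ${\rm tr}\chi\,{\rm tr}\underline\chi = (\Omega^{-1}{\rm tr}\chi)(\Omega\,{\rm tr}\underline\chi)$ and compute each factor along $\{\hat v = 0\}$. The ingoing factor is read off directly from the algebraic identity~\eqref{0skm} restricted to $\{\hat v = 0\}$, which is valid purely because the data has the form $\slashed{g} = u^2 \tilde{\slashed{g}}$, $b^A = u^{-1}\tilde b^A$ (item~5 of Theorem~\ref{themainextresult}):
\begin{equation*}
\Omega\,{\rm tr}\underline\chi\,|_{\hat v = 0} \;=\; \frac{2}{u} + \slashed{\rm div}\,b.
\end{equation*}
For the outgoing factor, insert the scale-invariant ansatz $\Omega^{-1}{\rm tr}\chi|_{\hat v = 0} = (-u)^{-1}c(\theta)$ into~\eqref{trchiidwqqq}: all $u$-powers collapse and $c$ is determined by a scale-free equation on $\mathbb{S}^2$ of exactly the type covered by Proposition~\ref{qualitativeexistence}, whose unique solution matches $\overset{\triangle}{\Omega^{-1}{\rm tr}\chi}$ at $u=-1$ from Lemma~\ref{uminus2data} and satisfies $c = 2(1-2\kappa) + c_1(\theta)$ with $\|c_1\|_{\mathring H^{N_0-2}} \lesssim \epsilon^{2-\delta}$.

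Multiplying the two factors and integrating, three cancellations reduce the result to explicit $O(\epsilon^2)$ pieces: $\int\slashed{\rm div}\,b\,\slashed{dVol} = 0$ by the divergence theorem on the closed surface $\mathbb{S}^2_{u,0}$; $\int\tilde K\,\widetilde{\slashed{dVol}} = 4\pi$ by Gauss--Bonnet applied to $\tilde{\slashed{g}}$; and $\int\mathcal{L}_{\tilde b}\log\tilde\Omega\,\widetilde{\slashed{dVol}} = O(\epsilon^{4-2\delta})$ after integration by parts, using that both $\log\tilde\Omega$ and $\slashed{\rm div}\,\tilde b$ are of size $\epsilon^2$ by Definition~\ref{Mreg} and Lemma~\ref{babablacksheep}. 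The $O(\epsilon^2)$ contributions that genuinely survive come from the $-2\kappa$ shift in the leading part of $c$ and from the source term $|\overset\triangle\eta|^2 \sim \epsilon^2$ in the equation for $c$; both are positive, yielding
\begin{equation*}
\int_{\mathbb{S}^2_{u,0}}{\rm tr}\chi\,{\rm tr}\underline\chi\,\slashed{dVol} \;=\; -16\pi + C_1 \kappa + C_2\int|\overset\triangle\eta|^2\,\widetilde{\slashed{dVol}} + O(\epsilon^{3-2\delta}),
\end{equation*}
with explicit positive constants $C_1,C_2$. Inserting into the Hawking mass formula, invoking Lemma~\ref{kappaisthis} to identify $\kappa \sim \epsilon^2$, and using $r(u) = |u|(1+O(\epsilon^{2-\delta}))$ together with ${\rm Area}(\mathbb{S}^2_{u,0}) = 4\pi r(u)^2$, produces $m(u) \sim \epsilon^2|u|$ and hence $m(u)/\sqrt{{\rm Area}(\mathbb{S}^2_{u,0})} \sim \epsilon^2$.

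The main obstacle is the rigorous justification of the scale-invariant ansatz $\Omega^{-1}{\rm tr}\chi|_{\hat v=0} = (-u)^{-1}c(\theta)$ in the absence of exact $\kappa$-self-similarity of the bulk spacetime; this is handled by checking directly that~\eqref{trchiidwqqq} along $\{\hat v=0\}$ has all coefficients transforming consistently with the ansatz (a consequence of the exact self-similarity of the incoming data) and that the reduced equation on $\mathbb{S}^2$ has a unique solution via Proposition~\ref{qualitativeexistence}, which must agree with $\overset{\triangle}{\Omega^{-1}{\rm tr}\chi}$ at $u = -1$ by construction. A secondary care is the bookkeeping of $\epsilon^{2-\delta}$ versus $\epsilon^2$ errors — and in particular verifying the two-sidedness of the estimate~\eqref{masskfow} by confirming that the surviving contributions $\kappa$ and $\int|\overset\triangle\eta|^2$ do not cancel but combine with strictly positive coefficients.
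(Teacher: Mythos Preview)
Your approach is sound and genuinely different from the paper's. The paper works directly with the $\nabla_3\check\rho$ propagation equation~\eqref{nabla3checkrho}, multiplies by $\Omega$, restricts to $\{v=0\}$ using self-similarity, rewrites the right-hand side via the Codazzi equation, and integrates to obtain the identity~\eqref{hawkmassform}; the surviving $\epsilon^2$ pieces are $\int|\eta|^2$ and $\tfrac{1}{16}\int\Omega^{-1}{\rm tr}\chi\,|\slashed\nabla\hat\otimes b|^2$. You instead use the Gauss equation and Gauss--Bonnet to reduce to $\int{\rm tr}\chi\,{\rm tr}\underline\chi$, then feed in the equation~\eqref{trchitriangle} for $\Omega^{-1}{\rm tr}\chi$. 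Your route is arguably more elementary (no Bianchi--Codazzi manipulation), while the paper's has the advantage of giving $\check\rho$ itself rather than only its integral, and of making the $|\slashed\nabla\hat\otimes b|^2$ mechanism explicit.

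There is one genuine slip you should fix. The claim $\|c_1\|_{\mathring H^{N_0-2}}\lesssim\epsilon^{2-\delta}$ is false: the source term $2\slashed{\rm div}\overset{\triangle}{\eta}$ in~\eqref{trchitriangle} is only $O(\epsilon^{1-\delta})$ by Lemma~\ref{uminus1data}, so Lemma~\ref{uminus2data} gives $c-2=O(\epsilon^{1-\delta})$ and no sharper. This matters because the piece $-2\int c_1\,\slashed{dVol}$ in your expansion of $\int{\rm tr}\chi\,{\rm tr}\underline\chi$ would then be $O(\epsilon^{1-\delta})$, swamping the $\kappa$ signal. The repair is immediate and is implicit in what you wrote: rather than estimating $c_1$ pointwise, integrate~\eqref{trchitriangle} over $\mathbb{S}^2$ so that the dangerous $\slashed{\rm div}\overset{\triangle}{\eta}$ term drops by the divergence theorem --- exactly as $\slashed{\rm div}b$ does. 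You should add this fourth cancellation $\int\slashed{\rm div}\overset{\triangle}{\eta}\,\slashed{dVol}=0$ to your list; once it is included, integrating the equation gives $\int c\,\slashed{dVol}=8\pi-16\pi\kappa-2\int|\overset{\triangle}{\eta}|^2+O(\epsilon^{3-2\delta})$ directly, and the rest of your argument goes through with $C_1=32\pi$, $C_2=4$.
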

\begin{proof} It follows from Proposition~\ref{formalcalculationsscaleinv} that when $\kappa = 0$ we have that  both $\rho$ and $\hat{\underline{\chi}}_{AB}$ vanish along $\{\hat{v} = 0\}$. This clearly implies that $m$ vanishes when $\{\hat{v}=0\}$. 

Next we consider the case when $\kappa > 0$. Keeping in mind that $\check{\rho}$ is invariant under the change of coordinates $\left(u,v,\theta^A\right) \mapsto \left(u,\hat{v},\theta^A\right)$, we will work in the $\left(u,v,\theta^A\right)$ coordinates for $v > 0$ and then take the limit as $v \to 0$. We recall the propagation equations for $\check{\rho}$:
\begin{align}\label{nabla3checkrho}
\nabla_3\check{\rho} + \frac{3}{2}{\rm tr}\underline{\chi} \check{\rho} &= -\slashed{\rm div}\underline{\beta} + \zeta \cdot \underline{\beta} - 2\eta\cdot\underline{\beta} - \frac{1}{2}\hat{\underline{\chi}}\cdot\slashed{\nabla}\hat{\otimes}\eta - \frac{1}{2}\hat{\underline{\chi}}\cdot\left(\eta\hat{\otimes}\eta\right) + \frac{1}{4}{\rm tr}\chi\left|\hat{\underline{\chi}}\right|^2,
\end{align}

We know from Lemma~\ref{dkwowsss322} that $\check{\rho}$ has a regular limit as $v\to 0$ and hence that $m\left(\mathbb{S}^2_{u,v}\right)$ will also have a regular limit as $v\to 0$. Next, we multiply~\eqref{nabla3checkrho} by $\Omega$ and take the limit as $v\to 0$. When $v = 0$, self-similarity implies that $\partial_u\check{\rho} = -u^{-1}\check{\rho}$. Thus using also the identities from Lemma~\ref{scalrelations2}  we obtain in the $v\to 0$ limit that 
\begin{align}\label{nabla3checkrho2}
&\mathcal{L}_b\check{\rho} -2u^{-1}\check{\rho}+ \frac{3}{2}\left(\frac{2}{u} + \slashed{\rm div}b\right) \check{\rho} = 
\\ \nonumber &\qquad -\Omega\slashed{\rm div}\underline{\beta} + \zeta \cdot\left( \Omega\underline{\beta}\right) - 2\eta\cdot\left(\Omega\underline{\beta}\right) - \frac{1}{2}\Omega\hat{\underline{\chi}}\cdot\slashed{\nabla}\hat{\otimes}\eta - \frac{1}{2}\Omega\hat{\underline{\chi}}\cdot\left(\eta\hat{\otimes}\eta\right) + \frac{1}{16}\Omega^{-1}{\rm tr}\chi\left|\slashed{\nabla}\hat{\otimes}b\right|^2.
\end{align}
The left hand side of~\eqref{nabla3checkrho2} simplifies to $\left(u^{-1}+\frac{3}{2}{\rm div}b\right)\check{\rho} + \mathcal{L}_b\check{\rho}$, 
On the other hand, using the Codazzi equation, the right hand side of~\eqref{nabla3checkrho2} is equal to
\begin{align*}
&-\slashed{\nabla}^A\left(\Omega\underline{\beta}_A\right) - \eta^A\left(\Omega\underline{\beta}\right)_A -\frac{1}{2}\hat{\underline{\chi}}_{AB}\left(\slashed{\nabla}\hat{\otimes}\eta\right)^{AB} -\frac{1}{2}\hat{\underline{\chi}}_{AB}\left(\eta\hat{\otimes}\eta\right)^{AB}+ \frac{1}{16}\Omega^{-1}{\rm tr}\chi\left|\slashed{\nabla}\hat{\otimes}b\right|^2 =
\\ \nonumber &\qquad -\slashed{\nabla}^A\left(\Omega\underline{\beta}_A\right) -\eta^A\left(\Omega\nabla^B\hat{\underline{\chi}}_{BA} - \frac{1}{2}\Omega\slashed{\nabla}_A{\rm tr}\underline{\chi} - \zeta^B\left(\Omega\hat{\underline{\chi}}_{AB}\right) + \frac{1}{2}\zeta_A\Omega{\rm tr}\underline{\chi}\right)
\\ \nonumber &\qquad -\frac{1}{2}\Omega\hat{\underline{\chi}}_{AB}\left(\slashed{\nabla}\hat{\otimes}\eta\right)^{AB} -\frac{1}{2}\Omega\hat{\underline{\chi}}_{AB}\left(\eta\hat{\otimes}\eta\right)^{AB}+ \frac{1}{16}\Omega^{-1}{\rm tr}\chi\left|\slashed{\nabla}\hat{\otimes}b\right|^2 =
\\ \nonumber &\qquad -\slashed{\nabla}^A\left(\Omega\underline{\beta}_A\right) - \slashed{\nabla}^B\left(\eta^A\left(\Omega\hat{\underline{\chi}}\right)_{AB}\right)  + \frac{1}{2}\eta^A\slashed{\nabla}_A\left(\Omega{\rm tr}\underline{\chi}\right) - \frac{1}{2}\left|\eta\right|^2\Omega{\rm tr}\underline{\chi}+ \frac{1}{16}\Omega^{-1}{\rm tr}\chi\left|\slashed{\nabla}\hat{\otimes}b\right|^2.
\end{align*} 

Thus, integrating~\eqref{nabla3checkrho2} over $\mathbb{S}^2_{u,0}$ and applying the divergence theorem leads to
\begin{align}\label{hawkmassform}
&u^{-1}\int_{\mathbb{S}^2_{u,0}}\left[\left(1+\frac{u}{2}\slashed{\rm div}b\right)\check{\rho}\right]\slashed{dVol} = 
\\ \nonumber &\qquad \int_{\mathbb{S}^2_{u,0}}\left[\frac{1}{2}\eta^A\slashed{\nabla}_A\slashed{\rm div}b - \frac{1}{2}\left|\eta\right|^2\left(\frac{2}{u} +\slashed{\rm div}b\right) + \frac{1}{16}\Omega^{-1}{\rm tr}\chi\left|\slashed{\nabla}\hat{\otimes} b\right|^2\right]\slashed{dVol}.
\end{align}

Now, combining~\eqref{hawkmassform} with the characteristic initial data estimates from Section~\ref{setitupayya} leads to
\[\int_{\mathbb{S}^2_{u,0}}\left(-\check{\rho}\right)\slashed{dVol} = (-u)\left(O\left(\epsilon^3\right) + \int_{\mathbb{S}^2_{u,0}}\left[\left|\eta\right|^2 + \frac{K}{16}\left|\slashed{\nabla}\hat{\otimes}b\right|^2\right]\slashed{dVol}\right) \sim (-u)\epsilon^2.\]

\end{proof}

\appendix
\section{Examples of $\left(\epsilon,\gamma,\delta,N_0,M_0,M_1\right)$-Regular Data}\label{constructthetupleregul}
In this section we construct examples of $\left(\epsilon,\gamma,\delta,N_0,M_0,M_1\right)$-regular $4$-tuples $\left(\slashed{g}_{AB},b^A,\kappa,\Omega\right)$ in the sense of Definition~\ref{Mreg}.
\begin{proposition}\label{solvethedivergenceequationforb}Let $0 < \epsilon \ll \gamma \ll 1$, $\check{b}^A$ be a choice of seed data in the sense of Definition~\ref{areallynicedefinitionofseeddata}, and $\left(N_0,M_0,M_1\right) \in \left(\mathbb{Z}_{> 0}\right)^3$ satisfy $N_0 \gg 1$, $M_0 \gg N_0$, and $M_1 \gg N_0$. Then there exists $\delta > 0$ satisfying $\gamma \ll \delta \ll 1$, a vector field $b^A = \check{b}^A + \mathring{\nabla}^Af$, and $\kappa > 0$ solving~\eqref{rayconstr} with $\slashed{g}_{AB} = \mathring{\slashed{g}}_{AB}$ and $\Omega = 1$ such that $\left(\mathring{\slashed{g}}_{AB},b^A,\kappa,1\right)$ is a $\left(\epsilon,\gamma,\delta,N_0,M_0,M_1\right)$-regular $4$-tuple in the sense of Definition~\ref{Mreg}.
\end{proposition}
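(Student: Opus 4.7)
The plan is to exploit the fact that under the ansatz $\slashed{g}_{AB}=\mathring{\slashed{g}}_{AB}$ and $\Omega\equiv1$, the $\kappa$-constraint~\eqref{rayconstr} reduces to a single scalar nonlinear elliptic equation for the pair $(f,\kappa)$, which I would solve by a contraction-mapping argument built on the linear theory of Proposition~\ref{qualitativeexistence}. First I would substitute $b=\check b+\mathring{\nabla}f$; since $\mathring{\nabla}_A\tilde b^A=0$ (Remark~\ref{comp}) and $\mathring{\nabla}_A z^A=0$ by hypothesis, one obtains $\slashed{\rm div}b=\mathring{\Delta}f$, and \eqref{rayconstr} becomes
\begin{equation}\label{reducedeqn}
(1-2\kappa)\,\mathring{\Delta}f \;-\; \mathcal{L}_b\mathring{\Delta}f \;=\; \tfrac{1}{2}(\mathring{\Delta}f)^2 + \tfrac{1}{4}\bigl|\mathring{\nabla}\hat{\otimes} b\bigr|^2 - 4\kappa,
\end{equation}
with unknowns a mean-zero function $f$ and a positive constant $\kappa$.

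Integrating \eqref{reducedeqn} over $\mathbb{S}^2$ and using $\int\mathring{\Delta}f=0$ together with $\int\mathcal{L}_b(\mathring{\Delta}f)=-\int(\mathring{\Delta}f)\slashed{\rm div}b=-\int(\mathring{\Delta}f)^2$ produces the algebraic identity
\begin{equation}\label{kappaformula}
16\pi\,\kappa\;=\;\tfrac{1}{4}\int_{\mathbb{S}^2}\bigl|\mathring{\nabla}\hat{\otimes} b\bigr|^2\mathring{\rm dVol}\;-\;\tfrac{1}{2}\int_{\mathbb{S}^2}(\mathring{\Delta}f)^2\,\mathring{\rm dVol},
\end{equation}
which uniquely determines $\kappa=\kappa[f]$. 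Combining~\eqref{thiswillbeuseful} with the definition of seed data gives $|\mathring{\nabla}\hat{\otimes} b|^2 = 2\epsilon^2 a^2(\theta)+O(\epsilon^{3-\delta})$, while $(\mathring{\Delta}f)^2$ will be shown to be $O(\epsilon^{4-2\delta})$, so~\eqref{kappaformula} yields $\kappa[f]=\tfrac{\epsilon^2}{16}\int_0^\pi a^2(\theta)\sin\theta\,d\theta + O(\epsilon^{3-\delta})>0$, consistent with Lemma~\ref{kappaisthis}.

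With $\kappa$ eliminated, I would solve~\eqref{reducedeqn} for $u=\mathring{\Delta}f$ by a fixed-point iteration on the ball $\{(f,\kappa):\|f\|_{\mathring{H}^{N_0+2}}\le C\epsilon^{2-\delta},\ \kappa\in[c_1\epsilon^2,c_2\epsilon^2]\}$. At each iterate the linearised operator $w\mapsto(1-2\kappa)w-\mathcal{L}_b w$ falls under Definition~\ref{KuF} (with $X=-(1-2\kappa)^{-1}b$, $h\equiv0$) and satisfies the hypotheses of Proposition~\ref{qualitativeexistence}, because $b=O(\epsilon)$ and $\kappa=O(\epsilon^2)$ are small. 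The proposition therefore furnishes a right inverse on $\mathring{H}^{N_0}$, and the quadratic character of the remainder on the right-hand side of~\eqref{reducedeqn} closes the contraction after updating $\kappa$ at each step by~\eqref{kappaformula}. The bound~\eqref{somebound1ssssss} is then read off directly from the estimate of Proposition~\ref{qualitativeexistence} at the fixed point.

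The step I expect to be the main obstacle is establishing the axisymmetric-derivative bound~\eqref{somebound3ssssss}, namely $\|\mathcal{L}_{\partial_\phi}f\|_{\mathring{H}^{N_0+1}}\lesssim\epsilon^{M_1/2}$. The key structural input is that the leading vector field $\tilde b=\bigl(\int_{\pi/2}^\theta a(\hat\theta)/\sin\hat\theta\,d\hat\theta+r\bigr)\partial_\phi$ is exactly axisymmetric, so $\mathcal{L}_{\partial_\phi}\tilde b=0$; axisymmetry of~\eqref{reducedeqn} is therefore broken only by the seed perturbation $z$, whose $\mathring{H}^{M_1}$-norm is $\le\epsilon^{M_0}$. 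Commuting~\eqref{reducedeqn} with $\mathcal{L}_{\partial_\phi}$ and reapplying Proposition~\ref{qualitativeexistence} inductively would then close the higher-regularity estimate, provided one controls the commutators $[\mathcal{L}_{\partial_\phi},\mathcal{L}_b]$ and $\mathcal{L}_{\partial_\phi}\slashed{\rm div}b$ against the previously established bounds on $f$. This step is a perturbative argument in the spirit of Lemma~\ref{bunchofboundsbahckd}, now carried out around the nonlinear fixed point. Once it is completed, every requirement of Definition~\ref{Mreg} holds (with $\varphi\equiv 0$ and $\log\Omega\equiv 0$), and one is free to choose $\delta$ to be any constant with $\gamma\ll\delta\ll1$.
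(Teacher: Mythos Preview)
Your proposal is correct and follows essentially the same approach as the paper: both reduce the $\kappa$-constraint (with $\slashed g=\mathring{\slashed g}$, $\Omega=1$) to a scalar equation for $\mathring{\Delta}f$, invert the first-order operator $w\mapsto w\pm\mathcal L_b w$ via Proposition~\ref{qualitativeexistence}, determine $\kappa$ at each step by the orthogonality condition $\int\mathring\Delta f=0$ (your integral formula~\eqref{kappaformula} is exactly this), and close an iteration/contraction with quadratic right-hand side; the axisymmetric bound~\eqref{somebound3ssssss} is obtained in both cases by commuting with $\mathcal L_{\partial_\phi}$ and using $\mathcal L_{\partial_\phi}\tilde b=0$ together with the $\epsilon^{M_0}$-smallness of $z$. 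The only cosmetic difference is that the paper writes out the iterates $(D_i,\kappa_i,f_i)$ explicitly and shows the sequence is Cauchy, whereas you phrase it as a contraction on a ball---these are equivalent.
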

\begin{proof}We will look for a solution $b$ of the form
\[b^A = \check{b}^A + \mathring{\nabla}^Af,\]
and construct our solution by an iteration procedure. We  define sequences $\{D_i\}_{i=0}^{\infty}$, $\{\kappa_i\}_{i=0}^{\infty}$, and $\{f_i\}_{i=0}^{\infty}$ of functions $D_i$, constants $\kappa_i$, and functions $f_i$ as follows. First of all, we set $D_0 = 0$, $\kappa_0 = 0$, and $f_0 = 0$ and also define
\[b_i^A \doteq  \check{b}^A + \mathring{\nabla}^Af_i,\qquad \forall i \geq 0.\]
Now we will explain our inductive construction. Thus we assume that $\left(D_{i-1},\kappa_{i-1},f_{i-1}\right)$ have been constructed. For every constant $\tilde \kappa$, we use Proposition~\ref{qualitativeexistence} to define a function $\tilde D_i\left(\theta^A,\tilde \kappa\right)$ by requiring that
\begin{equation}\label{eqndefDi}
\tilde D_i + \mathcal{L}_{b_{i-1}}\tilde D_i = \frac{1}{2}\left(D_{i-1}\right)^2 + \frac{1}{4}\left|\mathring{\nabla}\hat{\otimes}b_{i-1}\right|_{\mathring{\slashed{g}}}^2 - 4\tilde\kappa + 2\kappa_{i-1}D_{i-1}.
\end{equation}
We then choose $\kappa_i$ by requiring that 
\begin{equation}\label{kappaeqn}
\int_{\mathbb{S}^2}\tilde D_i\left(\theta^A, \kappa_i\right)\mathring{{\rm dVol}} = 0,
\end{equation}
and then set $D_i\left(\theta^A\right) \doteq \tilde D_i\left(\theta^A,\kappa_i\right)$. Finally, we require that $f_i$ satisfy
\begin{equation}\label{feqn}
\mathring{\Delta}f_i  = D_i,\qquad \int_{\mathbb{S}^2}f_i\, \mathring{{\rm dVol}} = 0.
\end{equation}
Note that the condition~\eqref{kappaeqn} is a necessary and sufficient orthogonality condition to solve~\eqref{feqn}.

Now we will show by induction on $i$ that the sequences $\{D_i\}_{i=1}^{\infty}$, $\{\kappa_i\}_{i=1}^{\infty}$ and $\{f_i\}_{i=1}^{\infty}$ are well defined and satisfy (for a suitably large $A > 0$ and suitably small $0 < \tilde\delta \ll 1$) the estimates:
\begin{equation}\label{thisishteestforthesequenceofstuff}
\left\vert\left\vert D_i\right\vert\right\vert_{\mathring{H}^{N_0}} \leq A\epsilon^{2-2\tilde\delta},\qquad \left\vert\left\vert f_i\right\vert\right\vert_{\mathring{H}^{N_0+2}} \leq A\epsilon^{2-3\tilde\delta},\qquad |\kappa_i| \leq A\epsilon^{2-\tilde\delta},
\end{equation}
We start with the base case $i = 1$. The equation for $\tilde{D}_1$ may be written (keeping Remark~\ref{comp} in mind) as 
\begin{equation}\label{ford1}
\tilde D_1 +4\tilde \kappa + \mathcal{L}_{\check{b}}\tilde D_1= \frac{1}{4}\left|\mathring{\nabla}\hat{\otimes}\check{b}\right|_{\mathring{\slashed{g}}}^2 = \frac{1}{4}\epsilon^2a^2(\theta) + E,
\end{equation}
where 
\[E \doteq \frac{\epsilon}{2}\mathring{\slashed{g}}\left(\mathring{\nabla}\hat{\otimes}\tilde b,z\right) + \frac{1}{4}\left|z\right|^2_{\mathring{\slashed{g}}}\]
satisfies
\[\left\vert\left\vert E\right\vert\right\vert_{\mathring{H}^{30N}} \lesssim \epsilon^{100}.\]

We allow $\tilde\kappa$ to be an arbitrary constant satisfying  the bound 
\begin{equation}\label{boundtobjqjqmqmq}
\left|\tilde\kappa \right| \leq A\epsilon^{2-\tilde\delta}.
\end{equation}
Then, if $\epsilon$ is sufficiently small and $A$ is sufficiently large, we can apply Proposition~\ref{qualitativeexistence} to obtain a unique $\tilde D_1$ solving~\eqref{ford1} and also satisfying the bound
\begin{equation}\label{hmmthisreallyisnice}
\left\vert\left\vert \tilde D_1\right\vert\right\vert_{\mathring{H}^{N_0}} \leq A\epsilon^{2-2\tilde\delta}.
\end{equation}
Furthermore, re-writing the equation for $\tilde D_1$ as 
\begin{equation}\label{ford122}
\left(\tilde D_1 +4\tilde \kappa - \frac{1}{4}\left|\mathring{\nabla}\hat{\otimes}\check{b}\right|^2\right) +  \mathcal{L}_{\check{b}}\left(\tilde D_1 +4\tilde\kappa- \frac{1}{4}\left|\mathring{\nabla}\hat{\otimes}\check{b}\right|_{\mathring{\slashed{g}}}^2\right)= -\frac{1}{4}\mathcal{L}_{\check{b}}\left(\left|\mathring{\nabla}\hat{\otimes}\check{b}\right|_{\mathring{\slashed{g}}}^2\right),
\end{equation}
we can appeal again to Proposition~\ref{qualitativeexistence} and Sobolev inequalities to show that 
\begin{equation}\label{okmnbvcdsertyuijjuj}
\left\vert\left\vert \tilde D_1 - \frac{1}{4}\left|\mathring{\nabla}\hat{\otimes}\check{b}\right|_{\mathring{\slashed{g}}}^2 + 4\tilde\kappa \right\vert\right\vert_{L^{\infty}} \lesssim \epsilon^3,
\end{equation}
and also that
\begin{equation}\label{uniquekknjkasdfavav}
\tilde{D}_1\left(\theta^A,\tilde\kappa^{(1)}\right) - \tilde{D}_1\left(\theta^A,\tilde \kappa^{(2)}\right) = \tilde \kappa^{(1)}-\tilde \kappa^{(2)}.
\end{equation}
Next, we will show that we can pick $\kappa_1$ satisfying the bound from~\eqref{thisishteestforthesequenceofstuff} so that 
\begin{equation}\label{forkappa1}
\int_{\mathbb{S}^2}\tilde D_1\left( \theta^A,\kappa_1\right)\mathring{{\rm dVol}} = 0.
\end{equation}
To see this, it suffices to note that we can use~\eqref{uniquekknjkasdfavav} to write
\begin{align}\label{knbvcdftyuioqqqqq}
\int_{\mathbb{S}^2}\tilde D_1\left(\theta^A,\tilde \kappa\right)\mathring{{\rm dVol}} = \frac{\tilde \kappa}{4\pi} +\int_{\mathbb{S}^2}\tilde D_1\left(\theta^A,0\right)\mathring{{\rm dVol}},
\end{align}
and then note that~\eqref{okmnbvcdsertyuijjuj} implies that
\[\left|\int_{\mathbb{S}^2}D_1\left(\theta,0\right)\mathring{\rm dVol}\right| \lesssim \epsilon^2,\]
Having picked $\kappa_1$ we then set $D_1\left(\theta^A\right) \doteq \tilde D_1\left(\theta^A,\kappa_1\right)$. It follows then that $D_1$ satisfies the estimate from~\eqref{thisishteestforthesequenceofstuff}. Finally, it follows immediately from elliptic theory that we can uniquely define $f_1$ by solving
\[\mathring{\Delta}f_1 = D_1 ,\qquad \int_{\mathbb{S}^2}f_1\mathring{\rm dVol} = 0,\]
and that this $f_1$ will satisfy the bound from~\eqref{thisishteestforthesequenceofstuff}.

Having established the base case, we turn to the inductive step. Thus, we assume that $\{D_i\}_{i=1}^{j-1}$, $\{f_i\}_{i=1}^{j-1}$, and $\{\kappa_i\}_{i=1}^{j-1}$ have been constructed and satisfy~\eqref{thisishteestforthesequenceofstuff}. The equation~\eqref{eqndefDi} (with $i = j$) which defines $\tilde D_j$ in terms of the constant $\tilde \kappa$ may be written as
\begin{equation}\label{eqndefDi222}
\tilde D_j + \mathcal{L}_{b_{j-1}}\tilde D_j = \frac{1}{2}\left(D_{j-1}\right)^2 + \frac{1}{4}\left|\mathring{\nabla}\hat{\otimes}b_{j-1}\right|_{\mathring{\slashed{g}}}^2 - 4\tilde\kappa + 2\kappa_{j-1}D_{j-1}.
\end{equation}
By the inductive hypothesis, we have 
\[\left\vert\left\vert b_{j-1}\right\vert\right\vert_{\mathring{H}^{N_0+1}} \lesssim \left\vert\left\vert \check{b}\right\vert\right\vert_{\mathring{H}^{N_0+1}} + \left\vert\left\vert f_{j-1}\right\vert\right\vert_{\mathring{H}^{N_0+2}} \lesssim \epsilon^{1-\tilde\delta}.\]
Thus, we can apply apply Proposition~\ref{qualitativeexistence}, the induction hypothesis, Lemma~\ref{interlemm}, and take $\epsilon^{\tilde\delta}A \ll 1$ to obtain, for any $\tilde\kappa$ satisfying~\eqref{boundtobjqjqmqmq}, the following:
\begin{align}\label{atthisstepweobtainedthisestiamtefordj}
\left\vert\left\vert \tilde D_j\right\vert\right\vert_{\mathring{H}^{N_0}} &\lesssim \left\vert\left\vert \left(D_{j-1}\right)^2\right\vert\right\vert_{\mathring{H}^{N_0}} + \left\vert\left\vert \left|\mathring{\nabla}\check{b}\right|^2\right\vert\right\vert_{\mathring{H}^{N_0}} +  \left\vert\left\vert \left|\mathring{\nabla}^2f\right|^2\right\vert\right\vert_{\mathring{H}^{N_0}} + |\tilde \kappa| +
 |\kappa_{j-1}|\left\vert\left\vert D_{j-1}\right\vert\right\vert_{\mathring{H}^{N_0}}
  \\ \nonumber &\lesssim A^2\epsilon^{4-4\tilde\delta} + \epsilon^{2-\tilde\delta} + A^2\epsilon^{4-3\tilde\delta} + 
 A^2\epsilon^{4-3\tilde\delta}
  \\ \nonumber &\leq  A\epsilon^{2-2\tilde\delta}.
\end{align}
In particular, for any choice of $\tilde\kappa$ satisfying the bound~\eqref{boundtobjqjqmqmq}, then $\tilde D_j$ satisfies the desired bound from~\eqref{thisishteestforthesequenceofstuff}. Now we can re-write~\eqref{eqndefDi222} as 
\begin{align}\label{eqndefDi2222222}
&\left(\tilde D_j -\frac{1}{4}\left|\mathring{\nabla}\hat{\otimes}b_{j-1}\right|_{\mathring{\slashed{g}}}^2+4\tilde \kappa\right) + \mathcal{L}_{b_{j-1}}\left(\tilde D_j-\frac{1}{4}\left|\mathring{\nabla}\hat{\otimes}b_{j-1}\right|_{\mathring{\slashed{g}}}^2+4\tilde\kappa\right) =
\\ \nonumber &\qquad  -\frac{1}{4}\mathcal{L}_{b_{j-1}}\left(\left|\mathring{\nabla}\hat{\otimes}b_{j-1}\right|_{\mathring{\slashed{g}}}^2\right) +\frac{1}{2}\left(D_{j-1}\right)^2 + 2\kappa_{j-1}D_{j-1}.
\end{align}
Applying Proposition~\ref{qualitativeexistence} and a Sobolev inequality and arguing as in~\eqref{atthisstepweobtainedthisestiamtefordj} and using $\epsilon^{\tilde\delta}A \ll 1$ leads to
\begin{equation}\label{9olkmnbgfr42567w8s9o}
\left\vert\left\vert \tilde D_i - \frac{1}{4}\left|\mathring{\nabla}\hat{\otimes}b_{j-1}\right|_{\mathring{\slashed{g}}}^2 + 4\tilde \kappa\right\vert\right\vert_{L^{\infty}} \lesssim \left\vert\left\vert \tilde D_i - \frac{1}{4}\left|\mathring{\nabla}\hat{\otimes}b_{j-1}\right|_{\mathring{\slashed{g}}}^2 + 4\tilde \kappa\right\vert\right\vert_{\mathring{H}^2} \lesssim \epsilon^{3-\tilde\delta}.
\end{equation}
Since the right had side of~\eqref{eqndefDi2222222} does not depend on $\tilde\kappa$, we also have 
\begin{equation}\label{kappadldldkdkdk}
\tilde D_j\left(\theta^A,\tilde\kappa^{(1)}\right) - \tilde D_j\left(\theta^A,\tilde\kappa^{(2)}\right) = \tilde\kappa^{(1)} - \tilde\kappa^{(2)}.
\end{equation}
Thus, we have
\begin{equation}\label{tobepiajsnsjsk}
\int_{\mathbb{S}^2}\tilde D_j\left(\theta^A,\tilde\kappa\right) \mathring{{\rm dVol}} = \frac{\tilde\kappa}{4\pi} + \int_{\mathbb{S}^2}\tilde D_j\left(\theta^A,0\right)\mathring{\rm dVol}.
\end{equation}
From the bound~\eqref{9olkmnbgfr42567w8s9o} we obtain
\[\left|\int_{\mathbb{S}^2}\tilde D_j\left(\theta^A,0\right)\mathring{\rm dVol}\right| \lesssim \epsilon^2.\]
In particular, we can pick $\tilde\kappa$ so that the left hand side of~\eqref{tobepiajsnsjsk} vanishes and so that the bound~\eqref{boundtobjqjqmqmq} is satisfied. We set this choice of $\tilde\kappa$ to be $\kappa_j$ and then set $D_j \doteq \tilde D_j\left(\theta^A,\kappa_j\right)$. Finally, elliptic theory and the induction hypothesis uniquely define $f_j$ solving
\[\mathring{\Delta}f_j = D_j,\qquad \int_{\mathbb{S}^2}f_j\mathring{\rm dVol} = 0,\]
and satisfying the desired bounds from~\eqref{thisishteestforthesequenceofstuff}. This completes the induction step and thus~\eqref{thisishteestforthesequenceofstuff} holds for all $j \geq 0$. 

Arguing in a similar fashion with equations derived for the difference of the quantities, one may show that the sequences $\{D_i\}$, $\{f_i\}$, and $\{\kappa_i\}$ are Cauchy and converge to $D_{\infty}$, $f_{\infty}$, and $\kappa_{\infty}$ all satisfying the bound~\eqref{thisishteestforthesequenceofstuff}. Finally, one sets $b^A = \check{b}^A + \mathring{\nabla}^Af_{\infty}$ and $\kappa = \kappa_{\infty}$.  We will then have that $D_{\infty} = \slashed{\rm div}b$ and it follows that $b$ and $\kappa$ solve the equation~\eqref{rayconstr}. By passing to the limit in the bounds~\eqref{thisishteestforthesequenceofstuff}, we have
\begin{equation}\label{finalfbodjkslslsl}
\left\vert\left\vert f_{\infty}\right\vert\right\vert_{\mathring{H}^{N_0+2}} \lesssim \epsilon^{2-3\tilde\delta},\qquad \left|\kappa_{\infty}\right| \lesssim \epsilon^{2-\tilde\delta}.
\end{equation}
Thus, after defining $\delta \doteq 3\tilde\delta$, we have a $4$-tuple $\left(\mathring{\slashed{g}}_{AB},b^A,\kappa,1\right)$ for which we have verified all of the conditions of being an $\left(\epsilon,\gamma,\delta,N_0,M_0,M_1\right)$-regular $4$-tuple except for~\eqref{somebound3ssssss}. However,~\eqref{somebound3ssssss} is easily proven by commuting the relevant equations with $\mathcal{L}_{\partial_{\phi}}$ and using that $a(\theta)$ is axisymmetric and~\eqref{theotherrequirementsbutheyareimportant}. We omit the details.
\end{proof}

\section{Useful Tensorial Identities}
We start with two useful preliminary lemmas.
\begin{lemma}\label{identitytwotrfere}Let $\slashed{g}_{AB}$ be a Riemannian metric on $\mathbb{S}^2$, and let $\mu_{AB}$ and $\nu_{AB}$ be trace-free symmetric tensors. Then
\begin{equation}\label{toprovelemmaaa}
\frac{1}{2}\left(\slashed{\nabla}\left(\mu\cdot\nu\right) + {}^*\slashed{\nabla}\left(\mu\wedge \nu\right)\right) = \mu \slashed{\rm div}\nu + \left(\nu\cdot\slashed{\nabla}\right)\cdot\mu.
\end{equation}
\end{lemma}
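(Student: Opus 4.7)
The plan is to reduce \eqref{toprovelemmaaa} to a single pointwise algebraic identity about symmetric trace-free $2$-tensors in two dimensions and then differentiate. Specifically, I would first establish that for any STFT tensors $\mu_{AB},\nu_{AB}$ on $(\mathbb{S}^2,\slashed{g})$,
\begin{equation}\label{keyalg}
\mu_A{}^C\nu_{BC} \;=\; \tfrac{1}{2}(\mu\cdot\nu)\slashed{g}_{AB} + \tfrac{1}{2}(\mu\wedge\nu)\slashed{\epsilon}_{AB}.
\end{equation}
This is linear algebra: the space of STFT at a point is two-real-dimensional, so the symmetric and antisymmetric parts of the bilinear form $(\mu,\nu)\mapsto \mu_A{}^C\nu_{BC}$ must be proportional to $\slashed{g}_{AB}$ and $\slashed{\epsilon}_{AB}$ respectively, and the coefficients are pinned down by tracing with $\slashed{g}^{AB}$ and $\slashed{\epsilon}^{AB}$. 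One can verify it in one line by working in a local orthonormal frame with $\mu = \bigl(\begin{smallmatrix} a & b\\ b & -a\end{smallmatrix}\bigr)$, $\nu = \bigl(\begin{smallmatrix} c & d\\ d & -c\end{smallmatrix}\bigr)$ and comparing $\mu\nu^T$ with $\mu\cdot\nu = 2(ac+bd)$ and $\mu\wedge\nu = 2(ad-bc)$.

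Next, I would apply $\slashed{\nabla}^B$ to \eqref{keyalg}. Since $\slashed{g}$ and $\slashed{\epsilon}$ are covariantly constant, the right-hand side becomes
\[
\tfrac{1}{2}\slashed{\nabla}_A(\mu\cdot\nu) + \tfrac{1}{2}\slashed{\epsilon}_A{}^B\slashed{\nabla}_B(\mu\wedge\nu) \;=\; \tfrac{1}{2}\bigl(\slashed{\nabla}(\mu\cdot\nu) + {}^*\slashed{\nabla}(\mu\wedge\nu)\bigr)_A,
\]
while a Leibniz expansion of the left-hand side gives
\[
\slashed{\nabla}^B\bigl(\mu_A{}^C\nu_{BC}\bigr) \;=\; \nu^{BC}\slashed{\nabla}_B\mu_{AC} + \mu_A{}^C\slashed{\nabla}^B\nu_{BC} \;=\; \bigl((\nu\cdot\slashed{\nabla})\cdot\mu\bigr)_A + \bigl(\mu\,\slashed{\rm div}\nu\bigr)_A,
\]
which is precisely the right-hand side of \eqref{toprovelemmaaa}. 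Matching the two expressions completes the proof.

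There is no real obstacle: the only nontrivial input is the pointwise identity \eqref{keyalg}, which is a genuinely two-dimensional phenomenon reflecting the coincidence that the STFT space and the span of $\{\slashed{g},\slashed{\epsilon}\}$ are both $2$-dimensional. Once \eqref{keyalg} is in hand, the remainder is a one-line Leibniz computation using $\slashed{\nabla}\slashed{g} = 0$ and $\slashed{\nabla}\slashed{\epsilon}=0$; in particular, no integration by parts, no elliptic estimates, and no smallness assumptions are needed.
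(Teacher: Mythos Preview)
Your proof is correct and takes a genuinely different route from the paper. The paper proceeds by brute-force frame computation: it fixes an arbitrary oriented orthonormal frame $(e_1,e_2)$, expands both sides of \eqref{toprovelemmaaa} evaluated on $e_1$ using $\mu_{22}=-\mu_{11}$, $\nu_{22}=-\nu_{11}$, and then matches the resulting eight terms by inspection. Your argument instead isolates the purely algebraic content as the pointwise identity $\mu_A{}^C\nu_{BC}=\tfrac{1}{2}(\mu\cdot\nu)\slashed{g}_{AB}+\tfrac{1}{2}(\mu\wedge\nu)\slashed{\epsilon}_{AB}$, and then obtains \eqref{toprovelemmaaa} in one stroke by applying $\slashed{\nabla}^B$ and invoking $\slashed{\nabla}\slashed{g}=\slashed{\nabla}\slashed{\epsilon}=0$. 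This is cleaner and more transparent: it makes explicit that the differential identity is nothing but the Leibniz rule applied to a two-dimensional linear-algebra fact, and it avoids the term-by-term bookkeeping. It is worth noting that the paper does record contracted versions of your pointwise identity \eqref{keyalg} in the very next lemma (Lemma~\ref{iguessthisisusefulwww}), so the algebraic insight is present in the paper as well; your contribution is to observe that differentiating the uncontracted identity yields Lemma~\ref{identitytwotrfere} directly.
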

\begin{proof}Let $e_1$ be an arbitrary unit vector and then choose $e_2$ so that $\left(e_1,e_2\right)$ is positively oriented. Since $e_1$ is arbitrary, it suffices to establish the identity~\eqref{toprovelemmaaa} when evaluated on $e_1$. 

We have
\begin{equation}\label{okmdw981n1}
\mu\cdot\nu = \mu_{11}\nu_{11} + 2\mu_{12}\nu_{12} + \mu_{22}\nu_{22} = 2\mu_{11}\nu_{11} + 2\mu_{12}\nu_{12},
\end{equation}
\begin{equation}\label{okmdw981n2}
\mu\wedge \nu = \mu_{11}\nu_{12} + \mu_{12}\nu_{22} - \mu_{12}\nu_{11} - \mu_{22}\nu_{12} = 2\mu_{11}\nu_{12} - 2\mu_{12}\nu_{11}.
\end{equation}
Using~\eqref{okmdw981n1} and~\eqref{okmdw981n2} we may now calculate 
\begin{align}\label{okmdw981n3}
\frac{1}{2}\left(\slashed{\nabla}_1\left(\mu\cdot\nu\right) + {}^*\slashed{\nabla}_1\left(\mu\wedge \nu\right)\right) &= \left(\slashed{\nabla}_1\mu_{11}\right)\nu_{11} + \mu_{11}\left(\slashed{\nabla}_1\nu_{11}\right) + \left(\slashed{\nabla}_1\mu_{12}\right)\nu_{12} + \mu_{12}\left(\slashed{\nabla}_1\nu_{12}\right)
\\ \nonumber &\qquad +\left(\slashed{\nabla}_2\mu_{11}\right)\nu_{12} + \mu_{11}\left(\slashed{\nabla}_2\nu_{12}\right) - \left(\slashed{\nabla}_2\mu_{12}\right)\nu_{11} - \mu_{12}\left(\slashed{\nabla}_2\nu_{11}\right).
\end{align}

Next, we calculate
\begin{align}\label{okmdw981n4}
\mu_{1B}\slashed{\nabla}_C\nu^{BC} + \left(\slashed{\nabla}_B\mu_{C1}\right)\nu^{BC} &= \mu_{11}\left(\slashed{\nabla}_1\nu_{11}\right) + \mu_{11}\left(\slashed{\nabla}_2\nu_{12}\right) + \mu_{12}\left(\slashed{\nabla}_1\nu_{12}\right) - \mu_{12}\slashed{\nabla}_2\nu_{11} 
\\ \nonumber &\qquad + \left(\slashed{\nabla}_1\mu_{11}\right)\nu_{11} + \left(\slashed{\nabla}_2\mu_{11}\right)\nu_{12} + \left(\slashed{\nabla}_1\mu_{12}\right)\nu_{12} - \left(\slashed{\nabla}_2\mu_{12}\right)\nu_{11}.
\end{align}
Finally, by inspection we see that~\eqref{okmdw981n3} and~\eqref{okmdw981n4} are the same.

\end{proof}

\begin{lemma}\label{iguessthisisusefulwww}Let $\slashed{g}_{AB}$ be a Riemannian metric on $\mathbb{S}^2$,  let $\mu_{AB}$ and $\nu_{AB}$ be trace-free symmetric tensors, and let $\vartheta_A$ be a $1$-form. Then
\[\mu\cdot\left(\vartheta\cdot\nu\right) - \nu\cdot\left(\vartheta\cdot\mu\right) = {}^*\vartheta\left(\mu\wedge\nu\right),\]
\[\mu\cdot\left(\vartheta\cdot\nu\right) + \nu\cdot\left(\vartheta\cdot\mu\right) = \vartheta\left(\mu\cdot\nu\right),\]
\[\mu\cdot\left(\vartheta\cdot\nu\right) = \frac{1}{2}\mu\cdot\left(\vartheta\cdot\nu\right)  - \frac{1}{2}\nu\cdot\left(\vartheta\cdot\mu\right)+  \frac{1}{2}\mu\cdot\left(\vartheta\cdot\nu\right)+\frac{1}{2}\nu\cdot\left(\vartheta\cdot\mu\right) =  \frac{1}{2}\vartheta\left(\mu\cdot\nu\right) + \frac{1}{2}{}^*\vartheta\left(\mu\wedge\nu\right).\]
\end{lemma}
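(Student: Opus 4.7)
The plan is to reduce all three identities to a pointwise computation in a suitably chosen frame, in the same spirit as the proof of Lemma~\ref{identitytwotrfere}. Note first that the third identity is a trivial algebraic consequence of the first two: adding them gives $2\mu\cdot(\vartheta\cdot\nu) = \vartheta(\mu\cdot\nu) + {}^*\vartheta(\mu\wedge\nu)$, so it suffices to establish the first two. Moreover, both sides of each identity are $1$-forms, so it suffices to verify them when contracted with an arbitrary unit vector $e_1$.

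Given any point on $\mathbb{S}^2$ and any unit vector $e_1$ at that point, I would complete $e_1$ to a positively oriented orthonormal frame $(e_1,e_2)$. The trace-free symmetric condition forces $\mu_{22}=-\mu_{11}$ and $\nu_{22}=-\nu_{11}$, so as in the proof of Lemma~\ref{identitytwotrfere} one obtains
\[
\mu\cdot\nu = 2\mu_{11}\nu_{11}+2\mu_{12}\nu_{12},\qquad \mu\wedge\nu = 2\mu_{11}\nu_{12}-2\mu_{12}\nu_{11}.
\]
Using the definitions
\[
(\vartheta\cdot\nu)_A = \vartheta^B\nu_{BA},\qquad [\mu\cdot(\vartheta\cdot\nu)]_A = \mu_A^{\ B}\vartheta^C\nu_{CB},
\]
one expands $\mu\cdot(\vartheta\cdot\nu)$ and $\nu\cdot(\vartheta\cdot\mu)$ in components and checks that their difference and sum evaluated on $e_1$ match, respectively, the components of ${}^*\vartheta(\mu\wedge\nu)$ and $\vartheta(\mu\cdot\nu)$ on $e_1$. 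In the chosen frame $({}^*\vartheta)_1=\vartheta_2$ and $({}^*\vartheta)_2=-\vartheta_1$, so each of the four scalar identities to be verified is a short algebraic rearrangement involving only the components $\mu_{11},\mu_{12},\nu_{11},\nu_{12},\vartheta_1,\vartheta_2$.

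There is no genuine obstacle here: the ``hard part'' is simply keeping track of signs in the expansion, exactly analogous to the identification of the right-hand sides of equations~\eqref{okmdw981n3} and~\eqref{okmdw981n4} in the proof of Lemma~\ref{identitytwotrfere}. I would present the computation in the $1$-component only (since $e_1$ is arbitrary) for both identities simultaneously, then invoke the linear combination above to conclude the third identity.
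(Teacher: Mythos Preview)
Your proposal is correct and follows essentially the same approach as the paper: the paper's proof simply states that one writes out both sides of the first two identities in an orthonormal oriented basis (as in Lemma~\ref{identitytwotrfere}), omits the details, and notes that the third identity is an immediate consequence of the first two. Your write-up in fact supplies more of the details than the paper does.
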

\begin{proof}The proof of the first two identities is similar to the proof of Lemma~\ref{identitytwotrfere}. One simply writes out both sides of the identity in an orthonormal oriented basis. We omit the details. The final identity is an immediate consequence of the first two.
\end{proof}

\begin{lemma}\label{bettereta}Let $\left(\mathcal{M},g_{\mu\nu}\right)$ be a spacetime satisfying the Einstein vacuum equations in the double-null gauge. Then we have 
\begin{equation}\label{plwo2}
\Omega\nabla_3\eta_A + \left(\Omega{\rm tr}\underline{\chi}\right)\eta_A + 2\left(\left(\Omega\hat{\underline{\chi}}\right)\cdot\eta\right)_A = -\Omega\underline{\beta}_A - 4\slashed{\nabla}_A\left(\Omega\underline{\omega}\right),
\end{equation}
\begin{equation}\label{plwo3}
\Omega\underline{\beta}_A = \slashed{\rm div}\left(\Omega\hat{\underline{\chi}}\right)_A - \frac{1}{2}\slashed{\nabla}_A\left(\Omega{\rm tr}\underline{\chi}\right) - \left(\eta\cdot\left(\Omega\hat{\underline{\chi}}\right)\right)_A + \frac{1}{2}\eta_A \left(\Omega{\rm tr}\underline{\chi}\right),
\end{equation}
\begin{equation}\label{plwo4}
\Omega^{-1}\beta_A = -\slashed{\rm div}\left(\Omega^{-1}\hat{\chi}\right)_A + \frac{1}{2}\slashed{\nabla}_A\left(\Omega^{-1}{\rm tr}\chi\right) - \left(\eta\cdot\left(\Omega^{-1}\hat{\chi}\right)\right)_A + \frac{1}{2}\eta_A\left(\Omega^{-1}{\rm tr}\chi\right).
\end{equation}
\end{lemma}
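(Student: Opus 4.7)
The three identities are reformulations of the standard null-structure and Codazzi equations \eqref{3ueta}, \eqref{tcod1}, and \eqref{tcod2}, adapted to the $\Omega$-weighted quantities that are natural in our signature scheme. The plan is to derive each one by a direct tensorial computation, using only the definitions from \eqref{osjf}, the Leibniz rule, and in one case the commutator formula from Lemma~\ref{commlemm}.

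For \eqref{plwo3} and \eqref{plwo4}, the plan is to multiply the Codazzi equations \eqref{tcod2} and \eqref{tcod1} by $\Omega$ and $\Omega^{-1}$ respectively, and compare with $\slashed{\rm div}(\Omega\hat{\underline{\chi}}) - \tfrac{1}{2}\slashed{\nabla}(\Omega{\rm tr}\underline{\chi})$ and $\slashed{\rm div}(\Omega^{-1}\hat{\chi}) - \tfrac{1}{2}\slashed{\nabla}(\Omega^{-1}{\rm tr}\chi)$, using the product rule for $\slashed{\nabla}$. Each side will pick up terms proportional to $\slashed{\nabla}\log\Omega$, and the key observation is that once we substitute the relations $\zeta = \eta - \slashed{\nabla}\log\Omega = -\underline{\eta} + \slashed{\nabla}\log\Omega$ from \eqref{osjf}, the $\slashed{\nabla}\log\Omega$ contributions produced by the Leibniz rule cancel \emph{exactly} against those produced by the $\zeta$-terms in the Codazzi equations. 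This leaves precisely the $\eta$-terms on the right-hand sides of \eqref{plwo3} and \eqref{plwo4}. This step is purely algebraic and will pose no obstacle beyond bookkeeping.

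For \eqref{plwo2}, which is the only non-trivial identity, the plan is to use the symmetric pair $\eta + \underline{\eta} = 2\slashed{\nabla}\log\Omega$ to write $\eta = 2\slashed{\nabla}\log\Omega - \underline{\eta}$, apply $\Omega\nabla_3$, and combine with equation \eqref{3ueta} for $\Omega\nabla_3\underline{\eta}$. The main subtlety, and the place requiring care, is computing $\Omega\nabla_3\slashed{\nabla}_A\log\Omega$: for this we will use Lemma~\ref{commlemm} (which reduces to $[\Omega\nabla_3,\slashed{\nabla}_A]f = -\Omega\underline{\chi}_A{}^C\slashed{\nabla}_Cf$ on scalars) together with the identity $\Omega\nabla_3\log\Omega = -2\Omega\underline{\omega}$ from \eqref{osjf}. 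This yields
\[
\Omega\nabla_3\slashed{\nabla}_A\log\Omega = -2\slashed{\nabla}_A(\Omega\underline{\omega}) - \Omega\underline{\chi}_A{}^C\slashed{\nabla}_C\log\Omega.
\]
Substituting this and the right-hand side of \eqref{3ueta} into $\Omega\nabla_3\eta = 2\Omega\nabla_3\slashed{\nabla}\log\Omega - \Omega\nabla_3\underline{\eta}$, and using $\underline{\eta} - \eta = -2\zeta = -2\eta + 2\slashed{\nabla}\log\Omega$, the $\slashed{\nabla}\log\Omega$ terms once again cancel. Splitting $\underline{\chi} = \hat{\underline{\chi}} + \tfrac{1}{2}({\rm tr}\underline{\chi})\slashed{g}$ in the surviving $\Omega\underline{\chi}\cdot\eta$ term produces exactly the left-hand side of \eqref{plwo2}.

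The hardest step is the commutator calculation and tracking the cancellation of the $\slashed{\nabla}\log\Omega$ contributions in \eqref{plwo2}; everything else is a straightforward rewrite. Since this is a purely local computation with no analytic content, I expect the entire proof to fit in under a page, and in the paper itself the authors have indeed omitted the verification.
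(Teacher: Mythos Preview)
Your proposal is correct and follows essentially the same route as the paper's proof: for~\eqref{plwo2} the paper also writes $\underline{\eta}=-\eta+2\slashed{\nabla}\log\Omega$, applies $\Omega\nabla_3$, uses the commutator~\eqref{com2} on $\log\Omega$ together with $\Omega\nabla_3\log\Omega=-2\Omega\underline{\omega}$, and then substitutes into~\eqref{3ueta}; for~\eqref{plwo3} and~\eqref{plwo4} the paper simply says they follow ``in a straightforward fashion'' from~\eqref{tcod1} and~\eqref{tcod2}, which is precisely the Leibniz-rule computation with the $\zeta+\slashed{\nabla}\log\Omega=\eta$ cancellation you describe. Your write-up in fact supplies more detail than the paper does.
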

\begin{proof}
Using~\eqref{com2} we have
\begin{align}\label{plwo}
\Omega\nabla_3\underline{\eta}_A &= \Omega\nabla_3\left(-\eta_A+2\slashed{\nabla}_A\log\Omega\right)
\\ \nonumber &= -\Omega\nabla_3\eta_A + 2\left[\Omega\nabla_3,\slashed{\nabla}_A\right]\log\Omega - 4\slashed{\nabla}_A\left(\Omega\underline{\omega}\right)
\\ \nonumber &= -\Omega \nabla_3\eta_A - \left(\Omega{\rm tr}\underline{\chi}\right)\left(\slashed{\nabla}_A\log\Omega\right) - 2\Omega\left(\hat{\underline{\chi}}\cdot\slashed{\nabla}\right)_A\log\Omega - 4\slashed{\nabla}_A\left(\Omega\underline{\omega}\right).
\end{align}

Plugging~\eqref{plwo} into~\eqref{3ueta} then yields~\eqref{plwo2}.

Finally,~\eqref{plwo3} and~\eqref{plwo4} follow in a straightforward fashion from~\eqref{tcod1} and~\eqref{tcod2}.

\end{proof}

\section{Proof of Lemma~\ref{codisnice}}\label{dwlpdwlp}
\begin{proof}

For any trace-free symmetric $2$-tensor $\nu_{AB}$, let us set $F_{AB} \doteq \Omega\nabla_4\nu_{AB}$. Then, using Lemma~\ref{commlemm}, we find that
\begin{align}\label{ddwwd}
&\Omega\nabla_4\slashed{\rm div}\nu_A =
\\ \nonumber & \slashed{\rm div}F_A + \Omega^2\left[2\left(\Omega^{-1}\beta\right)\cdot\nu - \left(\Omega^{-1}\chi\cdot\nu\right)\underline{\eta}+ \Omega^{-1}\chi\cdot\left(\underline{\eta}\cdot\nu\right) - \nu\cdot \left(\underline{\eta}\cdot\Omega^{-1}\chi\right)\right]_A - \Omega^2\left(\Omega^{-1}\chi\right)^{BC}\slashed{\nabla}_B\nu_{CA}.
\end{align}
Integrating~\eqref{ddwwd} in the $v$-direction with $\nu_{AB} = \overline{\Omega\hat{\underline{\chi}}}_{AB} +\widetilde{\hat{\underline{\chi}}}_{AB}$, using~\eqref{hatchirenroma}, and using Lemma~\ref{iguessthisisusefulwww} we obtain that
\begin{align}\label{lpwlpdkocmienco12456gvcd}
&\slashed{\rm div}\left(\Omega\hat{\underline{\chi}}\right)_A = \slashed{\rm div}\left(\widetilde{\hat{\underline{\chi}}}^{(1)}\right)_A+\overline{\slashed{\rm div}\Omega\hat{\underline{\chi}}}_A\\ \nonumber &\qquad +\int_0^v\Bigg[\overline{\slashed{\nabla}^B}\Bigg(\left(\frac{\hat{v}}{-u}\right)^{-2\kappa}\left(\overline{\left(\frac{\hat{v}}{-u}\right)^{\kappa}\Omega}\right)^2\Bigg(\overline{-\frac{1}{2}\left(\Omega^{-1}{\rm tr}\chi\right)\left(\Omega\hat{\underline{\chi}}_{AB}\right) + \left(\slashed{\nabla}\hat{\otimes}\underline{\eta}\right)_{AB} + \left(\underline{\eta}\hat{\otimes}\underline{\eta}\right)_{AB}}
\\ \nonumber &\qquad -\frac{1}{2} \overline{\Omega{\rm tr}\underline{\chi}}\left(\overset{\triangleright}{\Omega^{-1}\hat{\chi}}_{AB}\right)\Bigg)\Bigg)+\Bigg(\left(\frac{\hat{v}}{-u}\right)^{-2\kappa}\left(\overline{\left(\frac{\hat{v}}{-u}\right)^{\kappa}\Omega}\right)^2\Bigg(2\left(\overset{\triangleright}{\Omega^{-1}\beta}\right)^B\overline{\Omega\hat{\underline{\chi}}}_{AB} 
\\ \nonumber &\qquad -\left(\left(\overset{\triangleright}{\Omega^{-1}\hat{\chi}}\right)\cdot\overline{\Omega\hat{\underline{\chi}}}\right)\overline{\underline{\eta}}_A +  \overline{{}^*\underline{\eta}}_A\left(\left(\overset{\triangleright}{\Omega^{-1}\hat{\chi}}\right)\wedge \overline{\Omega\hat{\underline{\chi}}}\right) 
 -\frac{1}{2}\overline{\left(\Omega^{-1}{\rm tr}\chi\right)\slashed{\rm div}\left(\Omega\hat{\underline{\chi}}\right)_A}
 \\ \nonumber &\qquad - \left(\overset{\triangleright}{\Omega^{-1}\hat{\chi}}\right)^{BC}\overline{\slashed{\nabla}_B\left(\Omega\hat{\underline{\chi}}\right)_{CA}}\Bigg)+\left(\frac{\hat{v}}{-u}\right)^{-2\kappa}\left(\overline{\left(\frac{\hat{v}}{-u}\right)^{\kappa}\Omega}\right)^2\left(\overset{\triangleright}{\Omega^{-1}\hat{\chi}}\cdot\overline{{\rm div}\left(\Omega\hat{\underline{\chi}}\right)} + \frac{1}{2}\overline{\Omega^{-1}{\rm tr}\chi {\rm div}\left(\Omega\hat{\underline{\chi}}\right)} \right) \Bigg]\, d\hat{v} + \mathcal{H}^{(2)}.
\end{align}

Next, using Lemma~\ref{commlemm} and~\eqref{trchiundaplwd}, we find that
\begin{align}\label{lpdwldpkq}
&\Omega\nabla_4\slashed{\nabla}_A\left(\overline{\Omega{\rm tr}\underline{\chi}} + \widetilde{{\rm tr}\underline{\chi}}^{(0)}\right) =
\\ \nonumber & \left(\frac{v}{-u}\right)^{-2\kappa}\slashed{\nabla}_A\left[\left(\overline{\left(\frac{v}{-u}\right)^{\kappa}\Omega}\right)^2\left(\overline{-\frac{1}{2}\left(\Omega^{-1}{\rm tr}\chi\right)\left(\Omega{\rm tr}\underline{\chi}\right) + 2\left(\rho - \frac{1}{2}\left(\Omega^{-1}\hat{\chi}\right)\cdot\left(\Omega\hat{\underline{\chi}}\right)\right) + 2\slashed{\rm div}\underline{\eta} + \left|\underline{\eta}\right|^2}\right)\right]
\\ \nonumber &-\frac{1}{2}\Omega^2\left(\Omega^{-1}{\rm tr}\chi\right)\slashed{\nabla}_A\left(\overline{\Omega{\rm tr}\underline{\chi}} + \widetilde{{\rm tr}\underline{\chi}}^{(0)}\right) -\Omega^2\left(\Omega^{-1}\hat{\chi}\right)_A^{ \ \ B}\slashed{\nabla}_B\left(\overline{\Omega{\rm tr}\underline{\chi}} + \widetilde{{\rm tr}\underline{\chi}}^{(0)}\right).
\end{align}

Thus we obtain the following analogue of~\eqref{lpwlpdkocmienco12456gvcd}
\begin{align}\label{afwfvkpw}
&\slashed{\nabla}_A\left(\Omega{\rm tr}\underline{\chi}\right)  = \overline{\slashed{\nabla}_A\left(\Omega{\rm tr}\underline{\chi}\right)} + \slashed{\nabla}_A\widetilde{{\rm tr}\underline{\chi}}^{(1)} + \int_0^v\left(\frac{\hat{v}}{-u}\right)^{-2\kappa}\left(\overline{\left(\frac{\hat{v}}{-u}\right)^{\kappa}\Omega}\right)^2\left(\overset{\triangleright}{\Omega^{-1}\hat{\chi}} + \frac{1}{2}\overline{\Omega^{-1}{\rm tr}\chi}\right)\overline{\slashed{\nabla}_A\Omega{\rm tr}\underline{\chi}}\, d\hat{v}
\\ \nonumber &+\int_0^v\Bigg(\left(\frac{\hat{v}}{-u}\right)^{-2\kappa}\slashed{\nabla}_A\left[\left(\overline{\left(\frac{\hat{v}}{-u}\right)^{\kappa}\Omega}\right)^2\left(\overline{-\frac{1}{2}\left(\Omega^{-1}{\rm tr}\chi\right)\left(\Omega{\rm tr}\underline{\chi}\right) + 2\left(\rho - \frac{1}{2}\left(\Omega^{-1}\hat{\chi}\right)\cdot\left(\Omega\hat{\underline{\chi}}\right)\right) + 2\slashed{\rm div}\underline{\eta} + \left|\underline{\eta}\right|^2}\right)\right]
\\ \nonumber &\left(\frac{\hat{v}}{-u}\right)^{-2\kappa}\left(\overline{\left(\frac{\hat{v}}{-u}\right)^{\kappa}\Omega}\right)^2\left[\overline{-\frac{1}{2}\left(\Omega^{-1}{\rm tr}\chi\right)\slashed{\nabla}_A\left(\Omega{\rm tr}\underline{\chi} \right)} -\left(\overset{\triangleright}{\Omega^{-1}\hat{\chi}}\right)_A^{ \ \ B}\overline{\slashed{\nabla}_B\left(\Omega{\rm tr}\underline{\chi}\right)}\right]\Bigg)\, d\hat{v} + \mathcal{H}^{(2)}.
\end{align}

Finally the proof concludes by substituting in~\eqref{lpwlpdkocmienco12456gvcd},~\eqref{afwfvkpw},~\eqref{betamowfa}, and~\eqref{etaffwffww} into~\eqref{plwo3}, using Lemmas~\ref{identitytwotrfere},~\ref{iguessthisisusefulwww}, ~\ref{kpdwdddd}, and~\ref{bettereta}, and carrying out all of the possible cancellations. We omit the straightforward if tedious calculation.

\end{proof}

\section{Proof of Lemma~\ref{thatissuchaconst}}\label{askf}
In this section we give the proof of Lemma~\ref{thatissuchaconst}.

\begin{proof}
Let's set
\begin{equation}\label{sffsfsfs}
\check{\Omega}\left(\theta\right) \doteq \lim_{v\to 0}\left(\frac{v}{-u}\right)^{\kappa}\Omega\left(v,u,\theta\right),\qquad u^{-1}\check{b}^A(\theta) \doteq \lim_{v\to 0}b^A\left(v,u,\theta\right),\qquad u^2\check{\slashed{g}}_{AB}(\theta) \doteq \lim_{v\to 0}\slashed{g}_{AB}\left(u,v,\theta\right).
\end{equation}
Multiplying the $e_3$-Raychaudhuri equation~\eqref{3trchi} by $\Omega^2$ leads to the following equation:
\begin{equation}\label{rewritetrchiund2}
\left(\partial_u + \mathcal{L}_b\right)\left(\Omega{\rm tr}\underline{\chi}\right) + \frac{1}{2}\left(\Omega{\rm tr}\underline{\chi}\right)^2 = -\left|\Omega\hat{\underline{\chi}}\right|^2 - 4\left(\Omega\underline{\omega}\right)\left(\Omega{\rm tr}\underline{\chi}\right).
\end{equation}
Next, note that
\begin{equation}\label{omegalimitv0}
\lim_{v\to 0}\frac{v}{u}\Omega\omega = \frac{\kappa}{2u}
\end{equation}
Observe that Lemma~\ref{scalrelations2} and~\eqref{omegalimitv0} imply the following relations:
\begin{equation}\label{onv0form2}
\Omega{\rm tr}\underline{\chi}|_{v=0} = \frac{2}{u} +\slashed{\rm div}b,\qquad \Omega\hat{\underline{\chi}}_{AB}|_{v=0} = \frac{1}{2}\left(\slashed{\nabla}\hat{\otimes}b\right)_{AB},\qquad \Omega\underline{\omega}|_{v=0} = -\frac{\kappa}{2u}-\frac{1}{2}\mathcal{L}_b\log\check{\Omega}.
\end{equation}
Recalling the definition of $\check{b}$ and $\check{\slashed{g}}$ from~\eqref{sffsfsfs}, we may plug in~\eqref{onv0form2} into~\eqref{rewritetrchiund2} and simplify to obtain
\begin{align}\label{raychaubecomesthisscalinv2}
&u^{-2}\check{\slashed{g}}^{AB}\slashed{\nabla}_A\check{b}_B + u^{-2}\mathcal{L}_{\check{b}}\left(\check{g}^{AB}\slashed{\nabla}_A\check{b}_B\right) + \frac{1}{2}u^{-2}\left(\check{\slashed{g}}^{AB}\slashed{\nabla}_A\check{b}_B \right)^2 =
\\ \nonumber &\qquad  -\frac{1}{4}u^{-2}\check{\slashed{g}}^{AC}\check{\slashed{g}}^{BD}\left(\slashed{\nabla}\hat{\otimes}\check{b}\right)_{AB}\left(\slashed{\nabla}\hat{\otimes}\check{b}\right)_{CD} +\left(\frac{2\kappa}{u}+2u^{-1}\check{\Omega}^{-1}\left(\mathcal{L}_{\check{b}}\check{\Omega}\right)\right)\left(\frac{2}{u} +u^{-1}\check{\slashed{g}}^{AB}\slashed{\nabla}_A\check{b}_B\right).
\end{align}

It  immediately follows that~\eqref{basicconstr2} holds.
\end{proof}

\end{document}